\newtheorem{thm}{Theorem}[section]
\newtheorem{cor}[thm]{Corollary}
\newtheorem{lemma}[thm]{Lemma}
\newtheorem{prop}[thm]{Proposition}
\theoremstyle{definition}
\newtheorem{example}[thm]{Example}
\newtheorem{definition}[thm]{Definition}
\newtheorem{remark}[thm]{Remark}
\newtheorem{question}[thm]{Question}
\newtheorem{notations}[thm]{Notations}
\newtheorem{assumption}[thm]{Assumption}
\def\iddots{\mathinner{\mkern1mu\raise\p@
\vbox{\kern7\p@\hbox{.}}\mkern2mu
\raise4\p@\hbox{.}\mkern2mu\raise7\p@\hbox{.}\mkern1mu}}
\newcommand{\cA}{\mathcal{A}}
\newcommand{\cB}{\mathcal{B}}
\newcommand{\cC}{\mathcal{C}}
\newcommand{\cD}{\mathcal{D}}
\newcommand{\cF}{\mathcal{F}}
\newcommand{\cH}{\mathcal{H}}
\newcommand{\cJ}{\mathcal{J}}
\newcommand{\cK}{\mathcal{K}}
\newcommand{\cM}{\mathcal{M}}
\newcommand{\cN}{\mathcal{N}}
\newcommand{\cO}{\mathcal{O}}
\newcommand{\cP}{\mathcal{P}}
\newcommand{\cQ}{\mathcal{Q}}
\newcommand{\cS}{\mathcal{S}}
\newcommand{\cT}{\mathcal{T}}
\newcommand{\cU}{\mathcal{U}}
\newcommand{\cV}{\mathcal{V}}
\newcommand{\cZ}{\mathcal{Z}}
\newcommand{\bA}{\mathbb{A}}
\newcommand{\bC}{\mathbb{C}}
\newcommand{\bP}{\mathbb{P}}
\newcommand{\bfP}{\mathbf{P}}
\newcommand{\bQ}{\mathbb{Q}}
\newcommand{\bR}{\mathbb{R}}
\newcommand{\bZ}{\mathbb{Z}}
\newcommand{\bD}{\mathbf{D}}
\newcommand{\ft}{\mathfrak{t}}
\newcommand{\fC}{\mathfrak{C}}
\newcommand{\cX}{\mathcal{X}}
\newcommand{\Mod}{\mathrm{Mod}}
\newcommand{\Hom}{\mathrm{Hom}}
\newcommand{\Res}{\mathrm{Res}}
\newcommand{\End}{\mathrm{End}}
\newcommand{\Spec}{\mathrm{Spec}}
\newcommand{\pr}{\mathrm{pr}}
\newcommand{\supp}{\mathrm{supp}}
\newcommand{\sfh}{\mathsf{h}}
\newcommand{\sfF}{\mathsf{F}}
\newcommand{\sfb}{\mathsf{b}}
\newcommand{\sing}{\mathrm{sing}}
\newcommand{\sm}{\mathsf{sm}}
\numberwithin{equation}{subsection}
\newcommand{\Shv}{\mathrm{Shv}}
\newcommand{\Cone}{\mathrm{Cone}}
\newcommand{\proj}{\mathrm{proj}}
\newcommand{\reg}{\mathrm{reg}}
\newcommand{\frj}{\mathfrak{j}}
\newcommand{\bv}{\mathbf{v}}
\newcommand{\cY}{\mathcal{Y}}
\newcommand{\cW}{\mathcal{W}}
\newcommand{\std}{\mathrm{std}}
\newcommand{\cL}{\mathcal{L}}
\newcommand{\Ad}{\mathrm{Ad}}
\newcommand{\fg}{\mathfrak{g}}
\newcommand{\fb}{\mathfrak{b}}
\newcommand{\fn}{\mathfrak{n}}
\newcommand{\fU}{\mathfrak{U}}
\newcommand{\fc}{\mathfrak{c}}
\newcommand{\fp}{\mathfrak{p}}
\newcommand{\fl}{\mathfrak{l}}
\newcommand{\fsl}{\mathfrak{sl}}
\newcommand{\fz}{\mathfrak{z}}
\newcommand{\fM}{\mathfrak{M}}
\newcommand{\Core}{\mathrm{Core}}
\newcommand{\Coh}{\mathrm{Coh}}
\newcommand{\der}{\mathrm{der}}
\newcommand{\ad}{\mathrm{ad}}
\newcommand{\lng}{\langle}
\newcommand{\rng}{\rangle}
\newcommand{\fund}{\mathsf{fund}}
\newcommand{\sfLambda}{\mathsf{\Lambda}}
\newcommand{\norm}{\mathsf{N}}
\newcommand{\cpt}{\mathsf{cpt}}
\newcommand{\fS}{\mathfrak{S}}
\newcommand{\fF}{\mathfrak{F}}
\newcommand{\fK}{\mathfrak{K}}
\newcommand{\fQ}{\mathfrak{Q}}
\newcommand{\Perf}{\mathrm{Perf}}
\newcommand{\sfq}{\mathsf{q}}
\newcommand{\scrZ}{\mathscr{Z}}
\newcommand{\dagg}{\dagger}
\newcommand{\po}{\ar@{}[dr]|{\text{\pigpenfont R}}}
\newcommand{\pb}{\ar@{}[dr]|{\text{\pigpenfont J}}}
\newcommand{\sfB}{\mathsf{B}}
\newcommand{\sfN}{\mathsf{N}}
\tikzset{cross/.style={cross out, draw=black, minimum size=2*(#1-\pgflinewidth), inner sep=0pt, outer sep=0pt},
cross/.default={2pt}}
\tikzset{
  saveuse path/.code 2 args={
    \pgfkeysalso{#1/.style={insert path={#2}}}
    \global\expandafter\let\csname pgfk@\pgfkeyscurrentpath/.@cmd\expandafter\endcsname
                                \csname pgfk@\pgfkeyscurrentpath/.@cmd\endcsname
    \pgfkeysalso{#1}},
  /pgf/math set seed/.code=\pgfmathsetseed{#1}}
\newcommand{\pnrelbar}{
  \linethickness{\dimen2}
  \sbox\z@{$\m@th\prec$}
  \dimen@=1.1\ht\z@
  \begin{picture}(\dimen@,.4ex)
  \roundcap
  \put(0,.2ex){\line(1,0){\dimen@}}
  \put(\dimexpr 0.5\dimen@-.2ex\relax,0){\line(1,1){.4ex}}
  \end{picture}
}
\newcommand{\precneq}{\mathrel{\vcenter{\hbox{\text{\prec@neq}}}}}
\newcommand{\prec@neq}{
  \dimen2=\f@size\dimexpr.04pt\relax
  \oalign{
    \noalign{\kern\dimexpr.2ex-.5\dimen2\relax}
    $\m@th\prec$\cr
    \noalign{\kern-.5\dimen2}
    \hidewidth\pnrelbar\hidewidth\cr
  }
}
\begin{document}

\title[HMS for the universal centralizers I]{Homological Mirror Symmetry for the universal centralizers I: The adjoint group case}
\author{Xin Jin}
\address{Math Department, Boston College, Chestnut Hill, MA 02467.}
\email{xin.jin@bc.edu}

\maketitle

\begin{abstract}
We prove homological mirror symmetry for the universal centralizer $J_G$ (a.k.a Toda space), associated to any complex semisimple Lie group $G$. The A-side is a partially wrapped Fukaya
category on $J_G$, and the B-side is the category of coherent sheaves
on the categorical quotient of a dual maximal torus by the Weyl group
action (with some modification if $G$ has a nontrivial center). This is the first and the main part of a two-part series, dealing with $G$ of adjoint type. The general case will be proved in the forthcoming second part \cite{Jin2}. 
\end{abstract}

\tableofcontents

\section{Introduction}

 \subsection{Background and main results}

For a (connected) complex semisimple Lie group $G$, one can define a holomorphic symplectic variety $J_G$, called the \emph{universal centralizer} or the \emph{Toda space} (cf. \cite{Lusztig}\footnote{It was first introduced in the group setting in \cite{Lusztig} as $\cN_G$ (in the last paragraph).}, \cite{BFM}, \cite{Ginzburg}), which has the structure of a (holomorphic) complete integrable system over $\fc=\ft^*\sslash W$, where $\ft$ is a Cartan subalgebra of the Lie algebra $\fg$ of $G$, and $W$ is the Weyl group associated to the root system. Roughly speaking, one can build $J_G$ from an affine blowup of $T^*T$, where $T$ is a maximal torus, along the diagonal walls associated to the root data, and then take the orbit space of $W$.

There are many remarkable features of $J_G$, and here we list a couple of them. First, one has a canonical map 
\begin{align*}
\chi: J_G\rightarrow \fc=\ft^*\sslash W
\end{align*}
that exhibits $J_G$ as an abelian group scheme over $\fc$, and also a (holomorphic) complete integrable system. The fiber over any point in $\fc$, represented by a regular element $\xi$ in the Kostant slice $\cS\subset \fg^*$, is isomorphic to the centralizer of $\xi$ in $G$. In particular, the generic fiber is isomorphic to a maximal torus in $G$. Second, the ring of functions on $J_G$ (which defines $J_G$ as an affine variety) is identified with the $G^\vee(\cO)$-equivariant homology of the affine Grassmannian $Gr_{G^\vee}=G^\vee(\cK)/G^\vee(\cO)$ of the Langlands dual group $G^\vee$, where $\cK=\bC((z)), \cO=\bC[[z]]$. This is one of the main results in \cite{BFM} and it has led to interesting connections to various aspects of the geometric Langlands program. 

The integrable system structure on $J_G$ can be viewed as a non-abelian version of the familiar integrable system $T^*T\rightarrow\ft^*$, which is the most basic example of homological mirror symmetry (abbreviated as HMS below). Recall the HMS statement for $T^*T$ as the following. Let $T^\vee$ be the dual torus. Let $\cW(T^*T)$ denote for the partially wrapped Fukaya category of $T^*T$ (after taking twisted complexes), and let $\Coh(T^\vee)$ be the category of coherent sheaves on $T^\vee$. 
\begin{thm}[Well known]
There is an equivalence of categories
\begin{align*}
\cW(T^*T)\simeq \Coh(T^\vee).
\end{align*}
\end{thm}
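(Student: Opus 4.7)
The plan is to reduce the claim to the standard computation of the wrapped Fukaya category of a cotangent bundle in terms of chains on the based loop space, and then to identify the resulting Pontryagin algebra with $\cO(T^\vee)$ via Langlands duality for tori.

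First I would invoke the theorem of Abouzaid, in the $\infty$-categorical formulation of Ganatra-Pardon-Shende, which for a closed, oriented, spin manifold $M$ identifies
\begin{align*}
\cW(T^*M) \;\simeq\; \Perf\bigl(C_{-*}(\Omega M;\bk)\bigr),
\end{align*}
with $\Omega M$ the based loop space carrying its Pontryagin product, placed in nonpositive cohomological degree. I would apply this with $M = T$, viewing $T$ as the compact real torus that controls the symplectic topology of its cotangent bundle up to the paper's partial-wrapping convention at infinity.

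Because $T$ is a topological abelian group and a $K(\pi,1)$ with $\pi_1(T)$ canonically the cocharacter lattice $X_*(T)$, the space $\Omega T$ is homotopy equivalent to the discrete abelian group $X_*(T)$, and the Pontryagin product is commutative. Hence the dg algebra $C_{-*}(\Omega T;\bk)$ is formal, concentrated in degree $0$, and equal to the commutative group algebra $\bk[X_*(T)]$. Langlands duality for tori identifies $X_*(T)$ with the character lattice $X^*(T^\vee)$, so
\begin{align*}
\bk[X_*(T)] \;\simeq\; \bk[X^*(T^\vee)] \;=\; \cO(T^\vee).
\end{align*}
Smoothness of the affine variety $T^\vee$ gives $\Perf(T^\vee) \simeq \Coh(T^\vee)$, and the composition
\begin{align*}
\cW(T^*T) \;\simeq\; \Perf\bigl(\bk[X_*(T)]\bigr) \;\simeq\; \Perf(T^\vee) \;\simeq\; \Coh(T^\vee)
\end{align*}
yields the desired equivalence.

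The only substantive input is the cotangent-bundle theorem; the remaining steps are bookkeeping. The one item requiring care is the reconciliation of the paper's ``partially wrapped" normalization of $\cW(T^*T)$ (i.e.\ the particular stop at infinity chosen to match the complex structure on $T^*T$) with the unstopped wrapped setup of Abouzaid-Ganatra-Pardon-Shende. This is where I would expect to spend the only real effort; once it is pinned down, everything else is automatic, consistent with the ``well known" label attached to the statement.
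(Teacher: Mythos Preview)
The paper does not supply a proof of this statement; it is quoted as well-known background with no argument given. Your approach via Abouzaid's cotangent-bundle theorem and the identification $C_{-*}(\Omega T)\simeq \bk[X_*(T)]=\cO(T^\vee)$ is the standard route and is correct.

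One clarification on your final paragraph: in the paper $T$ denotes the \emph{complex} torus (a maximal torus of $G$), not its compact form. The Liouville-sector structure is obtained by compactifying $T\cong(\bC^\times)^n$ to a manifold with boundary $\overline{T}\cong T_{\cpt}\times\overline{B}^n$, and $\cW(T^*T)$ means $\cW(T^*\overline{T})$ in the sense of \cite{GPS1}. There is no separate ``stop chosen to match the complex structure''; the stop is simply the union of cotangent fibers over $\partial\overline{T}$. The Ganatra--Pardon--Shende version of the cotangent-bundle theorem already applies to $T^*\overline{T}$ as a sector and gives $\cW(T^*\overline{T})\simeq\Perf(C_{-*}(\Omega\overline{T}))$; since $\overline{T}$ deformation retracts onto $T_{\cpt}$, one has $\Omega\overline{T}\simeq X_*(T)$ as a discrete abelian group, and your computation goes through unchanged. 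The reconciliation you flagged as the only real effort is therefore automatic in the GPS framework.
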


We remark on the definition of $\cW(T^*T)$. Since $T$ is a non-compact manifold, one needs to specify the allowed wrapping Hamiltonians in the definition of the (partially) wrapped Fukaya category. Here we follow the recent work of \cite{GPS1}, \cite{GPS2} that gives a precise definition of (partially) wrapped Fukaya categories on Liouville sectors (also see \emph{loc. cit.} for previous work in this line). Roughly speaking, a Liouville sector is a class of Liouville manifolds $M$ with boundaries, that is in addition to the contact-type $\infty$-boundary $\partial^\infty M$ that a usual Liouville manifold has, it has a ``finite" non-contact-type boundary $\partial M$. The Lagrangian objects in the wrapped Fukaya category should have ends contained in $\partial^\infty M$. Any wrapping should take place on $\partial^\infty M$ as usual, but stops near $\partial M$ (the ``finite" boundary). In particular, for any non-compact manifold $X$, take a compactification $\overline{X}$ with smooth boundary (of codimension 1), then $T^*\overline{X}$ is a Liouville sector with finite boundary given by the union of cotangent fibers over $\partial X$. 

To simplify notations, we usually denote a Liouville sector by its interior, when the compactification has been introduced. So $\cW(T^*T)$ means the wrapped Fukaya category for the Liouville sector $T^*\overline{T}$, for a standard compactification of $T$, i.e. a maximal compact subtorus times a compact ball.

One of our results is that $J_G$ (together with a canonical Liouville 1-form) can be naturally partially compactified to be a Liouville sector, so that one has a well defined $\cW(J_G)$ as introduced above. 

\begin{prop}[cf. Proposition \ref{prop: partial compactify}]
There is a natural partial compactification $\overline{J}_G$ of $J_G$ as a Liouville sector. Moreover, $\overline{J}_G$ can be isotopic to a Weinstein sector. 
\end{prop}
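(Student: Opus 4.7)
The plan is to build $\overline{J}_G$ by partially compactifying the underlying model $T^*T$ along its non-compact real directions, and then verifying that both the affine blowup along root walls and the $W$-quotient respect this compactification. Recall from the introduction that $J_G$ is obtained as the $W$-quotient of an affine blowup $\widetilde{T^*T}$ of $T^*T$ along codimension-$2$ subvarieties indexed by the roots, and that the canonical Liouville $1$-form on $J_G$ descends from $p\,dq$ on $T^*T$ (we take the imaginary part of the holomorphic primitive to obtain the real Liouville structure).

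First I would fix a standard $W$-equivariant partial compactification of $T$. Using the polar decomposition $T\cong T_c\times A$, with $T_c$ the maximal compact subtorus and $A\cong\bR^r$ the split real form ($r=\mathrm{rk}\,G$), I would take $\overline{A}$ to be a closed $W$-invariant round ball in $A$ containing all root walls $\{\alpha=0\}$ strictly in its interior, and set $\overline{T}:=T_c\times\overline{A}$. Then $T^*\overline{T}$ is a Liouville sector with finite boundary $T^*_{\partial\overline{T}}T$. The blowup loci associated to the roots are supported over a bounded subset of $A$, hence the affine blowup takes place entirely in the interior of $T^*\overline{T}$, producing a Liouville sector $\overline{\widetilde{T^*T}}$ with the same finite boundary. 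Since $W$ acts freely near $\partial\overline{T}$ and preserves the Liouville $1$-form, the quotient $\overline{J}_G:=\overline{\widetilde{T^*T}}/W$ is a smooth manifold-with-boundary carrying a descended Liouville $1$-form. Checking the sectorial axioms (the Liouville vector field is tangent to the finite boundary and outward-pointing at contact infinity nearby) reduces to the standard verification on $T^*\overline{T}$.

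For the Weinstein statement, I would construct a $W$-invariant exhausting plurisubharmonic Morse function on $\widetilde{T^*T}$ by perturbing an expression of the form $|p|^2+\rho(t)^2$, where $\rho$ is distance to the origin in $A$, so that all critical points lie in the interior of $\overline{\widetilde{T^*T}}$ and the gradient descent flow preserves the blowup stratification. Averaging over $W$ and descending then gives a Weinstein structure on $\overline{J}_G$. The isotopy from the natural Liouville sector structure to the Weinstein one is obtained from a Moser/Gray-type interpolation between the two primitives, together with a $W$-equivariant cutoff so the deformation is trivial near $\partial\overline{J}_G$.

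The main obstacle is the Weinstein isotopy step: along the interpolation one must verify that the finite boundary remains non-characteristic and that no zeros of the Liouville vector field escape to infinity or collide with the exceptional loci of the affine blowup. This should follow from the compact control on the root walls together with the $W$-invariance of the construction, but it is the step requiring the most delicate analysis, since the blowup has a non-trivial exceptional stratification and one needs to track how plurisubharmonic Morse functions behave under affine blowup and under descent along a non-free group quotient.
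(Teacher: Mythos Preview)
Your approach has a fundamental geometric error that makes it unworkable. You write that $\overline{A}$ is ``a closed $W$-invariant round ball in $A$ containing all root walls $\{\alpha=0\}$ strictly in its interior,'' and that ``the blowup loci associated to the roots are supported over a bounded subset of $A$.'' But the root walls in $A\cong\bR^r$ are linear hyperplanes through the origin, hence unbounded; no compact ball can contain them in its interior. Likewise the blowup loci in $T^*T$ (of the form $\{\alpha(\xi)=0,\ e^{\alpha^\vee}(t)=1\}$) project to unbounded subsets of $A$. Consequently, the $W$-action is \emph{not} free near $\partial\overline{T}$: the reflection $s_\alpha$ fixes the subtorus $\ker(\alpha)\subset T$, which meets $T_c\times\partial\overline{A}$ in a codimension-one locus. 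The quotient $\overline{T^*T}/W$ near the boundary is therefore an orbifold, not a smooth manifold-with-boundary, and there is no reason for the Liouville sector axioms to survive. The ``rough'' blowup/$W$-quotient description in the introduction is not literally a free quotient and cannot be used this way.

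The paper's construction is entirely different and does not pass through the $W$-quotient picture at all. It uses the second definition of $J_G$ as a bi-Whittaker Hamiltonian reduction and the resulting Bruhat decomposition $J_G=\bigsqcup_{S\subset\Pi}\cB_{w_0w_S}$ (Section~\ref{section: Bruhat}). The key input is a carefully constructed function $\widetilde{\norm}$ on $J_G-\cB_1$, built from the matrix coefficients $|b_\lambda|$ on $G/N$, whose level set $\widetilde{\norm}^{-1}(1)$ is shown (Proposition~\ref{prop: hypersurface F}) to split as $\bR\times\fF$ for a Liouville hypersurface $\fF$. This yields an exact symplectic identification $J_G-\cB_1\cong\fF\times T^{*,>0}\bR$, which embeds into $\fF\times\bC_{\Re z\leq 0}$; the partial compactification $\overline{J}_G$ is then the pushout of $J_G$ with $\fF\times\bC_{\Re z\leq 0}$ along $J_G-\cB_1$. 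The Weinstein structure comes from realizing $\widehat{J}_G$ as $|\cZ(G)|$ critical handles (one for each component of $\chi^{-1}([0])$) attached to $\fF\times\bC$, with $\fF$ itself carrying a Weinstein structure produced by a Morse-Bott handle decomposition indexed by $S\subsetneq\Pi$. The finite boundary of $\overline{J}_G$ is thus $\fF\times\{\Re z=0\}$, which has no analogue in your proposed $T^*\overline{T}$ picture.
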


The main result of the paper is the following HMS statement for $J_G$, when $G$ is of adjoint type.

\begin{thm}[cf. Theorem \ref{thm: sec G adjoint}]\label{thm: J_G, adjoint}
For any complex semisimple Lie group $G$ of adjoint type (i.e. the center of $G$ is trivial), we have an equivalence of (pre-triangulated dg) categories
\begin{align}\label{eq: thm J_G, adjoint}
\cW(J_G) \simeq \Coh(T^\vee\sslash W). 
\end{align} 
\end{thm}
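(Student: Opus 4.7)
The plan is to lift the statement to a $W$-equivariant HMS for a natural $W$-cover of $J_G$, and then descend to the quotient. Concretely, $J_G$ is obtained as a $W$-quotient of an intermediate space $\widetilde{J}_G$ — the affine blowup of $T^*T$ along the diagonal walls associated to the root data. The blowup precisely resolves the non-free locus of the $W$-action on $T^*T$, so $W$ acts freely on $\widetilde{J}_G$, and the partial compactification of Proposition \ref{prop: partial compactify} lifts to a $W$-equivariant Liouville (in fact Weinstein) sector $\overline{\widetilde{J}}_G$ with $\overline{\widetilde{J}}_G/W \simeq \overline{J}_G$.

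The first main step is to establish a $W$-equivariant enhancement of HMS for $\widetilde{J}_G$:
\begin{equation*}
\cW(\widetilde{J}_G) \simeq \Coh(T^\vee),
\end{equation*}
where the left-hand side carries the $W$-action induced from the geometry and the right-hand side carries the natural $W$-action on the dual torus. I would prove this by picking a $W$-equivariant Lagrangian skeleton $\widetilde{\Lambda}_G$ of $\overline{\widetilde{J}}_G$ and using a GPS-type Mayer--Vietoris cover by Liouville subsectors, each of which is modelled on $T^*T$ (away from the blown-up walls) or on a controlled local model of the blowup (near the walls). The known HMS $\cW(T^*T) \simeq \Coh(T^\vee)$ handles the generic pieces, and the $W$-action by translations on the A-side is matched with the natural $W$-action on $T^\vee$. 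The stop/skeleton bookkeeping near the walls, dictated by the root hyperplanes, must then be shown to impose no extra constraint in the limit, so that one still recovers all of $\Coh(T^\vee)$.

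The second step is the descent. Since $G$ is of adjoint type, the center is trivial and $W$ acts freely on (a deformation retract of) $\overline{\widetilde{J}}_G$, so the quotient behaves well both Floer-theoretically and schematically: on the A-side one has
\begin{equation*}
\cW(J_G) \simeq \cW(\widetilde{J}_G)^W,
\end{equation*}
and on the B-side, since $W$ is finite (hence linearly reductive over $\bC$), one has $\Coh(T^\vee)^W \simeq \Coh(T^\vee \sslash W)$. Combining with the equivariant equivalence above yields the desired equivalence \eqref{eq: thm J_G, adjoint}.

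The main obstacle is the equivariant HMS for $\widetilde{J}_G$ in the second paragraph. The affine blowup is not a local operation in the cotangent-bundle sense: it simultaneously modifies the Liouville structure along all root walls and introduces nontrivial topology in the fibers of $\chi$. Making sure that the wrapped category computed via the Mayer--Vietoris cover is genuinely $\cW(\widetilde{J}_G)$ (and not a partially wrapped variant with extra stops), and that the gluing data indexed by the positive roots matches the trivial descent data on the coherent side, is the technical heart of the argument. I expect this to be carried out by carefully tracking the Lagrangian skeleton through the blowup, applying stop-removal formulas to eliminate the auxiliary stops introduced in the local models, and using the known identification of microlocal sheaves on the zero section of $T^*T$ with $\Coh(T^\vee)$ to identify the whole gluing diagram with the constant cosheaf computing $\Coh(T^\vee)$.
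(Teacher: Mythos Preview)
There are two genuine gaps. First, the B-side descent is wrong: $\Coh(T^\vee)^W$, read as $W$-equivariant coherent sheaves, is $\Coh([T^\vee/W])$ on the quotient \emph{stack}, not $\Coh(T^\vee\sslash W)$. These differ because $W$ does not act freely on $T^\vee$---every reflection fixes a positive-dimensional subtorus---so the stacky inertia is nontrivial. Linear reductivity of $W$ tells you pushforward to the coarse moduli is exact, but it does not collapse the equivariant category onto the GIT quotient. So even granting everything upstream, your conclusion lands in the wrong target. Second, the equivariant HMS $\cW(\widetilde{J}_G)\simeq\Coh(T^\vee)$ is asserted without a mechanism. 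The affine blowup genuinely adds critical handles to $T^*T$ (this is exactly the content of the Bruhat/handle decomposition in Sections~\ref{sec: Bruhat}--\ref{sec: skeleton, sector}), and critical handles contribute new cocore generators to the wrapped category; there is no reason the result should coincide with $\cW(T^*T)$. Your Mayer--Vietoris sketch simply hopes the wall pieces ``impose no extra constraint,'' which is the entire question. Note also that the paper explicitly records that $W$ does \emph{not} act on $J_G$, so the putative free $W$-action on a sector model of $\widetilde{J}_G$ compatible with the partial compactification is itself something you would have to construct, not assume.

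For contrast, the paper avoids any cover or $W$-action on the A-side. It shows the single Kostant section $\Sigma_I$ generates $\cW(J_G)$ (Proposition~\ref{prop: split generation}), reducing the problem to computing $\cA_G=\End(\Sigma_I)^{\op}$. A direct Floer calculation (Section~\ref{sec: wrapping Ham}) gives $\cA_G\cong\bC[T^\vee\sslash W]$ as a graded vector space concentrated in degree $0$. The ring structure is pinned down via the subsector inclusion $T^*\overline{T}\simeq\cB_{w_0}^\dagger\hookrightarrow\overline{J}_G$ and its restriction/co-restriction adjunction: the torus branes $(L_0,\check\rho)$ are shown to map to skyscraper-type objects which become $W$-invariant in $\cW(J_G)$ (Propositions~\ref{prop: L_0, part 2} and~\ref{prop: L_0, W}), forcing the induced map $T^\vee\to\Spec\cA_G$ to be $W$-invariant, after which Pittie--Steinberg finishes. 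The $W$-symmetry is thus detected purely on the B-side through Floer theory of these probe branes, never through a geometric $W$-action on $J_G$.
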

There is a more general statement for any complex semisimple\footnote{We will also include a statement for complex reductive groups in \cite{Jin2}.} Lie group $G$, but to state that we need to introduce some notations. For any $G$, let $\cZ(G)$ denote for the center of $G$ and let $\cZ(G)^*$ be the Pontryagin dual of $\cZ(G)$. Then there is a canonical isomorphism $\cZ(G)^*\cong \pi_1(G^\vee)$. Let $G^\vee_{sc}$ (resp. $G_{ad}$) denote for the simply connected (resp. adjoint) form of $G^\vee$ (resp. $G$), i.e. the universal cover of $G^\vee$ (resp. $G/\cZ(G)$). Let $T^\vee_{sc}$ (resp. $T_{ad}$) denote for a maximal torus of $G^\vee_{sc}$ (resp. $G_{ad}$). 

In the second paper of this sequel, we prove the following HMS result for a general semisimple $G$. 
\begin{thm}[\cite{Jin2}, the general version]\label{thm: J_G, general}
For any complex semisimple Lie group $G$, we have an equivalence of categories
\begin{align}\label{thm: eq mirror J_G}
\cW(J_G) \simeq \Coh(T^\vee_{sc}\sslash W)^{\pi_1(G^\vee)},
\end{align} 
where the category on the right-hand-side is the category of $\cZ(G)^*\cong \pi_1(G^\vee)$-equivariant coherent sheaves on $T^\vee_{sc}\sslash W$. 
\end{thm}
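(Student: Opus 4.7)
\textbf{Proof plan for Theorem \ref{thm: J_G, general}.}

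The strategy is to bootstrap from the adjoint case (Theorem \ref{thm: J_G, adjoint}) by exploiting a natural covering relationship between $J_G$ and $J_{G_{ad}}$. First, I would establish that the isogeny $G\twoheadrightarrow G_{ad}$ induces a finite \'etale Galois covering
\begin{align*}
p\colon J_G \longrightarrow J_{G_{ad}}
\end{align*}
with deck group $\cZ(G)$. This should follow directly from the construction of $J_G$ sketched in the introduction: the covering $T\to T_{ad}=T/\cZ(G)$ lifts to a $\cZ(G)$-cover $T^*T\to T^*T_{ad}$; the affine blow-up along the root walls is compatible with this cover (the root data of $G$ and $G_{ad}$ coincide, and $\cZ(G)$ acts trivially on each root character); and the $W$-action commutes with the $\cZ(G)$-action, hence descends to a $\cZ(G)$-cover of $W$-quotients. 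Verifying that $p$ extends to a morphism of the Liouville sectors $\overline{J}_G\to\overline{J}_{G_{ad}}$ of Proposition \ref{prop: partial compactify} with the correct behavior at the ``finite'' boundary is a point worth spelling out carefully.

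Second, I would invoke a general covering-space principle for partially wrapped Fukaya categories: for any $\Gamma$-Galois \'etale cover $\pi\colon\tilde M\to M$ of Liouville sectors with $\Gamma$ finite abelian, one expects a canonical equivalence
\begin{align*}
\cW(\tilde M)\;\simeq\;\cW(M)^{\Gamma^*\text{-eq}},
\end{align*}
where $\Gamma^*=\Hom(\Gamma,\bC^\times)$ acts on $\cW(M)$ by tensoring with the rank-one local systems pulled back from characters $\pi_1(M)\twoheadrightarrow\Gamma\to\bC^\times$; equivalently, the deck action of $\Gamma$ on $\cW(\tilde M)$ has invariants $\cW(M)$. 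This should be derivable from the descent machinery of \cite{GPS1}, \cite{GPS2} together with a computation of generators (cotangent fibers and their deck translates on $\tilde M$) and their endomorphism algebras, using the identity $\pi_*\cO_{\tilde M}\cong\cO_M\otimes_{\bC}\bC[\Gamma]$ at the level of module categories. One sanity check is the elementary case $\tilde M=T^*T$, $M=T^*T_{ad}$, where the principle reproduces $\Coh(T^\vee)=\Coh(T^\vee_{sc})^{\cZ(G)^*}$.

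Third, combining the previous two steps with the adjoint HMS, and using the canonical identification $\cZ(G)^*\cong\pi_1(G^\vee)$, one arrives at
\begin{align*}
\cW(J_G)\;\simeq\;\cW(J_{G_{ad}})^{\cZ(G)^*\text{-eq}}\;\simeq\;\Coh(T^\vee_{sc}\sslash W)^{\pi_1(G^\vee)}.
\end{align*}
The residual $\pi_1(G^\vee)$-action on $T^\vee_{sc}\sslash W$ should be the translation action coming from the inclusion $\pi_1(G^\vee)\hookrightarrow\cZ(G^\vee_{sc})\subset T^\vee_{sc}$, which commutes with $W$ by centrality and descends to the GIT quotient.

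The main technical obstacle I anticipate is the equivariance matching in the third step: showing that the $\cZ(G)^*$-action on $\cW(J_{G_{ad}})$ produced by the A-side cover corresponds, under the adjoint mirror equivalence, to precisely the translation action of $\pi_1(G^\vee)$ on $T^\vee_{sc}\sslash W$. Theorem \ref{thm: J_G, adjoint} is proved at the level of abstract categories, so extracting a naturality strong enough to carry the equivariance structures will likely require revisiting the proof with a concrete, functorial construction of the mirror equivalence --- presumably one in which a specified set of generators of $\cW(J_{G_{ad}})$ is matched explicitly with line bundles on $T^\vee_{sc}\sslash W$, so that twisting by local systems on the A-side translates into translation of support on the B-side. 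The dictionary between characters of the center and elements of the fundamental group of the dual group is classical in the geometric Langlands setting, but assembling it into a clean categorical equivalence --- including the delicate matching at the ramified (non-regular-semisimple) fibers of $\chi$ --- is where the real work should lie.
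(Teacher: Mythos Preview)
Your proposal is correct and aligns with the approach the paper sketches for this theorem, which is deferred to the sequel \cite{Jin2}. The paper states only that the general case ``can be deduced from Theorem \ref{thm: J_G, adjoint} by the monadicity of a natural functor $\cW(J_G)\rightarrow \cW(J_{G_{\ad}})$, which is mirror to the pullback (i.e.\ forgetful) functor $\Coh(T^\vee_{sc}\sslash W)^{\pi_1(G^\vee)}\rightarrow \Coh(T^\vee_{sc}\sslash W)$''; your covering-space formulation $\cW(\tilde M)\simeq\cW(M)^{\Gamma^*}$ is precisely the categorical content of that monadicity statement for a finite abelian deck group.

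One refinement worth noting: the paper is slightly more explicit about how the equivariance matching you flag as the main obstacle should be carried out. Rather than arguing abstractly that the $\cZ(G)^*$-action on $\cW(J_{G_{\ad}})$ matches the $\pi_1(G^\vee)$-translation action on the B-side, the paper indicates (in the paragraph following Theorem \ref{thm: J_G, general}) that the equivalence is realized concretely by sending the Kostant sections $\Sigma_z$, $z\in\cZ(G)$---which generate $\cW(J_G)$ by Proposition \ref{prop: split generation}---to the structure sheaf $\cO_{T^\vee_{sc}\sslash W}$ equipped with the equivariant structures indexed by $\pi_1(G^\vee)^*\cong\cZ(G)$. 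This gives a direct bijection of generators and reduces the matching to identifying their endomorphism and hom spaces, for which Proposition \ref{prop: A_G, center} already provides the A-side computation. So the ``revisiting the proof with a concrete, functorial construction'' you anticipate is exactly what is done, via the Kostant sections rather than via an abstract naturality argument.
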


The functor from the $A$-side $\cW(J_G)$ to the $B$-side $\Coh(T^\vee_{sc}\sslash W)^{\pi_1(G^\vee)}$ in (\ref{thm: eq mirror J_G}) can be described explicitly. The integrable system $J_G\rightarrow \fc$ has a collection of sections, called the Kostant sections, indexed by the center elements of $G$. These turn out to be a set of generators of the wrapped Fukaya category. On the other hand, the $\pi_1(G^\vee)$-equivariant coherent sheaves on $T^\vee_{sc}\sslash W$ (which can be identified with the affine space of dimension $n=\text{rank}(G)$) is generated by a collection of equivariant sheaves which come from putting different equivariant structures, indexed by $ \pi_1(G^\vee)^*$, on the structure sheaf $\cO_{T^\vee_{sc}\sslash W}$. The mirror functor matches these two collections of generators through the canonical isomorphism $\cZ(G)\cong \pi_1(G^\vee)^*$.

\subsection{Example of $G=SL_2(\bC)$ and idea of proof}

In this section, we illustrate some of the key geometric features of $J_G$ through the example of $G=SL_2(\bC)$, and we will give some sketch of the proof for Theorem \ref{thm: J_G, adjoint} in the adjoint type case. The general case Theorem \ref{thm: J_G, general} (\cite{Jin2}) can be deduced from Theorem \ref{thm: J_G, adjoint} by the monadicity of a natural functor $\cW(J_G)\rightarrow \cW(J_{G_\ad})$, which is mirror to the pullback (i.e. forgetful) functor $\Coh(T^\vee_{sc}\sslash W)^{\pi_1(G^\vee)}\rightarrow \Coh(T^\vee_{sc}\sslash W)$. 

\subsubsection{Example of $G=SL_2(\bC)$}

For $G=SL_2(\bC)$, the base of the integrable system $\fc=\ft^*\sslash W$ is identified with $\bA^1$, coming from taking the determinant of any traceless $2\times 2$-matrix. For any generic point $a\in \bA^1\backslash\{0\}$, we can represent it by the diagonal matrix $\mathrm{diag}[a, -a]$ (or any element in its conjugacy class), and the fiber over $a$ can be identified with its centralizer, the standard maximal torus $T$ (diagonal $2\times 2$-matrices with determinant 1). For the point $0\in \bA^1$, it should be represented by the (conjugacy class of) nilpotent matrix $\begin{bmatrix}0&0\\
1&0
\end{bmatrix}$, and the fiber over it can be identified with its centralizer in $G$, consisting of matrices of the form 
\begin{align*}
\begin{bmatrix}1&0\\
*&1
\end{bmatrix}, \begin{bmatrix}-1&0\\
*&-1
\end{bmatrix}, \text{ where }*\text{ can be any complex number}.
\end{align*}
In particular, the central fiber is a disjoint union of two affine lines. There is a canonical $\bC^\times$-action on $J_G$, whose flow lines are indicated in Figure \ref{figure: J_SL_2}. The corresponding $\bR_+$-action (after taking square root) is the flow of a Liouville vector field. 

\begin{figure}[htbp]
\centering
\includegraphics[width=3in]{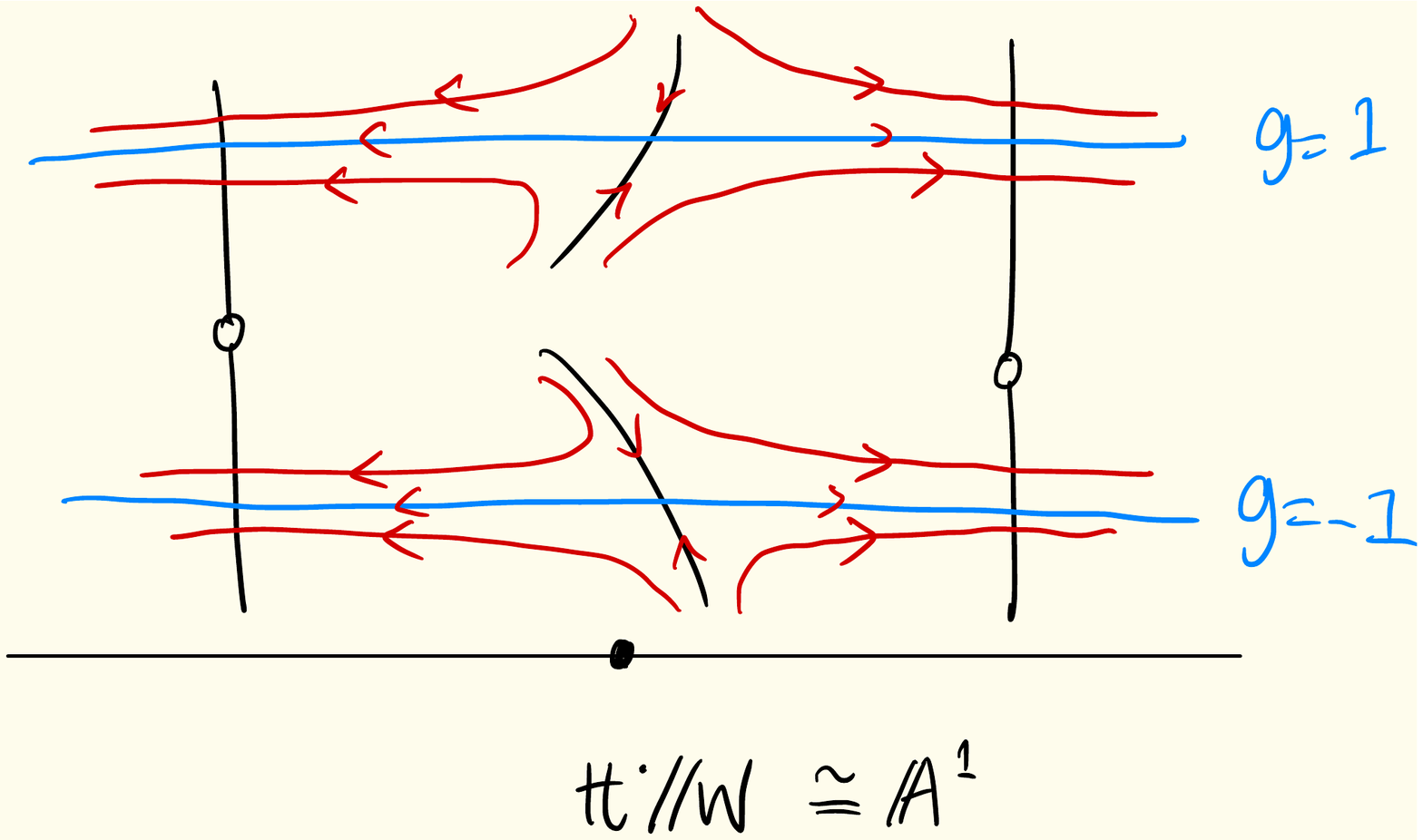} 
\caption{A picture of $J_{SL_2(\bC)}\rightarrow \fc\cong \bA^1$}\label{figure: J_SL_2}
\end{figure}

There are two horizontal sections of $\chi: J_{SL_2}\rightarrow \fc$, corresponding to the union of $g=\pm I$ in each fiber (recall each fiber is a centralizer and in particular a group). These are the Kostant sections. Away from the Kostant sections, there is an interesting symplectic identification 
\begin{align*}
J_{SL_2}- \{g=\pm I\}\cong T^*T,
\end{align*} 
which is \emph{not} obvious from the above picture (Figure \ref{figure: J_SL_2}). 
Using this, one can build $J_{SL_2}$ from a handle attachment by attaching two critical handles (a handle is called \emph{critical} if the core has the dimension of a Lagrangian), each has core a connected component of the central fiber, to $T^*T$\footnote{Here $T^*T$ is equipped with a different Liouville 1-form than the standard one. In particular, $J_{SL_2}$ as a Liouville sector is \emph{not} from attaching handles to the sector $T^*S^1\times T^*[0,1]$. In fact, the latter is replaced by $T^*S^1\times T^*(0,1]\cong T^*S^1\times \bC_{\Re z\leq 0}$.}. Then the Konstant sections become the ``linking discs" (i.e. normal slices to the cores). Furthermore, one can endow $J_{SL_2}$ with a Weinstein sector structure (in the sense of \cite{GPS1}), and obtains an arborealized Lagrangian skeleton in the sense of \cite{Nadler},  as follows (Figure \ref{figure: core}). Here we have two Lagrangian caps attached to a semi-infinite annulus $S^1\times [1, \infty)$ along two circles intersecting in an interesting way\footnote{Regarding microlocal sheaves on the Lagrangian skeleton, they should vanish near $S^1\times \{1\}$.}.  \\

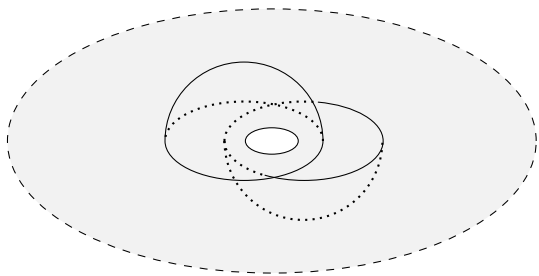
\begin{figure}[h]
\centering
 \begin{tikzpicture}
\path [draw=none,fill=gray, fill opacity = 0.1,even odd rule] (4.37,1.5) ellipse (10pt and 5pt) (4.37, 1.5) ellipse (100pt and 50pt);
   \draw[thick, dotted] (5.05,1.5)  arc[x radius = 10.5mm, y radius=5.25mm, start angle= 0, end angle= 180];
   \draw (5.05,1.5)  arc[x radius = 10.5mm, y radius=5.25mm, start angle= 360, end angle= 180];
   \draw (5.05,1.5)  arc[radius = 10.5mm, start angle= 0, end angle= 180];
    \draw[thick, dotted] (3.74,1.5)  arc[x radius = 10.5mm, y radius = 5.25mm, start angle= 180, end angle= 80];  
     \draw[thick, dotted] (3.74,1.5)  arc[x radius = 10.5mm, y radius = 5.25mm, start angle= 180, end angle= 240]; 
      \draw(5.85,1.5)  arc[x radius = 10.5mm, y radius = 5.25mm, start angle= 0, end angle= 80]; 
      \draw(5.85,1.5)  arc[x radius = 10.5mm, y radius = 5.25mm, start angle= 360, end angle= 240]; 
       \draw[thick, dotted] (3.74,1.5)  arc[radius = 10.5mm, start angle= 180, end angle= 360];  
      \draw (4.37,1.5) ellipse (10pt and 5pt);
      \draw[dashed]  (4.37, 1.5) ellipse (100pt and 50pt);
           \end{tikzpicture}
           \caption{Picture of an arborealized Lagrangian skeleton for $J_{SL_2(\bC)}$}\label{figure: core}
\end{figure}

\subsubsection{Idea of proof of Theorem \ref{thm: J_G, adjoint}}\label{subsubsec: idea of proof}

First, for any semisimple Lie group $G$,  we prove that $J_G$ admits a Bruhat decomposition\footnote{During the preparation of the paper, the author learned that similar features have been observed in \cite{Teleman}.} indexed by subsets $S\subset \Pi$ of the set of simple roots $\Pi$ of $G$ (associated to a fixed principal $\fsl_2$-triple), based on an equivalent definition of $J_G$ as a Whittaker type Hamiltonian reduction. This roughly induces a Weinstein handle decomposition. For $G=SL_2(\bC)$, $\Pi$ has exactly one element, and we have $S=\Pi$ corresponding to the Kostant sections $\{g=\pm I\}$, and $S=\emptyset$ corresponding to the complement, which is isomorphic to $T^*\bC^\times$. For a general $G$, $S=\Pi$ always gives the Kostant section(s) and $S=\emptyset$ always gives $T^*T$ (but the Liouville form is somewhat different from the standard one). 

Second, we show that $J_G$ can be partially compactified to be a Weinstein sector. Then we obtain a skeleton of $J_G$ as for the case $G=SL_2(\bC)$, with each Bruhat ``cell" contributing one component of the skeleton. To be more precise, the skeleton depends on a choice of the Weinstein structure. We further show that the cocores to some of the critical handles, which are the Kostant sections, generate the partially wrapped Fukaya category of $J_G$ (using general results from \cite{GPS1, GPS2, CDGG}).

Third, assuming $G$ is of adjoint type, the only Kostant section $\Sigma_I:=\{g=I\}$ generates $\cW(J_G)$. So to prove the HMS result (\ref{eq: thm J_G, adjoint}), we just need to compute $\End(\Sigma_I)$. The first step is to define appropriate wrapping Hamiltonians on $J_G$, so that $\End(\Sigma_I)$ matches with $\bC[T^\vee\sslash W]$ as vector spaces. The second step, which is the main step, is to use the funtoriality of inclusions of Weinstein sectors (plus other geometric information) to show the two rings are isomorphic. This step is somewhat indirect. The rough idea is that the Bruhat ``cell" corresponding to $S=\emptyset$, denoted by $\cB_{w_0}$, gives a sector inclusion $\cB_{w_0}^\dagg\cong T^*\overline{T}\hookrightarrow \overline{J}_G$ for a subsector $\cB_{w_0}^\dagg\subset \cB_{w_0}$ (see Subsection \ref{subsubsec: conic} for the precise formulation)\footnote{We remark that there is another adjoint pair for stop/handle removal, which is trivial because $\cW(\cB_{w_0})\simeq 0$.  
}, which induces an adjoint pair of functors between the ind-completion of $\cW(T^*T)$ and the ind-completion of $\cW(J_G)$ (cf. \cite{GPS1}; in the current case the adjoint pair is actually well defined between the wrapped Fukaya categories). For example, for the Lagrangian skeleton Figure \ref{figure: core}, the adjoint functors correspond to restriction and co-restriction between (wrapped) microlocal sheaves on the whole skeleton and local systems on the outer annulus which is disjoint from the attaching caps. Under mirror symmetry, this corresponds to the pushforward and pullback functors between $\Coh(T^\vee)$ and $\Coh(T^\vee\sslash W)$ along the projection $T^\vee\rightarrow T^\vee\sslash W$. Noting that the skyscraper sheaves on $T^\vee$ are mirror to conormal bundles $L_0$ of the maximal compact subtorus $T_{\cpt}\subset T$, equipped with a rank $1$ local system $\check{\rho}\in \Hom(\pi_1(T), \bC^\times)\cong T^\vee$, our approach is based on Floer calculations involving these conormal bundles and the Kostant section $\Sigma_I$. One of the key facts that we establish can be summarized as follows:

\begin{prop}[cf. Proposition \ref{prop: L_0, part 2} and \ref{prop: L_0, W} for the precise statement]
Under the natural functor $co\text{-}res: \cW(T^*T)\rightarrow \cW(J_G)$, the objects $(L_0, \check{\rho})$ are sent to ``skyscraper objects", i.e. their morphism spaces with $\Sigma_I$ are of rank $1$. Moreover, their images are $W$-invariant in the sense that $co\text{-}res(L_0, \check{\rho})\cong co\text{-}res(L_0, w(\check{\rho}))$ for all $w\in W$. 
\end{prop}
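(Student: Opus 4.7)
The plan is to prove both assertions by transporting the relevant computations to $\cW(T^*T)\simeq\Coh(T^\vee)$ through the adjunction afforded by the sector inclusion $\cB_{w_0}^{\dagg}\cong T^*\overline{T}\hookrightarrow\overline{J}_G$. Using the adjoint pair $(\cores,\mathrm{res})$ provided by \cite{GPS1}, which mirrors the adjunction $(\pi_*,\pi^*)$ for $\pi:T^\vee\to T^\vee\sslash W$, one has the natural identification
\begin{align*}
\Hom_{\cW(J_G)}\bigl(\Sigma_I,\cores(L_0,\check{\rho})\bigr)\;\simeq\;\Hom_{\cW(T^*T)}\bigl(\mathrm{res}(\Sigma_I),(L_0,\check{\rho})\bigr),
\end{align*}
and symmetrically in the opposite direction.

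The essential geometric input for the rank-one claim is an identification of $\mathrm{res}(\Sigma_I)$ with the cotangent fiber $F_e\subset T^*T$ at the identity $e\in T$: since $\Sigma_I$ is a section of $\chi:J_G\to\fc$ meeting each generic fiber $\chi^{-1}([a])\cong T$ in the single point $e$, with wrapping Hamiltonians adapted to the sector structure of $\overline{J}_G$ it restricts on the open Bruhat cell to the cotangent slice at $e$. Under $\cW(T^*T)\simeq\Coh(T^\vee)$ one has $F_e\leftrightarrow\cO_{T^\vee}$ and $(L_0,\check\rho)\leftrightarrow\cO_{\check\rho}$, so the Hom becomes $\Hom^*_{T^\vee}(\cO_{T^\vee},\cO_{\check\rho})\simeq\bC$, of rank one; the opposite direction is dual and gives $\bC[-n]$ by a Koszul resolution, also of rank one.

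For the $W$-invariance, the mirror picture predicts $\pi_*\cO_{\check\rho}\simeq\pi_*\cO_{w\check\rho}\simeq\cO_{[\check\rho]}$, and the Floer realization is to construct an explicit Hamiltonian isotopy \emph{inside} $\overline{J}_G$ carrying $\cores(L_0,\check\rho)$ to $\cores(L_0,w\check\rho)$. The geometric source of this isotopy is the monodromy of the integrable system $\chi:J_G^{\reg}\to\fc^{\reg}$: the fundamental group of $\fc^{\reg}$ surjects onto $W$, and parallel transport along a loop realizing a simple reflection $s_\alpha\in W$ preserves $L_0$ setwise (since $s_\alpha$ preserves $T_{\cpt}$) while twisting the flat connection $\check\rho$ to $s_\alpha\check\rho$. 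This isotopy is realized inside $J_G$ precisely because it traverses root-hyperplane walls of $\ft^*$ that are filled in upon passing from $T^*T$ to the full universal centralizer; inside $T^*T$ alone no such isotopy exists, which is why $(L_0,\check\rho)$ and $(L_0,w\check\rho)$ remain distinct in $\cW(T^*T)$. Iterating over all simple reflections yields the full $W$-invariance.

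The main obstacle will be the second step: constructing the monodromy isotopy compatibly with the chosen wrapping Hamiltonians on $\overline{J}_G$ and verifying that it remains admissible for the partially wrapped Fukaya category (i.e., no Lagrangian end crosses the finite boundary $\partial\overline{J}_G$). A secondary subtlety in the first step is showing that wrapped restriction of $\Sigma_I$ genuinely converges to $F_e$ without extra contributions from the higher-codimension Bruhat cells, which demands a local analysis near each non-open stratum of $\chi$.
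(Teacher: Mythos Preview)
There is a genuine circularity in your rank-one argument. You want to use the adjunction to reduce to computing $\Hom_{\cW(T^*T)}(\mathrm{res}(\Sigma_I),(L_0,\check\rho))$ and then identify $\mathrm{res}(\Sigma_I)$ with a cotangent fiber. But in the paper's logic this identification is a \emph{consequence} of the proposition, not an input: Proposition~\ref{prop: A_G commutative}(ii), which says $\mathrm{res}(\cA_G)\cong\bC[T^\vee]$, is deduced from Proposition~\ref{prop: L_0, part 2} via Lemma~\ref{lemma: M, countable, lb}. Your geometric justification (``$\Sigma_I$ meets each generic fiber of $\chi$ in $e$, hence restricts to the cotangent fiber at $e$'') conflates two different structures. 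It is true that under the Lagrangian \emph{correspondence} (\ref{eq: Lag corresp}) one has $\widehat{\Sigma}_I=T^*_I T$, but that correspondence is explicitly flagged in Subsection~\ref{subsec: def of J_G} as not defining a functor (the map $\pi_\chi$ is neither proper nor open). The categorical restriction functor of \cite{GPS1} is a different object, and since $\Sigma_I\subset\cB_1$ is disjoint from $\cB_{w_0}$ there is no naive geometric restriction available. What you call a ``secondary subtlety'' is in fact the entire content.

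For the $W$-invariance, the monodromy idea is appealing but does not apply to $L_0$ as stated. The Lagrangian $L_0$ is a cylindricalized conormal to a compact torus orbit in $\cB_{w_0}\cong T^*T$; it is \emph{not} a fiber of $\chi$, and the monodromy of $\chi$ acts on $\chi$-fibers, not on the fibers of $T^*T\to\ft^*$. The precise relationship between these two torus fibrations on $\cB_{w_0}$ is exactly what Subsection~\ref{subsubsec: B_w_0} (Proposition~\ref{lemma: empty, Ham isotopy}, Corollary~\ref{cor: dist star}) is devoted to, and it only holds asymptotically as $|\gamma_{-\Pi}(\rho)|\to 0$. There is no global Hamiltonian isotopy of the kind you describe.

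The paper's actual proof is a direct Floer computation. One designs a wrapping Hamiltonian $H$ on $\fc$ adapted to the wall stratification of $\ft_\bR$ (Step~1 of the proof of Proposition~\ref{prop: L_0, part 2}), then uses the wall analysis of Subsection~\ref{subsec: walls} (Proposition~\ref{prop: g_S, natural, w}) to show that $\varphi_H^s(\Sigma_I)$ never meets $L_0$ near the walls, forcing all intersections into the regular region $T\times\bD^\circ$ where the identification~(\ref{eq: L_w'}) applies. The intersection then reduces to an explicit count yielding a single point. For Proposition~\ref{prop: L_0, W} the same setup gives a clean intersection along $|W|$ compact tori; for regular $\check\rho$ only one component carries matching local systems, and a spectral-sequence argument produces $H^*(T;\bC)$, whence the two simple objects have nonzero $H^0\Hom$ and are therefore isomorphic.
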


We also prove a non-exact version (though not logically needed for the proof of the main theorem) which is more intuitive from SYZ mirror symmetry perspective, and whose proof is relatively easier. For this, we consider generic \emph{shifted} conormal bundles of $T_{\cpt}$ and we work over the Novikov field $\sfLambda$. 

\begin{prop}[cf. Proposition \ref{prop: L_xi, S_e, part 1} for the precise statement]\label{prop: intro skyscraper}
Under the natural functor $\cW(T^*T;\sfLambda)\rightarrow \cW(J_G;\sfLambda)$, the (generic) shifted conormal bundles of $T_{\cpt}$ give ``skyscraper objects", i.e. their morphism spaces with $\Sigma_I$ are of rank 1. Moreover, their images are $W$-invariant under the natural $W$-action on $T^*T$. 
\end{prop}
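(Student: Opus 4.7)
The plan is to reinterpret $L_\xi$, after applying the co-restriction functor, as the Lagrangian fiber of the integrable system $\chi:J_G\to\fc$ over $[\xi]\in\fc$, and to deduce both claims from the abelian group-scheme structure of $\chi^{-1}(\fc^{\reg})$. For generic $\xi\in\ft^*$, its image $[\xi]\in\fc$ lies in the regular locus $\fc^{\reg}$, so the fiber $F_{[\xi]}:=\chi^{-1}([\xi])$ is a smooth Lagrangian isomorphic to $T$.

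First, I would argue that $\cores(L_\xi)$ is equivalent in $\cW(J_G;\sfLambda)$ to $F_{[\xi]}$. The shifted conormal $L_\xi$ of $T_{\cpt}$ in $T^*T$ is Hamiltonian isotopic, inside the open subsector $\cB_{w_0}^{\dagg}\cong T^*\overline{T}$ of $\overline{J}_G$, to a Lagrangian cylinder projecting to the single point $\xi\in\ft^*$; under the identification of $\cB_{w_0}^{\dagg}$ with an open subsector of $\overline{J}_G$, this cylinder embeds into $F_{[\xi]}$. Since $F_{[\xi]}$ carries an abelian group structure, a group translation realized by a Hamiltonian compactly supported near $F_{[\xi]}$ in $J_G$ interpolates between $L_\xi$ and $F_{[\xi]}$ itself. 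Non-exactness of this isotopy is tracked by Novikov weights, which is why the statement is formulated over $\sfLambda$.

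For the rank-one claim, $\Sigma_I$ is a section of $\chi$ meeting each regular fiber $F_{[\xi]}$ in exactly one point, namely the identity $I\in F_{[\xi]}$. Hence $F_{[\xi]}\cap\Sigma_I$ is a single transverse point; after equipping both Lagrangians with canonical brane data, one obtains $HF^\bullet(\cores(L_\xi),\Sigma_I;\sfLambda)\cong\sfLambda$ in a single degree. For the $W$-invariance, the natural $W$-action on $T^*T$ sends $L_\xi$ to $L_{w(\xi)}$, but both of their images $\cores(L_\xi)$ and $\cores(L_{w(\xi)})$ are equivalent to the same fiber $F_{[\xi]}=F_{[w(\xi)]}$, since $[\xi]=[w(\xi)]\in\fc=\ft^*\sslash W$.

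The main obstacle is the first step: constructing the Hamiltonian isotopy in $J_G$ (rather than in $T^*T$) from $L_\xi$ to $F_{[\xi]}$. In $T^*T$ alone, $L_\xi$ is not a fiber of any natural projection, and the requisite deformation relies on the abelian group-scheme structure of $\chi^{-1}(\fc^{\reg})$, which is only visible after passing to $J_G$. One must also track the change of Liouville $1$-form between the standard one on $T^*T$ and the one on $\cB_{w_0}^{\dagg}$ inherited from $J_G$, so that the sectorial inclusion $\cB_{w_0}^{\dagg}\hookrightarrow\overline{J}_G$ genuinely produces a well-defined $\cores$ functor on wrapped Fukaya categories; the non-exact isotopy then contributes precisely the Novikov coefficients expected from the SYZ picture.
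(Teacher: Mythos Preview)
Your proposal captures the correct heuristic---indeed the paper itself explains (in the paragraph following the proposition and in the remark after Proposition~\ref{prop: L_xi, S_e, part 1}) that the objects $\cL_\zeta$ are ``geometrically modeled on'' the torus fiber $\chi^{-1}([\zeta])$---but the argument as written has a genuine gap that the paper goes to considerable length to work around.

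The central issue is that the fiber $F_{[\xi]}=\chi^{-1}([\xi])$ is \emph{not} a well-defined object in $\cW(J_G;\sfLambda)$: its ends run into the finite (sector) boundary $\partial\overline{J}_G$ rather than into the contact boundary $\partial^\infty\overline{J}_G$, so it is not an admissible Lagrangian brane. The paper states this explicitly in a footnote. Consequently, you cannot identify $\cores(L_\xi)$ with $F_{[\xi]}$ in the category, and the computation ``$\Sigma_I\cap F_{[\xi]}=\{\text{one point}\}$, hence $HF^\bullet\cong\sfLambda$'' is not available. Moreover, even inside $T^*T$, the shifted conormal $L_\xi$ (which is, up to cylindrical modification, $T_{\cpt}\times(\xi+\ft_\bR^*)$) is not Hamiltonian isotopic to the fiber $T\times\{\xi\}$ of the projection $T^*T\to\ft^*$: these are Lagrangians of very different asymptotic behavior, and no compactly supported Hamiltonian will carry one to the other.

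The paper's actual proof is substantially more indirect. It constructs an explicit cylindrical Lagrangian $\cL_\zeta$ (Subsection~\ref{subsec: construct L_zeta}), then uses the Lagrangian correspondence $J_G\leftarrow J_G\times_\fc\ft^*\to T^*T$ to transform the wrapped Floer computation into $T^*T$, where a carefully designed wrapping Hamiltonian $H_1$ (split into compact and real parts near the relevant locus) makes $\varphi_{H_1}^s(\Sigma_I)\cap\cL_\zeta$ reduce to a single transverse point for $|s|\gg 1$ (Lemma~\ref{lemma: step 1,2,3}). The $W$-invariance is likewise proved by a direct clean-intersection Floer computation between $\cL_\zeta$ and $\cL_{w(\zeta)}$, not by identifying both with a common fiber. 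The link to the fiber $\chi^{-1}([\zeta])$ is only made precise through the analysis in Subsection~\ref{subsubsec: B_w_0} (Proposition~\ref{lemma: empty, Ham isotopy}, Corollary~\ref{cor: dist star}), which shows that the portion of $\cL_\zeta$ near $\zeta$ and a portion of the torus fiber are related by a compactly supported Hamiltonian isotopy on a bounded region---but this never promotes $F_{[\xi]}$ itself to an object of the category.
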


We give a heuristic explanation why Proposition \ref{prop: intro skyscraper} holds. The integrable system $\chi: J_G\rightarrow \fc$ suggests that the ``skyscraper objects" in $\cW(J_G)$ are the fibers\footnote{We note that these fibers are not well defined objects in $\cW(J_G)$, because their boundaries are inside the ``finite" boundary of $J_G$.}, which follows from basic principles in SYZ mirror symmetry. The shifted conormal bundles of $T_{\cpt}$ can be thought as modeled on the generic torus fibers of $\chi$, with each $W$-orbit of shifted conormal bundles modeled on the same fiber. This reflects some intriguing geometric relations between a generic torus fiber of the integrable system and the base manifold $T$ in $\cB_{w_0}\cong T^*T$: while the generic shifted conormal bundles of $T_{\cpt}$ in a  $W$-orbit do \emph{not} talk to each other in $\cB_{w_0}$, they become ``close to'' Hamiltonian isotopic in $J_G$ and the bridge is given by the common torus fiber that they are modeled on (note that $W$ does \emph{not} act on $J_G$).

We make a couple of more remarks. First, there is a clear restriction and induction pattern among standard Levi subgroups (as in a related way expected in \cite{Teleman}) in terms of restriction and co-restriction functors between wrapped Fukaya categories for inclusions of the corresponding subsectors (and equivalently on microlocal sheaf categories). We use this in the proof of the main theorem and elaborate it more in \cite{Jin2}. Second, it is tempting to try to prove the HMS result by replacing $\cW(J_G)$ with $\mu\Shv^w(J_G)$, the wrapped microlocal sheaf category (cf. \cite{Nadler2, NaSh}) for the Lagrangian skeleton of $J_G$. However, due to the complicatedness of the singularities of the Lagrangian skeleton, the author does not know an effective way to directly compute the sheaf category in high dimensions. 

\subsection{Related works and future directions}

The main theorem (Theorem \ref{thm: J_G, adjoint}) can be viewed as an ``analytic" version of a theorem of Lonergan \cite{Lonergan} and Ginzburg \cite{Ginzburg} on the description of the category of bi-Whittaker $D$-modules (see \emph{loc. cit.} for the precise statement)
\begin{align}\label{eq: D-module}
D\text{-}mod(N\overset{\psi}{\backslash}G\overset{\psi}{/}N)\simeq \text{QCoh}(``\ft^*\sslash W_{\text{aff}}"),
\end{align}
where the generic Lie algebra character $\psi:\fn\rightarrow \bC$ of the maximal unipotent subgroup $N$ is the same as the $f$ in Subsection \ref{subsec: def of J_G} that realizes $J_G$ as a bi-Whittaker Hamiltonian reduction of $T^*G$, and $``\ft^*\sslash W_{\text{aff}}"$ is some coarse quotient $``(\ft^*/\Lambda)\sslash W"$ with $\Lambda$ the weight lattice of $T$ which is identified with the coweight lattice of $T^\vee$ (see also \cite{BZG}). Heuristically, if we replace the left-hand-side of (\ref{eq: D-module}) by the partially wrapped Fukaya category of $J_G$, and think of $``(\ft^*/\Lambda)\sslash W"$ analytically as $T^\vee\sslash W$ (and replace $\text{QCoh}$ by $\text{Coh}$), then this is exactly the equivalence of categories in the main theorem. However, there is no direct link between these two versions. 

As explained in \cite{BZG}, the result (\ref{eq: D-module}) is important for understanding module categories over the finite Hecke category $\widehat{\cH}_G$ of bimonodromic sheaves on $N\backslash G/N$, which is of particular interest in geometric representation theory. For example, in Betti Geometric Langlands program of Ben-Zvi and Nadler \cite{BZN2}, one studies sheaves with nilpotent singular support on the moduli of $G$-bundles on a curve $X$ with $N$-reductions on a finite set $S\subset X$. At each $s\in S$, there is an affine Hecke action and in particular an $\widehat{\cH}_G$-action. The $\widehat{\cH}_G$ module categories form the character field theory developed in \cite{BZN1,BZGN} that  assigns to a point a family of 3d topological field theories over $``\ft^*\sslash W_{\text{aff}}"$, thanks to the Ng$\hat{\text{o}}$-action of the bi-Whittaker category (cf. \cite{BZG}). In the Betti version, the natural action of $\Coh(T^\vee\sslash W)$ on the family of theories should correspond to the convolution action of $\cW(J_G)$. For example, using our theorem, the skyscraper sheaves on $T^\vee\sslash W$ in the B-model would give certain objects in the category of character sheaves (the assignment of the field theory to $S^1$) that act on it by convolution. The de Rham version of this has been studied in \cite{Chen}. We would like to investigate this aspect and its various applications in future work, e.g. along the line of the conjectural picture \cite[Remark 2.7]{BZG} and \cite{Teleman}.

As the symmetic monoidal structure on $\cW(J_G)$ (a consequence of the main theorem) plays an essential role in the above approach to categorical representation theory, we note that it is also expected to come naturally from the (abelian) group scheme structure on $J_G$ (cf. \cite{Pascaleff} for some developments in this direction). Roughly speaking, one can represent the functor for the monoidal structure $\cW(J_G)\otimes \cW(J_G)\rightarrow \cW(J_G)$ as a (smooth) Lagrangian correspondence $L_{\text{mon}}$ in $J_G^a\times J_G^a\times J_G$ (where the superscript $a$ means taking the opposite symplectic form). The main technical difficulty is caused by the ``finite" boundary of $J_G$. Namely, $L_{\text{mon}}$ will touch the ``finite" boundary of the product sector making it \emph{not} a well defined object in the wrapped Fukaya category. Alternatively, one can use microlocal sheaf theory on the Lagrangian skeleton, but we don't know how to realize this by a ``geometric" correspondence without appealing to the main theorem. 
We defer the study for a future work. Further desired results along this line would be to show that the restriction functors for sector inclusions are naturally symmetric monoidal, and there are natural compatibilities between compositions of restrictions as symmetric monoidal functors.  

Lastly, we would like to point out that the universal centralizers $J_G$ constitute an important class of the Coulomb branches mathematically defined in \cite{BFN}. It would be interesting to extend the present work to some other Coulomb branches whose HMS is currently unknown.

\subsection{Organization}
The organization of the paper goes as follows. In Section \ref{sec: Bruhat}, we review the definition(s) of $J_G$, and prove the Bruhat decomposition result. We give explicit descriptions of all the Bruhat ``cells" and some important symplectic subvarieties (associated to standard Levi subgroups) built from them. In Section \ref{sec: skeleton, sector}, we give the construction of a partial compactification of $J_G$ that is naturally a Liouville sector, and we show that it can be isotopic to a Weinstein sector. We describe the skeleton of the resulting Weinstein sector, and show that the Kostant sections generate $\cW(J_G)$. In Section \ref{sec: wrapping Ham}, we define certain positive linear Hamiltonians on $J_G$, so we have a convenient calculation of $\End(\Sigma_I)$ (and morphisms between different Kostant sections for general $G$), as a (graded) vector space. The upshot is that all intersection points are concentrated in degree 0, so $\End(\Sigma_I)$ is an ordinary algebra. In Section \ref{sec: HMS adjoint}, we first state the main theorem and the key propositions that lead to its proof, then we develop some analysis in Subsection \ref{subsec: analysis cB_w0}-\ref{subsec: construct L_zeta} that are crucial for the proof of the key propositions. These subsections contain important geometric features of $J_G$, which in particular explain the intriguing picture behind Proposition \ref{prop: intro skyscraper}. Lastly, we give the proof of the key propositions in Section \ref{subsec: proof prop}.

\subsection{Acknowledgement}
I would like to thank Harrison Chen, Sam Gunningham, Justin Hilburn, Oleg Lazarev, George Lusztig, David Nadler, John Pardon, Paul Seidel, Changjian Su, Dima Tamarkin and Zhiwei Yun for stimulating conversations at various stages of this project. I am grateful to David Nadler for valuable feedback on this work, and to Dima Tamarkin for help with proof of Lemma \ref{lemma: countable, 0}. The author was partially supported by an NSF grant DMS-1854232.

\section{Definition(s) of $J_G$ and the Bruhat decomposition}\label{sec: Bruhat}

\subsection{Definition(s) of $J_G$ and a Lagrangian correspondence}\label{subsec: def of J_G}

In this subsection, we review some equivalent definitions of $J_G$ and a canonical Lagrangian correspondence, which will be used in later sections. The exposition is roughly following \cite[Section 2]{Ginzburg}, and we refer the reader to \emph{loc. cit.} for further details. 

Let $G$ (resp. $\fg$) be any complex semisimple Lie group (resp. its Lie algebra). Let $\fg^{\reg}$ (resp. $\fg^{*,\reg}$) be the (Zariski open dense) subset of regular elements in $\fg$ (resp. $\fg^*$), i.e. the elements whose stabilizer with respect to the adjoint (resp. coadjoint) action by $G$ has dimension equal to $n:=\text{rank} G$ (which is the minimal possible dimension). To simplify notations, we often identify $\fg^*$ with $\fg$ using the Killing form unless otherwise specified, hence their regular elements. Let $\fc:=\fg\sslash G$ be the adjoint quotient of $\fg$. Fix any principal $\fsl_2$-triple $(e,f,h)$, and let $\cS:=f+\ker \ad_e\subset \fg^{\reg}$ be the Kostant slice. The Kostant slice gives a section of the adjoint quotient map $\fg\longrightarrow \fc$ (and its restriction to $\fg^{\reg}$), by a theorem of Kostant \cite{Kostant}. 

Let $T^{*,\reg}G\subset T^*G\cong G\times \fg$ (identified using left translations) be the regular part of the cotangent bundle of $G$, consisting of pairs $(g, \xi)\in G\times \fg^{\reg}$. Consider the locus in $T^{*,\reg}G$ 
defined by 
\begin{align}\label{eq: scrZ}
&\scrZ_G:=\{(g,\xi)\in T^{*,\reg}(G): \Ad_g\xi=\xi\},
\end{align}
which is acted by $G$ through the adjoint action on both factors. The obvious projection $\scrZ_G\longrightarrow \fg^{\reg}$ represents $\scrZ_G$ as a $G$-equivariant  abelian group scheme over $\fg^{\reg}$. The categorical quotient $\scrZ_G\sslash G$ can be identified with the affine variety
\begin{align}\label{eq: centralizer cS}
\{(g,\xi)\in G\times \cS: \Ad_g\xi=\xi\},
\end{align}
i.e. the centralizers of the elements in the Kostant slice $\cS$. 

\begin{definition}[First definition of $J_G$]
The \emph{universal centralizer} of $G$, denoted by $J_G$, is defined to be $\scrZ_G\sslash G$, which is isomorphic to (\ref{eq: centralizer cS}). 
\end{definition}

The virtue of this definition is that it explains the name ``universal centralizer", and it exhibits $J_G$ as an abelian group scheme over $\fc$:
\begin{align*}
\chi: J_G\longrightarrow \fc,
\end{align*}
which is actually a holomorphic integrable system. See Figure \ref{figure: J_SL_2} for the case when $G=SL_2(\bC)$.

Next, we give a second definition of $J_G$, which is given by a bi-Whittaker Hamiltonian reduction of $T^*G$. To define this, we fix a Borel subgroup $B\subset G$ and a maximal torus $T\subset B$, and let $N\subset B$ be the unipotent radical. Let $\fb, \ft, \fn$ be the respective Lie algebras.  Let $\Delta\subset \ft^*$ (resp. $\Delta^+$, $\Delta^-$) be the set of roots (resp. positive roots defined by $\fb$, negative roots). Let $\Pi$ be the set of simple roots in $\Delta^+$, and let $W$ be the Weyl group associated to the root system. 

Fix a \emph{regular} element $f\in \bigoplus\limits_{\alpha\in \Pi}\fg_{-\alpha}$, and an $\fsl_2$-triple $(e, f, \sfh_0:=h)$ as above. Note that $\sfh_0=\sum\limits_{\alpha\in \Delta^+}\alpha^\vee$, where $\alpha^\vee$ is the coroot corresponding to $\alpha$. Consider the $N\times N$-Hamiltonian action on $T^*G$, induced from the left and right $N$-action on $G$.  The moment map of the Hamiltonian action is given by 
\begin{align*}
\mu: T^*G&\longrightarrow \fn^*\oplus\fn^*\cong \fn^-\oplus \fn^-\\
(g,\xi)&\mapsto (\xi\text{ mod } \fb, \Ad_g\xi\text{ mod } \fb).
\end{align*}
Since $(f,f)\in \fn^-\oplus \fn^-$ is a regular character of $N\times N$, we have 
\begin{align*}
\mu^{-1}(f, f)=\{(g,\xi): \xi\in f+\fb, \Ad_g\xi\in f+\fb\}
\end{align*}
an $N\times N$-stable coisotropic subvariety in $T^*G$. The action turns out to be free (cf. \cite{Ginzburg} for more details), and we have an identification
\begin{align}\label{eq: Ham N reduction}
\mu^{-1}(f,f)/N\times N\cong \{(g,\xi)\in G\times \cS: \Ad_g\xi=\xi\},
\end{align}
which is exactly isomorphic to $J_G$. This uses the isomorphism
\begin{align*}
N\times\cS&\overset{\sim}{\longrightarrow} f+\fb\\
(u, \xi)&\mapsto \Ad_u\xi,
\end{align*}
which is an important feature of the Kostant slice that we will frequently use without referring to it explicit.

Hence we have a second definition/characterization of $J_G$ as follows.
\begin{definition}[Second definition of $J_G$]\label{def: second J_G}
The \emph{universal centralizer} $J_G$ is defined to be the Hamiltonian reduction (\ref{eq: Ham N reduction}), which is a smooth holomorphic symplectic variety. 
\end{definition}

We remark that there are several other equivalent definitions/characterizations of $J_G$, showing different features of it, as well as its prominent role in representation theory and mathematical physics. For example, it is calculated in \cite{BFM} that the ring of functions on $J_G$, as an affine variety, is isomorphic to the equivariant homology ring $H_{\bullet}^{G^\vee(\cO)}(Gr_{G^\vee})$ of the affine Grassmannian (with the convolution product structure). In particular, it belongs to the list of Coulomb branches defined in \cite{BFN}. On the other hand, $J_G$ is also identified with the moduli space of solutions of the Nahm equations, so it has a hyperKahler structure (cf. \cite{Bielawski}). Since we will not use these features, we will not provide any further details.

We now describe a canonical $\bC^\times$-action on $J_G$, which will define a Liouville vector field as follows. 
Let 
$\gamma: \bC^\times\rightarrow T$ denote the cocharacter corresponding to $\sfh_0$.
Then the canonical $\bC^\times$-action on $J_G$ is given by 
\begin{align}\label{eq: C star action}
&s\cdot (g, \xi)=(\Ad_{\gamma(s)}g, s^2\cdot\Ad_{\gamma(s)}\xi). 
\end{align}
Note that the $\bC^\times$-action scales the symplectic form $\omega=d(\lng\xi, g^{-1}dg\rng)$ by weight $2$, and it does not depend on the choice of representatives $(g,\xi)\in \mu^{-1}(f, f)$. 
Taking the square root of the restricted $\bR_+\subset \bC^\times$-action on $J_G$, we get a Liouville flow.  Let $Z$ denote for the corresponding Liouville vector field. Note that if $G$ is adjoint, then we can turn (\ref{eq: C star action}) into a weight $1$ action by using the cocharacter $\frac{1}{2}\sfh_0$ and changing the scaling $s^2$ on the second factor by $s$. Then the action gives the holomorphic Liouville flow on $J_G$.

Lastly, we recall the Lagrangian correspondence (cf. \cite[Section 2.3]{Ginzburg}, \cite{Teleman})
\begin{align}\label{eq: Lag corresp}
J_G\overset{\pi_{J_G}}{\longleftarrow} J_G\underset{\fc}{\times}\ft^*\overset{\pi_\chi}{\longrightarrow} T^*T,
\end{align}
in which the left map is the obvious projection, the middle term can be identified with
\begin{align}\label{eq: J_G, fc, ft}
J_G\underset{\fc}{\times}\ft^*&\cong\{(g,\xi, B_1)\in G\times \cS\times G/B: \Ad_{g}\xi=\xi, \xi\in \fb_1=\text{Lie} B_1, g\in B_1\}\\
\nonumber&\cong \{(g,\xi, B_1)\in \mathscr{Z}_G\times G/B: \Ad_{g}\xi=\xi, \xi\in \fb_1=\text{Lie} B_1, g\in B_1\}\sslash G
\end{align}
and the right map $\pi_\chi$ is given by 
\begin{align}\label{eq: pi_chi, B_1}
\pi_\chi: (g,\xi, B_1)\mapsto (g \text{ mod }[B_1, B_1], \xi\text{ mod }[\fb_1, \fb_1])\in T\times \ft^*. 
\end{align}
When we refer to this Lagrangian correspondence, we read the correspondence from left to right, i.e. we view $J_G\underset{\fc}{\times}\ft^*$ as a smooth Lagrangian submanifold in $J_G^a\times T^*T$, where $J_G^a$ is the same as $J_G$ but equipped with the opposite symplectic structure. We will refer to the opposite one that is read from right to left, as the \emph{opposite} correspondence.

We comment on some good and bad features of the correspondence (\ref{eq: Lag corresp}). Some useful features include: (1) the map $\pi_{\chi}$ is $W$-equivariant  with respect to the $W$-action on  $J_G\underset{\fc}{\times}\ft^*$ induced from the $W$-action on the $\ft^*$-factor and the natural $W$-action on $T^*T$; (2) the correspondence respects the canonical $\bC^\times$-action on $J_G$ and the square of the fiber dilating $\bC^\times$-action on $T^*T$; (3) it transforms the Kostant sections to cotangent fibers in $T^*T$; (4) it transforms a generic torus fiber of $\chi$ to $|W|$ copies of torus fibers (constant sections) in $T^*T$, inducing isomorphisms from the former to each component of the latter, and it respects the group scheme structure on $J_G$ and $T^*T$. 

An essential bad feature of the correspondence is that $\pi_\chi$ is neither proper nor open. For example, it transforms the central fiber $\chi^{-1}([0])$ to the discrete set $\cZ(G)\times \{0\}$ in $T^*T$, while the whole zero-section of $T^*T$, except for $\cZ(G)\times \{0\}$, is disjoint from the image of $\pi_\chi$. For this reason, it is hard to calculate the associated functors\footnote{Even the definition of the functors (as categorical bimodules) requires technical treatments, for the Lagrangian correspondence as a smooth Lagrangian submanifold in $J_G^a\times T^*T$ (and similarly for the inverse correspondence) will have ends intersect the ``finite" boundary of the product sector, so one needs to perturb the ends in a careful way.} between wrapped Fukaya categories by geometric compositions. However, we use the correspondence (not as a functor though) in our calculations of Floer cochains in Section \ref{sec: wrapping Ham} and \ref{subsec: proof 5.6, 5.7}.

\subsection{The Bruhat decomposition}\label{section: Bruhat}
Using the second definition of $J_G$ (Definition \ref{def: second J_G}) in Subsection \ref{subsec: def of J_G} and under the same setup, we will show a Bruhat decomposition for $J_G$.   The Bruhat decomposition is induced from the projection to the double coset $N\backslash G/N$
\begin{equation*}
J_G\rightarrow N\backslash G/N. 
\end{equation*}
For each element $w\in W$, we use $\cB_w$ to denote for the corresponding Bruhat ``cell"\footnote{Although we call $\cB_w$ a Bruhat cell, it does not mean that $\cB_w$ is contractible, and this is usually not the case (cf. Proposition \ref{prop: B_w0w}).} in $J_G$.

\begin{prop}\label{prop: B_w0w}
\begin{itemize}
\item[(a)]
For any semisimple Lie group $G$, the Bruhat decomposition of the group scheme $J_G$ is indexed by $w_0w_S$, where $w_0$ is the longest element in $W$ and $w_S$ is the longest element in the Weyl group of the standard parabolic subgroup $P_S$ determined by a set of simple roots $S$. 
\item[(b)] Let $\cZ(L_S)$ be the center of the standard Levi factor $L_S$ of $P_S$, and let $L^{\der}_S=[L_S, L_S]$ be the derived group of $L_S$. Then 
\begin{align}\label{eq: prop B_w0w}
&\cB_{w_0w_S}
\cong T^*\cZ(L_S)\times (\fl^{\der}_S\sslash L_S^{\der})
\end{align}
and it is $\bC^\times$-invariant. 
\end{itemize}
\end{prop}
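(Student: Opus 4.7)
My plan attacks (a) and (b) separately using the Hamiltonian reduction description $J_G=\mu^{-1}(f,f)/(N\times N)$.

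\textbf{Part (a).} The projection $T^*G\to G\to N\backslash G/N$ is $(N\times N)$-invariant and descends to $J_G\to N\backslash G/N$; pulling back the Bruhat stratification (with strata $T_w\cong T$ indexed by $W$) gives the decomposition $J_G=\bigsqcup_{w\in W}\cB_w$. To determine which cells are non-empty, I would take $(g,\xi)\in\mu^{-1}(f,f)$ with $g\in BwB$, use the $(N\times N)$-action together with the Bruhat normal form to normalize $g$ to the slice $g=\dot{w}t$, write $\xi=f+\eta$ with $\eta\in\fb$, and impose $\Ad_{\dot{w}t}\xi\in f+\fb$. The decisive step is to examine the $\fg_{-\beta}$-components for $\beta\in\Delta^+\setminus\Pi$: the term $\Ad_{\dot{w}t}f$ contributes non-trivially in $\fg_{-\beta}$ precisely when $w(\alpha)=\beta$ for some $\alpha\in\Pi$, and this contribution cannot be cancelled by $\Ad_{\dot{w}t}\eta$ (because $\eta\in\fb$ has vanishing $\fg_{-\alpha}$-component). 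Therefore $\cB_w\neq\emptyset$ forces $w(\Pi)\subset\Pi\cup\Delta^-$.

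Setting $S:=\{\alpha\in\Pi:w(\alpha)\in\Pi\}$, I would then establish the combinatorial equivalence $w(\Pi)\subset\Pi\cup\Delta^-\iff w=w_0w_S$. One direction is a direct computation using the opposition involutions $\sigma_S$ of $W_S$ and $\sigma_0$ of $W$: for $\alpha\in S$, $w_0w_S(\alpha)=\sigma_0\sigma_S(\alpha)\in\Pi$, while for $\alpha\in\Pi\setminus S$, $w_S(\alpha)\in\Delta^+$ and hence $w_0w_S(\alpha)\in\Delta^-$. The other direction follows from the length identity $\ell(w)=\#\{\gamma\in\Delta^+:w(\gamma)\in\Delta^-\}=|\Delta^+|-|\Delta_S^+|=\ell(w_0w_S)$ together with the matching action on $\Pi$, forcing $w=w_0w_S$. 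An explicit choice of $\eta$ (with suitable $\fg_{-w^{-1}\alpha}$-components for $\alpha\in\Pi\setminus w(S)$) then exhibits non-emptiness, completing (a).

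\textbf{Part (b).} For $w=w_0w_S$ fixed, I would produce the isomorphism $\cB_w\cong T^*\cZ(L_S)\times\fc_S$ by explicitly parametrizing the residual data after the $(N\times N)$-quotient. The root-space components of $\eta\in\fb$ fall into three types: those forced to vanish by the moment constraints; those pinned to specific values by the $\Ad_{\dot{w}t}f$-contributions in $\fg_{-\alpha}$ for $\alpha\in w(S)$; and the remaining free components, which split further into a ``central'' piece along $\ft^*/\ft^*_{L_S^{\der}}$ and a ``derived'' piece inside $\fl_S^{\der}$. Packaging $t|_{\cZ(L_S)}$ with the central free piece yields a point of $T^*\cZ(L_S)$, while the derived piece, projected through the Kostant slice $\cS_{L_S^{\der}}$ of the derived Levi, yields a point of $\fc_S=\fl_S^{\der}\sslash L_S^{\der}$. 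The decoupling is clean because $w_0w_S$ restricted to $S$ is the bijection $\sigma_0\sigma_S:S\to\sigma_0(S)\subset\Pi$, so the Whittaker conditions in the $L_S^{\der}$-block separate from those in the $\cZ(L_S)$-direction. The $\bC^\times$-invariance is immediate: the action conjugates $g$ by $\gamma(s)\in T$ (which preserves every Bruhat cell) and scales $\xi$ with weight $2$, compatibly with the product structure.

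\textbf{Main obstacle.} The technical crux of (b) is making the decomposition of residual data rigorous --- specifically, proving that the ``derived'' free components really are constrained to the Kostant slice $\cS_{L_S^{\der}}$ and that the resulting map is an algebraic isomorphism, not merely a bijection of points. This requires an explicit root-by-root analysis using the compatibility of the principal $\fsl_2$-triple $(e,f,\sfh_0)$ of $G$ with a principal triple $(e_S,f_S,\sfh_{0,S})$ of $L_S^{\der}$, namely that $f_S$ is the $S$-component of $f$ and $\sfh_{0,S}$ agrees with the $\fl_S^{\der}$-component of $\sfh_0$. With this compatibility in hand the packaging into $T^*\cZ(L_S)\times\cS_{L_S^{\der}}$ is forced, and the dimension count $2(n-|S|)+|S|=2n-|S|$ matches the expected codimension $|S|$ of the Bruhat cell.
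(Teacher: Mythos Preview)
Your plan for part (a) follows the paper's approach closely: normalize $g$ to the Bruhat slice $\dot w t$, analyze the negative-root components of $\Ad_{\dot w t}(f+\eta)$, and conclude $w(\Pi)\subset\Pi\cup\Delta^-$. One caution: your length identity $\ell(w)=|\Delta^+|-|\Delta_S^+|$ presupposes that $w(\Delta^+\setminus\Gamma(S))\subset\Delta^-$, which does not follow immediately from $w(\Pi\setminus S)\subset\Delta^-$. The cleaner route (implicit in the paper) is to set $w'=w_0w$, observe $w'(S)\subset-\Pi$ and $w'(\Pi\setminus S)\subset\Delta^+$, and show $w'w_S=e$ by checking $w'w_S(\Pi)\subset\Delta^+$ directly; this avoids the length count.

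Part (b) has a genuine gap in the mechanism. First, the equations in $\fg_{-\alpha}$ for $\alpha\in w(S)$ receive \emph{no} contribution from $\eta$ (since $w^{-1}(-\alpha)\in-\Pi$ and $\eta\in\fb$); they are constraints on $t$ alone, and they force $t$ into a $\cZ(L_S)$-torsor. So ``$t|_{\cZ(L_S)}$'' is not a projection but the entire surviving $T$-datum. The pinned $\eta$-components correspond to $\alpha\in\Pi\setminus w(S)$, not $\alpha\in w(S)$. Second, and more importantly, the ``derived'' free components (namely $\eta$ restricted to $\ft_S\oplus\fn_S=\fb_{\fl_S^{\der}}$) are \emph{not constrained} to the Kostant slice $\cS_{L_S^{\der}}$; they fill out all of $f_S+\fb_{\fl_S^{\der}}$. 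What produces $\fl_S^{\der}\sslash L_S^{\der}$ is the residual $(N\times N)$-stabilizer of the Bruhat slice $\dot w T$: one checks that $(n_1,n_2)$ stabilizes $\dot w t$ with $n_1,n_2\in N$ precisely when $n_2\in N\cap w^{-1}Nw$, and for $w=w_0w_S$ this forces $n_2\in N_{L_S^{\der}}$. The quotient $(f_S+\fb_{\fl_S^{\der}})/N_{L_S^{\der}}$ is then Kostant's slice for $L_S^{\der}$, yielding $\fl_S^{\der}\sslash L_S^{\der}$. This is exactly how the paper proceeds, and without isolating this residual $N_{L_S^{\der}}$-action your proposed root-by-root analysis cannot close: there is no constraint to find, only a quotient to take.
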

\begin{proof}
For any $w\in W$, let $\overline{w}$ be a representative of $w$ in the normalizer of $T$.  
For any $w_0w\in W$, the Bruhat cell $\cB_{w_0w}$ of $J_G$ consists of pairs $((\overline{w}_0)^{-1}\overline{w}h, f+t+\xi)$, $h\in T,\ t\in \ft,\ \xi\in\bigoplus\limits_{\alpha\in \Delta^+}\fg_\alpha$ (modulo the equivalences induced by the $N\times N$-action), such that 
\begin{align}\label{eq: Ad f Delta}
&\Ad_{(\overline{w}_0)^{-1}\overline{w}h} (f+t+\xi)\in f+\ft+\bigoplus\limits_{\alpha\in \Delta^+}\fg_\alpha.
\end{align}
Note that (\ref{eq: Ad f Delta}) implies that $w$ must send $-\Pi$ into $\Pi\cup \Delta^-$, equivalently, $w$ sends $\Pi$ into $(-\Pi)\cup \Delta^+$. Let $S=(-w(\Pi))\cap \Pi$ and let $\Gamma(S)$ be the set of positive roots that can be written as sums of elements in $S$. Let $\fp_S=\fb\oplus \sum\limits_{\alpha\in \Gamma(S)}\fg_{-\alpha}$ be the standard parabolic subalgebra determined by $S$, then $w=w_S$, the longest element in the Weyl group of the standard parabolic subalgebra $\fp_S$. 

Now fix a subset $S\subset \Pi$, and write 
\begin{align*}
&f=\sum\limits_{\alpha\in S}f_{\alpha}+\sum\limits_{\alpha\in \Pi\backslash S}f_{\alpha}\\
&\xi=\sum\limits_{\beta\in \Gamma(S)}\xi_\beta+\sum\limits_{\beta\in \Delta^+\backslash \Gamma(S)}\xi_\beta,
\end{align*}
then (\ref{eq: Ad f Delta}) is equivalent to the data of 
\begin{align}
\nonumber&t\in \ft,\ \Ad_{\overline{w}_Sh}(f+\xi)\in \Ad_{\overline{w}_0}f+\bigoplus\limits_{\alpha\in \Delta^-}\fg_\alpha\\
\Leftrightarrow&
\label{eq: condition w_0wh}\begin{cases}
&\Ad_{\overline{w}_Sh}\sum\limits_{\alpha\in S}f_{\alpha}=\Ad_{\overline{w}_0}\sum\limits_{\alpha\in -w_0(S)}f_{\alpha},\\
&\Ad_{\overline{w}_Sh}(\sum\limits_{\alpha\in \Delta^+\backslash \Gamma(S)}\xi_\alpha)=\Ad_{\overline{w}_0}(\sum\limits_{\alpha\in \Pi\backslash w_0(-S)}f_\alpha)
\end{cases}\\
\nonumber\Leftrightarrow
&\begin{cases}&h\in T \text{ satisfying } \Ad_{\overline{w}_Sh}\sum\limits_{\alpha\in S}f_{\alpha}=\Ad_{\overline{w}_0}\sum\limits_{\alpha\in -w_0(S)}f_{\alpha}, 
\text{ which is a torsor over }\cZ(L_S),\\
& \xi\in\Ad_{(\overline{w}_Sh)^{-1}\overline{w}_0}(\sum\limits_{\alpha\in \Pi\backslash w_0(-S)}f_{\alpha})+\bigoplus\limits_{\alpha\in \Gamma(S)} \fg_\alpha\\
&t\in \ft
\end{cases}
\end{align}

Let $\phi_{S,h}=(\overline{w}_0)^{-1}\overline{w}_Sh$ and we identify the equivalence classes of solutions in $(\ref{eq: condition w_0wh})$ under the $N\times N$-action. We have $(\phi_{S,h}, f+t+\xi)$ identified with $(\phi_{S,h'}, f+t'+\xi')$ if and only if $h=h'$ and there exists $u\in N$ such that $\tilde{u}=\Ad_{\phi_{S,h}^{-1}}u^{-1}\in N$ and $f+t'+\xi'=\Ad_{\tilde{u}^{-1}}(f+t+\xi)$. 

Let $L_S^{\der}=[L_S, L_S]$ be the derived group of $L_S$.  For any $u=\exp(n)\in N$, $\Ad_{\phi_{S,h}^{-1}} u^{-1}=\exp(-\Ad_{\phi_{S,h}^{-1}} n)\in N$ if and only if $\Ad_{\phi_{S,h}^{-1}} n\in \fn$, and this happens if and only if $n\in \bigoplus\limits_{\alpha\in -w_0(\Gamma(S))}\fg_\alpha$ which is equivalent to $\tilde{u}=\Ad_{\phi_{S,h}^{-1}}u^{-1}\in N_{L^\der_S}$. Let $\fz_S$ be the subspace of $\ft$ defined by the equations $\alpha(\bullet)=0, \alpha\in S$, which is identified with the (dual of the) Lie algebra of $\cZ(L_S)$.  Since $\Ad_{\tilde{u}^{-1}}$ acts trivially on $\fz_S$, $\bigoplus\limits_{\alpha\in-(\Pi\backslash S)}\fg_\alpha$ and $\bigoplus\limits_{\alpha\in w_S^{-1}(\Pi\backslash S)}\fg_\alpha$,
 we have the following identification
\begin{align}\label{eq: proof B_w0w}
&\cB_{w_0w_S}\cong \cZ(L_S)\times \fz_S\times (\sum\limits_{\alpha\in S}f_{\alpha}+\fn_{\fl^{\der}_S}^\perp)/N_{L^{\der}_S}\\
\nonumber\cong&\cZ(L_S)\times \fz_S\times (\fl^{\der}_S\sslash L_S^{\der}),\\
\nonumber\cong&T^*\cZ(L_S)\times (\fl^{\der}_S\sslash L_S^{\der}). 
\end{align}
Note that the space of isomorphisms (\ref{eq: proof B_w0w}) is a torsor over $\cZ(L_S)$. The $\bC^\times$-invariance of $\cB_{w_0w_S}$ is obvious. 
\end{proof}

\begin{example}\label{example: B_w0}
If $S=\emptyset$, then $w_S=1$ and 
\begin{align*}
\cB_{w_0}\cong\{(\overline{w}_0^{-1}h, f+t+\Ad_{(\overline{w}_0^{-1}h)^{-1}}f): h\in T, t\in \ft\}\cong T^*T.
\end{align*}
\end{example}

\begin{remark}\label{remark: choices of w}

\begin{itemize}
\item[(a)] 
In the following, we will fix $\overline{w}_0$ and for each $S\subsetneq \Pi$, we will choose $\overline{w}_S\in N_{L_S^\der}(T\cap L_S^\der)$ (i.e. the normalizer of the maximal torus) satisfying
\begin{align}\label{eq: remark Ad, f}
f_{\alpha}=\Ad_{\overline{w}_S^{-1}\overline{w}_0}f_{w_0w_S(\alpha)},\ \forall\alpha\in S. 
\end{align}
Then for $S\subset S'$, we have 
\begin{align*}
&\Ad_{\overline{w}_{S'}^{-1}\overline{w}_{S}}f_{\alpha}=\Ad_{(\overline{w}_0^{-1}\overline{w}_{S'})^{-1}}(\Ad_{\overline{w}_0^{-1}\overline{w}_S}f_{\alpha})\\
=&\Ad_{(\overline{w}_0^{-1}\overline{w}_{S'})^{-1}}(f_{w_0w_{S}(\alpha)})=f_{w_{S'}w_{S}(\alpha)}, \forall\ \alpha\in S.
\end{align*}
Note that the last step uses $w_{S'}w_S(\alpha)\in S', \forall \alpha\in S$. 
Under such an assumption, the set of $h\in T$ in the second equivalent characterization in (\ref{eq: condition w_0wh}) is canonically identified with $\cZ(L_S)$. 

\item[(b)] Let $\ft_S$ denote for the Cartan subalgebra of $\fl_S^\der$. The condition of (\ref{eq: remark Ad, f}) gives an identification of the subrepresentation of $\Res_{L_{-w_0(S)}^\der}^G(V_{\lambda})$ generated by a highest weight vector $v_\lambda$, for any $\lambda\in X^*(T)^+$, with $V_{\pi_{\ft^*}^S(w_Sw_0(\lambda))}$ of $L_S^\der$, where $\pi_{\ft^*}^S: \ft^*\rightarrow \ft_S^*$ is the natural projection.

\end{itemize}
\end{remark}

For any $L_S$, we have $\cZ(L_S^{\der})$ acts on both $\cZ(L_S)$ and $J_{L_S^{\der}}$, and the twisted product $T^*\cZ(L_S)\underset{\cZ(L^\der_S)}\times J_{L^{\der}_S}$ is canonically a holomorphic symplectic variety. In the following, we use $N_S$ to denote for $N_{L_S^\der}$, and $f_S$ for $\sum\limits_{\alpha\in S}f_{\alpha}$. 

\begin{prop}\label{prop: U_S}
\begin{itemize}
\item[(a)] For any standard Levi $L_S$, we have 
\begin{align}\label{eq: prop fU_S splitting}
\fU_S=T^*\cZ(L_S)\underset{\cZ(L^\der_S)}\times J_{L^{\der}_S}
\end{align}
naturally embeds as an open (holomorphic) symplectic subvariety in $J_G$. 

\item[(b)] The Bruhat cell $\cB_{w_0w_S}$ is contained in $\fU_S$ as a coisotropic subvariety. More explicitly, using (\ref{eq: prop B_w0w}), we have 
\begin{align*}
\cB_{w_0w_S}\cong T^*\cZ(L_S)\underset{\cZ(L_S^\der)}{\times}\cB_{1, L_S^{\der}}\subset T^*\cZ(L_S)\underset{\cZ(L^\der_S)}\times J_{L^{\der}_S}.
\end{align*}
\end{itemize}
\end{prop}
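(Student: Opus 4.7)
The proof has two parts; I would do part (a) first and deduce part (b) from it together with Proposition \ref{prop: B_w0w} applied to the Levi.

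\textbf{Part (a).} The strategy is to use the second definition (Definition \ref{def: second J_G}) and realize $\fU_S$ as the bi-Whittaker reduction of $T^*U_S$ where $U_S \subset G$ is the $N$-biinvariant open subset $N \cdot L_S \cdot N$ (the big open Schubert cell of the standard parabolic $P_S$). Concretely, I would:
\begin{itemize}
\item After the $N \times N$-reduction, represent a point of $J_G$ whose image in $N \backslash G / N$ lies in $L_S$ by a pair $(g_L, \xi_L)$ with $g_L \in L_S$ and $\xi_L \in f + \fb$ satisfying $\Ad_{g_L} \xi_L \in f+\fb$. The residual symmetry is the $N_S \times N_S$-action (acting trivially in the directions transverse to $\fl_S$).
\item Decompose $g_L = h \cdot g_S$ with $h \in \cZ(L_S)$ and $g_S \in L_S^{\der}$, unique up to the diagonal $\cZ(L_S^{\der})$-action $(h, g_S) \sim (hz^{-1}, zg_S)$. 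Check that the chosen principal $\fsl_2$-triple $(e,f,\sfh_0)$ for $\fg$ restricts along $\fl_S^{\der}$ to a principal $\fsl_2$-triple $(e_S, f_S, \sfh_{0,S})$ (where $f_S = \sum_{\alpha \in S} f_\alpha$), and decompose the Kostant-slice datum as $\xi_L = t + \xi_S$ with $t \in \fz_S^*$ and $\xi_S \in f_S + \fb \cap \fl_S^{\der}$. The conditions $\Ad_{g_L}\xi_L \in f+\fb$ now decouple into $t$ being $\Ad_h$-invariant (automatic on the center) and $\Ad_{g_S}\xi_S \in f_S + \fb \cap \fl_S^{\der}$, i.e. $(g_S, \xi_S)$ represents a point of $J_{L_S^{\der}}$.
\item Verify that the resulting map
\begin{equation*}
T^*\cZ(L_S) \underset{\cZ(L_S^{\der})}{\times} J_{L_S^{\der}} \longrightarrow J_G, \qquad \bigl(h, t, [g_S, \xi_S]\bigr) \mapsto \bigl(h g_S,\ t + \xi_S\bigr)
\end{equation*}
is well-defined on the twisted product, is injective, and pulls back the canonical Liouville $1$-form $\lng \xi, g^{-1} dg \rng$ on $J_G$ to the sum of the canonical $1$-form on $T^*\cZ(L_S)$ and the analogous Liouville $1$-form on $J_{L_S^{\der}}$ (which descends to the twisted product because $\cZ(L_S^{\der})$ acts by Hamiltonian automorphisms with vanishing moment on both factors). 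Openness of the image is automatic since $N \cdot L_S \cdot N$ is open in $G$ and $J_G \to N \backslash G / N$ is continuous, while bijectivity onto the preimage follows from the uniqueness of the $\cZ(L_S) \cdot L_S^{\der}$-decomposition up to $\cZ(L_S^{\der})$.
\end{itemize}

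\textbf{Part (b).} Applying Proposition \ref{prop: B_w0w}(b) to the semisimple group $L_S^{\der}$ with its full set of simple roots (so that $w_S$ there equals the longest element $w_{0,S}$ of the Weyl group of $L_S^{\der}$, and $w_0 w_S = 1$), one gets
\begin{equation*}
\cB_{1, L_S^{\der}} \cong T^*\cZ(L_S^{\der}) \times (\fl_S^{\der} \sslash L_S^{\der}),
\end{equation*}
which is a disjoint union of Kostant sections of $J_{L_S^{\der}}$, hence a disjoint union of Lagrangians. Substituting into the twisted product from (a) and using $T^*\cZ(L_S) \underset{\cZ(L_S^{\der})}{\times} T^*\cZ(L_S^{\der}) \cong T^*\cZ(L_S)$ (via the multiplication map $\cZ(L_S) \times \cZ(L_S^{\der}) \to \cZ(L_S)$, which is an étale cover onto its image), one obtains exactly the identification (\ref{eq: prop B_w0w}). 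Coisotropicity of $\cB_{w_0 w_S}$ in $\fU_S$ reduces to coisotropicity of $T^*\cZ(L_S) \underset{\cZ(L_S^{\der})}{\times} \cB_{1, L_S^{\der}}$, which is immediate because its fibers over $T^*\cZ(L_S)$ are (unions of) Lagrangians in $J_{L_S^{\der}}$ and the base factor is taken in full.

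\textbf{Main obstacle.} The substantive point is the symplectic compatibility: one must check that the natural Liouville forms match (not merely that symplectic forms match up to a sign), which amounts to verifying that the Kostant-slice splitting $\cS \cap \fl_S = \fz_S \oplus \cS_{L_S^{\der}}$ is compatible with the $\fsl_2$-grading used to define the $\bC^\times$-action (\ref{eq: C star action}) and its restriction to $L_S^{\der}$. This uses that $\sfh_0 = \sfh_{0,S} + (\text{element of }\fz_S)$, so that the $\bC^\times$-dilation preserves the product decomposition. Once this is in place, descent along the finite group $\cZ(L_S^{\der})$ is routine.
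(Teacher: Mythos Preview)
Your part (a) contains a fundamental error. The set $U_S = N \cdot L_S \cdot N$ is \emph{not} open in $G$ for $S \neq \Pi$: it equals $\bigcup_{w \in W_S} BwB$, which has dimension $\dim B + \ell(w_S) < \dim G$. More to the point, the image of $\fU_S$ in $J_G$ does not lie over $L_S$ at all. By Proposition~\ref{prop: B_w0w}(a) the Bruhat cells of $J_G$ are exactly the $\cB_{w_0 w_{S'}}$, and (as Proposition~\ref{prop: fU_S', sqcup} confirms) $\fU_S$ is the union of those with $S' \subset S$, i.e.\ the cells sitting over $N w_0 w_{S'} N$; none of these meet $N L_S N$ except possibly the Kostant sections $\cB_1$. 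Concretely, your map $(h,t,[g_S,\xi_S]) \mapsto (hg_S,\, t+\xi_S)$ does not land in $\mu^{-1}(f,f)$: the element $t+\xi_S$ is not even in $f+\fb$ (it is missing $f-f_S$ and an $\fn_{\fp_S}$-component), and if you add $f-f_S$ back then $\Ad_{hg_S}(f-f_S) \in \fn_{\fp_S}^-$ will generically fail to equal $f-f_S$.

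The paper's embedding is instead
\[
(z,t;g_S,\xi_S)\ \longmapsto\ \bigl(\phi_S z g_S,\ \xi_S + t + \Ad_{(\phi_S z g_S)^{-1}}(f - f_{-w_0(S)}) + (f - f_S)\bigr),
\]
with $\phi_S = \overline{w}_0^{-1}\overline{w}_S$. The twist by $\phi_S$ places the image over the genuinely open set $N\overline{w}_0^{-1} L_S N \supset N w_0 N$, and the two correction terms (lying in $\fn_{\fp_S}$ and in $\bigoplus_{\alpha \in \Pi \setminus S}\fg_{-\alpha}$ respectively) are precisely what is needed to make both $\Xi_S$ and $\Ad_{\phi_S z g_S}\Xi_S$ lie in $f+\fb$. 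What you flagged as the ``main obstacle'' (matching Liouville forms, descent along $\cZ(L_S^\der)$) is in fact routine once this formula is in hand: the correction terms pair trivially with $z^{-1}dz + g_S^{-1}dg_S$, so $\iota_S^*\lambda_{T^*G}=p_S^*\lambda_{\fU_S}$ on the nose. Injectivity, however, is not immediate from the formula; the paper deduces it by observing that the restriction to the Zariski-dense piece $T^*\cZ(L_S)\times_{\cZ(L_S^\der)}\cB_{w_S,L_S^\der}$ is an isomorphism onto $\cB_{w_0}$. Your part (b) is fine once (a) is corrected.
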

\begin{proof}
We first prove (a). We continue to use the notations from the proof of Proposition \ref{prop: B_w0w}. 
We make the identification
\begin{align}\label{eq: J_L_S, f_S}
&J_{L_S^\der}\cong \{(g_S,\xi_S)|\xi_S, \Ad_{g_S}\xi_S\in f_S+\fn_{S}^\perp\subset\fl_S^\der\}/(N_S\times N_S)\\
\nonumber\cong&\mu_{N_S\times N_S}^{-1}(f_S,f_S)/(N_S\times N_S),
\end{align}
and let $\phi_{S}=(\overline{w}_0)^{-1}\overline{w}_S$ for the choices of $\overline{w}_0$ and $\overline{w}_S$ as in Remark \ref{remark: choices of w} (a). We have the following $N_S\times N_S$-equivariant embedding 
\begin{align}
\label{eq: prop U_S}&\iota_S: (\cZ(L_S)\times\fz_S)\underset{\cZ(L_S^\der)}{\times}\mu_{N_S\times N_S}^{-1}(f_S,f_S)\rightarrow G\times \fg\cong T^*G\\
\nonumber&(z, t; g_S, \xi_S)\mapsto (\phi_{S}zg_S, \xi_S+t+\Ad_{(\phi_{S}zg_S)^{-1}}(f-f_{-w_0(S)})+(f-f_S))=:(\phi_{S}zg_S, \Xi_S),
\end{align}
whose image is in $\mu^{-1}_{N\times N}(f,f)$ and $N_S\times N_S$ acts on $G\times \fg$ through 
\begin{align*}
N_S\times N_S\overset{(\Ad_{\phi_{S}}, id)}\longhookrightarrow N\times N.
\end{align*} 
The validity of (\ref{eq: prop U_S}) follows from the simple fact that
\begin{align*}
\Ad_{(\phi_{S}zg_S)^{-1}}(f-f_{-w_0(S)})\in \bigoplus\limits_{\alpha\in \Delta^+\backslash \Gamma(S)}\fg_\alpha=\fn_{\fp_S}
\end{align*}
and
\begin{equation}
\Ad_{\phi_{S}zg_S}(f-f_S)\in  \bigoplus\limits_{\alpha\in \Delta^+\backslash(\Gamma(-w_0(S)))}\fg_\alpha.
\end{equation}
It is clear from (\ref{eq: condition w_0wh}) that the image of $\iota_S$ is independent of the choice of $\overline{w}_0, \overline{w}_S$.

Now we show that $\iota_S$ in (\ref{eq: prop U_S}) satisfies that $\iota_S^*\omega_{T^*G}=p_S^*\omega_{\fU_S},$ where 
\begin{align*}
p_S: (\cZ(L_S)\times\fz_S)\underset{\cZ(L_S^\der)}{\times}\mu_{N_S\times N_S}^{-1}(f_S,f_S)\rightarrow \fU_S,
\end{align*}
is the quotient map. 
Recall that $\omega_{T^*G}=-d(\langle \xi, g^{-1}dg\rangle)$, where $(g,\xi)\in G\times \fg$ and $g^{-1}dg$ denotes for the Maurer-Cartan form. In the following, let $\lambda_{T^*G}=-\langle \xi, g^{-1}dg\rangle$ and $\lambda_{\fU_S}=-(\langle t, z^{-1}dz\rangle+\langle \xi_S, g_S^{-1}dg_S \rangle)$ denote for the primitive of the symplectic forms on $T^*G$ and $\fU_S$ respectively.  We have 
\begin{align}\label{eq: i*lambda}
&-\iota_S^*\lambda_{T^*G}=\langle(\xi_S+t+\Ad_{(\phi_{S}zg_S)^{-1}}(f-f_{-w_0(S)})+(f-f_S)), (\phi_{S}zg_S)^{-1}d(\phi_{S}zg_S)\rangle\\
\nonumber=&\langle \xi_S+t+\Ad_{(\phi_{S}zg_S)^{-1}}(f-f_{-w_0(S)})+(f-f_S)), z^{-1}dz+g_S^{-1}dg_S\rangle\\
\nonumber=&\langle t, z^{-1}dz\rangle+\langle \xi_S, g_S^{-1}dg_S \rangle=-p_S^*\lambda_{\fU_S}.
\end{align}
Here the vanishing of $\langle \xi_S, z^{-1}dz\rangle$ and $\langle t, g_S^{-1}dg_S\rangle$ is clear, and the vanishing of $\langle \Ad_{(\phi_Szg_S)^{-1}}(f-f_{-w_0(S)})+(f-f_S), z^{-1}dz+g_S^{-1}dg_S\rangle$ comes from that 
$\Ad_{(\phi_{S}zg_S)^{-1}}(f-f_{-w_0(S)})+(f-f_S)\in \bigoplus\limits_{\alpha\in (\Delta^+\backslash \Gamma(S))\cup (-(\Pi\backslash S))}\fg_\alpha$.

Next, we show that $\iota_S$ induces a holomorphic symplectic embedding $\tilde{\iota}_S: \fU_S\hookrightarrow J_G$. By (\ref{eq: i*lambda}) and the fact that $\dim \fU_S=\dim J_G$,  
the image of $\iota_S$ is everywhere transverse to the $N\times N$-orbits in $\mu_{N\times N}^{-1}(f,f)$. So $\tilde{\iota}_S: J_{L_S^{\der}}\rightarrow J_G$ is a local holomorphic symplectic diffeomorphism. Now we observe that $\fU_S$ contains a Zariski open (dense) subset $T^*\cZ(L_S)\times_{\cZ(L_S^{\der})} \cB_{w_S, L_S^{\der}}$, where $\cB_{w_S, L_S^{\der}}$ is the open Bruhat cell in $J_{L_S^\der}$ and the restriction of $\tilde{\iota}_S$ to that open set is an embedding onto $\cB_{w_0}$. So we can conclude that $\tilde{\iota}_S$ is an embedding as well. 

Part (b) immediately follows, since $\cB_{w_0w_S}=\{g_S\in \cZ(L_S^\der)\}\subset \fU_S$. 
\end{proof}

\begin{prop}\label{prop: fU_S', sqcup}
For any $S\subset S'\subset \Pi$, we have a natural embedding $\tilde{\iota}_S^{S'}: \fU_S\hookrightarrow \fU_{S'}$. These form a compatible system of embeddings in the sense that for any $S\subset S'\subset S''$, we have $\tilde{\iota}_{S'}^{S''}\circ \tilde{\iota}_{S}^{S'}=\tilde{\iota}_{S}^{S''}$. Moreover,
\begin{equation}\label{eq: prop U_S decomp}
\fU_{S'}=\bigsqcup\limits_{S\subset S'} \cB_{w_0w_{S}},
\end{equation}
where $w_{S}$ as before is the longest element in the Weyl group of $L^{\der}_{S}$. 
\end{prop}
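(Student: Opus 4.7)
The plan is to construct $\tilde{\iota}_S^{S'}$ by iterating the construction of Proposition~\ref{prop: U_S}, now applied to the semisimple group $L_{S'}^{\der}$. Since $S \subset S'$, the subset $S$ is a set of simple roots for $L_{S'}^{\der}$, so part (a) of that proposition applied inside $L_{S'}^{\der}$ yields an open holomorphic symplectic embedding
\begin{equation*}
T^*\cZ_{L_{S'}^{\der}}(L_S \cap L_{S'}^{\der}) \underset{\cZ(L_S^{\der})}\times J_{L_S^{\der}} \hookrightarrow J_{L_{S'}^{\der}}.
\end{equation*}
The natural isogeny $\cZ(L_{S'}) \times \cZ_{L_{S'}^{\der}}(L_S \cap L_{S'}^{\der}) \twoheadrightarrow \cZ(L_S)$ (with finite kernel $\cZ(L_S) \cap \cZ(L_{S'}^{\der})$) then allows one to fiber the above embedding with $T^*\cZ(L_{S'})$ over $\cZ(L_{S'}^{\der})$, producing $\tilde{\iota}_S^{S'} \colon \fU_S \hookrightarrow \fU_{S'}$.

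The first compatibility to verify is $\tilde{\iota}_{S'} \circ \tilde{\iota}_S^{S'} = \tilde{\iota}_S$, which is checked at the level of the explicit formula (\ref{eq: prop U_S}). The left-hand side first applies the $L_{S'}^{\der}$-version of $\iota_S$ and then $\iota_{S'}$ in $G$; the right-hand side applies $\iota_S$ in $G$ directly. The key ingredients are the factorization $\phi_S = \phi_{S'} \cdot (\overline w_{S'}^{-1} \overline w_S)$ and the matching of the nilpotent correction terms ($f - f_{-w_0(S)}$ inside $G$ versus its $L_{S'}^{\der}$-analogue). Both hold precisely because of the normalization (\ref{eq: remark Ad, f}) chosen in Remark~\ref{remark: choices of w}(a), which was designed exactly so that the relevant $\Ad$-conjugations compose correctly. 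Associativity $\tilde{\iota}_{S'}^{S''} \circ \tilde{\iota}_S^{S'} = \tilde{\iota}_S^{S''}$ then follows from injectivity of $\tilde{\iota}_{S''}$, since both composites equal $\tilde{\iota}_S^{S''}$ after post-composition with $\tilde{\iota}_{S''}$.

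For the decomposition (\ref{eq: prop U_S decomp}), apply the Bruhat decomposition of Proposition~\ref{prop: B_w0w} to the semisimple group $L_{S'}^{\der}$: its Bruhat cells $\cB_{w_{S'} w_S, L_{S'}^{\der}}$, indexed by $S \subset S'$, partition $J_{L_{S'}^{\der}}$. Crossing with $T^*\cZ(L_{S'})$ over $\cZ(L_{S'}^{\der})$ thus produces a disjoint decomposition of $\fU_{S'}$. To identify the piece corresponding to $S$ with $\cB_{w_0 w_S}$, apply Proposition~\ref{prop: U_S}(b) inside $L_{S'}^{\der}$ to rewrite $\cB_{w_{S'} w_S, L_{S'}^{\der}} \cong T^*\cZ_{L_{S'}^{\der}}(L_S \cap L_{S'}^{\der}) \times_{\cZ(L_S^{\der})} \cB_{1, L_S^{\der}}$; combined with the compatibility $\tilde{\iota}_{S'} \circ \tilde{\iota}_S^{S'} = \tilde{\iota}_S$, this piece maps under $\tilde{\iota}_{S'}$ precisely to $\tilde{\iota}_S\bigl(T^*\cZ(L_S) \times_{\cZ(L_S^{\der})} \cB_{1, L_S^{\der}}\bigr) = \cB_{w_0 w_S}$, the last equality being Proposition~\ref{prop: U_S}(b) applied to $G$. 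The main technical obstacle throughout is bookkeeping the finite overlaps of the various centers $\cZ(L_S)$, $\cZ(L_{S'})$, $\cZ(L_{S'}^{\der})$; this is cleanest if one works consistently with the $N \times N$-equivariant level set $\mu^{-1}(f,f) \subset T^*G$ before passing to the Hamiltonian quotient.
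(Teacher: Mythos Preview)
Your proposal is correct and follows essentially the same approach as the paper: construct $\tilde{\iota}_S^{S'}$ by applying the construction of Proposition~\ref{prop: U_S} inside $L_{S'}^{\der}$ (the paper writes down the analogue of formula~(\ref{eq: prop U_S}) with $\phi^{S,S'}=\overline{w}_{S'}^{-1}\overline{w}_S$ in place of $\phi_S$), then extend by the center splitting $\cZ(L_S)=\cZ(L_{S'})\times_{\cZ(L_{S'}^{\der})}\cZ(L_S^{S'})$. Your route to associativity---first check $\tilde{\iota}_{S'}\circ\tilde{\iota}_S^{S'}=\tilde{\iota}_S$ and then use injectivity of $\tilde{\iota}_{S''}$---is a clean way to organize what the paper says is ``clear from'' the explicit formula; the decomposition argument via the Bruhat decomposition of $J_{L_{S'}^{\der}}$ is likewise the same.
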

\begin{proof}
Since $L_S\subset L_{S'}$ and $\cZ(L_{S'})\subset \cZ(L_S)$, under the identification $L_{S'}\cong \cZ(L_{S'})\underset{\cZ(L_{S'}^{\der})}{\times} L_{S'}^\der$, we have $\cZ(L_S)=\cZ(L_{S'})\underset{Z(L_{S'}^{\der})}{\times}\cZ(L_S^{S'})$, where $L_{S}^{S'}=L_S\cap L_{S'}^{\der}$. This induces a splitting $\fz_S=\fz_{S'}\oplus \fz_{S}^{S'}$. Let $T_{S'}=T\cap L_{S'}^{\der}$ denote for the maximal torus in $L_{S'}^{\der}$, and choose representatives $\overline{w}_{S'}, \overline{w}_S\in N_{L_{S'}^\der}(T_{S'})$ as in Remark \ref{remark: choices of w}. Let $\phi^{S,S'}$ denote for $\overline{w}_{S'}^{-1}\overline{w}_S$, then we have $\Ad_{\phi^{S,S'}}f_{\alpha}=f_{w_{S'}w_S(\alpha)}$
 for all $\alpha\in S$.

Now we embed $\fU_S$ into $\fU_{S'}$ in a similar manner as of (\ref{eq: prop U_S}).
First, we have an $N_S\times N_S$-equivariant embedding
\begin{align}
\label{eq: prop U_S->U_S'}&\iota_{S}^{S'}: (\cZ(L_{S}^{S'})\times\fz_S^{S'})\underset{\cZ(L_S^\der)}{\times}\mu_{N_S\times N_S}^{-1}(f_S,f_S)\rightarrow\mu_{N_{S'}\times N_{S'}}^{-1}(f_{S'},f_{S'})\\
\nonumber&(z, t; g_S, \xi_S)\mapsto (\phi^{S,S'}zg_S, \xi_S+t+\Ad_{(\phi^{S,S'}zg_S)^{-1}}(f_{S'}-f_{-w_{S'}(S)})+(f_{S'}-f_S)).
\end{align}
By Proposition \ref{prop: U_S} (a), $\iota_S^{S'}$ descends to an embedding 
\begin{equation}\label{eq: iota, S, S'}
T^*\cZ(L_S^{S'})\underset{\cZ(L_S^{\der})}{\times} J_{L_S^{\der}}\hookrightarrow J_{L_{S'}^\der},
\end{equation}
which naturally extends to an embedding
\begin{equation}\label{eq: tilde i S, S'}
\tilde{\iota}_S^{S'}: \fU_S=T^*\cZ(L_S)\underset{\cZ(L_S^{\der})}{\times} J_{L_S^{\der}}\hookrightarrow T^*\cZ(L_{S'})\underset{\cZ(L_{S'}^{\der})}{\times} J_{L_{S'}^\der}=\fU_{S'}. 
\end{equation}
It is clear from (\ref{eq: prop U_S->U_S'}), that for any $S\subset S'\subset S''$, we have $\tilde{\iota}_{S'}^{S''}\circ \tilde{\iota}_{S}^{S'}=\tilde{\iota}_{S}^{S''}$, and the proposition follows. 
\end{proof}

\section{A Weinstein Sector structure on $J_G$}\label{sec: skeleton, sector}
We will give a partial compactification of $J_G$ (with real boundaries) and present a Weinstein sector structure on it, so that we can define a partially wrapped Fukaya category $\cW(J_G)$ on it following \cite{GPS1}. 
We give the Lagrangian core and skeleton of $J_G$, from which we can determine a set of generators of the partially wrapped Fukaya category. 

\subsection{Some algebraic set-up}\label{subsec: algebraic setup}
Recall that the algebraic functions on $G/N$, denoted by $\mathbb{\bC}[G/N]$, as a $G$-representation has a decomposition into irreducibles using the right $T$-action
\begin{align}\label{eq: O, G/N}
\bC[G/N]\cong \bigoplus\limits_{\lambda\in X^*(T)^+} V_{-w_0(\lambda)},
\end{align}
where $X^*(T)^+$ is the set of dominant weights of $T$. Any highest weight vector in each $V_{-w_0(\lambda)}$ corresponds to a left $N$-invariant function. Let $G_{sc}$ be the simply connected form of $G$, and let $T_{sc}\subset G_{sc}$ be the maximal torus (from taking the inverse image of $T$). Then for each fundamental (dominant) weight $\lambda\in X^*(T_{sc})^+_{\fund}$, choose  
\begin{align*}
v_{\lambda}\in V_\lambda,\ v_{-w_0(\lambda)}\in V_{-w_0(\lambda)}\cong V_\lambda^*
\end{align*}
satisfying 
\begin{align*}
(\overline{w}_0^{-1}v_{\lambda}, v_{-w_0(\lambda)})=1. 
\end{align*}
and let 
\begin{align}\label{eq: b_lambda}
b_\lambda(gN)=\lng gv_{\lambda}, v_{-w_0(\lambda)}\rng.
\end{align}
Then $b_\lambda$ is a highest weight vector in the factor $V_{-w_0(\lambda)}$ of (\ref{eq: O, G/N}).

Since $b_{\lambda}(gzN)=\lambda(z)b_{\lambda}(gN)$ for any $z\in \cZ(G_{sc})$, the real function $|b_{\lambda}|$ descends to a left $N$-invariant function on $G/N$. In the following, unless otherwise specified, we will view $b_\lambda$ (resp. $|b_\lambda|$) as a function on $J_{G_{sc}}$ (resp. $J_G$) through the left $N$-equivariant map $\mu^{-1}(f_0, f_0)/N\rightarrow G_{sc}/N$ (resp. $\mu^{-1}(f_0, f_0)/N\rightarrow G/N$). Let $ac_{\gamma(s)}$ denote for the action of $s\in \bC^\times$ on $J_G$ defined in (\ref{eq: C star action}). It is easy to see that on $J_{G_{sc}}$, we have 
\begin{align}\label{eq: scale b_lambda}
ac_{\gamma(s)}^*b_\lambda=s^{(w_0(\lambda)-\lambda)(\sfh_0)}b_\lambda=s^{-2\lambda(\sfh_0)}b_\lambda.
\end{align}

 In the following lemma, we give a description of the canonical 
 $\bC^\times$-action on the factors in $T^*\cZ(L_S)$ and $J_{L_S^\der}$ under the symplectic embedding (\ref{eq: prop U_S}). For $S\subset \Pi$, let $\sfh_0=\sfh_{0,S}+\sfh'_{0,S^\perp}$ be the decomposition with respect to orthogonal decomposition $\ft\cong\ft_S\oplus \langle \alpha\in S\rangle^\perp$, where 
 \begin{align}\label{eq: h_0, S, S perp}
 \sfh_{0,S}=\sum\limits_{\delta\in \Gamma(S)}\delta^\vee,\ \sfh_{0, S^\perp}'=\sum\limits_{\beta\in \Delta^+\backslash\Gamma(S)}\beta^\vee
 \end{align}
 Let 
$\gamma_S: \bC^\times\rightarrow T_S=T_{L_S^\der}$ be the  map determined by the cocharacter $\sfh_{0,S}$. 

 \begin{lemma}\label{lemma: action factor}
The canonical $\bC^\times$-action on $J_G$ restricted to the open locus $T^*\cZ(L_S)\underset{\cZ(L_S^\der)}{\times}J_{L_S^\der}$ has its action on each factor as
\begin{align*}
&s\cdot (z, t)=(z\Ad_{\overline{w}_S^{-1}\overline{w}_0}\gamma(s)\gamma(s)^{-1}, s^2t), \ (z,t)\in T^*\cZ(L_S);\\
&s\cdot (g_S, \xi_S)=(\Ad_{\gamma_S(s)}g_S, s^2\Ad_{\gamma_S(s)}\xi_S),\ (g_S,\xi_S)\in J_{L_S^\der}, s\in \bC^\times.
\end{align*}
Here $\Ad_{\overline{w}_S^{-1}\overline{w}_0}\gamma(s)\gamma(s)^{-1}$ regarded as a one parameter subgroup in $\cZ(L_S)\subset T$ is given by the cocharacter
\begin{align}\label{eq: lemma: action factor }
w_S^{-1}w_0(\sfh_0)-\sfh_0=-2\sfh_{0, S^\perp}'.
\end{align}
\end{lemma}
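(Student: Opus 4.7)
The plan is to pull back the canonical $\bC^\times$-action (\ref{eq: C star action}) on $J_G$ through the explicit $N_S\times N_S$-equivariant embedding $\iota_S$ constructed in (\ref{eq: prop U_S}), and to verify that the proposed action on each factor of $T^*\cZ(L_S)\underset{\cZ(L_S^\der)}{\times}J_{L_S^\der}$ recombines to give exactly $(g,\xi)\mapsto(\Ad_{\gamma(s)}g,\,s^2\Ad_{\gamma(s)}\xi)$ at the level of representatives in $\mu^{-1}(f,f)\subset T^*G$. The whole argument rests on one structural observation: the orthogonal splitting $\sfh_0=\sfh_{0,S}+\sfh'_{0,S^\perp}$ of (\ref{eq: h_0, S, S perp}) exponentiates to a factorization $\gamma(s)=\gamma_S(s)\gamma'_{S^\perp}(s)$ with commuting factors, and since $\sfh'_{0,S^\perp}\in\fz_S$, the piece $\gamma'_{S^\perp}(s)$ lies in $\cZ(L_S)$ and therefore commutes with every element of $L_S^\der$. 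This commutation is what lets the action untangle cleanly into an action on $T^*\cZ(L_S)$ and one on $J_{L_S^\der}$.

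For the group-element component I would take the proposed $s\cdot z=z\,\Ad_{\phi_S^{-1}}\gamma(s)\,\gamma(s)^{-1}$ (writing $\phi_S=\overline{w}_0^{-1}\overline{w}_S$) and $s\cdot g_S=\Ad_{\gamma_S(s)}g_S$, and compute $\phi_S(s\cdot z)(s\cdot g_S)$ directly. The $z$-part simplifies to $\gamma(s)\phi_S z\gamma(s)^{-1}$ since $z$, $\gamma(s)$, and $\phi_S z\phi_S^{-1}$ all lie in $T$ and hence pairwise commute; the $g_S$-part becomes $\gamma(s)^{-1}\gamma_S(s)g_S\gamma_S(s)^{-1}=\gamma'_{S^\perp}(s)^{-1}g_S\gamma_S(s)^{-1}=g_S\gamma(s)^{-1}$, using that $\gamma'_{S^\perp}(s)\in\cZ(L_S)$ commutes with $g_S\in L_S^\der$. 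Combining gives $\phi_S(s\cdot z)(s\cdot g_S)=\Ad_{\gamma(s)}(\phi_S z g_S)$ on the nose, with no $N\times N$-correction required. For the Lie-algebra component $\Xi_S$, I would check $s\cdot\Xi_S=\Xi'_S$ term by term: the weight computation $\Ad_{\gamma(s)^{\pm 1}}f_\alpha=s^{\mp 2}f_\alpha$ for $\alpha\in\Pi$ makes the $(f-f_S)$ term invariant after multiplying by $s^2\Ad_{\gamma(s)}$ and absorbs the outer $\Ad_{\gamma(s)}$ correctly into $\Ad_{(\phi_S zg_S)^{-1}}(f-f_{-w_0(S)})$; the $\xi_S$-term matches because $\gamma'_{S^\perp}(s)$ acts trivially on $\fl_S^\der$, so $\Ad_{\gamma(s)}\xi_S=\Ad_{\gamma_S(s)}\xi_S$; and the $t$-term is fixed by $\Ad_{\gamma(s)}$.

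Finally, the cocharacter identity (\ref{eq: lemma: action factor }) reduces to elementary Weyl-group facts: $w_0(\sfh_0)=-\sfh_0$ since $\sfh_0=2\rho^\vee$ and $w_0$ sends $\Delta^+$ to $\Delta^-$; and $w_S$, as the longest element of the Weyl group of $L_S^\der$, permutes $\Delta^+\setminus\Gamma(S)$ while negating $\Gamma(S)$, so $w_S^{-1}(\sfh_{0,S})=-\sfh_{0,S}$ and $w_S^{-1}(\sfh'_{0,S^\perp})=\sfh'_{0,S^\perp}$. These combine to give $w_S^{-1}w_0(\sfh_0)-\sfh_0=-2\sfh'_{0,S^\perp}\in\fz_S$, which also confirms that $s\cdot z$ lives in $\cZ(L_S)$ as claimed. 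The main obstacle is really just bookkeeping: the explicit formula for $\iota_S$ is a bit intricate, and one has to track carefully which pieces cancel against the $T$-component of $\gamma(s)$ versus the finer $\gamma_S/\gamma'_{S^\perp}$ split, but once the key commutation $[\gamma'_{S^\perp}(s), L_S^\der]=1$ is in hand the rest of the verification is routine.
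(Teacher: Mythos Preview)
Your proposal is correct and takes essentially the same approach as the paper: both verify that the embedding $\iota_S$ intertwines the proposed factored action with the ambient $\bC^\times$-action on $J_G$, using the same key facts (the splitting $\gamma(s)=\gamma_S(s)\gamma'_{S^\perp}(s)$, the centrality $\gamma'_{S^\perp}(s)\in\cZ(L_S)$, and the weight computation $\Ad_{\gamma(s)^{\pm1}}f_\alpha=s^{\mp2}f_\alpha$ for $\alpha\in\Pi$). The only cosmetic difference is direction---the paper applies the ambient action to $\iota_S(z,t;g_S,\xi_S)$ and simplifies until it recognizes $\iota_S$ applied to the transformed data, whereas you go the other way---and your write-up is more explicit about why each step works, which the paper leaves to the reader.
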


\begin{proof}

\begin{align*}
s\cdot (z, t; g_S, \xi_S)\mapsto (\Ad_{\gamma(s)}(\phi_{S}zg_S), s^2\Ad_{\gamma(s)}(\xi_S+t+\Ad_{(\phi_{S}zg_S)^{-1}}(f-f_{-w_0(S)})+(f-f_S)))
\end{align*}
\begin{align*}
&(\Ad_{\gamma(s)}(\phi_{S}zg_S), s^2\Ad_{\gamma(s)}(\xi_S+t+\Ad_{(\phi_{S}zg_S)^{-1}}(f-f_{-w_0(S)})+(f-f_S)))\\
=&(\phi_{S}z\Ad_{(\overline{w}_0^{-1}\overline{w}_S)^{-1}}\gamma(s)\gamma(s)^{-1}\Ad_{\gamma_S(s)}g_S, s^2\Ad_{\gamma_S(s)}(\xi_S+t))\\
&+s^2\Ad_{\Ad_{\gamma_S(s)}g_S^{-1}z^{-1}\gamma(s)\phi_{S}^{-1}}(f-f_{-w_0(S)})+(f-f_S)\\
=&(\phi_{S}z\Ad_{(\overline{w}_0^{-1}\overline{w}_S)^{-1}}\gamma(s)\gamma(s)^{-1}\Ad_{\gamma_S(s)}g_S, s^2\Ad_{\gamma_S(s)}(\xi_S+t))\\
&+\Ad_{\Ad_{\gamma_S(s)}g_S^{-1}z^{-1}\gamma(s)\Ad_{(\overline{w}_0^{-1}\overline{w}_S)^{-1}}\gamma(s)^{-1}\phi_{S}^{-1}}(f-f_{-w_0(S)})+(f-f_S)
\end{align*}

The cocharacter formula (\ref{eq: lemma: action factor }) is direct to check. 
\end{proof}

 The following lemma is easy to check. 
\begin{lemma}
For any $\lambda\in X^*(T_{sc})^+_\fund$, we have $|b_\lambda|\neq 0$ on $N\overline{w}TN$ if and if $ww_0\in W_\lambda=\{w\in W: w(\lambda)=w\}$. In particular, $|b_\lambda|\neq 0$ on the Bruhat cell $\cB_{w_0w_S}$ if and only if $w_S\in W_\lambda\Leftrightarrow \lambda\in \lng\alpha\in S\rng^\perp$. 
\end{lemma}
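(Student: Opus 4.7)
The plan is a short highest-weight calculation inside the irreducible $G$-module $V_\lambda$. First I would write a general point of $N\overline{w}TN$ as $g=n_1\overline{w}tn_2$ and unwind $b_\lambda(g)=\lng gv_\lambda,v_{-w_0(\lambda)}\rng$. Since $v_\lambda\in V_\lambda$ is a highest weight vector, it is fixed by $N$ and acted on by $T$ through $\lambda$, so $n_2 v_\lambda=v_\lambda$ and $tv_\lambda=\lambda(t)v_\lambda$. Likewise, $v_{-w_0(\lambda)}$ is a highest weight vector of $V_{-w_0(\lambda)}\cong V_\lambda^*$, hence annihilated by the positive root subgroups and so fixed by $N$ under the contragredient action; combined with $G$-invariance of the natural pairing $V_\lambda\otimes V_\lambda^*\to \bC$, this lets me push $n_1$ across and obtain
\[
b_\lambda(n_1\overline{w}tn_2)=\lambda(t)\,\lng \overline{w}v_\lambda,\,v_{-w_0(\lambda)}\rng.
\]

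Next I would observe that $\overline{w}v_\lambda$ has $T$-weight $w(\lambda)$, whereas the functional $v_{-w_0(\lambda)}\in V_\lambda^*$ has weight $-w_0(\lambda)$, meaning that as a linear functional on $V_\lambda$ it is nonzero only on the weight $w_0(\lambda)$-subspace. This weight space is the one-dimensional lowest weight line, and every weight $\mu$ of $V_\lambda$ satisfies $\mu-w_0(\lambda)\in\sum_{\alpha\in\Pi}\bZ_{\geq 0}\alpha$. It follows that the matrix coefficient is nonzero exactly when $w(\lambda)=w_0(\lambda)$, i.e.\ $w_0^{-1}w\in W_\lambda$; using that $w_0^2=1$ in $W$, this is the stated condition relating $w$, $w_0$, and the stabilizer $W_\lambda$ (and in particular, when $w=w_0w_S$ it produces the specialization $w_S\in W_\lambda$).

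For the Bruhat-cell version we take $w=w_0w_S$, so the condition from the previous step becomes $w_S\in W_\lambda$. Because $\lambda$ is a (fundamental) dominant weight, $W_\lambda$ is the standard parabolic subgroup generated by the simple reflections $s_\alpha$ with $\lng\lambda,\alpha^\vee\rng=0$; and $w_S$, being the longest element of the parabolic $W_S=\lng s_\alpha:\alpha\in S\rng$, lies in $W_\lambda$ if and only if $W_S\subseteq W_\lambda$ (use that the set of reflections appearing in any reduced expression of $w_S$ is exactly $\{s_\beta:\beta\in\Gamma(S)\}\supset\{s_\alpha:\alpha\in S\}$), if and only if $\lng\lambda,\alpha^\vee\rng=0$ for every $\alpha\in S$, which is precisely $\lambda\in\lng\alpha\in S\rng^\perp$.

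The main, and essentially only, point of care is matching the weight conventions so that $v_{-w_0(\lambda)}$, viewed as a functional on $V_\lambda$ via the identification $V_{-w_0(\lambda)}\cong V_\lambda^*$, is seen to pair against the lowest-weight line of $V_\lambda$ and nothing else; once this is set up correctly the lemma is a one-line weight argument with no substantive obstacle.
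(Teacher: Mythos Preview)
Your argument is correct and is exactly the routine highest-weight verification one would expect; the paper gives no proof, saying only that the lemma ``is easy to check.'' One small point: the condition you actually obtain is $w_0w\in W_\lambda$ (equivalently $w(\lambda)=w_0(\lambda)$), not the paper's ``$ww_0\in W_\lambda$''; these differ unless $\lambda=-w_0(\lambda)$, but yours is the one that specializes correctly to $w_S\in W_\lambda$ when $w=w_0w_S$, so the general statement in the paper evidently contains a typo (alongside ``if and if'' and ``$w(\lambda)=w$'' in the same sentence).
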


For any $\beta\in \Pi$, let $\beta^\vee$ be the the corresponding coroot, and let $\lambda_{\beta^\vee}$ (resp. $\lambda_\beta^\vee$) denote for the fundamental weight (resp. coweight) that is dual to $\beta^\vee$ (resp. $\beta$). 

\begin{lemma}\label{lemma: weights, b_lambda}
For any $G=G_{sc}$ and $S\subset \Pi$, under the embedding 
\begin{align*}
\iota_S: \fU_S=T^*\cZ(L_S)\underset{\cZ(L_S^\der)}{\times}J_{L_S^\der}\hookrightarrow J_G
\end{align*}
for a fixed choice of $\overline{w}_0, \overline{w}_S$ as in Remark \ref{remark: choices of w}, we have 
\begin{align}
\label{eq: lemma b,lambda,S}&\iota_S^*b_{\lambda_{\beta^\vee}}(g_S,\xi_S; z,t)=
\begin{cases}
&\lambda_{\beta^\vee}(z), \text{ if }\beta\not\in S,\\
&\lambda_{\beta^\vee}(z)b^S_{\lambda_{\beta^\vee}}(g_S),\text{ if }\beta\in S,
\end{cases}
\end{align}
where $b_{\lambda_{\beta^\vee}}^S\in \bC[L_S^\der]^{N_S\times N_S}$ corresponds to the fundamental weight\footnote{Note that in general, $b^S_{\lambda_{\beta^\vee}}(g_S)\neq b_{\lambda_{\beta^\vee}}(g_S)$.} $\pi_{\ft^*}^S(\lambda_{\beta^\vee})\in \ft_S^*$ that is dual to $\beta\in S$. 
\end{lemma}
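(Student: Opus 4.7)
The plan is to unwind the formula for $b_{\lambda_{\beta^\vee}}$ along the explicit embedding $\iota_S$ of (\ref{eq: prop U_S}), and to reduce the computation to the analogous matrix coefficient inside the $L_S^\der$-subrepresentation of $V_\lambda$ generated by the highest weight vector.

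First, by the formula (\ref{eq: prop U_S}) and the definition (\ref{eq: b_lambda}), for $\lambda = \lambda_{\beta^\vee}$ we have
\begin{align*}
\iota_S^*b_\lambda(g_S,\xi_S;z,t)
= \langle \phi_S\, z\, g_S\, v_\lambda,\, v_{-w_0(\lambda)}\rangle
= \langle \overline{w}_0^{-1}\overline{w}_S\, z\, g_S\, v_\lambda,\, v_{-w_0(\lambda)}\rangle.
\end{align*}
Since $z \in \cZ(L_S)$ commutes with $L_S^\der \ni g_S, \overline{w}_S$ and acts on $v_\lambda$ by the character $\lambda$, this equals $\lambda(z)\cdot \langle \overline{w}_0^{-1}\overline{w}_S\, g_S\, v_\lambda,\, v_{-w_0(\lambda)}\rangle$. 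The normalization $(\overline{w}_0^{-1}v_\lambda, v_{-w_0(\lambda)}) = 1$, together with the fact that $v_{-w_0(\lambda)}$ kills every weight space of $V_\lambda$ other than the lowest weight line $\bC\cdot \overline{w}_0^{-1} v_\lambda$, shows that for any $u \in V_\lambda$ the quantity $\langle \overline{w}_0^{-1} u,\, v_{-w_0(\lambda)}\rangle$ is the coefficient of $v_\lambda$ in the highest weight decomposition of $u$. Applied to $u = \overline{w}_S g_S v_\lambda$, this reduces the computation to extracting the $v_\lambda$-coefficient of $\overline{w}_S g_S v_\lambda$.

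Next, $v_\lambda$ is a highest weight vector for the Borel $B\cap L_S^\der$ of $\ft_S$-weight $\pi^S_{\ft^*}(\lambda)$, so the $L_S^\der$-submodule $V_S \subset V_\lambda$ it generates is irreducible of highest weight $\pi^S_{\ft^*}(\lambda)$. Since $g_S, \overline{w}_S \in L_S^\der$, the vectors $g_S v_\lambda$ and $\overline{w}_S g_S v_\lambda$ both lie in $V_S$, and $\overline{w}_S v_\lambda$ is (up to scalar) the $L_S^\der$-lowest weight vector of $V_S$. Because $\overline{w}_S$ permutes weight spaces, the $v_\lambda$-coefficient of $\overline{w}_S g_S v_\lambda$ equals the $\overline{w}_S^{-1} v_\lambda$-coefficient of $g_S v_\lambda$, which by the intrinsic version of (\ref{eq: b_lambda}) applied inside $V_S$ is precisely $b^S_{\pi^S_{\ft^*}(\lambda)}(g_S)$. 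The only delicate point is matching the two normalizations of highest/lowest weight pairings, which is exactly the compatibility of $\overline{w}_0$ and $\overline{w}_S$ enforced by Remark \ref{remark: choices of w}(a).

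Finally, a case analysis finishes the proof. If $\beta\notin S$, then $\lambda_{\beta^\vee}$ lies in $\langle \alpha\in S\rangle^\perp$, so $\pi^S_{\ft^*}(\lambda_{\beta^\vee}) = 0$; the module $V_S$ is trivial, $b^S_0 \equiv 1$, and $\iota_S^*b_\lambda = \lambda(z)$. If $\beta \in S$, then $\pi^S_{\ft^*}(\lambda_{\beta^\vee})$ is the fundamental weight of $L_S^\der$ dual to $\beta^\vee$, and $\iota_S^*b_\lambda = \lambda(z)\cdot b^S_{\lambda_{\beta^\vee}}(g_S)$, as claimed.
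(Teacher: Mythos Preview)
Your proof is correct and follows essentially the same approach as the paper: unwind $b_\lambda$ along the explicit formula for $\iota_S$, pull out the $\lambda(z)$ factor, and reduce to the $L_S^\der$-matrix coefficient on the subrepresentation $V_S\subset V_\lambda$ generated by $v_\lambda$, using the normalization compatibility of Remark~\ref{remark: choices of w}(a). The only cosmetic difference is that the paper moves $\overline{w}_0^{-1}\overline{w}_S$ directly to the other side of the pairing to obtain $(g_S v_\lambda,\,\overline{w}_S^{-1}\overline{w}_0 v_{-w_0(\lambda)})$ and then invokes $(\overline{w}_S^{-1}v_\lambda,\,\overline{w}_S^{-1}\overline{w}_0 v_{-w_0(\lambda)})=1$, whereas you phrase the same step as ``extracting the $v_\lambda$-coefficient'' and then tracking how $\overline{w}_S$ permutes weight spaces; the two formulations are equivalent.
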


\begin{proof}
Using the definition (\ref{eq: b_lambda}) of $b_\lambda$ and the formula (\ref{eq: prop U_S}), we have 
\begin{align*}
&\iota_S^*b_\lambda(z,t;g_S,\xi_S)=b_\lambda((\overline{w}_0)^{-1}\overline{w}_S zg_S)=\lambda(z)b_\lambda((\overline{w}_0)^{-1}\overline{w}_S g_S)\\
=&\lambda(z)(\overline{w}_0^{-1}\overline{w}_S g_S(v_\lambda), v_{-w_0(\lambda)})=\lambda(z)(g_S(v_\lambda), \overline{w}_S^{-1}\overline{w}_0v_{-w_0(\lambda)})\\
=&\lambda(z)b_{\pi_{\ft^*}^S(\lambda)}(g_S).
\end{align*}
The last line above uses
\begin{align*}
(\overline{w}_S^{-1}v_{\lambda}, \overline{w}_S^{-1}\overline{w}_0v_{-w_0(\lambda)})=1. 
\end{align*}
For any $S$, let $\pi_{\ft^*}^S: \ft^*\rightarrow \ft_S^*$ be the projection map. 

Since 
\begin{align*}
\pi_{\ft^*}^S(\lambda_{\beta^\vee})=\begin{cases}&0, \text{ if }\beta\not\in S\\
&\lambda_{\beta^\vee},\text{ if }\beta\in S
\end{cases},
\end{align*}
(\ref{eq: lemma b,lambda,S}) follows. 
\end{proof}

Recall that we use $\Gamma(S)$ to denote the set of positive roots that can be written as sums of elements in $S$, i.e. the set of positive roots of the standard Levi subalgebra generated by $S$. A direct corollary of Lemma \ref{lemma: weights, b_lambda} is the following.

\begin{cor}\label{cor: b_lambda, regular}
Assume $G=G_{sc}$. 
  For any $S\subsetneq \Pi$, the holomorphic function
\begin{align*}
b_{S^\perp}:=(\iota_S^*b_{\lambda_{\beta^\vee}})_{\beta\not\in S}: &\fU_S\longrightarrow (\bC^\times)^{\Pi\backslash S} 
\end{align*}
is regular everywhere. 
\end{cor}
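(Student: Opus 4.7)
The plan is that this corollary follows almost immediately from Lemma~\ref{lemma: weights, b_lambda}, and the real content is packaged in the character-theoretic observation that characters of a torus never vanish. So the proof strategy is: (i) invoke the lemma to identify the pullback of each $b_{\lambda_{\beta^\vee}}$ on the $\beta\not\in S$ part as a pure character of $\cZ(L_S)$; (ii) observe that such a character automatically takes values in $\bC^\times$; (iii) conclude that the assembled map $b_{S^\perp}$ lands in $(\bC^\times)^{\Pi\backslash S}$.

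More concretely, for each $\beta\in\Pi\setminus S$, the first case of formula (\ref{eq: lemma b,lambda,S}) gives
\begin{align*}
\iota_S^*b_{\lambda_{\beta^\vee}}(g_S,\xi_S;z,t) = \lambda_{\beta^\vee}(z),
\end{align*}
so in particular this pullback depends only on the $\cZ(L_S)$-coordinate of a point of $\fU_S = T^*\cZ(L_S)\underset{\cZ(L_S^\der)}{\times} J_{L_S^\der}$ and is independent of the cotangent fiber direction $t$ and of the $J_{L_S^\der}$-factor. Since we are working under the assumption $G=G_{sc}$, every fundamental weight $\lambda_{\beta^\vee}$ lies in the character lattice $X^*(T)$, hence defines an algebraic homomorphism $T\to\bC^\times$. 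Restricting this homomorphism to the closed diagonalizable subgroup $\cZ(L_S)\subset T$ yields an algebraic homomorphism $\cZ(L_S)\to\bC^\times$, so $\lambda_{\beta^\vee}(z)\neq 0$ for every $z\in\cZ(L_S)$.

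Assembling these component functions over $\beta\in\Pi\setminus S$ shows that $b_{S^\perp}$ factors as the composition of the projection $\fU_S\to\cZ(L_S)$ with the homomorphism $\cZ(L_S)\to(\bC^\times)^{\Pi\setminus S}$ given by $z\mapsto(\lambda_{\beta^\vee}(z))_{\beta\not\in S}$. Both maps are regular morphisms of affine varieties, so $b_{S^\perp}$ is regular everywhere, as claimed. There is no real obstacle here; the only thing to be careful about is that $\cZ(L_S)$ need not be connected (it is a possibly disconnected diagonalizable subgroup of $T$), but this does not affect the argument, since a character of any diagonalizable group is automatically valued in $\bC^\times$.
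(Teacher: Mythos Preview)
Your factorization of $b_{S^\perp}$ through the projection $\fU_S \to \cZ(L_S)$ followed by the character map $z\mapsto(\lambda_{\beta^\vee}(z))_{\beta\notin S}$ is exactly right and matches how the paper intends the corollary to follow from Lemma~\ref{lemma: weights, b_lambda} (the paper gives no separate proof). The one point to flag is your reading of ``regular everywhere'' as ``is a regular morphism of varieties.'' Since the map is already declared holomorphic, and since the sole later use of this corollary (showing that $\widetilde{\norm}$ is everywhere differentiable and \emph{regular} on $J_G\setminus\cB_1$, i.e.\ has no critical points, in Subsection~\ref{subsec: tilde N, I}) needs $b_{S^\perp}$ to be a \emph{submersion}, the intended meaning is almost certainly that every point of $\fU_S$ is a regular point of the map.

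This is immediate from your setup once you add one sentence: the projection $\fU_S\to\cZ(L_S)$ is a bundle map, hence a submersion; and the character homomorphism $\cZ(L_S)\to(\bC^\times)^{\Pi\setminus S}$ is an isogeny, because on Lie algebras the kernel of $\fz_S\to\bC^{\Pi\setminus S}$, $t\mapsto(\lambda_{\beta^\vee}(t))_{\beta\notin S}$, consists of those $t\in\fz_S$ with $t\in\langle\alpha^\vee:\alpha\in S\rangle=\ft_S$, and $\fz_S\cap\ft_S=0$ since the Cartan matrix of $L_S^{\der}$ is nondegenerate. Hence $b_{S^\perp}$ has surjective differential everywhere.
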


The following lemma is needed for proving Proposition \ref{prop: proper b map} below. 
Assume $G=G_{sc}$. Consider the holomorphic map 
\begin{align}\label{eq: pi_b, def}
\pi_b:=(b_{\lambda_{\beta^\vee}})_{\beta\in \Pi}: J_G\longrightarrow \bC^{\Pi}. 
\end{align}
Let 
\begin{align}\label{eq: pi_b, first, def}
\pi_{|b|}:=\sum\limits_{\beta\in \Pi}|b_{\lambda_{\beta^\vee}}|^{\frac{1}{\lambda_{\beta^\vee}(\sfh_0)}}: J_{G_\ad}\longrightarrow \bR_{\geq 0}. 
\end{align}
Note that the \emph{inverse} canonical $\bC^\times$-action scales each $b_{\lambda_{\beta^\vee}}$ with  weight $2\lambda_{\beta^\vee}(\sfh_0)>0$, making $\pi_{|b|}$ homogeneous of weight $2$.

\begin{lemma}\label{lemma: pi_|b|, epsilon}
\item[(i)] For any compact neighborhood of $\fK\subset \fc$ of $[0]$, there exists $\epsilon>0$ such that $\pi_{|b|}^{-1}([0,\epsilon])\cap \chi^{-1}(\fK)$ is compact. 

\item[(ii)] The restriction $\pi_b|_{\chi^{-1}([0])}: \chi^{-1}([0])\rightarrow \bC^{\Pi}$ is proper. 
\end{lemma}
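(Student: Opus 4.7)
I would prove (ii) first by an explicit description of the central fiber, and then derive (i) from (ii) together with the Bruhat decomposition of Proposition \ref{prop: B_w0w}, Lemma \ref{lemma: weights, b_lambda}, and the stratum gluing of Proposition \ref{prop: fU_S', sqcup}.

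For (ii), the key observation is that in the Kostant-slice form (\ref{eq: centralizer cS}), the central fiber is identified with $C_G(f) \cong \cZ(G) \times C_f^0$, where $C_f^0 = \exp(\ker \ad_f)$ is commutative unipotent of dimension $n$. Because $|\lambda_{\beta^\vee}(z)| = 1$ on the finite center $\cZ(G)$, properness reduces to that of $\pi_b|_{C_f^0}$. I would pick a homogeneous basis $Y_{-2d_1}, \ldots, Y_{-2d_n}$ of $\ker\ad_f$ adapted to the principal $\fsl_2$-triple (with $d_i$ the exponents of $\fg$ and $d_1 = 1$), so that on the coordinates $x_i$ of $\bA^n \cong C_f^0$ via $x \mapsto u(x) := \exp(\sum x_i Y_{-2d_i})$, the canonical $\bC^\times$-action scales $x_i \mapsto s^{-2d_i} x_i$; by (\ref{eq: scale b_lambda}) each $b_{\lambda_{\beta^\vee}}(u(x))$ is then a polynomial of homogeneous $\sfh_0$-weight $2\lambda_{\beta^\vee}(\sfh_0) > 0$. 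Thus $\pi_b|_{C_f^0} : \bA^n \to \bA^n$ is $\bC^\times$-equivariant between affine spaces carrying strictly positive weights on both sides, so it is proper as soon as its scheme-theoretic central fiber $\pi_b^{-1}(0) \cap C_f^0$ reduces to the single point $\{u = 1\}$. To verify this I would expand $\exp(X) v_{\lambda_{\beta^\vee}}$ in $T$-weight layers of $V_{\lambda_{\beta^\vee}}$: the extreme weight $w_0(\lambda_{\beta^\vee})$ can only be reached through $f^{\lambda_{\beta^\vee}(\sfh_0)} v_{\lambda_{\beta^\vee}}$, and joint vanishing of $b_{\lambda_{\beta^\vee}}$ across $\beta \in \Pi$ peels off the coefficients $x_i$ one by one in order of increasing exponent.

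For (i), combining Lemma \ref{lemma: weights, b_lambda} with the elementary identity $b^S_{\lambda_{\beta^\vee}}(z_S) = 0$ for $z_S \in \cZ(L_S^\der)$ (an immediate consequence of $\langle v_\lambda, v_{-w_0(\lambda)}\rangle = 0$ at the identity, by the weight argument that a vector of weight $\lambda$ cannot pair with a functional of weight $-w_0(\lambda) \neq -\lambda$), I find that on each Bruhat cell $\cB_{w_0 w_S} \cong T^*\cZ(L_S) \times \cS_{L_S^\der}$ the function $b_{\lambda_{\beta^\vee}}$ equals $\lambda_{\beta^\vee}(z)$ for $\beta \notin S$ and vanishes for $\beta \in S$. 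Since $\lambda_{\beta^\vee}|_{\cZ(L_S)}$ is a nontrivial character whenever $\beta \notin S$, the zero locus $\{\pi_{|b|} = 0\}$ is exactly the disjoint union of Kostant sections $\cB_e$, and $\cB_e \cap \chi^{-1}(\fK)$ is $|\cZ(G)|$ copies of $\fK$, hence compact. To upgrade this to a thin $\epsilon$-neighborhood I argue by contradiction: a sequence $p_n \in \pi_{|b|}^{-1}([0, \epsilon_n]) \cap \chi^{-1}(\fK)$ with $\epsilon_n \to 0$ that has no convergent subsequence in $J_G$ can be restricted to a single cell $\cB_{w_0 w_{S_*}}$. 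Boundedness of $\chi(p_n) \in \fK$ controls the $(t, \bar\xi_{S_*})$-coordinates via the proper finite map $\fz_{S_*} \oplus (\fl_{S_*}^\der \sslash L_{S_*}^\der) \to \fc$ (quotient by $W/W_{L_{S_*}^\der}$), while $\lambda_{\beta^\vee}(z_n) \to 0$ for $\beta \notin S_*$ forces the $\cZ(L_{S_*})$-coordinate $z_n$ to the anti-dominant boundary; the gluing of Proposition \ref{prop: fU_S', sqcup} then identifies this limit with a point of $\cB_{w_0 w_{S'}} \subset J_G$ for some $S' \supsetneq S_*$, contradicting the non-convergence hypothesis. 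Induction on $|\Pi \setminus S|$ eventually lands in $\cB_e$, where compactness was already established.

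\textbf{Main obstacle.} The technical heart is the claim $\pi_b^{-1}(0) \cap C_f^0 = \{1\}$ in (ii); the combinatorics of weight-lowering in each fundamental representation $V_{\lambda_{\beta^\vee}}$ under $X \in \ker\ad_f$, matched against the exponent-weighted filtration on $C_f^0$, must be carried out carefully---effectively a Whittaker-function calculation ensuring that the $b$'s distinguish each principal $\fsl_2$-direction. A cleaner alternative is to invoke the classical fact that Kostant's Whittaker reduction identifies $\bC[\chi^{-1}([0])]$ with $\bC[x_1, \ldots, x_n]$ and realizes the $b_{\lambda_{\beta^\vee}}$ as algebraically independent generators, which immediately gives properness; but aligning our normalizations with that statement is the real bookkeeping. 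Once (ii) is secured, (i) becomes a finite induction over the Bruhat strata using the gluing already in place.
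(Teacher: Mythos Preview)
Your plan for (ii) is genuinely different from the paper's---the paper deduces (ii) from (i) in one line via homogeneity, whereas you analyze $C_G(f)$ directly. Your route is viable, and the ``main obstacle'' you flag (that $\pi_b^{-1}(0)\cap C_f^0=\{1\}$) is indeed the whole content; it does follow from Kostant's results on Whittaker functions, though your weight-peeling sketch is too loose as written (the lowest-weight coefficient receives contributions from all of $\ker\ad_f$, not only from powers of $f$).

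Your argument for (i), however, has a genuine error. You assert that boundedness of $\chi(p_n)$ controls the $(t,\bar\xi_{S_*})$-coordinates via the finite map $\fz_{S_*}\oplus(\fl_{S_*}^\der\sslash L_{S_*}^\der)\to\fc$. This would require $\chi|_{\cB_{w_0w_{S_*}}}$ to be independent of the $\cZ(L_{S_*})$-coordinate $z$, which is false: already for $G=SL_2$ and $S_*=\emptyset$ one computes on $\cB_{w_0}\cong T\times\ft$ that $\chi(h,t)=-t^2+h^{-2}$, so with $h_n=1/n$ and $t_n=n$ one has $\chi$ identically $0$ while $t_n\to\infty$. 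Thus $(t_n,\bar\xi_{S_*,n})$ need not be bounded, and your subsequent ``$z_n$ converges to the anti-dominant boundary, gluing gives a limit in $\cB_{w_0w_{S'}}$'' step has no foundation (nor does $\lambda_{\beta^\vee}(z_n)\to 0$ for all $\beta\notin S_*$ pin down any limit direction for $z_n$). The paper circumvents this entirely: it invokes the log compactification $\overline{J}^{\log}_{G_{\ad}}$ of \cite{Balibanu}, which is proper over $\fc$, so any sequence with $\chi$ bounded automatically subconverges in $\overline{J}^{\log}_{G_{\ad}}$; the work is then to show, by an explicit coordinate comparison in $\cB_{w_0}$, that a limit on the boundary divisor is incompatible with $\pi_{|b|}\to 0$. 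That compactness input is exactly what your stratum-by-stratum induction is missing.
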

\begin{proof}
(i) By the homogeneity of $\pi_{|b|}$ under the contracting $\bC^\times$-action, it suffices to show that there exists a compact neighborhood of $\fK\subset \fc$ of $[0]$ and $\epsilon>0$ such that $\pi_{|b|}^{-1}([0,\epsilon])\cap \chi^{-1}(\fK)$ is compact. 
Recall the log partial compactification for the adjoint group $\overline{J}^{\log}_{G_\ad}$ defined in \cite{Balibanu},  
\begin{align}
\label{eq: J, log, def}&\overline{J}^{\log}_{G_\ad}=\{(g^{-1}B,\xi\in \cS): \Ad_{g}\xi\in \bigoplus\limits_{\alpha\in \Pi}\fg_{-\alpha}\oplus \fb\}\subset G/B\times \cS,\\
\nonumber&G_{\ad}\times \cS\supset J_{G_\ad}\hookrightarrow \overline{J}^{\log}_{G_\ad}\overset{\overline{\chi}}{\longrightarrow}\cS\\
\nonumber&(g,\xi)\mapsto (g^{-1}B, \xi).
\end{align}
Here we need the presentation of $J_{G_\ad}$ in (\ref{eq: centralizer cS}) to make the embedding well defined. The \emph{inverse} canonical $\bC^\times$-action extends to $\overline{J}^{\log}_{G_\ad}$, given by 
\begin{align*}
s\cdot (g^{-1}B,\xi)=(s^{-\sfh_0}g^{-1}B, s^{-2}\Ad_{s^{-\sfh_0}}\xi).
\end{align*}
By Theorem 4.11 in \emph{loc. cit.}, the $\bC^\times$-fixed points of the \emph{contracting}  $\bC^\times$-flow are the $T$-fixed points of the Peterson variety identified with $\overline{\chi^{-1}([0])}\subset G/B$, and these are indexed by $\overline{w}^{-1}_S\overline{w}_0B, S\subset\Pi$\footnote{Here we use slightly different conventions from \cite{Balibanu} to be compatible with previous sections; the difference is essentially given by an additional factor of $\overline{w}_0$.}, and the dimension of the ascending manifold of $\overline{w}_S^{-1}\overline{w}_0B\in \overline{\chi^{-1}([0])}$ is $2|\Pi|-|S|=2n-|S|$. Note that the intersection of the ascending manifold with $J_{G_\ad}$ is exactly $\cB_{w_0w_S}$, which is an open dense part.

Suppose the contrary, there exists a sequence $(g_j,\xi_j)\in G_{\ad}\times \cS$ such that 
\begin{align*}
&\xi_j\rightarrow f,\ g_j^{-1}B\rightarrow g_\infty^{-1}B\in \overline{\chi^{-1}([0])}-\chi^{-1}([0])=\bigsqcup\limits_{S\subsetneq \Pi} \overline{w}_0A_S\overline{w}_S B,\\
&\pi_{|b|}(g_j)\rightarrow 0. 
\end{align*} 
where 
\begin{align}\label{eq: A_S}
A_S=N_{S}\cap C_G(\Ad_{\overline{w}_0^{-1}}f_{-w_0(S)})
\end{align}
 (cf. \cite[Proposition 6.3]{Balibanu1} and references cited therein for the Schubert decomposition of $\overline{\chi^{-1}([0])}$). As always, we fix a collection of representatives $\overline{w}_S, S\subset \Pi$ for $w_S\in W$. 

There exists a unique $S\subsetneq \Pi$ and a unique element $u_S\in A_S$, such that $g_{\infty}^{-1}B=\overline{w}_0u_S\overline{w}_S B$.
Since $\cB_{w_0}$ is open dense in $J_{G_\ad}$, after perturbing the sequence $(g_j, \xi_j)$ a little bit if necessary, we may assume that $(g_j,\xi_j)\in \cB_{w_0}$ for all $j$. 
Then we can write 
\begin{align}
\label{eq: g_j, xi_j, u_j}
(g_j,\xi_j)=&(\Ad_{u_j}(\widetilde{u}_j\overline{w}_0^{-1}z_j), \Ad_{u_j}(f+t_j+\Ad_{(\overline{w}_0^{-1}z_j)^{-1}}f)),\text{ for some }z_j\in T, t_j\in \ft, 
\end{align}
where $\widetilde{u}_j\in N$ is the unique element that makes the pair on the right-hand-side (without applying $\Ad_{u_j}$) a commuting pair, and $u_j\in N$ is the unique unipotent element whose adjoint action on the Lie algebra element in the presentation (\ref{eq: prop U_S}) is in the Kostant slice $\cS$. Then $\pi_{|b|}(g_j)\rightarrow 0$ is equivalent to $\lambda_{\beta^\vee}(z_j)\rightarrow 0$ for all $\beta\in \Pi$. 
We have the following implications
\begin{itemize}
\item[(i)]

\begin{align}
\nonumber&g_j^{-1}B=u_j\overline{w}_0B\rightarrow g_{\infty}^{-1}B=\overline{w}_0 u_S\overline{w}_SB,\\
\label{eq: y_jb_j, N-}\Rightarrow&N\ni u_j=\overline{w}_0u_S\overline{w}_S(y_j^-b_j)\overline{w}_0^{-1}\text{ for some }b_j\in B, N^-\ni y_j^-\rightarrow I\\
\nonumber\Rightarrow& u_S\overline{w}_S(y_j^-b_j)\in N^-\\
\nonumber\Rightarrow&y_j^-b_j\in \overline{w}_S^{-1}N_S\cdot N^- \cap N^-\cdot B \subset N^-\cdot h_1T_{S}N_{S}\text{ for some fixed }h_1\in T\text{ such that }\overline{w}_S^{-1}\in h_1\cdot L_S^{\der} \\
\nonumber\Rightarrow& b_j\in h_1N_{S}T_S, N^-\ni y_j\rightarrow I
\end{align}
We will write $b_j=h_1n^{(j)}z^{(j)}$ with respect to the splitting above. \\
 
 \item[(ii)] Write $\xi_j=f+\eta_j$ for $\eta_j\in \fb$ (or more precisely in $\ker \ad_e$ which is not essential) with $|\eta_j|\rightarrow 0$. Recall $u_S\in A_S$ from (\ref{eq: A_S}). 
 \begin{align}
\nonumber\Ad_{\overline{w}_S^{-1}u_S^{-1}\overline{w}_0^{-1}}(f+\eta_j)=&\Ad_{\overline{w}_S^{-1}\overline{w}_0^{-1}}f_{-w_0(S)}+\Ad_{\overline{w}_S^{-1}u_S^{-1}\overline{w}_0^{-1}}(f-f_{-w_0(S)})+\Ad_{\overline{w}_S^{-1}u_S^{-1}\overline{w}_0^{-1}}\eta_j\\
\label{eq: Ad, f, eta_j}=&\Ad_{h_2}f_{S}+(\text{a fixed term in }\fn_{\fp_S})+({\substack{\text{a term in }\fn_{\fp_S}^-\oplus \fl_S\\
 \text{ that is approaching to 0}}}),
 \end{align}
 where $h_2\in T$ is some fixed element.

 \begin{align}
 \nonumber&\Ad_{b_j\overline{w}_0^{-1}}(t_j+\Ad_{(\overline{w}_0z_j)^{-1}}f+f)\\
 \label{eq: Ad_b, 1}=&\Ad_{b_j\overline{w}_0^{-1}}(t_j)+\Ad_{h_1z^{(j)}}(\Ad_{\widetilde{z}_j^{-1}}(f-f_S))+\Ad_{b_j}(\Ad_{\widetilde{z}_j^{-1}}f_S)+\Ad_{b_j}(\Ad_{\overline{w}_0^{-1}}f)
 \end{align}
where 
\begin{align}
\nonumber&\widetilde{z}_j^{-1}=h_3\cdot w_0(z_j^{-1})=\overline{w}_0^{-1}z_j^{-1}\overline{w}_0^{-1}\text{ for some fixed }h_3\in T\\
\label{eq: beta tilde z_j}&\lambda_{\beta^\vee}(\widetilde{z}_j^{-1})=\lambda_{\beta^\vee}(h_3)\cdot \lambda_{\beta^\vee}(w_0(z_j^{-1}))\rightarrow 0, \ \beta\in \Pi.
\end{align} 
 \end{itemize}

Equation (\ref{eq: g_j, xi_j, u_j}), the relation (\ref{eq: y_jb_j, N-}) and $y_j^-\rightarrow I$ implies that the difference between (\ref{eq: Ad_b, 1}) and (\ref{eq: Ad, f, eta_j}) is approaching to $0$. In particular, with respect to the decomposition $\ft\oplus\fn^-\oplus \fn_{\fp_S}\oplus \fn_S$, we have 
\begin{align}
&w_0(t_j)+\proj_{\ft_S}\Ad_{b_j\widetilde{z}_j^{-1}}f_S\rightarrow 0\\
\label{eq: factor fn_-}&\Ad_{h_1z^{(j)}\widetilde{z}_j^{-1}}f\rightarrow \Ad_{h_2}f_S \\
\label{eq: factor fn_fp_S}&\Ad_{b_j\overline{w}_0^{-1}}(f-f_{-w_0(S)}))\rightarrow \Ad_{\overline{w}_S^{-1}u_S^{-1}\overline{w}_0^{-1}}(f-f_{-w_0(S)}),
\end{align}
where we omit the relation on the component $\fn_S$. (\ref{eq: factor fn_-}) implies that 
\begin{align*}
&\beta(z^{(j)}\widetilde{z}_j^{-1})\rightarrow\begin{cases}&\beta(h_1^{-1}h_2),\text{ if } \beta\in S\\
&\infty,\text{ if } \beta\in \Pi\backslash S
\end{cases}.
\end{align*}
However, since $z^{(j)}\in T_S$, for $\alpha\in \Pi\backslash S\neq \emptyset$, 
\begin{align*}
\lambda_{\alpha^\vee}(\widetilde{z}_j)=\lambda_{\alpha^\vee}(z^{(j)})\lambda_{\alpha^\vee}(z^{(j)}\widetilde{z}_j^{-1})^{-1}=\lambda_{\alpha^\vee}(z^{(j)}\widetilde{z}_j^{-1})^{-1}\rightarrow 0
\end{align*}
because $\lambda_{\alpha^\vee}$ as a \emph{nonnegative} linear combination of $\beta\in \Pi$ has a strictly positive component in $\alpha\in \Pi\backslash S$. This gives a contradiction to (\ref{eq: beta tilde z_j}), so the lemma is established.

(ii) follows from (i) since $\pi_{|b|}|_{\chi^{-1}([0])}$ is homogeneous with respect to the inverse $\bR_+$-action on the domain and the weight $2$ $\bR_+$-action on the codomain. 
\end{proof}

\begin{prop}\label{prop: proper b map}
For any compact region $\fK\subset \fc$, the restriction of $\pi_b$ from (\ref{eq: pi_b, def})
\begin{align*}
\pi_b|_{\chi^{-1}(\fK)}: \chi^{-1}(\fK)\longrightarrow \bC^{\Pi}
\end{align*}
is proper. 
\end{prop}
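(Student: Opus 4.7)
The plan is to use the first definition of $J_G$ as $\{(g,\xi) \in G \times \cS : \Ad_g\xi = \xi\}$ from Subsection \ref{subsec: def of J_G}, under which $(\chi, \pi_b)$ becomes $(g, \xi) \mapsto (\xi, \pi_b(g))$, so properness of $\pi_b|_{\chi^{-1}(\fK)}$ reduces to the joint properness of $(\chi, \pi_b)$ over compacts. Given a sequence $(g_j, \xi_j) \in \pi_b^{-1}(K) \cap \chi^{-1}(\fK)$, I would lift $\fK$ to a compact subset of $\cS$ via Kostant's theorem and pass to a subsequence with $\xi_j \to \xi_\infty \in \cS$ (necessarily regular). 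The centralizers $Z_G(\xi_j)$ are commutative connected algebraic groups of dimension $n$ converging to $Z_G(\xi_\infty)$; the task is to extract a convergent subsequence of $g_j \in Z_G(\xi_j)$ using the bound on $\pi_b(g_j)$.

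The key technical input is: $\pi_b|_{Z_G(\xi)}\colon Z_G(\xi) \to \bC^\Pi$ is proper, uniformly for $\xi$ in compact subsets of $\cS$. For $\xi$ regular semisimple, $Z_G(\xi)$ is a maximal torus (a $G$-conjugate of $T$), and $b_{\lambda_{\beta^\vee}}$ restricted to it is a Laurent polynomial in the torus coordinates. Because each irreducible $V_{\lambda_{\beta^\vee}}$ contains both the highest weight $\lambda_{\beta^\vee}$ and its $w_0$-image (lowest weight), the union of the Newton polytopes of these Laurent polynomials over $\beta \in \Pi$ is full-dimensional, yielding properness. For $\xi$ in the discriminant (regular but non-semisimple), $Z_G(\xi)$ has a unipotent factor and a parallel analysis via the exponential map shows the matrix coefficients still force growth at infinity.

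With uniform properness established, $g_j$ has a convergent subsequence in $G$; the limit $(g_\infty, \xi_\infty)$ satisfies $\Ad_{g_\infty} \xi_\infty = \xi_\infty$ by continuity and so lies in $J_G$, showing that $\pi_b^{-1}(K) \cap \chi^{-1}(\fK)$ is sequentially compact, hence compact. The main obstacle is the uniform properness claim, especially at the discriminant of $\cS$ where the centralizer degenerates: one must verify that the properness constants can be chosen uniformly over compact regions of $\cS$, which requires a careful analysis of how matrix coefficients behave on the Jordan-type decomposition of the centralizers. An alternative route, closer in spirit to the proof of Lemma \ref{lemma: pi_|b|, epsilon}(i), would instead embed into Balibanu's log compactification $\overline{J}^{\log}_{G_\ad}$, use projectivity over $\cS$ to extract a limit in $\overline{J}^{\log}_{G_\ad}$, and rule out the boundary strata $\overline{w}_0 A_S \overline{w}_S B$ by adapting the decomposition (\ref{eq: g_j, xi_j, u_j}) and the conclusion $\lambda_{\alpha^\vee}(z_j) \to \infty$ to the case of a general limit $\xi_\infty \in \cS$ (using that $\xi_\infty - f \in \fb$ preserves the essential $\mathfrak{n}^-_{\mathfrak{p}_S}$-component relation).
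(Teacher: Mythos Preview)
Your reformulation is sound: properness of $\pi_b|_{\chi^{-1}(\fK)}$ is equivalent to uniform properness of $\pi_b|_{Z_G(\xi)}$ for $\xi$ in compacta of $\cS$. But this is essentially a restatement of the proposition, and you have not proved the key claim.

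The Newton polytope heuristic is too weak as stated. For a conjugate torus $g_\xi T g_\xi^{-1}$, the restriction $b_{\lambda_{\beta^\vee}}(g_\xi t g_\xi^{-1})$ is a Laurent polynomial in $t \in T$ with exponents among the weights of $V_{\lambda_{\beta^\vee}}$, but with coefficients depending on $g_\xi$. Having the union of Newton polytopes full-dimensional does not imply a tuple of Laurent polynomials defines a proper map $(\bC^\times)^n \to \bC^n$; one needs finiteness (nonzero mixed volume with the right support conditions), and one needs it \emph{uniformly} as $g_\xi$ varies. As $\xi$ approaches the discriminant, coefficients degenerate and the centralizer acquires a unipotent part; you acknowledge this is ``the main obstacle'' but supply no mechanism to control it. The phrase ``parallel analysis via the exponential map'' for the unipotent case is not an argument.

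The paper's proof is structurally different. It runs a double induction: on the rank of $G$ (so the result is assumed for all $J_{L_S^\der}$, $S \subsetneq \Pi$), and on $|S|$ for subsets $S \subsetneq \Pi$. The geometric engine is the open cover of $J_G$ by the $\fU_S \cong T^*\cZ(L_S) \times_{\cZ(L_S^\der)} J_{L_S^\der}$ from Proposition~\ref{prop: U_S}, together with Lemma~\ref{lemma: weights, b_lambda}, which factorises $b_{\lambda_{\beta^\vee}}$ on $\fU_S$ as $\lambda_{\beta^\vee}(z)$ times a lower-rank $b^S$-function. One introduces regions $\fU_{S,<\epsilon_2}^{\geqslant \epsilon_1}$ cut out by inequalities on the $|b_{\lambda_{\beta^\vee}}|$ and shows inductively that over any compact in $\fc$ the $\cS_{\fl_S^\der} \times \fz_S$-coordinate stays bounded; Lemma~\ref{lemma: pi_|b|, epsilon} provides the seed near $\pi_{|b|}=0$, and the contracting $\bR_{\geq 1}$-action is used to propagate estimates across scales. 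This approach never isolates a single centralizer fiber; it instead exploits the global Bruhat/Levi stratification, which is precisely what handles the discriminant uniformly.

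Your alternative via $\overline{J}^{\log}_{G_\ad}$ is a reasonable direction and closer to the method of Lemma~\ref{lemma: pi_|b|, epsilon}(i), but you have only sketched it. Carrying the argument of (\ref{eq: g_j, xi_j, u_j})--(\ref{eq: beta tilde z_j}) to a general limit $\xi_\infty \in \cS$ requires reworking the contradiction: in that lemma the hypothesis is $\pi_{|b|}(g_j) \to 0$ and $\xi_j \to f$, whereas here you would have $\pi_b(g_j)$ bounded and $\xi_j \to \xi_\infty$ arbitrary, so the asymptotics of $\Ad_{(\overline{w}_0^{-1}z_j)^{-1}} f$ and the role of (\ref{eq: factor fn_-}) change. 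This may well work, but it is not the paper's route for this proposition and you would need to supply the details.
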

\begin{proof}

We prove by induction on two things. First, suppose we have proved by induction on the rank of the group the proposition for all $J_{L_S^\der}$ with $S\subsetneq \Pi$. The base case $S=\emptyset$ is trivial. Second, assume $S=\emptyset$. For any compact $\fK'\subset \fc$ (here and after, always assuming containing a neighborhood of $[0]$) and any $\epsilon>0$, there exists a compact $\fK'_{\emptyset, \epsilon}\subset \ft$ such that for all $h\in T$ with $|b_{\lambda_{\beta^\vee}}(\overline{w}_0^{-1}h)|=|\lambda_{\beta^\vee}(h)|\geq \epsilon, \beta\in \Pi$, 
\begin{align*}
\chi_\fg(f+t+\Ad_{(\overline{w}_0^{-1}h)^{-1}}f)\in \fK'\Rightarrow t\in \fK'_{\emptyset, \epsilon}.
\end{align*}
The upshot is that $\Ad_{(\overline{w}_0^{-1}h)^{-1}}f$ is bounded under the assumption, so $\fK'_{\emptyset, \epsilon}$ does not depend on $h$. Note that for the same reason, the inverse implication is also true, i.e. for any compact $\fK_\emptyset\subset\ft$ and $h\in T$ as above, we have 
\begin{align*}
\chi(\iota_\emptyset (\{h: |\lambda_{\beta^\vee}(h)|\geq \epsilon\})\times \fK_\emptyset)\text{ is pre-compact}. 
\end{align*}

Now suppose we have proved for all $S'$ with $|S'|<k$ such that for any $\epsilon_1, \epsilon_2>0$ and compact $\fK'\subset \fc$ as above, there exists a compact $\fK_{S',\epsilon_1, \epsilon_2}'\subset \cS_{\fl_{S'}^\der}\times \fz_{S'}\cong \ft/W_{S'}$ such that for any $(g_{S'}, \xi_{S'}; z,t)\in \fU_{S'}$ satisfying 
\begin{align}\label{eq: region, b, S'}
&|b_{\lambda_{\beta^\vee}}(\overline{w}_0^{-1}\overline{w}_{S'}g_{S'}z)|\begin{cases}&\geq \epsilon_1, \beta\not\in S'\\
&< \epsilon_2 |\lambda_{\beta^\vee}(z)|, \beta\in S'
\end{cases} \\
\nonumber&\Leftrightarrow \begin{cases}&|\lambda_{\beta^\vee}(z)|\geq \epsilon_1, \beta\not\in S'\\
&|b^{S'}_{\lambda_{\beta^\vee}}(g_{S'})|< \epsilon_2, \beta\in S'
\end{cases}, 
\end{align}
we have 
\begin{align*}
\chi(\iota_{S'}(g_{S'}, \xi_{S'}; z, t))\in\fK'\Rightarrow \xi_{S'}+t\in \fK_{S',\epsilon_1, \epsilon_2}'. 
\end{align*}
Let $\fU_{S',<\epsilon_2}^{\geqslant\epsilon_1}$ be the region defined by (\ref{eq: region, b, S'}). 
We note that it is important that we assume $\xi_{S'}\in \cS_{\fl_{S'}^\der}$ in the presentation. 
On the other hand, the inverse implication also holds under the same assumption. Namely, if $\xi_{S'}+t\in\fK_{S'}$ for some compact $\fK_{S'}\subset \cS_{\fl_{S'}^\der}\times \fz_{S'}$ and $|b_{\lambda_{\beta^\vee}}^{S'}(g_S')|<\epsilon_2, \beta\in S'$, then by induction, $g_{S'}\in C_{L_{S'}^\der}(\xi_{S'})$ is uniformly bounded in $L_{S'}^\der$. This together with $|\lambda_{\beta^\vee}(z)|\geq \epsilon_2, \beta\not\in S'$ implies that $\Xi_{S'}$ from (\ref{eq: prop U_S}) has a uniformly bounded component in $\fn_{\fp_{S'}}$. Hence 
 \begin{align*}
 \chi(\iota_{S'}\{(g_{S'}, \xi_{S'}; z, t)\in \fU_{S',<\epsilon_2}^{\geqslant \epsilon_1}: \xi_{S'}+t\in \fK_{S'}\}) \text{ is pre-compact.}
 \end{align*}
The induction steps also include the above claim for all lower rank groups, in particular for $J_{L_S^\der}, S\subsetneq \Pi$, we have the claim holds for all $S'\subsetneq S$ and any $\epsilon_1, \epsilon_2>0$.

Now we look at any $S$ with $|S|=k<n=|\Pi|$. For any $\epsilon_1,\epsilon_2$, let $\cZ(L_S)_{\geq \epsilon_1}:=\{|\lambda_{\beta^\vee}(z)|\geq \epsilon_1, \beta\not\in S\}$. 
Then there exists $\epsilon'_1, \epsilon_2'>0$ such that 
\begin{align}\label{eq: fU_S: geq epsilon}
\fU_{S, \geqslant \epsilon_2}^{\geqslant \epsilon_1}:=(J_{L_S^\der}\underset{\cZ(L_S^\der)}{\times}T^*\cZ(L_S)_{\geq \epsilon_1})-\fU_{S,<\epsilon_2}^{\geqslant \epsilon_1}\subset \bigcup\limits_{S'\subsetneq S}\tilde{\iota}_{S'}^S(\fU^{\geqslant \epsilon'_1}_{S', <\epsilon'_2}).
\end{align}
We have the fibration $\pi_{S,<\epsilon_2}: \fU_{S,<\epsilon_2}^{\geqslant \epsilon_1}\rightarrow T^*\cZ(L_S)_{\geq \epsilon_1}/\cZ(L_S^\der)$ with fiber at any point $(\dot{z}, t)\in T^*\cZ(L_S)_{\geq \epsilon_1}/\cZ(L_S^\der)$ the open subset 
\begin{align}\label{eq: F_dot z, t}
F_{(\dot z, t)}\cong \{(g_S, \xi_S)\in (L_S^\der\times \cS_{\fl_S^\der})\cap \mathscr{Z}_{L_S^\der}: |b_{\lambda_{\beta^\vee}}(g_S)|< \epsilon_2, \beta\in S\}\subset J_{L_S^\der}.
\end{align}
By induction, for any given compact $\fK_S\subset \fc_S$ and $(\dot{z},t)$ as above, $\chi_S^{-1}(\fK_S)\cap F_{(\dot{z},t)}$ is compact.  Let 
\begin{align}\label{eq: fU_S, geq epsilon}
\fU_{S,\fK_S, \epsilon_2}^{\geqslant \epsilon_1}:=\fU_{S,\geqslant \epsilon_2}^{\geqslant\epsilon_1}\cup \bigcup\limits_{\dot{z}\in \cZ(L_S)_{\geq \epsilon_1}/\cZ(L_S^\der)}(F_{(\dot{z},t)}\cap \chi_S^{-1}(\fK_S)). 
\end{align}
Now the fiber of $\pi_{S, \fK_S}: \fU_{S,\fK_S,\epsilon_2}^{\geqslant \epsilon_1}\rightarrow T^*(\cZ(L_S)_{\geq \epsilon_1}/\cZ(L_S^\der))$ at $(\dot{z}, t)$ has two parts of (finite) boundaries: (1) $\chi_S^{-1}(\partial\fK_S)\cap \overline{F}_{(\dot{z}, t)}$ and (2) $\partial F_{(\dot{z}, t)}\cap \chi_{S}^{-1}(\fc_S-\fK_S)$, whose union over all $T^*(\cZ(L_S)_{\geq \epsilon_1}/\cZ(L_S^\der))$ gives the ``horizontal" boundary of $\fU_{S,\fK_S,\epsilon_2}^{\geqslant \epsilon_1}$. We denote these two parts of boundaries by $\sfB_{\partial \fK_S, \epsilon_1,\epsilon_2}$ and $\sfB^{\not\fK_S}_{\partial F, \epsilon_1,\epsilon_2}$, respectively.

We show that
\begin{align}\label{eq: claim fK_S}
\text{for sufficiently large }\fK'_S,\ \chi(\sfB_{\partial \fK'_S,\epsilon_1,\epsilon_2})\text{ and }\chi(\sfB^{\not\fK'_S}_{\partial F,\epsilon_1,\epsilon_2})\text{ are outside any given compact }\fK'\subset \fc,
\end{align}
which is exactly the induction step for $S$ in the second part. 
We set up some notations. For any interval $J\subset \bR_{>0}$, we set $\cZ(L_S)_{J}=\{z\in \cZ(L_S): |\lambda_{\beta^\vee}(z)|\in J, \beta\not\in S\}$. 
Denote for the preimage of $T^*(\cZ(L_S)_{J}/\cZ(L_S^\der))$ through $\pi_{S,<\epsilon_2}$ (resp. $\pi_{S, \fK_S}$) in $\fU_{S,<\epsilon_2}^{\geqslant \epsilon_1}$ (resp. $\fU_{S, \fK_S,\epsilon_2}^{\geqslant \epsilon_1}$) as $\fU_{S, <\epsilon_2}^{J}$ (resp. $\fU_{S, \fK_S,\epsilon_2}^{J}$). 

First, choose any $R_1>2\epsilon_1$, and consider $\fU_{S, <\epsilon_2}^{[\epsilon_1, R_1]}\subset \fU_{S,<\epsilon_2}^{\geqslant\epsilon_1}$.  Since $\pi_{|b|}$ is bounded on this region, by Lemma \ref{lemma: pi_|b|, epsilon} (ii), $\chi^{-1}([0])$ intersects $\overline{\fU}_{S, <\epsilon_2}^{[\epsilon_1, R_1]}$ in a compact region, so there exists a compact $\fK^{(1)}_S\subset \fc_S$ such that 
\begin{align}\label{eq: chi, [0], fU, R_1}
\chi^{-1}([0])\cap \fU_{S,<2\epsilon_2}^{[\epsilon_1, R_1]}\subset (\chi_S^{-1}(\fK^{(1)}_S)\underset{\cZ(L_S^\der)}{\times} T^*\cZ(L_S))\cap \fU_{S,<2\epsilon_2}^{[\epsilon_1, R_1]}.
\end{align}

Choose $\epsilon_1''>0$ such that $\fU_{S',<\epsilon_2'}^{\geqslant \epsilon_1'}\subset (\fU_{S'}^S)_{<\epsilon_2'}^{\geqslant\epsilon_1''}\underset{\cZ(L_S^\der)}{\times} T^*\cZ(L_S)_{\geq \epsilon_1'}$ (cf. (\ref{eq: fU_S: geq epsilon}) for $\epsilon_j'$), for all $S'\subsetneq S$, where $\fU_{S'}^S$ denotes for the left-hand-side of (\ref{eq: iota, S, S'}) with the containment relation between $S'$ and $S$ swapped, and $(\fU_{S'}^S)_{<\epsilon_2'}^{\geqslant \epsilon_1''}$ is defined similarly using (\ref{eq: region, b, S'}) for the group $L_S^\der$. 
By induction, for any compact $\fK'\subset \fc$ containing a neighborhood of $[0]$ and any $S'\subsetneq S$, there exists $\fK'_{S', \epsilon_1', \epsilon_2'}\subset \cS_{\fl_{S'}^\der}\times\fz_{S'}$ such that for any $(g_{S'},\xi_{S'}; z, t)\in \fU_{S',<\epsilon_2'}^{\geqslant \epsilon_1'}$ with $\xi_{S'}+t\not\in \fK'_{S', \epsilon_1',\epsilon_2'}$, we have $\chi(\iota_{S'}(g_{S'}, \xi_{S'}; z, t))\not\in \fK'$. 
Also by induction, there exists $\fK^{(2)}_S\subset \fc_S$ such that for any $S'\subsetneq S$, 
\begin{align*}
\chi_S(\iota_{S'}^S(g_{S'}, \xi_{S'}; z^{(S)}, t^{(S)}))\not\in \fK^{(2)}_S, (g_{S'}, \xi_{S'}; z^{(S)}, t^{(S)})\in (\fU^S_{S'})_{<\epsilon_2'}^{\geqslant \epsilon_1''}\Rightarrow \xi_{S'}+t^{(S)}\not\in \proj_{\cS_{\fl_{S'}^\der}\times \fz_{S'}^S}\fK'_{S', \epsilon_1', \epsilon_2'},
\end{align*}
with respect to the splitting $\fz_S=\fz_{S'}\oplus \fz_{S'}^S$ as in the proof of Proposition \ref{prop: fU_S', sqcup}. 
Fix any $\fK_S$ containing an open neighborhood of $\fK_S^{(1)}\cup \fK_S^{(2)}$. By induction, for any $(g_S, \xi_S;z, t)\in \sfB_{\partial \fK_S, \epsilon_1,\epsilon_2}$, we have $g_S\in C_{L_S^\der}(\xi_S)$ and $\xi_S$ are bounded and $|\beta(z^{-1})|, \beta\not\in S$ are bounded from above,
so by (\ref{eq: prop U_S}) 
\begin{align}\label{eq: sfB, |t|}
 \chi(\sfB_{\partial \fK_S,\epsilon_1,\epsilon_2}\cap \{|t|\gg 1\})\cap \fK'=\emptyset.
 \end{align}

Combining the above observations (and the compatibility of the open embeddings in Proposition \ref{prop: fU_S', sqcup}) and using the relation (\ref{eq: fU_S: geq epsilon}), we have 
\begin{itemize}
\item[(a)] There exists a compact neighborhood $\fK_1$ of $[0]$ in $\fc$ such that 
\begin{align*}
\chi(\sfB_{\partial \fK_S, \epsilon_1,\epsilon_2}\cap \overline{\fU}_{S,<\epsilon_2}^{[\epsilon_1,R_1]})\cap \fK_1=\emptyset
\end{align*}

\item[(b)] 
$\chi(\sfB^{\not\fK_S}_{\partial F,\epsilon_1,\epsilon_2})\cap \fK'=\emptyset.$ 
\end{itemize}

Second, we claim that there exists $R_2\gg R_1$ and a compact neighborhood $\fK_2$ of $[0]$ in $\fc$ such that 
\begin{align*}
\chi(\sfB_{\partial \fK_S,\epsilon_1,\epsilon_2}\cap \overline{\fU}_{S, <\epsilon_2}^{\geqslant R_2})\cap \fK_2=\emptyset.
\end{align*}
Indeed, recall $\Xi_S$ is from (\ref{eq: prop U_S}), by the same consideration as above from induction, 
\begin{align*}
&(g_S, \xi_S;z, t)\in \sfB_{\partial \fK_S,\epsilon_1,\epsilon_2}\cap \overline{\fU}_{S, <\epsilon_2}^{\geqslant R_2}\Rightarrow \Xi_S\overset{\substack{\text{uniformly}\\ \text{close to}}}{\sim} f+\xi_S+t, \chi_{\fl_S^\der}(\xi_S)\in \partial \fK_S\\
\Rightarrow& \chi_\fg(\Xi_S)\overset{\substack{\text{uniformly}\\ \text{close to}}}{\sim}\chi_\fg(f+\xi_S+t)=\chi_\fg(\xi_S+t). 
\end{align*}
By assumption on $\xi_S$, it is clear that $\chi_\fg(\xi_S+t)$ is outside a fixed compact neighborhood of $[0]\in \fc$. 

Third, for $\sfB_{\partial \fK_S,\epsilon_1,\epsilon_2}\cap \overline{\fU}_{S, <\epsilon_2}^{[R_1, R_2]}$, by (\ref{eq: chi, [0], fU, R_1}) and the invariance of $\chi^{-1}([0])$ under the inverse $\bC^\times$-action, we have 
\begin{align}\label{eq: chi, [0], fU, R_1, R_2}
\chi^{-1}([0])\cap \fU_{S,<2\epsilon_2}^{[R_1, R_2]}\subset (\chi_S^{-1}(\fK_S)\underset{\cZ(L_S^\der)}{\times}T^*\cZ(L_S))\cap \fU_{S,<2\epsilon_2}^{[R_1, R_2]}.
\end{align}
Combining with (\ref{eq: sfB, |t|}), we see that there exists a compact neighborhood $\fK_3\subset \fc$ of $[0]$ such that 
\begin{align*}
\chi(\sfB_{\partial \fK_S, \epsilon_1,\epsilon_2}\cap \overline{\fU}_{S, <\epsilon_2}^{[R_1, R_2]})\cap \fK_3=\emptyset. 
\end{align*}

In summary, we have found a $\fK_S$ so that the following hold:
\begin{itemize}
\item[(a')] There exists a compact neighborhood $\widetilde{\fK}\subset \fc$ of $[0]$ such that $\chi(\sfB_{\partial \fK_S,\epsilon_1,\epsilon_2})\cap \widetilde{\fK}=\emptyset.$

\item[(b)] $\chi(\sfB_{\partial F,\epsilon_1,\epsilon_2}^{\not\fK_S})\cap \fK'=\emptyset$. Note that if we enlarge $\fK_S$ to be sufficiently large, then the corresponding $\chi(\sfB^{\not\fK_S}_{\partial F,\epsilon_1,\epsilon_2})$ is disjoint from any given compact $\fK''\subset \fc$. 
\end{itemize}

Now we use the (inverse, i.e. contracting) $\bR_{\geq 1}$-action (as a multiplicative monoid) to find a $\fK'_S$ so that claim (\ref{eq: claim fK_S}) holds. Without loss of generality, we may assume that $\fc-\fK'$ is invariant under the $\bR_{\leq 1}$-action. 
Let $\tau\gg 1$ such that $\tau\cdot \fK'\subset\widetilde{\fK}^\circ$. Choose $\fK_{S}'\supset \fK_S$ such that $\tau_1\cdot \sfB_{\partial \fK'_S, \epsilon_1,\epsilon_2}\cap \sfB_{\partial \fK_S, \epsilon_1,\epsilon_2}=\emptyset$ for all $1\leq \tau_1\leq \tau$. This is achievable because the $\bR_{\geq 1}$-action on $\fU_{S}^{\geqslant \epsilon_1}:=\fU_{S,<\epsilon_2}^{\geqslant \epsilon_1}\cup \fU_{S, \geqslant \epsilon_2}^{\geqslant \epsilon_1}$ is the ``product" $\bR_{\geq 1}$-action on the fiber (canonically identified with  $J_{L_S^\der}$ up to $\cZ(L_S^\der)$) and on the base $T^*(\cZ(L_S)_{\geq \epsilon}/\cZ(L_S^\der))$. So the condition on $\fK_S'$ can be checked for the open subset in (\ref{eq: F_dot z, t}) quotient out by $\cZ(L_S^\der)$. It is not hard to see that $\fK'_S$ makes the claim (\ref{eq: claim fK_S}) valid. Indeed, for any $(g_S, \xi_S;z, t)\in \fU_{S}^{\geqslant\epsilon_1}-\fU_{S, \fK'_S}^{\geqslant\epsilon_1}$, we look at the flow $\tau_1\cdot (g_S, \xi_S;z, t), \tau_1\in \bR_{\geq 1}$, which will intersect $\sfB_{\partial \fK_S,\epsilon_1,\epsilon_2}\cup \sfB_{\partial F, \epsilon_1,\epsilon_2}^{\not\fK_S}$ at a finite time. There are two cases
\begin{itemize}
\item[Case 1.] the flow line first intersects $\sfB_{\partial F, \epsilon_1,\epsilon_2}^{\not\fK_S}$, then by (b) above and that $\fc-\fK'$ is invariant under the $\bR_{\leq 1}$-action, $\chi(\iota_S(g_S, \xi_S;z, t))\cap \fK'=\emptyset$.  

\item[Case 2.] the flow line first intersects $\sfB_{\partial \fK_S, \epsilon_1,\epsilon_2}$ at $\widetilde{\tau}_1\cdot (g_S, \xi_S;z, t)$ for some $\widetilde{\tau}_1\in \bR_{\geq 1}$. By assumption and (a') above, $\widetilde{\tau}_1>\tau$, therefore, 
\begin{align*}
\chi(\iota_S(g_S, \xi_S;z, t))=\widetilde{\tau}_1^{-1}\cdot \chi(\iota_S(\widetilde{\tau}_1\cdot (g_S, \xi_S;z, t)))\subset \widetilde{\tau}_1^{-1}(\fc-\widetilde{\fK}).
\end{align*}
Since $\widetilde{\tau}_1^{-1}(\fc-\widetilde{\fK})\cap \fK'=\emptyset$, the claim follows in this case. 
\end{itemize} 
Thus, we have proved claim (\ref{eq: claim fK_S}). 

Lastly, we finish the proof of the proposition. Using Lemma \ref{lemma: pi_|b|, epsilon} (i), we fix an $\epsilon>0$ and a compact neighborhood $\fK\subset \fc$ of $[0]$, so that $\pi_{|b|}^{-1}([0,\epsilon])\cap \chi^{-1}(\fK)$ is compact. We have $\pi_{|b|}^{-1}[\epsilon, \infty)\subset \bigcup\limits_{S\subsetneq \Pi} \fU_{S, <\epsilon_2'}^{\geqslant \epsilon_1'}$ for some $\epsilon_1', \epsilon_2'>0$. Fix any finite interval $[0, K]\subset\bR_{\geq 0}$. By the induction steps above, for any $S\subsetneq\Pi$, $\fU_{S, <\epsilon_2'}^{\geqslant \epsilon_1'}\cap \chi^{-1}(\fK)\cap \pi_{|b|}^{-1}([0,K])$ is pre-compact in $\fU_{S}$. Therefore $\chi^{-1}(\fK)\cap \pi_{|b|}^{-1}([0,K])$ is a finite union of compact subsets, so it is compact. The proof is complete. 
\end{proof}

\begin{remark}\label{remark: handle}
Implicit in the proof above is an inductive process of handle attachments to get $J_G$. 
Namely, the step of getting from (\ref{eq: fU_S: geq epsilon}) to (\ref{eq: fU_S, geq epsilon}), for a fixed $\fK_S$ (assuming it is a closed ball in $\fc_S$ containing $[0]$ in the interior) and sufficiently small $\epsilon_2>0$,  should be viewed as joining a (Morse-Bott) index $(n+|S|)$-handle. 
\end{remark}

We fix some standard (local) coordinates for the open cell $\cB_{w_0}\cong T^*T$. First, the functions $b_{\lambda_{\beta^\vee}},\beta\in \Pi$ give local coordinates on $w_0T\subset G$ (if $G=G_{sc}$ these are also global coordinates). Let $\widetilde{p}_{\beta^\vee}\in \ft, \beta\in \Pi$ be the dual coordinate on $\ft^*$, which are the same as pairing with the simple coroots $\beta^\vee$. Let
\begin{align}\label{eq: q, p}
&q_{\lambda_{\beta^\vee}}=\log |b_{\lambda_{\beta^\vee}}|^{1/\lambda_{\beta^\vee}(\sfh_0)}, \ \theta_{\lambda_{\beta^\vee}}=\Im \log b_{\lambda_{\beta^\vee}} (\text{this is multivalued})\\
\nonumber&p_{\beta^\vee}=\lambda_{\beta^\vee}(\sfh_0)\Re \widetilde{p}_{\beta^\vee}-i\Im \widetilde{p}_{\beta^\vee}. 
\end{align}
The symplectic form on $\cB_{w_0}\cong T^*T$ in such coordinates is given by 
\begin{align*}
\omega&=-\Re(d\sum\limits_{\beta\in \Pi}\widetilde{p}_{\beta^\vee} b_{\lambda_{\beta^\vee}}^{-1}db_{\lambda_{\beta^\vee}})=-\sum\limits_{\beta\in \Pi}d \Re p_{\beta^\vee}\wedge dq_{\lambda_{\beta^\vee}}+d\Im p_{\beta^\vee}\wedge d\theta_{\lambda_{\beta^\vee}}. 
\end{align*}

Similarly, for any $S\subset \Pi$, we can define (local) symplectic dual coordinates 
\begin{align}\label{eq: dual coordinate S}
(q_{\lambda_{\beta^\vee}}, \theta_{\lambda_{\beta^\vee}}; \Re p_{\beta^\vee_{S^\perp}}, \Im p_{\beta^\vee_{S^\perp}}), \beta\not\in S
\end{align}
for the factor $T^*\cZ(L_S)$ in $\fU_S$, where $\beta^\vee_{S^\perp}=\pi_{\fz_S}(\beta^\vee)$ denote for the orthogonal projection of $\beta^\vee$ onto $\fz_S$ with respect to the Killing form. Then
\begin{align*}
\omega_{\fU_S}=-\sum\limits_{\beta\not\in S}(d \Re p_{\beta_{S^\perp}^\vee}\wedge dq_{\lambda_{\beta^\vee}}+d\Im p_{\beta_{S^\perp}^\vee}\wedge d\theta_{\lambda_{\beta^\vee}})+\omega_{J_{L_S^\der}}. 
\end{align*}
Note that for $S_1\subsetneq S_2$, the function $\Re p_{\beta^\vee_{S_1^\perp}}$ and $\Re p_{\beta^\vee_{S_2^\perp}}, \beta\not\in S_2$,  are usually different: 
\begin{align}\label{eq: relation Rp_S1, S2}
\Re p_{\beta^\vee_{S_2^\perp}}=\Re p_{\beta^\vee_{S_1^\perp}}+\sum\limits_{\gamma\in S_2\backslash S_1}a_\gamma\Re p_{\gamma^\vee_{S_1^\perp}}
\end{align}
for some constants $a_\gamma$, on $\fU_{S_1}$.

\subsection{A partial compactification of $J_G$ as a Liouville/Weinstein sector}\label{subsec: partial compact}
In this section, we introduce a partial compactification of $J_G$ as a Liouville/Weinstein sector. The key idea is to first partially compactify $J_G-\cB_1$ as a Liouville sector of the form $\fF\times \bC_{\Re z\leq 0}$ where $\fF$ is a Liouville manifold. Then $\overline{J}_G$ is obtained from attaching $|\cZ(G)|$ many critical handles (corresponding to the connected components of $\chi^{-1}([0])$) to $\fF\times \bC_{\Re z\leq 0}$. The main results are Proposition \ref{prop: hypersurface F}, \ref{prop: partial compactify} and \ref{prop: split generation}.

\subsubsection{A smooth hypersurface $H^{sm}$ in $\bR^n_{|b_\lambda|^{1/\lambda(\sfh_0)}}$}\label{subsubsec: H, sm}

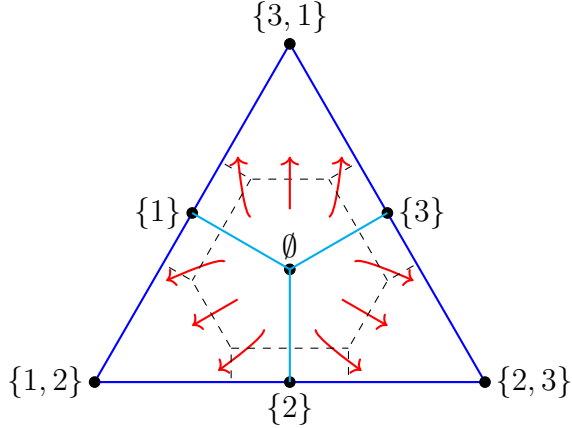
\begin{figure}[h]
\begin{tikzpicture}
\draw[thick, blue] (1.73*1.5,-1.5) to (0,3);
\draw[thick, blue] (-1.73*1.5, -1.5) to (0,3);
\draw[thick, blue] (-1.73*1.5,-1.5) to  (1.73*1.5, -1.5);
\draw[dashed] (-1.73*0.5, 1.4)--(-1.73*0.3, 1.2)-- (1.73*0.3, 1.2)--(1.73*0.5, 1.4);
\draw[red, thick, ->] (1.73*0.3, 0.7) parabola (1.73*0.4, 1.5); 
\draw[red, thick, ->] (-1.73*0.3, 0.7) parabola (-1.73*0.4, 1.5); 
\draw[red, thick, ->,rotate=120] (1.73*0.3, 0.7) parabola (1.73*0.4, 1.5); 
\draw[red, thick, ->, rotate=120] (-1.73*0.3, 0.7) parabola (-1.73*0.4, 1.5); 
\draw[red, thick, ->,rotate=240] (1.73*0.3, 0.7) parabola (1.73*0.4, 1.5); 
\draw[red, thick, ->, rotate=240] (-1.73*0.3, 0.7) parabola (-1.73*0.4, 1.5); 
\draw[red, thick, ->] (0, 0.8)--(0, 1.5);
\draw[red, thick, ->, rotate=120] (0, 0.8)--(0, 1.5);
\draw[red, thick, ->, rotate=240] (0, 0.8)--(0, 1.5);
\draw[dashed, rotate=120] (-1.73*0.5, 1.4)--(-1.73*0.3, 1.2)-- (1.73*0.3, 1.2)--(1.73*0.5, 1.4);
\draw[dashed, rotate=240] (-1.73*0.5, 1.4)--(-1.73*0.3, 1.2)-- (1.73*0.3, 1.2)--(1.73*0.5, 1.4);
\draw[dashed] (-1.73*0.3, 1.2)--(-1.73*0.75, 1.2-1.35);
\draw[dashed,  rotate=120] (-1.73*0.3, 1.2)--(-1.73*0.75, 1.2-1.35);
\draw[dashed, rotate=240] (-1.73*0.3, 1.2)--(-1.73*0.75, 1.2-1.35);
\filldraw (1.73*1.5,-1.5) circle (2pt) node[right]{$\{2,3\}$};
\filldraw (0,3) circle (2pt) node[above]{$\{3,1\}$};
\filldraw (-1.73*1.5,-1.5) circle (2pt) node[left]{$\{1,2\}$};
\filldraw (1.73*0.75, 0.75) circle (2pt) node[right]{$\{3\}$};
\filldraw (-1.73*0.75, 0.75) circle (2pt) node[left]{$\{1\}$};
\filldraw (0, -1.5) circle (2pt) node[below]{$\{2\}$};
\filldraw (0,0) circle (2pt) node[above]{$\emptyset$};
\draw[thick, cyan] (0,0)--(-1.73*0.75, 0.75);
\draw[thick, cyan] (0,0)--(1.73*0.75, 0.75);
\draw[thick, cyan] (0,0)--(0, -1.5);
\end{tikzpicture}
\caption{A picture for $\fC^2$ with $\Pi=\{1,2,3\}$: the barycenters are indexed by $S\subsetneq \Pi$;  the center of $\fC^2$, the cyan segments, and the three open region in the complement give the stratification $\{\fS_S^\circ\}_{S\subsetneq \Pi}$ of $\mathring{\fC}^{2}$; some enlargements of the open regions enclosed by the dashed lines give the collection of $U_S$ (\ref{eq: U_S, D_S}); the red flow lines indicate the flow of $Z_{H^{sm}}$ with prescribed features.}\label{figure: Morse}
\end{figure}

Let 
\begin{align}
\label{eq: norm b}&\|(b_\lambda)\|=\sum\limits_{\beta\in \Pi}|b_{\lambda_{\beta^\vee}}|^{1/\lambda_{\beta^\vee}(\sfh_0)},\\
\label{eq: norm b,S}&\|(b_{\lambda})\|_{S^\perp}=\sum\limits_{\beta\in\Pi\backslash S}|b_{\lambda_{\beta^\vee}}|^{1/\lambda_{\beta^\vee}(\sfh_0)}, S\subsetneq \Pi. 
\end{align}
Note that by (\ref{eq: scale b_lambda}), the canonical $\bR_+$-action from restriction from the canonical $\bC^\times$-action (resp. Liouville flow) scales $\|(b_\lambda)\|$ and $\|(b_\lambda)\|_{S^\perp}$ by weight $-2$ (resp. $-1$). 

Let $\fC^{n-1}$ be the $(n-1)$-simplex 
\begin{align*}
\{\|(b_\lambda)\|=1\}\subset \bR^n_{|b_\lambda|^{1/\lambda(\sfh_0)}},
\end{align*}
depicted in Figure \ref{figure: Morse} (here $\bR^n_{|b_\lambda|^{1/\lambda(\sfh_0)}}$ means the first quadrant in $\bR^n$, i.e. all coordinates are nonnegative). 
The cells in $\fC^{n-1}$, indexed by $S\subsetneq \Pi$, are given by 
\begin{align}\label{eq: C_S}
C_S=\{|b_{\lambda_{\beta^\vee}}|^{1/\lambda_{\beta^\vee}(\sfh_0)}=0\Leftrightarrow\beta\in S\}. 
\end{align}
We mark the barycenter of $C_S$ by $S$ (cf. Figure \ref{figure: Morse}). For each $\alpha\in \Pi$, let $\Pi_{\alpha}=\Pi\backslash\{\alpha\}$.  Let $\widehat{C}_S$ be the coordinate plane in $\bR^n_{|b_\lambda|^{1/\lambda(\sfh_0)}}$ defined by the same equations as for $C_S$. 

We are going to ``bend" $\fC^{n-1}$ inside $\bR^n_{|b_\lambda|^{1/\lambda(\sfh_0)}}$ in the following steps. 

First, for every $\Pi_{\alpha}$, viewed as a vertex in $\fC^{n-1}$, take the hyperplane 
\begin{align}\label{eq: H_alpha}
H_\alpha=\{|b_{\lambda_{\alpha^\vee}}|^{1/\lambda_{\alpha^\vee}(\sfh_0)}=1/2\}. 
\end{align}
The hyperplanes $H_\alpha, \alpha\in\Pi$, together cut out the cubic region $Q^n=\{|b_{\lambda_{\alpha^\vee}}|^{1/\lambda_{\alpha^\vee}(\sfh_0)}\in [0,1/2]\}$. The boundary of $Q^n$ is naturally (minimally) stratified, and the collection of strata whose closure does not contain the origin projects to a stratification on $\fC^{n-1}$ along the radial rays, depicted in Figure \ref{figure: Morse}. The strata in the interior of $\fC^{n-1}$ are indexed by $S\subsetneq \Pi$, corresponding to $(\bigcap\limits_{\alpha\not\in S}H_\alpha\cap Q^n)^\circ\subset \partial Q^n$. By some abuse of notations, we will denote the strata in $Q^n\cap \bR^n_{|b_\lambda|^{1/\lambda(\sfh_0)}>0}$ and those in the interior of $\fC^{n-1}$ both by $\fS^\circ_S, S\subsetneq \Pi$. For later convenience, introduce 
\begin{align*}
\fS_S=\overline{\fS^\circ_S}\backslash\bigcup\limits_{S'\subsetneq S}\overline{\fS^\circ_{S'}}\subset \fC^{n-1}. 
\end{align*}
The collection $\{\fS_S: S\subsetneq \Pi\}$ should be viewed as a stratification of $\fC^{n-1}$ as a manifold with boundary, where we do not separately stratify the boundary.

Second, we perform a smoothing of $\partial Q^n_+=\overline{\partial Q^n\cap \bR^n_{|b_\lambda|^{1/\lambda(\sfh_0)}>0}}$ using induction on the dimension of strata $\dim \fS_S=|S|$. For $|S|=n-1$, we delete a tubular neighborhood of the lower dimensional strata. Suppose we have defined the smoothing of $\partial Q^n_+$ away from a tubular neighborhood of the union of strata of dimension $\leq \ell$, such that along each stratum $\fS_{S'}$ with $|S'|>\ell$, the smoothing is locally defined by an equation of the form 
\begin{align}
\label{eq: f_S'}&f_{S'}(|(b_{\lambda_{\beta^\vee}})|^{1/\lambda_{\beta^\vee}(\sfh_0)};\beta\not\in S')=0,\\
\label{eq: f_S', partial}&\frac{\partial f_{S'}}{\partial |(b_{\lambda_{\beta^\vee}})|^{1/\lambda_{\beta^\vee}(\sfh_0)}}\leq 0, \beta\not\in S' (\text{but not all zero}),
\end{align}
Here we take 
\begin{align*}
f_{\Pi_\alpha}(|(b_{\lambda_{\alpha^\vee}})|^{1/\lambda_{\alpha^\vee}(\sfh_0)})=-|(b_{\lambda_{\alpha^\vee}})|^{1/\lambda_{\alpha^\vee}(\sfh_0)}+\frac{1}{2}, \forall \alpha\in \Pi. 
\end{align*}
For nice geometric properties, we can assume that all functions belong to a fixed analytic geometric category. 
For any $S$ with $|S|=\ell$,
we look at the coordinate plane $\widehat{C}_S$ that is orthogonal to 
$\fS_S$ in $\bR^n_{|b_\lambda|^{1/\lambda(\sfh_0)}}$. The intersection of $\widehat{C}_S$ with the existing partial smoothing can be extended to a smoothing of $\widehat{C}_S\cap \partial Q^n_+$ satisfying (\ref{eq: f_S', partial}) with $S'$ replaced by $S$. Take the product of the smoothing and the complement of a tubular neighborhood of $\partial \fS_S$ in $\overline{\fS}_S$, with the latter denoted by $D_{S}$. 
Note that by Lemma \ref{lemma: weights, b_lambda}, for a fixed point $(|b_{\lambda_{\beta^\vee}}|^{1/\lambda_{\beta^\vee}(\sfh_0)})_{\beta\not\in S}$, $D_S$ is parametrizing $(|b_{\lambda_{\beta^\vee}}|^{1/\lambda_{\beta^\vee}(\sfh_{0;S})})_{\beta\in S}$ for $J_{L_S^\der}$ (near the cone point $0$). 
Then the smoothing is extended over the complement of a tubular neighborhood of the strata of dimension $<\ell$, and (\ref{eq: f_S'}) and (\ref{eq: f_S', partial}) are satisfied for all $|S|\geq \ell$. Repeat the step until no stratum is left. 

Take a collection of functions $\{(f_S(|(b_{\lambda_{\beta^\vee}})|^{1/\lambda_{\beta^\vee}(\sfh_0)};\beta\not\in S): S\subsetneq \Pi\}$ as above, which defines a global smoothing of $\fC^{n-1}$, denoted by $H^{sm}$. For each $S\subsetneq \Pi$, let $\sfN_S\subset \widehat{C}_S$ be an open neighborhood of $\fS_S\cap \widehat{C}_S$, and let 
\begin{align}\label{eq: U_S, D_S}
U_S=(\sfN_S\cap H^{sm})\times D_S.
\end{align}
The collection $\{U_S: S\subsetneq \Pi\}$ (for appropriate choices of $\sfN_S$) defines an open cover of $H^{sm}$, depicted as the domains enclosed by the dashed lines in Figure \ref{figure: Morse} (after some enlargement for each of them).

Let $Z_{|b|}$ denote for the standard negative radial vector field on $\bR^n_{|b_\lambda|^{1/\lambda(\sfh_0)}}$, which is the same as the pushforward of the Liouville vector field $Z$ along the projection (here the $\pi_{|b|}$ is different from (\ref{eq: pi_b, first, def}); the latter was only used in Subsection \ref{subsec: algebraic setup})
\begin{align*}
\pi_{|b|}:J_G\rightarrow \bR^n_{|b_\lambda|^{1/\lambda(\sfh_0)}}.
\end{align*}  
For each $S$, let $Z=Z_{S^\perp}+Z_{S}$ be the splitting of $Z$ on $\fU_S$ as in Lemma \ref{lemma: action factor}. The projection of $Z_S$ along $\pi_{|b|}$ gives a well defined vector field on $U_S$ (\ref{eq: U_S, D_S}), which is the direct sum of a vector field $Z_{|b|;S}$ on $D_S$ and the zero vector field on $\sfN_S\cap H^{sm}$. The flow of $Z_{|b|;S}$ scales each $|b^S_{\lambda_{\gamma^\vee}}(g_S)|^{1/\lambda_{\gamma^\vee}(\sfh_{0;S})}, \gamma\in S$, by weight $-1$, and consequently scales each $|b_{\lambda_{\gamma^\vee}}|^{1/\lambda_{\gamma^\vee}(\sfh_0)}, \gamma\in S$, by weight $-\frac{\lambda_{\gamma^\vee}(\sfh_{0;S})}{\lambda_{\gamma^\vee}(\sfh_0)}$. 

Consider the following function on an open neighborhood of $\overline{U}_S$ in $\bR^n_{|b_\lambda|^{1/\lambda(\sfh_0)}}$: 
\begin{align}
F_S=\sum\limits_{\gamma\in S}|b^S_{\lambda_{\gamma^\vee}}(g_S)|^{2}=\sum\limits_{\gamma\in S}\frac{(|b_{\lambda_{\gamma^\vee}}|^{1/\lambda_{\gamma^\vee}(\sfh_0)})^{2\lambda_{\gamma^\vee}(\sfh_{0})}}{|\lambda_{\gamma^\vee}(z)|^2}. 
\end{align} 
Each denominator $|\lambda_{\gamma^\vee}(z)|, \beta\in S$ is a product of some powers of $|\lambda_{\beta^\vee}(z)|=|b_{\lambda_{\beta^\vee}}|, \beta\not\in S$, so it is everywhere nonzero on a (not too large) open neighborhood of $\overline{U}_S$. Take a (Whitney) stratification on the neighborhood compatible with $\overline{U}_S$, then $F_S$ has no critical value in $(0,2\delta_S)$ with respect to the stratification for some $\delta_S>0$. 
In particular, for every fixed value of $(|b_{\lambda_{\beta^\vee}}|^{1/\lambda_{\beta^\vee}(\sfh_0)}, \beta\not\in S)$, any level hypersurface $\{F_S=\eta\}, \eta\in (0,\delta_S)$ cuts out a contractible portion of a sphere in $D_S$. The vector field $Z_{|b|;S}$ is transverse to all level hypersurfaces and points from higher levels to lower ones.  

The projection of $Z_{S^\perp}$ to the coordinate plane $\widehat{C}_S=\{|b_{\lambda_{\gamma^\vee}}|^{1/\lambda_{\gamma^\vee}(\sfh_0)}=0, \gamma\in S\}$ gives the negative standard radial vector field. Let $Z'_{H^{sm};S^\perp}$ be the orthogonal projection of the negative standard radial vector field to $\{f_{S}(|(b_{\lambda_{\beta^\vee}})|^{1/\lambda_{\beta^\vee}(\sfh_0)};\beta\not\in S)=0\}\subset \sfN_S$. The vector field uniquely lifts to a smooth vector field on $U_S$, denoted by $Z_{H^{sm};S^\perp}$, through the projection $U_S\rightarrow \sfN_S\cap H^{sm}$ satisfying the condition that $Z_{H^{sm};S^\perp}(|b^S_{\lambda_{\gamma^\vee}}(g_S)|^{2})=0$ for all $\gamma\in S$. 

Let $U_S'=U_S\cap F_S^{-1}[0,\delta_S)$. Let $\partial(U_S')_v$ be the vertical boundary of $U_S'$ given by $(\partial (\sfN_S\cap H^{sm})\times D_S)\cap \overline{U'}_S$, and let $\partial(U_S')_h$ be the horizontal boundary of $U_S'$ given by $F_S^{-1}(\delta_S)\cap \overline{U}_S$. For any $\emptyset\subsetneq P\subset S$, let $U'_{S;P}$ be the portion of the boundary of $U_S'$ given by $\overline{U}_S'\cap C_P$ (cf. (\ref{eq: C_S}) for the notation of $C_P$). We can similarly define $\partial (U'_{S;P})_v$ (resp. $\partial (U'_{S;P})_h$) by the intersection of $U'_{S;P}$ with the hypersurfaces $(\partial (\sfN_S\cap H^{sm})\times D_S)$ (resp. $F_S^{-1}(\delta_S)$).

In each induction step for the choice of 
\begin{align}\label{eq: N_S, f_S, choice}
(\sfN_S, f_S(|(b_{\lambda_{\beta^\vee}})|^{1/\lambda_{\beta^\vee}(\sfh_0)};\beta\not\in S)), 
\end{align}
we assume further that (1) the partial smoothing of $\fC^{n-1}$ is extended from $\bigcup\limits_{|S|>\ell}U_S'$ to $\bigcup\limits_{|S|\geq \ell}U_S'$; (2) the distance function $\|b_\lambda\|_{S^\perp}$ has a unique nondegenerate maximum on $\sfN_S\cap U_{S}'\subset \widehat{C}_S$ near the barycenter of $C_S$, which is the only critical point and which is denoted by $c_S$; (3) the Hessian of $\|(b_\lambda)\|_{S^\perp}|_{\sfN_S\cap U_{S}'}$ at $c_S$ has sufficiently small norm, i.e. $\sfN_S\cap U_{S}'$ is close to a round sphere centered at the origin near $c_S$. Meanwhile we inductively define a vector field $Z_{H^{sm}}$ on $H^{sm}$ as follows (cf. Figure \ref{figure: Morse}). For the base case when $S=\Pi_\alpha, \alpha\in \Pi$, define $Z_{H^{sm}}|_{U'_{\Pi_\alpha}}$ (or on a slightly larger neighborhood) to be $Z_{|b|;\Pi_\alpha}$ as above. In this case, $Z_{|b|;\Pi_\alpha}$ is pointing inward along the horizontal boundaries $(\partial U'_{\Pi_\alpha})_h$ and $(\partial U'_{\Pi_\alpha; P})_h$ for any $\emptyset\subsetneq P\subset \Pi_\alpha$. Note that the vertical boundaries are empty in this case.

Suppose we have defined $Z_{H^{sm}}$ over $U'_{>\ell}=\bigcup\limits_{|S|>\ell}U_S'$, with the properties that (1) $Z_{H^{sm}}|_{U_{S}'}$ is pointing inward to $U_S'$ along $\partial(U_S')_h$ and pointing outward along $\partial(U_S')_v$; (2) the same holds for $U_{S;P}', \emptyset\subsetneq P\subset S$, with $\partial(U_S')_h$ and $\partial(U_S')_v$ replaced by $\partial(U_{S;P}')_h$ and $\partial(U_{S;P}')_v$ respectively. Now $Z_{H^{sm}}$ is pointing inward everywhere along $\partial U'_{>\ell}$, so for any $\tilde{S}$ with $|\tilde{S}|=\ell$, we can choose $\sfN_{\tilde{S}}\subset \widehat{C}_{\tilde{S}}$ so that $Z_{H^{sm}}$ is pointing outward of $\sfN_{\tilde{S}}\cap H^{sm}$ along $\partial (\sfN_{\tilde{S}}\cap H^{sm})$. With some careful choices (which are easily achieved) of $U_{\tilde{S}}'$ together with a partition of unity $\{\varphi_{U_{>\ell}'}, \varphi_{U'_{\tilde{S}}}\}$ for the open covering $\{U_{>\ell}', U'_{\tilde{S}}\}$, we can make sure the following extension of $Z_{H^{sm}}|_{U'_{>\ell}}$
\begin{align*}
Z_{H^{sm}}|_{U'_{>\ell}\cup U'_{\tilde{S}}}=\varphi_{U_{>\ell}'} \cdot Z_{H^{sm}}|_{U'_{>\ell}}+\varphi_{U'_{\tilde{S}}}\cdot (Z_{H^{sm}, \tilde{S}^\perp}+Z_{|b|;\tilde{S}})
\end{align*}
 is pointing inward to $U_{\tilde{S}}'$ along $\partial(U_{\tilde{S}}')_h$ and pointing outward along $\partial(U_{\tilde{S}}')_v$ and similarly for $U_{\tilde{S};P}', \emptyset\subsetneq P\subset \tilde{S}$ (and the same remains true for all $S$ with $|S|>\ell$). 
Now repeat the step until no stratum is left. We remark that during the inductive process, we can make sure that $Z_{H^{sm}}\cdot \mathrm{grad}(-\|(b_\lambda)\|_{S^\perp}^2)>0$ everywhere on $U_S'$ except at $c_S$ introduced above. This implies that those $c_S, S\subsetneq \Pi$ are the only zeros of $Z_{H^{sm}}$.

In summary, the above construction gives a vector field $Z_{H^{sm}}$ and an open covering of $H^{sm}$ by $U_S'$ with desired behavior stated in the following lemma.

\begin{lemma}\label{lemma: Z_H, sm}
\item[(a1)] At any point in $U_{S}'$, the difference $Z_{|b|}-Z_{H^{sm}}$ satisfies 
\begin{align*}
(Z_{|b|}-Z_{H^{sm}})(|b^S_{\lambda_{\gamma^\vee}}(g_S)|^2)=0, \gamma\in S.
\end{align*}
In particular, with respect to the splitting of $\fU_S$ (\ref{eq: prop fU_S splitting}) and the Darboux coordinates on the factor $T^*\cZ(L_S)$ in (\ref{eq: dual coordinate S}), there is a unique lifting of $Z_{|b|}-Z_{H^{sm}}$ to $TJ_G|_{\pi_{|b|}^{-1}(U_S')}$ of the form
\begin{align}\label{eq: lemma Z_b-Z_Hsm}
Z_{|b|}-Z_{H^{sm}}=\sum\limits_{\beta\not\in S}a_{S;\beta}\partial_{q_{\lambda_{\beta^\vee}}}=\sum\limits_{\beta\not\in S}a_{S;\beta} \cdot |b_{\lambda_{\beta^\vee}}|^{1/\lambda_{\beta^\vee}(\sfh_0)}\partial_{|b_{\lambda_{\beta^\vee}}|^{1/\lambda_{\beta^\vee}(\sfh_0)}}, 
\end{align}
where $a_{S;\beta}$ is a real function on $U_S'$ (more precisely, the pullback function to $\pi_{|b|}^{-1}(U_S')$). 

\item[(a2)] For any $S_2\subset S_1$, the above lifting of $Z_{|b|}-Z_{H^{sm}}$ on $\pi_{|b|}^{-1}(U_{S_1}')$ and $\pi_{|b|}^{-1}(U_{S_2}')$ coincide on their intersection. Hence there is a canonical lifting of $Z_{|b|}-Z_{H^{sm}}$ to $TJ_G|_{\pi_{|b|}^{-1}(H^{sm})}$. 

\item[(b)]For every $S\subsetneq \Pi$, the vector field $Z_{H^{sm}}$ has exactly one zero on $U_S'$ at  $c_S$. Moreover, $Z_{H^{sm}}$ is pointing inward to $U_S'$ along $\partial(U_S')_h$ and pointing outward along $\partial(U_S')_v$. The same holds for $U_{S;P}', \emptyset\subsetneq P\subset S$, with $\partial(U_S')_h$ and $\partial(U_S')_v$ replaced by $\partial(U'_{S;P})_h$ and $\partial(U_{S;P}')_v$ respectively. 
\end{lemma}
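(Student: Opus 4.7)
The three parts record features built into the inductive construction of $Z_{H^{sm}}$; the plan is to verify them in the order (b), (a1), (a2).

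For (b), I would argue by downward induction on $|S|$, paralleling the induction that defines $Z_{H^{sm}}$. The base case $|S|=n-1$ is immediate: on $U'_{\Pi_\alpha}$, $Z_{H^{sm}}=Z_{|b|;\Pi_\alpha}$ has unique zero at the cone point $c_{\Pi_\alpha}$ and points inward along $\partial(U'_{\Pi_\alpha})_h$ by weight considerations of the radial flow. At the inductive step, the partition of unity was arranged precisely to preserve the inward-along-horizontal and outward-along-vertical boundary behavior on all previously-handled $U'_{S'}$ with $|S'|>\ell$, and to impose the same on $U'_{\tilde{S}}$; the refinement to $U'_{S;P}$ is parallel. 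Uniqueness of the zero at $c_S$ follows from the simultaneous facts, both arranged in the induction, that $c_S$ is the unique nondegenerate maximum of $\|(b_\lambda)\|_{S^\perp}$ on $\sfN_S\cap U'_S$ and that $Z_{H^{sm}}\cdot\mathrm{grad}(-\|(b_\lambda)\|_{S^\perp}^2)>0$ everywhere on $U'_S$ except at $c_S$.

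For (a1), the key observation is the \emph{splitting-independence} of the Liouville flow's action on the pulled-back function $|b^S_{\lambda_{\gamma^\vee}}(g_S)|^2$ for $\gamma\in S$. For any $\tilde{S}\supseteq S$, viewing the nested embedding $\fU_S\hookrightarrow\fU_{\tilde{S}}$ and decomposing $Z|_{\fU_{\tilde{S}}}=Z_{\tilde{S}^\perp}+Z_{\tilde{S}}$ via Lemma \ref{lemma: action factor}, the action of $Z_{\tilde{S}}$ on $g_S\in L_S^\der$ agrees with that of $Z_S$: indeed, the difference $\sfh_{0,\tilde{S}}-\sfh_{0,S}$ lies in $\fz_S$ (since $\alpha(\sfh_{0,\tilde{S}})=\alpha(\sfh_{0,S})=2$ for $\alpha\in S$, a short root-system calculation using $\alpha(\rho^\vee_R)=1$ for $\alpha\in R$), so conjugation by the corresponding one-parameter subgroup fixes $L_S^\der$ pointwise. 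Hence $Z_{\tilde{S}}(|b^S_{\lambda_{\gamma^\vee}}|^2)=-2\lambda_{\gamma^\vee}(\sfh_{0,S})|b^S_{\lambda_{\gamma^\vee}}|^2$, independent of $\tilde{S}$. Since $Z_{H^{sm}}|_{U'_S}$ is, via the iterated partition of unity, a convex combination of pieces $Z_{H^{sm};\tilde{S}^\perp}+Z_{|b|;\tilde{S}}$ for various $\tilde{S}$, and since each $Z_{H^{sm};\tilde{S}^\perp}$ is tangent to $\sfN_{\tilde{S}}$ and so annihilates the $g_S$-dependent $|b^S_{\lambda_{\gamma^\vee}}|^2$, the partition-of-unity sum evaluates to $-2\lambda_{\gamma^\vee}(\sfh_{0,S})|b^S_{\lambda_{\gamma^\vee}}|^2=Z_{|b|}(|b^S_{\lambda_{\gamma^\vee}}|^2)$, giving $(Z_{|b|}-Z_{H^{sm}})(|b^S_{\lambda_{\gamma^\vee}}|^2)=0$. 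Consequently the canonical lift of $Z_{|b|}-Z_{H^{sm}}$ to $TJ_G|_{\pi_{|b|}^{-1}(U'_S)}$ has trivial $J_{L_S^\der}$-component under the splitting (\ref{eq: prop fU_S splitting}), so it projects to an $\bR$-combination of the ambient radial vectors $|b_{\lambda_{\beta^\vee}}|^{1/\lambda_{\beta^\vee}(\sfh_0)}\partial_{|b_{\lambda_{\beta^\vee}}|^{1/\lambda_{\beta^\vee}(\sfh_0)}}$ for $\beta\notin S$; these lift canonically to $\partial_{q_{\lambda_{\beta^\vee}}}$ in the $T^*\cZ(L_S)$-factor, giving (\ref{eq: lemma Z_b-Z_Hsm}). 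Uniqueness is clear since any lift with a nonzero $p$- or $\theta$-component would project to zero in $T\bR^n$.

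For (a2), on the overlap $\pi_{|b|}^{-1}(U'_{S_1}\cap U'_{S_2})$ with $S_2\subset S_1$, the vector fields $\partial_{q_{\lambda_{\beta^\vee}}}$ are intrinsically defined on $\{|b_{\lambda_{\beta^\vee}}|\ne 0\}$ (via $q_{\lambda_{\beta^\vee}}=\log|b_{\lambda_{\beta^\vee}}|^{1/\lambda_{\beta^\vee}(\sfh_0)}$), and the change of basis (\ref{eq: relation Rp_S1, S2}) affects only the $p$-variables. Matching $T\bR^n$-components forces $a_{S_1;\beta}=a_{S_2;\beta}$ for $\beta\notin S_1$; applying (a1) with $S=S_1$ on the overlap forces $a_{S_2;\beta}=0$ for $\beta\in S_1\setminus S_2$. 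So the two liftings agree, assembling to a canonical global lift on $\pi_{|b|}^{-1}(H^{sm})$. The main technical obstacle I anticipate is the splitting-independence underlying (a1)—equivalently, the compatibility of Lemma \ref{lemma: action factor}'s decompositions across the nested embeddings $\fU_S\hookrightarrow\fU_{\tilde{S}}$—which rests on the identity $\sfh_{0,\tilde{S}}-\sfh_{0,S}\in\fz_S$ for $S\subseteq\tilde{S}$.
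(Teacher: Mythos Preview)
Your approach matches the paper's: all three parts are recorded as consequences of the inductive construction, with (b) declared straightforward, (a1) checked by induction over the partition-of-unity layers, and (a2) deduced from the nested splitting $\fU_{S_2}\hookrightarrow\fU_{S_1}$ together with uniqueness. Your identification of the identity $\sfh_{0,\tilde S}-\sfh_{0,S}\in\fz_S$ (equivalently $\sfh'_{0,\tilde S^\perp}\in\fz_{\tilde S}\subset\fz_S$) as the technical crux is exactly right.

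One point in your (a1) argument is phrased too loosely: you write that ``$Z_{H^{sm};\tilde S^\perp}$ is tangent to $\sfN_{\tilde S}$ and so annihilates the $g_S$-dependent $|b^S_{\lambda_{\gamma^\vee}}|^2$.'' Tangency to the $\sfN_{\tilde S}$-factor of $U_{\tilde S}=(\sfN_{\tilde S}\cap H^{sm})\times D_{\tilde S}$ only guarantees annihilation of the $D_{\tilde S}$-coordinate functions, namely the $|b^{\tilde S}_{\lambda_{\delta^\vee}}(g_{\tilde S})|$ for $\delta\in\tilde S$. To conclude you need that $|b^S_{\lambda_{\gamma^\vee}}(g_S)|$ is itself a function of these, which follows from Lemma~\ref{lemma: weights, b_lambda} applied to $L_{\tilde S}^{\der}$: for $\gamma\in S\subset\tilde S$ one has $|b^S_{\lambda_{\gamma^\vee}}(g_S)|=|b^{\tilde S}_{\lambda_{\gamma^\vee}}(g_{\tilde S})|\,/\,|\lambda^{\tilde S}_{\gamma^\vee}(z^{(\tilde S)})|$, and the denominator is a monomial in the $|b^{\tilde S}_{\lambda_{\beta^\vee}}(g_{\tilde S})|$ for $\beta\in\tilde S\setminus S$. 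With this filled in, your argument is complete.
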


\begin{proof}
(a1) can be checked by induction on $Z_{H^\sm}|_{U_{>\ell}'\cap U_S'}, \ell\geq |S|-1,$ for a fixed $S$. 

(a2) By the relation 
\begin{align*}
&T^*\cZ(L_{S_2})\underset{\cZ(L_{S_2}^\der)}{\times}J_{L_{S_2}^\der}\cong (T^*\cZ(L_{S_1})\underset{\cZ(L_{S_1}^\der)}{\times}T^*(\cZ(L_{S_2})\cap L_{S_1}^\der))\underset{\cZ(L_{S_2}^\der)}{\times}J_{L_{S_2}^\der}\\
&\hookrightarrow T^*\cZ(L_{S_1})\underset{\cZ(L_{S_1}^\der)}{\times}J_{L_{S_1}^\der}
\end{align*}
and the definition of the coordinates in (\ref{eq: q, p}), it is clear that the unique lifting of $Z_{|b|}-Z_{H^{sm}}$ in the chart $\pi_{|b|}^{-1}(U_{S_1}')$ satisfies that its restriction to $\pi_{|b|}^{-1}(U_{S_1}'\cap U_{S_2}')$ is of the form (\ref{eq: lemma Z_b-Z_Hsm}) with respect to the chart $\pi_{|b|}^{-1}(U_{S_2}')$. The claim then follows from the uniqueness property. 

(b) is straightforward. 
 \end{proof}

 \subsubsection{Some structural results on $J_G- \cB_1$}\label{subsec: tilde N, I}

Let $\norm(|b_\lambda|^{\frac{1}{\lambda(\sfh_0)}};\lambda\in X^*(T_{sc})^+_{\fund})$ be the smooth function on $\bR^n_{|b_\lambda|^{1/\lambda(\sfh_0)}}$, homogeneous with respect to the Liouville flow with \emph{weight $-\frac{1}{2}$},  whose value on $H^{sm}$ (defined in Subsection \ref{subsubsec: H, sm}) is constantly $1$. We use $\widetilde{\norm}$ to denote for its pullback to $J_G-\cB_1$ along the projection
\begin{align*}
\pi_{|b|}: J_G- \cB_1\longrightarrow \bR^n_{|b_\lambda|^{1/\lambda(\sfh_0)}}. 
\end{align*}
The upshot is that $\widetilde{\norm}$ is everywhere differentiable and regular, which follows from the fact that $|b_{\lambda_{\beta^\vee}}|^{1/\lambda_{\beta^\vee}(\sfh_0)}$ is bounded below by a positive number on $U'_S$ for any $\beta\not\in S$ and Corollary \ref{cor: b_lambda, regular}. The Hamiltonian vector field $X_{\widetilde{\norm}}$ on $\pi_{|b|}^{-1}(H^{sm})=\{\widetilde{\norm}=1\}\subset J_G$ generates the characteristic foliation on the hypersurface.

\begin{prop}\label{prop: hypersurface F}
\item[(a)]
There exists a Liouville hypersurface $\fF$ in $\widetilde{\norm}^{-1}(1)$ and a diffeomorphism 
\begin{align}\label{eq: lemma, hypersurface F}
\widetilde{\norm}^{-1}(1)\cong \bR\times \fF
\end{align}
such that each leaf of the characteristic foliation on the left-hand-side is sent to $\bR\times \{y\}$ for some $y\in \fF$. 

\item[(b)] The Liouville structure on $\fF$ can be isotopic to a (generalized) Weinstein structure\footnote{By a \emph{generalized} Weinstein structure, we mean the function $\phi$ in the Weinstein manifold structure in \cite[Section 11.4, Definition 11.10]{CE} is Morse-Bott (rather than Morse).}, whose (generalized) critical Weinstein handles are indexed by $(\sigma, S)$ with $S\subsetneq \Pi$ and $\sigma\in \pi_0(\cZ(L_S))$ . 
\end{prop}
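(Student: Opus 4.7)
The plan is to identify $\widetilde{\norm}^{-1}(1)$ as a contact-type hypersurface in $J_G-\cB_1$ and then to trivialize its characteristic foliation. Because $\widetilde{\norm}$ is Liouville-homogeneous of weight $-\tfrac{1}{2}$, the Liouville field $Z$ satisfies $Z\cdot\widetilde{\norm} = -\tfrac{1}{2}\widetilde{\norm}$ and is everywhere transverse to $\widetilde{\norm}^{-1}(1)$, so $\lambda := \iota_Z\omega$ restricts to a contact form. A short Cartan-calculus computation gives $\lambda(X_{\widetilde{\norm}}) = -\iota_Z d\widetilde{\norm} = -Z(\widetilde{\norm}) = \tfrac{1}{2}\widetilde{\norm}$, so on the hypersurface the Reeb vector field is $R_\lambda = 2 X_{\widetilde{\norm}}$ and it spans the characteristic line field. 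Thus (a) reduces to producing a function $\rho\colon\widetilde{\norm}^{-1}(1)\to\bR$ with $R_\lambda(\rho)>0$ everywhere and that is a proper submersion on each characteristic orbit; the Reeb flow then yields the diffeomorphism $\widetilde{\norm}^{-1}(1)\cong\bR\times\fF$ with $\fF:=\rho^{-1}(0)$, and $\lambda|_\fF$ is tautologically a Liouville form as the restriction of a contact form to a codimension-one submanifold transverse to the Reeb direction.

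The function $\rho$ is built from the Darboux coordinates \eqref{eq: q, p} and \eqref{eq: dual coordinate S}. On each chart $\pi_{|b|}^{-1}(U'_S)$, $\widetilde{\norm}$ factors through $\pi_{|b|}$ and depends only on the $q_{\lambda_{\beta^\vee}}$, so
\begin{equation*}
X_{\widetilde{\norm}}\;=\;-\!\sum_{\beta\notin S}\frac{\partial\widetilde{\norm}}{\partial q_{\lambda_{\beta^\vee}}}\,\partial_{\Re p_{\beta^\vee_{S^\perp}}},
\end{equation*}
which is tangent to the $T^*\cZ(L_S)$-factor of the splitting \eqref{eq: prop fU_S splitting} and vanishes on the $J_{L_S^{\der}}$-factor. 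The local candidate $\rho_S := -\sum_{\beta\notin S}\Re p_{\beta^\vee_{S^\perp}}$ already has $X_{\widetilde{\norm}}(\rho_S)=\sum_{\beta\notin S}\partial_{q_{\lambda_{\beta^\vee}}}\widetilde{\norm}>0$. These pieces do not agree across strata, differing by the change-of-coordinates \eqref{eq: relation Rp_S1, S2}, so I would glue them using a partition of unity $\{\varphi_S\}$ subordinate to $\{U'_S\}$ compatible with the nesting $S\subset S'\Rightarrow U_S\subset U_{S'}$, setting $\rho=\sum_S\varphi_S\cdot\rho_S$. Positivity of $R_\lambda(\rho)$ is preserved summand-wise, and properness along each characteristic orbit follows because each $\rho_S$ is nontrivially linear in the noncompact momentum variables, while the characteristic flow is pure translation in these variables.

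For (b), I would exploit the inductive handle-attachment picture encoded in Lemma \ref{lemma: Z_H, sm} and Remark \ref{remark: handle}. The zeros of $Z_{H^{sm}}$ are exactly the points $c_S$, one per stratum $\fS_S\subset H^{sm}$; by Lemma \ref{lemma: Z_H, sm}(a2), $Z_{H^{sm}}$ lifts canonically to a vector field on $\pi_{|b|}^{-1}(H^{sm})$, which after restricting to $\fF$ becomes gradient-like for a Morse-Bott function $\phi$ on $\fF$ built from $\|(b_\lambda)\|_{S^\perp}$ and the $|b^S_{\lambda_{\gamma^\vee}}|$ via the splitting \eqref{eq: prop fU_S splitting}, interpolated by the same partition of unity $\{\varphi_S\}$. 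The critical locus of $\phi$ on $\fF$ decomposes, over $S\subsetneq\Pi$, as the intersection $\fF\cap\pi_{|b|}^{-1}(c_S)$; inside \eqref{eq: prop fU_S splitting} this is the preimage of $c_S$ in the $T^*\cZ(L_S)$-factor (a compact Morse-Bott manifold whose connected components are labelled by $\pi_0(\cZ(L_S))$) times the central fiber $\{|b^S_{\lambda}|=0\}$ of $J_{L_S^{\der}}$ (the Kostant section(s) of $L_S^{\der}$). Hence the critical manifolds of $\phi$ are naturally indexed by pairs $(\sigma,S)$ with $\sigma\in\pi_0(\cZ(L_S))$, giving one generalized critical Weinstein handle per such pair. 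A standard Liouville-to-Weinstein isotopy along the interpolation parameter deforms $\lambda|_\fF$ to the corresponding Weinstein structure.

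The principal obstacle is the global coherence of $\rho$ together with properness of the characteristic flow: across the cover $\{U'_S\}$ the momentum coordinates $\Re p_{\beta^\vee_{S^\perp}}$ depend on $S$ via \eqref{eq: relation Rp_S1, S2}, so the gluing must be arranged so that the assembled $\rho$ still ranges over all of $\bR$ along every characteristic orbit with no recurrence. Once this is secured, (b) is largely bookkeeping using Lemma \ref{lemma: Z_H, sm} and the handle-attachment scheme of Remark \ref{remark: handle}, combined with the Kostant-section picture from Section \ref{subsec: def of J_G}.
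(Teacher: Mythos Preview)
Your approach to (a) via a global function $\rho$ with $R_\lambda(\rho)>0$ is a legitimate alternative, but the gluing hides more than you state. Positivity survives the partition of unity only because $X_{\widetilde{\norm}}$ is tangent to the cotangent fibers and hence kills the $\varphi_S$ (which are pulled back from $H^{sm}$); you should say this explicitly. The paper avoids partitions of unity altogether: it cuts out $\fF_S$ by $\sum_{\beta\notin S} a_{S;\beta}\,\Re p_{\beta^\vee_{S^\perp}}=0$, where the $a_{S;\beta}$ are the coefficients of the canonical lifting of $Z_{|b|}-Z_{H^{sm}}$ from Lemma~\ref{lemma: Z_H, sm}; by part (a2) there this lifting is chart-independent, so the pieces $\fF_S$ agree on overlaps on the nose. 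That choice yields an explicit splitting of $Z_{\fF_S}$ into the standard Liouville field on $J_{L_S^\der}$ and an Euler-type field on the $T^*\cZ(L_S)$-part, which the paper then uses (together with Proposition~\ref{prop: proper b map}) to verify that $\fF$ is genuinely a Liouville manifold: completeness of $Z_\fF$, compact closure of all ascending manifolds, and escape to $\fF^\infty$ of every other trajectory. Your sentence ``$\lambda|_\fF$ is tautologically a Liouville form'' only gives an exact symplectic structure and does not address any of these global dynamical conditions.

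In (b) you misidentify the critical locus. The set $\fF\cap\pi_{|b|}^{-1}(c_S)$ is not compact: over $c_S$ the momentum $t\in\fz_S$ is cut by a single linear equation, and $\{|b^S_{\lambda}|=0\}\subset J_{L_S^{\der}}$ is the union of Kostant sections $\cB_{1,L_S^{\der}}\cong\cZ(L_S^{\der})\times\fc_S$, noncompact in the $\fc_S$-direction. In the paper the zero set of $Z_{\fF}$ over $U'_S$ is only the compact torus $(\cZ(L_S)_{\cpt}\cdot z_0)\times\{t=0\}\times\{(g_S=1,\xi_S=f_S)\}$; the $\fc_S$-direction inside $\cB_{1,L_S^{\der}}$ is flowed \emph{out} by the Liouville field on the $J_{L_S^{\der}}$-factor, while the ascending manifold lies in $\chi_S^{-1}([0])$ (the central fiber of $\chi_S$, not the Kostant section). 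Your proposed $\phi$, built only from the $|b|$-coordinates, is constant along both the $t$- and the $\fc_S$-directions and therefore cannot serve as a Weinstein Lyapunov function.
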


\begin{proof}
(a) First, by the construction of $H^{sm}$, on $\pi_{|b|}^{-1}(U'_S)$ we have 
\begin{align*}
X_{\widetilde{\norm}}=\sum\limits_{\beta\not\in S}\frac{\partial f_S}{\partial |b_{\lambda_{\beta^\vee}}|^{1/\lambda_{\beta^\vee}(\sfh_0)}}|b_{\lambda_{\beta^\vee}}|^{1/\lambda_{\beta^\vee}(\sfh_0)}\partial_{\Re p_{\beta_{S^\perp}^\vee}}
\end{align*}
with respect to the splitting $T^*\cZ(L_{S})\underset{\cZ(L_{S}^\der)}{\times} J_{L_{S}^\der}$ over $U_S'$ (cf. (\ref{eq: q, p}) for the notations on dual symplectic coordinates). 

Second, recall the vector field $Z_{H^{sm}}$ that we have just constructed.  Let $a_{S;\beta}(|b_{\lambda_{\beta^\vee}}|^{1/\lambda_{\beta^\vee}(\sfh_0)})$ be the function on $U_S'$ as in (\ref{eq: lemma Z_b-Z_Hsm}).  Let $\fF_S\subset \pi_{|b|}^{-1}(U_S')$ be the symplectic hypersurface cut out by  the equation
\begin{align}\label{eq: F_(S_i)_i}
F_{U'_S}(|b_{\lambda_{\beta^\vee}}|^{-1/\lambda_{\beta^\vee}(\sfh_0)}, \Re p_{\beta^\vee_{S^\perp}}; \beta\not\in S):=\sum\limits_{\beta\not\in S}a_{S;\beta}\cdot \Re p_{\beta_{S^\perp}^\vee}=0. 
\end{align}
Since by construction $X_{\widetilde{\norm}}(F_{U'_S})>0$ everywhere on $\pi_{|b|}^{-1}(U'_S)$, 
$\fF_{S}$ gives a section of the principal $\bR$-bundle $\pi_{|b|}^{-1}(U'_S)\rightarrow \pi_{|b|}^{-1}(U'_S)/\bR$, generated by the Hamiltonian flow of $\widetilde{\norm}$. 

Third, we claim that over any intersection $U'_{S_1}\cap U'_{S_2}$ with $S_2\subset S_1$, $\fF_{S_1}$ and $\fF_{S_2}$ coincide, so $\{\fF_S: S\subsetneq \Pi\}$ glue to be a global symplectic hypersurface. Since both $\fF_{S_1}$ and $\fF_{S_2}$ are cut out by the \emph{linear equation in each cotangent fiber} of $T^*\cZ(L_{S_2})$ along $\sfN_{S_2}\cap H^{sm}$ given by the condition $\omega(Z_{|b|}-Z_{H^{sm}},-)=0$, where $Z_{|b|}-Z_{H^{sm}}$ is the canonical lifting to $TJ_G|_{\pi_{|b|}^{-1}(H^{sm})}$ in Lemma \ref{lemma: Z_H, sm} (a2), we are done. For later reference, we denote the resulting symplectic hypersurface in $T^*\cZ(L_{S})|_{\sfN_S\cap H^{sm}}$ as $\cH_{S;|b|_{L_S^\der}}$, where the subscript $|b|_{L_S^\der}$ indicates the dependence of the linear equation in $(|b_{\lambda_{\gamma^\vee}}|^{1/\lambda_{\gamma^\vee}(\sfh_{0;S})})$ for $J_{L_S^\der}$. Note that it is \emph{not} true that the tangent vectors of $\cH_{S,|b|_{L_S^\der}}$ all satisfy that $\omega(Z_{|b|}-Z_{H^{sm}},-)=0$. However, there exists a unique vector field $v_{S,|b|_{L_S^\der}}$ (which vanishes at the zero-section) 
on $\cH_{S,|b|_{L_S^\der}}$ that is tangent to the cotangent fibers such that $\omega(Z_{|b|}-Z_{H^{sm}}+v_{S,|b|_{L_S^\der}},-)=0$ holds everywhere on $T\cH_{S,|b|_{L_S^\der}}$. 

Lastly, we check that $\fF$ is a Liouville hypersurface. First, for any $\fF_S$, the Liouville vector field $Z_{\fF_S}$ splits, with respect to the splitting of $\fU_S$ in (\ref{eq: prop fU_S splitting}), as the standard Liouville vector field on $J_{L_S^\der}$, and the vector field $Z_{S^\perp}-(Z_{|b|}-Z_{H^{sm}}+v_{S,|b|_{L_S^\der}})$.  The latter is equal to the sum of the Euler vector field on $\cH_{S,|b|_{L_S^\der}}$ as a vector bundle over $(\sfN_S\cap H^{sm})\times \cZ(L_S)_{\cpt}$, and $(Z_{H^{sm}}-Z_{|b|;S})-v_{S,|b|_{L_S^\der}}$ with respect to the splitting $(\sfN_S\cap H^{sm})\times \cZ(L_S)_{\cpt}\times \fz_{S,\bR}$. It is clear that $Z_{\fF}$ is complete. 

By assumption (2) for (\ref{eq: N_S, f_S, choice}), the zero locus of $Z_{\fF_S}$ is contained in $\pi_{|b|}^{-1}(c_S)$ (recall $c_S\in \widehat{C}_S$). This is an orbit of the maximal compact torus $\cZ(L_S)_{\cpt}$ in $\cZ(L_S)$, having $|\pi_0(\cZ(L_S))|$ many connected components,  which is more explicitly 
\begin{align*}
((\cZ(L_S)_{\cpt}\cdot z_0)\times \{0\})\times \{(g_S=1, \xi_S=f_S)\}\subset (\cZ(L_S)\times \fz_S^*)\underset{\cZ(L_S^\der)}{\times} J_{L_S^\der}.
\end{align*}
Assumption (3) for (\ref{eq: N_S, f_S, choice}) assures that the ascending manifold of the above compact torus (not necessarily connected) inside $\pi_{|b|}^{-1}(U'_S)$ is contained in $((\cZ(L_S)_{\cpt}\cdot z_0)\times \{0\})\times \chi_S^{-1}([0])$.  
By Proposition \ref{prop: proper b map} and Lemma \ref{lemma: Z_H, sm} (b), the ascending manifold of each compact torus in $\{Z_{\fF}=0\}$ must have compact closure. 

The last thing to check is that aside from the ascending manifolds of $\{Z_{\fF}=0\}$, every flow line $\varphi_{Z_\fF}(t)$ satisfies that $\lim\limits_{t\rightarrow -\infty}\varphi_{Z_\fF}(t)$ is contained in $\{Z_{\fF}=0\}$, and $\varphi_{Z_\fF}(t)\rightarrow \fF^\infty$ as $t\rightarrow\infty$. This follows from Proposition \ref{prop: proper b map}, Lemma \ref{lemma: Z_H, sm} (b),  and the above description of $Z_{\fF_S}$.

(b)  What we have presented in part (a) is a Morse-Bott type handle decomposition of the Liouville hypersurface $\fF$. For each $U_S'$, there are $|\pi_0(\cZ(L_S))|$ many handles that can be isotopic to a standard Weinstein handle of index $n-|S|+2|S|=n+|S|$, the core of which (i.e. ascending manifold) is isomorphic to $(\cZ(L_S)_{\cpt})_0\times D^{2|S|}$ (here $(\cZ(L_S)_{\cpt})_0$ means the identity component of $\cZ(L_S)_\cpt$). 
Note that Weinstein manifolds can be completely constructed from Weinstein handles, and analogous to Morse theory, there are multiple handle attachment procedures that produce equivalent (isotopic) Weinstein manifolds. The following Figure \ref{figure: Morse, old} shows a way to turn the original handles for $U_S'$ into critical handles by adding a bunch of subcritical handles. More explicitly, consider the following stratification of $\fC^{n-1}$, whose codimension $k$ strata are indexed by strictly increasing $(k+1)$-chains $S_0\subsetneq S_1\subsetneq\cdots\subsetneq S_k\subsetneq \Pi$ (as before we don't stratify the boundary separately; equivalently, the strata are in one-to-one correspondence with the strata in $\mathring{\fC}^{n-1}$). For each stratum $\widetilde{\fS}_{(S_j)_j}$ of codimension $k$, we can associate $|\pi_0(\cZ(L_{S_0}))|$ many index $(2n-1-k)$-handle(s), whose core is given by $(\cZ(L_{S_0})_\cpt)_0\times D^{n-1-k-|S_0|}\times D^{2|S_0|}$. The construction is completely similar to (a), and we leave the details to the interested reader. 
\end{proof}

\begin{figure}[h]
\begin{tikzpicture}
\draw[thick, blue] (1.73*1.5,-1.5) to (0,3);
\draw[thick, blue] (-1.73*1.5, -1.5) to (0,3);
\draw[thick, blue] (-1.73*1.5,-1.5) to  (1.73*1.5, -1.5);
\draw[thick, cyan] (-1.73*0.5, 1.5)--(-1.73*0.3, 1.2)-- (1.73*0.3, 1.2)--(1.73*0.5, 1.5);
\draw[thick, cyan, rotate=120] (-1.73*0.5, 1.5)--(-1.73*0.3, 1.2)-- (1.73*0.3, 1.2)--(1.73*0.5, 1.5);
\draw[thick, cyan, rotate=240] (-1.73*0.5, 1.5)--(-1.73*0.3, 1.2)-- (1.73*0.3, 1.2)--(1.73*0.5, 1.5);
\draw[thick, cyan] (-1.73*0.3, 1.2)--(-1.73*0.75, 1.2-1.35);
\draw[thick, cyan, rotate=120] (-1.73*0.3, 1.2)--(-1.73*0.75, 1.2-1.35);
\draw[thick, cyan, rotate=240] (-1.73*0.3, 1.2)--(-1.73*0.75, 1.2-1.35);
\filldraw (1.73*1.5,-1.5) circle (2pt);
\filldraw (0,3) circle (2pt);
\filldraw (-1.73*1.5,-1.5) circle (2pt);
\filldraw (1.73*0.75, 0.75) circle (2pt);
\filldraw (-1.73*0.75, 0.75) circle (2pt);
\filldraw (0, -1.5) circle (2pt);
\filldraw (0,0) circle (2pt);
\end{tikzpicture}
\caption{}\label{figure: Morse, old}
\end{figure}

\subsubsection{A partial compatification of $J_G$ and its Lagrangian skeleton}\label{subsec: compactify, J_G}

Fix a Liouville hypersurface $\fF\subset \widetilde{\sfN}^{-1}(1)$ as in Proposition \ref{prop: hypersurface F} above. Let $\tilde{I}: \widetilde{\sfN}^{-1}(1)\rightarrow \bR$ be a function such that 
\begin{align*}
&\tilde{I}|_{\fF}=0,\ X_{\frac{1}{\widetilde{\norm}^2}}(\tilde{I})=1.
\end{align*}
Recall that $\widetilde{\sfN}$ is homogeneous of weight $-\frac{1}{2}$ with respect to the Liouville flow. Then the flow of $X_{\frac{1}{\widetilde{\norm}^2}}$ gives an identification $\widetilde{\sfN}^{-1}(1)\cong \fF\times \bR$ from which we see that $\widetilde{\sfN}^{-1}(1)$ is a contact manifold with contact form $d\tilde{I}+\vartheta_\fF$. 
Furthermore, we have an isomorphism of exact symplectic manifolds (which in particular identifies the respective Liouville flows)
\begin{align}
&(J_G-\cB_1, \vartheta|_{J_G-\cB_1}=-\frac{1}{\widetilde{\sfN}^2}d\tilde{I}+\vartheta_{\fF})\overset{\sim}\longrightarrow (\fF\times T^{*,>0}\bR, -\tau dt+\vartheta_{\fF}),
\end{align}
 where $T^{*, >0}\bR=\{(t, \tau\cdot dt): \tau>0\}$. Using the exact embedding
 \begin{align*}
 T^{*,>0}\bR&\longhookrightarrow \bC_{\Re z\leq 0}\\
 (t, \tau)&\mapsto (-2\tau^{\frac{1}{2}}t, -\tau^{\frac{1}{2}}), 
 \end{align*}
we can embed $J_G-\cB_1$ into $\fF\times \bC_{\Re z\leq 0}$, which gives
the partial compactification of $J_G$
\begin{align*}
\overline{J}_G:=J_G\underset{J_G-\cB_1}{\coprod}(\fF\times \bC_{\Re z\leq 0}). 
\end{align*}
Moreover, We define the completion of $J_G$ as 
\begin{align*}
\widehat{J}_G:=J_G\underset{J_G-\cB_1}{\coprod}(\fF\times \bC_{z}).
\end{align*}

For later reference, we define the function 
\begin{align*}
&I: J_G-\cB_1\longrightarrow \bR
\end{align*}
determined by the properties that $I|_{\widetilde{\sfN}^{-1}(1)}=-2\tilde{I}$, and it is homogeneous with \emph{weight $\frac{1}{2}$} with respect to the Liouville flow. Then under the embedding of $J_G-\cB_1$ into $\fF\times \bC_{\Re z\leq 0}$, the $\bC_{\Re z\leq 0}$ has coordinate $q=\Re z=I$ and $p=\Im z=-\frac{1}{\widetilde{\sfN}}$.

For example, on a conic (with respect to the Liouville flow) open subset in $\cB_{w_0}\cong T^*T$ whose projection to $\bR^n_{|b_\lambda|^{1/\lambda(\sfh_0)}}$ is disjoint from an open neighborhood of the codimension $<n$ and $\geq 1$ faces, we can take
\begin{align}
\label{eq: tildeN, B_w0}&\widetilde{\norm}=(\sum\limits_{\beta\in \Pi} |b_{\lambda_{\beta}^\vee}|^{1/\lambda_{\beta^\vee}(\sfh_0)})^{\frac{1}{2}} \\
\label{eq: I, B_w0}&I=2(\sum\limits_{\beta\in \Pi} |b_{\lambda_{\beta}^\vee}|^{1/\lambda_{\beta^\vee}(\sfh_0)})^{\frac{1}{2}}\sum\limits_{\beta\in \Pi}\Re p_{\beta^\vee}.
\end{align}

Similarly, for any $\emptyset\neq S\subsetneq \Pi$, on a conic open subset in $\fU_S$ whose projection to $\bR^n_{|b_\lambda|^{1/\lambda(\sfh_0)}}$ is disjoint from an open neighborhood of the faces that have vanishing $|b_{\lambda_{\beta^\vee}}|^{1/\lambda_{\beta^\vee}(\sfh_0)}$ for some $\beta\not\in S$, we can set 
\begin{align}
\label{eq: tildeN, B_w0wS}&\widetilde{\norm}_S=(\sum\limits_{\beta\not\in S} |b_{\lambda_{\beta}^\vee}|^{1/\lambda_{\beta^\vee}(\sfh_0)})^{\frac{1}{2}} \\
\label{eq: I, B_w0wS}&I_S=2(\sum\limits_{\beta\not\in S} |b_{\lambda_{\beta}^\vee}|^{1/\lambda_{\beta^\vee}(\sfh_0)})^{\frac{1}{2}}\sum\limits_{\beta\not\in S}\Re p_{\beta^\vee_{S^\perp}}.
\end{align}
using (\ref{eq: dual coordinate S}).

Note that for the above choice of $(\widetilde{\norm}, I)$ (resp. $(\widetilde{\norm}_S, I_S)$), the projection of $Z_{\fF}$ to $\fC^{n-1}$ is completely zero in a neighborhood of the center (resp. radial towards the barycenter corresponding to $S$). The former is somewhat different from the construction in the proof of Proposition \ref{prop: hypersurface F} (a) in the way that we choose $Z_{H^{sm}}=0$ on some open $\Omega_{\emptyset}\subset U_\emptyset'$.

\begin{prop}\label{prop: partial compactify}
The partial compactification  $\overline{J}_G$ is a Liouville sector, and can be isotopic to a Weinstein sector that is obtained from attaching  $|\cZ(G)|$ many critical handles to $\fF\times \bC_{\Re z\leq 0}$. The completion $\widehat{J}_G$ is a Liouville completion of $\overline{J}_G$, and can be isotopic to a Weinstein manifold that is obtained from attaching  $|\cZ(G)|$ many critical handles to $\fF\times \bC_{z}$. 
\end{prop}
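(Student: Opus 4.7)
The plan is to first verify the Liouville sector axioms for $\overline{J}_G$ in the sense of \cite{GPS1}, and then exhibit the missing region $\cB_1 \subset \overline{J}_G$ as a disjoint union of $|\cZ(G)|$ standard critical Weinstein handles attached to $\fF \times \bC_{\Re z \leq 0}$, one for each connected component of $\cB_1 \cong \cZ(G) \times \fc$.

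\textbf{Step 1 (Liouville sector).} By the construction in Subsection \ref{subsec: compactify, J_G}, on $J_G - \cB_1$ the Liouville form is $\vartheta = -\tfrac{1}{\widetilde{\sfN}^2}d\tilde{I} + \vartheta_\fF$, and the exact embedding $J_G - \cB_1 \hookrightarrow \fF \times \bC_{\Re z \leq 0}$ identifies this with the standard product Liouville form $-p\,dq + \vartheta_\fF$, where $q = \Re z = I$ and $p = \Im z = -\widetilde{\sfN}^{-1}$. A neighborhood of the finite boundary $\partial \overline{J}_G = \fF \times \{\Re z = 0\}$ is thus symplectomorphic to $\fF \times \{\Re z \in (-\epsilon, 0]\}$, so the defining function $I$ has Hamiltonian vector field $X_I = \partial_p$, pointing outward along $\partial \overline{J}_G$. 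Completeness of the Liouville flow is straightforward: on the collar it reduces to completeness of $Z_\fF$ (granted by Proposition \ref{prop: hypersurface F}), while on $\cB_1 \cong \cZ(G) \times \fc$ the flow is the standard radial flow on each copy of $\fc \cong \bA^n$. This verifies that $\overline{J}_G$ is a Liouville sector and that $\widehat{J}_G$ is its natural Liouville completion.

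\textbf{Step 2 (Weinstein sector and handles).} Upgrade $\fF$ to a genuine Weinstein manifold by perturbing the generalized Weinstein structure of Proposition \ref{prop: hypersurface F}(b) via small Morse functions on the torus factors $(\cZ(L_S)_\cpt)_0$; the product then endows $\fF \times \bC_{\Re z \leq 0}$ with a Weinstein sector structure. The remaining piece $\cB_1 \cong T^*\cZ(G) \times \fc \cong \cZ(G) \times \bA^n$ decomposes as a disjoint union of $|\cZ(G)|$ contractible copies of $\fc$, each being a Kostant section $\Sigma_\sigma$ indexed by $\sigma \in \cZ(G)$. Under the canonical $\bC^\times$-action \eqref{eq: C star action}, each $\Sigma_\sigma$ is invariant and contracts to a single attracting fixed point (the image of $\sigma$ in the central fiber $\chi^{-1}([0])$). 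Since $\Sigma_\sigma$ is a smooth Lagrangian of real dimension $2n = \tfrac{1}{2}\dim_\bR J_G$, the compact portion $\Sigma_\sigma \cap \{\widetilde{\sfN}\ge c\}$ together with its normal data is identified with the core of a standard critical Weinstein handle, attached along a Legendrian attaching sphere $\partial \Sigma_\sigma^\cpt$ lying in the ideal contact boundary of $\fF \times \bC_{\Re z \leq 0}$. Gluing on the $|\cZ(G)|$ critical handles thus produces a Weinstein sector isotopic to $\overline{J}_G$; replacing $\bC_{\Re z \leq 0}$ by $\bC_z$ yields the corresponding statement for $\widehat{J}_G$.

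\textbf{Main obstacle.} The delicate step is verifying that the Liouville structure on a neighborhood of each Kostant section $\Sigma_\sigma$ can be deformed, via an exact isotopy compactly supported away from $\partial \overline{J}_G$, into the standard local model of a critical Weinstein handle. Concretely, one needs a Liouville-compatible Morse function on $\Sigma_\sigma$ whose unique critical point coincides with the attracting fixed point of the Liouville flow and whose gradient dynamics match the contracting Liouville dynamics transverse to $\Sigma_\sigma$; the homogeneity of $\widetilde{\sfN}$ (weight $-\tfrac{1}{2}$) and $I$ (weight $+\tfrac{1}{2}$) under the Liouville flow, together with the explicit $\bC^\times$-action in \eqref{eq: C star action}, should reduce this to a direct local computation in coordinates adapted to the splitting $\cB_1 \cong \cZ(G) \times \fc$. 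Once the local model is secured, the Legendrian attaching data in $\partial^\infty(\fF \times \bC_{\Re z \leq 0})$ is read off from the asymptotic behavior of $\Sigma_\sigma^\cpt$ under the Liouville flow, and the final assembly of the Weinstein handle decomposition follows from standard Weinstein handle attachment theory.
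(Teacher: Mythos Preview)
Your overall strategy matches the paper's, and Step 1 is fine, but Step 2 contains a genuine error: you have swapped cores and cocores. Under the forward Liouville flow (which corresponds to $s\to\infty$ in the $\bC^\times$-action \eqref{eq: C star action}), the fixed point $(\sigma,f)$ is \emph{repelling} along $\Sigma_\sigma$, not attracting: the action expands $\fc\cong\Sigma_\sigma$ away from $[0]$. Hence each Kostant section is the \emph{unstable} manifold of its critical point, i.e.\ the cocore of the corresponding critical handle, not the core. The stable manifold (core) is the connected component of the central fiber $\chi^{-1}([0])$ through $(\sigma,f)$, which lies almost entirely outside $\cB_1$; its Legendrian boundary at infinity in $\partial^\infty(\fF\times\bC_{\Re z\leq 0})$ is the actual attaching sphere. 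This is exactly what the paper records in Proposition \ref{prop: split generation}(i) and in the $SL_2$ discussion in the introduction, where the cores are explicitly identified with components of $\chi^{-1}([0])$ and the Kostant sections with the linking discs.

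Consequently your description of the attaching data as $\partial\Sigma_\sigma^{\cpt}$ is wrong: $\Sigma_\sigma\subset\cB_1$ is disjoint from $J_G-\cB_1\subset\fF\times\bC_{\Re z\leq 0}$, so its boundary cannot sit in the ideal contact boundary of the latter. The correct picture is that removing $\cB_1=\bigsqcup_\sigma\Sigma_\sigma$ from $J_G$ amounts to removing the cocores of the $|\cZ(G)|$ critical handles; since deleting a cocore from a critical handle deformation retracts onto the attaching region, this exhibits $J_G-\cB_1$ as the sector before attachment. The paper's own proof is terse, simply invoking the behavior of $Z$ near $\cB_1$ together with Remark \ref{remark: handle}; your more explicit verification would go through once you interchange the roles of core and cocore and locate the attaching spheres as the Legendrian links of the components of $\chi^{-1}([0])$.
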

\begin{proof}
By Proposition \ref{prop: hypersurface F}, 
\begin{align*}
\widehat{J}_G-\cB_1\cong \mathfrak{F}\times (\bC_{q+ip}, \alpha_{\bC}=\frac{1}{2}(qdp-pdq))
\end{align*}
is a Liouville manifold and can be isotopic to a Weinstein manifold. The behavior of $Z$ near $\cB_1$ realizes $\widehat{J}_G$ as a Weinstein manifold from attaching $|\cZ(G)|$ many critical handles to $\widehat{J}_G-\cB_1$ (cf. Remark \ref{remark: handle}), one for each Kostant section $\{g=z\}, z\in \cZ(G)$. Hence the proposition follows. 
\end{proof}

\begin{prop}\label{prop: split generation}
\item[(i)] Using the Weinstein structure on $\fF$ from Proposition \ref{prop: hypersurface F} (b), 
the Lagrangian skeleton of $\overline{J}_G$ inside $\widehat{J}_G$ has $|\pi_0(\cZ(L_S))|$ many  Lagrangian component(s) for each $S\subset \Pi$. 

\item[(ii)]
The Kostant sections $\{g=z\}, z\in \cZ(G)$ generate the partially wrapped Fukaya category of the Weinstein sector $\overline{J}_G$. 
\end{prop}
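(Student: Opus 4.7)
For part (i), my plan is to combine Propositions \ref{prop: hypersurface F} (b) and \ref{prop: partial compactify}. The latter gives a Weinstein isomorphism $\widehat{J}_G - \cB_1 \cong \fF \times \bC$, so the skeleton of this open piece is the product of the skeleton of $\fF$ with $\{0\}$. By Proposition \ref{prop: hypersurface F} (b), the components of that skeleton are in bijection with pairs $(\sigma, S)$ with $S \subsetneq \Pi$ and $\sigma \in \pi_0(\cZ(L_S))$. The remaining skeleton components of $\overline{J}_G$ lie in the $|\cZ(G)|$ critical handles attached along $\cB_1$; their cores are the connected components of the central fiber $\chi^{-1}([0])$, indexed by $\cZ(G) = \pi_0(\cZ(L_\Pi))$. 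Together these exhaust the indexing by $(\sigma, S)$ with $S \subseteq \Pi$ and $\sigma \in \pi_0(\cZ(L_S))$.

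For part (ii), the plan is to invoke the Viterbo-style generation theorem for partially wrapped Fukaya categories of Weinstein sectors, established in \cite{GPS2} and independently in \cite{CDGG}: such a category is split-generated by the cocores of the critical Weinstein handles. I will then verify that the only critical Weinstein handles of $\overline{J}_G$ are the $|\cZ(G)|$ handles attached at $\cB_1$, whose cocores are exactly the Kostant sections $\{g=z\}$, $z \in \cZ(G)$. The central observation is that the Weinstein manifold $\fF \times \bC$ from Proposition \ref{prop: partial compactify}, prior to attaching the Kostant-section handles, is \emph{subcritical}: the radial Liouville flow on $\bC$ has skeleton $\{0\}$ of real dimension $0$, so the product skeleton $\mathrm{skel}(\fF) \times \{0\}$ has real dimension $n-1$, strictly less than half the real dimension $2n$ of $\overline{J}_G$. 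Equivalently, each critical handle of $\fF$ (of Weinstein index $n-1$) becomes subcritical in the $2n$-dimensional product. Hence only the $|\cZ(G)|$ Kostant-section handles are critical in $\overline{J}_G$, and their cocores---the linking disks to the central-fiber cores---are precisely the Kostant sections.

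The main technical subtlety I anticipate is that the Weinstein structure on $\fF$ given by Proposition \ref{prop: hypersurface F} (b) is \emph{generalized} (Morse--Bott), with each critical locus a compact torus $(\cZ(L_S)_\cpt)_0 \cdot \sigma$, while the generation theorem of \cite{GPS1, GPS2, CDGG} is usually stated for ordinary Weinstein structures. Two remedies are available: either perturb each Morse--Bott critical locus into an ordinary Morse family of critical handles (all of which remain subcritical in $\overline{J}_G$ and hence contribute no cocore generators), or else argue directly that the sectorial inclusion $\fF \times \bC_{\Re z \leq 0} \hookrightarrow \overline{J}_G$ factors through a subcritical sector whose wrapped Fukaya category vanishes, so that any cocore coming from an $\fF$-handle is automatically zero in $\cW(\overline{J}_G)$ and only the Kostant sections remain as generators.
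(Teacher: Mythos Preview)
Your plan for part (i) is essentially correct, but one detail is off: for $S\subsetneq\Pi$, the skeleton of the \emph{sector} $\overline{J}_G$ is not $\Core(\fF)\times\{0\}$ but $\Core(\fF)\times\bR_{\geq 0}$, the cone on the stop. Your description gives an $(n-1)$-dimensional set, whereas the components are supposed to be Lagrangian, i.e.\ $n$-dimensional. The paper records exactly this: each component of $\Core(\fF)$ is multiplied by a half-line in $\bC$.

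Part (ii) has a genuine gap. You correctly observe that the completion $\fF\times\bC$ is subcritical, so the only critical handles of the \emph{Weinstein manifold} $\widehat{J}_G$ are the $|\cZ(G)|$ Kostant handles. But the generation theorem of \cite{GPS2,CDGG} for a Weinstein \emph{sector} says that $\cW(\overline{J}_G)$ is generated by cocores of interior critical handles \emph{together with} linking discs to the stop. Here the stop is $\Core(\fF)$, and its linking discs are precisely the Lagrangians $(\text{cocore in }\fF)\times\{p=1\}$ that the paper writes down in part (i). These are bona fide generators a priori; subcriticality of the completion does not make them disappear. Neither of your proposed remedies closes this gap: perturbing the Morse--Bott loci to Morse does not remove the stop or its linking discs, and $\cW(\fF\times\bC_{\Re z\leq 0})$ is not zero---by the K\"unneth formula it is $\cW(\fF)\otimes\cW(\bC_{\Re z\leq 0})\simeq\cW(\fF)$, since $\bC_{\Re z\leq 0}$ is $\bC$ with a single stop and has wrapped category $\Perf(\bC)$. (What \emph{is} zero is the fully wrapped $\cW(\fF\times\bC)$, which is a different category.)

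The paper's argument instead accepts that all cocores---Kostant sections and linking discs---generate, and then disposes of the linking discs by a one-line geometric move: in the $\bC$-factor, Hamiltonian isotope the arc $\{p=1\}$ away from the half-line $\bR_{\geq 0}$. This makes each linking disc disjoint from $\Core(\fF)\times\bR_{\geq 0}$, so it meets the relative skeleton only along $\chi^{-1}([0])$, and the wrapping exact triangle then expresses it in terms of the Kostant sections. Your subcriticality intuition is morally what underlies this isotopy (there is room in $\bC$ to push the arc off the stop), but it is this isotopy, not a handle-index count, that actually does the work.
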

\begin{proof}
(i) The Lagrangian component(s) for each $S\subset \Pi$ is given by: 
\begin{itemize}
\item If $S=\Pi$, then $\chi^{-1}([0])$ gives $|\cZ(G)|$ many Lagrangian components in the skeleton, and the Kostant sections give their cocores;\\
\item If $S\subsetneq \Pi$, then it gives $|\pi_0(\cZ(L_S))|$ many Lagrangian components in $\Core(\fF)$, the product of which with $\bR_{\geq 0}$ gives the same amounts of Lagrangian components in the skeleton of $\overline{J}_G$. The Lagrangian  
\begin{align*}
T^*\cZ(L_S)|_{\{z_S\}}\times (\{g=I\}\subset J_{L_S^\der})\times \{p=1\}\subset \cH_1\times \bR,
\end{align*}
for any $z_S\in \cZ(L_S)$ satisfying 
\begin{align*}
|b_{\lambda_{\beta^\vee}}(z_S)|^{1/\lambda_{\beta^\vee}(\sfh_0)}=\frac{1}{|\Pi\backslash S|}, \beta\not\in S,
\end{align*}
in each component of $\cZ(L_S)$ gives a cocore of the corresponding Lagrangian component. 
\end{itemize}

(ii) By \cite{GPS2}, the above Lagrangian cocores genearate $\cW(\overline{J}_G)$. 
On the other hand, for any Lagrangian cocore corresponding to $S\subsetneq \Pi$, we can perform a Hamiltonian isotopy on the $\bC$ factor which pushes $\{t=1\}$ away from $\bR_{\geq 0}$, and so moves the cocore away from $\Core(\fF)\times \bR_{\geq 0}$ and makes it intersect $\chi^{-1}([0])$ only.  Therefore, such cocores are generated by the Kostant sections. 
\end{proof}

\begin{remark}
We remark on some obvious relations between $\overline{J}_G$ and the log-compactification $\overline{J}^{\log}_G$ (\ref{eq: J, log, def}) for $G$ of adjoint type. The smooth function $1/\widetilde{\sfN}$ extends to $\overline{J}^{\log}_G$ by $0$, and defines a decreasing sequence of tubular neighborhoods $\{1/\widetilde{\sfN}<\frac{1}{j}\}, j>1$ (with smooth boundary) of the log-boundary divisor $\partial\overline{J}^{\log}_G$, which are related by the contracting $\bR_+$-flow and whose intersection is equal to the boundary divisor. An alternative way to see the normal crossing divisor $\partial\overline{J}^{\log}_G$ is as follows. Let $\cC_S$ be the ascending manifold of $\overline{w}_0\overline{w}_SB\in \overline{\chi^{-1}([0])}$ in $\overline{J}^{\log}_G$ with respect to the contracting $\bC^\times$-flow, whose union over all $S\subset \Pi$ gives the Bialynicki-Birula decomposition of $\overline{J}^{\log}_G$ (cf. \cite{Balibanu}). On $\cB_{w_0w_S}\cong \Sigma_{I;S}\times T^*\cZ(L_S)$, where $\Sigma_{I;S}\subset J_{L_S^\der}$ is the Kostant section associated to $g_S=I$, define 
\begin{align*}
\sfF_{S,\beta}(g_S=I, \xi_S;z, t):=\beta(z^{-1}),\beta\in \Pi\backslash S,
\end{align*}
which extend to be affine coordinates (completed by a choice of affine coordinates on $\cS_{\fl_S^\der}$ and $\fz_S^*$) on the affine space $\cC_S$. 
The zero locus of $\prod\limits_{\beta\in \Pi\backslash S}\sfF_{S,\beta}$ gives the normal crossing divisor inside $\cC_S$. 
\end{remark}

\section{The Wrapping Hamiltonians and one calculation of wrapped Floer cochains}\label{sec: wrapping Ham}

In this section, we calculate the wrapped Floer complexes for the Kostant sections. The main results are 
Proposition \ref{prop: A_G, vector} and \ref{prop: A_G, center}, which show that the Floer complexes are all concentrated in degree zero, and the generators are indexed by the \emph{dominant} coweight lattice of $T$ for $G$ of adjoint form. 

We mention a few basic set-ups for the wrapped Fukaya category of $J_G$, and give some references on the foundations of Fukaya categories instead of going into any detail of them. To set up gradings for Lagrangians in $\cW(J_G)$, we need to choose a compatible almost complex structure $\cJ$ and trivialize the square of the canonical bundle $\kappa^{\otimes 2}$. For this, we use that $J_G$ is hyperKahler and let $\cJ$ be the complex structure that is compatible with the real part of the present holomorphic symplectic form on it. Since $(J_G, \cJ)$ is again holomorphic symplectic using the hyperKahler rotated holomorphic symplectic form, $c_1(TJ_G)=0$ and we can trivialize $\kappa$ (hence $\kappa^{\otimes 2}$) by the top exterior power of this holomorphic symplectic form. Using this, holomorphic Lagrangians all have constant integer gradings (cf. \cite[Proposition 5.1]{Jin1}). We remark that since the choice of a grading for a (smooth) Lagrangian is completely topological, we usually don't stick to a single $\cJ$ or trivialization of $\kappa^{\otimes 2}$. 

For a friendly introduction of Fukaya categories, we refer the reader to \cite{Auroux}. For the foundations of Fukaya categories, we refer the reader to \cite{Seidel1}. For the more recent development of partially wrapped Fukaya categories on Liouville/Weinstein sectors, we refer the reader to \cite{GPS1, GPS2, Sylvan}.

\subsection{Choices of wrapping Hamiltonians}\label{subsec: Hamiltonians}

The Killing form on $\fg$ induces a $W$-invariant Hermitian inner product on $\ft^*\cong \ft$, namely 
\begin{align*}
\lng \xi, \eta\rng_{\text{Herm}}:=\lng\xi,\overline{\eta}\rng, 
\end{align*}
and let $\|\xi\|^2$ (or $|\xi|^2$) be $\lng \xi,\overline{\xi}\rng_{\text{Herm}}$. 
For any $R>0$, let $y_R: [-1,\infty)\rightarrow \bR$ be any smooth function such that 
\begin{align}\label{eq: condition on y}
y_R(x)=\begin{cases}&\frac{1}{2}x^2,\ x\leq R, \\
&\frac{1}{2}R x, x>2R. 
\end{cases}
\end{align} 
Let $\pi_\ft: \ft\cong \ft^*\longrightarrow \fc$ denote for the quotient map.

Let $(\sigma_1,\cdots, \sigma_n)$ be a set of \emph{homogeneous} complex affine coordinates on $\fc$ with respect to the induced $\bC^\times$-action from the weight $1$ dilating action on $\ft$.  Let $u_1, \cdots, u_n$ be the respective weights of the affine coordinates, which are all positive integers. Let $\tilde{u}:=\max\{u_1,\cdots, u_n\}+1$.

Assume $f(\xi)$ is any $W$-invariant homogeneous smooth function with weight $2$ on $\ft$ such that $f|_{\ft-\{0\}}>0$ and $f(\xi)$ descends to a $C^2$-function on $\fc-\{[0]\}$. 
For any $\delta>0$ small, let 
\begin{align}\label{eq: p_tilde u, delta}
p_{\tilde{u},\delta}: \bR_{\geq 0}\rightarrow \bR_{\geq 0}
\end{align}
be a smooth function such that (1) $0<p'_{\tilde{u},\delta}(s)\leq 2$, for $s>0$; (2) $p_{\tilde{u},\delta}(s)=s, s\in [3\delta,\infty)$, and $p_{\tilde{u},\delta}(s)=s^{\tilde{u}}, s\in [0, \delta)$. Then $p_{\tilde{u},\delta}\circ f$ is a $W$-invariant $C^2$-function on $\ft$ that descends to a $C^2$-function\footnote{If $f$ descends to a $C^k$-function on $\fc-\{0\}$, then by sufficiently increasing $\tilde{u}$, we can make $\tilde{f}_{\tilde{u},\delta}$ a $C^k$-function as well.} on $\fc$, denoted by $\tilde{f}_{\tilde{u}, \delta}: \fc\rightarrow \bR_{\geq 0}$. Note that $[0]\in \fc$ is the only critical point (which is a global minimum) of $\tilde{f}_{\tilde{u}, \delta}$. We can always perturb $\tilde{f}_{\tilde{u}, \delta}$ a little bit near $[0]$ so that $[0]$ is a non-degenerate global minimum, without introducing new critical points. 
Moreover, we have 
\begin{align}
\|D(p_{\tilde{u},\delta}\circ f)(\xi)\|\leq 2\|D f(\xi)\|, \text{ on }\{\xi\in \ft: f(\xi)\leq 3\delta\}. 
\end{align}

Now we describe the induction steps to define a smooth $W$-invariant function $\widetilde{F}$ that descends to a $C^2$-function $F$ on $\fc$, and which will serve (after some modifications) as a collection of  desired positive wrapping Hamiltonian functions on $J_G$. Let $\fS_{\ft}$ be the standard stratification on $\ft$ indexed by $S\subset \Pi$, with each stratum $\fz_S^\circ$ consisting of points whose stabilizer under the $W$-action is equal to $W_S:=N_{L_S}(T)/T$. For any $S\subset \Pi$, let 
\begin{align}\label{eq: U_S, epsilon}
U_{S,\epsilon}:=\{\xi\in \ft: \|\xi-\proj_{\fz_S}\xi\|<\epsilon\cdot \|\proj_{\fz_S}\xi\|, \proj_{\fz_S}\xi\in \fz_S^\circ\}
\end{align}
be a $\bC^\times$-invariant tubular neighborhood of $\fz_S^\circ$. In each of the following steps, we will choose some $\epsilon_j>0, j=1,\cdots, n$, sufficiently small, such that 
\begin{align}\label{eq: mathring U_S, epsilon}
\mathring{U}_{S, \epsilon_{|S|}}:=U_{S, \epsilon_{|S|}}-\bigcup\limits_{S'\supsetneq S}U_{S',\frac{1}{2}\epsilon_{|S'|}}
\end{align}
 are all disjoint for any pair of $S$ without any containment relation. 

\emph{Step 1. } The base case $F_{\leq 0}$ on $\ft^{\reg}$.

We start with the function $F_{\leq 0}(\xi):=\|\xi\|^2$ on $\ft^\reg$. It is clear that the function $F_{\leq 0}$ descends to a smooth function on $\fc^{\reg}=\ft^{\reg}/W$. 

\emph{Step 2.} Assumptions on the $j$-th step function $F_{\leq j}$. 

Suppose we have defined $F_{\leq j}$ on $\ft_{\leq j}:=\ft-\bigcup\limits_{|S|> j}U_{S, \epsilon^{(j)}_{|S|}}$, for some choice of $(\epsilon^{(j)}_k)_{k=1,\cdots, n}$ as above, such that $F_{\leq j}$ is a $W$-invariant homogeneous $C^2$-function with weight $2$ and the followings hold: 
\begin{itemize}
\item[(i)] For any $S$ with $|S|\leq j$,  on $\mathring{U}_{S,\epsilon^{(j)}_{|S|}}$ we have 
\begin{align}\label{eq: F_leq j, sum}
F_{\leq j}(\xi)=\|\proj_{\fz_S}\xi\|^2(1+f_S(\frac{\xi-\proj_{\fz_S}\xi}{\|\proj_{\fz_S}\xi\|})),
\end{align}
for some smooth function\footnote{Here we only need $f_S$ in the $\epsilon^{(j)}_{|S|}$-neighborhood of $0\in \ft_S$.} $f_S: \ft_S\rightarrow \bR_{\geq 0}$ that descends to a smooth function on $\fc_{S}:=\ft_S\sslash W_S$. In particular, $F_{\leq j}$ descends to a $C^2$-function on $\fc_{\leq j}:=\ft_{\leq j}/W$ (the image of $\ft_{\leq j}$ under $\pi_\ft: \ft\rightarrow \fc$). 

\item[(ii)] The function $f_S: \ft_S\rightarrow \bR_{\geq 0}$ satisfies $f_S(0)=0$ and $f_S>0$ on $\ft_S-\{0\}$. Let $Z_S$ be the radial vector field on $\ft_S$, i.e. the vector field generating the weight $1$ $\bR_+$-action. Then $\iota_{Z_S}df_S>0$ on $\ft_S-\{0\}$. 
In particular, this implies that the origin is the only critical point (global minimum) of $f_S$. For the induced function $\tilde{f}_S: \fc_S\rightarrow \bR_{\geq 0}$, we require that $[0]$ is a non-degenerate critical point. 

\item[(iii)]  
\begin{align}\label{eq: norm Df_S}
\|Df_S\|\leq 4^j\|D(\|\xi_S\|^2)\|.
\end{align} 
\end{itemize}

\emph{Step 3. }Modifying and extending $F_{\leq j}$ to $F_{\leq j+1}$.

For any $S$ with $|S|=j+1$, 
consider the following intersection\footnote{If $j+1=n$, i.e. $S=\Pi$, then replace the function $\|\proj_{\fz_S}\xi\|$ everywhere by constant $1$.}:
\begin{align*}
\cT_{S, \epsilon_{j+1}^{(j)}}:=U_{S,\epsilon^{(j)}_{j+1}}\cap \{\xi\in \ft:\|\proj_{\fz_S}\xi\|=1\}\cap  \bigcup\limits_{S_1\subsetneq S}\mathring{U}_{S_1, \epsilon^{(j)}_{|S_1|}}.
\end{align*}
By the requirement on $F_{\leq j}$ (\ref{eq: F_leq j, sum}), we have for any $S_1\subsetneq S$
\begin{align*}
F_{\leq j}(\xi)|_{\cT_{S, \epsilon^{(j)}_{j+1}}\cap \mathring{U}_{S_1,\epsilon^{j+1}_{|S_1|}}}=(1+\|\proj_{\fz_{S_1}}\xi-\proj_{\fz_{S}}\xi\|^2) (1+f_{S_1}(\frac{\xi-\proj_{\fz_{S_1}}\xi}{\sqrt{1+\|\proj_{\fz_{S_1}}\xi-\proj_{\fz_{S}}\xi\|^2}})). 
\end{align*}
In particular, $F_{S}:=F_{\leq j}|_{\cT_{S, \epsilon^{(j)}_{j+1}}}-1$, which only depends on $\xi-\proj_{\fz_S}\xi$ but not on $\proj_{\fz_S}(\xi)$, descends to a smooth positive function defined on an open ``annulus" $\{\epsilon^{(j)}_{j+1}/2<\|\xi_S\|<\epsilon^{(j)}_{j+1}\}$ around the origin in $\ft_{S}$, satisfying $\iota_{Z_S}dF_S>0$. 

Now modify $F_{S}$ inside $\{\epsilon^{(j)}_{j+1}/2<\|\xi_S\|<(2/3)\epsilon^{(j)}_{j+1}\}$, extend it to be homogeneous with weight $2$ (or better modify its induced function on a portion of $\fc_S$) on a small neighborhood of the origin in $\ft_S$, then compose it with $p_{\tilde{u}, \delta}$ (\ref{eq: p_tilde u, delta}) for appropriate $\tilde{u}$ and $\delta>0$.  The resulting function is denoted by $f_S$, and it is clear that, with some careful choices,  $f_S$ satisfies all the conditions in \emph{Step 2}. Note that we can always perturb $\tilde{f}_S: \fc_S\rightarrow \bR_{\geq 0}$ a little bit near $[0]$ so that it becomes $C^2$-smooth and $[0]$ is a non-degenerate minimum, without creating new critical points. 

Lastly, define $F_{\leq {j+1}}(\xi)$ on $\mathring{U}_{S, \epsilon_{|S|}^{(j)}}, |S|=j+1$ by the formula in (\ref{eq: F_leq j, sum}). Since it matches with $F_{\leq j}$ near the boundary of $\cT_{S,\epsilon_{j+1}^{(j)}}$, it extends $F_{\leq j}$ (restricted to a smaller domain) to a desired function on $\ft_{\leq j+1}$, for some new choices of $(\epsilon^{(j+1)}_{k})_{k=1,\cdots, n}$.
 
In the end, we will get $\widetilde{F}:=F_{\leq n}$ on $\ft$, and this finishes the induction step. Let $F$ be the induced function on $\fc$.

Define 
\begin{align}
\label{eq: tilde H_R def}&\widetilde{H}_R:=y_R\circ \sqrt{\widetilde{F}}: \ft\longrightarrow \bR_{\geq 0}\\
\label{eq: H_R def}&H_R:=y_R\circ \sqrt{F}: \fc\longrightarrow \bR_{\geq 0}
\end{align}
It is clear that both $\widetilde{H}_R$ is smooth and $H_R$ is $C^2$-smooth on their respective defining domains. By some abuse of notations, we will denote their respective pullback functions on $T^*T$ and $J_G$ by the same notations. Since $J_G\rightarrow\fc$ is a complete integrable system, the Hamiltonian flows of $H_R$ on $J_G$ are all complete. 

\begin{definition}\label{def: positive Ham}
Assume a Liouville sector $\overline{X}$ has an increasing sequence of Liouville subsectors $\overline{X}_k\subset X, k\geq 1$ such that $\bigcup\limits_{k}\overline{X}_k=X$ (the interior of $\overline{X}$). We say a Hamiltonian function $H:X\rightarrow\bR$, whose Hamiltonian flows are all  complete, is (\emph{nonnegative}/\emph{positive}) \emph{linear} if each $H|_{\overline{X}_k}, k\geq 1$ is (nonnegative/strictly positive) homogeneous of weight $1$ with respect to the Liouville flow outside a compact region in $\overline{X}_k$. 
\end{definition}

\begin{remark}
\item[(i)] Strictly speaking, by the definition of a linear Hamiltonian on a Liouville sector $\overline{X}$ in \cite{GPS1}, one needs the Hamiltonian and its differential to vanish along $\partial\overline{X}$. 
In the setting of Definition \ref{def: positive Ham}, 
we can extend $H|_{\overline{X}_k}$ to be $H_k: \overline{X}\rightarrow \bR$ which vanishes in a neighborhood of $\partial\overline{X}$. Given any cylindrical $L\subset X$, for any $t\in \bR$, define $\varphi^t_{X_H}(L):=\varphi^t_{X_{H_k}}(L)$ for $k\gg 1$, which is well defined and obviously stabilizes by the completeness of the Hamiltonian flows of $H$. In particular, the argument in \cite[Lemma 3.28]{GPS1} still works with $\text{Ham}(\overline{X})$ replaced by $\text{Ham}(X)$ consisting of linear Hamiltonian functions with complete Hamiltonian flows in the sense of the above definition. 

\item[(ii)] The Liouville sectors $\overline{J}_G$ and $T^*\overline{M}$ for a smooth compact manifold $\overline{M}$ with boundary both satisfy the conditions in Definition \ref{def: positive Ham}. The latter is easy to see. For $\overline{J}_G$, this follows from Proposition \ref{prop: proper b map} and the handle attachment description in Proposition \ref{prop: partial compactify}. By the notations from Subsection \ref{subsubsec: subsector}, we can form $(\overline{J}_G)_k=J_G-\fF\overset{\triangle}{\times}\mathring{\cP}_k$, for a decreasing sequence $\cP_k$ such that $\bigcap\limits_k\cP_k=\emptyset$. Then it is clear that $H_R$ is a positive linear Hamiltonian on $J_G$.  
\end{remark}

\subsection{One calculation of wrapped Floer cochains}\label{subsec: Floer cochains}

Let $G$ be an adjoint group. Let $\Sigma_I$ denote for the (only) Kostant section. In this subsection, we calculate $\Hom(\Sigma_I, \Sigma_I)$ using the Hamiltonians defined in (\ref{eq: H_R def}). The idea is to use the Lagrangian correspondence (\ref{eq: Lag corresp}) to transform the wrapping process in $J_G$ to a wrapping process in $T^*T$, with the latter easier to understand. Indeed, since the Hamiltonian function $-\proj_1^*H_R+\proj_2^*(\widetilde{H}_{R})$ on $J_G^a\times T^*T$ vanishes on the Lagrangian subvariety $J_G\underset{\fc}{\times}\ft$, we have the Lagrangian correspondence equivariant with respect to the Hamiltonian flow $\varphi_{H_R}^s$ on $J_G$ and $\varphi_{\widetilde{H}_R}^s$ on $T^*T$.  For any Lagrangian $L\subset J_G$, let $\widehat{L}$ be the transformation under  (\ref{eq: Lag corresp}), e.g. $\widehat{\Sigma}_I=T_I^*T$. 
Then we have 
\begin{align}\label{eq: equivariant flow H_R}
\widehat{\varphi_{H_R}^1(L)}=\varphi_{\widetilde{H}_{R}}^1(\widehat{L}),
\end{align}
for any $L\subset J_G$. 

Implicitly in the definitions (\ref{eq: tilde H_R def}), (\ref{eq: H_R def}) are the choices of $(\epsilon_k^{j})_{k=1,\cdots, n}$. In the following, we assume $\widetilde{H}_R, R\geq 0$ (resp. $H_R$) as $R$ increases satisfies that the choices of $(\epsilon_k^{j})_{k=1,\cdots, n}$ depending on $R$ have limit values $0$.

\begin{lemma}
For any cylindrical Lagrangian $L$, the Lagrangians $\{\varphi_{H_{R}}^1(L)\}_{R\geq 0}$ is cofinal in the wrapping category $(L\rightarrow -)^+$ (in the sense of \cite[Section 3.4]{GPS1}).
\end{lemma}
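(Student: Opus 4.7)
The plan is to verify three properties from which cofinality of $\{\varphi^1_{H_R}(L)\}_{R\geq 0}$ in $(L\to-)^+$ follows via the standard criterion of \cite{GPS1}: (1) each map $L\to\varphi^1_{H_R}(L)$ belongs to $(L\to-)^+$; (2) for $R\leq R'$ there is a morphism $\varphi^1_{H_R}(L)\to\varphi^1_{H_{R'}}(L)$ in $(L\to-)^+$; and (3) every positive wrapping $L\to L'$ is eventually dominated by some $\varphi^1_{H_R}(L)$.

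First I would dispatch (1) and (2). By the construction in Subsection~\ref{subsec: Hamiltonians}, outside a compact subset of $\fc$ the function $F:\fc\to\bR_{\geq 0}$ is homogeneous of weight $2$ with respect to the dilating $\bC^\times$-action and vanishes only at $[0]\in\fc$. Consequently $H_R=y_R\circ\sqrt{F}$ equals $\tfrac{R}{2}\sqrt{F}$ on $\{F\geq 4R^2\}$ and is therefore a nonnegative Hamiltonian that is linear (homogeneous of weight $1$) outside a compact subset of each Liouville subsector $\overline{X}_k\subset\overline{J}_G$, in the sense of Definition~\ref{def: positive Ham}. Nonnegativity immediately gives (1). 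For (2), I would fix the family $\{y_R\}_{R\geq 0}$ so that $y_R(x)\leq y_{R'}(x)$ for all $x\geq 0$ whenever $R\leq R'$, which is compatible with (\ref{eq: condition on y}). Then $H_{R'}-H_R\geq 0$ is again a nonnegative linear Hamiltonian whose time-$1$ flow provides the required positive isotopy from $\varphi^1_{H_R}(L)$ to $\varphi^1_{H_{R'}}(L)$.

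The main step is (3). Any object $L'\in(L\to-)^+$ is of the form $\varphi^1_K(L)$ for some nonnegative linear Hamiltonian $K$. On each Liouville subsector $\overline{X}_k$, $K$ is bounded outside a compact set by a constant multiple $C_k\cdot\rho_k$ of the weight-$1$ radial function $\rho_k$ induced by the Liouville flow. Because $J_G\to\fc$ is a complete integrable system whose Liouville flow projects to the weight-$2$ dilation on $\fc$, the function $\sqrt{F}$ is comparable to $\rho_k$ outside any fixed conic neighborhood of $\chi^{-1}([0])\subset J_G$. For $R\gg C_k$, this yields $H_R\geq K$ outside such a neighborhood. The key geometric input, supplied by Proposition~\ref{prop: partial compactify}, is that $\chi^{-1}([0])$ lies inside the cores of the $|\cZ(G)|$ critical handles attached to form $\overline{J}_G$, and in particular does not reach the ideal contact boundary $\partial^\infty\overline{J}_G$. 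Hence the pointwise inequality $H_R\geq K$ holds near $\partial^\infty\overline{J}_G$, so $H_R-K$ generates a nonnegative linear isotopy from $L'$ to $\varphi^1_{H_R}(L)$, giving the required morphism in $(L\to-)^+$.

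The main subtlety is precisely the degeneracy of $H_R$ along $\chi^{-1}([0])$, where $F$ vanishes. As indicated above, this is controlled by the handle-theoretic description of $\overline{J}_G$ in Proposition~\ref{prop: partial compactify}: the degeneracy locus sits inside the ``finite'' part of the sector and not at its ideal contact boundary, so the comparison with $K$ only needs to be executed at contact infinity, where $\sqrt{F}$ dominates any fixed radial function up to a constant.
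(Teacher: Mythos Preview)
Your approach is sound and rests on the same geometric fact the paper uses: at contact infinity $H_R=\tfrac{R}{2}\sqrt{F}$ with $\sqrt{F}$ strictly positive there (because $\chi^{-1}([0])$ escapes only to the finite boundary of the sector, not to $\partial^\infty\overline{J}_G$). The paper's proof simply packages this observation differently: it notes that $\varphi^1_{H_R}$ on $\partial^\infty J_G$ coincides with the time-$R$ map of the single positive contact flow generated by $\tfrac{1}{2}\sqrt{F}$, and then invokes \cite[Lemma~3.28]{GPS1} directly. Your (1)--(3) essentially unpack what that lemma says; the paper skips the unpacking.

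There is one technical slip in your step (3). You write that ``$H_R-K$ generates a nonnegative linear isotopy from $L'$ to $\varphi^1_{H_R}(L)$''. This would require $\varphi^1_{H_R-K}\circ\varphi^1_K=\varphi^1_{H_R}$, which fails when $K$ and $H_R$ do not commute; $K$ is an arbitrary linear Hamiltonian, not one pulled back from $\fc$. The fix is standard: take the diagonal path $u\mapsto\varphi^u_{H_R}\circ\varphi^{1-u}_K(L)$ from $L'$ to $\varphi^1_{H_R}(L)$. Since linear Hamiltonian flows preserve the radial function $\rho_k$ at infinity, the effective contact Hamiltonian along this path is $H_R-K\circ(\varphi^u_{H_R})^{-1}$, and $K\circ(\varphi^u_{H_R})^{-1}$ obeys the same bound $\leq C_k\rho_k$ as $K$. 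Hence the path is positive once $R$ is large, and the domination goes through. This is precisely the kind of maneuver hidden inside \cite[Lemma~3.28]{GPS1}, which is why the paper just cites it.
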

\begin{proof}
Note that $\varphi_{H_R}^1$ on $\partial^\infty J_G$ is the same as the time $R$ map of the positive contact flow induced by the linear Hamiltonian $\frac{1}{2}\sqrt{F}$ on its symplectization. So the lemma follows from the argument in \cite[Lemma 3.28]{GPS1}.  
\end{proof}

\begin{prop}\label{prop: A_G, vector}
Assume $G$ is of adjoint type. For a sequence of $R_n\rightarrow \infty$, the intersections of $\varphi_{H_{R_n}}^1(\Sigma_I)$ and $\Sigma_I$ are all transverse and are in degree $0$. Morover, as $R\rightarrow \infty$, the intersection points are naturally indexed by the dominant coweight lattice of $T$. 
\end{prop}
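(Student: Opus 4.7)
The plan is to use the Lagrangian correspondence \eqref{eq: Lag corresp} together with its equivariance \eqref{eq: equivariant flow H_R} to translate the intersection problem into the cotangent bundle $T^*T$, where the Hamiltonian flow of $\widetilde{H}_R$ is explicit.

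Since $\widehat{\Sigma}_I = T_I^*T$ and $\pi_\chi$ is generically $|W|$-to-one from the preimage of a Kostant section onto its cotangent-fiber image, the intersections of $\varphi^1_{H_R}(\Sigma_I)$ with $\Sigma_I$ in $J_G$ are in bijection with $W$-orbits of intersections of $\varphi^1_{\widetilde{H}_R}(T_I^*T)$ with $T_I^*T$ in $T^*T$. In $T^*T \cong T \times \ft^*$ with $\widetilde{H}_R$ depending only on $p\in\ft^*$, the time-$1$ flow is $(q,p) \mapsto (q\cdot \exp(\nabla \widetilde{H}_R(p)), p)$, so intersections with $T_I^*T$ correspond exactly to $p\in\ft^*$ satisfying
\begin{equation*}
\nabla \widetilde{H}_R(p) \in \ker(\exp : \ft \to T),
\end{equation*}
which for $G$ of adjoint type is $2\pi i$ times the cocharacter lattice $X_*(T)$, i.e. the coweight lattice of $T$.

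In the regular region of $\ft^*$ (far from Weyl walls), the base case of the inductive construction of $\widetilde F$ in Subsection \ref{subsec: Hamiltonians} gives $\widetilde F(p) = \|p\|^2$, and whenever $\sqrt{\widetilde F(p)} \leq R$ the function $y_R$ is purely quadratic, so $\nabla \widetilde H_R(p) = p$ under the Hermitian identification $(\ft^*)^* \cong \ft$. Thus every lattice point $p_0 = 2\pi i \lambda$ in the quadratic regime contributes one intersection. The tangent spaces at $(I, p_0)$ in $\ft\oplus\ft^*$ are $\{(0,\delta)\}$ for $T_I^*T$ and $\{(\delta,\delta)\}$ for $\varphi^1_{\widetilde H_R}(T_I^*T)$ (using that $d\exp(p_0)$ is the identity since $\exp(p_0) = I$), and these are transverse. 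The modified $\widetilde F$ near Weyl walls does not introduce spurious intersections: by the inductive properties (ii)--(iii) in Subsection \ref{subsec: Hamiltonians}, namely the radial positivity $\iota_{Z_S} df_S > 0$ and the norm bound \eqref{eq: norm Df_S}, together with $W$-invariance of both sides of the equation $\nabla \widetilde H_R(p)\in 2\pi i X_*(T)$, any solution in a non-regular stratum is forced to coincide up to $W$ with a lattice point $2\pi i\lambda$; the non-degeneracy perturbation at $[0]$ handles $\lambda = 0$, and in the intermediate and linear regimes $\sqrt{\widetilde F}(p) > R$ the norm $\|\nabla \widetilde H_R(p)\|$ equals $|2\pi\lambda|$ only on a discrete (in $R$) set of critical radii, which we avoid by choosing the cofinal sequence $R_n$.

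Passing to the $W$-quotient, the intersections in $J_G$ are indexed by $W$-orbits of $2\pi i X_*(T)$ intersected with a growing ball in $\ft^*$; as $R_n\to\infty$ this exhausts $X_*(T)/W$, which is in canonical bijection with the dominant coweights via the unique dominant representative of each orbit. For the grading, $\Sigma_I$ is a holomorphic Lagrangian and hence has constant integer degree, while at each intersection the convex quadratic Hamiltonian $\widetilde H_R$ produces the Lagrangian path $V_t = \{(t\delta, \delta)\}_{t\in[0,1]}$ in the Lagrangian Grassmannian, which has vanishing Maslov index relative to $T_I^*T$ and no interior crossings; hence every intersection lies in degree $0$. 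The main obstacle is the careful analysis near Weyl walls, where controlling $\nabla\widetilde H_R$ and establishing transversality for the chosen sequence of $R_n$ requires the specific convexity and homogeneity arranged in the inductive construction of $\widetilde F$.
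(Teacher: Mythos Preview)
Your overall strategy matches the paper's: transport the problem to $T^*T$ via the Lagrangian correspondence \eqref{eq: Lag corresp}, identify the intersections $\varphi^1_{\widetilde{H}_R}(T_I^*T)\cap T_I^*T$ with (approximately) lattice points, and then pass back to $J_G$ by taking $W$-orbits. The identification of intersections in $T^*T$ with lattice points, and the set-theoretic bijection with dominant coweights after quotienting, are both correct.

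The gap is in the transversality and degree-$0$ argument at intersection points lying on Weyl walls. Such points exist already for adjoint groups (for instance each fundamental coweight $\lambda_{\beta}^\vee$ lies on the wall $\{\alpha=0\}$ for every simple $\alpha\neq\beta$), so they cannot be ignored. Your tangent-space computation $\{(0,\delta)\}$ versus $\{(\delta,\delta)\}$ presumes $\nabla^2\widetilde{H}_R=\mathrm{id}$, which holds only in the regular region where $\widetilde{F}=\|\cdot\|^2$; near a wall stratum $\mathring{U}_{S,\epsilon_{|S|}}$ the modification via $f_S$ changes the Hessian. More seriously, even if you establish transversality in $T^*T$, this does \emph{not} transfer to $J_G$ along the correspondence: at a point $\eta\in\fz_S^\circ$ the quotient $\chi_\ft:\ft\to\fc$ has differential of rank $n-|S|$ (the $W_S$-invariants on $\ft_S$ all have degree $\geq 2$), so $\pi_{J_G}$ fails to be a submersion there and cannot carry transversality across. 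The paper handles this by working directly in $J_G$ with the descended Hamiltonian $H_R$ on $\fc$: in the local model $\fc\simeq\fz_S\times\fc_S$ near $[\eta]$, transversality in the $\fz_S$-direction is quadratic as in your argument, while transversality in the $\fc_S$-direction comes precisely from the non-degeneracy of the minimum of $\tilde{f}_S:\fc_S\to\bR_{\geq 0}$ at $[0]$, which was arranged in the inductive construction of Subsection~\ref{subsec: Hamiltonians}. This same positivity of the Hessian of $H_R$ on $\fc$ (not of $\widetilde{H}_R$ on $\ft$) is what yields degree $0$; your Maslov-path argument in $T^*T$ again only applies to the regular intersections. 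You correctly flag the wall analysis as ``the main obstacle'' but do not supply the missing step, which is to invoke the non-degeneracy of $\tilde{f}_S$ on the quotient $\fc_S$ rather than properties of $f_S$ on $\ft_S$.
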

\begin{proof}
Using (\ref{eq: equivariant flow H_R}), we just need to examine the intersection points $\varphi_{\widetilde{H}_{R}}^1(\widehat{\Sigma}_I)\cap \widehat{\Sigma}_I$ and understand their corresponding intersection points in $J_G$. 

By construction, given any $(\epsilon_j)_{j=1,\cdots, n}$ and $M\gg 1$, for any $S\subset \Pi$,  over $\mathring{U}_{S,\epsilon_{|S|}}\cap \{\|\xi\|\leq M\}\subset \ft$ (\ref{eq: mathring U_S, epsilon}), we have 
the intersections $\varphi_{\widetilde{H}_R}^1(\widehat{\Sigma}_I)\cap \widehat{\Sigma}_I$ stabilize for $R\rightarrow\infty$. Using the form of $\widetilde{H}_R$ in (\ref{eq: norm Df_S}) and the assumptions on $f_S$, we can conclude that the intersection points there are naturally indexed by $\mathring{U}_{S,\epsilon_{|S|}}\cap \{\|\xi\|\leq M\}\cap X_*(T)$. Now transforming these intersection points to $J_G$ using the opposite Lagrangian correspondence  (\ref{eq: Lag corresp}), and using the non-degeneracy of the minimum of $\widetilde{f}_S: \fc_S\rightarrow\bR_{\geq 0}$, we can conclude that all intersections are transverse and have degree $0$, and they are naturally indexed by the dominant coweight lattice $X_*(T)^+$. The proposition thus follows. 
\end{proof}

We can do a similar calculation for any semisimple $G$ with center $\cZ(G)$. For any $z\in \cZ(G)$, let $\mu^\vee(z)$ be any coweight representative of $z$ under the canonical isomorphism $X_*(T_\ad)/X_*(T)\cong \cZ(G)$. 
\begin{prop}\label{prop: A_G, center}
Let $z_1, z_2\in \cZ(G)$. For a sequence of $R_n\rightarrow \infty$, the intersections of $\varphi_{H_{R_n}}^1(\Sigma_{z_1})$ and $\Sigma_{z_2}$ are all transverse and are in degree $0$. Morover, as $R\rightarrow \infty$, the intersection points are naturally indexed by $(\mu^\vee(z_1)-\mu^\vee(z_2)+X_*(T))\cap X_*^+(T_\ad)$. 

\end{prop}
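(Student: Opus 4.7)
The plan is to mirror the proof of Proposition \ref{prop: A_G, vector}, reducing the calculation in $J_G$ to one in $T^*T$ via the Lagrangian correspondence (\ref{eq: Lag corresp}) and the equivariance relation (\ref{eq: equivariant flow H_R}). The new ingredient is to track the images $\widehat{\Sigma}_{z_i}$ of the Kostant sections for $z_i \in \cZ(G)$, and the resulting shift in the lattice of intersection points, which is governed by the canonical isomorphism $X_*(T_\ad)/X_*(T) \cong \cZ(G)$.

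First I would identify $\widehat{\Sigma}_z$ for $z \in \cZ(G)$. Since $z$ is central, its image under any Borel quotient $B_1 \to B_1/[B_1,B_1] \cong T$ is well-defined and equal to $z \in T$; thus by (\ref{eq: pi_chi, B_1}), $\widehat{\Sigma}_z$ is (with multiplicity $|W|$) the cotangent fiber $T^*_z T \subset T^*T$. The Hamiltonian $\widetilde{H}_R$, pulled back from $\ft$ along the projection $T \times \ft \to \ft$, has a Hamiltonian flow that preserves the $\ft$-coordinate and translates the $T$-coordinate by $\exp(-s\,d\widetilde{H}_R/d\xi)$. Hence the intersection $\varphi^1_{\widetilde{H}_R}(T^*_{z_1}T) \cap T^*_{z_2}T$ is parametrized by $\xi \in \ft$ solving
\[
\tfrac{d\widetilde{H}_R}{d\xi}(\xi) \in \mu^\vee(z_1) - \mu^\vee(z_2) + X_*(T) \subset X_*(T_\ad),
\]
with an appropriate sign convention for the choice of coweight lifts $\mu^\vee(z_i)$.

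Next I would observe that this coset is $W$-stable because $W$ acts trivially on $X_*(T_\ad)/X_*(T) \cong \cZ(G)$. On the open, conic region where $\widetilde{F}(\xi) = \|\xi\|^2$ (which exhausts $\ft$ as $R \to \infty$ away from shrinking tubular neighborhoods $\mathring{U}_{S,\epsilon_{|S|}}$ of the Weyl walls), $d\widetilde{H}_R/d\xi$ is proportional to $\xi$, so the intersection points in $T^*T$ lying over this region are in bijection with the coset elements lying in this region. Near each Weyl wall, the stratified modification of $\widetilde{H}_R$ built in Subsection \ref{subsec: Hamiltonians} (via the functions $f_S$ and the non-degenerate minima of the induced functions $\tilde{f}_S$ on $\fc_S$) yields the same count and transversality, exactly as in the adjoint case. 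Transforming back to $J_G$ via the opposite correspondence quotients the set of intersections by the $W$-action on $\ft^*$, and closed dominant representatives of this $W$-quotient are precisely $(\mu^\vee(z_1) - \mu^\vee(z_2) + X_*(T)) \cap X_*^+(T_\ad)$.

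The transversality and degree claims then follow verbatim from the adjoint case: transversality comes from the non-degeneracy of the minimum of each $\tilde{f}_S$ on $\fc_S$, and both $\Sigma_{z_1}$ and $\Sigma_{z_2}$ are holomorphic Lagrangians with the constant integer grading recalled at the beginning of Section \ref{sec: wrapping Ham}, so the degree at each transverse intersection equals the constant $0$ computed in the flat model region. The main technical obstacle I anticipate is the stratified analysis around the Weyl walls in the case where $\mu^\vee(z_1) - \mu^\vee(z_2) + X_*(T)$ actually meets the boundary of $X_*^+(T_\ad)$; however, the perturbation making $[0] \in \fc_S$ a non-degenerate minimum of $\tilde{f}_S$ in Proposition \ref{prop: A_G, vector} serves the same purpose here, so this should not introduce new difficulties beyond what is already handled in the adjoint case.
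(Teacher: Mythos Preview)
Your proposal is correct and follows essentially the same route as the paper's proof: reduce via the Lagrangian correspondence to computing $\varphi^1_{\widetilde{H}_R}(T^*_{z_1}T)\cap T^*_{z_2}T$ in $T^*T$, index those intersections by the coset $\mu^\vee(z_1)-\mu^\vee(z_2)+X_*(T)$, and then transform back to $J_G$ to pick out the dominant representatives. One small remark: the parenthetical ``with multiplicity $|W|$'' is not quite right---over $\Sigma_z$ the correspondence restricts to a bijection onto $T^*_zT$ (the $|W|$ Borels containing a regular $\xi$ give the $|W|$ distinct points of the $W$-orbit in $\ft^*$, not $|W|$ copies of the same point)---but this does not affect your argument, and your added observation that the coset is $W$-stable is exactly what makes the passage back to $J_G$ clean.
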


\begin{proof}
The proof is very similar to that of Proposition \ref{prop: A_G, vector}. Here we first look at $\varphi_{\widetilde{H}_{R}^1(\widehat{\Sigma}_{z_1})}\cap \widehat{\Sigma}_{z_2}$, and then transform back to $\varphi_{H_{R_n}}^1(\Sigma_{z_1})\cap \Sigma_{z_2}$. The intersection points  $\varphi_{\widetilde{H}_{R}^1(\widehat{\Sigma}_{z_1})}\cap \widehat{\Sigma}_{z_2}$ as $R\rightarrow \infty$ are exactly indexed by $\mu^\vee(z_1)-\mu^\vee(z_2)+X_*(T)$. Transforming the intersection points to $J_G$ gives $(\mu^\vee(z_1)-\mu^\vee(z_2)+X_*(T))\cap X_*^+(T_\ad)$.  
\end{proof}

\section{Homological mirror symmetry for adjoint type $G$}\label{sec: HMS adjoint}

For $z\in \cZ(G)$, let $\Sigma_z$ denote for the Kostant section $\{g=z\}$. In particular, $\Sigma_I$ is the Kostant section $\{g=I\}$.  For $G$ of adjoint type, let
\begin{align*}
\cA_G:=End(\Sigma_I)^{op}. 
\end{align*}
From now on, we will work with ground field $\bC$. The calculation in Proposition \ref{prop: A_G, vector} says that $\cA_G$ is isomorphic to $\bC[T^\vee\sslash W]$ as a \emph{vector space}. 
In this section, we prove the main theorem for $G$ of adjoint type:
\begin{thm}\label{thm: sec G adjoint}
Assume $G$ is of adjoint type. There is an algebra isomorphism $\cA_G\cong \bC[T^\vee\sslash W]$ yielding the HMS result:
\begin{align*}
\cW(J_G)\simeq \Coh(T^\vee\sslash W). 
\end{align*} 
\end{thm}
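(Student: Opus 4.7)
The plan is to reduce the theorem to the algebra isomorphism $\cA_G \cong \bC[T^\vee\sslash W]$, and then to establish this isomorphism by exhibiting a compatible family of characters parameterized by $T^\vee\sslash W$. Since $G$ is of adjoint type, $|\cZ(G)| = 1$, so Proposition \ref{prop: split generation}(ii) says $\Sigma_I$ alone generates $\cW(J_G)$. Combined with Proposition \ref{prop: A_G, vector}, which identifies $\cA_G$ with $\bC[T^\vee\sslash W]$ as a vector space (generators indexed by dominant coweights $X_*(T)^+$, concentrated in degree zero so that $\cA_G$ is an ordinary algebra), the theorem reduces to upgrading this vector space identification to an algebra isomorphism. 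Given such an isomorphism, a standard module-theoretic argument (since the image of $\Sigma_I$ is a compact generator whose endomorphism algebra matches the one for $\cO_{T^\vee\sslash W}$ in $\Coh(T^\vee\sslash W)$) yields the equivalence $\cW(J_G) \simeq \Coh(T^\vee\sslash W)$.

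To pin down the algebra structure, I would first exploit the sector inclusion $\cB_{w_0}^{\dagg} \cong T^*\overline{T} \hookrightarrow \overline{J}_G$ described in Subsection \ref{subsubsec: idea of proof}, which produces a co-restriction functor $co\text{-}res: \cW(T^*T) \to \cW(J_G)$. Under the classical HMS equivalence $\cW(T^*T) \simeq \Coh(T^\vee)$, the skyscraper sheaf at $\check\rho \in T^\vee$ corresponds to the conormal bundle $L_0$ of the compact subtorus $T_{\cpt} \subset T$ equipped with the rank-one local system $\check\rho \in \Hom(\pi_1(T),\bC^\times) \cong T^\vee$. The key technical inputs, which the introduction flags as Proposition \ref{prop: L_0, part 2} and Proposition \ref{prop: L_0, W} (and also the Novikov-version, Proposition \ref{prop: L_xi, S_e, part 1}), assert that $co\text{-}res(L_0, \check\rho)$ is a \emph{skyscraper-like} object in $\cW(J_G)$ — i.e. $\Hom(\Sigma_I, co\text{-}res(L_0, \check\rho))$ is one-dimensional — and that $co\text{-}res(L_0, \check\rho) \cong co\text{-}res(L_0, w(\check\rho))$ for every $w \in W$.

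Granting these, for each point $[\check\rho] \in T^\vee\sslash W$ the module $\Hom(\Sigma_I, co\text{-}res(L_0, \check\rho))$ is a one-dimensional representation of $\cA_G = \End(\Sigma_I)^{op}$, yielding a character $\chi_{[\check\rho]}: \cA_G \to \bC$. Assembling these into a single map
\begin{equation*}
\Phi: \cA_G \longrightarrow \bC[T^\vee\sslash W], \qquad a \mapsto \bigl([\check\rho]\mapsto \chi_{[\check\rho]}(a)\bigr),
\end{equation*}
I would then verify algebraicity in $\check\rho$ by arranging the family $\{(L_0,\check\rho)\}_{\check\rho\in T^\vee}$ into an $(L_0$-indexed$)$ universal local system on $T\times T^\vee$, so that Floer continuation with $\Sigma_I$ produces regular functions on $T^\vee$ (and $W$-invariant by the second property above). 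To match $\Phi$ with the vector-space identification of Proposition \ref{prop: A_G, vector}, I would compute $\chi_{[\check\rho]}$ on the basis generator $a_\lambda \in \cA_G$ indexed by $\lambda \in X_*(T)^+$ by tracking the intersection points $\varphi_{H_R}^1(\Sigma_I) \cap \Sigma_I$ through the Lagrangian correspondence $J_G \leftarrow J_G\times_\fc \ft^* \to T^*T$ used in Section \ref{sec: wrapping Ham}; the answer should read $\chi_{[\check\rho]}(a_\lambda) = \sum_{\mu\in W\cdot \lambda}\check\rho^\mu$, i.e. the $W$-invariant monomial of weight $\lambda$. Since these form a basis of $\bC[T^\vee]^W = \bC[T^\vee\sslash W]$ and $\Phi$ carries the basis $\{a_\lambda\}$ of $\cA_G$ bijectively to this basis, $\Phi$ is an algebra isomorphism.

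The main obstacle is establishing the two key geometric properties of $co\text{-}res(L_0,\check\rho)$: the skyscraper property (rank one Hom with $\Sigma_I$) and the $W$-invariance of its image. The $W$-invariance is especially delicate because $W$ does \emph{not} act on $J_G$ itself — the action on $T^*T$ must be shown to become trivial after co-restriction, and the geometric bridge, as sketched in Subsection \ref{subsubsec: idea of proof}, is the common generic torus fiber of $\chi:J_G\to\fc$ to which each $W$-orbit of shifted conormals is modeled. Carrying this out requires the careful analysis of the subsectors $\cB_{w_0}^\dagg$, a construction of auxiliary Lagrangians $L_\zeta$ interpolating between $W$-translates, and Floer calculations relying on the correspondence (\ref{eq: Lag corresp}); this is the content of Subsections \ref{subsec: analysis cB_w0}--\ref{subsec: construct L_zeta} and \ref{subsec: proof prop}, which I would carry out in that order. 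Once both properties are in hand, the assembly into $\Phi$ and the identification with the basis of $W$-invariant monomials proceeds as outlined.
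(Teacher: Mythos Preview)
Your overall strategy --- reduce to the algebra isomorphism, then use the skyscraper property (Proposition~\ref{prop: L_0, part 2}) and the $W$-invariance (Proposition~\ref{prop: L_0, W}) of $co\text{-}res(L_0,\check\rho)$ --- is exactly the paper's, and you have correctly identified those two propositions as the heart of the argument. But the step where you conclude the isomorphism diverges from the paper and contains a real gap. You propose to compute the character $\chi_{[\check\rho]}(a_\lambda)$ directly and show it equals the symmetrized monomial $\sum_{\mu\in W\cdot\lambda}\check\rho^{\mu}$, by ``tracking the intersection points $\varphi_{H_R}^1(\Sigma_I)\cap\Sigma_I$ through the Lagrangian correspondence.'' Tracking intersection points only gives the vector-space identification of Proposition~\ref{prop: A_G, vector}; computing $\chi_{[\check\rho]}(a_\lambda)$ requires the Floer \emph{product} $\mu^2$, i.e.\ counting holomorphic triangles in $J_G$. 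The correspondence (\ref{eq: Lag corresp}) does not transport these triangle counts: as the paper itself warns, $\pi_\chi$ is neither proper nor open, so there is no functor here that lets you reduce the $J_G$ triangle count to the trivial one in $T^*T$. Triangles in $J_G$ with boundary on $\Sigma_I$ and $(L_0,\check\rho)$ will leave $\cB_{w_0}$, and you have no mechanism for controlling them. Without this computation your map $\Phi$ is well defined but you cannot verify that it carries the basis $\{a_\lambda\}$ to the basis $\{m_\lambda\}$, so you cannot conclude bijectivity.

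The paper avoids this obstacle entirely by an indirect argument. From the bimodule $\cM$ representing $co\text{-}res$, Proposition~\ref{prop: A_G commutative} extracts that $\cM\cong\cA_G^{\oplus|W|}$ as a left $\cA_G$-module and $\cM\cong\bC[T^\vee]$ as a right $\bC[T^\vee]$-module; this already gives an algebra embedding $\cA_G\hookrightarrow\End_{\bC[T^\vee]}(\cM)\cong\bC[T^\vee]$ without ever identifying which function $a_\lambda$ lands on. Proposition~\ref{prop: A_G, W-inv} then shows the associated map $\mathsf f:T^\vee\to\Spec\cA_G$ is $W$-invariant (this is where Proposition~\ref{prop: L_0, W} enters, and only for regular $\check\rho$, which suffices by Zariski density). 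Finally, the Pittie--Steinberg theorem (that $\bC[T^\vee]$ is free of rank $|W|$ over $\bC[T^\vee]^W$) combined with $\mathsf f_*\cO_{T^\vee}\cong\cO_{\Spec\cA_G}^{\oplus|W|}$ forces the factored map $\hat{\mathsf f}:T^\vee\sslash W\to\Spec\cA_G$ to satisfy $\hat{\mathsf f}_*\cO_{T^\vee\sslash W}\cong\cO_{\Spec\cA_G}$, hence to be an isomorphism. The moral: rather than computing Floer products, match ranks and invoke Pittie--Steinberg.
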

Recall that $\cW(J_G)$ is generated by $\Sigma_I$ (cf. Proposition \ref{prop: split generation}), so the only remaining nontrivial part of Theorem \ref{thm: sec G adjoint} is the isomorphism $\cA_G\cong \bC[T^\vee\sslash W]$. The proof of this isomorphism occupies the last two sections. It uses the functorialities of wrapped Fukaya categories under inclusions of Liouville sectors, developed in \cite{GPS1, GPS2}.

\subsection{Statement of main propositions}\label{subsec: key propositions}

From the Weinstein handle attachment description of $J_G$ in Section \ref{sec: skeleton, sector}, we see that the inclusion $\cB_{w_0}\cong T^*T\hookrightarrow J_G$ 
restricted to a Liouville subsector $\cB_{w_0}^\dagg\simeq T^*\overline{T}$ (with isotopic sector structures), 
gives an inclusion of Liouville sectors (see Subsection \ref{subsubsec: conic} for the precise formulation). Thus we have the restriction (right adjoint) and co-restriction (left adjoint) functors as adjoint pairs on the (large) dg-categories 
\begin{equation}\label{eq: res, co-res, large}
\begin{tikzcd}[arrow style=tikz,>=stealth,row sep=4em]
\cA_G-\Mod \arrow[rr, shift left=.4ex, "res"]
  &&\bC[T^\vee]-\Mod\ar[ll, shift left=.4ex, "co\text{-}res"],
\end{tikzcd}
\end{equation}
where $co\text{-}res$ preserves compact objects (i.e. perfect modules).

\begin{prop}\label{prop: A_G commutative}
For $G$ of adjoint type,  we have the followings. 
\begin{itemize}
\item[(i)] The co-restriction functor is given by an $\cA_G-\bC[T^\vee]$-bimodule $\cM$ that is isomorphic to $\cA_G^{\oplus |W|}$ (resp. $\bC[T^\vee]$) as a left $\cA_G$-module (resp. right $\bC[T^\vee]$-module).

\item[(ii)]  The restriction functor sends $\cA_G$ to $\bC[T^\vee]$. In particular, we have  
\begin{equation}\label{eq: res, co-res}
\begin{tikzcd}[arrow style=tikz,>=stealth,row sep=4em]
\cA_G-\Perf \arrow[rr, shift left=.4ex, "res"]
  &&\bC[T^\vee]-\Perf\simeq \cW(\cB^\dagg_{w_0}).\ar[ll, shift left=.4ex, "co\text{-}res"]
\end{tikzcd}
\end{equation}

\item[(iii)] The algebra $\cA_G$ is embedded as a subalgebra of $\bC[T^\vee]$, hence commutative. 

\item[(iv)] The (commutative) algebra $\cA_G$ is finitely generated. 
\end{itemize}
\end{prop}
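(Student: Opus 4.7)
The strategy is to compute the bimodule $\cM$ geometrically by combining the wrapping Hamiltonians of Section~\ref{sec: wrapping Ham} with the Lagrangian correspondence \eqref{eq: Lag corresp}, and then extract the bimodule structure from post- and pre-composition. Geometrically, $co\text{-}res$ sends a cylindrical Lagrangian in $\cB_{w_0}^\dagg$ to the same Lagrangian viewed inside $\overline{J}_G$ via the sector inclusion. Applied to the generator $T_I^*T$ of $\cW(\cB_{w_0}^\dagg) \simeq \bC[T^\vee]\text{-Perf}$, our task becomes the computation of $\cM = \Hom_{\cW(J_G)}(\Sigma_I, co\text{-}res(T_I^*T))$ together with its $(\cA_G, \bC[T^\vee])$-bimodule structure.

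To count intersections, I would wrap $\Sigma_I$ by the flow of $H_R$ and intersect it with $T_I^*T$ inside $J_G$. By the equivariance of the Lagrangian correspondence under the respective Hamiltonian flows \eqref{eq: equivariant flow H_R}, this count can be transported to the $T^*T$-side, where $\Sigma_I$ becomes a cotangent fiber. The computation then parallels Proposition \ref{prop: A_G, vector}, but since we are now pairing two Lagrangians on the $T^*T$-side rather than pairing $\Sigma_I$ with itself back in $J_G$, no projection by $W$ appears, and the intersection points are indexed by the \emph{full} coweight lattice $X_*(T) = X^*(T^\vee)$. All intersections are transverse and concentrated in degree zero, yielding $\cM \cong \bC[T^\vee]$ as a graded vector space. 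Post-composition by $\End(T_I^*T) = \bC[T^\vee]$ acts by multiplication of lattice exponentials, so $\cM \cong \bC[T^\vee]$ as a right $\bC[T^\vee]$-module. The left $\cA_G$-action commutes with this right action, hence factors as multiplication through an algebra homomorphism $\varphi: \cA_G \to \bC[T^\vee]$; since the right action is free of rank $1$ and hence faithful, $\varphi$ is injective. This establishes (iii): $\cA_G$ embeds in the commutative ring $\bC[T^\vee]$ and is therefore itself commutative.

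For the rank-$|W|$ statement in (i), I would combine three ingredients. First, the key skyscraper propositions (Propositions \ref{prop: L_0, part 2} and \ref{prop: L_0, W}) imply $W$-invariance of $co\text{-}res$ on the skyscraper objects $(L_0, \check\rho)$, which forces $\varphi(\cA_G) \subset \bC[T^\vee]^W$ by evaluation at Weyl-conjugate points $\check\rho$. Second, the basis of $\cA_G$ indexed by dominant coweights (Proposition \ref{prop: A_G, vector}) matches the basis of $\bC[T^\vee]^W$ given by Weyl-orbit sums, so $\dim_\bC \cA_G = \dim_\bC \bC[T^\vee]^W$ in each graded piece; combined with the injection $\varphi(\cA_G) \subset \bC[T^\vee]^W$, this gives $\varphi(\cA_G) = \bC[T^\vee]^W$. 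Third, the classical Pittie--Steinberg freeness theorem asserts that $\bC[T^\vee]$ is free of rank $|W|$ over $\bC[T^\vee]^W$. Combined, these identify $\cM \cong \bC[T^\vee] \cong \cA_G^{\oplus |W|}$ as a left $\cA_G$-module, completing (i). Part (ii) then follows from the adjunction
$\Hom_{\cW(J_G)}(\Sigma_I, co\text{-}res(T_I^*T)) \cong \Hom_{\cW(T^*T)}(res(\Sigma_I), T_I^*T) \cong \bC[T^\vee]$
together with the fact that $T_I^*T$ generates $\cW(\cB_{w_0}^\dagg)$, which by Yoneda forces $res(\Sigma_I) \cong T_I^*T$. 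For (iv), the Artin--Tate lemma applies: the larger algebra $\bC[T^\vee]$ is finitely generated over $\bC$ and finitely generated as an $\cA_G$-module (of rank $|W|$), hence the subring $\cA_G$ is finitely generated over $\bC$.

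The heart of the argument is the geometric computation of $\cM$ together with the $W$-invariance input from the skyscraper propositions. The main obstacle is that $co\text{-}res(T_I^*T)$ is a non-compact Lagrangian whose intersections with $\Sigma_I$ require careful control near the ``finite'' boundary of $\overline{J}_G$; the non-properness of $\pi_\chi$ is exactly what makes the direct geometric computation of these Floer cochains delicate, and it is here that the detailed analysis of conormal and shifted-conormal Lagrangians in $\cB_{w_0} \cong T^*T$, developed in the subsections following this proposition, becomes essential.
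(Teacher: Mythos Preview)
Your overall strategy is sensible, but there are two genuine gaps, and the order in which you try to establish the pieces of (i) is essentially inverted relative to what the argument can support.

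\textbf{Gap 1: the right $\bC[T^\vee]$-module structure.} You assert that ``post-composition by $\End(T_I^*T)=\bC[T^\vee]$ acts by multiplication of lattice exponentials,'' but this is exactly the content that needs proof. Knowing the intersection points (Floer generators) is not the same as knowing the Floer \emph{products}; the right action is defined by counting holomorphic triangles, and nothing you have written controls those. Relatedly, your ``transport to the $T^*T$-side'' via the correspondence \eqref{eq: Lag corresp} conflates two different copies of $T^*T$: the one appearing as the target of $\pi_\chi$, and the one coming from the isomorphism $\cB_{w_0}\cong T^*T$ of Example~\ref{example: B_w0}. The cotangent fiber $F_h\subset\cB_{w_0}$ does \emph{not} transform to a cotangent fiber under $\pi_\chi$, so the equivariance \eqref{eq: equivariant flow H_R} does not give you the intersection count you claim. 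The paper circumvents all of this: it never computes the triangle products directly. Instead it proves a purely algebraic lemma (Lemma~\ref{lemma: M, countable, lb}) saying that a countably generated module in degree $\le 0$ whose derived fiber at every closed point is one-dimensional, and which has a finite set of degree-zero elements generating every such fiber, must be a line bundle. The one-dimensionality of fibers is precisely Proposition~\ref{prop: L_0, part 2}, and the finite generating set is supplied by the left-module identification $\cM\cong\cA_G^{\oplus|W|}$, which is proved \emph{first}.

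\textbf{Gap 2: the dimension count.} Your argument for surjectivity of $\varphi\colon\cA_G\hookrightarrow\bC[T^\vee]^W$ appeals to ``$\dim_\bC\cA_G=\dim_\bC\bC[T^\vee]^W$ in each graded piece.'' But there is no a priori grading on $\cA_G$, and no reason the injection $\varphi$ (which is built from Floer products) respects any filtration by coweight norm. Having bases indexed by the same countable set is not enough to conclude that an injection between infinite-dimensional spaces is onto. This step would amount to proving the main theorem inside the proof of Proposition~\ref{prop: A_G commutative}, and the paper deliberately avoids it: the identification $\cA_G\cong\bC[T^\vee\sslash W]$ is deferred to Theorem~\ref{thm: sec G adjoint}, deduced from Propositions~\ref{prop: A_G commutative} and~\ref{prop: A_G, W-inv} together with Pittie--Steinberg and a line-bundle argument.

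\textbf{How the paper proceeds instead.} For the left $\cA_G$-module structure the paper uses the wrapping exact triangle of \cite{GPS2}: a generic cotangent fiber $F_h\subset\cB_{w_0}$ meets the core $\chi^{-1}([0])$ transversely in $|W|$ points (Proposition~\ref{prop: proper b map} and Lemma~\ref{lemma: hat chi_epsilon, gamma}), so $co\text{-}res(F_h)\cong\Sigma_I^{\oplus|W|}$ immediately. This is both simpler and logically prior to the right-module statement, since it provides the finite generating set $e_1,\dots,e_{|W|}$ needed in Lemma~\ref{lemma: M, countable, lb}. Part (iii) then falls out of (i) (or (ii)) by observing that the right action of $\cA_G$ on $\cM\cong\bC[T^\vee]$ lands in $\End_{\bC[T^\vee]}(\bC[T^\vee])\cong\bC[T^\vee]$. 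For (iv) the paper argues directly from the matrix presentation of the $\bC[T^\vee]$-action on $\cA_G^{\oplus|W|}$; your Artin--Tate argument would also work once (i) is in hand, but again it requires the rank-$|W|$ freeness, which you have not established independently.
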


\begin{prop}\label{prop: A_G, W-inv}
\item[(i)] The restriction and co-restriction functors in (\ref{eq: res, co-res}) can be identified as the $!$-pullback and pushfoward functors respectively on the (bounded) dg-category of coherent sheaves for a map of affine varieties
\begin{align}\label{eq: f, schemes}
\mathsf{f}: T^\vee\longrightarrow \Spec \cA_G.
\end{align}

\item[(ii)] The map $\mathsf{f}$ (\ref{eq: f, schemes}) is $W$-invariant.
\end{prop}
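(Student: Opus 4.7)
The plan for part (i) is algebraically formal given Proposition \ref{prop: A_G commutative}. The subalgebra embedding $\cA_G \hookrightarrow \bC[T^\vee]$, together with the fact that $\bC[T^\vee]$ is free of rank $|W|$ over $\cA_G$, defines a finite morphism of affine varieties $\mathsf{f}: T^\vee \to \Spec \cA_G$. I would then verify that the $(\cA_G, \bC[T^\vee])$-bimodule $\cM$ computing $co\text{-}res$ is precisely $\bC[T^\vee]$ equipped with the bimodule structure induced by $\mathsf{f}^{\#}$, which is essentially the content of Proposition \ref{prop: A_G commutative}(i). Tensoring over $\bC[T^\vee]$ with $\cM$ is then restriction of scalars along $\mathsf{f}^{\#}$, which on bounded coherent sheaves is exactly $\mathsf{f}_*$; this functor is well defined and exact because $\mathsf{f}$ is finite. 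Since $res$ is the right adjoint of $co\text{-}res$ and the right adjoint of $\mathsf{f}_*$ along a finite morphism is $\mathsf{f}^! = R\Hom_{\cA_G}(\bC[T^\vee], -)$, I obtain $res \simeq \mathsf{f}^!$; the identity $res(\cA_G) = \bC[T^\vee]$ from Proposition \ref{prop: A_G commutative}(ii) is then automatic.

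For part (ii), I plan to invoke the Floer-theoretic Propositions \ref{prop: L_0, part 2} and \ref{prop: L_0, W}, which assert that for every $\check{\rho} \in T^\vee$ and $w \in W$, the object $co\text{-}res(L_0, \check{\rho}) \in \cW(J_G)$ has a one-dimensional morphism space with $\Sigma_I$ and is isomorphic to $co\text{-}res(L_0, w(\check{\rho}))$. Under the standard HMS dictionary for $T^*T$, the pair $(L_0, \check{\rho})$ is mirror to the skyscraper sheaf $\cO_{\check{\rho}}$ on $T^\vee$. Via the identification of part (i), the first assertion says that $\mathsf{f}_* \cO_{\check{\rho}} \cong \cA_G/\mathfrak{m}_{\mathsf{f}(\check{\rho})}$ is a one-dimensional $\cA_G$-module, while the second gives $\mathsf{f}_* \cO_{\check{\rho}} \cong \mathsf{f}_* \cO_{w(\check{\rho})}$, so $\mathfrak{m}_{\mathsf{f}(\check{\rho})} = \mathfrak{m}_{\mathsf{f}(w(\check{\rho}))}$, i.e., $\mathsf{f}(\check{\rho}) = \mathsf{f}(w(\check{\rho}))$. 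Since closed points of $T^\vee$ are Zariski dense, this set-theoretic equality upgrades to the scheme-theoretic identity $\mathsf{f} = \mathsf{f} \circ w$ for every $w \in W$, which is precisely the $W$-invariance of $\mathsf{f}$.

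The hard part is not Proposition \ref{prop: A_G, W-inv} itself but the Floer-theoretic input it invokes. The $W$-invariance $co\text{-}res(L_0, \check{\rho}) \cong co\text{-}res(L_0, w(\check{\rho}))$ has no a priori symmetry origin, since $J_G$ carries no $W$-action; the isomorphism must be manufactured from the intricate geometry of the Lagrangian correspondence $J_G \leftarrow J_G \underset{\fc}{\times} \ft^* \to T^*T$, in which distinct $W$-translates of conormal bundles of $T_{\cpt}$ become ``close to'' Hamiltonian isotopic inside $\overline{J}_G$ via the common torus fiber of $\chi$ they project onto. Carrying this out rigorously is the role of the analysis in Subsections \ref{subsec: analysis cB_w0}--\ref{subsec: construct L_zeta} and the Floer calculations in Subsection \ref{subsec: proof prop}. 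Granted those results, the present proposition is a formal translation between the symplectic adjoint pair $(co\text{-}res, res)$ and the algebro-geometric pair $(\mathsf{f}_*, \mathsf{f}^!)$.
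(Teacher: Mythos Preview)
Your proposal is correct and follows essentially the same route as the paper: part (i) is deduced formally from Proposition \ref{prop: A_G commutative} by identifying the bimodule $\cM$ with $\bC[T^\vee]$ carrying the standard bimodule structure induced by the ring map $\cA_G\hookrightarrow\bC[T^\vee]$, and part (ii) is reduced to Propositions \ref{prop: L_0, part 2} and \ref{prop: L_0, W} together with a density argument. One small inaccuracy: Proposition \ref{prop: L_0, W} is stated and proved only for \emph{regular} $\check{\rho}\in (T^\vee)^{\reg}$, not for all $\check{\rho}$ as you write; the paper likewise restricts to the regular locus and then passes to all of $T^\vee$ by Zariski density of $(T^\vee)^{\reg}$, so your final density argument should be phrased with regular points rather than all closed points.
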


Assuming Proposition \ref{prop: A_G commutative} and Proposition \ref{prop: A_G, W-inv}, we can give a direct proof of Theorem \ref{thm: sec G adjoint}. 

\begin{proof}[Proof of Theorem \ref{thm: sec G adjoint}]
Since $\mathsf{f}$ from (\ref{eq: f, schemes}) is $W$-invariant, it factors as
\begin{align*}
\mathsf{f}:T^\vee\longrightarrow T^\vee\sslash W\overset{\hat{\mathsf{f}}}{\longrightarrow} \Spec \cA_G.
\end{align*}
By Proposition \ref{prop: A_G commutative} and the Pittie--Steinberg Theorem (cf. \cite{Steinberg}, \cite [Theorem 6.1.2]{CG}), we have isomorphisms
\begin{align*}
\mathsf{f}_*\cO_{T^\vee}\cong (\hat{\mathsf{f}}_*\cO_{T^\vee\sslash W})^{\oplus |W|}\cong \cO_{\Spec\cA_G}^{\oplus |W|}.
\end{align*}
So $\hat{\mathsf{f}}_*\cO_{T^\vee\sslash W}$ is a line bundle on $\Spec \cA_G$, which is on the other hand must be trivial, i.e. 
$\hat{\mathsf{f}}_*\cO_{T^\vee\sslash W}\cong \cO_{\Spec\cA_G}$. Hence, $\hat{\mathsf{f}}$ is an isomorphism, and the theorem follows. 

\end{proof}

We will give the proof of Proposition \ref{prop: A_G commutative} and \ref{prop: A_G, W-inv} in Section \ref{subsec: proof prop}. The key technical results for the proof are Proposition \ref{prop: L_0, part 2} and   \ref{prop: L_0, W} below, whose proof will be provided in the same section. Since the motivation for the latter results comes from a relatively easier calculation for certain non-exact Lagrangians, with coefficients in the Novikov field, we will first state the non-exact version in Proposition \ref{prop: L_xi, S_e, part 1}. Although it is not logically necessary for the proof of the main theorem, it gives the geometric intuition, and the techniques in its proof in Subsection \ref{subsec: proof non-exact} will be used for the proof of the exact version.

Let $\sfLambda=\{\sum_{j=0}^\infty a_j\sfq^{\gamma_j}: a_j\in \bC, \gamma_j\in \bR, \gamma_j\rightarrow\infty\}$ be the Novikov field over $\bC$. Let $\cW(J_G;\sfLambda)$ be the wrapped Fukaya category linear over $\sfLambda$ consisting of tautologically unobstructed, tame and asymptotically cylindrical Lagrangian branes (equipped with local systems\footnote{In general, one allows $\sfLambda$-local systems with unitary monodromy. Here we restrict to a simpler situation.} induced from finite rank local systems over $\bC$). 
When writing the morphism space between two Lagrangian objects, if a Lagrangian (brane) does not come with a local system, we mean the underlying local system is the trivial rank 1 local system.  In the following, we fix the grading on $\Sigma_I$ to be the constant $n=\dim_\bC T$ (cf. \cite{Jin1} for the constant property of gradings on a holomorphic Lagrangian). Since $\Sigma_I$ is contractible, the Pin structure is uniquely assigned.

\begin{prop}\label{prop: L_xi, S_e, part 1}
Assume $G$ is of adjoint type. For any $\zeta\in \ft^\reg_c\cong i\ft_\bR^\reg$, there exists a non-exact Lagrangian brane $\cL_{\zeta}\in \cW(\cB_{w_0}^\dagg;\sfLambda)$, with the projection $\pi_\zeta: \cL_\zeta\rightarrow T$ a homotopy equivalence and $(\pi_\zeta^*)^{-1}[\alpha_{J_G}|_{\cL_\zeta}]=\zeta\in H^1(T, \bC)\cong \ft^*$, 
such that 
\begin{itemize}
\item[(i)] The object $(\cL_\zeta, \check{\rho})\in \Perf_\Lambda(\cW(\cB_{w_0}^\dagg;\sfLambda))\simeq \Perf_\sfLambda(\bC[T^\vee]\underset{\bC}{\otimes}\sfLambda)$ corresponds to the simple module $\bC[T^\vee]\underset{\bC}{\otimes}\sfLambda/(x^{\lambda_\alpha^\vee}-\lambda_\alpha^\vee(\check{\rho})\cdot \sfq^{i\lambda_\alpha^\vee(\zeta)}: \alpha\in \Pi)$, up to some renormalization $\sfq\mapsto \sfq^c$, for some fixed constant $c\in \bR^\times$. 

\item[(ii)] Viewing $(\cL_{\zeta},\check{\rho})$ as an object in $\cW(J_G;\sfLambda)$, we have 
\begin{align}
\label{eq: prop skyscraper 1}&\Hom_{\cW(J_G; \sfLambda)}((\cL_{\zeta},\check{\rho}), \Sigma_I)\cong \sfLambda [-n]\\
\label{eq: prop skyscraper 2}&\Hom_{\cW(J_G;\sfLambda)}(\Sigma_I, (\cL_{\zeta},\check{\rho}))\cong \sfLambda.
\end{align} 

\item[(iii)] For any two objects $(\cL_{\zeta},\check{\rho}_1)$ and $(\cL_{w(\zeta)},w(\check{\rho}_2))$ in $\cW(J_G;\sfLambda)$, we have 
\begin{align*}
\Hom_{\cW(J_G;\sfLambda)}((\cL_{\zeta},\check{\rho}_1), (\cL_{w(\zeta)},w(\check{\rho}_2)))\cong\begin{cases}&H^*(T, \sfLambda), \text{ if }\check{\rho}_1=\check{\rho}_2,\\
&0, \text{ otherwise}.
\end{cases}
\end{align*}
In particular, the objects $(\cL_\zeta, \check{\rho})$ and $(\cL_{w(\zeta)}, w(\check{\rho}))$ in $\cW(J_G;\sfLambda)$ are isomorphic, for all $\zeta\in \ft_c^\reg$ and $w\in W$.  

\end{itemize}
\end{prop}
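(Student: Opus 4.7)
The plan is to build $\cL_\zeta$ as a (twisted) Lagrangian section of the projection $\cB_{w_0} \cong T^*T \to T$ from Example \ref{example: B_w0}. Concretely, choose a closed $1$-form $\alpha_\zeta$ on $T$ representing the class $\zeta \in i\ft^*_\bR \subset H^1(T,\bC)$ and take $\cL_\zeta$ to be the graph $\{(x,\alpha_\zeta(x)):x\in T\}$, truncated so that its ends lie inside the subsector $\cB_{w_0}^{\dagger}$ in a standard cylindrical way (using the Liouville flow and the partial compactification of Subsection \ref{subsec: compactify, J_G}). Up to cylindrical Hamiltonian isotopy in $\cB_{w_0}^{\dagger}$, such a $\cL_\zeta$ depends only on $\zeta$. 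Part (i) is then the standard Abouzaid--Fukaya--Oh computation for $T^*T$: under the HMS equivalence $\cW(T^*T;\sfLambda) \simeq \Perf_\sfLambda(\bC[T^\vee]\otimes\sfLambda)$, a twisted section equipped with the local system $\check\rho$ is sent to the skyscraper at the point of $T^\vee$ whose coordinates read $x^{\lambda_\alpha^\vee} = \lambda_\alpha^\vee(\check\rho)\cdot \sfq^{i\lambda_\alpha^\vee(\zeta)}$, the Novikov exponents being the symplectic areas paired against the periods of $\cL_\zeta$ (and the renormalization $\sfq\mapsto \sfq^c$ accounts for the difference between the Liouville form on $\cB_{w_0}$ and the standard cotangent form, cf.\ (\ref{eq: q, p})).

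\textbf{Part (ii).} After wrapping $\Sigma_I$ by a positive linear Hamiltonian of the form $H_R$ from Section \ref{sec: wrapping Ham}, the intersection with $\cL_\zeta$ can be transported through the Lagrangian correspondence (\ref{eq: Lag corresp}) into $T^*T$, where it becomes the intersection of a cotangent fiber with a section; this is a single transverse point. The degrees of this intersection in the two $\Hom$ directions are $0$ and $n$ by Poincar\'e duality on $T$, giving the rank-one conclusions (\ref{eq: prop skyscraper 1})--(\ref{eq: prop skyscraper 2}). To show the differentials vanish and there are no higher-order disk corrections, one uses that $\Sigma_I$ is exact and holomorphic (hence has constant grading and no disk ends) together with tameness/tautological unobstructedness of $\cL_\zeta$; any hypothetical rigid holomorphic disk would need positive symplectic area with zero $\cL_\zeta$-flux, contradicting the Stokes balance.

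\textbf{Part (iii), the main step.} This is where the geometry of $J_G$ (rather than $\cB_{w_0}$ alone) enters. I will construct, for each $w\in W$, an explicit Hamiltonian isotopy inside $\widehat{J}_G$ taking $(\cL_\zeta,\check\rho)$ to $(\cL_{w(\zeta)},w(\check\rho))$. The key observation is that the Lagrangian correspondence (\ref{eq: Lag corresp}) is $W$-equivariant over the base $\ft^*$ but factors through the $W$-invariant quotient $\fc = \ft^*\sslash W$; in particular, over a generic point $[\xi]\in \fc$, the $|W|$ sheets in $J_G\times_{\fc}\ft^*$ all map to the \emph{same} torus fiber $\chi^{-1}([\xi]) \subset J_G$. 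I will deform $\cL_\zeta$ inside $J_G$ using the flow of a Hamiltonian pulled back from $\fc$ until it is confined to an arbitrarily small tubular neighborhood of a chosen generic fiber $\chi^{-1}([\xi])$, then slide it to the $w$-branch (its class changes by the $W$-action on $\ft^*$, which on $\cB_{w_0}$ is visibly the $W$-action taking $\zeta$ to $w(\zeta)$ and $\check\rho$ to $w(\check\rho)$). Finally, the self-Hom computation reduces via this identification to $H^*(T,\sfLambda)$ by a standard Morse/Bott cohomology argument, and orthogonality of distinct $\check\rho_1 \neq \check\rho_2$ follows from part (i) together with the observation that the isomorphism class in $\cW(J_G;\sfLambda)$ only remembers the $W$-orbit of the mirror point.

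The hardest part is verifying that the isotopy in (iii) can be carried out without the Lagrangian exiting $J_G$ (e.g.\ into the finite boundary of $\overline J_G$ or across the singular fibers of $\chi$), and that the flux of the isotopy integrates to precisely $\zeta - w(\zeta)$ in a well-defined way over the Novikov field. This will rely on genericity of $\zeta\in \ft_c^\reg$, so that the relevant fiber $\chi^{-1}([\xi])$ is smooth and $W$-symmetric, together with an explicit construction of the interpolating Lagrangian carried out in the subsections devoted to analysis of $\cB_{w_0}$ and to the $\cL_\zeta$ construction (Subsections \ref{subsec: analysis cB_w0}--\ref{subsec: construct L_zeta}).
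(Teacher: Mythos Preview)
Your construction of $\cL_\zeta$ as the graph of a closed $1$-form on $T$ does not match the paper's and would not work as stated. Since $T$ is a non-compact complex torus, such a graph is not cylindrical in $\cB_{w_0}^\dagg$ without serious modification. The paper instead starts from the \emph{conormal bundle} $\Lambda_R$ of a compact torus orbit $T_{\cpt,R}\subset T$, shifts by $\zeta$, cylindricalizes (Subsection~\ref{subsubsec: conic}), and then applies a compactly supported Hamiltonian deformation built from Proposition~\ref{lemma: empty, Ham isotopy} (see (\ref{eq: cL_zeta, def})). This last step is essential: it forces $\widehat{\cL}_\zeta$ to satisfy the structural Lemma~\ref{lemma: L_xi, R, clean}, namely that near $\{\|\xi_\bR\|\le\delta_0\}$ the transform $\widehat{\cL}_\zeta$ splits as a disjoint union over $W$ of pieces $T_\cpt\times\{w(\zeta)\}\times w(\Gamma)$. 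A naive graph would not have this property, and without it neither your argument for (ii) nor (iii) goes through. Your sketch for (ii) also misses that the wrapping Hamiltonian must be modified (Step~2 of Lemma~\ref{lemma: step 1,2,3}) precisely so that on the region (\ref{eq: cU_zeta, eta_0}) it splits as $\frac{1}{2}\|\xi_c\|+\frac{1}{2}\|\xi_\bR\|^2$; only then does the intersection reduce to one transverse point.

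For (iii) your strategy is genuinely different from the paper's, and it has a gap. You propose to build an explicit Hamiltonian isotopy in $\widehat{J}_G$ carrying $(\cL_\zeta,\check\rho)$ to $(\cL_{w(\zeta)},w(\check\rho))$ by ``sliding through a generic fiber.'' But a Hamiltonian isotopy cannot change the flux class, and $\cL_\zeta$ and $\cL_{w(\zeta)}$ have distinct classes $\zeta\neq w(\zeta)$ in $H^1(\cL_\zeta;\bR)$; you would need a non-Hamiltonian Lagrangian isotopy together with a precise flux/local-system matching argument over $\sfLambda$, which you have not supplied and which would be delicate near the walls. The paper avoids this entirely: it computes $\Hom((\cL_\zeta,\check\rho_1),(\cL_{w(\zeta)},w(\check\rho_2)))$ directly by wrapping with the same modified $\widetilde{H}_1$ and showing (via Lemma~\ref{lemma: L_xi, R, clean}) that the intersection in $J_G$ is \emph{clean along a single $T_\cpt$-orbit} inside $\chi^{-1}([\zeta])$, on which the two local systems restrict (under the identification $\chi^{-1}([\zeta])\cong T$) to $\check\rho_1$ and $\check\rho_2$ respectively. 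Morse--Bott then gives $H^*(T;\sfLambda)$ or $0$, and the isomorphism of objects follows because the degree-$0$ piece is rank one. No isotopy is ever constructed.
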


\begin{remark}
In Proposition \ref{prop: L_xi, S_e, part 1}, the objects $(\cL_\zeta, \check{\rho})$ and $(\cL_{w(\zeta)},w(\check{\rho}))$ are geometrically modeled on the complex torus fiber $\chi^{-1}([\zeta])$ (which is not a well defined object in $\cW(J_G;\sfLambda)$). 
More explicitly, it will follow from the construction in Subsection \ref{subsec: construct L_zeta} that $\cL_\zeta\cap \chi^{-1}([\zeta])$ is a compact torus homotopy equivalent to $\chi^{-1}([\zeta])$ (more precisely a $(\chi^{-1}([\zeta]))_{\cpt}$-orbit), and $\cL_{w(\zeta)}\cap \chi^{-1}([\zeta])$ can be thought as (though not identical to) $w(\cL_\zeta\cap \chi^{-1}([\zeta]))$. Then $w(\check{\rho})$ on ${w(\cL_\zeta\cap \chi^{-1}([\zeta]))}$ is the pullback local system of $\check{\rho}$ on $\cL_\zeta\cap \chi^{-1}([\zeta])$ under $w^{-1}$. In particular, they define the same local system on $\chi^{-1}([\zeta])$. This morally explains why they are isomorphic in $\cW(J_G;\sfLambda)$. 
 \end{remark}

Now we state the key propositions in the exact setting. 
Let $L_0\subset \cB_{w_0}\cong T^*T$ be a ``cylindricalization" of the conormal bundle of an orbit of the maximal compact subtorus in $T$ (cf. Subsection \ref{subsubsec: conic} for an explicit construction).

\begin{prop}\label{prop: L_0, part 2}
We have in $\cW(J_G)$, 
\begin{align}
\label{eq: skyscraper 1 no q}&\Hom_{\cW(J_G)}((L_0,\check{\rho}), \Sigma_I)\cong \bC[-n]\\
\label{eq: skyscraper 2 no q}&\Hom_{\cW(J_G)}(\Sigma_I, (L_0,\check{\rho}))\cong \bC.
\end{align} 
\end{prop}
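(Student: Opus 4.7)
The plan is to reduce both morphism space computations to the model case of $T^*T$, using the Liouville sector inclusion $\cB_{w_0}^\dagg \cong T^*\overline{T} \hookrightarrow \overline{J}_G$ from Subsection \ref{subsec: compactify, J_G} and its adjoint pair $(co\text{-}res, res)$ of functors on the wrapped Fukaya categories. Under the homological mirror symmetry $\cW(T^*T)\simeq\mathrm{Coh}(T^\vee)$, $(L_0, \check{\rho})$ corresponds to the skyscraper sheaf $\cO_{\check{\rho}}$ and a cotangent fiber $T^*_I T$ corresponds to $\cO_{T^\vee}$, so the computations on the mirror side yield $\Hom(\cO_{\check{\rho}}, \cO_{T^\vee}) \cong \bC[-n]$ and $\Hom(\cO_{T^\vee}, \cO_{\check{\rho}}) \cong \bC$.

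Since $L_0 \subset \cB_{w_0}^\dagg$, the object $(L_0, \check{\rho}) \in \cW(J_G)$ is canonically identified with $co\text{-}res$ of $(L_0, \check{\rho}) \in \cW(T^*T)$. By adjunction, the first isomorphism reduces to
\begin{align*}
\Hom_{\cW(J_G)}((L_0, \check{\rho}), \Sigma_I) \cong \Hom_{\cW(T^*T)}((L_0, \check{\rho}), res(\Sigma_I)),
\end{align*}
so the main task is to show $res(\Sigma_I) \cong T^*_I T$ in $\cW(T^*T)$. I would verify this through the Lagrangian correspondence (\ref{eq: Lag corresp}) and its equivariance (\ref{eq: equivariant flow H_R}): the preimage of $\Sigma_I$ in $J_G\times_\fc \ft^*$ projects bijectively onto $T^*_I T$ over the regular locus, and this identification is preserved by positive wrapping with $H_R$ from Section \ref{sec: wrapping Ham}. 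The standard HMS for $T^*T$ then delivers $\bC[-n]$.

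For the second isomorphism, the adjunction does not apply in the same form, so I would proceed by a direct wrapped Floer calculation of $\Hom_{\cW(J_G)}(\Sigma_I, (L_0, \check{\rho}))$: positively wrap $(L_0, \check{\rho})$ by the flow of $H_R$ and count transverse intersections with $\Sigma_I$, assigning degrees via the holomorphic trivialization of $\kappa^{\otimes 2}$ described in Section \ref{sec: wrapping Ham}. By (\ref{eq: equivariant flow H_R}) these intersection points are in bijection with the intersections of the $\varphi_{\widetilde{H}_R}^1$-wrapped conormal with $T^*_I T$ in $T^*T$, quotiented by the generic $|W|$-fold fiber of the projection $J_G\times_\fc\ft^*\to J_G$. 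The analogous computation in $T^*T$ produces, after the Floer differential, a one-dimensional space in degree $0$ (mirror to $\Hom(\cO_{T^\vee}, \cO_{\check{\rho}})$), and the $W$-invariance of $\widetilde{H}_R$ guarantees that its $|W|$ Weyl translates descend to one intersection in $J_G$ in the same degree.

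The principal obstacle will be controlling the non-properness of $\pi_\chi$ and ensuring that intersections and pseudoholomorphic strips are confined to $\cB_{w_0}$ so that the Lagrangian correspondence genuinely bridges the Floer-theoretic data on the two sides. Using Proposition \ref{prop: proper b map} together with the handle decomposition from Section \ref{sec: skeleton, sector}, one confines at each finite wrapping time the intersection points to a compact region inside $\cB_{w_0}$. Ruling out stray holomorphic strips that exit $\cB_{w_0}$ and pass through the critical handle regions, which could in principle alter the Floer differential, requires a maximum-principle type argument near the Liouville boundary and near the handles, using the explicit structure of $L_0$, $\Sigma_I$, and $H_R$ built in Sections \ref{sec: skeleton, sector}--\ref{sec: wrapping Ham}. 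This confinement-and-differential step is the technical heart of the proof.
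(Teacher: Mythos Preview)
Your approach has a circularity problem. The claim that $res(\Sigma_I) \cong T^*_I T$ in $\cW(T^*T)$ is essentially Proposition~\ref{prop: A_G commutative}(ii), and in the paper's logical structure that proposition is proved \emph{using} Proposition~\ref{prop: L_0, part 2}: both parts (i) and (ii) of Proposition~\ref{prop: A_G commutative} invoke it via Lemma~\ref{lemma: M, countable, lb}. So you cannot appeal to it here. Your alternative---verifying $res(\Sigma_I) \cong T^*_I T$ through the Lagrangian correspondence (\ref{eq: Lag corresp})---conflates two different things: the correspondence takes $\Sigma_I$ to $\widehat{\Sigma}_I = T^*_I T$ as a \emph{subset} of $T^*T$, but this is not the restriction functor $res$, which is the abstract right adjoint to the sector inclusion. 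As the paper warns at the end of Subsection~\ref{subsec: def of J_G}, the correspondence does not induce a functor between the Fukaya categories because $\pi_\chi$ is neither proper nor open.

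The paper's proof is a direct Floer calculation in $J_G$, but organized around a different difficulty than you identify. Confinement of holomorphic strips is not the issue: the argument produces exactly one transverse intersection point, so there is no differential to control. The actual obstacle is controlling intersections near the ``walls'' $\ft_\bR^{\sing}$, where the description (\ref{eq: L_w'}) of $L_0$ as a union of $|W|$ Lagrangian sections breaks down and neither the correspondence nor the integrable system picture applies. The key input is Proposition~\ref{prop: g_S, natural, w} from Subsection~\ref{subsec: walls}: for $R$ large, the portion of $L_0$ near a wall indexed by $(S,w)$, when transported via (\ref{eq: chi_S, mathscr Z}), lies outside any fixed compact in the $J_{L_S^\der}$-factor. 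The paper then designs a specific Hamiltonian $H$ (built from a smoothed $W$-invariant polytope in $\ft_\bR$) whose flow near each wall moves $\Sigma_I$ only along $\cZ(L_S)_0$-orbits and hence stays bounded in the $J_{L_S^\der}$-factor; Proposition~\ref{prop: g_S, natural, w} then forces $\varphi_H^s(\Sigma_I) \cap L_0 = \emptyset$ near every wall for all $s\in\bR$. The remaining intersections, confined to the regular region $\bD^\circ$, are counted via the identification (\ref{eq: bD, circ, T_cpt}) and yield a single point of the correct degree. Your proposal does not engage with this wall-avoidance mechanism, which is the technical heart of the argument.
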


\begin{prop}\label{prop: L_0, W}
For all regular $\check{\rho}\in \Hom(\pi_1(T), \bC^\times)\cong T^\vee$, i.e. $\check{\rho}\in (T^\vee)^\reg$, we have 
\begin{align}\label{eq: prop L_0, W}
\Hom_{\cW(J_G)}((L_0, \check{\rho}), (L_0, w_1(\check{\rho})))\cong H^*(T,\bC), w_1\in W. 
\end{align}
In particular, in such cases, the objects $(L_0, \check{\rho})$ and $(L_0, w_1(\check{\rho}))$ viewed as objects in $\cW(J_G)$ are isomorphic. 
\end{prop}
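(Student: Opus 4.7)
The proof will proceed in two steps: first establish the formula (\ref{eq: prop L_0, W}) in the case $w_1 = e$, and then exhibit explicit isomorphisms $(L_0, \check{\rho}) \xrightarrow{\sim} (L_0, w_1(\check{\rho}))$ in $\cW(J_G)$ for regular $\check{\rho}$, which reduces the general case to the self-Hom case already computed.

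\textbf{Step 1: the self-Hom.} The fundamental tool is the adjunction coming from the sector inclusion $\cB_{w_0}^\dagg \hookrightarrow \overline{J}_G$ of Subsection \ref{subsubsec: conic}. Since $(L_0, \check{\rho})$ lies in the subsector and is sent to $co\text{-}res(L_0, \check{\rho})$ in $\cW(J_G)$, the adjunction identifies
\begin{equation*}
\Hom_{\cW(J_G)}\bigl((L_0,\check{\rho}),(L_0,\check{\rho})\bigr) \;\cong\; \Hom_{\cW(\cB_{w_0}^\dagg)}\bigl((L_0,\check{\rho}),\, res\circ co\text{-}res(L_0,\check{\rho})\bigr).
\end{equation*}
The content of the computation is therefore to identify the monad value $res\circ co\text{-}res(L_0, \check{\rho})$. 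The expectation, guided by the non-exact model of Proposition \ref{prop: L_xi, S_e, part 1}, is that it should be given by the sum $\bigoplus_{w \in W}(L_0, w(\check{\rho}))$ over the Weyl orbit. Geometrically, wrapping $L_0$ inside $J_G$ (as opposed to wrapping inside the subsector $\cB_{w_0}^\dagg$) produces excess Reeb-chord orbits that re-enter $\cB_{w_0}$ through the Bruhat cells $\cB_{w_0 w_S}$ with $|S| \geq 1$, and the Bruhat analysis of Section \ref{section: Bruhat} together with the Lagrangian correspondence (\ref{eq: Lag corresp}) -- which is $W$-equivariant on the $\ft^*$-factor -- lets one match these extra chords with the $W$-translates of $L_0$ inside $\cB_{w_0} \cong T^*T$. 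Under the equivalence $\cW(\cB_{w_0}^\dagg) \simeq \bC[T^\vee]\text{-Perf}$, the summand $(L_0, w(\check{\rho}))$ is the skyscraper at $w(\check{\rho}) \in T^\vee$. Regularity of $\check{\rho}$ ensures that the $|W|$ points $\{w(\check{\rho})\}_{w\in W}$ are distinct, so skyscrapers at distinct points have vanishing $\Hom$ in $\bC[T^\vee]\text{-Perf}$, and only the $w = e$ summand contributes, yielding
\begin{equation*}
\Ext^*_{\bC[T^\vee]}(\cO_{\check{\rho}},\cO_{\check{\rho}}) \;\cong\; \Lambda^*(T_{\check{\rho}} T^\vee) \;\cong\; H^*(T,\bC).
\end{equation*}

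\textbf{Step 2: the general $w_1$ and the isomorphism claim.} The same adjunction argument applied to the pair $((L_0,\check{\rho}),(L_0,w_1(\check{\rho})))$ gives
\begin{equation*}
\Hom_{\cW(J_G)}\bigl((L_0,\check{\rho}),(L_0,w_1(\check{\rho}))\bigr) \;\cong\; \Hom_{\cW(\cB_{w_0}^\dagg)}\bigl((L_0,\check{\rho}),\bigoplus_{w\in W}(L_0, ww_1(\check{\rho}))\bigr),
\end{equation*}
and by regularity exactly one summand, corresponding to $w = w_1^{-1}$, survives, again producing $H^*(T,\bC)$. To upgrade this identification to the statement that $(L_0, \check{\rho})$ and $(L_0, w_1(\check{\rho}))$ are actually isomorphic, I would pick a degree-$0$ generator of this $\Hom$ space and test its invertibility by composing with a degree-$0$ generator of the reverse $\Hom$-space; the resulting element of $\Hom^0((L_0,\check\rho),(L_0,\check\rho)) \cong H^0(T,\bC) = \bC$ is nonzero by a direct count at a transverse model intersection near the torus fiber $\chi^{-1}([\zeta])$ that morally mediates the two objects (compare the remark following Proposition \ref{prop: L_xi, S_e, part 1}).

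\textbf{Expected main difficulty.} The technical heart of this plan is the identification $res\circ co\text{-}res(L_0,\check{\rho}) \simeq \bigoplus_{w} (L_0, w(\check{\rho}))$. This requires a careful analysis of the wrapping Hamiltonian flow in $J_G$ restricted to $\cB_{w_0}^\dagg$, and the matching of the extra Reeb chords with the $W$-translates of the zero-section orbit. The non-exact analog in Proposition \ref{prop: L_xi, S_e, part 1} (where the torus fibers of $\chi$ are honest Lagrangians over $\sfLambda$ and provide a transparent geometric bridge for the $W$-action) serves as the guide, but in the exact setting the fibers are \emph{not} objects of $\cW(J_G)$, and the $W$-equivariance must instead be extracted from the Lagrangian correspondence (\ref{eq: Lag corresp}) together with a controlled deformation of the wrapping Hamiltonian of Section \ref{sec: wrapping Ham} across the Bruhat strata $\cB_{w_0 w_S}$. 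This is the step where the analysis developed in Subsection \ref{subsec: analysis cB_w0} and Subsection \ref{subsec: construct L_zeta} will do the heavy lifting.
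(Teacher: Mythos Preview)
Your categorical framing via the adjunction is appealing, but the entire content of the proposition is hidden in your ``expected main difficulty''---the identification $res\circ co\text{-}res(L_0,\check{\rho})\simeq\bigoplus_{w\in W}(L_0,w(\check{\rho}))$---and as written this step is circular. From Proposition~\ref{prop: A_G commutative} alone (which uses only Proposition~\ref{prop: L_0, part 2}) one knows that $res\circ co\text{-}res(L_0,\check{\rho})$ is a length-$|W|$ perfect complex on $T^\vee$, but \emph{not} where its support lies: identifying the support as the $W$-orbit of $\check{\rho}$ is precisely the $W$-invariance of the map $\mathsf{f}$ in Proposition~\ref{prop: A_G, W-inv}, whose proof in turn invokes the proposition you are trying to establish. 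Your geometric sketch (excess Reeb chords ``re-entering through $\cB_{w_0 w_S}$'') is not how the paper controls this, and making it precise would require exactly the wall analysis that the paper carries out directly on the Floer complex.

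The paper's proof is a direct geometric computation, not an adjunction argument. It reuses the Hamiltonian $H$ built in the proof of Proposition~\ref{prop: L_0, part 2} and takes two cylindrical modifications $L_0^{(1)}, L_0^{(2)}$ with parameters $R^{(2)}\gg R^{(1)}$. The wall analysis (Proposition~\ref{prop: g_S, natural, w}) confines $\varphi_H^s(L_0^{(1)})\cap L_0^{(2)}$ for $s\gg 1$ to the regular region $T\times\bD^\circ$, and there the integrable-system identification~(\ref{eq: bD, circ, T_cpt}) exhibits the intersection as a \emph{clean} intersection along $|W|$ copies of $T_{\cpt}$, the component $C_w$ carrying the rank-one local system $w^{-1}(\check{\rho}^{-1})\otimes w_0 w_1(\check{\rho})$. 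The clean-intersection spectral sequence then has $E_1$-page $\bigoplus_w H^*(T_{\cpt};w^{-1}(\check{\rho}^{-1})\otimes w_0w_1(\check{\rho}))$; for regular $\check{\rho}$ exactly one summand (namely $w=w_1^{-1}w_0$) is nonzero, giving $H^*(T;\bC)$ up to a shift, and a grading-range argument pins the shift to zero. For the isomorphism claim the paper does not test a composition as you propose: it simply observes that by Proposition~\ref{prop: L_0, part 2} both $(L_0,\check{\rho})$ and $(L_0,w_1(\check{\rho}))$ are \emph{simple} left $\cA_G$-modules, so a nonzero $H^0\Hom$ forces an isomorphism. If you were to actually supply the deferred monad identification, you would end up reproducing this clean-intersection computation; the adjunction repackaging does not shortcut it.
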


In the remaining parts of this section, we develop some analysis in Subsection \ref{subsec: analysis cB_w0} and \ref{subsec: walls} that are crucial for the proof of the key propositions. Strictly speaking, the analysis in Subsection \ref{subsec: analysis fU_S} about $\fU_S, \emptyset\neq S\subsetneq \Pi$ is not logically needed for the proofs, but it is a natural generalization of the analysis done in Subsection \ref{subsubsec: B_w_0} about $\cB_{w_0}$. We include this for the sake of completeness and for recording some interesting geometric aspects about $\fU_S$ that may be of independent interest (see Question \ref{question: mu_D, K, rho} for the main points addressed). In Subsection \ref{subsec: construct L_zeta}, we give the explicit construction of $L_0$ and $\cL_{\zeta}$ that appeared in the above key propositions.

\subsection{Some analysis inside $\cB_{w_0}$ and $\fU_S, S\subsetneq \Pi$}\label{subsec: analysis cB_w0}

This subsection is motivated by the following simple observation, and it is crucial for the proof of the main theorem in Section \ref{sec: HMS adjoint}. Recall the identification $\cB_{w_0}\cong T^*T$ in Example \ref{example: B_w0}. We observe that for a fixed $t\in \ft$ and $h\in T$, as we multiply $h$ by $\epsilon^{-\sfh_0}$ for $|\epsilon|\rightarrow 0$, the characteristic map  
\begin{align*}
\chi|_{\cB_{w_0}}: &\cB_{w_0}\longrightarrow \fc\\
&(\overline{w}_0^{-1}h, f+t+\Ad_{(\overline{w}_0^{-1}h)^{-1}}f)\mapsto \chi(f+t+\Ad_{(\overline{w}_0^{-1}h)^{-1}}f)
\end{align*}  
is getting closer and closer to $\chi(f+t)$, which is the same as the composition of projecting to $t\in \ft$ and the quotient map $\ft\rightarrow\fc$. Geometrically, this suggests that for any $[\xi]\in \fc^{\reg}$, $\chi^{-1}([\xi])\cap \cB_{w_0}$ will split into $|W|$ many disjoint sections over a region in $T$ of the form $\bigcup\limits_{|\epsilon|<\eta_0}\epsilon^{-\sfh_0}\cdot \cV$, for any pre-compact domain $\cV\subset T$ and for sufficiently small $\eta_0>0$. In the following, we make these into rigorous statements. In particular, we establish a link between the standard integrable system structure $T^*T\rightarrow \ft$ and that inherited from the embedding into $\chi: J_G\rightarrow \fc$ (the latter is certainly incomplete, i.e. having incomplete torus orbits) through an interpolating family of ``integrable systems" on certain pre-compact regions in $T^*T$. We also have the general discussions for $\fU_S$  (\ref{eq: prop fU_S splitting}) where the torus with Hamiltonian action(s) is replaced by $\cZ(L_S)$.

For any $S\subsetneq \Pi$, it would be more convenient to use the identity component of $\cZ(L_S)$, denoted by $\cZ(L_S)_0$, instead of $\cZ(L_S)$ for discussions of Hamiltonian actions. We state the following lemma about the relation between $\cZ(L_S^\der)$ and $\pi_0(\cZ(L_S))$ for concreteness. 
\begin{lemma}\label{lemma: pi_0, L_S}
For any semisimple Lie group $G$, we have canonical identifications
\begin{align}\label{eq: pi_0, L_S}
\pi_0(\cZ(L_S))\cong X_*(T_{S,\ad})/\pi_{\ft_S}(X_*(T))
\end{align}
\begin{align}\label{eq: Z, L_S, der}
\cZ(L_S^\der)\cong X_*(T_{S,\ad})/(X_*(T)\cap \ft_S),
\end{align}
where $T_{S,\ad}$ is a maximal torus of $L_{S,\ad}$. 
In particular, we have a short exact sequence 
\begin{align*}
1\rightarrow \pi_{\ft_S}(X_*(T))/(X_*(T)\cap \ft_S)\rightarrow \cZ(L_S^\der)\rightarrow \pi_0(\cZ(L_S))\rightarrow 1,
\end{align*}
which gives an identification 
\begin{align}\label{eq: cZ_S, der0}
\cZ(L_S^\der)_0:=\cZ(L_S^\der)\cap \cZ(L_S)_0\cong\pi_{\ft_S}(X_*(T))/(X_*(T)\cap \ft_S).
\end{align} 
\end{lemma}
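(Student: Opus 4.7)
The proof is essentially an exercise in bookkeeping of various lattices, using the standard identification of a complex torus with its Lie algebra modulo cocharacter lattice, together with the orthogonal decomposition $\ft = \ft_S \oplus \fz_S$ provided by the Killing form, where $\fz_S = \text{Lie}(\cZ(L_S)_0)$. The plan is to realize both $\cZ(L_S)$ and $\cZ(L_S^\der)$ as subquotients of a common lattice $\Lambda_S \subset \ft$, from which both identifications and the short exact sequence fall out mechanically. There should be no real obstacle; the only thing to watch is the distinction between the inclusion $\ft_S \hookrightarrow \ft$ and the projection $\pi_{\ft_S}: \ft \twoheadrightarrow \ft_S$.

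First, I would fix the convention $H \cong \mathfrak{h}/X_*(H)$ for any complex algebraic torus $H$ (suppressing the $2\pi i$). Applied to $T$ and to $T_S := T \cap L_S^\der$, this gives $T = \ft/X_*(T)$ and $T_S = \ft_S/X_*(T_S)$, where the inclusion $T_S \hookrightarrow T$ identifies $X_*(T_S) = X_*(T) \cap \ft_S$. By definition $\cZ(L_S) = \{t \in T : \alpha(t)=1,\ \forall \alpha \in S\}$ and $\cZ(L_S^\der) = \cZ(L_S) \cap L_S^\der = \{t \in T_S : \alpha(t)=1,\ \forall \alpha \in S\}$. Introduce the lattice
\begin{align*}
\Lambda_S := \{\xi \in \ft : \alpha(\xi) \in \bZ,\ \forall \alpha \in S\},
\end{align*}
so that $\cZ(L_S) = \Lambda_S/X_*(T)$ and $\cZ(L_S^\der) = (\Lambda_S \cap \ft_S)/(X_*(T) \cap \ft_S)$.

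Next, the key observation: since every $\alpha \in S$ vanishes on $\fz_S$, a vector $\xi = \eta + \zeta$ with $\eta \in \ft_S, \zeta \in \fz_S$ lies in $\Lambda_S$ iff $\alpha(\eta) \in \bZ$ for all $\alpha \in S$. The lattice $\{\eta \in \ft_S : \alpha(\eta) \in \bZ, \forall \alpha \in S\}$ is, by definition of the adjoint torus, exactly $X_*(T_{S,\ad})$, since $X^*(T_{S,\ad})$ is the root lattice of $L_S^\der$. Hence
\begin{align*}
\Lambda_S \;=\; X_*(T_{S,\ad}) \oplus \fz_S.
\end{align*}
Substituting into the formula for $\cZ(L_S^\der)$ gives immediately
\begin{align*}
\cZ(L_S^\der) \cong X_*(T_{S,\ad})/(X_*(T) \cap \ft_S),
\end{align*}
which is (\ref{eq: Z, L_S, der}). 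For $\pi_0(\cZ(L_S))$, the identity component $\cZ(L_S)_0$ corresponds to $\fz_S/(X_*(T) \cap \fz_S)$ inside $\cZ(L_S) = \Lambda_S/X_*(T)$, so
\begin{align*}
\pi_0(\cZ(L_S)) \;=\; \Lambda_S/(\fz_S + X_*(T)) \;=\; X_*(T_{S,\ad})/\pi_{\ft_S}(X_*(T)),
\end{align*}
where the last equality uses the decomposition of $\Lambda_S$ to project away the $\fz_S$ component; this is (\ref{eq: pi_0, L_S}).

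Finally, both identifications present $\cZ(L_S^\der)$ and $\pi_0(\cZ(L_S))$ as quotients of the same lattice $X_*(T_{S,\ad})$ by $X_*(T) \cap \ft_S \subset \pi_{\ft_S}(X_*(T))$ (the inclusion is clear: elements of $X_*(T) \cap \ft_S$ are fixed by $\pi_{\ft_S}$). The natural surjection $\cZ(L_S^\der) \twoheadrightarrow \pi_0(\cZ(L_S))$ is precisely the composition $\cZ(L_S^\der) \hookrightarrow \cZ(L_S) \twoheadrightarrow \pi_0(\cZ(L_S))$, whose kernel is $\cZ(L_S^\der) \cap \cZ(L_S)_0 =: \cZ(L_S^\der)_0$. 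By the snake-lemma-style comparison, this kernel equals $\pi_{\ft_S}(X_*(T))/(X_*(T) \cap \ft_S)$, yielding both the advertised short exact sequence and the identification (\ref{eq: cZ_S, der0}).
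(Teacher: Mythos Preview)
Your proof is correct and follows essentially the same approach as the paper's: both identify the preimage of $\cZ(L_S)$ in the universal cover $\ft$ as $X_*(T_{S,\ad}) \oplus \fz_S$ (the paper writes this as $\{t\in \ft_S: (\alpha,t)\in i\bZ,\ \alpha\in S\}+\fz_S$, which is the same thing up to the $i$-convention), and read off both quotient descriptions from there. Your version is more explicit about the intermediate lattice $\Lambda_S$ and about how the short exact sequence and the identification of $\cZ(L_S^\der)_0$ follow, but there is no substantive difference in method.
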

\begin{proof}
First, we have the preimage of $\cZ(L_S)$ in the universal cover $\ft$ of $T$ given by $\{t\in \ft_S: (\alpha,t)\in i\bZ, \forall\alpha\in S\}+\fz_S$. So 
\begin{align*}
\pi_0(\cZ(L_S))&\cong (\{t\in \ft_S: (\alpha,t)\in  i\bZ, \alpha\in S\}+\fz_S)/(iX_*(T)+\fz_S)\\
&\cong X_*(T_{S,\ad})/\pi_{\ft_S}(X_*(T)). 
\end{align*}
Similarly, we have the preimage of $\cZ(L_S^\der)$ in the universal cover $\ft$ given by $iX_*(T_{S,\ad})\subset \ft_S$ modulo $iX_*(T)$, and so (\ref{eq: Z, L_S, der}) follows. 
\end{proof}

It follows from Lemma \ref{lemma: pi_0, L_S} that for $G$ of adjoint type, $\cZ(L_S)_0=\cZ(L_S)$ and $\cZ(L^\der_S)_0=\cZ(L^\der_S)$. Although we assume $G$ of adjoint type for the rest of the paper, we use $\cZ(L_S)_0$ and $\cZ(L^\der_S)_0$ in the following, since most of the results work directly for a general $G$.

Let $D_S\subset \ft_S$ be any $W_S$-invariant pre-compact open neighborhood of $0\in \ft_S$. Let $\cK_{S^\perp}\subset \fz_S^\circ$ be any connected pre-compact open region such that 
\begin{align}\label{eq: cQ_D,cK}
\cQ_{D, \cK}:=D_S+\cK_{S^\perp}\subset \ft
\end{align} 
(cf. Figure \ref{figure: cK}) satisfies 
\begin{align}\label{eq: condition K_S, perp}
w(\overline{\cQ_{D, \cK}})\cap \overline{\cQ_{D, \cK}}=\emptyset, \forall w\not\in W_S. 
\end{align}
Let $\widetilde{\pr}_{\cK'_{S^\perp}}: \cQ_{D,\cK}/W_S\rightarrow \cK_{S^\perp}$ be the natural (analytic) projection. 
Let 
\begin{align}\label{eq: frX_0, D, K}
\fU_{S, D, \cK}:=\chi_S^{-1}(D_S/W_S)\underset{\cZ(L_S^\der)_0}{\times} (\cZ(L_S)_0\times \cK_{S^\perp})=\chi_S^{-1}(D_S/W_S)\underset{\cZ(L_S^\der)}{\times} (\cZ(L_S)\times \cK_{S^\perp}),
\end{align}
where $\chi_S: J_{L_S^\der}\rightarrow \fc_S$ is the characteristic map. For any $\cZ(L_S^\der)_0$-invariant pre-compact open region 
\begin{align}\label{eq: cY_S, cV}
\cY_S\subset \chi_S^{-1}(D_S/W_S)\text{ and }\cV_{S^\perp}\subset \cZ(L_S)_0, 
\end{align}
let 
\begin{align}\label{eq: cW_cY_S}
\cW_{\cY_S, \cV, \cK}:=\cY_S\underset{\cZ(L_S^\der)_0}{\times} (\cV_{S^\perp}\times \cK_{S^\perp})
\end{align}

Define for any $\rho\in \cZ(L_S)_0$
\begin{align}\label{eq: frj_S;rho}
\frj_{S;\rho}: \fU_S&\longrightarrow \fU_S\\
\nonumber(g_S,\xi_S;z, t)&\mapsto (g_S,\xi_S;z\rho, t),
\end{align}
which preserves the canonical holomorphic symplectic and Liouville 1-form on $\fU_S$ given explicitly by 
\begin{align*}
\omega|_{\fU_S}=-(d\lng \xi_S, g_S^{-1}dg_S\rng+d\lng t, z^{-1}dz\rng)
\end{align*} 
\begin{align}\label{eq: hat vartheta_S}
\vartheta|_{\fU_S}=-(\lng \xi_S, g_S^{-1}dg_S\rng+\lng t, z^{-1}dz\rng -\frac{1}{2}d\lng \xi_S, \Ad_{g_S^{-1}}\sfh_{0,S}-\sfh_{0,S}\rng+d\lng t, \sfh_{0, S^\perp}'\rng).
\end{align}

Let 
\begin{align}
\gamma_{-\Pi\backslash S}:=(-\beta\in-\Pi\backslash S): \cZ(L_S)_0\longrightarrow (\bC^\times)^{\Pi\backslash S}\hookrightarrow \bC^{\Pi\backslash S}. 
\end{align}
For $\rho\in \cZ(L_S)_0$ satisfying $|\gamma_{-\Pi\backslash S}(\rho)|\ll 1$, and some slightly larger open neighborhood $\cK'_{S^\perp}$ of $\overline{\cK}_{S^\perp}$, the map 
\begin{align}\label{eq: mu_D,cK,rho}
\mu_{D,\cK', \rho}:\pr_{\cK'_{S^\perp}}\circ \chi\circ \frj_{S;\rho}: \cW_{\cY_S, \cV, \cK}\longrightarrow \cK'_{S^\perp}
\end{align}
is well defined, and it fits into an $(n-|S|)$-dimensional family of deformations of $\pr_{\cK_{S^\perp}}$ through $\rho\mapsto (c_\beta)_\beta=\gamma_{-\Pi\backslash S}(\rho)$ (after inserting $\Ad_{\rho}$ between $\chi$ and $\frj_{S;\rho}$ in (\ref{eq: mu_D,cK,rho}) which has \emph{no} effect on (\ref{eq: mu_D,cK,rho}))\footnote{Here to simplify notations, we have suppressed the dependence of $\mu_{D,\cK, \rho}$ on the domain $\cW_{\cY_S, \cV, \cK}$. Note also that the family of maps $\mu_{D,\cK,\rho}$ does not necessarily embed into the universal family $\widetilde{\mu}_{D,\cK,(c_\beta)_{\beta\in \Pi\backslash S}}$, because $\gamma_{-\Pi\backslash S}$ is not always injective.}, given by 
\begin{align}\label{eq: mu_D,K, canonical}
&\widetilde{\mu}_{D,\cK',(c_\beta)_{\beta\in \Pi\backslash S}}:=\widetilde{\pr}_{\cK'_{S^\perp}}\circ \chi(\sum\limits_{\beta\in \Pi\backslash S}c_\beta\cdot f_\beta+\xi_S+t+\Ad_{g_S^{-1}}\psi), (c_\beta)\in \bC^{\Pi\backslash S}, |(c_\beta)|\ll 1, \\
\nonumber&\text{ where }\psi=\Ad_{z^{-1}\overline{w}_S^{-1}\overline{w}_0}(f-f_{-w_0(S)}),
\end{align}
from the same domain. 
Note that since $G$ has trivial center, there is a one-to-one correspondence between $\psi$ and $z\in \cZ(L_S)_0$. 
We will refer to (\ref{eq: mu_D,K, canonical}) as the \emph{universal $(\Pi\backslash S)$-deformations} of $\widetilde{\mu}_{D,\cK, 0}:=\pr_{\cK_{S^\perp}}$. One can view $\widetilde{\mu}_{D,\cK, 0}$ (originally defined on $\fU_{S,D,\cK}$) as the moment map for the obvious Hamiltonian $\cZ(L_S)_0$-action on the right-hand-side of (\ref{eq: frX_0, D, K}), and the Hamiltonian reduction is isomorphic to $\chi_S^{-1}(D_S/W_S)/\cZ(L_S^\der)_0\subset J_{L_{S}^\der/\cZ(L_S^\der)_0}$.  
Proposition \ref{eq: tilde mu, embed} below shows that for every element in the family, functions on $\cK_{S^\perp}'$ induce Poisson commuting Hamiltonian functions on $\cW_{\cY_S, \cV, \cK}$ through pullback, and it is part of an integrable system with complete $\cZ(L_S)_0$-orbits.

\begin{prop}\label{eq: tilde mu, embed}
For any $(c_\beta)_\beta$ with $|(c_\beta)_\beta|\ll 1$,  the image of 
\begin{align*}
\widetilde{\mu}_{D, \cK', (c_\beta)_\beta}^*: C^\infty(\cK'_{S^\perp};\bR)\longrightarrow C^\infty(\cW_{\cY_S, \cV, \cK};\bR)
\end{align*}
are Poisson commuting Hamiltonian functions on $\cW_{\cY_S, \cV, \cK}$, with respect the real symplectic structure. The same holds for pullback of holomorphic functions with respect the holomorphic symplectic structure. In fact, letting $S'=S\cup\{\beta\in \Pi\backslash S: c_\beta\neq 0\}$, we have a natural commutative diagram 
\begin{align}\label{lemma: integrable S}
\xymatrixcolsep{5pc}\xymatrix{\cW_{\cY_S, \cV, \cK}\ar@/_3pc/[dd]_{\widetilde{\mu}_{D,\cK',(c_\beta)_{\beta}}}\ar[d]\ar@{^{(}->}[r]^{\tilde{\iota}_S^{S'}\circ \frj_{S, \rho_0}}&J_{L_{S'}}\ar[d]^{\widetilde{\chi}_{S'}}\\
\cQ_{D,\cK'}/W_{S}\ar[d]\ar@{^{(}->}[r]^{\jmath}&\ft\sslash W_{S'}\cong\fc_{S'}\times \fz_{S'}\ar[d]\\
\cK'_{S^\perp}\ar[r]&\fz_{S'}
},
\end{align}
for some $\rho_0\in \cZ(L_S)_0$, that embeds $\widetilde{\mu}_{D, \cK', (c_\beta)_\beta}$ holomorphically symplectically into the integrable system 
\begin{align}\label{eq: lemma tilde chi_S'}
\widetilde{\chi}_{S', \cK'}: \widetilde{\chi}_{S'}^{-1}(\jmath(\cQ_{D,\cK'}/W_{S}))\overset{\widetilde{\chi}_{S'}}{\longrightarrow} \jmath(\cQ_{D,\cK'}/W_{S})\overset{\pr_{\cK_{S^\perp}'}\circ \jmath^{-1}}{\longrightarrow}  \cK'_{S^\perp}
\end{align}
with complete $\cZ(L_S)_0$-orbits. 

\end{prop}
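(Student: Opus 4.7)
The plan is to realize the family of maps $\widetilde{\mu}_{D,\cK',(c_\beta)_\beta}$ as the pullback of one standard integrable-system structure on a larger Levi universal centralizer, namely on $J_{L_{S'}^{\der}}$ (or better its extension on $\fU_{S'}$). Once this is done, the Poisson-commutativity and the completeness of the $\cZ(L_S)_0$-orbits become automatic because the corresponding statements are already known for the integrable system $\chi_{S'}: J_{L_{S'}^{\der}}\to \fc_{S'}$ (equivalently for $\fU_{S'}\to\cS_{\fl_{S'}^\der}\times\fz_{S'}^*$). Concretely, let $S'=S\cup\{\beta\in\Pi\backslash S:c_\beta\neq0\}$. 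The first task is to rescale coordinates so that the perturbation coefficients $(c_\beta)$ are realized by an element $\rho_0\in\cZ(L_S)_0$: using the cocharacter identification inside $\cZ(L_S)_0\to(\bC^\times)^{\Pi\backslash S}$, pick $\rho_0$ with $(-\beta)(\rho_0)=c_\beta$ for $\beta\in S'\backslash S$ and $(-\beta)(\rho_0)=0$-behavior (i.e.\ lying in the factor frozen out by $\cZ(L_{S'})$) for $\beta\in\Pi\backslash S'$.

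Next I would plug the composition $\tilde{\iota}_{S}^{S'}\circ\frj_{S;\rho_0}$ into the embedding formula (\ref{eq: prop U_S->U_S'}) for $\iota_S^{S'}$. The key algebraic check is that under this substitution the $\fn_{\fp_S}$-term $\Ad_{(\phi^{S,S'}zg_S)^{-1}}(f_{S'}-f_{-w_{S'}(S)})$ becomes precisely $\Ad_{g_S^{-1}}\psi$ (after identifying $\phi^{S,S'}$ with the ratio of representatives entering $\psi$) and that the $f_{S'}-f_S$ term becomes $\sum_{\beta\in S'\backslash S}c_\beta f_\beta$ up to the scaling that defines $\rho_0$. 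In other words, the left column of the claimed commutative diagram is the composition $\chi\circ(\text{image of }\iota_S^{S'}\circ\frj_{S;\rho_0})$, which, after passing through $\chi_{\fg}$ and restricting to the $\fz_{S'}$-direction, is the $\widetilde{\chi}_{S'}$-component composed with projection $\pr_{\cK'_{S^\perp}}$. This identification is the content of the middle horizontal arrow $\jmath$: it embeds $\cQ_{D,\cK'}/W_S$ into $\ft\sslash W_{S'}\cong\fc_{S'}\times\fz_{S'}$ as (base of $D_S/W_{S}$-family inside $\cS_{\fl_{S'}^\der})\times\cK'_{S^\perp}$, where the condition (\ref{eq: condition K_S, perp}) is used to guarantee that different $W_S$-orbits do not collide inside the $W_{S'}$-quotient.

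Once the diagram (\ref{lemma: integrable S}) is established, Poisson-commutativity is immediate: on $J_{L_{S'}^\der}$ (and its $\cZ(L_{S'})$-extension $\fU_{S'}$) the Hamiltonian reduction defining $\chi_{S'}$ produces an integrable system, so pullbacks of functions from the base commute. The statement for holomorphic functions follows by exactly the same computation in the holomorphic category, since all maps involved are holomorphic. The $\cZ(L_S)_0$-orbit completeness reduces to the abelian group scheme structure of $J_{L_{S'}}\to\fc_{S'}$: the fibers of $\widetilde{\chi}_{S'}$ restricted to $\widetilde{\chi}_{S'}^{-1}(\jmath(\cQ_{D,\cK'}/W_S))$ contain $\cZ(L_S)_0$ inside a maximal torus of the generic fiber, and this subtorus acts freely and properly (after shrinking $\cV_{S^\perp}$ if needed) by Hamiltonian translation. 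Since we have restricted to a precompact $\cY_S\subset\chi_S^{-1}(D_S/W_S)$ with $|\gamma_{-\Pi\backslash S}(\rho)|\ll 1$, the orbit through any point of $\cW_{\cY_S,\cV,\cK}$ stays inside $\fU_{S'}$ where the group scheme structure is literal, giving completeness.

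The main obstacle I anticipate is the bookkeeping for matching $\Ad_{(\phi^{S,S'}zg_S)^{-1}}(f_{S'}-f_{-w_{S'}(S)})$ with $\Ad_{g_S^{-1}}\psi$: this requires choosing the representatives $\overline{w}_S,\overline{w}_{S'}$ consistently as in Remark \ref{remark: choices of w} and tracking how the shift $\frj_{S;\rho_0}$ interacts with these choices. Once this identification is performed carefully (and in particular one verifies that, when $\beta\in\Pi\backslash S'$, the coordinate $(-\beta)(z)$ survives as a free coordinate on $\cV_{S^\perp}$ rather than being turned into a deformation parameter), the rest of the proposition is formal consequence of the existence and properties of $\widetilde{\chi}_{S'}$.
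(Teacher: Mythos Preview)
Your approach is essentially the same as the paper's: choose $\rho_0\in\cZ(L_S)_0$ to normalize the deformation coefficients, compose the shift $\frj_{S,\rho_0}$ with the embedding $\tilde{\iota}_S^{S'}$, and then read off the diagram by comparing Lie algebra elements. The paper shortcuts the ``bookkeeping'' you anticipate by observing that the difference between $\Ad_{\rho_0^{-1}}(\sum_{\beta\in S'\setminus S}c_\beta f_\beta+\xi_S+t+\Ad_{g_S^{-1}}\psi)$ and the second component of $\tilde{\iota}_S^{S'}\circ\frj_{S,\rho_0}(g_S,\xi_S;z,t)$ lies in $\fn_{\fp_{S'}}$, which it deduces directly from the compatibility $\tilde{\iota}^\Pi_{S'}\circ\tilde{\iota}_S^{S'}=\tilde{\iota}_S^\Pi$ established in Proposition~\ref{prop: fU_S', sqcup}; this avoids matching the $\Ad$-terms by hand.

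Two small points to clean up. First, your description of $\rho_0$ for $\beta\in\Pi\setminus S'$ as ``$(-\beta)(\rho_0)=0$-behavior'' is incoherent since $\rho_0$ lies in a torus; the correct statement is simply that the condition $\Ad_{\rho_0^{-1}}(\sum c_\beta f_\beta)=f_{S'\setminus S}$ only constrains $(-\beta)(\rho_0)$ for $\beta\in S'\setminus S$, and the remaining components are free. Second, your claim that ``the orbit through any point of $\cW_{\cY_S,\cV,\cK}$ stays inside $\fU_{S'}$'' is not what gives completeness: the $\cZ(L_S)_0$-orbits are complete because $J_{L_{S'}}\to\fc_{S'}\times\fz_{S'}$ is a group scheme (so fibers are complete abelian groups containing $\cZ(L_S)_0$), not because the orbits remain in the open piece $\fU_{S'}$---in general they do not.
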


\begin{proof}
Fix any $(c_\beta)_\beta$ and let $S'$ be as above. Choose $\rho_0\in \cZ(L_S)_0$ such that $\Ad_{\rho_0^{-1}}(\sum\limits_{\beta\in \Pi\backslash S}c_\beta\cdot f_\beta)=f_{S'\backslash S}$. We do the following embedding using $\tilde{\iota}_S^{S'}$ from (\ref{eq: tilde i S, S'})
\begin{align*}
\tilde{\iota}_S^{S'}\circ \frj_{S, \rho_0}: &\cW_{\cY_S, \cV, \cK}\longhookrightarrow J_{L_{S'}}=J_{L_{S'}^\der}\underset{\cZ(L_{S'}^\der)}{\times}T^*\cZ(L_{S'})
\end{align*}
Then comparing $\Ad_{\rho_0^{-1}}(\sum\limits_{\beta\in S'\backslash S}c_\beta\cdot f_\beta+\xi_S+t+\Ad_{g_S^{-1}}\psi)$ with the second component of  $\tilde{\iota}_S^{S'}\circ \frj_{S, \rho_0}(g_S,\xi_S;z, t)$, we see that their difference 
is contained in $\fn_{\fp_{S'}}$ (the nilpotent radical of the standard parabolic subalgebra for $S'$). This can be directly seen from the equality $\tilde{\iota}^\Pi_{S'}\circ\tilde{\iota}_S^{S'}=\tilde{\iota}_S^\Pi$ established in Proposition \ref{prop: fU_S', sqcup}. Hence, we have the commutative diagram (\ref{lemma: integrable S}), and the embedding of $\widetilde{\mu}_{D, \cK', (c_\beta)_\beta}$ into the integrable system with complete $\cZ(L_S)_0$-orbits.  
\end{proof}

\begin{remark}\label{remark: cK in fz_S}
We remark that it is important to view (i.e. fix an embedding of) $\cK'_{S^\perp}$ inside $\fz_S^\circ$ to specify a Hamiltonian $\cZ(L_S)_0$-action on $\widetilde{\chi}_{S'}^{-1}(\jmath(\cQ_{D,\cK'}/W_{S}))$ in Proposition \ref{eq: tilde mu, embed}. In particular, in the following whenever we are talking about integrable systems over $\cK'_{S^\perp}$ with $\cZ(L_S)_0$-actions, it only makes sense after fixing such an embedding. Changing $\cK_{S^\perp}$ by $w\in N_{W_{S'}}(W_S)$ induces the following commutative diagram, where the left $\cZ(L_S)_0$ and right $\cZ(L_S)_0$ actions on $\chi_{\fl_{S'}}(\cK'_{S^\perp})$ at the top are respectively induced from identifying $\chi_{\fl_{S'}}(\cK'_{S^\perp})$ with $\cK'_{S^\perp}$ and $w(\cK'_{S^\perp})$. They are related by the automorphism $w$ on $\cZ(L_S)_0$. 
\begin{figure}[htbp]
\begin{tikzpicture}
\node (A0) at  (-4,0) {$\cK'_{S^\perp}$};
\node (B0) at (0,0)  {$\chi_{\fl_{S'}}(\cK'_{S^\perp})$};
\node (C0) at  (4,0) {$w(\cK'_{S^\perp})$};
\draw[->] (A0) -- (B0) node[pos=0.5, above]{$\sim$};
\draw[->] (C0) -- (B0) node[pos=0.5, above]{$\sim$};
\draw[->] (A0) to[bend right] (C0) node[pos=0.5, yshift=-0.4in]{$w$};
\node (A1) at  (-4,1.5) {$\cW_{\cY_S, \cV, \cK}$};
\node (B1) at (0,1.5)  {$\widetilde{\chi}_{S'}^{-1}(\jmath(\cQ_{D,\cK'}/W_{S}))$};
\node (C1) at  (4,1.5) {$\cW_{\cY_S, \cV, w(\cK)}$};
\draw[right hook->] (A1) -- (B1);
\draw[left hook->] (C1)-- (B1);
\draw[->] (A1) -- (A0) node[pos=0.5, left]{$\widetilde{\mu}_{D,\cK',(c_\beta)_{\beta}}$};
\draw[->] (B1) -- (B0) node[pos=0.5, left]{$\widetilde{\chi}_{S'}$};
\draw[->] (C1) -- (C0) node[pos=0.5, right]{$\widetilde{\mu}_{D,w(\cK'),(c_\beta)_{\beta}}$};
\node (A2) at  (-1.5,3) {$\cZ(L_S)_0$};
\node (B2) at (1.5,3)  {$\cZ(L_S)_0$};
\draw[->] (A2) -- (B2) node[pos=0.5, above]{$w$};
\draw [->] (-1.5, 1.9) arc(260:-80:0.4);
\draw [->] (1.5, 1.9) arc(260:-80:0.4);
\end{tikzpicture}
\end{figure}
\end{remark}

Let 
\begin{align}\label{eq: chi_fg}
\chi_\fg: \fg\longrightarrow \fc\ (\text{resp. }\chi_\ft: \ft\longrightarrow \fc)
\end{align}
be the adjoint quotient map, and let 
\begin{align}
\chi_{\cK_{S^\perp}}:=\pr_{\cK_{S^\perp}}\circ \chi: \chi^{-1}(\chi_\fg(\cQ_{D,\cK}))\longrightarrow \cQ_{D,\cK}/W_S\longrightarrow \cK_{S^\perp}
\end{align} 
be the moment map for the Hamiltonian $\cZ(L_S)_0$-action on $ \chi^{-1}(\chi_\fg(\cQ_{D,\cK}))$. 
For some slight enlargement $D'_S\supset \overline{D}_S$ contained in $\ft_S$, we have the commutative diagram 
\begin{align}\label{diagram: mu, S, rho}
\xymatrixcolsep{5pc}\xymatrix{\cW_{\cY_S, \cV, \cK}\ar[d]_{\widetilde{\mu}_{D,\cK', (\gamma_{-\Pi\backslash S}(\rho))}=\mu_{D,\cK', \rho}}\ar@{^{(}->}[r]^{\frj_{S,\rho}\ \ \ \ \ }&\chi^{-1}(\chi_\fg(\cQ_{D',\cK'}))\ar[dl]^{\chi_{\cK'_{S^\perp}}}\\
\cK'_{S^\perp}&
}
\end{align}
 for $\rho\in \cZ(L_S)_0$ satisfying $|\gamma_{-\Pi\backslash S}(\rho)|\ll 1$. 
 By Lemma \ref{lemma: xi, t, regular} below, there is an isomorphism 
\begin{align}\label{eq: chi_S, mathscr Z}
\chi_S^{-1}(D'_S/W_S)\underset{\cZ(L_S^\der)_0}{\times}(\cZ(L_S)_0\times \cK'_{S^\perp})&\longrightarrow \chi^{-1}(\chi_\fg(\cQ_{D',\cK'}))\\
\nonumber(((g_S, \xi_S)\in \mathscr{Z}_{L_S^\der}\sslash L_S^\der), (z, t))&\mapsto ((g_Sz, \xi_S+t)\in \mathscr{Z}_{G}\sslash G),
\end{align}
where the second line of the presentation (with the elements understood from the respective sublocus) emphasizes that the elements $(g_S, \xi_S)$ are from the centralizer presentation of $J_{L_S^\der}$ (\ref{eq: scrZ}), rather than the Whittaker Hamiltonian reduction perspective (in particular, $\xi_S+t$ is \emph{not} in $f+\fb$ unless $S=\Pi$) Then the Hamiltonian reduction of $\chi_{\cK'_{S^\perp}}$ at any point in $\cK'_{S^\perp}$ is then canonically isomorphic to $\chi_S^{-1}(D'_S/W_S)/\cZ(L_S^\der)_0$. 

\begin{lemma}\label{lemma: xi, t, regular}
Let $D_S, \cK_{S^\perp}$ satisfy the condition (\ref{eq: condition K_S, perp}). Then for any $\xi_S^\natural\in (f_S+\fb_S)\cap \chi_{\fl_S^\der}^{-1}(D_S/W_S)$ and $t^\natural\in \cK_{S^\perp}$, we have $\xi_S^\natural+t^\natural$ is regular in $\fg$. 
\end{lemma}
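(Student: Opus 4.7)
The plan is to show that the centralizer $\fz_\fg(\tilde\xi) := \ker\ad_{\tilde\xi}$ of $\tilde\xi := \xi_S^\natural + t^\natural$ has dimension exactly $n$. First I would reduce to the case $\xi_S^\natural \in \cS_{\fl_S^\der}$ using the Kostant slice isomorphism $N_S \times \cS_{\fl_S^\der} \cong f_S + \fb_S$: write $\xi_S^\natural = \Ad_u(\xi_S^\Kos)$ with $u \in N_S \subset L_S^\der$ and $\xi_S^\Kos \in \cS_{\fl_S^\der}$, and note that $u$ commutes with $t^\natural \in \fz_S$, so $\tilde\xi = \Ad_u(\xi_S^\Kos + t^\natural)$. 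Thus we may assume $\xi_S^\natural = \xi_S^\Kos$, which by Kostant is regular in $\fl_S^\der$.

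Next I would decompose $\fg$ into $\ad_{t^\natural}$-eigenspaces $\fg = \bigoplus_c \fg_c$ with $\fg_c := \bigoplus_{\alpha(t^\natural) = c} \fg_\alpha$. Since $t^\natural \in \fz_S^\circ$ has $W$-stabilizer exactly $W_S$, a root $\alpha$ satisfies $\alpha(t^\natural) = 0$ iff $s_\alpha \in W_S$ iff $\alpha$ lies in the root system of $\fl_S$; hence $\fg_0 = \fl_S$. Because $\xi_S^\Kos \in \fg_0$, $\ad_{\xi_S^\Kos}$ commutes with $\ad_{t^\natural}$ and so $\ad_{\tilde\xi}$ preserves each $\fg_c$. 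On $\fg_0 = \fl_S$ we have $\ad_{\tilde\xi} = \ad_{\xi_S^\Kos}$ (as $t^\natural$ is central in $\fl_S$), with kernel $\fz_S \oplus \fz_{\fl_S^\der}(\xi_S^\Kos)$ of dimension $(n-|S|)+|S| = n$. For $c \neq 0$, $\ad_{\tilde\xi}|_{\fg_c} = c\,\Id + \ad_{\xi_S^\Kos}|_{\fg_c}$, and the problem reduces to showing this is invertible.

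To analyze the eigenvalues of $\ad_{\xi_S^\Kos}|_{\fg_c}$, take the Jordan decomposition $\xi_S^\Kos = \xi_s + \xi_n$ in $\fl_S^\der$ and conjugate $\xi_s$ to some $\tau_S \in \ft_S$ by an element of $L_S^\der$. Since $L_S$ commutes with $t^\natural$, $\Ad_{L_S^\der}$ preserves each $\fg_c$, so this conjugation respects the decomposition, and the eigenvalues of $\ad_{\xi_S^\Kos}|_{\fg_c}$ coincide with those of $\ad_{\tau_S}|_{\fg_c}$, namely $\{\alpha(\tau_S) : \alpha(t^\natural) = c\}$. Consequently the eigenvalues of $\ad_{\tilde\xi}|_{\fg_c}$ are $\{\alpha(\tau_S + t^\natural) : \alpha(t^\natural) = c\}$, so invertibility reduces to $\alpha(\tau_S + t^\natural) \neq 0$ for every root $\alpha$ outside the root system of $\fl_S$. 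Since $D_S$ is $W_S$-invariant we may take $\tau_S \in D_S$, so $\tau_S + t^\natural \in \cQ_{D,\cK}$; condition (\ref{eq: condition K_S, perp}) then finishes the argument, because if $\alpha(\tau_S + t^\natural) = 0$ for such an $\alpha$, then $s_\alpha \notin W_S$ would fix $\tau_S + t^\natural$, placing this point in both $\overline{\cQ_{D,\cK}}$ and $s_\alpha(\overline{\cQ_{D,\cK}})$---a contradiction.

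The subtlest point, and the main place where something could go wrong, is the block decomposition: in the presence of a nontrivial nilpotent part $\xi_n$, the claim that $\ad_{\xi_S^\Kos}|_{\fg_c}$ has the same eigenvalues as $\ad_{\tau_S}|_{\fg_c}$ relies crucially on $\Ad_{L_S^\der}$ preserving each $\fg_c$, which in turn rests on $\ad_{\fl_S}$ commuting with $\ad_{t^\natural}$---forced by $t^\natural \in \fz_S$. With this in hand, condition (\ref{eq: condition K_S, perp}) is tailored exactly to the non-vanishing needed, and regularity of $\tilde\xi$ follows.
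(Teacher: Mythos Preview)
Your proof is correct and follows the same overall arc as the paper's: reduce via an $N_S$-conjugation (which commutes with $t^\natural \in \fz_S$), show that the centralizer of $\tilde\xi$ lies in $\fl_S$, and then conclude from regularity of $\xi_S^\natural + t^\natural$ in $\fl_S$. The execution of the middle step differs. The paper normalizes not to the Kostant slice but to $\xi_S^\natural = f_S + \tau$ with $\tau \in D_S \subset \ft_S$, and then argues directly on root spaces: if a nonzero $\eta \in \fn_{\fp_S}$ centralized $\tilde\xi$, the $\fg_{\alpha_0}$-component of $[\tilde\xi,\eta]$ (for $\alpha_0$ the maximal root occurring in $\eta$) would be $c_{\alpha_0}\,\alpha_0(\tau + t^\natural)\,e_{\alpha_0} \neq 0$, a contradiction; the case $\eta \in \fn_{\fp_S}^-$ is symmetric. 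Your route---decomposing $\fg$ into $\ad_{t^\natural}$-eigenspaces and reading off the eigenvalues of $\ad_{\tilde\xi}|_{\fg_c}$ via the Jordan decomposition of $\xi_S^\Kos$---is more structural and makes transparent why condition~(\ref{eq: condition K_S, perp}) is exactly the non-vanishing required; the paper's argument is shorter and sidesteps the Jordan decomposition entirely by virtue of its normalization. Both ultimately hinge on the same fact: $\alpha(\tau + t^\natural) \neq 0$ for every root $\alpha \notin \pm\Gamma(S)$.
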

\begin{proof}
Up to adjoint action by $N_S$, we may assume that $\xi_S^\natural=f_S+\tau\in \fb_S^-$ for some $\tau\in D_S\subset \ft_S$. We claim that for any $\eta=\sum\limits_{\alpha\in \Delta^+\backslash \Gamma(S)}c_\alpha e_\alpha\in \fn_{\fp_S}$, $[\xi_S^\natural+t^\natural, \eta]=0\Rightarrow \eta=0$. Suppose $\eta\neq 0$, let $\alpha_0$ be a maximal root (under the standard partial order) such that $c_{\alpha}\neq 0$. Then the root component of $[\xi_S^\natural+t^\natural, \eta]$ in $\fg_{\alpha_0}$ is equal to $[\tau+t^\natural, c_{\alpha_0}e_{\alpha_0}]=c_{\alpha_0}\alpha_0(\tau+t^\natural)e_{\alpha_0}$. By assumption on $D_S+\cK_{S^\perp}$, $\alpha_0(\tau+t^\natural)\neq 0$, for the annihilators in $\Delta^+$ of any element in $D_S+\cK_{S^\perp}$ is contained in $\Gamma(S)$. So the claim follows. Similarly, we have 
for any $\eta=\sum\limits_{\alpha\in \Delta^+\backslash \Gamma(S)}c_\alpha f_\alpha\in \fn^-_{\fp_S}$, $[\xi_S^\natural+t^\natural, \eta]=0\Rightarrow \eta=0$. Thus the Lie algebra centralizer of $\xi_S^\natural+t^\natural$ is contained in $\fl_S$. Since $\xi_S^\natural+t^\natural$ is regular in $\fl_S$, the lemma follows. 
\end{proof}

In the following, fix any  $D_S^\dagger, \cK_{S^\perp}^\dagger$ with the same property as $D_S, \cK_{S^\perp}$, respectively, satisfying
\begin{align}\label{eq: D, K, dagger}
\overline{D_S^\dagger}\subset D_S,\ \overline{\cK^\dagger}_{S^\perp}\subset \cK_{S^\perp}. 
\end{align}
and we consider 
\begin{align}\label{eq: cY_S^dagg}
\cY^\dagger_S\subset \chi_S^{-1}(D_S^\dagg/W_S), \overline{\cY_S^\dagg}\subset \cY_S
\end{align}
satisfying the similar property as for $\cY_S$ (\ref{eq: cY_S, cV}). 

Here is the main question that we will answer in this section. 
\begin{question}\label{question: mu_D, K, rho}
Since $\mu_{D,\cK',\rho}$ fits into the universal $(\Pi\backslash S)$-deformation of $\widetilde{\mu}_{D,\cK', (c_\beta)_{\beta\in \Pi\backslash S}}$, in particular for $|\gamma_{-\Pi\backslash S}(\rho)|\rightarrow 0$, it converges to $\widetilde{\mu}_{D,\cK,0}=\pr_{\cK_{S^\perp}}$, it can be viewed as an interpolating family of incomplete Hamiltonian $\cZ(L_S)_0$-systems between the complete systems $\chi_{\cK'_{S^\perp}}$ and $\widetilde{\mu}_{D,\cK,0}$ (the latter viewed on $\fU_{S, D,\cK}$). Can we understand the relations between these two complete Hamiltonian $\cZ(L_S)_0$-systems through the interpolating family? More concretely, we want to investigate the following two aspects of their relations:
\begin{itemize}
\item[(i)] The relation(s) between their $\cZ(L_S)_0$-orbits: for this (and (ii) below) we take $\cV_{S^\perp}\subset \cZ(L_S)_0$ to be $\cZ(L_S)_{0,\cpt}\times \exp(B_R(0))$ for some standard ball $B_R(0)$ centered at $0$ inside $\fz_{S, \bR}$, and we will relate $\frj_{S,\rho}(\{(g_S,\xi_S)\}\times \cV_{S^\perp}\times \{\kappa\})$ with a $\cZ(L_S)_0$-orbit inside $\chi_{\cK'_{S^\perp}}^{-1}(\kappa)$, for any $\kappa\in \cK_{S^\perp}^\dagger$.

\item[(ii)] The relation(s) between the Hamiltonian reductions through $\frj_{S,\rho}$: for the universal $(\Pi\backslash S)$-deformations $\widetilde{\mu}_{D,\cK',(c_\beta)_{\beta\in \Pi\backslash S}}$ (\ref{eq: mu_D,K, canonical}) with $|(c_\beta)_\beta|$ sufficiently small, we have the characteristic foliations in $\widetilde{\mu}_{D,\cK',(c_\beta)_\beta}^{-1}(\kappa)$ arbitrarily close to the ``standard" foliations 
\begin{align*}
\{\{(g_S,\xi_S)\}\times \cV_{S^\perp}\times \{\kappa\}: (g_S,\xi_S)\in \cY^\dagg_S, \kappa\in \cK_{S^\perp}\}.
\end{align*}
In particular, fixing the $|\cZ(L_S^\der)_0|$-to-$1$ multi-section of the standard foliation given by $\cY^\dagg_S\times \{u_0\}\times \cK_{S^\perp}$ for some $u_0\in \cV_{S^\perp}$, it is transverse to the characteristic foliations in $\widetilde{\mu}_{D,\cK',(c_\beta)_\beta}^{-1}(\kappa)$ for all $|(c_\beta)_\beta|$ small. After a modification of 
\begin{align*}
(\cY^\dagg_S\times \{u_0\}\times\cK_{S^\perp})\cap \widetilde{\mu}_{D,\cK',(c_\beta)_\beta}^{-1}(\kappa) 
\end{align*}
to be a $\cZ(L_S^\der)_0$-equivariant multi-section of the symplectic quotient; see the definition of $\cY^\dagg_{S, \kappa, (c_\beta)_\beta}$ in Remark \ref{remark: modify multi-section}. 
We get an embedding 
\begin{align*}
\cY^\dagg_{S, \kappa, (c_\beta)_\beta}/\cZ(L_S^\der)_0\hookrightarrow\widetilde{\mu}_{D,\cK',(c_\beta)_\beta}^{-1}(\kappa)/(\text{characteristic leaves}) 
\end{align*}
where the latter has the quotient symplectic structure\footnote{The latter quotient space might not have a good structure near $\partial \cY_S\underset{\cZ(L_S^\der)_0}{\times} (\cV_{S^\perp}\times \cK_{S^\perp})$. The embedding from $\cY^\dagg_{S, \kappa, (c_\beta)_\beta}/\cZ(L_S^\der)_0$ does not touch such ``bad" places.}, for all $(c_\beta)_\beta$ near $0$ and $\kappa\in \cK^\dagg_{S^\perp}$. Now for $\rho\in \cZ(L_S)_0$ with $|\gamma_{-\Pi\backslash S}(\rho)|$ sufficiently small, $\frj_{S,\rho}$ induces a map (which is a \emph{local} symplectic isomorphism) on the ``Hamiltonian reductions", 
\begin{align}\label{eq: j_S,rho, reduction}
\overline{\frj}_{S,\rho;\kappa}:&\cY^\dagg_{S,\kappa, (c_\beta)_\beta}/\cZ(L_S^\der)_0\hookrightarrow \mu_{D,\cK',\rho}^{-1}(\kappa)/(\text{characteristic leaves})\\
\nonumber&\longrightarrow \chi_S^{-1}(D'_S/W_S)/\cZ(L_S^\der)_0.
\end{align}

We would like to understand this map. More specifically, we will show that as we enlarge $\cY_S$ (then so is $\cY^\dagg_{S, \kappa, (c_\beta)_\beta}$) and letting $|\gamma_{-\Pi\backslash S}(\rho)|\rightarrow 0$, the map (\ref{eq: j_S,rho, reduction}) covers any fixed compact region inside $\chi_S^{-1}(D^\dagg_S/W_S)/\cZ(L_S^\der)_0$ in the codomain and it is one-to-one (see Proposition \ref{prop: Ham reduction emb} below). 
\end{itemize}
\end{question}

We remark that Question \ref{question: mu_D, K, rho} (i), (ii) are nontrivial and are quite useful for understanding the geometry of $J_G$. The reason is that it is a highly nonlinear question to deduce explicit formulas for the torus fibers $\chi^{-1}([\xi])\cong C_G(\xi)$ for general $\xi\in \cS$ and similarly $\cZ(L_S)_0$-orbits in $\chi_{\cK'_{S^\perp}}^{-1}(\kappa)$, especially (the portion) inside $\cB_{w_0}$ or $\fU_S$. The following two subsections analyze the asymptotic behaviors in certain directions, i.e. $|\gamma_{-\Pi\backslash S}(\rho)|\ll 1$, making the questions approachable, while giving geometric information about the orbits that is sufficient for many purposes.

\subsubsection{Analysis inside $\cB_{w_0}$}\label{subsubsec: B_w_0}
For $S=\emptyset$, many of the discussions as well as notations can be simplified. We will omit the null inputs $D_{\emptyset}, \cY_{\emptyset}, \cZ(L_{\emptyset}^\der)_0$ in all notations, and we will denote $\cK_{\emptyset^\perp}$ (resp. $\cV_{\emptyset^\perp}$) simply by $\cK$ (resp. $\cV$), for which we make the analytic identification $\chi_\ft|_{\cK}: \cK\cong \chi_\ft(\cK)\subset \fc^{\reg}$ (cf. Figure \ref{figure: cK}). Note that $\chi_{\cK}=\chi$, and diagram (\ref{diagram: mu, S, rho}) specializes to be the commutative diagram 
\begin{align}\label{diagram: mu, rho}
\xymatrix{\cV\times \ft\ar[d]_{\mu_{\rho}}&\ar@{_{(}->}[l]\cV\times \cK= \cW_{\cV, \cK}\ar[d]_{\widetilde{\mu}_{\cK', \gamma_{-\Pi}(\rho)}=\mu_{\cK', \rho}}\ar@{^{(}->}[r]^{\ \ \ \ \frj_{\rho} }&\chi^{-1}(\chi_\ft(\cK'))\ar[dl]^{\chi}\\
\fc&\ar@{_{(}->}[l]\cK'\overset{\chi_\ft}{\cong} \chi_\ft(\cK')&
},
\end{align}
where we add a left column on the deformed $\mu_\rho$ well defined on $\cV\times\ft$. We emphasize again that if we want to talk about $T$-action on $\chi^{-1}(\chi_\ft(\cK'))$, we need to specify an embedding of $\cK'$ into $\ft$. This is by default through the definition of $\cK'$ as a subset of $\ft$. 

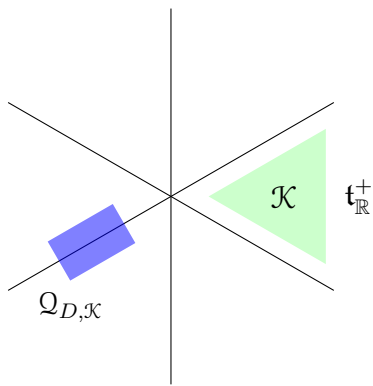
\begin{figure}
\begin{tikzpicture}
\draw (0,2.5)--(0,-2.5);
\draw ({2.5*cos(30)},{2.5*sin(30)})--({-2.5*cos(30)},{-2.5*sin(30)});
\draw ({2.5*cos(150)},{2.5*sin(150)})--({-2.5*cos(150)},{-2.5*sin(150)});
\fill[fill=green!20] ({1.8*cos(30)+0.5},{1.8*sin(30)})--(0.5,0)--({-1.8*cos(150)+0.5},{-1.8*sin(150)})--({1.8*cos(30)+0.5},{1.8*sin(30)});
\draw (1.5, 0) node {$\cK$};
\draw (2.5, 0) node {$\ft_\bR^+$};
\fill[fill=blue, opacity=0.5] ({0.6*cos(240)-0.2*cos(30)},{0.6*sin(240)-0.2*sin(30)})--({-0.6-0.2*cos(30)},{-0.2*sin(30)})--({-0.6-1.2*cos(30)}, {-1.2*sin(30)})--({0.6*cos(240)-1.2*cos(30)},{0.6*sin(240)-1.2*sin(30)})--({0.6*cos(240)-0.2*cos(30)},{0.6*sin(240)-0.2*sin(30)});
\draw ({0.6*cos(240)-1.2*cos(30)},{0.6*sin(240)-1.2*sin(30)}) node[below] {$\cQ_{D,\cK}$};
\end{tikzpicture}
\caption{A real picture of $\cK$ (green triangular region; here we make it inside the closed cone $\ft_\bR^+$) and $\cQ_{D,\cK}=D_S+\cK_{S^\perp}$ (blue rectangular region) inside $\ft$. }\label{figure: cK}
\end{figure}

For $S=\emptyset$, part (ii) of Question \ref{question: mu_D, K, rho} is trivial. For Question \ref{question: mu_D, K, rho} (i), our main result not only gives relations between the individual (incomplete) $T$-orbits, but also establishes an ``equivalence" between the integrable systems, restricted to certain pre-compact regions.

Since any $\kappa \in \cK$ is a regular value of $\chi$ and $\widetilde{\mu}_{\cK', 0}^{-1}(\kappa)=T\times \{\kappa\}$, for any pre-compact open region $\cV\subset T$ as described in Question \ref{question: mu_D, K, rho} (i) (the inclusion is in particular a homotopy equivalence), there exist $r_{\cV}>0$ such that for any $(c_\beta)_{\beta\in \Pi}$ satisfying $|(c_\beta)_\beta|<r_{\cV}$, we have $\kappa$ a regular value of $\widetilde{\mu}_{\cK', (c_\beta)}$ and $\widetilde{\mu}_{\cK', (c_\beta)}^{-1}(\kappa)\cap \cW_{\cV, \cK'}$ is a smooth Lagrangian section over $\cV$. 

For a general $[\xi]\in \fc$, we have the following: 

\begin{lemma}\label{lemma: hat chi_epsilon, gamma}
\item[(i)] For any pre-compact open $\cV\subset T$ as above, any compact region $\fK\subset \fc$ and $\delta>0$, there exists $r_{\cV,\fK, \delta}>0$ such that for all $[\xi]\in \fK$ and $\rho\in T$ satisfying $|\gamma_{-\Pi}(\rho)|\leq r_{\cV,\fK, \delta}$, we have
\begin{align}\label{eq: chi, xi, general}
\chi^{-1}([\xi])\cap (\frj_\rho(\cV)\times \ft)\subset \frj_\rho(\cV)\times \bigcup\limits_{\xi'\in \ft: \chi_\ft(\xi')=[\xi]}\{|t-\xi'|<\delta\}. 
\end{align}

\item[(ii)]
Let $\fK'\subset \fc^{\reg}$ be a compact subset. Then for any small $\delta>0$, there exists $r>0$ such that 
\begin{align}\label{lemma eq: chi_epsilon in delta_U}
\chi^{-1}([\xi])\cap (\bigcup\limits_{|\gamma_{-\Pi}(\rho)|\leq r}\frj_\rho(\cV)\times \ft)\subset (\bigcup\limits_{|\gamma_{-\Pi}(\rho)|\leq r} \frj_\rho (\cV))\times (\bigcup\limits_{\xi'\in \ft: \chi_\ft(\xi')=[\xi]}\{|t-\xi'|<\delta\}),
\end{align}
for all $[\xi]\in \fK'$, and the intersection has $|W|$ many connected components with each projecting to $\bigcup\limits_{|\gamma_{-\Pi}(\rho)|\leq r} \frj_\rho (\cV)$ isomorphically. 

\end{lemma}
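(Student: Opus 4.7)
The plan is to unwind the description of $\chi|_{\cB_{w_0}}$ from Example \ref{example: B_w0} and treat the lemma as a perturbation analysis around the ``limit'' $|\gamma_{-\Pi}(\rho)|=0$. First I parametrize a point of $\frj_\rho(\cV)\times\ft$ as $(\overline{w}_0^{-1}g,\,f+t+\eta(g))$ with $g\in \cV\rho$, $t\in\ft$, and $\eta(g):=\Ad_{g^{-1}\overline{w}_0}f$. A short calculation, using that $\Ad_{\overline{w}_0}f\in\bigoplus_{\beta\in\Pi}\fg_\beta$, gives $\eta(g)=\sum_{\beta\in\Pi}d_\beta\,\beta(g)^{-1}e_\beta\in\fn^+$ for some nonzero constants $d_\beta$. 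For $g\in \bigcup_{|\gamma_{-\Pi}(\rho)|\leq r}\cV\rho$ one has $|\beta(g)^{-1}|\leq r\cdot \max_{h\in\overline{\cV}}|\beta(h)^{-1}|$, so $\|\eta(g)\|\leq C_{\cV}\cdot r$ uniformly; in particular $\eta(g)$ depends on $g$ only through the tuple $(\beta(g)^{-1})_{\beta\in\Pi}$ and tends to $0$ as $r\to 0$.

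The geometric heart of the argument is the identity $\chi_\fg(f+t)=\chi_\ft(t)$ for every $t\in \ft$. I would prove this by noting that when $t$ is regular the operator $\ad_t:\fn^-\to\fn^-$ is invertible, so one can solve $\Ad_{\exp n}(t)-t=f$ recursively for $n\in\fn^-$; successive corrections live in strictly deeper terms of the lower central series of $\fn^-$, and the iteration terminates after finitely many steps. This realizes $f+t$ as $N^-$-conjugate to $t$ on $\ft^\reg$, and the identity $\chi_\fg(f+t)=\chi_\ft(t)$ then extends to all of $\ft$ by continuity of $\chi_\fg$ and $\chi_\ft$.

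For (i), I would use a ``leading term'' argument: fix generators $P_1,\dots,P_n$ of $\bC[\fg]^G$ of degrees $d_i$; each $P_i(f+t+\eta)$, for $(t,\eta)\in\ft\times\fn^+$, is a polynomial whose top-degree part in $t$ equals $P_i(t)$, since $f$ and $\eta$ are nilpotent. Consequently, if $\chi_\fg(f+t+\eta(g))\in \fK$ and $\|\eta(g)\|$ is bounded (by some fixed $C_\cV r_0$), then $t$ must lie in a compact subset $\fM\subset \ft$ depending only on $\fK$, $\cV$ and $r_0$. On $\fM\times\{\eta:\|\eta\|\leq C_\cV r\}$ the continuous function $\chi_\fg(f+t+\eta)-\chi_\ft(t)$ tends to $0$ uniformly as $r\to 0$, while $\chi_\ft$ is proper; a standard $\varepsilon$-$\delta$ argument then forces $t$ to lie within $\delta$ of $\chi_\ft^{-1}([\xi])$, proving (i).

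For (ii), I would apply the parametric implicit function theorem to $\widetilde{F}(t,\eta):=\chi_\fg(f+t+\eta)$ at each point $(\xi',0)$ with $\xi'\in \chi_\ft^{-1}([\xi_0])\subset \ft^\reg$. By the key identity, $\partial_t\widetilde{F}(\xi',0)=d\chi_\ft|_{\xi'}$ is an isomorphism, so IFT yields a unique smooth local section $t=t_{\xi'}(\eta,[\xi])$ solving $\widetilde{F}(t,\eta)=[\xi]$ near $(\xi',0,[\xi_0])$. Since $\chi_\ft^{-1}(\fK')$ is compact and $|W|$-to-$1$ over $\fK'$, finitely many such neighborhoods cover it, and after shrinking $r$ so that $\{\eta(g):g\in\bigcup_{|\gamma_{-\Pi}(\rho)|\leq r}\cV\rho\}$ lies uniformly in the $\eta$-domain of each of them, the local sections assemble into $|W|$ disjoint global graphs, one for each branch of $\chi_\ft^{-1}([\xi])$. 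Each graph, being a section of the projection to $g$, maps isomorphically onto $\bigcup_{|\gamma_{-\Pi}(\rho)|\leq r}\frj_\rho(\cV)$, and (i) guarantees that these graphs exhaust $\chi^{-1}([\xi])\cap(\bigcup_\rho\frj_\rho(\cV)\times\ft)$. The hard part will be obtaining uniformity of the IFT across all of $\fK'$ and all admissible $\rho$ simultaneously, but this is handled by the key observation that $\widetilde{F}$ depends on $g$ only through the bounded quantity $\eta(g)$, which reduces uniformity to ordinary compactness.
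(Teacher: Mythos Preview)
Your argument is correct and follows the same overall strategy as the paper: both treat the condition $\chi(\overline{w}_0^{-1}g,\,f+t+\eta(g))=[\xi]$ as a small perturbation of $\chi_\ft(t)=[\xi]$ once $|\gamma_{-\Pi}(\rho)|$ is small, establish an a priori bound on $t$, and then read off (i) by continuity and (ii) by an implicit-function-type argument. The one substantive difference is in how the a priori bound on $t$ is obtained. The paper invokes the homogeneity relation coming from the canonical $\bC^\times$-action,
\[
\epsilon^2\cdot \mu_{\rho}(h, t)=\mu_{\rho\cdot \epsilon^{-\sfh_0}}(h, \epsilon^2\cdot t),
\]
which immediately confines $\proj_\ft\bigl(\mu_\rho^{-1}([\xi])\cap(\cV\times\ft)\bigr)$ to a compact set; this ties the lemma into the $\bC^\times$-equivariant framework used throughout Section~\ref{subsec: analysis cB_w0}. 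Your leading-term argument on the homogeneous generators $P_i$ achieves the same compactness by a direct polynomial estimate, which is more self-contained but requires the extra observation (which you should make explicit) that $\chi_\ft^{-1}([0])=\{0\}$, so that some $P_i$ is nonvanishing on the unit sphere of $\ft$, forcing $|P_i(f+t+\eta)|\to\infty$ along any sequence with $\|t\|\to\infty$ and $\|\eta\|$ bounded. For part (ii) the paper simply quotes the discussion preceding the lemma that $\widetilde{\mu}_{\cK',(c_\beta)}^{-1}(\kappa)$ is a smooth Lagrangian section over $\cV$ for small $|(c_\beta)|$; your direct IFT argument is exactly what underlies that statement, and your remark that the dependence on $g$ factors through the bounded parameter $\eta(g)$ is precisely the paper's observation that $\mu_{\cK',\rho}$ sits inside the universal family $\widetilde{\mu}_{\cK',(c_\beta)}$.
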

\begin{proof}
First, by applying $\frj_\rho^{-1}$ on both sides, (\ref{eq: chi, xi, general}) is equivalent to 
\begin{align}\label{eq: proof chi, xi, general}
\mu_{\rho}^{-1}([\xi])\cap (\cV\times \ft)\subset  \cV\times \bigcup\limits_{\xi'\in \ft: \chi(\xi')=[\xi]}\{|t-\xi'|<\delta\}. 
\end{align}
Second, we have the homogeneity relation for $(h,t)\in T^*T\cong \cB_{w_0}$
\begin{align*}
&\epsilon^2\cdot \mu_{\rho}(h, t)=\mu_\rho(h\cdot \epsilon^{-\sfh_0}, \epsilon^2\cdot t)=\mu_{\rho\cdot \epsilon^{-\sfh_0}}(h, \epsilon^2\cdot t)\\
\Leftrightarrow &\mu_{\rho}(h, t)=\frac{1}{\epsilon^2} \mu_{\rho\cdot \epsilon^{-\sfh_0}}(h, \epsilon^2\cdot t)
\end{align*}
This implies that $\proj_\ft(\mu_{\rho}^{-1}([\xi])\cap (\cV\times \ft))$ is contained in a compact region  for all $\rho$ with $|\gamma_{-\Pi}(\rho)|$ small. Then (\ref{eq: proof chi, xi, general}) follows from that $\mu_\rho$ is a small deformation of $\chi_\ft\circ \proj_\ft$. 

Assuming $[\xi]\in\fc^{\reg}$, there exists $\cK\subset \ft^{\reg}$ as above, such that $\chi_{\ft}^{-1}([\xi])=\{w(\xi'): w\in W\}\subset \coprod\limits_{w\in W}w(\cK)$ for some $\xi'\in \cK$. Then for $|\gamma_{-\Pi}(\rho)|$ sufficiently small, $\mu_{w(\cK'), \rho}^{-1}(w(\xi'))$ is a Lagrangian section in $\cW_{\cV, w(\cK')}$ over $\cV$. This directly implies the second part of the lemma. 
\end{proof}

In particular, Lemma \ref{lemma: hat chi_epsilon, gamma} implies that for any $[\xi]\in \fc^{\reg}$, inside the preimage of $\pi_{|b|}$ over  
\begin{align}\label{eq: cone cap bigger than R}
\Cone(\cV_{\log})\cap \{\|(b_\lambda)\|>R\}\subset \bR^n_{|b_\lambda|^{1/\lambda(\sfh_0)}}, 
\end{align}
for a pre-compact open subset $\cV_{\log}$ in the interior of $\fC^{n-1}$ and $R\gg 1$, $\chi^{-1}([\xi])$ splits into $|W|$ disjoint sections over that region. Moreover, the sections are getting closer and closer to the constant sections indexed by $\{\xi'\in \ft: \chi_\ft(\xi')=[\xi]\}$ as $R\rightarrow\infty$. 
Here we are using the notations in Subsection \ref{subsubsec: H, sm}. Near the end of this subsection, we will give a more precise description of these $|W|$ disjoint sections inside $\chi^{-1}([\xi])$.

Now we work specifically with the setting written before Lemma \ref{lemma: hat chi_epsilon, gamma}. 
\begin{lemma}\label{lemma: L_t_gamma}
Under the above settings, for each $\kappa\in \cK^\dagg$, the Lagrangian 
\begin{align}\label{eq: lemma L_t, epsilon}
\cS_{\kappa, \rho}:=\mu_{\cK', \rho}^{-1}(\kappa)\subset \cW_{\cV, \cK},\text{ where } |\gamma_{-\Pi}(\rho)|<r_\cV
\end{align}
(resp. 
\begin{align*}
\cS_{\kappa, (c_\beta)}:=\widetilde{\mu}_{\cK', (c_\beta)_\beta}^{-1}(\kappa)\subset \cW_{\cV, \cK},\text{ where } |(c_\beta)_\beta|<r_\cV)
\end{align*}
satisfies
\begin{itemize}

\item[(i)] $\cS_{\kappa, \rho}$ (resp. $\cS_{\kappa, (c_\beta)}$) is a smooth Lagrangian section over $\cV$ that is Hamiltonian isotopic to $\cV\times \{\kappa\}$ inside $\cW_{\cV, \cK'}$. The same holds for $\cS_{\kappa, (c_\beta)}$.

\item[(ii)] The natural inclusion $\cS_{\kappa, \rho}\overset{\frj_\rho}\longhookrightarrow \chi^{-1}([\kappa])$ is a homotopy equivalence. Moreover, if we use the canonical identification with respect to $\xi=\kappa$ and $B_1=B$ in (\ref{eq: J_G, fc, ft}), (\ref{eq: pi_chi, B_1}), $\nu_\kappa: \chi^{-1}([\kappa])\overset{\sim}{\rightarrow} C_G(\kappa)\cong T$, then the sequence of maps 
\begin{align}\label{eq: lemma T, j, nu}
T\overset{h.e.}{\hookleftarrow}\cV\cong \cS_{\kappa, \rho}\overset{\frj_\rho}\longhookrightarrow \chi^{-1}([\kappa])\overset{\sim}{\rightarrow} C_G(\kappa)\cong T
\end{align}
induces a homotopy equivalence from $T$ (identified with $B/[B,B]$) to itself that is isotopic to the identity map. 
\end{itemize}
\end{lemma}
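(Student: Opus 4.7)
The plan is to reduce both parts of Lemma \ref{lemma: L_t_gamma} to the undeformed model $\widetilde{\mu}_{\cK',0}=\pr_\cK$, whose fiber over $\kappa$ is the trivial section $\cV\times\{\kappa\}$, and then propagate the conclusions through the universal $\Pi$-deformation family of Proposition \ref{eq: tilde mu, embed}, controlling the embedded geometry via Lemma \ref{lemma: hat chi_epsilon, gamma}(i).

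For part (i), the Lagrangian property is immediate from Proposition \ref{eq: tilde mu, embed}: the coordinate functions on $\cK'$ pull back along $\widetilde{\mu}_{\cK',(c_\beta)_\beta}$ (and equivalently along $\mu_{\cK',\rho}$) to Poisson-commuting Hamiltonians, so every regular fiber is isotropic, and the dimension count $\dim\cS_{\kappa,\rho}=\dim\cV=n$ confirms that it is Lagrangian. For the section property, note that at $(c_\beta)_\beta=0$ the fiber $\cV\times\{\kappa\}$ is transverse to each cotangent fiber; by compactness of $\overline{\cV}$ and the implicit function theorem this transversality persists for all $(c_\beta)_\beta$ of small norm, which is precisely the content of the constant $r_\cV$. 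The resulting smooth family $\{\cS_{\kappa,s(c_\beta)_\beta}\}_{s\in[0,1]}$ is a Lagrangian isotopy inside $\cW_{\cV,\cK'}$; to upgrade it to a Hamiltonian isotopy I apply Moser's trick, and the class obstruction $[\vartheta|_{L_s}]\in H^1(\cV;\bR)$ is constant in $s$ because both endpoints sit as fibers at the common value $\kappa$ inside the complete integrable system (\ref{eq: lemma tilde chi_S'}) on $\chi^{-1}(\chi_\fg(\cQ_{D',\cK'}))$, for which the action coordinate at level $\kappa$ is intrinsically defined.

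For part (ii), by (i) the diffeomorphism $\cS_{\kappa,\rho}\cong\cV$ is tautological, so the composition (\ref{eq: lemma T, j, nu}) amounts to computing $\cV\overset{\frj_\rho}{\hookrightarrow}\chi^{-1}([\kappa])\overset{\nu_\kappa}{\cong}T$. A point $(h,t)\in\cS_{\kappa,\rho}$ maps under $\frj_\rho$ to $(h\rho,t)\in T^*T$, represented via Example \ref{example: B_w0} by the Whittaker pair $(\overline{w}_0^{-1}h\rho,\,f+t+\Ad_{(\overline{w}_0^{-1}h\rho)^{-1}}f)$. By Lemma \ref{lemma: hat chi_epsilon, gamma}(i), for $|\gamma_{-\Pi}(\rho)|$ sufficiently small this intersection is a single graph over $\frj_\rho(\cV)$ with $t$ uniformly close to the canonical $W$-representative $\kappa\in\ft$. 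Passing to the Kostant-normalized form $(g',\kappa)$ with $g'\in C_G(\kappa)\cong T$ by applying the unique compensating $N\times N$-transformation, one sees that $g'$ differs from $h\rho$ by a correction $c(h,\rho)\in T$ that depends smoothly on both arguments and tends to the identity as $|\gamma_{-\Pi}(\rho)|\to 0$. Composing with the homotopy-inverse of $\cV\hookrightarrow T$, the induced self-map of $T$ is homotopic to right-multiplication by $\rho$ modulo a null-homotopic perturbation, and this is isotopic to the identity via the path $s\mapsto \exp(s\log\rho)$ inside the connected abelian Lie group $T$.

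The main obstacle I foresee is in part (ii): converting the Whittaker representative $\overline{w}_0^{-1}h\rho\in T\overline{w}_0^{-1}$---which lies in the opposite Bruhat cell rather than in $B$---into the Kostant-normalized form requires solving a nonlinear equation for the correcting element $u\in N$, and some bookkeeping is needed to show that the resulting $T$-correction $c(h,\rho)$ stays in the connected component of the identity uniformly over $h\in\cV$ and small $\rho$. Once this is established, the discreteness of $\mathrm{Aut}(\pi_1(T))\cong\GL(n,\bZ)$ applied to the continuously varying family of self-homotopy-equivalences concludes the argument.
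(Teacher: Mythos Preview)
Your outline for part (i) has a real gap in the flux step. You claim that ``both endpoints sit as fibers at the common value $\kappa$ inside the complete integrable system (\ref{eq: lemma tilde chi_S'}),'' but this is not true: at $s=0$ the section $\cV\times\{\kappa\}$ is a fiber of the \emph{undeformed} system $\pr_{\cK}:T^*T\to\ft$, whereas for $s>0$ (when all $c_\beta\neq 0$) the section embeds into a fiber of $\chi:J_G\to\fc$. These are different integrable systems, so an appeal to ``intrinsic action coordinates'' does not by itself equate the two period classes. The paper closes this gap by observing that $\frj_\rho$ preserves the Liouville $1$-form on $\cB_{w_0}$; hence for every $\rho$ the periods $\int_{\Gamma_i}\vartheta|_{\cS_{\kappa,\rho}}$ agree with $\int_{\widetilde{\frj}(\Gamma_i)}\vartheta|_{\chi^{-1}([\kappa])}$, and the latter is manifestly independent of $\rho$ because the homology map $\widetilde{\frj}$ is. Sending $|\gamma_{-\Pi}(\rho)|\to 0$ then computes this common value as $\langle\kappa,\Gamma_i\rangle$, and the case of general $(c_\beta)_\beta$ follows by density of $\gamma_{-\Pi}(T)$.

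For part (ii) your direct geometric approach---tracking a point through the Kostant normalization and controlling the torus correction $c(h,\rho)$---is in principle workable, but it is exactly the nonlinear problem you flag as the main obstacle, and carrying it out rigorously would require analysis comparable to Subsections \ref{subsec: analysis cB_w0}--\ref{subsec: walls}. The paper sidesteps this entirely by \emph{reusing} the period identity from part (i). Since the Liouville periods on $\chi^{-1}([\kappa])$, read through $\nu_\kappa$, are given by $\langle\kappa,\nu_\kappa(\,\cdot\,)\rangle$ (this is immediate from the Lagrangian correspondence (\ref{eq: Lag corresp})), the equality $\langle\kappa,\Gamma_i\rangle=\langle\kappa,\nu_\kappa(\widetilde{\frj}(\Gamma_i))\rangle$ holds for each basis cycle $\Gamma_i$. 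For \emph{generic} $\kappa$ the pairing $\langle\kappa,-\rangle:H_1(\cV;\bQ)\to\bC$ is injective, which forces $\nu_\kappa\circ\widetilde{\frj}$ to be the identity on $H_1$; continuity in $\kappa\in\cK$ then handles all $\kappa$. This is both shorter and avoids the $N\times N$-normalization bookkeeping you anticipated.
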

\begin{proof}

(i) Fix a basis for $H_1(\cV,\bZ)\cong H_1(\cS_{\kappa,\rho},\bZ)$ (the isomorphism is the canonical one induced from the projection $\cS_{\kappa,\rho}\overset{\sim}{\rightarrow} \cV$) and denote each 1-cycle by $\Gamma_i$. First, the family of embeddings 
\begin{align}\label{eq: cV, cS, j_rho}
\cV\cong \cS_{\kappa, \rho}\overset{\frj_\rho}{\longhookrightarrow} \chi^{-1}([\kappa])
\end{align}
induces the same map on homology $\widetilde{\frj}: H_1(\cV,\bZ)\longrightarrow H_1( \chi^{-1}([\kappa]);\bZ)$. Since $\frj_\rho$ preserves holomorphic Liouville forms (\ref{eq: hat vartheta_S}) in the case when $S=\emptyset$, we have for any $\Gamma_i$, 
\begin{align}\label{eq: lemma L_t_gamma}
\int_{\Gamma_i}\vartheta|_{\cS_{\kappa, \rho}}=\int_{\widetilde{\frj}(\Gamma_i)}\vartheta|_{\chi^{-1}([\kappa])},
\end{align}
where the right-hand-side does \emph{not} depend on $\rho$. 
On the other hand, we have  
\begin{align}\label{eq: limit Gamma_i}
\lim\limits_{|\gamma_{-\Pi}(\rho)|\rightarrow 0}\int_{\Gamma_i}\vartheta|_{\cS_{\kappa, \rho}}=\int_{\Gamma_i}\vartheta|_{\cV\times\{\kappa\}}=\lng \kappa, \Gamma_i\rng.
\end{align}
So we have 
\begin{align*}
\int_{\Gamma_i}\vartheta|_{\cS_{\kappa, \rho}}=\lng \kappa, \Gamma_i\rng, \forall i.
\end{align*}
The same holds for $\cS_{\kappa, (c_\beta)}$ because every $(c_\beta)_\beta$ is in the closure of $\gamma_{-\Pi}(T)$.  These imply (i).

(ii) 
Since (\ref{eq: cV, cS, j_rho}) gives an isotopy class of embeddings over $\kappa\in \cK$, it suffices to prove (ii) for generic $\kappa\in \cK^\dagg$. For generic choices of $\kappa$, we may assume that $\lng\kappa, -\rng$ on an integral basis of $H_1(\cV;\bZ)$ is a set of linearly independent complex numbers over $\bQ$, equivalently the map $\lng \kappa,-\rng: H_1(\cV,\bQ)\rightarrow \bC$ is an embedding of vector spaces over $\bQ$. Note that the right-hand-side of (\ref{eq: lemma L_t_gamma}) is equal to $\lng \kappa, \nu_\kappa(\widetilde{\frj}(\Gamma_i)\rng$ with respect to the canonical identification $\nu_\kappa: \chi^{-1}([\kappa])\overset{\sim}{\longrightarrow} C_G(\kappa)\cong T$. This can be directly seen from the Lagrangian correspondence (\ref{eq: Lag corresp}) that induces an exact symplectomorphism $\chi^{-1}(\chi_\ft(\cK))\cong T\times \cK$. Now from the equality between (\ref{eq: lemma L_t_gamma}) and (\ref{eq: limit Gamma_i}), we see that $\Gamma_i, i=1,\cdots, n$, contained in $\cS_{\kappa, \rho}$, gives a basis of $H_1(\chi^{-1}([\kappa]);\bZ)$, and this shows that $\cS_{\kappa, \rho}\hookrightarrow \chi^{-1}([\kappa])$ is a homotopy equivalence. Moreover, by the same consideration, the sequence of maps (\ref{eq: lemma T, j, nu}) induces the identity map on $H_1(T;\bZ)$, hence it induces a homotopy equivalence that is isotopic to the identity. 
\end{proof}

\begin{prop}\label{lemma: empty, Ham isotopy}
Under the same setting as for Lemma \ref{lemma: L_t_gamma}, for any pre-compact open $\cV^\dagg\subsetneq \cV$ (defined similarly as for $\cV$) and any smooth curve $(c_\beta(s))_\beta, s\in (-\epsilon', \epsilon')$ with $(c_\beta(0))_\beta=0$ in $\bC^{\Pi}$, there exists a compactly supported Hamiltonian isotopy $\varphi_s, 0\leq s\leq \epsilon$ (with $\varphi_0=id$) on $\cW_{\cV, \cK}=\cV\times \cK$, for some sufficiently small $\epsilon>0$, such that 
\begin{align*}
\varphi_s(\cV^\dagg\times \{\xi\})\subset \widetilde{\mu}_{\cK', (c_\beta(s))_\beta}^{-1}([\xi]), 
\end{align*}
for every $\xi\in \cK^\dagg\subset \cK$ and $s\in [0,\epsilon]$. 
\end{prop}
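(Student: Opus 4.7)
The plan is to reduce the proposition to an infinitesimal PDE and solve it via a Moser-type argument for the family of Lagrangian fibrations $\pi_s := \widetilde{\mu}_{\cK',(c_\beta(s))_\beta}$ (so $\pi_0 = \proj_\cK$). I seek a smooth, compactly supported, time-dependent Hamiltonian $H_s \colon \cV\times\cK \to \bR$, $H_0 = 0$, whose flow $\varphi_s$ satisfies $\pi_s\circ\varphi_s = \pi_0$ on $\cV^\dagg\times\cK^\dagg$; the proposition then follows by evaluating at the Lagrangian $\cV^\dagg\times\{\xi\}$. Differentiating this identity in $s$ reduces matters to solving
\[
\{H_s, \pi_s^j\} = -\partial_s \pi_s^j, \qquad j=1,\ldots,n,
\]
in a neighborhood of $\varphi_s(\cV^\dagg \times \cK^\dagg)$. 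The compatibility of this overdetermined system, $X_{\pi_s^j}(\partial_s \pi_s^k) = X_{\pi_s^k}(\partial_s\pi_s^j)$, is automatic from differentiating $\{\pi_s^j,\pi_s^k\} = 0$ (which holds by Proposition \ref{eq: tilde mu, embed}) in $s$.

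For global solvability, I combine Proposition \ref{eq: tilde mu, embed} with Lemma \ref{lemma: L_t_gamma}(i). The former realizes each $\pi_s$ as the moment map of a Hamiltonian $\cZ(L_\emptyset)_0 = T$-action on a complete integrable system containing $\cW_{\cV,\cK}$, supplying commuting complete flows $X_{\pi_s^j}$ and local action-angle coordinates $(q_s,p_s)$ with $p_s^j = \pi_s^j$, in which the PDE becomes $\partial_{q_s^j} H_s = \partial_s p_s^j$. The obstruction to gluing local primitives into a global $H_s$ is the set of periods of the 1-form $\sum_j \partial_s p_s^j\,dq_s^j$ on each fiber $L_s^\xi := \pi_s^{-1}(\xi)\cap\cV^\dagg\times\cK$. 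These vanish by Lemma \ref{lemma: L_t_gamma}(i): each $L_s^\xi$ is a Lagrangian section $(h,\sigma_s(h,\xi))$ over $\cV^\dagg$ that is Hamiltonian-isotopic to $\cV^\dagg\times\{\xi\}$, and since $\cV^\dagg \hookrightarrow T$ is a homotopy equivalence this Hamiltonian-isotopy condition translates to exactness of the closed 1-form $\sigma_s(\cdot,\xi) - \xi$ on $\cV^\dagg$. Choosing smooth primitives $G_s(h,\xi)$ with $\sigma_s(h,\xi) = \xi + d_h G_s(h,\xi)$, $G_0 = 0$, the smooth $(s,\xi)$-dependence of $G_s$ permits the local primitives for our PDE to be assembled into a smooth global $H_s$ on a neighborhood of $\cV^\dagg\times\cK^\dagg$ (concretely, e.g.\ via a generating function $S_s(h,\widetilde\xi) = h\cdot\widetilde\xi + \widetilde G_s(h,\widetilde\xi)$, with $\widetilde G_s$ related to $G_s$ by implicit inversion; at the infinitesimal level, the natural leading order is $H_0 = -\partial_s G_s|_{s=0}$).

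To make $H_s$ compactly supported, multiply by a smooth cutoff equal to $1$ on a neighborhood of $\cV^\dagg\times\cK^\dagg$ and supported in $\cV\times\cK$. For $s\in[0,\epsilon]$ with $\epsilon$ small enough, the flow of the cut-off Hamiltonian moves any point only a small amount, so starting from $\cV^\dagg\times\cK^\dagg$ it remains in the cutoff-is-$1$ region; hence $\pi_s\circ\varphi_s = \pi_0$ still holds there. The main technical obstacle is the global smooth construction of $G_s$ (equivalently $H_s$) uniformly in $\xi$: once Lemma \ref{lemma: L_t_gamma}(i) supplies fiberwise exactness of $\sigma_s-\xi$, and the smoothness of the family of Lagrangian fibrations (and of $\sigma_s$ in all variables) is invoked to ensure smooth dependence on $(\xi,s)$, the required $H_s$ follows from standard integration/implicit-function arguments along each fiber.
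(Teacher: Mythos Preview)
Your approach is correct and relies on the same two inputs as the paper --- Proposition~\ref{eq: tilde mu, embed} for the completion to an integrable system with action-angle coordinates, and Lemma~\ref{lemma: L_t_gamma}(i) for the vanishing of periods --- but the execution is genuinely different. The paper does not solve the infinitesimal PDE for $H_s$. Instead it builds $\varphi_s$ in one shot: fixing a reference point $u_0\in\cV$, the Lagrangian section $\{u_0\}\times\cK$ is transverse to the fibers of every $\pi_s$ (for small $s$) and thus serves as a common zero-section for the action-angle coordinates $(q_{c,s},q_{\bR,s};p_{c,s},p_{\bR,s})$ of each system. The paper then \emph{defines} $\varphi_s$ by matching numerical values of these coordinates, which via Proposition~\ref{eq: tilde mu, embed} amounts to restricting a $T$-equivariant symplectomorphism $T\times\cK^\dagg\to\widetilde\chi_{S'}^{-1}(\jmath(\cK^\dagg))$ sending the reference section to itself; this restriction is independent of the auxiliary $\rho_0$ and smooth in $s$. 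Only afterward does the paper check that $\varphi_s^*\vartheta_{\std}-\vartheta_{\std}$ is exact (hence $\varphi_s$ is a Hamiltonian isotopy), and this is the step where Lemma~\ref{lemma: L_t_gamma}(i) enters.

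What each route buys: the paper's construction is more geometric and global, exploiting the $T$-equivariance directly and avoiding the bookkeeping around the overdetermined system (the smooth-in-$\xi$ choice of primitives $G_s$, the generating-function detour you sketch). Your Moser-type argument makes the period obstruction explicit and is closer to the more general version in Proposition~\ref{lemma: varphi_s, mu_D,K} (where there is no clean analogue of the $T$-equivariant map and one is forced into a Moser argument anyway). One minor point: in your write-up the passage from ``$\sigma_s-\xi$ is exact for each $s$'' to ``the 1-form $\sum_j(\partial_s\pi_s^j)\,dq_s^j$ has vanishing periods'' is left implicit; it follows by differentiating $\pi_s(h,\sigma_s(h,\xi))=\xi$ in $s$ and using $\sigma_s=\xi+d_hG_s$, which gives (at each $s$) that the 1-form in question equals $-d_h(\partial_s G_s)$ up to a factor invertible near $s=0$.
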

\begin{proof}
Fix a reference point $u_0\in \cV$. The Lagrangian section $\{u_0\}\times \cK$ of the Lagrangian fibration $\cV\times \cK\rightarrow \cK$ gives a Lagrangian section for 
\begin{align}\label{eq: mu, cK', c_beta, s}
\widetilde{\mu}_{\cK', (c_\beta(s))_\beta}: \widetilde{\mu}_{\cK', (c_\beta(s))_\beta}^{-1}(\cK^\dagg)\longrightarrow \cK^\dagg, |s|\text{ sufficiently small}. 
\end{align} 
Without loss of generality, we may assume that $\epsilon'$ is sufficiently small, so that the above holds for all $s\in (-\epsilon', \epsilon')$. 
By Proposition \ref{eq: tilde mu, embed}, for every $s$, (\ref{eq: mu, cK', c_beta, s}) is part of a complete integrable system with each fiber a complete $T$-orbit. 
So with respect to some fixed real linear coordinates $(p_{c}^j; p_\bR^j), 1\leq j\leq n$ on $\ft^*\cong \ft\cong \ft_c\oplus \ft_\bR$ (e.g. those introduced in (\ref{eq: q, p})), there are canonical (locally defined) affine coordinates on the fibers $(q_{c,s}^j; q_{\bR,s}^j)$, with base points defined by the Lagrangian section $\{u_0\}\times \cK$, such that the real symplectic form $\omega$ is of the form $-\sum\limits_j dp_{c,s}^j\wedge dq_{c,s}^j+dp_{\bR,s}^j\wedge dq_{\bR,s}^j$. 
Here 
\begin{align*}
p_{c,s}^j(u_0, \xi)=p_c^j(\widetilde{\mu}_{\cK', (c_\beta(s))_\beta}(u_0, \xi)),\  p_{\bR,s}^j(u_0, \xi)=p_\bR^j(\widetilde{\mu}_{\cK', (c_\beta(s))_\beta}(u_0, \xi))
\end{align*} 
on $\{u_0\}\times \cK$. 
We will use $(q_c^j, q_{\bR}^j; p_c^j, p_\bR^j)$ to denote for $(q_{c, 0}^j, q_{\bR,0}^j; p_{c,0}^j, p_{\bR,0}^j)$. 

For any $0<s\leq \epsilon'$, using Proposition \ref{eq: tilde mu, embed}, we can define a $T$-equivariant symplectomorphism
\begin{align}\label{eq: tilde varphi_s}
\widetilde{\varphi}_{s,\rho_0}: T\times \cK^\dagg\longrightarrow \widetilde{\chi}_{S'}^{-1}(\jmath(\cK^\dagg))
\end{align}
over $\cK^\dagg$, that respects the restriction of the chosen Lagrangian sections $\{u_0\}\times \cK$ and $\widetilde{\iota}_{\emptyset}^{S'}\circ \frj_{\rho_0}(\{u_0\}\times \cK)$, where $S'$ and $\rho_0$ depend on $(c_{\beta}(s))_\beta$. 
Now the restriction of $\widetilde{\varphi}_{s,\rho_0}$ gives  
\begin{align*}
\varphi_s: &\cV^\dagg\times \cK^\dagg\longrightarrow  \widetilde{\mu}_{\cK', (c_\beta(s))_\beta}^{-1}(\cK^\dagg)\hookrightarrow \cW_{\cV,\cK}\\
&(q_c^j, q_\bR^j; p_c^j, p_\bR^j)\mapsto (q_{c,s}^j=q_c^j, q_{\bR,s}^j=q_c^j; p_{c,s}^j=p_c^j, p_{\bR,s}^j=p_{\bR}^j),
\end{align*}
with respect to the coordinates defined above, which is \emph{independent} of the choice of $\rho_0$. Since the coordinates $(q_{c,s}^j, q_{\bR,s}^j; p_{c,s}^j, p_{\bR,s}^j)$ change smoothly with respect to $s$, for sufficiently small $s>0$, $\varphi_s$ is well defined and smoothly depending on $s$.  Note that this actually gives an alternative proof of Lemma \ref{lemma: L_t_gamma} (ii).

Lastly, by Lemma \ref{lemma: L_t_gamma} (i), we see that $\varphi_s^*\vartheta_{\std}-\vartheta_{\std}$ is an exact 1-form (which is bounded because all the constructions can be extended to the larger neighborhood $\cV\times \cK$), using the restriction of the standard real Liouville form $\vartheta_{\std}$ on $T^*T$. Equivalently, one can use $\vartheta|_{\cB_{w_0}}$ instead of $\vartheta_{\std}$. Hence $\varphi_s$ can be extended to be a compactly supported Hamiltonian isotopy on $\cV\times \cK$. This completes the proof of the proposition. 
\end{proof}

\begin{notations}\label{notations}
For inclusion of open cones $C_{\lhd}\subset C_{\lhd}'\subset \ft_{\bR}^+- \{0\}$ (recall $\ft_\bR^+$ is closed), we use the notation $C_{\lhd}\dot{\subset}C_{\lhd}'\dot{\subset} \ft_{\bR}^+- \{0\}$ to indicate the condition that $\overline{C}_{\lhd}-\{0\}\subset C_{\lhd}'$ and $\overline{C'}_{\lhd}-\{0\}\subset\mathring\ft_{\bR}^+$. 
\end{notations}

\begin{lemma}\label{lemma: proj X_eta, c}
Assume the same setting as for Lemma \ref{lemma: L_t_gamma}. Fix any open cones $C_{\lhd}\dot{\subset} C_{\lhd}'\dot{\subset} \ft_{\bR}^+- \{0\}$.  
Then there exists $\epsilon_{C_\lhd}>0$ and $M>0$, such that for all $|(c_\beta)_{\beta\in \Pi}|<\epsilon_{C_\lhd}$ and all $\eta\in C_{\lhd}\subset C^\infty(\cK;\bR)$ (or equivalently viewed as a holomorphic function in the holomorphic setting), the Hamiltonian vector field $X_{\eta;(c_\beta)}$ of the pullback function $\widetilde{\mu}_{\cK', (c_\beta)_\beta}^*(\eta)$ satisfies the following:

for any $(u,\xi)\in \widetilde{\mu}_{\cK', (c_\beta)_\beta}^{-1}(\cK^\dagg)\subset \cV\times\cK$, the projection of $X_{\eta;(c_\beta)}(u,\xi)$ in $T_u\cV\cong \ft$ is contained in $C_{\lhd}'+\ft_c$ and 
\begin{align*}
|X_{\eta;(c_\beta)}(u,\xi)-(\frj_u)_*\eta(u,\xi)|\leq M\cdot |(c_\beta)_\beta|\cdot |\eta|.
\end{align*} 
\end{lemma}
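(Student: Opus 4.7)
The plan is to reduce to the unperturbed case $(c_\beta) = 0$, where the Hamiltonian vector field admits an explicit coordinate description, and then to propagate smoothly to small $(c_\beta)$.

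First I would handle the base case $(c_\beta) = 0$: the map $\widetilde{\mu}_{\cK', 0}$ is simply the projection $\pr_{\cK}: \cV \times \cK \to \cK$. In the Darboux-type symplectic coordinates on $\cB_{w_0} \cong T^*T$ from (\ref{eq: q, p}), the pullback $\widetilde{\mu}_{\cK', 0}^* \eta$ is a linear function of the $p$-variables alone, and a direct computation from $\omega = -d\langle t, z^{-1} dz\rangle$ shows that its Hamiltonian vector field is the constant (i.e.\ left-invariant) vector field $\eta \in \ft \cong T_u \cV$ on $\cV$. This is exactly $(\frj_u)_* \eta$, so both claims of the lemma hold trivially at $(c_\beta) = 0$ (with $C_\lhd$ itself serving as the target cone).

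Next I would use the explicit formula (\ref{eq: mu_D,K, canonical}), which for $S = \emptyset$ reads
\begin{equation*}
\widetilde{\mu}_{\cK', (c_\beta)}(z, t) \;=\; \widetilde{\pr}_{\cK'}\circ \chi\Bigl(\textstyle\sum_{\beta\in \Pi} c_\beta f_\beta + t + \Ad_{z^{-1}\overline{w}_0^{-1}} f\Bigr),
\end{equation*}
to verify that $(c_\beta) \mapsto \widetilde{\mu}_{\cK', (c_\beta)}$ depends smoothly (in fact real-analytically) on $(c_\beta)$ in the $C^\infty$-topology, uniformly on the compact region $\overline{\cV \times \cK^\dagger}$. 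A first-order Taylor expansion at $(c_\beta) = 0$ then yields
\begin{equation*}
\widetilde{\mu}_{\cK', (c_\beta)}^* \eta - \widetilde{\mu}_{\cK', 0}^* \eta \;=\; O\!\bigl(|(c_\beta)| \cdot |\eta|\bigr)
\end{equation*}
in the $C^2$-norm, where linearity in $\eta$ lets me pull out the factor $|\eta|$ uniformly. Since taking the Hamiltonian vector field is a continuous linear operation from $C^2$-functions on a compact set to $C^0$-vector fields, the same estimate transfers to $|X_{\eta; (c_\beta)} - (\frj_u)_* \eta| \leq M\,|(c_\beta)|\,|\eta|$ for some constant $M$ depending only on the geometry of $\overline{\cV \times \cK^\dagger}$.

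Finally, the cone condition $C_\lhd \dot\subset C_\lhd' \dot\subset \ft_\bR^+ - \{0\}$ provides a uniform positive lower bound, on the unit sphere in $\ft_\bR$, on the angular distance between $C_\lhd$ and the complement of $C_\lhd'$. Combining this with the previous estimate and the fact that the base vector $(\frj_u)_* \eta$ already lies in $C_\lhd$, I can choose $\epsilon_{C_\lhd} > 0$ small enough that for all $|(c_\beta)| < \epsilon_{C_\lhd}$ and all $\eta \in C_\lhd$, the real part of the $T_u \cV$-projection of $X_{\eta;(c_\beta)}$ remains within $C_\lhd'$, while the (small) imaginary part is absorbed into $\ft_c$. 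The only delicate point is verifying that $M$ and $\epsilon_{C_\lhd}$ can be chosen uniformly in $\eta \in C_\lhd$, but this is immediate from the homogeneity of the construction in $\eta$, so scaling reduces the estimate to $\eta$ in a bounded subset of $C_\lhd$ (say of unit length). I do not anticipate a serious obstacle beyond careful bookkeeping of the smooth dependence on $(c_\beta)$.
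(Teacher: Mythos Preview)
Your proposal is correct and follows essentially the same approach as the paper: both identify the unperturbed Hamiltonian vector field as the invariant field $(\frj_u)_*\eta$, expand $\widetilde{\mu}_{\cK',(c_\beta)}$ analytically in $(c_\beta)$, and invoke pre-compactness of $\cW_{\cV,\cK}$ to obtain the uniform $M|(c_\beta)||\eta|$ bound. The paper works directly with the holomorphic Hamiltonian vector field and its analytic expansion (then takes real parts), while you phrase the same content in terms of a $C^2$-Taylor estimate and spell out the cone-containment and homogeneity-in-$\eta$ steps more explicitly; these are cosmetic differences only.
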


\begin{proof}
It is clear from the definition (\ref{eq: mu_D,K, canonical})
\begin{align*}
\widetilde{\mu}_{\cK', (c_\beta)_\beta}(u,\xi)=\xi+\sum\limits_{\beta\in \Pi} c_\beta P_\beta(u,\xi)+\cdots
\end{align*}
has a convergent analytic expansion in $c_\beta$ with coefficients in analytic $\ft^*$-valued functions of $(u,\xi)$. Thus the holomorphic Hamiltonian vector field $X^{hol}_{\eta;(c_\beta)}$ has an analytic expansion 
\begin{align*}
X^{hol}_{\eta, (c_\beta)}(u,\xi)=(\frj_u)_*\eta+\sum\limits_{\beta\in \Pi} c_\beta X^{hol}_{\eta;\beta}(u,\xi)+\cdots
\end{align*}
where $\eta\in \ft$ is the invariant vector field on each fiber $\cV\times \{\kappa\}\subset T\times\{\kappa\}$ and $X^{hol}_{\eta;\beta}$ is the holomorphic Hamiltonian vector field of $\lng \eta, P_{\beta}(u,\xi)\rng$. Note that the corresponding real Hamiltonian vector field is $X_{\eta;(c_\beta)}=2\Re X^{hol}_{\eta;(c_\beta)}$. Since $\cW_{\cV, \cK}$ is pre-compact, the lemma follows.  
\end{proof}

Similarly as for $\widetilde{\varphi}_{s, \rho_0}$ (\ref{eq: tilde varphi_s}) in the proof of Proposition  \ref{lemma: empty, Ham isotopy}, we define for $|(c_\beta)_\beta|\ll 1$
\begin{align}\label{eq: tilde varphi_c}
\widetilde{\varphi}_{(c_\beta), \rho_0}: T\times \cK^\dagg\longrightarrow \widetilde{\chi}_{S'}^{-1}(\jmath(\cK^\dagg)) 
\end{align} 
to be the $T$-equivariant symplectomorphism over $\cK^\dagg$ that sends the Lagrangian section $\{u_0\}\times \cK^\dagg$ to the restriction of $\widetilde{\iota}_\emptyset^{S'}\circ \frj_{\rho_0}(\{u_0\}\times \cK)$, where $S'$ and $\rho_0$ depending on $(c_\beta)_\beta$ are as in Proposition \ref{eq: tilde mu, embed}. In particular, $\gamma_{-S'}(\rho_0)=(c_\beta)_{\beta\in S'}$.  

For any subset $C\subset \ft_\bR^+-\{0\}$, let $T_{C}$ denote for the preimage of $C$ through the real logarithmic map $\log_{\bR}: T\rightarrow \ft_\bR$. 

\begin{prop}\label{prop: C_lhd, j}
Fix any open cone $C_\lhd\dot{\subset} \ft_\bR^+-\{0\}$. 
Under the same setting as for Lemma \ref{lemma: L_t_gamma}, there exists $\epsilon_{C_\lhd}>0$ such that for all $|(c_\beta)_\beta|<\epsilon_{C_\lhd}$ and $\rho'\in T_{C_\lhd}$, 
\begin{align}\label{eq: cor rho}
\rho'\star (\widetilde{\iota}_\emptyset^{S'}\circ \frj_{\rho_0}(\overline{\widetilde{\mu}_{\cK', (c_\beta)_\beta}^{-1}(\cK^\dagg)}))\subset \widetilde{\iota}_\emptyset^{S'}\circ \frj_{\rho_0\rho'}(\cW_{\cV', \cK'}),
\end{align}
where $\rho_0$ is associated with $(c_\beta)_\beta$ as above, and the action on the left-hand-side is from the $T$-action on the right-hand-side of (\ref{eq: tilde varphi_c}) with respect to $\chi_\ft(\cK')\cong\cK'$ (cf. Remark \ref{remark: cK in fz_S}) . 
Moreover, for any chosen $\delta>0$, we can choose $\epsilon_{C_\lhd}>0$ so that there is a uniform bound 
\begin{align}\label{eq: cor dist}
dist ((\widetilde{\iota}_\emptyset^{S'}\circ \frj_{\rho_0\rho'})^{-1}(\rho'\star (\widetilde{\iota}_\emptyset^{S'}\circ \frj_{\rho_0}(u,\xi))), (u,\xi))<\delta 
\end{align}
for all $(u, \xi)\in \widetilde{\mu}_{\cK', (c_\beta)_\beta}^{-1}(\cK^\dagg)\subset \cV'\times \cK'$ and $\rho'\in T_{C_\lhd}$. Here the distance is taken with respect to the standard $T$-invariant metric on $\cB_{w_0}\cong T^*T$. 
\end{prop}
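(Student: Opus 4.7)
The plan is to reduce both assertions to comparing two $T$-actions on the image $\widetilde{\iota}_\emptyset^{S'}\circ\frj_{\rho_0}(\cW_{\cV^\dagg, \cK^\dagg})$: the Hamiltonian $T$-action $\rho'\star$ generated by the moment map $\widetilde{\mu}_{\cK', (c_\beta)_\beta}$, and the $\frj$-shift action corresponding to translation in the $T=\cZ(L_\emptyset)_0$ factor. When $(c_\beta)_\beta=0$ the two coincide exactly, so for $|(c_\beta)_\beta|$ small we expect only a bounded perturbation, controlled by Lemma \ref{lemma: proj X_eta, c}. The main tool is the $T$-equivariant action-angle identification $\widetilde{\varphi}_{(c_\beta), \rho_0}: T\times \cK^\dagg\to \widetilde{\chi}_{S'}^{-1}(\jmath(\cK^\dagg))$ from (\ref{eq: tilde varphi_c}), in which the Hamiltonian $T$-action is literally translation in the $T$-factor.

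For the containment (\ref{eq: cor rho}), I will write $\rho'=\rho'_\cpt\exp(\eta)$ with $\rho'_\cpt\in T_{\cpt}$ and $\eta\in C_\lhd$. The compact part generates a trajectory which is automatically bounded, while $\exp(\eta)$ corresponds to the time-$1$ flow of $X_{\eta;(c_\beta)}$. By Lemma \ref{lemma: proj X_eta, c}, the projection of this vector field onto $T_u\cV$ lies in $C_\lhd'+\ft_c$, so the trajectory of the $\cV$-component stays in a $T_{C_\lhd'}$-translate of $\cV^\dagg$; composing with $(\widetilde{\iota}_\emptyset^{S'}\circ\frj_{\rho_0\rho'})^{-1}$ removes the $\rho'$-shift and lands the image inside $\cV'\times\cK'$ for suitable pre-compact enlargements $\cV'\supset\cV^\dagg$ and $\cK'\supset\cK^\dagg$ that absorb the uniformly-bounded transverse perturbation.

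For the distance bound (\ref{eq: cor dist}), I will translate everything to the Darboux chart $\widetilde{\varphi}_{(c_\beta), \rho_0}$. The embedding $\widetilde{\iota}_\emptyset^{S'}\circ \frj_{\rho_0}$ differs from $\widetilde{\varphi}_{(c_\beta), \rho_0}$ by a diffeomorphism $\Phi_{(c_\beta)}$ of the source $\cW_{\cV, \cK'}$ that fixes the reference section $\{u_0\}\times\cK^\dagg$ and has $C^0$-norm $O(|(c_\beta)_\beta|)$ on the pre-compact domain. On the other hand, the embedding $\widetilde{\iota}_\emptyset^{S'}\circ\frj_{\rho_0\rho'}$ is obtained from $\widetilde{\iota}_\emptyset^{S'}\circ\frj_{\rho_0}$ by precomposing with $\frj_{\rho'}$, which on the source is translation by $\rho'$ in the $T$-factor, while $\rho'\star$ is translation by $\rho'$ in the Darboux factor of the target. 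Composing $(\widetilde{\iota}_\emptyset^{S'}\circ\frj_{\rho_0\rho'})^{-1}\circ\rho'\star\circ(\widetilde{\iota}_\emptyset^{S'}\circ\frj_{\rho_0})$ therefore cancels the $\rho'$-shift and leaves only the fixed, $\rho'$-independent deviation coming from $\Phi_{(c_\beta)}$; choosing $\epsilon_{C_\lhd}$ small enough that $\|\Phi_{(c_\beta)}-\mathrm{id}\|_{C^0}<\delta$ on the relevant domain yields the uniform bound.

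The main obstacle will be justifying the uniformity in $\rho'\in T_{C_\lhd}$: a naive integration of the vector-field bound in Lemma \ref{lemma: proj X_eta, c} produces a deviation proportional to $|\eta|$, growing unboundedly as $\rho'$ ranges over the non-compact set $T_{C_\lhd}$. The resolution lies in exploiting the fact that the Hamiltonian $T$-action on the target integrable system (Proposition \ref{eq: tilde mu, embed}) is a \emph{complete, genuine} group action realizing exactly a translation in the Darboux coordinates; the apparent scaling with $|\eta|$ encodes the natural scaling of the primary translation, which is precisely what is cancelled by the $\frj_{\rho'}$-shift on the source, leaving only the bounded, $\rho'$-independent change-of-coordinates $\Phi_{(c_\beta)}$. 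Formalizing this cancellation requires carefully using the $T$-equivariance of $\widetilde{\varphi}_{(c_\beta), \rho_0}$ to consistently relate the embeddings for different values of $\rho'$, together with the cone condition $C_\lhd\dot\subset C_\lhd'$ to keep the trajectories away from the Weyl-chamber walls where $\widetilde{\mu}$ degenerates.
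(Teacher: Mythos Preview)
Your intuition that the two $T$-actions differ only by a bounded ``change of coordinates'' $\Phi_{(c_\beta)}$ is correct, but the argument as written has a genuine gap at the crucial step. You claim that
\[
(\widetilde{\iota}_\emptyset^{S'}\circ\frj_{\rho_0\rho'})^{-1}\circ\rho'\star\circ(\widetilde{\iota}_\emptyset^{S'}\circ\frj_{\rho_0})
= \frj_{\rho'}^{-1}\circ\Phi_{(c_\beta)}^{-1}\circ L_{\rho'}\circ\Phi_{(c_\beta)},
\]
and that this is close to the identity because $\Phi_{(c_\beta)}$ is $C^0$-close to $\mathrm{id}$. The problem is domain: your bound $\|\Phi_{(c_\beta)}-\mathrm{id}\|_{C^0}=O(|(c_\beta)_\beta|)$ holds only on the \emph{pre-compact} region $\cW_{\cV,\cK'}$, whereas $L_{\rho'}\circ\Phi_{(c_\beta)}(u,\xi)$ lands in a $\rho'$-translate of that region, arbitrarily far away as $\rho'$ ranges over the non-compact $T_{C_\lhd}$. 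To apply $\Phi_{(c_\beta)}^{-1}$ there you would need the bound on a non-compact set, which you do not have. If instead you try to use $\Phi_{(c'_\beta)}$ with $(c'_\beta)=\gamma_{-\Pi}(\rho_0\rho')$ at the shifted point, you must relate the Darboux charts $B_{\rho_0}$ and $B_{\rho_0\rho'}$; but these are based at the sections $\frj_{\rho_0}(\{u_0\}\times\cK)$ and $\frj_{\rho_0\rho'}(\{u_0\}\times\cK)$, and showing that the $\star$-shift between them is close to $\rho'$ is \emph{exactly} the distance estimate you are trying to prove. So the cancellation is circular.

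The same issue affects your containment argument: Lemma~\ref{lemma: proj X_eta, c} controls $X_{\eta;(c_\beta)}$ only on $\cW_{\cV,\cK}$, so a single application does not keep the full trajectory inside $T_{C'_\lhd}\cdot\frj_{\rho_0}(\cV)\times\cK$ once it leaves the initial chart. The paper fixes this with a re-centering (``regeneration'') argument: assuming the trajectory exits at time $r$, one picks $r_1<r$ close to $r$, sets $\rho_1=u_0^{-1}\cdot\mathrm{proj}_T(\Upsilon_\eta(r_1)\star\frj_{\rho_0}(u_0,\tilde\xi))$, and reapplies the lemma in the chart $\frj_{\rho_1}(\cW_{\cV,\cK})$. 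This is legitimate because moving in the $C'_\lhd$-direction only decreases $|\gamma_{-\Pi}(\rho_1)|$, so $\epsilon_{C_\lhd}$ still controls the new chart; one obtains a fixed advance time $\nu>0$, contradicting the exit at $r$. For the distance bound, the paper derives an ODE for $\rho_\eta(s):=\mathrm{proj}_T(\Upsilon_\eta(s)\star\frj_{\rho_0}(u_0,\tilde\xi))$ and integrates the perturbation $O(|\gamma_{-\Pi}(u_0^{-1}\rho_\eta(s))|\cdot|\eta|)$; the cone condition forces $|\gamma_{-\Pi}(u_0^{-1}\rho_\eta(s))|$ to decay like $e^{-\beta(\eta)s/K}$, which makes the integrated error bounded uniformly in $s$ rather than growing linearly. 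This exponential decay along the trajectory is the mechanism that produces $\rho'$-uniformity, and it is what your Darboux-chart heuristic does not supply.
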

\begin{proof}
First, choose $C_{\lhd}\dot{\subset} C_{\lhd}'\dot{\subset} \ft_{\bR}^+- \{0\}$,  $\epsilon_{C_\lhd}>0$ and $M>0$ satisfying the assumption and conclusion in Lemma \ref{lemma: proj X_eta, c}.  
By fixing the embedding $\cB_{w_0}$ into $J_{L_{S'}}$ through $\widetilde{\iota}_{\emptyset}^{S'}$, we can view everything inside $J_{L_{S'}}$, so we will omit $\widetilde{\iota}_{\emptyset}^{S'}$ in the proof. 
Since the embedding $\widetilde{\iota}_{\emptyset}^{S'}$ is $\cZ(L_{S'})$-equivariant for the obvious $\cZ(L_{S'})$-action on the source and target, the proposition can be reduced to the case when $S'=\Pi$ and $(c_\beta)_\beta=\gamma_{-\Pi}(\rho_0)$. 
It suffices to prove (\ref{eq: cor dist}) for the chosen Lagrangian section $\{u_0\}\times \cK$, and it is equivalent to saying
\begin{align}\label{eq: proof dist estimate}
\rho'\star \frj_{\rho_0}(u_0, \widetilde{\xi})\in \cB_{w_0}, \text{ and } dist(\rho'\star \frj_{\rho_0}(u_0, \widetilde{\xi}), \frj_{\rho_0\rho'}(u_0,\xi))<\delta, 
\end{align}
where $(u_0, \widetilde{\xi})=(\{u_0\}\times \cK)\cap \widetilde{\mu}_{\cK', (c_\beta)_\beta}^{-1}(\xi)$. 

For any $\eta\in C_{\lhd}$ and $\rho'_c\in T_c$, let $\Upsilon_{\eta}(s)=\rho'_c\cdot \exp(s\cdot \eta), s\geq 0$. With given $(c_\beta)$, we have 
\begin{align}\label{eq: Upsilon_eta}
\frac{d}{ds}\Upsilon_{\eta}(s)\star\frj_{\rho_0}(u_0,\widetilde{\xi})= X_{\eta, (c_\beta)}(\Upsilon_{\eta}(s)\star \frj_{\rho_0}(u_0,\widetilde{\xi})).
\end{align}
We claim that 
\begin{align}\label{eq: gamma_eta, s, cB}
\Upsilon_{\eta}(s)\star \frj_{\rho_0}(u_0,\widetilde{\xi})\subset (T_{C_\lhd'}\cdot \frj_{\rho_0}(\cV))\times \cK
\end{align}
for all $s\geq 0$. Suppose the contrary, there exists $r>0$ such that (\ref{eq: gamma_eta, s, cB}) holds for $s\in [0, r)$ but $\Upsilon_{\eta}(r)\star \frj_{\rho_0}(u_0,\widetilde{\xi})$ is outside $(T_{C_\lhd'}\cdot\frj_{\rho_0}(\cV))\times \cK$. Pick $r_1<r$ that is very close to $r$, and let $\rho_1=u_0^{-1}\cdot \proj_T (\Upsilon_{\eta}(r_1)\star \frj_{\rho_0}(u_0,\widetilde{\xi}))\in u_0^{-1}\cdot T_{C_\lhd'}\cdot \frj_{\rho_0}(\cV)$, then 
\begin{align*}
\frj_{\rho_1}^{-1}(\Upsilon_{\eta}(r_1)\star \frj_{\rho_0}(u_0,\widetilde{\xi}))\in \{u_0\}\times \cK. 
\end{align*}
Since 
\begin{align*}
\frj_{\rho_1}^{-1}(\Upsilon_{\eta}(r_1)\star \frj_{\rho_0}(u_0,\widetilde{\xi}))\in \widetilde{\mu}_{\cK', (\gamma_{-\Pi}(\rho_1))}^{-1}(\xi),
\end{align*}
and $|\gamma_{-\Pi}(\rho_1)|<\epsilon_{C_\lhd}$, we have $\frj_{\rho_1}^{-1}(\Upsilon_{\eta}(r_1)\star \frj_{\rho_0}(u_0,\widetilde{\xi}))$ very close to $(u_0, \xi)$. Hence by a similar argument as in Proposition \ref{lemma: empty, Ham isotopy}, there exists a fixed interval $[0, \nu], \nu>0$, depending only on $\eta$, such that for any $\epsilon\in [0,\nu]$, 
\begin{align*}
&\frj_{\rho_1}^{-1}(\Upsilon_{\eta}(r_1+\epsilon)\star \frj_{\rho_0}(u_0,\widetilde{\xi}))\subset T_{C'_{\lhd}}\cdot \frj_{\rho_1}^{-1}(\proj_T(\Upsilon_{\eta}(r_1)\star \frj_{\rho_0}(u_0,\widetilde{\xi})))\times \cK\\
\Rightarrow& \Upsilon_{\eta}(r_1+\epsilon)\star \frj_{\rho_0}(u_0,\widetilde{\xi})\subset T_{C_{\lhd}'}\cdot (\proj_T(\Upsilon_{\eta}(r_1)\star \frj_{\rho_0}(u_0,\widetilde{\xi})))\times \cK\subset (T_{C_\lhd'}\cdot \frj_{\rho_0}(\cV))\times \cK.
\end{align*}
Choosing $r_1>r-\nu$ gives a contradiction to the assumption that (\ref{eq: gamma_eta, s, cB}) does not hold at $r$. 

We show the estimate on distance in (\ref{eq: proof dist estimate}). Let $\rho_\eta(s)=\proj_T (\Upsilon_\eta(s)\star\frj_{\rho_0}(u_0, \widetilde{\xi}))$, then we have 
\begin{align}\label{eq: DE, rho_eta}
(\frj_{u_0\rho_\eta(s)^{-1}})_{*}\frac{d}{ds}\rho_\eta(s)=\proj_T X_{\eta, \gamma_{-\Pi}(u_0^{-1}\rho_\eta(s))}(\frj_{u_0^{-1}\rho_\eta(s)}^{-1}(\Upsilon_\eta(s)\star\frj_{\rho_0}(u_0, \widetilde{\xi})))
\end{align}
where both sides are contained in $T_{u_0}\cV$. Using the estimate from Lemma \ref{lemma: proj X_eta, c}, we get 
\begin{align*}
|u_0\rho_\eta(s)^{-1}\frac{d}{ds}\rho_\eta(s)-(\frj_{u_0})_*\eta|&\leq M\cdot |\gamma_{-\Pi}(u_0^{-1}\rho_\eta(s))|\cdot |\eta|.
\end{align*}
By the assumption on $C_\lhd'$, there exists $\varepsilon>0$ such that for all $\beta_i\in \Pi$,  
\begin{align*}
&\varepsilon\leq \frac{\beta_j}{\sum\limits_{i=1}^n\beta_i}\leq 1-\varepsilon \text{ on }C_\lhd'\\
\Rightarrow& |\gamma_{-\Pi}(u_0^{-1}\rho_\eta(s))|\leq n|\beta_j(u_0^{-1}\rho_\eta(s))|^{-\frac{1}{K}}
\end{align*}
for every $j$ and a uniform constant $K>0$ only depending on $\varepsilon$. Therefore, looking at each component $\beta(\rho_\eta(s))\in \bC^\times$ for (\ref{eq: DE, rho_eta}), we get 
\begin{align}\label{eq: beta, rho_eta, d}
&\beta(\rho_\eta(s))^{-1}\frac{d}{ds}\beta(\rho_\eta(s))=\beta(\eta)+O(|\beta(\rho_\eta(s))|^{-\frac{1}{K}}\cdot |\eta|). 
\end{align}
Let $F_\beta(s)=\log |\beta(\rho_\eta(s))e^{-\beta(\eta)s}|$, then the above on the real parts implies
\begin{align*}
&|\frac{d}{ds} F_\beta(s)|\leq \widetilde{M}\cdot e^{-\frac{F_\beta(s)}{K}-\frac{\beta(\eta)s}{K}}|\eta|,\ \beta\in \Pi\\
\Rightarrow & |\frac{d}{ds}e^{\frac{F_\beta(s)}{K}}|\leq \frac{\widetilde{M}}{K}e^{-\frac{\beta(\eta)s}{K}}|\eta|\\
\Rightarrow &|e^{\frac{F_\beta(s)}{K}}-|\beta(\rho_\eta(0))|^{1/K}|\leq \widetilde{M}' (1-e^{-\frac{\beta(\eta)s}{K}})\leq \widetilde{M'}\\
\Rightarrow&K\log( |\beta(\rho_\eta(0))|^{1/K}-\widetilde{M}')\leq F_\beta(s)\leq K\log(|\beta(\rho_\eta(0))|^{1/K}+ \widetilde{M}')\\
\Rightarrow & K\log(1- \frac{\widetilde{M}'}{|\beta(\rho_\eta(0))|^{1/K}})\leq F_\beta(s)-\log|\beta(\rho_\eta(0))|\leq K\log(1+ \frac{\widetilde{M}'}{|\beta(\rho_\eta(0))|^{1/K}}).
\end{align*}
Here $K, \widetilde{M}'$ only depend on $u_0, C_\lhd, C'_{\lhd}$. Assume that we have chosen $|\beta(\rho_\eta (0))|, \beta\in \Pi$, sufficiently large, equivalently $|(c_\beta)_\beta|$ sufficiently small, then 
\begin{align}\label{}
|\log |\beta(\rho_\eta(s)\rho_\eta(0)^{-1})e^{-\beta(\eta) s}||=|F_\beta(s)-\log|\beta(\rho_\eta(0))||<\delta', \forall \beta\in \Pi, s\geq 0
\end{align}
for arbitrarily small $\delta'>0$.

Lastly, taking the imaginary part of (\ref{eq: beta, rho_eta, d}) and using the above, we get 
\begin{align*}
&\frac{d}{ds}\arg \beta(\rho_\eta(s))=O(|\beta(\rho_\eta(0))|^{-\frac{1}{K}}e^{-\frac{\beta(\eta)s}{K}}\cdot |\eta|)\\
\Rightarrow& |\arg \beta(\rho_\eta(s))-\arg \beta(\rho_\eta(0))|\leq \widetilde{M}'|\beta(\rho_\eta(0))|^{-\frac{1}{K}}.
\end{align*}
By choosing $|(c_\beta)_\beta|$ sufficiently small, we can make the right-hand-side arbitrarily small, and also make $\rho_\eta(0)$ very close to $\rho_c'\rho_0u_0$. Thus we have proved the distance estimate in (\ref{eq: proof dist estimate}).  
\end{proof}

Now we are ready to give a refinement of Lemma \ref{lemma: hat chi_epsilon, gamma}. 

\begin{cor}\label{cor: dist star}
Under the same setting as in Proposition \ref{prop: C_lhd, j}, for any $\delta>0$, there exists $\epsilon_{C_\lhd}>0$ such that for any $(u,\xi)\in \cV\times \cK^\dagg$, $\rho_1\in T$ satisfying $|\gamma_{-\Pi}(\rho_1)|<\epsilon_{C_\lhd}$ and $\rho'\in T_{C_\lhd}$, we have 
\begin{align}
\label{eq: dist, rho'rho_1}&dist(\rho'\star \frj_{\rho_1}(u,\xi), \frj_{\rho'\rho_1}(u,\xi))<\delta
\end{align}
Moreover, 
\begin{align}
\label{eq: dist, W}&dist(w^{-1}(\rho')\star \frj_{\rho_1}(u,w(\xi))), \frj_{\rho'\rho_1}(u,w(\xi)))<\delta, \forall w\in W,
\end{align}
where both the $T$-action denoted by $\star$ are taken with respect to $\chi_\ft(\cK')\cong\cK'\subset \ft^\reg$ (cf. Remark \ref{remark: cK in fz_S}). The distance is taken with respect to the standard $T$-invariant metric on $T^*T$. 
\end{cor}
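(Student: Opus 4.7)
The first estimate (\ref{eq: dist, rho'rho_1}) should fall out as an almost immediate reformulation of the distance bound (\ref{eq: cor dist}) in Proposition \ref{prop: C_lhd, j}. The plan is to set $(c_\beta)_{\beta\in\Pi}:=\gamma_{-\Pi}(\rho_1)$, which forces $S'=\Pi$ and allows the choice $\rho_0=\rho_1$; then $\widetilde{\iota}_\emptyset^\Pi$ is simply the inclusion $\cB_{w_0}\hookrightarrow J_G$, and the bound (\ref{eq: cor dist}) reads
\begin{align*}
dist\bigl(\frj_{\rho'\rho_1}^{-1}(\rho'\star\frj_{\rho_1}(u,\xi)),\,(u,\xi)\bigr)<\delta.
\end{align*}
Since $\frj_{\rho'\rho_1}$ acts on $\cB_{w_0}\cong T^*T$ by translation along the $T$-base, it is an isometry for the standard $T$-invariant metric, and moving it outside $dist$ yields (\ref{eq: dist, rho'rho_1}) directly.

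For (\ref{eq: dist, W}), my plan is to exploit the $W$-equivariance of the entire construction in Section \ref{subsubsec: B_w_0}. The data $\cW_{\cV,\cK}$, the moment maps $\widetilde{\mu}_{\cK',(c_\beta)}$, the universal $(\Pi\backslash S)$-deformation, and the complete integrable system embedding of Proposition \ref{eq: tilde mu, embed} are all natural with respect to the choice of Weyl chamber, so they transport cleanly from $\cK$ to the $W$-translate $w(\cK)$. I will rerun the proof of Proposition \ref{prop: C_lhd, j} verbatim with $\cK$ (respectively $\cK'$) replaced by $w(\cK)$ (respectively $w(\cK')$), producing a parallel estimate
\begin{align*}
dist\bigl(\rho'\star_{w(\cK')}\frj_{\rho_1}(u,w(\xi)),\,\frj_{\rho'\rho_1}(u,w(\xi))\bigr)<\delta
\end{align*}
valid for $\rho'\in T_{C_\lhd}$ and $|\gamma_{-\Pi}(\rho_1)|<\epsilon_{C_\lhd}$, where $\star_{w(\cK')}$ denotes the $T$-action determined by the identification $\chi_\ft(w(\cK'))\cong w(\cK')$.

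To close the argument, I will invoke the comparison in Remark \ref{remark: cK in fz_S}: the two $T$-actions $\star_{\cK'}$ and $\star_{w(\cK')}$ on the common space $\chi^{-1}(\chi_\ft(\cK'))=\chi^{-1}(\chi_\ft(w(\cK')))$ differ by the automorphism of $T$ induced by $w$. The convention dictated by the diagram in that remark should give $\rho\star_{w(\cK')}=w^{-1}(\rho)\star_{\cK'}$; substituting $\rho=\rho'$ converts the $w$-analog estimate into exactly (\ref{eq: dist, W}). Since $W$ is finite, taking $\epsilon_{C_\lhd}$ to be the minimum over $w\in W$ of the constants arising in each $w$-analog gives a single bound valid for all $w$ simultaneously.

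The only delicate point — and where I expect to need care rather than face an essential obstacle — is verifying that the cone $C_\lhd\dot{\subset}\ft_\bR^+$ may be taken to be the \emph{same} in the $w$-translated version of Proposition \ref{prop: C_lhd, j}. This is indeed the case: inspecting its proof, the positivity requirement on $\eta\in C_\lhd$ enters only through the growth of $|\beta(u_0^{-1}\rho_\eta(s))|$ for $\beta\in\Pi$, which is a statement about staying away from $\partial\cB_{w_0}$ that depends on the Bruhat-cell geometry of $\cB_{w_0}$ itself, not on which Weyl chamber $\cK$ sits in. The Hamiltonian vector field estimates in Lemma \ref{lemma: proj X_eta, c} are manifestly $W$-equivariant, so the constants $M$ and $\epsilon_{C_\lhd}$ can be chosen uniformly in $w$ without enlarging the cone.
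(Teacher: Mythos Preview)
Your proposal is correct and follows essentially the same approach as the paper: reduce (\ref{eq: dist, rho'rho_1}) to the $S'=\Pi$ case of Proposition \ref{prop: C_lhd, j}, then obtain (\ref{eq: dist, W}) by rerunning the argument over $w(\cK)$ and invoking Remark \ref{remark: cK in fz_S} to compare the two $T$-actions. The one small point you glossed over is that Proposition \ref{prop: C_lhd, j} is stated for $(u,\xi)\in\widetilde{\mu}_{\cK',(c_\beta)}^{-1}(\cK^\dagg)$ rather than $\cV\times\cK^\dagg$; the paper handles this by slightly enlarging $\cK^\dagg$, and your isometry observation for $\frj_{\rho'\rho_1}$ is a useful addition that the paper leaves implicit.
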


\begin{proof}
First, (\ref{eq: dist, rho'rho_1}) is the special case of Proposition \ref{prop: C_lhd, j} (\ref{eq: cor dist}) for $S'=\Pi$. Although in the proposition, it is stated for $(u, \xi)\in \mu_{\cK', \rho_1}^{-1}(\cK^\dagg)\subset \cV'\times \cK'$, it also holds for $\cV\times \cK^\dagg$ by enlarging the former $\cK^\dagg$ slightly.  

Second, for any $w\in W$, using $(u,w(\xi))\in \cV\times w(\cK^\dagg)$, we have 
\begin{align*}
dist(\rho'\star \frj_{\rho_1}(u,w(\xi))), \frj_{\rho'\rho_1}(u,w(\xi)))<\delta,
\end{align*}
where the $T$-action denoted by $\star$ here is \emph{with respect to $\chi_\ft(\cK')\cong w(\cK')\subset \ft^\reg$}. By Remark \ref{remark: cK in fz_S}, this $T$-action differs from the $T$-action in (\ref{eq: dist, W}) by $w$, hence (\ref{eq: dist, W}) follows.  
\end{proof}

\subsubsection{Analysis inside $\fU_S, \emptyset\neq S\subsetneq \Pi$}\label{subsec: analysis fU_S}

In this section, we generalize several results from Subsection \ref{subsubsec: B_w_0} to $\emptyset\neq S\subsetneq \Pi$. We also give an answer to Question \ref{eq: D, K, dagger} (ii), which was trivial for $S=\emptyset$. Recall the notations from Question \ref{eq: D, K, dagger}. In particular, we are under the settings depicted in Figure \ref{figure: mu_D, cK', c}. 

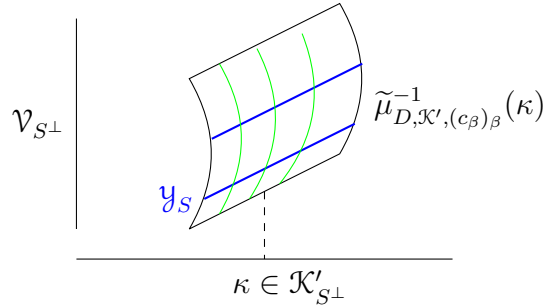
\begin{figure}[htbp]
\begin{tikzpicture}
\draw (-2.5,-2)--(2.5,-2) node[pos=0.5, below] {$\ \ \ \ \ \kappa\in \cK'_{S^\perp}$};
\draw (-2.5,-1.6)--(-2.5,1.2) node[pos=0.5, left] {$\cV_{S^\perp}$};
\draw (-1, -1.6)--(1,-0.6) to [bend right](1, 1.4)--(-1, 0.4) to [bend left](-1, -1.6);
\draw[dashed] (0, -1.1)--(0, -2);
\draw[blue, thick] (-0.8, -1.2)--(1.2, -0.2); 
\draw[blue, thick] (-0.7, -0.4)--(1.3, 0.6); 
\draw[green] (-0.6, -1.4) to [bend right] (-0.6, 0.6);
\draw[green] (-0.2, -1.2) to [bend right] (-0.1, 0.8);
\draw[green] (0.2, -1) to [bend right] (0.5, 1);
\draw (2.6, 0) node {$\widetilde{\mu}_{D, \cK', (c_\beta)_\beta}^{-1}(\kappa)$};
\draw[blue] (-0.8, -1.2) node[left] {$\cY_S$};
\end{tikzpicture}
\caption{A picture of the fiber $\widetilde{\mu}_{D, \cK', (c_\beta)_{\beta\in \Pi\backslash S}}^{-1}(\kappa)$, where the blue multi-section (it is connected although we draw it disconnected in this low dimensional picture) indicates the intersection of $\cY_S\times \{u_0\}\times \cK'_{S^\perp}$ with the fiber, and the green curves represent the characteristic foliations. }\label{figure: mu_D, cK', c}
\end{figure}

First, we state some direct generalizations of results from Subsection \ref{subsubsec: B_w_0}. For any $\kappa\in \cK^\dagg_{S^\perp}$, let $\cS_{\kappa, (g_S, \xi_S), (c_\beta)}$ denote for the characteristic leaf in $\widetilde{\mu}_{D, \cK', (c_\beta)_\beta}^{-1}(\kappa)\subset \cW_{\cY_S, \cV, \cK'}$ that passes through the point $(g_S, \xi_S; u_0, \widetilde{\kappa})$. Note that $\widetilde{\kappa}$ is uniquely determined for the restriction of $\widetilde{\mu}_{D, \cK', (c_\beta)}$ from $\{(g_S,\xi_S)\}\times\{u_0\}\times \cK_{S^\perp}$ to $\cK'_{S^\perp}$ is an open embedding. Let $\cD_{(g_S, \xi_S)}$ be a contractible neighborhood of $(g_S, \xi_S)$ in $\cY_S$ that is contained in a fundamental domain of the $\cZ(L_S^\der)_0$-action. 
Then Lemma \ref{lemma: L_t_gamma} immediately generalizes to the following form.

\begin{lemma}\label{lemma: cS_kappa, S}
Under the above settings, there exists $r_{\cV}>0$ such that for all $|(c_\beta)_{\beta\in \Pi\backslash S}|<r_{\cV}$, we have for each $\kappa\in \cK_{S^\perp}^\dagg$, the characteristic leaf $\cS_{\kappa, (g_S, \xi_S), (c_\beta)}$
satisfies
\begin{itemize}

\item[(i)] $\cS_{\kappa, (g_S, \xi_S), (c_\beta)}$ is a smooth section over $\cV_{S^\perp}$ that is Hamiltonian isotopic to $\{(g_S,\xi_S)\}\times \cV_{S^\perp}\times \{\kappa\}$ inside 
\begin{align}\label{eq: lemma cS_kappa, S}
\cD_{(g_S, \xi_S)}\times (\cV_{S^\perp}\times \cK')\subset \cW_{\cY_S, \cV, \cK'}.
\end{align}

\item[(ii)] The natural inclusion $\cS_{\kappa, (g_S, \xi_S), (c_\beta)}\overset{\widetilde{\iota}_S^{S'}\circ\frj_{S, \rho_0}}\longhookrightarrow \widetilde{\chi}_{S',\cK'}^{-1}(\kappa)$ from (\ref{eq: lemma tilde chi_S'}) induces a homotopy equivalence from the former to the $\cZ(L_S)_0$-orbit that contains it. Moreover, by reverting the first homotopy equivalence, the sequence
\begin{align*}
\cZ(L_S)_0\longhookleftarrow \cV_{S^\perp}\cong \cS_{\kappa, (g_S, \xi_S), (c_\beta)} \longhookrightarrow  \cZ(L_S)_0\star \widetilde{\iota}_S^{S'}(g_S, \xi_S; u_0\rho_0, \widetilde{\kappa})\cong \cZ(L_S)_0
\end{align*}
induces a homotopy equivalence from $\cZ(L_S)_0$ to itself that is isotopic to the identity. 

\end{itemize}
\end{lemma}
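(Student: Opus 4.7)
The plan is to follow the template of Lemma \ref{lemma: L_t_gamma} essentially verbatim, with the essential new input being Proposition \ref{eq: tilde mu, embed}. That proposition embeds $\widetilde{\mu}_{D,\cK',(c_\beta)_\beta}$ holomorphic-symplectically into a complete integrable system $\widetilde{\chi}_{S',\cK'}$ whose torus fibers are complete $\cZ(L_S)_0$-orbits. Consequently, the characteristic foliation of $\widetilde{\mu}_{D,\cK',(c_\beta)_\beta}^{-1}(\kappa)$ consists of the restrictions of (portions of) $\cZ(L_S)_0$-orbits under $\widetilde{\iota}_S^{S'}\circ \frj_{S,\rho_0}$, which reduces the problem to an analogue of the $S=\emptyset$ case with $T$ replaced by $\cZ(L_S)_0$ and with the extra factor $\chi_S^{-1}(D_S/W_S)$ playing a spectator role.

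For (i), at $(c_\beta)_\beta=0$ the map $\widetilde{\mu}_{D,\cK,0}$ is literally $\pr_{\cK_{S^\perp}}$, whose fiber-characteristic leaves are precisely $\{(g_S,\xi_S)\}\times\cV_{S^\perp}\times\{\kappa\}$. Since $\widetilde{\mu}_{D,\cK',(c_\beta)_\beta}$ depends analytically on $(c_\beta)$ and the domain $\cW_{\cY_S,\cV,\cK'}$ is pre-compact, for $r_\cV$ sufficiently small the perturbed characteristic leaf $\cS_{\kappa,(g_S,\xi_S),(c_\beta)}$ remains a smooth section over $\cV_{S^\perp}$ and stays inside $\cD_{(g_S,\xi_S)}\times(\cV_{S^\perp}\times\cK')$. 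The Hamiltonian isotopy within (\ref{eq: lemma cS_kappa, S}) then follows from a period comparison exactly as in Lemma \ref{lemma: L_t_gamma}(i): pick an integral basis $\{\Gamma_i\}$ of $H_1(\cV_{S^\perp},\bZ)\cong H_1(\cS_{\kappa,(g_S,\xi_S),(c_\beta)},\bZ)$, use that $\widetilde{\iota}_S^{S'}\circ \frj_{S,\rho_0}$ preserves the holomorphic Liouville form (\ref{eq: hat vartheta_S}), and observe that the corresponding periods along the ambient $\cZ(L_S)_0$-orbit in $\widetilde{\chi}_{S'}^{-1}$ are topological invariants of the orbit, to conclude $\int_{\Gamma_i}\vartheta|_{\cS_{\kappa,(g_S,\xi_S),(c_\beta)}}=\langle\kappa,\Gamma_i\rangle$ independent of $(c_\beta)$, matching the periods on $\{(g_S,\xi_S)\}\times\cV_{S^\perp}\times\{\kappa\}$.

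For (ii), the argument is the analogue of Lemma \ref{lemma: L_t_gamma}(ii) with $T$ replaced by $\cZ(L_S)_0$. Choosing $\kappa$ generic so that $\langle\kappa,-\rangle$ is injective on $H_1(\cV_{S^\perp},\bQ)$, the periods computed above exhibit $\{\Gamma_i\}$ as an integral basis of $H_1(\cZ(L_S)_0,\bZ)\cong H_1(\cZ(L_S)_{0,\cpt},\bZ)$; hence the inclusion into the ambient $\cZ(L_S)_0$-orbit is a homotopy equivalence. The composition with the reverse homotopy equivalence then induces the identity on $H_1$, and since $\cZ(L_S)_0$ is an Eilenberg--MacLane space (homotopy equivalent to a compact torus), this composition is isotopic to the identity.

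The main subtlety, which requires slightly more care than in the $S=\emptyset$ case, is that the ambient space $\fU_S$ is now a twisted product over $\cZ(L_S^\der)_0$. Restricting to a contractible $\cD_{(g_S,\xi_S)}$ contained in a fundamental domain of the $\cZ(L_S^\der)_0$-action is precisely what keeps the leaf a genuine section (as opposed to a multi-section) over $\cV_{S^\perp}$, and the pre-compactness of $\cY_S$ ensures the perturbation estimates are uniform. Beyond this bookkeeping, the analytic content is identical to the $S=\emptyset$ case already handled in Lemma \ref{lemma: L_t_gamma}.
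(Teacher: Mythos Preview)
Your proposal is correct and takes exactly the approach the paper intends: the paper's proof consists of the single sentence ``It follows from the same proof for Lemma~\ref{lemma: L_t_gamma},'' and you have spelled out precisely how that proof transports to the present setting, with Proposition~\ref{eq: tilde mu, embed} supplying the complete $\cZ(L_S)_0$-orbits and the preservation of the Liouville form under $\widetilde{\iota}_S^{S'}\circ\frj_{S,\rho_0}$ making the period argument go through verbatim. Your remark on the role of $\cD_{(g_S,\xi_S)}$ in avoiding the $\cZ(L_S^\der)_0$-twisting is a helpful clarification that the paper leaves implicit.
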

\begin{proof}
It follows from the same proof for Lemma \ref{lemma: L_t_gamma}. 
\end{proof}

\begin{remark}\label{remark: modify multi-section}
For $(\kappa, (c_\beta)_\beta)$ ranging in the space 
\begin{align}\label{eq: space kappa, c_beta}
\cK_{S^\perp}\times \{(c_\beta)_\beta\in \bC^{\Pi\backslash S}:|(c_\beta)_\beta|<r_\cV\}, 
\end{align}
the intersection $(\cY_S\times \{u_0\}\times \cK'_{S^\perp})\cap \widetilde{\mu}_{D^\dagg, \cK', (c_\beta(s))_\beta}^{-1}(\kappa)$ gives a $|\cZ(L_S^\der)_0|$-multi-section over its image in the reduced space, i.e. the quotient of $\widetilde{\mu}_{D^\dagg, \cK', (c_\beta)_\beta}^{-1}(\kappa)$ by the characteristic leaves. In the following, we modify these multi-sections to be $\cZ(L_S^\der)_0$-equivariant with respect to the ``moment map" $\widetilde{\mu}_{D^\dagg, \cK', (c_\beta)_\beta}$. 
For $(c_\beta)_\beta=0$, the multi-section is $\cZ(L_S^\der)_0$-invariant with respect to $\widetilde{\mu}_{D^\dagg, \cK',0}$. For close by $(c_\beta)_\beta$, we can do an averaging process, to make the multi-section $\cZ(L_S^\der)_0$-invariant with respect to $\widetilde{\mu}_{D^\dagg, \cK', (c_\beta(s))_\beta}$ after applying Proposition \ref{eq: tilde mu, embed}. 
More precisely, since the multi-section is very close to be $\cZ(L_S^\der)_0$-invariant, for any characteristic leaf, we can use the respective $\cZ(L_S^\der)_0$-action to move the points in the original multi-section to a small neighborhood of any chosen one of the points (the result will be independent of the chosen point), then we do an average in that small neighborhood (using the $\cZ(L_S)_0$-action from group elements near the identity) which is well defined, and we turn its $\cZ(L_S^\der)_0$-orbit to be the new multi-section restricted to that leaf. This gives the modification, and we denote the resulting multi-section for $(c_\beta)_\beta$ as $\cY^\dagg_{S, \kappa, (c_\beta)}$. If $(c_\beta)_\beta=\gamma_{-\Pi\backslash S}(\rho)$ for some $\rho\in \cZ(L_S)_0$, we also denote $\cY^\dagg_{S, \kappa, (c_\beta)}$ by $\cY^\dagg_{S, \kappa, \rho}$. 
\end{remark}

\begin{lemma}\label{lemma: cY_S, R}
Fix $\kappa\in \cK_{S^\perp}^\dagg$. Assume that $\cY_{S,R}$ is defined by 
\begin{align}\label{eq: def cY_S,R}
\cY_{S,R}:=\{\sum\limits_{\beta\in S}|b^S_{\lambda_{\beta^\vee}}|^{\frac{1}{\lambda_{\beta^\vee}(\sfh_{0;S})}}<R\}
\end{align}
inside $\chi_S^{-1}(D_S/W_S)\subset J_{L_{S}^\der}$. Then
\begin{itemize}
\item[(i)] Fix any compact region in the Hamiltonian reduction of $\chi_{\cK'_{S^\perp}}$ at $\kappa$, which is also canonically identified with $\chi_S^{-1}(D'_S/W_S)/\cZ(L_S^\der)_0$. For any $R>0$ sufficiently large, there exists $\epsilon_R>0$ such that for all $\rho\in \cZ(L_S)_0$ satisfying $|\gamma_{-\Pi\backslash S}(\rho)|<\epsilon_R$, the image of some fixed tubular neighborhood of the ``horizontal boundary" of $\cY^\dagg_{S,R; \kappa, \rho}$ (i.e. the intersection of $\cY^\dagg_{S,R; \kappa, \rho}$ with a tubular neighborhood of 
\begin{align*}
\{\sum\limits_{\beta\in S}|b_{\lambda_{\beta^\vee}}^S(g_S)|^{\frac{1}{\lambda_{\beta^\vee}(\sfh_{0;S})}}=R\}, 
\end{align*}
in $\widetilde{\mu}_{D^\dagg, \cK', (c_\beta(s))_\beta}^{-1}(\kappa)$) under $\overline{\frj}_{S,\rho;\kappa}$ (\ref{eq: j_S,rho, reduction}), is outside the fixed compact region.  

\item[(ii)] Fixing $R>0$, the image of $\cY^\dagg_{S,R;\kappa, \rho}/\cZ(L_S^\der)_0$ under $\overline{\frj}_{S,\rho;\kappa}$ is contained in some fixed compact region, for all $\rho\in \cZ(L_S)_0$ with sufficiently small $|\gamma_{-\Pi\backslash S}(\rho)|$.   
\end{itemize}
The same claims hold with $\mu_{D', \cK', \rho}$ replaced by $\widetilde{\mu}_{D',\cK', (c_\beta)_\beta}$ for $|(c_\beta)_{\beta\in \Pi\backslash S}|$ sufficiently small.

\end{lemma}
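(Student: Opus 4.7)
The plan is to exploit the fact that $\overline{\frj}_{S,\rho;\kappa}$ is a small analytic perturbation of the natural inclusion
\[
\cY_S/\cZ(L_S^\der)_0 \longhookrightarrow \chi_S^{-1}(D'_S/W_S)/\cZ(L_S^\der)_0
\]
as $|\gamma_{-\Pi\backslash S}(\rho)|\to 0$, combined with the properness of the function $\sum_{\beta\in S}|b^S_{\lambda_{\beta^\vee}}|^{1/\lambda_{\beta^\vee}(\sfh_{0;S})}$ on $\chi_S^{-1}(\fK_S)$ for any compact $\fK_S\subset \fc_S$ (which is the $J_{L_S^\der}$-analogue of Proposition \ref{prop: proper b map}). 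The universal $(\Pi\backslash S)$-deformation $\widetilde{\mu}_{D,\cK',(c_\beta)_\beta}$ depends analytically on $(c_\beta)_\beta$, and by Proposition \ref{eq: tilde mu, embed} it embeds holomorphic symplectically into the complete integrable system $\widetilde{\chi}_{S',\cK'}$. This gives an analytic family of reductions whose $(c_\beta)_\beta=0$ fiber is canonically $\chi_S^{-1}(D'_S/W_S)/\cZ(L_S^\der)_0$.

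The first step is to show that the modified $\cZ(L_S^\der)_0$-equivariant multi-sections $\cY^\dagg_{S,\kappa,(c_\beta)}$ (from Remark \ref{remark: modify multi-section}) depend continuously on $(c_\beta)_\beta$, as subsets of $\cW_{\cY_S,\cV,\cK'}$. This follows because the averaging procedure used to define them is itself smooth in $(c_\beta)_\beta$, and because for $(c_\beta)_\beta=0$ the multi-section is just $\cY_S\times \{u_0\}\times \cK'_{S^\perp}$ intersected with $\widetilde{\mu}^{-1}_{D,\cK',0}(\kappa)=\cY_S\times \cV_{S^\perp}\times \{\kappa\}$, on which the proposition identifies $\overline{\frj}_{S,1;\kappa}$ with the identity (after the canonical identification). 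Consequently, the map $\overline{\frj}_{S,\rho;\kappa}$ converges uniformly on bounded regions of $\cY_S/\cZ(L_S^\der)_0$ to the identity as $|\gamma_{-\Pi\backslash S}(\rho)|\to 0$; in particular, the proper function $\sum_{\beta\in S}|b^S_{\lambda_{\beta^\vee}}|^{1/\lambda_{\beta^\vee}(\sfh_{0;S})}$ is distorted by an amount tending to zero under $\overline{\frj}_{S,\rho;\kappa}$.

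With this continuity in hand, part (ii) is immediate: the bounded region $\cY^\dagg_{S,R;\kappa,\rho}/\cZ(L_S^\der)_0$ maps into a slight enlargement of $\{\sum_{\beta\in S}|b^S_{\lambda_{\beta^\vee}}|^{1/\lambda_{\beta^\vee}(\sfh_{0;S})}<R+1\}\subset \chi_S^{-1}(D'_S/W_S)$, which projects to a pre-compact region in the quotient by the properness statement above. For part (i), fix the target compact region $\fK_{\text{cpt}}\subset \chi_S^{-1}(D'_S/W_S)/\cZ(L_S^\der)_0$; by properness of $\sum_{\beta\in S}|b^S_{\lambda_{\beta^\vee}}|^{1/\lambda_{\beta^\vee}(\sfh_{0;S})}$ on $\chi_S^{-1}(D'_S/W_S)$, its values on $\fK_{\text{cpt}}$ are bounded by some constant $M$. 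Take any $R>M+1$; then the horizontal boundary tubular neighborhood of $\cY^\dagg_{S,R;\kappa,\rho}$ consists of points where the proper function takes values in $(R-\tfrac12, R+\tfrac12)$, and by the uniform convergence established above, the same function on the image takes values in $(R-1, R+1)$ once $|\gamma_{-\Pi\backslash S}(\rho)|<\epsilon_R$. Such values exceed $M$, placing the image outside $\fK_{\text{cpt}}$.

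The main obstacle I expect is establishing the uniform continuity of the reduction map $\overline{\frj}_{S,\rho;\kappa}$ with respect to $\rho$ (equivalently to $(c_\beta)_\beta$). Because $\overline{\frj}_{S,\rho;\kappa}$ is assembled from characteristic leaves in the deformed moment-level sets, one must control those leaves uniformly on regions of $\cW_{\cY_S,\cV,\cK'}$ whose bounds are dictated by $\cY_{S,R}$ and not by $(c_\beta)_\beta$. This requires an a priori bound on the drift of characteristic leaves in the transverse direction, which should follow from estimates analogous to those in Lemma \ref{lemma: proj X_eta, c} applied to the Hamiltonian vector fields of functions pulled back via $\widetilde{\mu}_{D,\cK',(c_\beta)_\beta}$, together with the analyticity of that family in $(c_\beta)_\beta$. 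The modification of the multi-section requires only a bounded amount of $\cZ(L_S)_0$-translation, which does not disturb the proper function by more than a uniform small amount.
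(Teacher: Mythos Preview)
Your strategy hinges on the claim that $\overline{\frj}_{S,\rho;\kappa}$ converges uniformly on bounded regions to the identity as $|\gamma_{-\Pi\backslash S}(\rho)|\to 0$, but this is precisely the nontrivial content of the lemma and cannot be taken for granted. The map $\overline{\frj}_{S,\rho;\kappa}$ compares two genuinely different presentations of points in $J_G$: the domain uses the Whittaker-type coordinates on $\fU_S$ from (\ref{eq: prop U_S}), while the codomain uses the centralizer presentation via the isomorphism (\ref{eq: chi_S, mathscr Z}). The translation $\frj_{S,\rho}$ sends $(g_S,\xi_S;z,t)$ to $(g_S,\xi_S;z\rho,t)$ with $\rho\to\infty$ in $\cZ(L_S)_0$; only after re-expressing the image in centralizer coordinates and quotienting by $\cZ(L_S)_0$ could one hope for a finite limit, and showing this requires real work. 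Your proposed patch via ``estimates analogous to Lemma \ref{lemma: proj X_eta, c}'' does not close the gap: that lemma is specific to $S=\emptyset$, and the $S$-analogue in the paper (Corollary \ref{cor: dist star, S}) is proved \emph{using} Lemma \ref{lemma: cY_S, R} (through Corollary \ref{cor: cY_S, R, cover}), so invoking it here would be circular.

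The paper's proof avoids any convergence statement and instead establishes the weaker but sufficient two-sided boundedness $|b^S_{\lambda_{\beta^\vee}}(g_S)|$ bounded $\Longleftrightarrow$ $|b^S_{\lambda_{\beta^\vee}}(g_{S,\rho}^\natural)|$ bounded. Concretely, one writes the centralizing pair $\frj_{S,\rho}(g_S,\xi_S;z,t)$ as $(u_\rho\phi_S g_S z\rho,\Xi_\rho)$ in $G\times(f+\fb)$, finds $Q_\rho=u_{1,\rho}^-u_{2,\rho}\in N_{P_S}^-\cdot N$ conjugating $\Xi_\rho$ to $\xi_{S,\rho}^\natural+t_\rho^\natural$, and then compares $|b_{\lambda_{\beta^\vee}}|$ on the two sides of the resulting group identity. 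The key input is Lemma \ref{lemma: nonzero lowest component}, which guarantees that an invariant line of an element of $f+\fb$ has nonzero lowest-weight component; this yields uniform upper and lower bounds on $|b_{\lambda_{\beta^\vee}}((u_{1,\rho}^-)^{-1}g_{S,\rho}^\natural z_\rho^\natural)|$ in terms of $|\lambda_{\beta^\vee}(z_\rho^\natural)|$, from which one deduces that $z\rho(z_\rho^\natural)^{-1}$ stays in a fixed compact subset of $\cZ(L_S)$. The two boundedness directions for $g_S$ and $g_{S,\rho}^\natural$ then follow by evaluating $|b_{\lambda_{\beta^\vee}}|$, $\beta\in S$, on two rearrangements of the same group identity. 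The threshold in $\rho$ for these estimates genuinely depends on $g_S$ (hence on $R$), which is why the paper emphasizes that the range of valid $\rho$ depends on the point --- this is exactly the $\epsilon_R$ in the statement, and it is not captured by a soft continuity argument.
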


\begin{proof}
First, the statements about $\widetilde{\mu}_{D',\cK', (c_\beta)_\beta}$ can be deduced from those about $\mu_{D', \cK', \rho}$ by replacing the group $G$ with $L_{S'}$ and using Proposition \ref{eq: tilde mu, embed}. So it suffices to prove the statements for $\mu_{D', \cK', \rho}$.

For any $\cY_{S,R}$, we choose $\rho$ with $|\gamma_{-\Pi\backslash S}(\rho)|$ sufficiently small so that $\mu_{D', \cK', \rho}: \cW_{\cY_{S,R}, \cV, \cK}\rightarrow \cK'_{S^\perp}$ is well defined. Fix any point  $(g_S, \xi_S;z, t)$ in  $\mu_{D', \cK', \rho}^{-1}(\kappa)$. 
Without loss of generality, we may assume $\xi_S$ is from the Kostant slice $\cS_{\fl^\der_S}$ for the semisimple Lie algebra $\fl^\der_S$, and $g_S$ be the respective centralizing element. Recall the notation from (\ref{eq: prop U_S}) $(\phi_Sg_Sz, \Xi_S(g_S, \xi_S;z, t))$. For $\frj_{S,\rho}(g_S, \xi_S;z, t)$, there exists a (unique) $u_\rho\in N$ such that 
\begin{align}\label{eq: Xi, g_S, xi_S, z, t}
(u_\rho\phi_Sg_Sz\rho, \Xi_\rho:= \Xi_S(g_S, \xi_S; z\rho,t))
\end{align}
is a centralizing pair. As $|\gamma_{-\Pi\backslash S}(\rho)|\rightarrow 0$, $\Xi_\rho$ is approaching $\Xi_0:=(f-f_S)+\xi_S+t$. 

On the other hand, let $(g_{S,\rho}^\natural, \xi_{S,\rho}^\natural; z_\rho^\natural, t_\rho^\natural)$ be a representative of $\frj_{S,\rho}(g_S, \xi_S;z, t)$ under the isomorphism (\ref{eq: chi_S, mathscr Z}). Here we also assume that $\xi_{S,\rho}^\natural$ is in the Kostant slice $\cS_{\fl^\der_S}$, so then it is uniquely determined. It is clear from the above discussion that $\xi_{S,\rho}^\natural$ (resp. $t_\rho^\natural$) is arbitrarily close to $\xi_S$ (resp. $t$) as $|\gamma_{-\Pi\backslash S}(\rho)|\rightarrow 0$. In particular, there exists $\epsilon_{g_S}>0$ (the dependence is only on $g_S$ due to the boundedness of $\xi_S, z,t$) such that for $\rho$ satisfying $|\gamma_{-\Pi\backslash S}(\rho)|<\epsilon_{g_S}$, we can find $Q_\rho=u_{1,\rho}^-u_{2,\rho}\in N_{P_S}^-\cdot N$ with $u_{1,\rho}^-$ (resp. $u_{2,\rho}$) contained in a fixed compact region in (the opposite of the unipotent radical of $P_S$) $N_{P_S}^-$ (resp. arbitrarily close to $I\in N$), such that $\Ad_{Q_\rho}(\Xi_\rho)=\xi_{S,\rho}^\natural+t_\rho^\natural$. More explicitly, we first find $u_{1,\rho}^-\in N_{P_S}^-$ such that $\Ad_{(u_{1,\rho}^-)^{-1}}(\xi_{S,\rho}^\natural+t_\rho^\natural)=(f-f_S)+\xi_{S,\rho}^\natural+t_\rho^\natural$  (this follows from a similar argument as for \cite[Lemma 3.1.44]{CG}). Since $\Xi_\rho$ is arbitrarily close to $(f-f_S)+\xi_{S,\rho}^\natural+t_\rho^\natural$ (and both of them are in $f+\fb$) and they are in the same adjoint orbit, we can find $u_{2,\rho}\in N$ close to $I$ such that $\Ad_{u_{2,\rho}}\Xi_\rho=(f-f_S)+\xi_{S,\rho}^\natural+t_\rho^\natural$.

We must have an equality
\begin{align}
\nonumber&\Ad_{Q_\rho}(u_\rho\phi_Sg_Sz\rho)=g_{S,\rho}^\natural z_\rho^\natural\\
\label{eq: Ad_u2,epsilon}\Leftrightarrow & \Ad_{u_{2,\rho}}(u_\rho\phi_Sg_Sz\rho)(u_{1,\rho}^-)^{-1}=(u_{1,\rho}^-)^{-1}g_{S,\rho}^\natural z_\rho^\natural. 
\end{align}
Now we compare the value of $|b_{\lambda_{\beta^\vee}}|, \beta\in \Pi$ on both sides.

First, we consider the case when $\beta\not \in S$. Let us evaluate $|b_{\lambda_{\beta^\vee}}|$ on the right-hand-side of (\ref{eq: Ad_u2,epsilon}). Recall that
\begin{align}\label{eq: b, u_1,epsilon-}
|b_{\lambda_{\beta^\vee}}((u_{1,\rho}^-)^{-1}g_{S,\rho}^\natural z_\rho^\natural)|=|\lng (u_{1,\rho}^-)^{-1}g_{S,\rho}^\natural z_\rho^\natural (v_{\lambda_{\beta^\vee}}), v_{-w_0(\lambda_{\beta^\vee})}\rng|,
\end{align}
where $v_{\lambda_{\beta^\vee}}$ and $v_{-w_0(\lambda_{\beta^\vee})}$ are highest weight vectors in $V_{\lambda_{\beta^\vee}}$ and $V_{\lambda_{\beta^\vee}}^*\cong V_{-w_0(\lambda_{\beta^\vee})}$ and the right-hand-side is the absolute value of the pairing\footnote{More precisely, we need to take a lift of $(u_{1,\rho}^-)^{-1}g_{S,\rho}^\natural z_\rho^\natural$ in $G_{sc}$ to make $b_{\lambda_{\beta^\vee}}$ well defined. But the value of $|b_{\lambda_{\beta^\vee}}|$ does not depend on the choice of the lifting. Similarly, the line $\bC\cdot(u_{1,\rho}^-)^{-1}g_{S,\rho}^\natural z_\rho^\natural (v_{\lambda_{\beta^\vee}})$ does not depend on the choice of the lifting either. }. If $\beta\not\in S$, then 
\begin{align*}
\bC\cdot (u_{1,\rho}^-)^{-1}g_{S,\rho}^\natural z_\rho^\natural (v_{\lambda_{\beta^\vee}})=\bC\cdot (u_{1,\rho}^-)^{-1}(v_{\lambda_{\beta^\vee}})
\end{align*}
is an invariant line of $\Ad_{u_{2,\rho}}(\Xi_\rho)$. 
Indeed, we have 
\begin{align*}
&\Ad_{u_{2,\rho}}(\Xi_\rho)\cdot (u_{1,\rho}^-)^{-1}v_{\lambda_{\beta^\vee}}=\Ad_{(u_{1,\rho}^-)^{-1}}(\xi_{S,\rho}^\natural+t_\rho^\natural)\cdot (u_{1,\rho}^-)^{-1}v_{\lambda_{\beta^\vee}}\\
&=\lambda_{\beta^\vee}(t_\rho^\natural)(u_{1,\rho}^-)^{-1}v_{\lambda_{\beta^\vee}}.
\end{align*}

By Lemma \ref{lemma: nonzero lowest component} below, (\ref{eq: b, u_1,epsilon-}) is nonzero and we have 
\begin{align}\label{eq: b, RHS, not in S}
&c\cdot |\lambda_{\beta^\vee}(z_\rho^\natural)|\leq |b_{\lambda_{\beta^\vee}}((u_{1,\rho}^-)^{-1}g_{S,\rho}^\natural z_\rho^\natural)|\leq C\cdot |\lambda_{\beta^\vee}(z_\rho^\natural)|,\text{ for } |\gamma_{-\Pi\backslash S}(\rho)|\rightarrow 0, \\
\nonumber&(g_S, \xi_S;z, t\text{ fixed}), \beta\not\in S
\end{align}
for some fixed positive constants $c, C>0$. On the other hand, if we evaluate $|b_{\lambda_{\beta^\vee}}|$ on the left-hand-side of (\ref{eq: Ad_u2,epsilon}), we get 
\begin{align}\label{eq: b, LHS, not in S}
&k\cdot |\lambda_{\beta^\vee}(z\rho)| \leq |b_{\lambda_{\beta^\vee}}(\Ad_{u_{2,\rho}}(u_\rho\phi_Sg_Sz\rho) ( u_{1,\rho}^-)^{-1})|\leq K\cdot |\lambda_{\beta^\vee}(z\rho)|, \\
\nonumber&\text{ as } |\gamma_{-\Pi\backslash S}(\rho)|\rightarrow 0 \ (g_S, \xi_S;z,t) \text{ fixed}, \beta\not\in S
\end{align} 
for some fixed constants $k, K>0$. 
This uses that for a fixed basis $v^{(j)}_\mu$ in the $\mu$-weight space of $V_{\lambda_{\beta^\vee}}$, we have 
\begin{align}\label{eq: u_1,rho, inverse}
(u_{1,\rho}^-)^{-1}v_{\lambda_{\beta^\vee}}=v_{\lambda_{\beta^\vee}}+\sum\limits_{\varpi\in \Sigma(\Delta^+\backslash \Gamma(S))\backslash \{0\}, j}c^{(j)}_{\varpi,\rho} v^{(j)}_{\lambda_{\beta^\vee}-\varpi},
\end{align}
where (i) $\Sigma(\Delta^+\backslash \Gamma(S))\subset X^*(T_{sc})$ is the monoid spanned by $\Delta^+\backslash \Gamma(S)$ over $\bZ_{\geq 0}$; (ii) the summation has only finitely many (possibly) nonzero terms indexed by $\lambda_{\beta^\vee}-\varpi$ (belonging to the convex hull of $W\cdot \lambda_{\beta^\vee}$) and $j$; (iii)
$|c_{\varpi,\rho}^{(j)}|$ are uniformly bounded and $u_{2,\rho}\overset{\text{close}}{\sim}I$ (near the limit).  
Note that we can choose $c, C, k, K$ uniformly for all $(g_S, \xi_S; z,t)$, \emph{but} the range of $\rho$ so that (\ref{eq: b, RHS, not in S}) and (\ref{eq: b, LHS, not in S}) hold depends on $(g_S, \xi_S;z,t)$, which is very important\footnote{In fact, the range of valid $\rho$ only depends on $g_S$, because $\xi_S, z, t$ are bounded.}. 
Comparing (\ref{eq: b, RHS, not in S}) and (\ref{eq: b, LHS, not in S}), we see that there exist uniform constants $m, M>0$ such that 
\begin{align}\label{eq: beta in S, z}
\nonumber&m|\lambda_{\beta^\vee}(z\rho)|\leq |\lambda_{\beta^\vee}(z_\rho^\natural)|\leq  M|\lambda_{\beta^\vee}(z\rho)|, \beta\in \Pi, |\gamma_{-\Pi\backslash S}(\rho)|\rightarrow 0\ (\text{fixing }g_S, \xi_S; z, t)\\
\Leftrightarrow& z\rho (z_\rho^\natural)^{-1}\text{ is contained in a \emph{uniformly} bounded region in }\cZ(L_S)  \text{ near the limit}.
\end{align}
Presumably, the above only works for $\beta\not\in S$, but since $\lambda_{\beta^\vee}, \beta\not\in S$ gives a finite indexed sublattice of $X^*(\cZ(L_S))$ (also technically we should lift everything to $G_{sc}$), the same inequalities hold for all $\beta\in S$ as well.

Now we rewrite the relation (\ref{eq: Ad_u2,epsilon}) as 
\begin{align}\label{eq: Ad_u2,epsilon, 2}
\phi_Sg_Sz\rho Q_\rho^{-1} (z_\rho^\natural)^{-1}=u_\rho^{-1}Q_\rho^{-1}g_{S,\rho}^\natural. 
\end{align}
The left-hand-side can be rewritten as
\begin{align}\label{eq: Ad_u2,epsilon, left}
\phi_Sg_S(z\rho(z_\rho^\natural)^{-1}) \Ad_{z_\rho^\natural}(u_{2,\rho}^{-1}(u_{1,\rho}^-)^{-1})=\phi_Sg_S(z\rho(z_\rho^\natural)^{-1}) \Ad_{z_\rho^\natural}(u_{2,\rho})^{-1}\Ad_{z_\rho^\natural}(u_{1,\rho}^-)^{-1}.
\end{align}
By the assumption that $z, z_\rho^\natural\in \cZ(L_S)_0$ and $|\gamma_{-\Pi\backslash S}(\rho)|\rightarrow 0$, we have  $\Ad_{z_\rho^\natural}(u_{1,\rho}^-)^{-1}\rightarrow I$ and $\Ad_{z_\rho^\natural}(u_{2,\rho})^{-1}\in u_{2,\rho}^{-1}\cdot N_{P_S}$. For any $\beta\in S$, we compare $|b_{\lambda_\beta^\vee}|$ on both sides of (\ref{eq: Ad_u2,epsilon, left}) after multiplying $\Ad_{z_\rho^\natural}(u^-_{1,\rho})$ on the right to each side, and get 
\begin{align*}
|b_{\lambda_{\beta^\vee}}^S(g_S)|\cdot |\lambda_{\beta^\vee}(z\rho(z_\rho^\natural)^{-1})|=|b_{\lambda_{\beta^\vee}}((u_\rho^{-1}Q_\rho^{-1}g_{S,\rho}^\natural)\Ad_{z_\rho^\natural}(u^-_{1,\rho}))|.
\end{align*}
Suppose $g_{S,\rho}^\natural$ is contained in a fixed bounded (i.e. compact) domain $\fQ$ in $L_S^\der$, for $|\gamma_{-\Pi\backslash S}(\rho)|\rightarrow 0$ with $(g_S, \xi_S; z, t)$ fixed, then by (\ref{eq: beta in S, z}) and the uniform boundedness of the right-hand-side, we see that $|b_{\lambda_{\beta^\vee}}^S(g_S)|$ is uniformly bounded.  Hence by Proposition \ref{prop: proper b map}, $g_S$ is contained in a fixed bounded domain (that only depends on $\fQ$) in $L_S^\der$. This implies (i).

For (ii), we use (\ref{eq: Ad_u2,epsilon, 2}) and (\ref{eq: Ad_u2,epsilon, left}) again, and get
\begin{align*}
\phi_Sg_{S,\rho}^\natural \Ad_{z_\rho^\natural}(u_{1,\rho}^-)=\phi_SQ_\rho u_\rho\phi_S g_S(z\rho(z_\rho^\natural)^{-1})\Ad_{z_\rho^\natural}(u_{2,\rho})^{-1}.
\end{align*}
For any $\beta\in S$, we compare $|b_{\lambda_\beta^\vee}|$ on both sides. Using $ \Ad_{z_\rho^\natural}(u_{1,\rho}^-)\in N^-_{P_S}$, we have 
\begin{align}
\nonumber&|b_{\lambda_{\beta^\vee}}^S(g_{S,\rho}^\natural)|=|b_{\lambda_{\beta^\vee}}(\phi_Sg_{S,\rho}^\natural \Ad_{z_\rho^\natural}(u_{1,\rho}^-))|\\
\label{eq: RHS, Ad_u2,epsilon, 2}&=|b_{\lambda_{\beta^\vee}}(\phi_SQ_\rho u_\rho\phi_S g_S(z\rho(z_\rho^\natural)^{-1}))|, (\text{fixing }g_S, \xi_S; z, t).
\end{align}
Since (\ref{eq: RHS, Ad_u2,epsilon, 2}) above is uniformly bounded for $(g_S, \xi_S, z,t)$ in a fixed compact region $\fQ'$ in $\fU_S$  (near the limit of $\rho$), $|b_{\lambda_{\beta^\vee}}^S(g_{S,\rho}^\natural)|$ is uniformly bounded. Hence by Proposition \ref{prop: proper b map} again, $(g_{S,\rho}^\natural, \xi_{S, \rho}^\natural)$ is contained in a fixed compact region in $J_{L_S^\der}$ depending only on $\fQ'$. This proves (ii). 
\end{proof}

\begin{lemma}\label{lemma: nonzero lowest component}
\item[(i)] Let $V_{\lambda}$ be the irreducible highest weight representation of $G_{sc}$ corresponding to $\lambda\in X^*(T_{sc})^+$, and let $v_{\lambda}$ be a fixed highest weight vector. 
Then for any vector $v\in G_{sc}\cdot v_{\lambda}\subset V_{\lambda}$ that generates an invariant line of a Lie algebra element $f+\xi_1\in f+\fb$, it has a nonzero lowest weight component with weight $w_0(\lambda)$.  

\item[(ii)] Let $\fK\subset \fb$ (resp. $Q\subset G$) be a compact subset. Let $V_{\lambda}^{w_0(\lambda), \circ}$ the open subset of $V_\lambda$ consisting of vectors with \emph{nonzero} weight component in $w_0(\lambda)$. 
Then the subset in $V_\lambda$ defined by 
\begin{align*}
V_{\lambda}^{\fK, Q}:=\{v\in Q\cdot v_\lambda: \bC\cdot v\text{ is an invariant line of some element in }f+\fK\}
\end{align*}
is compact in $V_{\lambda}^{w_0(\lambda),\circ}$. In particular, the function $|(-, v_{-w_0(\lambda)})|: V_\lambda\rightarrow \bR_{\geq 0}$, for a fixed highest weight vector $v_{-w_0(\lambda)}$ in $V_\lambda^*\cong V_{-w_0(\lambda)}$, has a strictly positive minimum and a finite maximum on $V_\lambda^{\fK, Q}$, if $V_\lambda^{\fK, Q}\neq \emptyset$. 
\end{lemma}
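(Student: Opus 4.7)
The plan is to establish part (i) via a Bruhat decomposition analysis on $g \in G_{sc}$ with $v = g v_\lambda$, and then to deduce part (ii) as a direct compactness consequence.

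For part (i), I would begin by interpreting the hypothesis geometrically: since the line $\bC v$ is invariant under $f + \xi_1$, this element lies in the Lie algebra of the stabilizer of $\bC v$ in $G_{sc}$. As $v = g v_\lambda$, this stabilizer equals $g P_\lambda g^{-1}$, where $P_\lambda$ is the parabolic subgroup fixing the highest weight line $\bC v_\lambda$; hence the hypothesis is equivalent to $g^{-1}(f + \xi_1) g \in \fp_\lambda$. I would then Bruhat-decompose $g = b_1 \dot{w} b_2$ with $w \in W^\lambda$ the minimal length coset representative of $W/W_\lambda$ and $b_1, b_2 \in B$. Writing $b_1 = t_1 n_1 \in TN$ and using Kostant's fact that $\Ad(N)$ preserves $f + \fb$, a direct computation yields
\begin{align*}
b_1^{-1}(f + \xi_1) b_1 \in \sum_{\alpha \in \Pi} c_\alpha f_\alpha + \fb, \qquad c_\alpha = \alpha(t_1)^{-1} \neq 0.
\end{align*}
Conjugating further by $\dot{w}^{-1}$ must land in $\fp_\lambda$, and comparing root-space supports of the two sides produces, for every $\alpha \in \Pi$, the constraint $w^{-1}\alpha \in \Delta^- \cup \Delta_\lambda^+$.

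The heart of the argument, and the step I expect to be the main obstacle, is deducing from this constraint that $w = w_0 w_\lambda$, where $w_\lambda$ denotes the longest element of $W_\lambda$. The membership $w \in W^\lambda$ implies $w(\Delta_\lambda^+) \subset \Delta^+$, so in particular $w^{-1}\alpha \notin \Delta_\lambda^-$ for $\alpha \in \Pi$; combining with the above constraint sharpens it to $w^{-1}\alpha \in (\Delta^- \setminus \Delta_\lambda^-) \cup \Delta_\lambda^+$. Applying $w_\lambda$, which swaps $\Delta_\lambda^+$ with $\Delta_\lambda^-$ while preserving $\Delta^{\pm} \setminus \Delta_\lambda^{\pm}$ setwise, one obtains $(w_\lambda w^{-1})\alpha \in \Delta^-$ for every $\alpha \in \Pi$. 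Since the only element of $W$ sending every simple positive root to a negative root is $w_0$, this forces $w_\lambda w^{-1} = w_0$, hence $w = w_0 w_\lambda$. Consequently $w\lambda = w_0 w_\lambda \lambda = w_0 \lambda$ (as $w_\lambda \in W_\lambda$ stabilizes $\lambda$), and unwinding the Bruhat decomposition gives $v = \lambda(b_2)\, b_1 v_{w_0 \lambda}$, whose $w_0(\lambda)$-weight component is the nonzero scalar $\lambda(b_2)\,(w_0\lambda)(t_1)$.

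For part (ii), the plan is to realize $V_\lambda^{\fK, Q}$ as the image, under the continuous map $(g, \xi_1) \mapsto g v_\lambda$, of the closed subset
\begin{align*}
\widetilde{V}_\lambda^{\fK, Q} := \{(g, \xi_1) \in Q \times \fK : \bC\cdot g v_\lambda \text{ is }(f + \xi_1)\text{-invariant}\} \subset Q \times \fK.
\end{align*}
Since $Q \times \fK$ is compact and the invariance condition is closed, $\widetilde{V}_\lambda^{\fK, Q}$ is compact, hence so is $V_\lambda^{\fK, Q}$ inside $V_\lambda$. By part (i), every element of $V_\lambda^{\fK, Q}$ has nonzero $w_0(\lambda)$-weight component, so $V_\lambda^{\fK, Q} \subset V_\lambda^{w_0(\lambda), \circ}$; as the latter is open in $V_\lambda$, compactness of $V_\lambda^{\fK, Q}$ is preserved in the subspace topology. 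The continuous function $|(-, v_{-w_0(\lambda)})|$ is strictly positive on $V_\lambda^{w_0(\lambda),\circ}$, so it attains a strictly positive minimum and a finite maximum on the compact set $V_\lambda^{\fK, Q}$ (when nonempty).
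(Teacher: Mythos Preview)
Your proposal is correct, and part (ii) matches the paper's argument almost exactly (the paper packages the same compactness via an incidence variety in $\fb \times \bP(V_\lambda)$, but the content is identical). Part (i), however, takes a genuinely different route from the paper.

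The paper's argument for (i) is considerably shorter and more direct: it never identifies the Bruhat cell of $g$. Instead, it works with the weight expansion of $v$ itself. If the $w_0(\lambda)$-component of $v$ vanishes, then $v = a v_\mu + \sum_{\mu \precneq \mu'} q_{\mu'}$ for some extremal weight $\mu \neq w_0(\lambda)$ with $a \neq 0$. Applying $f+\xi_1$ and reading off the weight-$(\mu-\alpha)$ component of $(f+\xi_1)v \in \bC v$ forces $f_\alpha v_\mu = 0$ for every simple $\alpha$, so $v_\mu$ is a lowest weight vector and $\mu = w_0(\lambda)$, a contradiction. That is the entire proof.

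Your approach instead translates the invariance condition into $\Ad_{g^{-1}}(f+\xi_1)\in\fp_\lambda$, Bruhat-decomposes $g$, and carries out a root-system computation to pin down $w = w_0 w_\lambda$. This is correct and perhaps more structural, but it is longer and requires more bookkeeping (for instance, your decomposition should read $g = b_1\dot w\,p$ with $p\in P_\lambda$ rather than $b_2\in B$ once you insist $w\in W^\lambda$; this does not affect the argument since $\Ad_p$ preserves $\fp_\lambda$, and similarly the sign in $c_\alpha$ is irrelevant). What your approach buys is an explicit description of where $g$ sits in $G/P_\lambda$; what the paper's approach buys is brevity and the avoidance of any Weyl-group combinatorics.
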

\begin{proof}
(i) Let $P_{\lambda}$ be the standard parabolic that fixes the line generated by $v_{\lambda}$. First, we have the canonical embedding $\iota: G/P_{\lambda}\hookrightarrow \bP(V_{\lambda})$, that sends every $gP_{\lambda}$ to $\bC\cdot gv_{\lambda}$.  Let $N_{P_\lambda}$ be the unipotent radical of $P_{\lambda}$. The left $N_{P_{\lambda}}$ action on $G/P_{\lambda}$ gives the Bruhat decomposition, indexed by the $T$-fixed points $x_{w(\lambda)}, w(\lambda)\in W\cdot \lambda\cong W/W_{\lambda}$ which correspond to the lines generated by the weight vectors $v_{w(\lambda)}$ (defined unique up to scaling). Since the line generated by $v$ in question is in the image of $\iota$, the lemma is equivalent to saying that the corresponding point $\tilde{v}$ in $G/P_{\lambda}$ for $\bC\cdot v$ must lie in $N_{P_{\lambda}}\cdot x_{w_0(\lambda)}$.

Suppose the contrary that $\tilde{v}$ is not in $N_{P_{\lambda}}\cdot x_{w_0(\lambda)}$. Then 
\begin{align*}
\tilde{v}\in \bigsqcup\limits_{\mu\in W\cdot \lambda\backslash \{w_0(\lambda)\}}N_{P_{\lambda}}\cdot x_\mu. 
\end{align*}
In particular, $v=av_\mu+\sum\limits_{\mu\underset{\neq}{\prec}\mu'}q_{\mu'}$ for some $\mu\in W\cdot \lambda\backslash \{w_0(\lambda)\}$, $a\neq 0$ and some weight vectors $q_{\mu'}$ in the weight spaces of $\mu'$. Now apply $f+\xi_1\in f+\fb$ to $v$. The invariance of $\bC\cdot v$ implies that $v_\mu\in \ker f$, i.e. $f_\alpha\cdot v_\mu=0, \forall\alpha\in \Pi$. However, this contradicts to the assumption that $\mu$ is \emph{not} the lowest weight, so part (i) of the lemma follows.

(ii) First, we have the closed incidence subvariety in $\fb\times \bP(V_\lambda)$
\begin{align*}
\cX_{V_\lambda,f+\fb}:=\{(\xi, [v])\in \fb\times \bP(V_\lambda): [v] \text{ is an invariant line of }f+\xi\},
\end{align*}
Note that the condition that $[v]$ is an invariant line of $f+\xi$ is the same as saying that the vector field on $\bP(V_\lambda)$ corresponding to $f+\xi$ vanishes at $[v]$.
We have the projection (resp. proper projection) $p_{\bP(V_\lambda)}: \cX_{V_\lambda,f+\fb}\rightarrow \bP(V_\lambda)$ (resp. $p_\fb: \cX_{V_\lambda,f+\fb}\rightarrow \fb$). Let $\pi: V_\lambda-\{0\}\rightarrow \bP(V_\lambda)$ be the natural projection. Then for the given compact $\fK\subset \fb$ and $Q\subset G$, we have 
\begin{align*}
V_{\lambda}^{\fK, Q}=\pi^{-1}(p_{\bP(V_\lambda)}p_\fb^{-1}(\fK))\cap (Q\cdot v_\lambda). 
\end{align*}
Since $Q\cdot v_\lambda$ is compact inside $V_\lambda-\{0\}$ and $\pi^{-1}(p_{\bP(V_\lambda)}p_\fb^{-1}(\fK))\subset V_\lambda-\{0\}$ is closed, the intersection $V_{\lambda}^{\fK, Q}$ is compact. 

By part (i), $V_{\lambda}^{\fK, Q}\subset V_\lambda^{w_0(\lambda), \circ}$. The last sentence then follows immediately. 
\end{proof}

\begin{cor}\label{cor: cY_S, R, cover}
Fix the setting as in Question \ref{question: mu_D, K, rho}, and use $\cY_{S,R}$ from Lemma \ref{lemma: cY_S, R}. As we increase $R\uparrow\infty$ and for each $R$ choose $\rho\in \cZ(L_S)_0$ with $|\gamma_{-\Pi\backslash S}(\rho)|$ sufficiently small, the map on Hamiltonian reductions $\overline{\frj}_{S,\rho;\kappa}$ (\ref{eq: j_S,rho, reduction}) gives a symplectic covering map over every fixed pre-compact open region inside $\chi^{-1}(D_S^\dagg/W_S)/\cZ(L_S^\der)_0$ (after restriction to the preimage).
\end{cor}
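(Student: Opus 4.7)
The plan is to derive Corollary \ref{cor: cY_S, R, cover} essentially as a combination of Lemma \ref{lemma: cY_S, R}(i)--(ii) with the general principle that a proper local diffeomorphism onto a connected base is a covering map. First I would record that $\overline{\frj}_{S,\rho;\kappa}$ is a local symplectic isomorphism: this follows from the statement accompanying (\ref{eq: j_S,rho, reduction}) together with the fact that $\frj_{S,\rho}$ preserves the holomorphic Liouville 1-form on $\fU_S$, and that the characteristic foliation of $\mu^{-1}_{D,\cK',\rho}(\kappa)$ is mapped by $\frj_{S,\rho}$ to (an open piece of) the characteristic foliation of $\chi_{\cK'_{S^\perp}}^{-1}(\kappa)$, whose leaf space is $\chi_S^{-1}(D'_S/W_S)/\cZ(L_S^\der)_0$ via (\ref{eq: chi_S, mathscr Z}).

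Next, given a pre-compact open $U\subset \chi_S^{-1}(D_S^\dagg/W_S)/\cZ(L_S^\der)_0$ with compact closure $\overline{U}$, I would choose $R_0>0$ large enough that $\overline{U}$ is contained in the image of the natural inclusion $\cY_{S,R_0}/\cZ(L_S^\der)_0\hookrightarrow\chi_S^{-1}(D^\dagg_S/W_S)/\cZ(L_S^\der)_0$ with some definite interior margin. For each $R\ge R_0$, Lemma \ref{lemma: cY_S, R}(i) yields $\epsilon_R>0$ such that for all $\rho$ with $|\gamma_{-\Pi\backslash S}(\rho)|<\epsilon_R$, the $\overline{\frj}_{S,\rho;\kappa}$-image of some fixed tubular neighborhood of the horizontal boundary of $\cY^\dagg_{S,R;\kappa,\rho}/\cZ(L_S^\der)_0$ is disjoint from $\overline{U}$. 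Consequently $(\overline{\frj}_{S,\rho;\kappa})^{-1}(\overline{U})$ is contained in a compact subset of the ``interior'' of $\cY_{S,R}/\cZ(L_S^\der)_0$, and the restriction $(\overline{\frj}_{S,\rho;\kappa})^{-1}(U)\to U$ is proper. A proper local diffeomorphism onto a connected base is a covering map, which gives the corollary once surjectivity (non-emptiness of the cover) is established.

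For surjectivity I would run a continuity/degree argument: as $|\gamma_{-\Pi\backslash S}(\rho)|\to 0$, the multi-section $\cY^\dagg_{S,R;\kappa,\rho}$ converges to $\cY_{S,R}\times\{u_0\}\times\{\kappa\}$ (by the construction in Remark \ref{remark: modify multi-section}) and $\overline{\frj}_{S,\rho;\kappa}$ converges uniformly on compact subsets to the natural inclusion $\cY_{S,R}/\cZ(L_S^\der)_0\hookrightarrow\chi_S^{-1}(D^\dagg_S/W_S)/\cZ(L_S^\der)_0$, whose image contains $\overline{U}$ by the choice of $R_0$. Combined with the covering property obtained above (and the fact that the covering degree is locally constant in $\rho$ and equals $1$ at the limit), this forces the covering to be surjective, and actually a diffeomorphism onto $U$ for $|\gamma_{-\Pi\backslash S}(\rho)|$ sufficiently small.

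The main obstacle I expect is the careful bookkeeping around the modification in Remark \ref{remark: modify multi-section}, namely verifying that $\cY^\dagg_{S,R;\kappa,\rho}/\cZ(L_S^\der)_0$ genuinely parametrizes the leaf space of $\mu^{-1}_{D,\cK',\rho}(\kappa)$ away from the horizontal boundary and that the limit $(c_\beta)\to 0$ reduces $\overline{\frj}_{S,\rho;\kappa}$ to the naive inclusion. Uniformity in $\kappa\in\cK^\dagg_{S^\perp}$ of the chosen $\epsilon_R$ should also be checked, but follows from the compactness of $\overline{\cK^\dagg_{S^\perp}}$ together with the continuity of all constructions in $\kappa$; once this uniformity is in place, the argument above delivers the covering-map statement uniformly over $\kappa$.
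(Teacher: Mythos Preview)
Your overall architecture---local symplectomorphism plus properness yielding a covering map---is sound and matches the paper's. The gap is in your surjectivity/degree step. You assert that as $|\gamma_{-\Pi\backslash S}(\rho)|\to 0$ the map $\overline{\frj}_{S,\rho;\kappa}$ converges to the natural inclusion $\cY_{S,R}/\cZ(L_S^\der)_0\hookrightarrow \chi_S^{-1}(D_S^\dagg/W_S)/\cZ(L_S^\der)_0$, but this is neither proved in the paper nor evident. The map $\overline{\frj}_{S,\rho;\kappa}$ factors through $\frj_{S,\rho}$, which sends points off to infinity in $\fU_S$; composing with the projection to the Hamiltonian reduction via (\ref{eq: chi_S, mathscr Z}) involves a nonlinear change of presentation $(g_S,\xi_S;z\rho,t)\mapsto (g_{S,\rho}^\natural,\xi_{S,\rho}^\natural;z_\rho^\natural,t_\rho^\natural)$. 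The analysis in Lemma~\ref{lemma: cY_S, R} gives $\xi_{S,\rho}^\natural\to\xi_S$ and two-sided bounds relating $|b^S_{\lambda_{\beta^\vee}}(g_{S,\rho}^\natural)|$ to $|b^S_{\lambda_{\beta^\vee}}(g_S)|$, but nowhere does it establish $g_{S,\rho}^\natural\to g_S$ (even mod $\cZ(L_S^\der)_0$). Indeed, the centralizer of the limit element $(f-f_S)+\xi_S+t$ is \emph{not} $C_{L_S^\der}(\xi_S)\cdot\cZ(L_S)_0$, so there is no obvious reason the natural inclusion should appear in the limit. Your conclusion that the covering has degree $1$ would render Proposition~\ref{prop: Ham reduction emb} immediate, yet the paper proves that proposition separately by an entirely different argument via Corollary~\ref{cor: dist star, S}; this is a strong signal that your limiting claim cannot be taken for granted.

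The paper establishes surjectivity by a purely topological argument that avoids any such limit: fix a small $R_0$, use Lemma~\ref{lemma: cY_S, R}(ii) to trap the image of $\cY_{S,R_0;\kappa,\rho}/\cZ(L_S^\der)_0$ inside some connected $\cP_{S,K_1}$, then increase $R$ and shrink $|\gamma_{-\Pi\backslash S}(\rho)|$ so that by Lemma~\ref{lemma: cY_S, R}(i) the image of a neighborhood of the horizontal boundary misses every $\cP_{S,\widetilde{K}}$ with $\widetilde{K}\in[K_1,K^{(n)}]$, while the vertical boundary (via $\xi_{S,\rho}^\natural\to\xi_S$) lands outside $\chi_S^{-1}(D_S^{(n)}/W_S)/\cZ(L_S^\der)_0$. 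Since the image is open, nonempty, and its frontier in $\cP_{S,K^{(n)}}$ is empty, connectedness forces the image to contain $\cP_{S,K^{(n)}}$. A secondary omission in your argument is exactly this vertical-boundary control; you only invoke Lemma~\ref{lemma: cY_S, R}(i), which governs the horizontal boundary, so your properness claim is incomplete without the observation that $(\xi_{S,\rho}^\natural,t_\rho^\natural)\to(\xi_S,t)$ keeps the vertical boundary outside the target region.
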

\begin{proof}
This is a direct consequence of Lemma \ref{lemma: cY_S, R}. Without loss of generality, by enlarging the original $D_S$ to be $\widetilde{D}_S$, we can replace $D_S^\dagg$ by $D_S$. 
It suffices to consider a sequence of pre-compact regions $\cP_{S,K^{(n)}}$ defined by the same equation as for $\cY_{S, K^{(n)}}$ (\ref{eq: def cY_S,R}), with $K^{(n)}\rightarrow\infty$, that are inside $\chi_S^{-1}(D^{(n)}_S/W_S)/\cZ(L_S^\der)_0$ through (\ref{eq: chi_S, mathscr Z}) (contained in the Hamiltonian reduction of $\chi_{\cK'_{S^\perp}}$ at $\kappa$). Here $D^{(n)}_S$ is an increasing sequence of $W_S$-invariant pre-compact open in $D_S$ with $\bigcup\limits_n D^{(n)}_S=D_S$.  Since the image of $\cY_{S,R_0;\kappa, \rho}/\cZ(L_S^\der)_0$ under the map $\overline{\frj}_{S,\rho;\kappa}$, for some fixed $R_0>0$, is contained in a compact region in the target, as $|\gamma_{-\Pi\backslash S}(\rho)|\rightarrow 0$, we can choose $K_1\gg R_0$, such that $\cP_{S,K_1}$ contains the same image (note that $\cP_{S,K_1}$ is connected for $K_1$ sufficiently large). For any $K^{(n)}>K_1$, as we increase $R$ towards $\infty$ and at the same time let $|\gamma_{-\Pi\backslash S}(\rho)|\rightarrow 0$, we have 
$\overline{\frj}_{S,\rho;\kappa}(\cY_{S,R; \kappa, \rho}/\cZ(L_S^\der)_0)\supset \cP_{S,K_1}$
and 
\begin{align*}
\overline{\frj}_{S,\rho;\kappa}(Nb(\partial^h(\cY_{S,R}/\cZ(L_S^\der)_0)))\cap Nb(\partial^h \cP_{S,\widetilde{K}})=\emptyset, \forall\widetilde{K}\in [K_1, K^{(n)}], 
\end{align*}
where $Nb(\partial^h--)$ stands for a fixed tubular neighborhood of the ``horizontal boundary"of $\cY_{S,R; \kappa, \rho}/\cZ(L_S^\der)_0$ and $\cP_{S,K_1}$ respectively, in the same sense as in Lemma \ref{lemma: cY_S, R} (i). On the other hand, a sufficiently thin tubular neighborhood of the ``vertical boundary" of $\cY_{S,R;\kappa, \rho}/\cZ(L_S^\der)_0$, given by the intersection of a thin neighborhood of $\chi_S^{-1}(\partial D_S/W_S)/\cZ(L_S^\der)_0$ with its closure, has image outside the closure of $\chi_S^{-1}(D^{(n)}_S/W_S)/\cZ(L_S^\der)_0$, for the reason that $(\xi_{S, \rho}^\natural, t_\rho^\natural)\rightarrow (\xi_S, t)$ when $|\gamma_{-\Pi\backslash S}(\rho)|\rightarrow 0$ as in the proof Lemma \ref{lemma: cY_S, R}. 
So these imply that $\overline{\frj}_{S,\rho;\kappa}(\cY_{S,R;\kappa, \rho}/\cZ(L_S^\der)_0)\supset \cP_{S,K^{(n)}}$, and it must be a covering map from the preimage of $\overline{\frj}_{S,\rho;\kappa}$ over $\cP_{S,K^{(n)}}$. 
\end{proof}

Using Lemma \ref{lemma: cY_S, R}, we also have direct analogue of Lemma \ref{lemma: proj X_eta, c}, Proposition \ref{prop: C_lhd, j} and 
Corollary \ref{cor: dist star}, for which we only state in the form of the corollary that will be applied later. In the following, we fix a $\cZ(L_S^\der)$-invariant complete metric on $J_{L_S^\der}$, e.g. the complete hyperKahler metric constructed in \cite{Bielawski}. Then it determines a complete $\cZ(L_S)_0$-invariant metric on $\fU_S$ by the Killing form restricted to $\fz_S$. 

\begin{cor}\label{cor: dist star, S}
Fix any open cone $C_{\lhd, S}\subset \mathring{\fz}_S\cap \ft_\bR^+$ such that $\overline{C}_{\lhd, S}-\{0\}\subset \mathring{\fz}_S\cap \ft_\bR^+$. For any $\delta>0$, there exists $\epsilon_{C_{\lhd, S}}>0$ such that for any $(g_S, \xi_S; z, t)\in \cW_{\cY_S, \cV, \cK^\dagg}$, $\rho_1\in \cZ(L_S)_0$ satisfying $|\gamma_{-\Pi\backslash S}(\rho_1)|<\epsilon_{C_{\lhd, S}}$ and $\rho'\in (\cZ(L_S)_0)_{C_{\lhd, S}}$, we have 
\begin{align*}
\rho'\star \frj_{S, \rho_1}(g_S, \xi_S; u_0, t)\in \fU_S, 
\end{align*}
\begin{align}
\label{eq: dist, rho'rho_1, S}&dist(\rho'\star \frj_{S,\rho_1}(g_S, \xi_S; z, t), \frj_{S, \rho'\rho_1}(g_S, \xi_S; z, t))<\delta, 
\end{align}
where the $\cZ(L_S)_0$-action $\star$ is the one on $\chi^{-1}(\chi_\ft(\cQ_{D, \cK'}))\subset J_G$ with respect to the projection $\chi_\ft(\cQ_{D, \cK'})\rightarrow \cK'_{S^\perp}\subset \mathring{\fz}_S$ (cf. Remark \ref{remark: cK in fz_S}).
Moreover, 
\begin{align}
\label{eq: dist, W, S}&dist(w^{-1}(\rho')\star \frj_{S,\rho_1}(u,w(t))), \frj_{S, \rho'\rho_1}(u,w(t)))<\delta, \forall w\in N_W(W_S). 
\end{align}
Here the distance is taken with respect to the fixed $\cZ(L_S)_0$-invariant metric on $\fU_S$. 
\end{cor}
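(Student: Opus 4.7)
The plan is to adapt the three-step strategy from Subsection \ref{subsubsec: B_w_0} (the analogues of Lemma \ref{lemma: proj X_eta, c}, Proposition \ref{prop: C_lhd, j}, and Corollary \ref{cor: dist star}) to the present setting, with the role of $T$ played by $\cZ(L_S)_0$ and the role of $\gamma_{-\Pi}$ played by $\gamma_{-\Pi\backslash S}$. Throughout, we fix $\cY_S = \cY_{S,R}$ for some large $R$ as in (\ref{eq: def cY_S,R}) and work inside $\cW_{\cY_S,\cV,\cK^\dagg}$, using Lemma \ref{lemma: cY_S, R} to control the $J_{L_S^\der}$-component under $\frj_{S,\rho}$.

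First I would establish the analogue of Lemma \ref{lemma: proj X_eta, c}: from the universal deformation formula (\ref{eq: mu_D,K, canonical}), the map $\widetilde{\mu}_{D,\cK',(c_\beta)_\beta}$ admits a convergent analytic expansion in the parameters $(c_\beta)_{\beta\in\Pi\backslash S}$ whose constant term is the projection to $\cK'_{S^\perp}$ (invariant under the $\cZ(L_S)_0$-action on the $T^*\cZ(L_S)$-factor). For fixed open cones $C_{\lhd,S} \,\dot\subset\, C_{\lhd,S}' \,\dot\subset\, \mathring{\fz}_S\cap \ft_\bR^+$ and any $\eta\in C_{\lhd,S}$, the Hamiltonian vector field $X_{\eta,(c_\beta)}$ on the pre-compact domain $\cW_{\cY_S,\cV,\cK^\dagg}$ is then an $O(|(c_\beta)|\cdot|\eta|)$ perturbation of the invariant $\cZ(L_S)_0$-vector field $\eta$; in particular, for $|(c_\beta)|$ small its projection to $T_{u_0}\cZ(L_S)_0 \cong \fz_S$ stays in $C_{\lhd,S}' + \fz_{S,c}$.

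Next, for the analogue of Proposition \ref{prop: C_lhd, j}, I would run the same confinement-and-ODE argument as in the $S=\emptyset$ case. For $\rho'_c \in \cZ(L_S)_{0,\cpt}$ and $\eta\in C_{\lhd,S}$, let $\Upsilon_\eta(s) = \rho'_c\cdot\exp(s\eta)$ and set $\rho_\eta(s) = \proj_{\cZ(L_S)_0}(\Upsilon_\eta(s)\star \frj_{S,\rho_0}(g_S,\xi_S;u_0,\widetilde{\kappa}))$. The previous step shows $\rho_\eta(s)$ remains in the enlarged cone for all $s\geq 0$, provided $|\gamma_{-\Pi\backslash S}(\rho_0)|$ is small; Lemma \ref{lemma: cY_S, R} (ii) simultaneously confines the $J_{L_S^\der}$-component. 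Looking at each $|\beta(\rho_\eta(s))|$ for $\beta\in\Pi\backslash S$ and letting $F_\beta(s) = \log|\beta(\rho_\eta(s))e^{-\beta(\eta)s}|$, the ODE
\[
\beta(\rho_\eta(s))^{-1}\tfrac{d}{ds}\beta(\rho_\eta(s)) = \beta(\eta) + O(|\beta(\rho_\eta(s))|^{-1/K}\cdot|\eta|)
\]
is of exactly the same shape as (\ref{eq: beta, rho_eta, d}); integrating as in Proposition \ref{prop: C_lhd, j} yields the exponential decay of $F_\beta(s)-\log|\beta(\rho_\eta(0))|$ and of $\arg\beta(\rho_\eta(s))-\arg\beta(\rho_\eta(0))$, uniformly for $\rho'\in (\cZ(L_S)_0)_{C_{\lhd,S}}$. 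Retranslating this estimate, one obtains the distance bound (\ref{eq: dist, rho'rho_1, S}).

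Finally, for the Weyl equivariance (\ref{eq: dist, W, S}), I would invoke Remark \ref{remark: cK in fz_S} applied to $w\in N_W(W_S)$, which preserves $\fz_S$ and so acts on $\cZ(L_S)_0$. Running the already-established estimate (\ref{eq: dist, rho'rho_1, S}) in the chart attached to $w(\cK')$ (so with the $\cZ(L_S)_0$-action there) and translating via the diagram of Remark \ref{remark: cK in fz_S} converts the acting element $\rho'$ (defined with respect to $\chi_\ft(\cK')$) into $w^{-1}(\rho')$, which yields (\ref{eq: dist, W, S}). The hard part of the argument is step two: because $\fU_S$ no longer splits as a product of $T^*\cZ(L_S)$ with a fixed piece, one must use Lemma \ref{lemma: cY_S, R} simultaneously with the ODE to ensure that the $J_{L_S^\der}$-coordinates $(g_{S,\rho}^\natural,\xi_{S,\rho}^\natural)$ along the flow remain in a compact region where the $O(\cdot)$ constants $M, K, \widetilde M'$ in Lemma \ref{lemma: proj X_eta, c}'s analogue are truly uniform; fortunately Lemma \ref{lemma: cY_S, R} is tailored exactly for this, so once the confinement is in place the remaining estimates are formally identical to those of Subsection \ref{subsubsec: B_w_0}.
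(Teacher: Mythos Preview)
Your overall three-step plan (analogues of Lemma \ref{lemma: proj X_eta, c}, Proposition \ref{prop: C_lhd, j}, Corollary \ref{cor: dist star}) matches the paper, which says explicitly that only the confinement claim $\rho'\star\frj_{S,\rho_1}(g_S,\xi_S;u_0,t)\in\fU_S$ requires new work; your handling of the vector-field estimate and of (\ref{eq: dist, W, S}) via Remark \ref{remark: cK in fz_S} is fine. The gap is in the confinement. You invoke Lemma \ref{lemma: cY_S, R} (ii) to ``confine the $J_{L_S^\der}$-coordinates $(g_{S,\rho}^\natural,\xi_{S,\rho}^\natural)$ along the flow,'' but these \emph{target} coordinates (in the presentation (\ref{eq: chi_S, mathscr Z})) are automatically confined: $\star$ is precisely the $\cZ(L_S)_0$-action in that chart, so their Hamiltonian-reduction class is $\star$-invariant. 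What must be confined is the \emph{source} $(g_S,\xi_S)$-coordinate of $\frj_{S,\rho}^{-1}(\Upsilon_\eta(s)\star\frj_{S,\rho_0}(\cdots))$ inside $\cY_S$, since the Step-1 estimate lives on the pre-compact $\cW_{\cY_S,\cV,\cK}$ and the $\star$-flow could a priori leave $\fU_S$ altogether; part (ii) of Lemma \ref{lemma: cY_S, R} alone does not give this.

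The paper closes this with a nested-region contradiction. Take $\cY_{S,R_1}\subset\cY_{S,R_2}$ with $R_1\ll R_2$; both parts of Lemma \ref{lemma: cY_S, R} (packaged via Corollary \ref{cor: cY_S, R, cover}) yield the sandwich (\ref{eq: daggdagg, X, dagg}): the reduction image of $\cY^{\dagg\dagg}_{S,R_1}$ sits in a fixed compact $\cX$, and $\cX$ is interior to the reduction image of $\cY^\dagg_{S,R_2}$. If the flow starting in $\cW_{\cY^{\dagg\dagg}_{S,R_1},\cV,\cK^\dagg}$ first exits $\bigcup_{\widetilde\rho}\frj_{S,\widetilde\rho}(\cW_{\cY^\dagg_{S,R_2},\cV,\cK})$ at time $r$, one checks it must do so through the $\partial\cY^\dagg_{S,R_2}$-boundary; but then $\overline{\frj}_{S,\rho_\eta(r);\kappa}$ applied to the exit point is simultaneously in $\cX$ (it equals the original, $\star$-invariant, reduction class) and outside $\cX$ (by the boundary condition in (\ref{eq: daggdagg, X, dagg})), a contradiction. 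With this confinement in hand, the ODE estimates are indeed formally identical to those of Subsection \ref{subsubsec: B_w_0}, as you say.
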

\begin{proof}
This follows essentially from the same proof for Lemma \ref{lemma: proj X_eta, c}, Proposition \ref{prop: C_lhd, j} and 
Corollary \ref{cor: dist star}. Only the part on  
\begin{align*}
\rho'\star \frj_{S, \rho_1}(g_S, \xi_S; u_0, t)\in \fU_S
\end{align*}
needs additional clarification. To show this, we consider $\cY_{S, R_1}\subset \cY_{S, R_2}$ (cf. (\ref{eq: def cY_S,R})) for some $0<R_1\ll R_2$. Then by Corollary \ref{cor: cY_S, R, cover}, for $R_2/R_1$ sufficiently large, there exists $\epsilon_{R_1, R_2}>0$ and a fixed compact region $\cX$ in $\chi_S^{-1}(D^\dagg/W_S)/\cZ(L_S^\der)_0$ contained in the right-hand-side of (\ref{eq: j_S,rho, reduction}), such that for all $\rho\in \cZ(L_S)_0$ satisfying $|\gamma_{-\Pi\backslash S}(\rho)|<\epsilon_{R_1, R_2}$, we have for all $\kappa\in \cK_{S^\perp}^\dagg$ (using $\overline{\frj}_{S, \rho;\kappa}$ from (\ref{eq: j_S,rho, reduction}))
\begin{align}
\label{eq: daggdagg, X, dagg}&\overline{\frj}_{S, \rho;\kappa}(\cY^{\dagg\dagg}_{S, R_1;\kappa, \rho}/\cZ(L_S^\der)_0)\subset \cX\subset \overline{\frj}_{S, \rho;\kappa}(\cY^\dagg_{S, R_2;\kappa, \rho}/\cZ(L_S^\der)_0), 
\end{align}
where (i) $D^{\dagg\dagg}$ is defined in the same way as $D^\dagg$ and satisfies $\overline{D^{\dagg\dagg}}\subset D^\dagg$, $\cY_{S, R_2}^{\dagg}\subset \chi_S^{-1}(D^{\dagg}/W_S)$ (resp. $\cY_{S, R_1}^{\dagg\dagg}\subset \chi_S^{-1}(D^{\dagg\dagg}/W_S)$) is defined by (\ref{eq: def cY_S,R}) using $D^\dagg$ (resp. $D^{\dagg\dagg}$); (ii) in the second inclusion, $\cX$ is disjoint from a tubular neighborhood of the boundary of $\overline{\frj}_{S, \rho;\kappa}(\cY^\dagg_{S, R_2}/\cZ(L_S^\der)_0)$. 

Now by a direct analogue of Lemma \ref{lemma: proj X_eta, c} with $\overline{C}_{\lhd, S}\subset C'_{\lhd, S}$ given and $\epsilon_{C_{\lhd, S}}>0, M>0$ satisfying the corresponding conclusions,  
we claim that for any $(g_S, \xi_S; u_0, t)\in  \cW_{\cY^{\dagg\dagg}_{S, R_1}, \cV,\cK^\dagg}$, we have 
\begin{align}\label{eq: cW_cY_S_dagg, star}
\rho'\star \frj_{S, \rho_1}(g_S, \xi_S; u_0, t)\in \bigcup\limits_{\widetilde{\rho}\in (\cZ(L_S)_0)_{C'_{\lhd, S}}}\frj_{S, \widetilde{\rho}}(\cW_{\cY^{\dagg}_{S, R_2}, \cV,\cK})
\end{align}
for all $\rho_1$ satisfying $|\gamma_{-\Pi\backslash S}(\rho_1)|<\widetilde{\epsilon}_{C_{\lhd, S}}:=\min\{\epsilon_{R_1, R_2}, \epsilon_{C_{\lhd, S}}\}$ and $\rho'\in  (\cZ(L_S)_0)_{C_{\lhd, S}}$.

Suppose the contrary, for some $(g_S, \xi_S;u_0, t)$, $\eta\in C_{\lhd, S}$, $\rho_c'\in (\cZ(L_S)_0)_c$ and the corresponding curve $\Upsilon_{\eta}(s):= \rho_c'\cdot \exp(s\cdot \eta), s\geq 0$, there exists $r>0$ such that $\Upsilon_{\eta}(r)\star \frj_{S, \rho_1}(g_S, \xi_S;u_0, t)$ is \emph{not} in the right-hand-side of (\ref{eq: cW_cY_S_dagg, star}). 
Let 
\begin{align*}
&\overline{\rho}_\eta(s):=u_0^{-1}\proj_{\cZ(L_S)_0/\cZ(L^\der_S)_0}(\Upsilon_{\eta}(s)\star \frj_{S, \rho_1}(g_S, \xi_S;u_0, t))\in \cZ(L_S)_0/\cZ(L^\der_S)_0,\\
&\kappa=\mu_{D, \cK, \rho_1}((g_S, \xi_S;u_0, t)). 
\end{align*}
For any $s\geq 0$ in the (largest connected) interval when $\overline{\rho}_\eta(s)$ is well defined, i.e. when $\Upsilon_{\eta}(s)\star \frj_{S, \rho_1}(g_S, \xi_S;u_0, t))$ is contained in $\fU_S$, we fix a representative $\rho_\eta(s)$ of $\overline{\rho}_\eta(s)$ in $\cZ(L_S)_0$. 

Since 
\begin{align*}
|\gamma_{-\Pi\backslash S}(\rho_\eta(s))|\leq |\gamma_{-\Pi\backslash S}(\rho_1)|<\widetilde{\epsilon}_{C_{\lhd, S}},
\end{align*}
for all $s\geq 0$ in the defining interval of $\overline{\rho}_\eta(s)$,  
we have the minimum of such $r$ satisfies 
\begin{align}\label{eq: boundary cW, dagg}
\Upsilon_{\eta}(r)\star \frj_{S, \rho_1}(g_S, \xi_S;u_0, t)\in \partial \cY^{\dagg}_{S, R_2}\underset{\cZ(L_S^\der)_0}{\times}(\bigcup\limits_{\widetilde{\rho}\in (\cZ(L_S)_0)_{C'_{\lhd, S}}}\frj_{S, \widetilde{\rho}} (\cV_{S^\perp})\times \cK_{S^\perp}).
\end{align}
Here we use that 
\begin{align*}
\proj_{\cZ(L_S)_0/\cZ(L^\der_S)_0}\frj^{-1}_{S, \rho_\eta(r)}(\Upsilon_{\eta}(r)\star \frj_{S, \rho_1}(g_S, \xi_S;u_0, t))=u_0\text{ mod }\cZ(L_S^\der)_0,
\end{align*}
and that whenever 
\begin{align*}
\proj_{J_{L_S^\der}/\cZ(L_S^\der)_0}\Upsilon_{\eta}(s)\star \frj_{S, \rho_1}(g_S, \xi_S;u_0, t)\subset \overline{\cY^\dagg_{S, R_2}}/\cZ(L_S^\der)_0,
\end{align*}
we have 
\begin{align*}
\proj_{\cK'_{S^\perp}}(\Upsilon_{\eta}(r)\star \frj_{S, \rho_1}(g_S, \xi_S;u_0, t))\overset{\text{close}}{\sim} \mu_{D,\cK, \Upsilon_{\eta}(r)}(\frj_{\Upsilon_{\eta}(r)}^{-1}(S, \Upsilon_{\eta}(r)\star \frj_{S, \rho_1}(g_S, \xi_S;u_0, t)))=\kappa
\end{align*}
(hence also close to $t$). So we can exclude the other boundaries of the right-hand-side of (\ref{eq: cW_cY_S_dagg, star}) for the minimum $r$. 

However, on one hand, we have 
\begin{align*}
\overline{\frj}_{S, \rho_\eta(r);\kappa}(\frj^{-1}_{S, \rho_\eta(r)}(\Upsilon_{\eta}(r)\star \frj_{S, \rho_1}(g_S, \xi_S;u_0, t)))=\overline{\frj}_{S, \rho_1;\kappa}(g_S, \xi_S;u_0, t)\in \overline{\frj}_{S, \rho;\kappa}(\cY^{\dagg\dagg}_{S, R_1;\kappa, \rho}/\cZ(L_S^\der)_0)\subset \cX, 
\end{align*}
while on the other hand,  (\ref{eq: boundary cW, dagg}) and (\ref{eq: daggdagg, X, dagg}) imply that  
\begin{align*}
\overline{\frj}_{S, \rho_\eta(r);\kappa}(\frj^{-1}_{S, \rho_\eta(r)}(\Upsilon_{\eta}(r)\star \frj_{S, \rho_1}(g_S, \xi_S;u_0, t)))\not\in \cX,
\end{align*}
which gives a contradiction. 
\end{proof}

Now we can give an answer to Question \ref{eq: D, K, dagger} (ii). 

\begin{prop}\label{prop: Ham reduction emb}
The covering map in Corollary \ref{cor: cY_S, R, cover} is one-to-one. 
\end{prop}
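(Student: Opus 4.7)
The plan is to show the covering has degree 1 via a continuity (deformation) argument that reduces the problem to the degenerate base case $\rho \in \ker \gamma_{-\Pi\backslash S}$, where the map is manifestly injective.

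First I would analyze the base case $(c_\beta)_{\beta \in \Pi \backslash S} = 0$. In this case $\widetilde{\mu}_{D, \cK', 0} = \pr_{\cK_{S^\perp}}$, so the level set
\[
\widetilde{\mu}_{D,\cK',0}^{-1}(\kappa) = \cY_S \underset{\cZ(L_S^\der)_0}{\times} (\cV_{S^\perp} \times \{\kappa\})
\]
has characteristic foliation identical to the $\cZ(L_S)_0$-orbit foliation on the $\cV_{S^\perp}$-factor. The reduction at $\kappa$ is canonically $\cY_S / \cZ(L_S^\der)_0$, and unwinding the identification (\ref{eq: chi_S, mathscr Z}), the composition in (\ref{eq: j_S,rho, reduction}) reduces to the natural open inclusion
\[
\cY^\dagg_S/\cZ(L_S^\der)_0 \hookrightarrow \chi_S^{-1}(D'_S/W_S)/\cZ(L_S^\der)_0,
\]
which is obviously one-to-one. (Note the $\frj_{S,\rho}$-twist becomes trivial after quotienting by $\cZ(L_S)_0$ in the target.) Hence the covering has degree $1$ at $\rho \in \ker\gamma_{-\Pi\backslash S}$.

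Next I would set up a continuous family linking the base case to the general case of small $|\gamma_{-\Pi\backslash S}(\rho)|$. The data $(\widetilde{\mu}_{D, \cK', (c_\beta)_\beta}, \cY^\dagg_{S, \kappa, (c_\beta)_\beta})$ depends smoothly on $(c_\beta)_\beta$ in a neighborhood of the origin in $\bC^{\Pi\backslash S}$, with the averaging procedure of Remark \ref{remark: modify multi-section} preserving smoothness in the deformation parameter. Applying an $S$-analogue of the Hamiltonian isotopy in Proposition \ref{lemma: empty, Ham isotopy} (constructed $\cZ(L_S^\der)_0$-equivariantly along each level set using the trivializing sections provided by Proposition \ref{eq: tilde mu, embed}), I would obtain a smooth identification of the sources $\cY^\dagg_{S, \kappa, (c_\beta)_\beta} / \cZ(L_S^\der)_0$ with $\cY^\dagg_S / \cZ(L_S^\der)_0$ for $|(c_\beta)_\beta|$ small. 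Under this identification, the family $\overline{\frj}_{S, \rho; \kappa}$ becomes a continuous family of maps from a fixed source to the fixed target $\chi_S^{-1}(D'_S/W_S)/\cZ(L_S^\der)_0$, reducing at $(c_\beta)_\beta = 0$ to the injective open inclusion of the base case.

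Finally I would conclude by local constancy of the covering degree. For any connected pre-compact $\cP \subset \chi_S^{-1}(D_S^\dagg/W_S)/\cZ(L_S^\der)_0$, the degree of the covering $\overline{\frj}_{S, \rho; \kappa}^{-1}(\cP) \to \cP$ is a locally constant integer-valued function of $\rho$ throughout the parameter region where Corollary \ref{cor: cY_S, R, cover} applies. By the base case this function equals $1$ on $\ker\gamma_{-\Pi\backslash S}$; by connectedness of each component of $\{|\gamma_{-\Pi\backslash S}(\rho)| < \epsilon\}$ (which is a small neighborhood of one element of $\ker\gamma_{-\Pi\backslash S}$, since $\gamma_{-\Pi\backslash S}$ is a local diffeomorphism near any point of its kernel in the adjoint setting), the degree equals $1$ on each such component. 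Shrinking the threshold $\epsilon = \epsilon_R > 0$ from Corollary \ref{cor: cY_S, R, cover} if necessary, we obtain injectivity for the covering as stated.

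The main obstacle is constructing the $\cZ(L_S^\der)_0$-equivariant Hamiltonian isotopy analogous to Proposition \ref{lemma: empty, Ham isotopy} for nonempty $S$: specifically, ensuring that the isotopy descends to a smooth identification on the $\cZ(L_S^\der)_0$-quotient and restricts compatibly to preimages of a fixed pre-compact $\cP$. This requires carefully combining the averaging construction of Remark \ref{remark: modify multi-section} with the trivializing sections from Proposition \ref{eq: tilde mu, embed}, and verifying uniform control near the boundary of $\cW_{\cY_S, \cV, \cK}$ so that the family of covering maps over $\cP$ does in fact vary continuously in $\rho$.
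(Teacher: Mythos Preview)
Your deformation approach has a genuine gap stemming from a confusion about the parameter space. The map $\gamma_{-\Pi\backslash S}: \cZ(L_S)_0 \to (\bC^\times)^{\Pi\backslash S} \subset \bC^{\Pi\backslash S}$ takes values in the open torus, so no $\rho \in \cZ(L_S)_0$ ever achieves $(c_\beta)_\beta = 0$; in particular the group-theoretic kernel $\ker\gamma_{-\Pi\backslash S}$ corresponds to $(c_\beta)_\beta = (1,\dots,1)$, not to the origin. For adjoint $G$ this kernel is trivial, and the identity element satisfies $|\gamma_{-\Pi\backslash S}(I)| = |(1,\dots,1)| \not< \epsilon$ for small $\epsilon$. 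Thus your ``base case'' lies on the \emph{boundary at infinity} of the actual parameter region $\{|\gamma_{-\Pi\backslash S}(\rho)| < \epsilon_R\}$, not inside it, and the map $\overline{\frj}_{S,\rho;\kappa}$ from (\ref{eq: j_S,rho, reduction}) is simply not defined there. To run the degree argument you would need to \emph{prove} that the family $\rho \mapsto \overline{\frj}_{S,\rho;\kappa}$ extends continuously to this degenerate limit and that the limit equals the natural inclusion; concretely this means showing $(g_{S,\rho}^\natural,\xi_{S,\rho}^\natural) \bmod \cZ(L_S^\der)_0 \to (g_S,\xi_S)\bmod \cZ(L_S^\der)_0$ uniformly as $|\gamma_{-\Pi\backslash S}(\rho)| \to 0$. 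The analysis in Lemma~\ref{lemma: cY_S, R} gives only boundedness of $g_{S,\rho}^\natural$, not convergence, and the change of presentation from the Whittaker model to the centralizer model via (\ref{eq: chi_S, mathscr Z}) makes the limit far from transparent.

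The paper circumvents this entirely. Instead of extending to $(c_\beta)_\beta = 0$, it argues by contradiction using the distance estimate of Corollary~\ref{cor: dist star, S}: two hypothetical distinct preimages differ after $\frj_{S,\rho_1}$ by a fixed element $\rho_{12} \in \cZ(L_S)_0$; one then flows by $\widetilde{\rho} \in (\cZ(L_S)_0)_{C_{\lhd,S}}$ to push $|\gamma_{-\Pi\backslash S}(\widetilde{\rho}\rho_1)|$ arbitrarily small while keeping the two points in separate characteristic leaves, and finally enlarges $\cV_{S^\perp}$ to $\widetilde{\cV}_{S^\perp} \ni \rho_{12}$ so that, with $\mu_{D,\cK',\widetilde{\rho}\rho_1}$ close to the projection on the larger chart, both points are forced to lie in a single leaf that projects isomorphically to $\widetilde{\cV}_{S^\perp}$. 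This yields the contradiction directly, without ever needing the family to converge at the ideal boundary.
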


\begin{proof}
Fix a pre-compact (connected) open region inside $\chi^{-1}(D_S^\dagg/W_S)/\cZ(L_S^\der)_0$. Also fix a sufficiently large $R$ and a sufficiently small $\epsilon_R>0$ so that the conclusion in Corollary \ref{cor: cY_S, R, cover} is satisfied for $\rho\in \cZ(L_S)_0$ with $|\gamma_{-\Pi\backslash S}(\rho)|<\epsilon_R$. 

We apply Corollary \ref{cor: dist star, S}, with $\cY_S=\cY_{S, R}$, a fixed open cone $C_{\lhd, S}$ and an arbitrarily small $\delta>0$ as in the assumption. Let $\epsilon'=\min\{\epsilon_R, \epsilon_{C_{\lhd, S}}\}$. Fixing any  $\rho_1\in \cZ(L_S)_0$ satisfying $|\gamma_{-\Pi\backslash S}(\rho_1)|<\epsilon'$, suppose we have two distinct points $(g_{S}^{(i)}, \xi_{S}^{(i)}; u_0,  t^{(i)})\in \cW_{\cY_{S, R}, \cV, \cK}, i=1, 2$ that are \emph{not} in the same characteristic leaf, but that map to the same point in the fixed pre-compact open region inside $\chi^{-1}(D^\dagg_S/W_S)/\cZ(L_S^\der)_0$ under $\overline{\frj}_{S, \rho_1;\kappa}$.  Then $\frj_{S, \rho_1}(g_{S}^{(i)}, \xi_{S}^{(i)}; u_0, t^{(i)}), i=1,2$ are in the same $\cZ(L_S)_0$-orbit in $\chi^{-1}(\chi_\ft(\cQ_{D,\cK'}))$ with respect to the projection $\chi_\ft(\cQ_{D, \cK'})\rightarrow \cK'_{S^\perp}$. 
Now for all $\widetilde{\rho}\in (\cZ(L_S)_0)_{C_{\lhd, S}}$, $\frj_{S, \widetilde{\rho}\rho_1}^{-1}( \widetilde{\rho}\star\frj_{S,\rho_1}(g_{S}^{(i)}, \xi_ {S}^{(i)}; u_0, t^{(i)}))$ is contained in a $\delta$-neighborhood of $(g_{S}^{(i)}, \xi_ {S}^{(i)};u_0, t^{(i)})$ 
in $\cW_{\cY_{S, R}, \cV,\cK}$, and 
the $\cZ(L_S)_0$-orbit containing both $\frj_{S, \rho_1}(g_{S}^{(i)}, \xi_{S}^{(i)}; u_0, t^{(i)})$, $i=1,2$ will intersect $\frj_{S, \widetilde{\rho}\rho_1}(\cW_{\cY_{S, R}, \cV,\cK})$ in at least two disconnected components containing $\widetilde{\rho}\star  \frj_{S,\rho_1}(g_{S}^{(i)}, \xi_ {S}^{(i)}; u_0, t^{(i)}), i=1,2$ respectively. This is because the set of such $\widetilde{\rho}$ in $(\cZ(L_S)_0)_{C_{\lhd, S}}$ is open, closed and nonempty, hence it is the entire space. 

On the other hand, we have 
\begin{align*}
\widetilde{\rho}\star  \frj_{S,\rho_1}(g_{S}^{(2)}, \xi_ {S}^{(2)}; u_0, t^{(2)})=\rho_{12}\star \widetilde{\rho}\star  \frj_{S,\rho_1}(g_{S}^{(1)}, \xi_ {S}^{(1)}; u_0, t^{(1)})
\end{align*}
for a fixed unique $\rho_{12}\in \cZ(L_S)_0$. 
Without loss of generality, we will assume $u_0=I\in \cV_{S^\perp}\subset \cZ(L_S)_0$. 
Choose a sufficiently large pre-compact open $\widetilde{\cV}_{S^\perp}\subset \cZ(L_S)_0$ (defined in the way described in Question \ref{question: mu_D, K, rho} (i)) that contains $\rho_{12}$. Then there exists $\epsilon_{\widetilde{\cV}}>0$ such that for all $\widetilde{\rho}$ satisfying $|\gamma_{-\Pi\backslash S}(\widetilde{\rho})|<\epsilon_{\widetilde{\cV}}$ (this will be contained in $C_{\lhd, S}$ for $\epsilon_{\widetilde{\cV}}$ sufficiently small), 
\begin{align*}
\mu_{D,\cK', \widetilde{\rho}\rho_1}: \cW_{\cY_{S, R}, \widetilde{\cV}, \cK}\longrightarrow \cK'_{S^\perp}
\end{align*}
is arbitrarily close to the projection map. By Proposition \ref{eq: tilde mu, embed} on the integrability of $\mu_{D,\cK', \widetilde{\rho}\rho_1}$ (on the larger domain $\cW_{\cY_{S, R}, \widetilde{\cV}, \cK}$), we must have 
$\frj_{S, \widetilde{\rho}\rho_1}^{-1}( \widetilde{\rho}\star\frj_{S,\rho_1}(g_{S}^{(i)}, \xi_ {S}^{(i)}; u_0, t^{(i)})$, $i=1,2$ lie in the \emph{same} characteristic leaf. However, since this characteristic leaf,  viewed in the product $\cD_{(g_S^{(1)}, \xi_S^{(1)})}\times \widetilde{\cV}_{S^\perp}\times\cK_{S^\perp}'$ as in (\ref{eq: lemma cS_kappa, S}), projects to $ \widetilde{\cV}_{S^\perp}$ isomorphically, 
its intersection with the original $\cW_{\cY_{S, R}, \cV, \cK}$ \emph{cannot} split into more than one leaves. Thus we reach at a contradiction. 
\end{proof}

We give a sketch of the proof for an analogue of Proposition \ref{lemma: empty, Ham isotopy}. 

\begin{prop}\label{lemma: varphi_s, mu_D,K}
Let $\cY_S, \cY^\dagg_S, \cV_{S^\perp}, \cV'_{S^\perp}, \cK^\dagg_{S^\perp}, \cK_{S^\perp}, \cK_{S^\perp}'$ be as above. For any smooth curve $(c_\beta(s))_\beta\in \bC^{\Pi\backslash S}, s\in (-\epsilon', \epsilon')$ with $(c_\beta(0))_\beta=0$, there exists $\epsilon>0$ and a compactly supported Hamiltonian isotopy $\varphi_s$, $0\leq s\leq \epsilon$, with $\varphi_0=id$, on 
$\cW_{\cY_S, \cV', \cK'}$
such that for every $\kappa\in \cK^\dagg_{S^\perp}$, we have 
\begin{align}\label{eq: varphi_s, mu_D,K}
\varphi_s(\mu_{D,\cK,0}^{-1}(\kappa)\cap \cW_{\cY_S^\dagg, \cV, \cK})\subset \widetilde{\mu}_{D',\cK',(c_\beta(s))_\beta}^{-1}(\kappa)\cap \cW_{\cY_S, \cV', \cK'}.
\end{align}
\end{prop}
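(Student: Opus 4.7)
The plan is to follow the strategy of Proposition \ref{lemma: empty, Ham isotopy} closely, with the added ingredient of Proposition \ref{eq: tilde mu, embed} governing the $\cZ(L_S)_0$-action on the deformed family and Lemma \ref{lemma: cS_kappa, S} controlling the characteristic leaves. Fix a reference point $u_0\in \cV_{S^\perp}$. By possibly shrinking $\epsilon'$, we may assume $|(c_\beta(s))_\beta|<r_\cV$ for all $s\in(-\epsilon',\epsilon')$, so that Lemma \ref{lemma: cS_kappa, S} applies: each characteristic leaf $\cS_{\kappa,(g_S,\xi_S),(c_\beta(s))}$ is a smooth section over $\cV_{S^\perp}$ that is Hamiltonian isotopic (inside $\cD_{(g_S,\xi_S)}\times\cV_{S^\perp}\times\cK'$) to the flat leaf $\{(g_S,\xi_S)\}\times\cV_{S^\perp}\times\{\kappa\}$. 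In particular the multi-section $\cY_S^\dagg\times\{u_0\}\times\cK_{S^\perp}$ is transverse to the characteristic foliation of $\widetilde{\mu}_{D',\cK',(c_\beta(s))_\beta}$ for all small $s$, and after the averaging of Remark \ref{remark: modify multi-section} we obtain a smoothly $s$-varying $\cZ(L_S^\der)_0$-equivariant multi-section $\cY^\dagg_{S,\kappa,(c_\beta(s))_\beta}$ that coincides with $\cY_S^\dagg\times\{u_0\}\times\{\kappa\}$ at $s=0$.

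Next, invoke Proposition \ref{eq: tilde mu, embed} to embed $\widetilde{\mu}_{D',\cK',(c_\beta(s))_\beta}$ holomorphic-symplectically into a complete integrable system $\widetilde{\chi}_{S'(s)}$ on $J_{L_{S'(s)}}$ via $\tilde{\iota}_S^{S'(s)}\circ\frj_{S,\rho_0(s)}$, where $S'(s)$ and $\rho_0(s)$ depend (in a piecewise-constant/continuous way respectively) on $s$. The complete $\cZ(L_S)_0$-orbits of this integrable system give canonical action-angle type coordinates on each fiber once a Lagrangian section is specified. Choosing the multi-sections $\cY^\dagg_{S,\kappa,(c_\beta(s))_\beta}$ constructed above as ``base points,'' the $\cZ(L_S)_0$-action produces a $\cZ(L_S^\der)_0$-equivariant fiber-preserving symplectomorphism
\[\widetilde{\varphi}_s:\ \cY_S^\dagg\underset{\cZ(L_S^\der)_0}{\times}(\cZ(L_S)_0\times\cK^\dagg_{S^\perp})\longrightarrow \widetilde{\chi}_{S'(s)}^{-1}(\jmath(\cQ_{D^\dagg,\cK^\dagg}/W_S))\]
that sends the flat section $\cY_S^\dagg\times\{u_0\}\times\cK^\dagg$ to $\tilde{\iota}_S^{S'(s)}\circ\frj_{S,\rho_0(s)}(\cY^\dagg_{S,\kappa,(c_\beta(s))_\beta})$, and which is independent of the choice of $\rho_0(s)$ by the $\cZ(L_S)_0$-equivariance. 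Restricting and pulling back through $\tilde{\iota}_S^{S'(s)}\circ\frj_{S,\rho_0(s)}$, we obtain a map
\[\varphi_s:\ \mu_{D,\cK,0}^{-1}(\cK^\dagg_{S^\perp})\cap\cW_{\cY_S^\dagg,\cV,\cK}\longrightarrow \widetilde{\mu}_{D',\cK',(c_\beta(s))_\beta}^{-1}(\cK^\dagg_{S^\perp})\cap\cW_{\cY_S,\cV',\cK'},\]
smoothly depending on $s$, with $\varphi_0=\mathrm{id}$ and satisfying (\ref{eq: varphi_s, mu_D,K}) by construction. For $s$ sufficiently small the image lies in $\cW_{\cY_S,\cV',\cK'}$, which is where we shrink $\cV_{S^\perp}'\supset\cV_{S^\perp}$ and $\cK_{S^\perp}'\supset\cK_{S^\perp}$ enter.

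It remains to upgrade $\varphi_s$ to a compactly supported Hamiltonian isotopy on the ambient space $\cW_{\cY_S,\cV',\cK'}$. That $\varphi_s$ is an exact symplectomorphism onto its image is the key computation: the restrictions of the Liouville form (\ref{eq: hat vartheta_S}) to the base multi-sections satisfy period relations analogous to (\ref{eq: lemma L_t_gamma}) of Lemma \ref{lemma: L_t_gamma} (i), because $\frj_{S,\rho}$ preserves $\vartheta|_{\fU_S}$ and the characteristic-leaf periods are prescribed by $\kappa$. In fact, for any 1-cycle $\Gamma$ on a characteristic leaf $\cS_{\kappa,(g_S,\xi_S),(c_\beta(s))}$, homologous to a cycle on the flat leaf via the Hamiltonian isotopy of Lemma \ref{lemma: cS_kappa, S} (i), the integral of $\vartheta|_{\fU_S}$ depends only on $\kappa$ and the homology class; combined with the $\cZ(L_S)_0$-equivariance of $\widetilde{\varphi}_s$, this forces $\varphi_s^*\vartheta-\vartheta$ to be exact and (uniformly in $s$) bounded on the pre-compact domain. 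A standard cut-off argument then extends the generating Hamiltonian to one that vanishes outside a compact set in $\cW_{\cY_S,\cV',\cK'}$, producing the required compactly supported Hamiltonian isotopy $\varphi_s$.

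The main obstacle is bookkeeping rather than analysis: namely, ensuring that the identifications provided by Proposition \ref{eq: tilde mu, embed} glue smoothly in $s$ even when the auxiliary Levi $S'(s)$ jumps (which happens when some $c_\beta(s)$ passes through zero). One resolves this by working locally in $s$ away from such jumps and observing that at jump times the extra $\cZ(L_S)_0$-orbits involved degenerate compatibly, so the map $\varphi_s$ and its generating Hamiltonian extend continuously; equivalently one chooses a single maximal $S'$ for which $c_\beta(s)$ is not identically zero for $\beta\in S'\setminus S$ on the given interval and works inside $J_{L_{S'}}$ throughout.
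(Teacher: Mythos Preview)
Your outline follows the paper's strategy but has a genuine gap at the central step: the claim that the map $\widetilde{\varphi}_s$ built from action--angle coordinates (with the multi-sections $\cY^\dagg_{S,\kappa,(c_\beta(s))_\beta}$ as base points) is already a symplectomorphism of the total space. This is precisely where $S\neq\emptyset$ differs from the case $S=\emptyset$ in Proposition~\ref{lemma: empty, Ham isotopy}. When $S=\emptyset$, the Lagrangian section $\{u_0\}\times\cK$ is the \emph{same} for all $s$ and all $\kappa$, so matching action--angle coordinates gives a symplectomorphism. For $S\neq\emptyset$ the multi-section $\cY^\dagg_{S,\kappa,(c_\beta(s))_\beta}$ varies with both $\kappa$ and $s$, and the paper shows explicitly that the naively defined $\widetilde{\varphi}_s$ fails to be symplectic:
\[
\widetilde{\varphi}_s^*\omega-\omega=\sum_j \alpha_{c,s,j}(p)\wedge dp_c^j+\alpha_{\bR,s,j}(p)\wedge dp_\bR^j,
\]
where $\alpha_{c,s,j}(p),\alpha_{\bR,s,j}(p)$ are closed $1$-forms on $\cY_S^{\dagg\dagg}$ depending on $p\in\cK^\dagg_{S^\perp}$. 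To kill this discrepancy one needs these forms to be exact, and this is where Lemma~\ref{lemma: pi_1, J_G} (giving $H^1(\cY_S^{\dagg\dagg};\bR)=0$) enters; only then can a Moser argument correct $\widetilde{\varphi}_s$ into a symplectomorphism. Your period argument addresses exactness of $\varphi_s^*\vartheta-\vartheta$, but that presupposes $\varphi_s^*\omega=\omega$, which is the missing step.

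A second, smaller point: you invoke Remark~\ref{remark: modify multi-section} to produce the equivariant multi-sections, but you still need to \emph{symplectically} identify $\cY^\dagg_{S,\kappa,(c_\beta(s))_\beta}$ for varying $s$ (the paper's Step~1), which itself requires a Moser argument on the reduced spaces. Finally, your last paragraph about $S'(s)$ jumping is a non-issue: the map $\varphi_s$ is independent of the auxiliary choice of $\rho_0(s)$ (hence of $S'(s)$), as you yourself note, so no gluing across jump times is needed.
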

\begin{proof}

We assume $|(c_\beta(s))_{\beta}|$ are all sufficient small, so that the conclusions in Lemma \ref{lemma: cS_kappa, S} all hold. 

\noindent\emph{Step 1. Identify the multi-sections $\cY^\dagg_{S, \kappa, (c_\beta(s))_\beta}$ over $s\in (-\epsilon, \epsilon)$ symplectically and $\cZ(L_S^\der)_0$-equivariantly. }

We make the identification between the symplectic quotient spaces\\
$\cY^\dagg_{S, \kappa, (c_\beta(s))_\beta}/\cZ(L_S^\der)_0$ at $\kappa$ for different $s\in (-\epsilon, \epsilon)$ as follows (up to restricting to a slightly smaller domain in $\cY^\dagg_{S, \kappa, (c_\beta(s))_\beta}/\cZ(L_S^\der)_0$ and for $s$ in a smaller interval $(-\epsilon^\dagg,\epsilon^\dagg)$). We put $\cY^\dagg_{S, \kappa, (c_\beta(s))_\beta}$ into a smooth family of symplectic manifold over $(-\epsilon, \epsilon)$ naturally contained inside $\cW_{\cY_S, \cV, \cK}\times (-\epsilon, \epsilon)$, and denote it by $\mathfrak{Y}^\dagg_{S, \kappa, (-\epsilon, \epsilon)}\rightarrow (-\epsilon, \epsilon)$. There is a natural smooth (not necessarily symplectic) identification, a.k.a ``parallel transport", between different fibers (after restricting to a slightly smaller domain), by sending each point in the original multi-section $(\cY_S\times \{u_0\}\times \cK'_{S^\perp})\cap \widetilde{\mu}_{D^\dagg, \cK', (c_\beta(s))_\beta}^{-1}(\kappa)$ to the corresponding point (after averaging) in the modified multi-section.

The $\cZ(L_S^\der)_0$-action on each fiber over $s$ above gives a $\cZ(L_S^\der)_0$-action on $\mathfrak{Y}^\dagg_{S,\kappa, (-\epsilon, \epsilon)}$ that preserves each fiber. Taking the quotient space assembles the symplectic quotient spaces $\cY^\dagg_{S, \kappa, (c_\beta)_\beta}/\cZ(L_S^\der)_0$ into a smooth family over $(-\epsilon, \epsilon)$. Now the ``parallel transport" on the family $\mathfrak{Y}^\dagg_{S, \kappa(-\epsilon, \epsilon)}$ gives a lifting of the unit positive vector field on $(-\epsilon, \epsilon)$ to a smooth vector field $\mathbf{v}$ on it, the average of the projection of $\bv$ to the quotient $\mathfrak{Y}^\dagg_{S, \kappa, (-\epsilon, \epsilon)}/\cZ(L_S^\der)_0$ is a smooth vector field that is a lifting of the unit positive vector field on the base $(-\epsilon, \epsilon)$. Integrating the vector field gives a smooth identification $\widetilde{\varphi}_{s,\cY,\kappa}$ between the fiber at $0$ and that at $s$. Since  the symplectic manifolds are exact and the diffeomorphisms are close to be symplectic (in fact, $\widetilde{\varphi}_{s,\cY,\kappa}^*\vartheta-\vartheta$ is close to zero), using Moser's argument, we can modify the smooth identifications to be symplectic, after restricting to a slightly smaller subdomain on each fiber.

Lastly, we lift the identification on the quotient spaces uniquely to a $\cZ(L_S^\der)_0$-equivariant symplectic identification between $\varphi_{s,\cY,\kappa}: \cY^{\dagg\dagg}_{S, \kappa,0}\rightarrow \cY^{\dagg\dagg}_{S, \kappa, (c_\beta(s))_\beta}$, subject to the condition that the distance between $\varphi_{s,\cY,\kappa}(g_S, \xi_S, u_0, \kappa)$ and $(g_S, \xi_S, u_0, \kappa)$ is small. Here the double $\dagg$ superscript means we are taking some slightly smaller subdomain. Note that the identifications $\varphi_{s,\cY,\kappa}$ are smoothly depending on $\kappa$.

\noindent \emph{Step 2. Construction of the Hamiltonian isotopy $\varphi_s$ on $\cW_{\cY_S^\dagg, \cV', \cK'}$.}

We fix some real linear coordinates $(p_c^j; p_\bR^j)$ on the base $\fz_S^*\cong \fz_S\cong \fz_{S, c}\oplus \fz_{S, \bR}$. For each $\kappa, s$, applying Proposition \ref{eq: tilde mu, embed} for $(c_\beta(s))_{\beta\in \Pi\backslash S}$, 
the $\cZ(L_S^\der)_0$-equivariant multi-section $\cY_{S, \kappa, (c_\beta(s))_\beta}^\dagg$ determines an embedding
\begin{align*}
\widetilde{\mu}_{D^{\dagg}, \cK, (c_\beta(s))_\beta}^{-1}(\kappa)\longhookrightarrow \cY^{\dagg}_{S, \kappa, (c_\beta(s))_\beta}\underset{\cZ(L_S^\der)_0}{\times}\cZ(L_S)_0
\end{align*}

Similarly to the proof of Lemma \ref{lemma: empty, Ham isotopy}, using $\varphi_{s,\cY,\kappa}: \cY^{\dagg\dagg}_{S, \kappa,0}\rightarrow \cY^{\dagg\dagg}_{S, \kappa, (c_\beta(s))_\beta}$ from the previous step and Proposition \ref{eq: tilde mu, embed}, we have a uniquely defined map 
\begin{align*}
\widetilde{\varphi}_s: &\cY^{\dagg\dagg}_S\underset{\cZ(L_S^\der)_0}{\times} (\cV^\dagg_{S^\perp}\times \cK^\dagg_{S^\perp})\longrightarrow \widetilde{\mu}_{D',\cK',(c_\beta(s))}^{-1}(\cK'_{S^\perp})\cap \cW_{\cY_S, \cV', \cK'}
\end{align*}
which sends $\cY^{\dagg\dagg}_S\times \{(u_0, \kappa)\}$ to $\cY^{\dagg\dagg}_{S,\kappa, (c_{\beta}(s))_\beta}$ through $\varphi_{s, \cY,\kappa}$, and which respects the canonical (locally defined) real affine coordinates $(q_{c, s, (g_S, \xi_S)}^j; q_{\bR, s, (g_S, \xi_S)}^j), j=1,\cdots, n-|S|$ and $(q_{c, s, \varphi_{s, \cY,\kappa}(g_S, \xi_S;u_0, \kappa)}^j; q_{\bR, s, \varphi_{s, \cY,\kappa}(g_S, \xi_S;u_0, \kappa)}^j), j=1,\cdots, n-|S|$ on each characteristic leaf that are dual to $(p_c^j;p_\bR^j)$ and that are relative to the respective $\cZ(L_S^\der)_0$-equivariant multi-sections.

It is clear that 
\begin{align*}
\widetilde{\varphi}_s^*\omega-\omega=\sum\limits_j \alpha_{c,s,j}(p)\wedge dp_{c}^j+\alpha_{\bR,s,j}(p)\wedge dp_{\bR}^j,
\end{align*}
where $\alpha_{c,s, j}(p)$ and $\alpha_{\bR,s,  j}(p)$ are $\cZ(L_S^\der)_0$-equivariant $1$-forms on $\cY^{\dagg\dagg}_S$ depending smoothly on $p\in \cK^\dagg_{S^\perp}$. Since $\widetilde{\varphi}_s^*\omega-\omega$ is closed, we get both $\alpha_{c,s,j}(p)$ and $\alpha_{\bR,s, j}(p)$ are closed 1-forms on $\cY^{\dagg\dagg}_S$ (with $p$ fixed). By Lemma \ref{lemma: pi_1, J_G} below, $H^1(\cY^{\dagg\dagg}_S;\bR)=0$, so we can choose $f_{c,s,j}(p)$ and $f_{\bR,s, j}(p)$ to be primitives of $\alpha_{c,s,j}(p)$ and $\alpha_{\bR,s, j}(p)$ on $\cY_S^{\dagg\dagg}$, respectively, such that they all vanish at  a fixed point in $\cY_S^{\dagg\dagg}$.  Then we have 
\begin{align*}
d(\sum\limits_jf_{c,s,j}(p)dp_{c}^j+f_{\bR,s,j}(p)dp_{\bR}^j)-\sum\limits_j (\alpha_{c,s,j}(p)\wedge dp_{c}^j+\alpha_{\bR,s,j}(p)\wedge dp_{\bR}^j)
\end{align*}
a \emph{closed} 2-form that is a combination of wedges of $dp_c^j, dp_\bR^k, j,k=1, \cdots, n-|S|$, which by the assumptions on $f_{c,s,j}(p)$ and $f_{\bR, s,j}(p)$ must be $0$.

Now we can apply Moser's argument in the specific form of \cite[Section 3.2]{McSa} with 
\begin{align*}
\sigma_s=\frac{d}{ds}(\sum\limits_jf_{c,s,j}(p)dp_c^j+f_{\bR,s,j}(p)dp_\bR^j),
\end{align*}
in (3.2.1) of \emph{loc. cit.}
Then for $\epsilon>0$ small, we can compose $\widetilde{\varphi}_s$ with the isotopy to define $\varphi_s$ that  preserves the symplectic form. Moreover, $\varphi_s^*\vartheta-\vartheta$ must be exact, because its integral along the 1-cycles in $\cV_{S^\perp}$ are all zero (cf. Lemma \ref{lemma: cS_kappa, S}), which implies that $\varphi_s$ is a Hamiltonian isotopy satisfying (\ref{eq: varphi_s, mu_D,K}). It is then easy to extend $\varphi_s$ to be a compactly supported Hamiltonian isotopy on $\cW_{\cY, \cV', \cK'}$. 
\end{proof}

\begin{lemma}\label{lemma: pi_1, J_G}
For any complex connected semisimple Lie group $G$, we have a natural isomorphism $\pi_1(J_G)\cong \pi_1(G)$. 
\end{lemma}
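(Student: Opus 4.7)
My plan is to compute $\pi_1(J_G)$ via the Bruhat decomposition of Proposition~\ref{prop: B_w0w} together with a van Kampen argument on the open cover $\{\fU_{\{\alpha\}}\}_{\alpha\in\Pi}$ provided by Proposition~\ref{prop: fU_S', sqcup}, reducing at the end to the rank-one computations for $SL_2$ and $PGL_2$. The starting point is the formula $\dim_\bC \cB_{w_0w_S}=2n-|S|$, so that the union $J_G\setminus U$, where $U:=\bigcup_{\alpha\in\Pi}\fU_{\{\alpha\}}=\cB_{w_0}\sqcup\bigsqcup_{\alpha\in\Pi}\cB_{w_0s_\alpha}$, is a closed complex analytic subset of $J_G$ of complex codimension at least $2$ (namely the union $\bigsqcup_{|S|\ge 2}\cB_{w_0w_S}$). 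A standard transversality/codimension argument then shows that the inclusion $U\hookrightarrow J_G$ induces an isomorphism $\pi_1(U)\xrightarrow{\ \sim\ }\pi_1(J_G)$.

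Next I apply Seifert--van Kampen to the open cover $\{\fU_{\{\alpha\}}\}_{\alpha\in\Pi}$ of $U$. By Proposition~\ref{prop: fU_S', sqcup}, the pairwise (and hence higher) intersections all equal $\cB_{w_0}\cong T^*T$, so $\pi_1(\cB_{w_0})=\pi_1(T)=X_*(T)$. Inside each $\fU_{\{\alpha\}}$, the complement of $\cB_{w_0}$ is the smooth closed complex hypersurface $\cB_{w_0s_\alpha}$, so the inclusion $\cB_{w_0}\hookrightarrow\fU_{\{\alpha\}}$ induces a surjection on $\pi_1$ whose kernel is normally generated by a meridian $m_\alpha$ around $\cB_{w_0s_\alpha}$. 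Because all intersections in the cover coincide with $\cB_{w_0}$, the van Kampen pushout collapses to
\[
\pi_1(U)\ \cong\ X_*(T)\Big/\bigl\langle m_\alpha:\alpha\in\Pi\bigr\rangle.
\]

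The main step is then to identify the meridian class $m_\alpha\in X_*(T)$ with the coroot $\alpha^\vee$. Here I use the splitting $\fU_{\{\alpha\}}\cong T^*\cZ(L_{\{\alpha\}})\times_{\cZ(L_{\{\alpha\}}^\der)}J_{L_{\{\alpha\}}^\der}$ of Proposition~\ref{prop: U_S}, under which $\cB_{w_0s_\alpha}$ corresponds to $T^*\cZ(L_{\{\alpha\}})\times_{\cZ(L_{\{\alpha\}}^\der)}\cB_{1,L_{\{\alpha\}}^\der}$. A small disc in $\cB_{w_0}\subset\fU_{\{\alpha\}}$ transverse to $\cB_{w_0s_\alpha}$ can be chosen inside a fibre of the projection onto $T^*\cZ(L_{\{\alpha\}})/\cZ(L_{\{\alpha\}}^\der)$, reducing the identification to the rank-one group $L_{\{\alpha\}}^\der$, which is either $SL_2$ or $PGL_2$. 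In both rank-one cases the Bruhat decomposition degenerates: $\cB_{w_0}\cong T^*T_{L_{\{\alpha\}}^\der}\cong T^*\bC^\times$ and the complement $\cB_1$ consists of $|\cZ(L_{\{\alpha\}}^\der)|$ disjoint Kostant sections, each cut out on a generic cotangent fibre by $g=I$ (resp.\ $g=-I$). A small loop in $T_{L_{\{\alpha\}}^\der}=\bC^\times$ around the identity is therefore the meridian, and it manifestly represents the coroot $\alpha^\vee\in\pi_1(T_{L_{\{\alpha\}}^\der})$. The map $X_*(T_{L_{\{\alpha\}}^\der})\to X_*(T)$ induced by $T_{L_{\{\alpha\}}^\der}\subset T$ sends this class to $\alpha^\vee\in X_*(T)$, yielding $m_\alpha=\alpha^\vee$.

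Putting the three steps together,
\[
\pi_1(J_G)\ \cong\ \pi_1(U)\ \cong\ X_*(T)\Big/\bigl\langle\alpha^\vee:\alpha\in\Pi\bigr\rangle\ =\ X_*(T)/Q^\vee\ =\ \pi_1(G),
\]
as desired. The main obstacle is the identification of the meridian with the coroot: one has to argue both that the meridian class lifts canonically to the $J_{L_{\{\alpha\}}^\der}$-factor of the fibre product (so that the $\cZ(L_{\{\alpha\}}^\der)$-quotient does not alter the answer modulo $X_*(T)$) and that in each rank-one case a small positively oriented loop around the Kostant section represents $\alpha^\vee$; the rest of the proof is formal.
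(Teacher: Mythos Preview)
Your van Kampen approach via the Bruhat stratification is sound and genuinely different from the paper's proof. The paper instead reduces to $G=G_{sc}$ through the covering $J_{G_{sc}}\to J_G$, shows $\pi_1(J_{G_{sc}})$ is finite using the Weinstein handle structure together with the torus--fiber homotopy equivalence of Lemma~\ref{lemma: L_t_gamma}(ii), and then rules out any nontrivial finite cover by a $W$-equivariance argument about sublattices of $X_*(T)$ and lifts of Kostant sections. Your route is more elementary and self-contained, needing only the codimension count from Proposition~\ref{prop: B_w0w} and the open cover of Proposition~\ref{prop: fU_S', sqcup}; the paper's argument has the advantage that naturality with respect to the projection $J_G\to G$ is built in from the outset, and it interacts with the Weinstein geometry used elsewhere in the paper.

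The one place that needs repair is the rank-one meridian identification, which you correctly flag as the main obstacle but do not actually justify. The sentence ``each cut out on a generic cotangent fibre by $g=I$\ldots a small loop in $T_{L_{\{\alpha\}}^\der}=\bC^\times$ around the identity is therefore the meridian, and it manifestly represents $\alpha^\vee$'' does not parse: cotangent fibers of $\cB_{w_0,L_{\{\alpha\}}^\der}=T^*T_{L_{\{\alpha\}}^\der}$ never meet the Kostant sections, and in the $PGL_2$ case the assertion that the meridian equals $\alpha^\vee$ (which is \emph{twice} the generator of $X_*(T_{PGL_2})$, not the generator) is far from manifest. A correct argument goes as follows. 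For $L_{\{\alpha\}}^\der=SL_2$, the parametrization of Example~\ref{example: B_w0} gives the embedding $\cB_{w_0}\hookrightarrow J_{SL_2}=\{(p,r,a):p^2-ar^2=1\}$ as $(z,t)\mapsto(tz,-z,t^2-z^{-2})$; the Kostant sections $\{r=0,\ p=\pm1\}$ therefore lie at ``$z\to 0$'' in the base coordinate $z$ on $T_{SL_2}=\bC^\times$, and a transversal disc (say at fixed $a$) has boundary the loop $|z|=\epsilon$, which represents $\alpha^\vee\in X_*(T_{SL_2})$. For $L_{\{\alpha\}}^\der=PGL_2$, a transversal disc to $\Sigma_I$ lifts along the double cover $J_{SL_2}\to J_{PGL_2}$ to a transversal disc upstairs, so the meridian downstairs is the image of $\alpha^\vee$ under $X_*(T_{SL_2})\hookrightarrow X_*(T_{PGL_2})$, namely $\alpha^\vee=2\lambda^\vee$. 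You should also note that when $\cB_{w_0s_\alpha}$ is disconnected (which can happen for non-adjoint $G$, cf.\ Lemma~\ref{lemma: pi_0, L_S}), all component meridians coincide with $\alpha^\vee$ by the same computation. With these points supplied, your proof is complete.
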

\begin{proof}
First, from the centralizer description of $J_{G}$ (\ref{eq: centralizer cS}), we have a natural morphism $p: \pi_1(J_G)\rightarrow \pi_1(G)$. By the Cartesian square with vertical morphisms regular connected coverings (of deck transformations by $\pi_1(G)$),
\begin{align*}
\xymatrix{J_{G_{sc}}\ar[r]\ar[d]&G_{sc}\ar[d]\\
J_{G}\ar[r]&G
}
\end{align*}
we see that $p$ is surjective. Now we need to show that $\ker p$ is trivial. For this it suffices to work with $G_{sc}$ for which $\ker p\cong \pi_1(J_{G_{sc}})$, and let $\widetilde{J}_{G_{sc}}$ be the universal cover. By the Weinstein handle attachment structure of $J_{G_{sc}}$ from Proposition \ref{prop: split generation} (i), especially its inductive pattern, and the isomorphism between $\pi_1(T\times\{\xi\})\cong \pi_1(\chi^{-1}([\xi]))$ for a torus fiber in $\cB_{w_0}$ over $\xi\in \ft^\reg$ by Lemma \ref{eq: lemma L_t, epsilon}, it is clear that $\pi_1(J_{G_{sc}})$ is finite, and the natural map $\pi_1(\chi^{-1}([\xi]))\rightarrow \pi_1(J_{G_{sc}})$ is surjective.  Then $\widetilde{\chi}: \widetilde{J}_{G_{sc}}\rightarrow \fc$
is an abelian group scheme with generic fibers connected complex tori (of the same rank). 

Now we look at the commutative diagram
\begin{align*}
\xymatrix{
\widetilde{J}_{G_{sc}}\underset{J_{G_{sc}}}{\times}(T\times \ft^{\reg})/W\ar[r]\ar[dr]& (T\times \ft^{\reg})/W\ar[d]&\\
&\fc^{\reg}\ar@{^{(}->}[r]&\fc
}
\end{align*}
where the fiber of the right-downward arrow (on the left) is a $\pi_1(J_{G_{sc}})$-cover of $T$, denoted by $\widetilde{T}$. This induces a $\pi_1(\fc^\reg)=Br_W$-action on $\pi_1(\widetilde{T})$, and a $Br_W$-equivariant embedding $\pi_1(\widetilde{T})\hookrightarrow \pi_1(T)$. Since the pure braid group acts trivially on $\pi_1(T)$, the embedding $\pi_1(\widetilde{T})\hookrightarrow \pi_1(T)$ is $W$-equivariant. In particular, the image of $\pi_1(\widetilde{T})$ in $\pi_1(T)\cong X_{*}(T)$ is a finite indexed $W$-invariant sublattice, and we have a $W$-action on $\widetilde{T}$ together with the isomorphism 
\begin{align*}
\widetilde{J}_{G_{sc}}\underset{J_{G_{sc}}}{\times}(T\times \ft^{\reg})/W\cong (\widetilde{T}\times \ft^{\reg})/W.
\end{align*} 

The Kostant sections over the contractible base $\fc$ are lifted to $|\cZ(G_{sc})|\times |\pi_1(J_{G_{sc}})|$ many disjoint sections of $\widetilde{\chi}$. On the other hand, if $\widetilde{T}$ is a non-trivial $W$-equivariant covering of $T$, then there exists a simple coroot $\alpha^\vee$ that is not in $X_*(\widetilde{T})$. 
Then $\lambda_{\alpha}^{\vee}\in (\Lambda^\vee/X_{*}(T))\cong \cZ(G_{sc})$ has 
\begin{align*}
s_{\alpha^\vee}(\lambda_{\alpha}^{\vee})=\lambda_{\alpha}^{\vee}-\alpha^\vee\neq \lambda_\alpha^\vee\text{ mod }X_*(\widetilde{T})
\end{align*}
This means the lifting of the Kostant section corresponding to $\lambda_{\alpha}^{\vee}$ to $\widetilde{J}_{G_{sc}}$ cannot be a collection of disjoint sections, for the lifting of its restriction inside $(T\times \ft^{\reg})/W$ to $(\widetilde{T}\times \ft^{\reg})/W$ already has a connected component that is a multi-section over $\fc^{\reg}$.  The proof is complete. 
\end{proof}

\subsection{Discussions around walls beyond $S^\perp, \emptyset\neq S\subsetneq \Pi$}\label{subsec: walls}
In this subsection, we develop some analysis around an arbitrary ``wall" $w(S^\perp), w\in W/W_S, \emptyset\neq S\subsetneq \Pi$ in $\ft$, that is needed for the proof of Proposition \ref{prop: L_0, part 2} and \ref{prop: L_0, W}. The main result is Proposition \ref{prop: g_S, natural, w}. We remark that there is no direct generalization of the analysis done in Subsection \ref{subsec: analysis fU_S} to the current setting for an arbitrary $w(\cZ(L_S)_0)$ (here all the subtori $w(\cZ(L_S)_0), w\in W/W_S$ are  \emph{relative to the same Borel $B$}). 

For any $\emptyset\neq S\subsetneq \Pi$, let $W^S_{\min}\cong W/W_S$ be the set of elements in $W$ consisting of the unique shortest representative of each coset. Recall that $w\in W^S_{\min}$ if and only if $w(S)\subset \Delta^+$. 
In $\cB_{w_0}\cong T^*T$, we look at $\cU_{\cQ', \cV}^{w(S)}:=
\cV\times w(D_S'+\cK_{S^\perp}'), w\in W_{\min}^S$, where $\cQ'_{D', \cK'}=D_S'+\cK_{S^\perp}'$ 
is a tubular neighborhood of $\cK_{S^\perp}'\subset \mathring{\fz}_{S}$ and $\cV\subset T$ is as in the setting of Section \ref{subsec: analysis cB_w0}. We fix a representative $\overline{w}\in N_G(T)$ for any $w\in W_{\min}^S$. Let $D_S\subset\overline{D}_S\subset D_S'$, $\cK_{S^\perp}\subset \overline{\cK}_{S^\perp}\subset\cK_{S^\perp}'$ be slightly smaller open subsets. Define $\cQ_{D, \cK}=D_S+\cK_{S^\perp}$ and $\cU_{\cQ, \cV}^{w(S)}$ similarly as for $\cQ'_{D', \cK'}$ and $\cU_{\cQ', \cV}^{w(S)}$. 
For any $(u\overline{w}_0^{-1}h, \xi=f+w(t)+\Ad_{(\overline{w}_0^{-1}h)^{-1}}f)\in \cB_{w_0}$ with $(h, w(t))\in \cU_{\cQ, \cV}^{w(S)}$ and $u\in N$ uniquely determined making the pair in $\mathscr{Z}_G$, and for any $\rho\in T$ with $|\gamma_{-\Pi}(\rho)|<\epsilon\ll 1$, we have 
\begin{align*}
\frj_\rho(u\overline{w}_0^{-1}h, \xi)=(u_\rho\overline{w}_0^{-1}h\rho, f+w(t)+\Ad_{(\overline{w}_0^{-1}h\rho)^{-1}}f)=:(u_\rho\overline{w}_0^{-1}h\rho, \xi_\rho)\in \mathscr{Z}_G\cap G\times (f+\fb)
\end{align*}
for some (unique) $u_\rho\in N$ (contained in a bounded region, i.e. in a compact region, from Lemma \ref{lemma: u_rho bounded} below) and the commutative diagram (where the items with $\{\}$ are one-point sets) 
\begin{align}\label{diagram: xi_rho, w, cS, S}
\xymatrix{\{\xi_\rho\}\ar[r]^{\Ad_{\nu_\rho}\ \ \ }& \{f+w(t'_\rho)\}\ar[d]_{\Ad_{\widetilde{u}_\rho}}&&\{f_S+t_{\rho}'\}\ar[ll]_{\Ad_{b_{1,\rho}^-\overline{w}}}\ar[d]_{\Ad_{u_{S,\rho}}}\\
&\cS&&(\cS_{\fl_S^\der}+\mathring\fz_S)\cap\fg^{\reg}\ar[ll]^{\Ad{u''(\widetilde{\varsigma})u(\varsigma)^-}\ \ \ \ }
}
\end{align}
where (i) $t_\rho'\in \cQ'_{D', \cK'}$, $N\ni\nu_\rho\overset{\text{close to}}{\sim} I$,  $\widetilde{u}_\rho\in N$ and  $u_{S,\rho}\in N_S$ are uniquely determined elements, $\widetilde{u}_\rho$ and $u_{S,\rho}$ are clearly uniformly bounded; 
(ii) $u(\varsigma)^-\in N_{P_S}^-$ and $u''(\widetilde{\varsigma})\in N$ are uniquely associated to each $\varsigma\in (\cS_{\fl_S^\der}+\mathring\fz_S)\cap \fg^{\reg}$ (note that in general $\cS_{\fl_S^\der}+\mathring\fz_S\not\subset \fg^\reg$) so that $\widetilde{\varsigma}=\Ad_{u(\varsigma)^-}(\varsigma)\in f+\fb$; (iii) one can assign a unique $b^-_{1,\rho}\in B^-$ so that the product of elements inducing the adjoint action following the two different paths from $\{f_S+t_\rho'\}$ to $\cS$ coincide (see Lemma \ref{lemma: u_rho bounded} (b) below); in particular, such a $b^-_{1,\rho}\in B^-$ is uniformly bounded. 
If we use $(g_{S,\rho}^\natural, \xi_{S,\rho}^\natural; z_\rho^\natural, t_\rho^\natural)$ as in the proof of Lemma \ref{lemma: cY_S, R} to present the equivalent point $(u_\rho\overline{w}_0^{-1}h\rho, \xi_\rho)$, through the isomorphism in (\ref{eq: chi_S, mathscr Z}), then we have 
\begin{align}
\label{eq: u_rho, natural, gz}&\Ad_{\widetilde{u}_\rho b_{1, \rho}^-\overline{w}u_{S,\rho}^{-1}}(g_{S,\rho}^\natural z_\rho^\natural, \xi_{S,\rho}^\natural+t_{\rho}^\natural)= \Ad_{\widetilde{u}_\rho\nu_\rho}(u_\rho\overline{w}_0^{-1}h\rho, \xi_\rho)\in G\times \cS
\end{align}

\begin{lemma}\label{lemma: u_rho bounded}
\item[(a)] Under the above setting, $u_\rho$ is contained in a bounded region in $N$. Moreover, for a fixed $h,t$, $\lim\limits_{|\gamma_{-\Pi}(\rho)|\rightarrow 0}u_\rho$ exists and it is the unique element in $N$ that sends $f+w_0(t)$ to $f+t$ through the adjoint action (so in fact only depends on $t$). 

\item[(b)] There exists a unique $b_{1,\rho}^-\in B^-$ making 
\begin{align*}
\widetilde{u}_\rho b_{1,\rho}^-\overline{w}=u''(\widetilde{\varsigma})u(\varsigma)^-u_{S,\rho},
\end{align*}
where $\varsigma=\Ad_{u_{S,\rho}}(f_S+t_\rho')$ and $\widetilde{\varsigma}=\Ad_{u(\varsigma)^-}(\varsigma)$. 
In particular, the elements $b^-_{1,\rho}\in B^-$ can be chosen to be uniformly bounded (i.e. contained in a fixed compact region) for $t\in \overline{D'}_S+\overline{\cK'}_{S^\perp}$. 
\end{lemma}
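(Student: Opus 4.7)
The plan is to unpack the two claims separately, both resting on the regularity of the Lie algebra elements involved and on the uniqueness of various conjugations into (or within) Kostant-type slices.

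For part (a), I would start from the commuting condition $\Ad_{u_\rho\overline{w}_0^{-1}h\rho}(\xi_\rho)=\xi_\rho$ and apply $\Ad_{\overline{w}_0^{-1}h\rho}$ to rewrite it as
\begin{equation*}
\Ad_{u_\rho}\!\bigl(\Ad_{\overline{w}_0^{-1}h\rho}(f)+w_0w(t)+f\bigr)\;=\;f+w(t)+\Ad_{(\overline{w}_0^{-1}h\rho)^{-1}}(f),
\end{equation*}
where I used that $h\in T$ acts trivially on $w(t)\in\ft$ and $\Ad_{\overline{w}_0^{-1}}$ restricted to $\ft$ is $w_0$. Both sides lie in $f+\fb$, and the assumption $(h,w(t))\in\cU_{\cQ,\cV}^{w(S)}$ together with small $|\gamma_{-\Pi}(\rho)|$ places them in the regular locus (ultimately an application of Lemma \ref{lemma: xi, t, regular} after relabeling), so the uniqueness of the $N$-orbit representative in a regular slice pins down $u_\rho$ uniquely and analytically in the parameters. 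Continuity over the precompact range $(h,t)\in\cV\times\cQ_{D,\cK}$ gives boundedness. For the limit, as $|\gamma_{-\Pi}(\rho)|\to 0$ both "error" terms $\Ad_{\overline{w}_0^{-1}h\rho}(f)$ and $\Ad_{(\overline{w}_0^{-1}h\rho)^{-1}}(f)$ tend to $0$, reducing the equation to $\Ad_{u_0}(f+w_0w(t))=f+w(t)$, which has a unique $u_0\in N$ depending only on $w(t)$ — equivalently, after relabeling $s=w(t)$, the unique element of $N$ sending $f+w_0(s)$ to $f+s$.

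For part (b), I would solve for $b_{1,\rho}^-$ directly from the commutativity of diagram (\ref{diagram: xi_rho, w, cS, S}): the two composed adjoint actions from $\{f_S+t_\rho'\}$ to $\cS$ agree, so $\Ad_{\widetilde u_\rho b_{1,\rho}^-\overline w}(f_S+t_\rho')=\Ad_{u''(\widetilde\varsigma)u(\varsigma)^-u_{S,\rho}}(f_S+t_\rho')$. Regularity of $f_S+t_\rho'$ (Lemma \ref{lemma: xi, t, regular}) forces the two conjugating elements to differ only by a centralizing element, and since $\widetilde u_\rho,u''(\widetilde\varsigma)\in N$ are the unique elements conjugating their respective inputs into $\cS$, the identity $\widetilde u_\rho b_{1,\rho}^-\overline w=u''(\widetilde\varsigma)u(\varsigma)^-u_{S,\rho}$ holds on the nose. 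This gives the explicit formula
\begin{equation*}
b_{1,\rho}^-\;=\;\widetilde u_\rho^{-1}\,u''(\widetilde\varsigma)\,u(\varsigma)^-\,u_{S,\rho}\,\overline w^{-1},
\end{equation*}
from which uniform boundedness is immediate: $u_{S,\rho}$ and $\widetilde u_\rho$ stay in fixed compact sets for $(g_S,\xi_S;z,t)$ in a precompact region, while $u(\varsigma)^-,u''(\widetilde\varsigma)$ are determined analytically by $\varsigma=\Ad_{u_{S,\rho}}(f_S+t_\rho')$, which itself remains in a compact set.

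The main obstacle I anticipate is verifying that the product on the right-hand side of the formula for $b_{1,\rho}^-$ actually lies in $B^-$ — nothing a priori forces the unipotent factors $u''(\widetilde\varsigma)u(\varsigma)^-u_{S,\rho}$ and $\widetilde u_\rho^{-1}$ to recombine cleanly after multiplication by $\overline w^{-1}$, and a direct Bruhat-decomposition argument is obstructed by the Weyl element. The key geometric input has to be that $w\in W_{\min}^S$ sends $S$ into $\Delta^+$, so that $\overline w^{-1}$ intertwines $N_S$ with a subgroup of $N$ lying compatibly with $N_{P_S}^-$; combined with the centralizing-diagram constraint and the regularity of $f_S+t_\rho'$, this should force the unipotent contributions of $N\cdot N_{P_S}^-\cdot N_S\cdot \overline w^{-1}$ to collapse into $B^-$. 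Making this root-space bookkeeping precise — in particular, tracking the signs and weights under $w$ — is the technical heart of part (b).
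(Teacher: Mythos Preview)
Your approach to part (a) is correct and matches the paper's: unpack the commuting condition for $u_\rho$, observe that as $|\gamma_{-\Pi}(\rho)|\to 0$ both sides of $\Ad_{u_\rho}(f+w_0(s)+\Ad_{\overline w_0^{-1}h\rho}f)=f+s+\Ad_{(\overline w_0^{-1}h\rho)^{-1}}f$ (with $s$ the $\ft$-component of $\xi$) converge to $f+w_0(s)$ and $f+s$, and conclude boundedness and the form of the limit.

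For part (b), you have correctly located the real issue --- showing that the product $\widetilde u_\rho^{-1}u''(\widetilde\varsigma)u(\varsigma)^-u_{S,\rho}\overline w^{-1}\in G$ actually lands in $B^-$ --- but your proposed resolution via root-space bookkeeping under $w\in W_{\min}^S$ is neither completed nor the paper's route. The paper avoids this combinatorics entirely with a single geometric observation: by the diagram, $b_{1,\rho}^-$ is an element of $G$ conjugating $\Ad_{\overline w}(f_S+t_\rho')$ to $f+w(t_\rho')$. Both of these lie in $(\fb^-)^{\reg}$ (here is where $w(S)\subset\Delta^+$ is used, to place $\Ad_{\overline w}f_S\in\fn^-$) and have the same image $w(t_\rho')$ in $\fb^-/[\fb^-,\fb^-]\cong\ft$. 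For regular elements of $\fb^-$, any $G$-conjugation between two such elements with common $\ft$-projection is necessarily realized by an element of $B^-$ (this is the standard fact that the centralizer of a regular element lying in $\fb^-$ is contained in $B^-$, combined with $B^-$-transitivity on each regular fiber of $\fb^-\to\ft$). This immediately gives $b_{1,\rho}^-\in B^-$, and boundedness then follows from your formula since every factor is bounded, exactly as you say. The paper also records a second, independent proof of boundedness via transverse slices to the $B^-$-orbits in $(\fb^-)^{\reg}$, but this is optional.

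So: your part (a) is fine; in part (b) replace the unresolved root-combinatorics paragraph by the one-line observation that $b_{1,\rho}^-$ conjugates between two regular elements of $\fb^-$ with equal $\ft$-projection.
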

\begin{proof}
(a) By assumption, $u_\rho$ is determined by the property
\begin{align*}
&\Ad_{u_\rho\overline{w}_0^{-1}h\rho}(f+t+\Ad_{(\overline{w}_0^{-1}h\rho)^{-1}}f)=f+t+\Ad_{(\overline{w}_0^{-1}h\rho)^{-1}}f\\
\Leftrightarrow&\Ad_{u_\rho}(f+w_0(t)+\Ad_{\overline{w}_0^{-1}h\rho}f)=f+t+\Ad_{(\overline{w}_0^{-1}h\rho)^{-1}}f
\end{align*}
Since as $|\gamma_{-\Pi}(\rho)|\rightarrow 0$, 
\begin{align*}
f+w_0(t)+\Ad_{\overline{w}_0^{-1}h\rho}f\rightarrow f+w_0(t)\in f+\ft,\text{ and } f+t+\Ad_{(\overline{w}_0^{-1}h\rho)^{-1}}f\rightarrow f+t\in f+\ft,
\end{align*} 
we have $u_\rho$ is bounded and $\lim\limits_{|\gamma_{-\Pi}(\rho)|\rightarrow 0}u_\rho$ is the unique element in $N$ that sends $f+w_0(t)$ to $f+t$ through the adjoint action. 

(b) The uniqueness of $b_{1,\rho}^-$ is clear. For existence, we observe that $b_{1,\rho}^-$ is an element in $G$ that takes $\Ad_{\overline{w}}(f_S+t_\rho')$ to $f+w(t_\rho')$. Since both elements are in $(\fb^-)^{\reg}$ and have the same image in $\fb^-/[\fb^-,\fb^-]$, any conjugation between them must be induced from an element in $B^-$. Then the claim follows. 

We remark that we don't really need the first claim in (b) to deduce the second claim. Here we include a slightly different proof of the second claim independent of the first, which is more natural. 
First, we have $\ft+f\subset (\fb^-)^{\reg}\rightarrow \ft$ a transverse slice to the $B^-$-orbits in $(\fb^-)^{\reg}$. Second, 
$\Ad_{\overline{w}}(f_S+t_\rho')$ gives a local transverse slice of the $B^-$-orbits over $w(\overline{D}_S'+\overline{\cK}_{S^\perp}')$. Now for each $\widetilde{t}\in w(\overline{D}_S'+\overline{\cK}_{S^\perp}')$, choose any $\widetilde{b}^-$ such that $\Ad_{\widetilde{b}^-}(\widetilde{t}+\Ad_{\overline{w}}(f_S))=\widetilde{t}+f$. Then there exists a small neighborhood $\cU_{\widetilde{t}}$ around $\widetilde{t}$ and a neighborhood $\cV_{\widetilde{b}^-}$ of $\widetilde{b}^-$ in $B^-$ such that 
\begin{align*}
\Ad_{\cV_{\widetilde{b}^-}}(\cU_{\widetilde{t}}+\Ad_{\overline{w}}f_S)\supset \cU_{\widetilde{t}}+f.
\end{align*}
Lastly, by the compactness of $\overline{D}_S'+\overline{\cK}'_{S^\perp}$, the claim follows. 
\end{proof}

\begin{lemma}\label{lemma: gh_2varphi}
Let $\cK\subset G$ be a fixed compact region. For any two $h_1, h_2\in T$ with $\log_\bR (h_j)\in \ft_{\bR}^+$, if $h_1=g h_2 \varphi$ for some $g, \varphi\in \cK$, then 
\begin{align*}
c|\lambda_{\beta^\vee}(h_2)|\leq |\lambda_{\beta^\vee}(h_1)|\leq C|\lambda_{\beta^\vee}(h_2)|, \beta\in \Pi
\end{align*}
for some constants $c,C>0$ that only depend on $\cK$. 
\end{lemma}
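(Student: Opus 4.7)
The plan is to reinterpret $|\lambda_{\beta^\vee}(h)|$, for $h\in T$ with $\log_\bR h\in \ft_\bR^+$, as the operator norm of $\rho_{\lambda_{\beta^\vee}}(h)$ acting on the irreducible representation $V_{\lambda_{\beta^\vee}}$, and then invoke submultiplicativity of the operator norm with respect to the compact factors $g$ and $\varphi$.

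First I would equip each $V_{\lambda_{\beta^\vee}}, \beta\in \Pi$, with a Hermitian inner product coming from a compact real form of $G$, so that the maximal compact subgroup acts unitarily and distinct weight spaces are orthogonal. Then for $h\in T$ with $\log_\bR h\in \ft_\bR^+$, writing $h=k\exp(x_\bR)$ with $k\in T_\cpt$ and $x_\bR\in \ft_\bR^+$, the operator $\rho_{\lambda_{\beta^\vee}}(h)$ factors as a unitary composed with a positive diagonal operator whose eigenvalues are $e^{\mu(x_\bR)}$ as $\mu$ ranges over the weights of $V_{\lambda_{\beta^\vee}}$. The dominance hypothesis, combined with the fact that $\lambda_{\beta^\vee}-\mu$ lies in the nonnegative integer span of the simple roots, guarantees that $(\lambda_{\beta^\vee}-\mu)(x_\bR)\geq 0$, so the largest eigenvalue is attained at the highest weight. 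This yields the key identification
\[
\|\rho_{\lambda_{\beta^\vee}}(h)\|_{\mathrm{op}}=|\lambda_{\beta^\vee}(h)|.
\]

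Given this, applying $\rho_{\lambda_{\beta^\vee}}$ to the relation $h_1=gh_2\varphi$ and using submultiplicativity of the operator norm gives
\[
|\lambda_{\beta^\vee}(h_1)|\leq \|\rho_{\lambda_{\beta^\vee}}(g)\|_{\mathrm{op}}\cdot |\lambda_{\beta^\vee}(h_2)|\cdot \|\rho_{\lambda_{\beta^\vee}}(\varphi)\|_{\mathrm{op}},
\]
with the outer two factors uniformly bounded as $g,\varphi$ range over the compact set $\cK$. Rewriting the same identity as $h_2=g^{-1}h_1\varphi^{-1}$ and repeating the argument produces the matching lower bound. Maximizing the resulting constants over the finite set $\beta\in \Pi$ yields uniform $C$ (and $c=1/C$) depending only on $\cK$. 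There is no substantial obstacle: the only nontrivial point is verifying the identification of the matrix coefficient $|\lambda_{\beta^\vee}(h)|$ with the operator norm on $V_{\lambda_{\beta^\vee}}$, which rests solely on the dominance hypothesis $\log_\bR h_j\in \ft_\bR^+$.
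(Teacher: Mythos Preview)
Your proof is correct and follows essentially the same idea as the paper's: both arguments work in the highest-weight representation $V_{\lambda_{\beta^\vee}}$ and exploit the dominance hypothesis to identify $|\lambda_{\beta^\vee}(h)|$ as the dominant eigenvalue modulus of $h$ acting there, then bound the contributions of $g$ and $\varphi$ by compactness. The only difference is packaging: the paper phrases the bound via the single matrix coefficient $b_{\lambda_{\beta^\vee}}(\overline{w}_0\,\cdot\,)=\langle\,\cdot\,v_{\lambda_{\beta^\vee}},v_{-w_0(\lambda_{\beta^\vee})}\rangle$ already introduced in Section~\ref{subsec: algebraic setup}, whereas you use the operator norm on $V_{\lambda_{\beta^\vee}}$ and submultiplicativity---a cleaner, self-contained formulation of the same estimate.
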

\begin{proof}
By symmetry, it suffices to prove $|\lambda_{\beta^\vee}(h_1)|\leq C|\lambda_{\beta^\vee}(h_2)|$. Using
$\log_\bR (h_2)\in \ft_{\bR}^+$ and $g, \varphi$ are bounded, we see that 
\begin{align*}
&|\lambda_{\beta^\vee}(h_1)|=|b_{\lambda_{\beta^\vee}}(\overline{w}_0h_1)|=|b_{\lambda_{\beta^\vee}}(\overline{w}_0gh_2\varphi)|=|\lng h_2\varphi (v_{\lambda_{\beta^\vee}}), g^{-1}v_{(-\lambda_{\beta^\vee})}\rng| \leq C|\lambda_{\beta^\vee}(h_2)|. 
\end{align*}
for some uniform constant $C>0$. 
\end{proof}

\begin{prop}\label{prop: g_S, natural, w}
Under the above settings, given any fixed compact region in $J_{L_S^\der}$, there exists $\epsilon>0$ such that for all $(h,t)\in \overline{\cU}_{\cQ, \cV}^{w(S)}$ and $|\gamma_{-\Pi}(\rho)|<\epsilon$, the corresponding $(g_{S,\rho}^\natural, \xi_{S,\rho}^\natural)$ for $\frj_\rho(u\overline{w}_0^{-1}h, \xi)$ from (\ref{eq: u_rho, natural, gz})  is outside the compact region. 
\end{prop}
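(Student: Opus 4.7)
The strategy is to bypass the conjugation equation (\ref{eq: u_rho, natural, gz}) and the boundedness content of Lemma~\ref{lemma: u_rho bounded}, and instead to estimate $|b^S_{\lambda_{\beta^\vee}}(g_{S,\rho}^\natural)|$ for $\beta\in S$ directly by comparing two Whittaker representatives of the same point in $J_G$. Using $\frj_\rho(z,t)=(z\rho,t)$ on $\fU_\emptyset=\cB_{w_0}$-coordinates together with Example~\ref{example: B_w0}, one Whittaker representative of $\frj_\rho(u\overline{w}_0^{-1}h,\xi)$ is
\[
(\overline{w}_0^{-1}h\rho,\ f+t+\Ad_{(\overline{w}_0^{-1}h\rho)^{-1}}f),\qquad t\in w(D'_S+\cK'_{S^\perp}).
\]
Via the open embedding $\tilde\iota_\emptyset^{S}\colon\cB_{w_0}\hookrightarrow\fU_S$ the same point is $\iota_S(g_{S,\rho}^\natural,\xi_{S,\rho}^\natural;z_\rho^\natural,t_\rho^\natural)$, yielding a second Whittaker representative $(\phi_S z_\rho^\natural g_{S,\rho}^\natural,\Xi_S^\natural)$ from (\ref{eq: prop U_S}). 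Bi-$N$-invariance of the function $b_{\lambda_{\beta^\vee}}$ on $J_G$, combined with the normalization $\langle\overline{w}_0^{-1}v_{\lambda_{\beta^\vee}},v_{-w_0(\lambda_{\beta^\vee})}\rangle=1$ and Lemma~\ref{lemma: weights, b_lambda}, yields, for all $\beta\in\Pi$,
\[
\lambda_{\beta^\vee}(z_\rho^\natural)=\lambda_{\beta^\vee}(h\rho)\ \text{if}\ \beta\notin S,\qquad \lambda_{\beta^\vee}(z_\rho^\natural)\,b^S_{\lambda_{\beta^\vee}}(g_{S,\rho}^\natural)=\lambda_{\beta^\vee}(h\rho)\ \text{if}\ \beta\in S.
\]

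Setting $\eta:=\log|h\rho|\in\ft_\bR$ and splitting $\eta=\eta^{S^\perp}+\eta^S$ along the Killing-orthogonal decomposition $\ft_\bR=\fz_{S,\bR}\oplus\ft_{S,\bR}$, the vanishing $\lambda_{\beta^\vee}|_{\ft_S}=0$ for $\beta\notin S$ turns the first identity into $|\lambda_{\beta^\vee}(z_\rho^\natural)|=e^{\lambda_{\beta^\vee}(\eta^{S^\perp})}$. Now $\{\lambda_{\beta^\vee}:\beta\notin S\}$ is a basis of $\fz_S^*$ (each lies in $\fz_S^*\subset\ft^*$, and $\langle\lambda_{\beta^\vee},\gamma^\vee\rangle=\delta_{\beta\gamma}$ provides linear independence, with cardinality matching $\dim\fz_S^*=|\Pi\setminus S|$), so these constraints force $\log|z_\rho^\natural|=\eta^{S^\perp}$ in $\fz_{S,\bR}$, whence $|\lambda_{\beta^\vee}(z_\rho^\natural)|=e^{\lambda_{\beta^\vee}(\eta^{S^\perp})}$ for \emph{every} $\beta\in\Pi$. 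The second identity then gives
\[
|b^S_{\lambda_{\beta^\vee}}(g_{S,\rho}^\natural)|=e^{\lambda_{\beta^\vee}(\eta^S)}=e^{\pi^S_{\ft^*}(\lambda_{\beta^\vee})(\eta^S)}\quad(\beta\in S).
\]
Since $\eta^S=\proj_{\ft_S}(\log|h|+\log|\rho|)$ and $\alpha|_{\fz_S}=0$ for $\alpha\in S$, we have $\alpha(\eta^S)=\alpha(\log|h|)+\log|\alpha(\rho)|$, in which the first summand is uniformly bounded on $\overline{\cV}$ while the second tends to $+\infty$ as $|\gamma_{-\Pi}(\rho)|\to 0$. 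Thus $\eta^S$ enters the deep interior of the $L_S^\der$-dominant cone of $\ft_{S,\bR}$, and $\pi^S_{\ft^*}(\lambda_{\beta^\vee})$---a non-negative, not identically zero combination of simple roots in $S$---evaluates on it with value tending to $+\infty$.

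Consequently $|b^S_{\lambda_{\beta^\vee}}(g_{S,\rho}^\natural)|\to+\infty$ uniformly in $(h,t)\in\overline{\cU}^{w(S)}_{\cQ,\cV}$; the element $w\in W^S_{\min}$ is invisible in the estimate since it enters only through the $\ft$-component of $\xi_\rho$ and not through $b_{\lambda_{\beta^\vee}}(\overline{w}_0^{-1}h\rho)=\lambda_{\beta^\vee}(h\rho)$. As $|b^S_{\lambda_{\beta^\vee}}|$ is continuous on $J_{L_S^\der}$ and therefore bounded on any fixed compact set, choosing $\epsilon>0$ small enough that this divergence overshoots the bound for some fixed $\beta\in S$ whenever $|\gamma_{-\Pi}(\rho)|<\epsilon$ completes the proof. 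The one step that needs to be written out with care---and is the only real obstacle---is the identification of the two Whittaker representatives in the first paragraph: one must check that $(\phi_S z_\rho^\natural g_{S,\rho}^\natural,\Xi_S^\natural)$ and $(\overline{w}_0^{-1}h\rho,\xi_\rho)$ lie in the \emph{same} $N\times N$-orbit in $\mu^{-1}(f,f)$, so that bona fide bi-$N$-invariance of $b_{\lambda_{\beta^\vee}}$ is being applied rather than the weaker conjugation invariance suggested by (\ref{eq: u_rho, natural, gz}); this is automatic from the definitions of $\iota_S$ and $\tilde\iota_\emptyset^S$ but is the obvious pitfall of the approach.
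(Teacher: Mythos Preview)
There is a genuine gap: you have conflated two distinct coordinate charts on (open subsets of) $J_G$. The quadruple $(g_{S,\rho}^\natural, \xi_{S,\rho}^\natural; z_\rho^\natural, t_\rho^\natural)$ in the proposition is, by the sentence immediately preceding (\ref{eq: u_rho, natural, gz}), defined via the \emph{centralizer}-based isomorphism (\ref{eq: chi_S, mathscr Z}): one writes the centralizing pair in $\mathscr{Z}_G$ as $(g_{S,\rho}^\natural z_\rho^\natural,\ \xi_{S,\rho}^\natural + t_\rho^\natural)$ with $\xi_{S,\rho}^\natural \in \cS_{\fl_S^\der}$ and $t_\rho^\natural \in \cK'_{S^\perp}$. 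The paper flags this explicitly right after (\ref{eq: chi_S, mathscr Z}) (``the elements $(g_S,\xi_S)$ are from the centralizer presentation \ldots\ rather than the Whittaker Hamiltonian reduction perspective''). This is \emph{not} the $\iota_S$-chart of Proposition~\ref{prop: U_S}: the two maps even have different images ($\fU_S$ versus $\chi^{-1}(\chi_\fg(\cQ_{D',\cK'}))$), and on their overlap they assign different coordinates to the same point. Thus $\iota_S(g_{S,\rho}^\natural,\xi_{S,\rho}^\natural;z_\rho^\natural,t_\rho^\natural)$ is a perfectly good point of $J_G$, but it is not $\frj_\rho(u\overline{w}_0^{-1}h,\xi)$, and Lemma~\ref{lemma: weights, b_lambda} does not yield your identities $\lambda_{\beta^\vee}(z_\rho^\natural)=\lambda_{\beta^\vee}(h\rho)$ and $\lambda_{\beta^\vee}(z_\rho^\natural)\,b^S_{\lambda_{\beta^\vee}}(g_{S,\rho}^\natural)=\lambda_{\beta^\vee}(h\rho)$. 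Indeed, the entire proof of Lemma~\ref{lemma: cY_S, R} (the $w=1$ situation) is precisely a nontrivial comparison between the $\iota_S$-coordinates $(g_S,\xi_S;z\rho,t)$ and the $(\ref{eq: chi_S, mathscr Z})$-coordinates $(g_{S,\rho}^\natural,\ldots)$ of the same point, culminating only in the inequality (\ref{eq: beta in S, z}); under your identification that lemma would be vacuous.

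Your computation does correctly show that the $\iota_S$-$J_{L_S^\der}$-component of $\frj_\rho(u\overline{w}_0^{-1}h,\xi)$ escapes every compactum, but the proposition (and its application in Step~2(iii) of the proof of Proposition~\ref{prop: L_0, part 2}, where $\varphi_H^s(\Sigma_I)$ near the wall is controlled \emph{via} (\ref{eq: chi_S, mathscr Z})) concerns the $(\ref{eq: chi_S, mathscr Z})$-component $g_{S,\rho}^\natural$. The bridge between the two presentations is exactly the conjugation identity (\ref{eq: u_rho, natural, gz}), which involves $b_{1,\rho}^-\overline{w}$ with $b_{1,\rho}^-\in B^-$; since $b_{\lambda_{\beta^\vee}}$ is only bi-$N$-invariant, no clean formula of your type survives. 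The paper therefore argues by contradiction: assuming $g_{S,\rho_n}^\natural$ stays bounded, Lemma~\ref{lemma: gh_2varphi} applied to the $G$-factor of (\ref{eq: u_rho, natural, gz}) forces $\widetilde{w}(z_{\rho_n}^\natural)\rho_n^{-1}$ into a compact region of $T$ for some $\widetilde w\in W$, which is impossible when $z_{\rho_n}^\natural\in\cZ(L_S)_0$, $S\neq\emptyset$, and $|\gamma_{-\Pi}(\rho_n)|\to 0$.
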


\begin{proof}
It is clear from (\ref{eq: u_rho, natural, gz}) that $\xi_{S,\rho}^\natural$ is bounded, for $\xi_\rho$ and the group elements after $\Ad$ are all bounded. We only need to prove that $g_{S,\rho}^\natural\in L_S^\der$ is outside any bounded region in $L_S^\der$ near the limit of $\rho$.

Suppose the contrary, there exists a sequence $(h_n, w(t_n))\in \overline{\cU}_{\cQ, \cV}^{w(S)}$, and $\rho_n$ with $|\gamma_{-\Pi}(\rho_n)|\rightarrow 0$, such that the corresponding $g_{S,\rho_n}^\natural$ is contained in some fixed compact region $\cD^\natural\subset L_S^\der$ for all $n$. Then for each $n$, choose $w_n\in W$ such that $w_n(\log_\bR z_{\rho_n}^\natural)\in \ft_\bR^+$. Since $|W|$ is finite, by restricting to a subsequence, we may assume that $w_n=\widetilde{w}$ for a fixed $\widetilde{w}$. Fix a representative of $\widetilde{w}\in N_G(T)$ and denote it by the same notation. Then we apply Lemma \ref{lemma: gh_2varphi} to the identity on the $G$-factors in (\ref{eq: u_rho, natural, gz}), where aside from $z_{\rho_n}^\natural=\Ad_{\widetilde{w}^{-1}}(\widetilde{w}(z^\natural_{\rho_n}))$ and $\rho_n$, every other element in the products is uniformly bounded. Hence we get there are uniform constants $c, C>0$ such that 
\begin{align*}
c\leq |\lambda_{\beta^\vee}(\widetilde{w}(z^\natural_{\rho_n})\rho_n^{-1})|\leq C,\forall \beta\in \Pi
\end{align*}
which means $\widetilde{w}(z^\natural_{\rho_n})\rho_n^{-1}$ is contained in a fixed compact region in $T$. However, this is impossible for $n\gg 1$ under the assumption that $S\neq \emptyset$. 
\end{proof}

\subsection{A construction of $L_0$ and $\cL_\zeta$ for any $\zeta\in \ft_c^\reg$.}\label{subsec: construct L_zeta}
In this subsection, we give a construction of $L_0$ and $\cL_\zeta$ for any $\zeta\in \ft_c^\reg$ that are used in the main propositions in Section \ref{subsec: key propositions}. For any $R\gg 1$, consider the conormal bundle 
\begin{align}\label{eq: def Lambda_R}
\Lambda_{R}:= \Lambda_{T_{\cpt, R}}
\end{align}
of the compact torus (more precisely, an orbit of it)
\begin{align}\label{eq: T_cpt, R def}
T_{\cpt, R}=
:\{|b_{\lambda_{\beta^\vee}}|^{1/\lambda_{\beta^\vee}(\sfh_0)}=R^2/n: \beta\in \Pi\}\subset T
\end{align}
 in $\cB_{w_0}\cong T^*T$. For any $\zeta\in \ft_c^\reg$, we can form the closed \emph{non-exact} Lagrangian $\zeta+\Lambda_{R}$. We will perform two modifications for the shifted conormal bundle $\zeta+\Lambda_R$: 
 \begin{itemize}
 \item[(1)] In Subsection \ref{subsubsec: conic}, we will do a cylindrical modification of $\Lambda_R$  and get a cylindrical Lagrangian $L_0$ contained in a Liouville subsector $\cB_{w_0}^\dagg\subset J_G$ define in Subsection \ref{subsubsec: subsector}. The upshot is $L_0^\zeta:= \zeta+L_0$ will be tautologically unobstructed, so that 
 $(L_0^\zeta,\check{\rho}), \check{\rho}\in \Hom(\pi_1(T), \bC^\times)$ is a well defined object in the  wrapped Fukaya category $\cW(J_G;\sfLambda)$ over the Novikov field $\sfLambda$.
 
 \item[(2)] In Subsection \ref{subsubsec: def Lambda_R}, we will perform a compactly supported Hamiltonian deformation of $L_0^\zeta$ based on the analysis in Subsection \ref{subsubsec: B_w_0}. The resulting Lagrangian is the desired $\cL_\zeta$.

 \end{itemize}
 
 \subsubsection{A Liouville subsector}\label{subsubsec: subsector}

 Using the Weinstein handle decomposition in Proposition \ref{prop: hypersurface F} (2) and its proof, let  $\fF_0\subset \fF$ be the portion of Liouville hypersurface defined by $I=0, \widetilde{\norm}=1$, whose projection to $\fC^{n-1}$ is contained in the stratum corresponding to $S=\emptyset$, and we will denote the projection by $\Omega_\emptyset$ (cf. Figure \ref{figure: proj L_xi, R}). Let $c_\emptyset$ denote for its center, i.e. the barycenter of the interior of $\fC^{n-1}$. 
 We assume the functions $I$ and $\widetilde{\norm}$ are of the form (\ref{eq: tildeN, B_w0}) and (\ref{eq: I, B_w0}), respectively, in a sufficiently large conic open subset (with respect to the Euler vector field) in $\cB_{w_0}$. Since the projection of $Z_{\fF_0}$ is zero in $\Omega_\emptyset$ (cf. Figure \ref{figure: proj L_xi, R}), $\fF_0$ is a itself a Liouville sector. Using the Darboux coordinates listed in (\ref{eq: q, p}), we have a natural sector splitting 
 \begin{align}\label{eq: fF_0, splitting}
 &\fF_0\cong T^*\Omega_\emptyset\times T^*T_{\cpt, 1}, \text{ where } T^*\Omega_\emptyset\cong\Omega_\emptyset\times \{(\Re p_{\beta^\vee}): \sum\limits_{\beta\in \Pi}\Re p_{\beta^\vee}=0\}, 
 \end{align}
 where $T^*\Omega_\emptyset$ and $T^*T_{\cpt, 1}$ are equipped with the standard Liouville sector structure.

Let $\cP\subset \bC_{\Re z\leq 0}$ be a Liouville subsector constructed as follows. Pick any real codimension $1$ sphere $S$ in $\chi^{-1}(0)$ surrounding $(g=I, \xi=f)$. The projection $S\subset \fF\times \bC_{\Re z<0}\rightarrow \bC_{\Re z<0}$ is contained in some proper open cone 
  \begin{align*}
 Q=\{z=r e^{i\theta}: \theta\in (\theta_-,\theta_+)\}, \text{ for some }[\theta_-,\theta_+]\subset (\frac{\pi}{2},\frac{3\pi}{2}) 
 \end{align*}
then the same holds for $\chi^{-1}([0])\cap \fF\times \bC_{\Re z<0}$. We assume that the subsector $\cP$ is of the form
 \begin{align}\label{eq: cP def}
 \cP=\{z=r e^{i\theta}: \theta\in [\frac{\pi}{2}, \frac{3\pi}{2}]\backslash (\theta_-,\theta_+), r\geq 0\}\cup \{\Re z\geq -A\},
 \end{align} 
for some fixed sufficiently large positive number $A$, shown in Figure \ref{figure: proj L_xi, R}. 

Let $\widetilde{\cH}_{K_0}\subset T^*(\Omega_\emptyset\times T_{\cpt, 1})$ (resp. $\widetilde{\cH}_{\leq K_0}$) be the contact hypersurface (resp. Liouville domain) defined by 
\begin{align}\label{eq: cH_K_0}
\sum\limits_{\beta\in \Pi}(\Re p_{\beta^\vee})^2+(\Im p_{\beta^\vee})^2= K_0^2 (\text{resp. }\leq K_0^2)\ (\text{recall }\sum\limits_{\beta\in \Pi}\Re p_{\beta^\vee}=0\text{ in }T^*\Omega_\emptyset)
\end{align}
for a sufficiently large $K_0>1$.  

\begin{lemma}\label{lemma: cH_K x P}
For sufficiently large $K_0>1$, $\chi^{-1}([0])\cap (\widetilde{\cH}_{K_0}\times \cP)=\emptyset$.
\end{lemma}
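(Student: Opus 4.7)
The plan is to exploit the $\bC^\times$-invariance of $\chi^{-1}([0])$ and rescale by the Liouville flow to extract a sharp contradiction from the asymptotic behavior of $\chi^{-1}([0])\cap\cB_{w_0}$. Since $[0]\in\fc$ is fixed by the canonical $\bC^\times$-action on $\fc$, the central fiber is preserved by the Liouville flow on $J_G$. Using that $p_{\beta^\vee}$ has weight $1$ and $z=-1/\widetilde{\norm}+iI$ has weight $1/2$, flowing by time $-\log K_0$ sends $\widetilde{\cH}_{K_0}\times\cP$ to $\widetilde{\cH}_1\times K_0^{-1/2}\cP$, where
\[
K_0^{-1/2}\cP=\bigl\{\theta\in[\tfrac{\pi}{2},\tfrac{3\pi}{2}]\setminus(\theta_-,\theta_+)\bigr\}\cup\bigl\{\Re z\ge -AK_0^{-1/2}\bigr\}.
\]
By the defining property of $Q$, the $\bC$-projection of $\chi^{-1}([0])\cap(\fF\times\bC_{\Re z<0})$ lies in $Q^\circ$, so the $\bC$-projection of any point in the rescaled intersection lies in $Q^\circ\cap\{\Re z\ge -AK_0^{-1/2}\}$, a region contained in a disk of radius $O(K_0^{-1/2})$ about the origin.

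Unrescaling, a hypothetical $x\in\chi^{-1}([0])\cap(\widetilde{\cH}_{K_0}\times\cP)$ has $|\Re z|\in(0,A]$, and the Liouville-weight formulas give $\widetilde{\norm}(x)=-1/\Re z$ and $|p_{\beta^\vee}(x)|=(\Re z)^2|p_{\beta^\vee}(y)|$. Combining with $\sum|p_{\beta^\vee}(y)|^2=K_0^2$ and the uniform lower bound $\tilde q_\lambda(y)\ge c_0>0$ on the pre-compact $\Omega_\emptyset\subset\mathring\fC^{n-1}$ yields $|t_x|\asymp|p_{\beta^\vee}(x)|\gtrsim(\Re z)^2 K_0$ at the corresponding $J_G$-point.

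On the other hand, writing $x=(\overline{w}_0^{-1}h,\,f+t+\Ad_{(\overline{w}_0^{-1}h)^{-1}}f)\in\cB_{w_0}$, the principal-$\fsl_2$ identity $\alpha(\sfh_0)=2$ for every simple root $\alpha$ gives $|\Ad_{(\overline{w}_0^{-1}h)^{-1}}f|\lesssim\max_\alpha|h^{-\alpha}|\sim\widetilde{\norm}(x)^{-4}$. Since the Chevalley map restricted to $f+\ft\oplus\fn$ has quadratic-Casimir leading term vanishing to order $|t|^2$ at $t=0$, the equation $\chi(x)=[0]$ forces $|t_x|\lesssim\widetilde{\norm}(x)^{-2}=|\Re z|^2$. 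Comparing with the lower bound above gives $(\Re z)^2 K_0\lesssim|\Re z|^2$, hence $K_0\le C$ for a constant $C=C(A,G)$; for $K_0>C$ the intersection is empty.

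The main obstacle is to promote the qualitative $t\to 0$ statement of Lemma \ref{lemma: hat chi_epsilon, gamma}(i) to the effective upper bound $|t_x|\lesssim\widetilde{\norm}(x)^{-2}$; this requires a sharper implicit-function-theorem argument using the quadratic Casimir as the leading nontrivial Chevalley invariant, together with the principal-$\fsl_2$ identity $\alpha(\sfh_0)=2$ that controls the decay rate of $\Ad_{(\overline{w}_0^{-1}h)^{-1}}f$ in terms of $\widetilde{\norm}$.
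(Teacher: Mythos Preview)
Your overall plan---exploit the $\bC^\times$-invariance of $\chi^{-1}([0])$ to rescale and then use the asymptotics of $\chi^{-1}([0])\cap\cB_{w_0}$---is exactly the paper's. The execution, however, has a genuine gap.

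The central error is the claimed relation $|p_{\beta^\vee}(x)|=(\Re z)^2|p_{\beta^\vee}(y)|$. This would hold if the $\fF$-component $y_\fF(x)$ were reached from $x$ by the Liouville flow, but in the product $J_G-\cB_1\cong\fF\times T^{*,>0}\bR$ the projection to $\fF$ is along the $T^{*,>0}\bR$-leaves, which are \emph{not} Liouville orbits. Both leaf generators $X_{1/\widetilde{\norm}^2}$ and $X_{\tilde I}$ annihilate $\Im p_{\beta^\vee}$ and $\theta_{\lambda_{\beta^\vee}}$, so in fact $\Im p_{\beta^\vee}(y_\fF(x))=\Im p_{\beta^\vee}(x)$ exactly, while $\Re p_{\beta^\vee}(y_\fF(x))=\Re p_{\beta^\vee}(x)-c_\beta\sum_\gamma\Re p_{\gamma^\vee}(x)$ with $c_\beta=e^{q_\beta}/\widetilde{\norm}^2$. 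Since $\sum_\gamma\Re p_{\gamma^\vee}(x)=I(x)/(2\widetilde{\norm}(x))=-\tfrac12\Re z\cdot\Im z$ is \emph{bounded} on $\overline Q\cap\{\Re z\ge -A\}$, one gets $\|p(x)\|=K_0+O(1)$, not $(\Im z)^2K_0$. (You also have $\Re z$ and $\Im z$ swapped throughout: in the paper's convention $\widetilde{\norm}=-1/\Im z$ and $I=\Re z$.)

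With the correct lower bound $|t_x|\gtrsim K_0$ in place, your quadratic-Casimir step is both unnecessary and, as written, invalid: the complex Killing form on $\ft$ has isotropic vectors, so $|\langle t,t\rangle|\lesssim|\eta|$ does not give $|t|\lesssim|\eta|^{1/2}$. The paper avoids any effective Chevalley estimate. After rescaling so that $|z|\le\epsilon$, one simultaneously has $\widetilde{\norm}(x')\ge 1/\epsilon$ (so the $T$-coordinate lies in $\frj_\rho(\cV)$ with $|\gamma_{-\Pi}(\rho)|\ll1$) \emph{and} $|t_{x'}|\sim 1$ from the corrected formula above. Then the purely qualitative Lemma~\ref{lemma: hat chi_epsilon, gamma}(i) with $\fK=\{[0]\}$ forces $|t_{x'}|<\delta$, a contradiction for small $\delta$.
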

\begin{proof}
By assumption, the projection of $\chi^{-1}([0])\cap \fF\times \cP$ to $\bC_{\Re z<0}$ is contained in the pre-compact region $Q\cap \{\Re z\geq -A\}$, so it suffices to prove that the intersection $\chi^{-1}([0])\cap (\widetilde{\cH}_{K_0}\times (Q\cap \{\Re z\geq -A\}))$ is empty. Since $\overline{Q}\cap \{\Re z\geq -A\}$ is compact, for any small $\epsilon>0$, there exists $M_\epsilon>1$ such that $\varphi_{Z_\bC}^{-M_\epsilon}(\overline{Q}\cap \{\Re z\geq -A\})\subset \bC_{\Re z<0, |z|^2\leq \epsilon^2}$. 
Now apply Lemma \ref{lemma: hat chi_epsilon, gamma} (i) with $\fK=\{[0]\}$, $I\in \cV$ and any $0<\delta\ll 1$. Choose $\epsilon>0$ such that $\{\|b_\lambda(\rho)\|\leq \epsilon\}\subset \{|\gamma_{-\Pi}(\rho)|<r_{\cV, \fK, \delta}\}$. Let $K_0=e^{M_\epsilon}$, then $\chi^{-1}([0])\cap (\widetilde{\cH}_{K_0}\times (\overline{Q}\cap \{\Re z\geq -A\}))=\emptyset$ as desired. 
\end{proof}

Using Lemma \ref{lemma: cH_K x P}, we can form the ``cylindricalization" of $\fF_0\times \cP$ as 
\begin{align*}
(\widetilde{\cH}_{\leq K_0}\times \cP)\cup \bigcup\limits_{s\geq 0}\varphi_Z^{s}(\widetilde{\cH}_{K_0}\times \cP).
\end{align*}
After a standard smoothing of the corners as in \cite{GPS1}, we get a Liouville subsector of $J_G$, denoted by $\fF_0\overset{\triangle}{\times}\cP$ or $\cB_{w_0}^\dagg$ (similarly, we can also define the subsector $\fF\overset{\triangle}{\times}\cP$).  To simplify notations, to represent a Liouville sector, we will usually just write its interior. The boundary of such a Liouville sector either has been introduced or is clear from the context.

  \subsubsection{A cylindrical modification of the conormal bundle $\Lambda_R$} \label{subsubsec: conic}
 
 Let  $\Lambda_{T_{\cpt}, 1}^0$ denote for the zero-section of $T^*T_{\cpt, 1}$. 
With respect to the splitting (\ref{eq: fF_0, splitting}), we have a splitting for the conormal bundle $\Lambda_R$ as 
 \begin{align}\label{eq: Lambda_R, split}
 \Lambda_R&=T^*_{c_\emptyset}\Omega_\emptyset\times\Lambda_{T_{\cpt}, 1}^0 \times \{\Re z=-\frac{1}{R}\}. 
 \end{align}

\begin{figure}[htbp]
\centering
\begin{tikzpicture}
\draw[thick] (1.73*1.5,-1.5) to (0,3);
\draw[thick] (-1.73*1.5, -1.5) to (0,3);
\draw[thick] (-1.73*1.5,-1.5) to  (1.73*1.5, -1.5);
\draw[thick, cyan] (-1.73*0.5, 1.5)--(-1.73*0.3, 1.2) coordinate (P)-- (1.73*0.3, 1.2) coordinate(Q)--(1.73*0.5, 1.5);
\draw[thick, cyan, rotate=120] (-1.73*0.5, 1.5)--(-1.73*0.3, 1.2) coordinate(PP) -- (1.73*0.3, 1.2) coordinate(QQ)--(1.73*0.5, 1.5);
\draw[thick, cyan, rotate=240] (-1.73*0.5, 1.5)--(-1.73*0.3, 1.2)coordinate(PPP) -- (1.73*0.3, 1.2) coordinate(QQQ)--(1.73*0.5, 1.5);
\draw[thick, cyan] (-1.73*0.3, 1.2)--(-1.73*0.75, 1.2-1.35);
\draw[thick, cyan, rotate=120] (-1.73*0.3, 1.2)--(-1.73*0.75, 1.2-1.35);
\draw[thick, cyan, rotate=240] (-1.73*0.3, 1.2)--(-1.73*0.75, 1.2-1.35);
\fill[fill=gray!40, draw=black, opacity=0.5] ($0.8*(Q)$)--($0.8*(P)$)--($0.8*(QQ)$)--($0.8*(PP)$)--($0.8*(QQQ)$)--($0.8*(PPP)$)--($0.8*(Q)$);
\filldraw (1.73*1.5,-1.5) circle (2pt);
\filldraw (0,3) circle (2pt);
\filldraw (-1.73*1.5,-1.5) circle (2pt);
\filldraw (1.73*0.75, 0.75) circle (2pt);
\filldraw (-1.73*0.75, 0.75) circle (2pt);
\filldraw (0, -1.5) circle (2pt);
\filldraw[blue] (0,0) circle (2pt) node[below] {$c_\emptyset$};
\end{tikzpicture}
\hspace{0.5in}
\begin{tikzpicture}
\draw[thick] (8,-1)--(8,1.73*2+1);
\draw[thick, dashed] (8,1.73*2+1) arc (90:270:1.73+1);
\draw[thick, blue, rounded corners=5pt] ({8+ cos (1.9 r)*2.73 }, {1.73+ sin (1.9 r)*2.73})--({8+ cos (1.9 r)*2.2 }, {1.73+ sin (1.9 r)*2.2})--({8+ cos (1.7 r)*1.6}, {1.73+ sin (1.7 r)*1.6})-- ({8+ cos (-1.7 r)*1.6 }, {1.73+ sin (-1.7 r)*1.6})--({8+ cos (-1.9 r)*2.2 }, {1.73+ sin (-1.9 r)*2.2})--({8+ cos (1.9 r)*2.73 }, {1.73+ sin (-1.9 r)*2.73}); 
\draw[thick, dashed, orange] (8-0.1, 1.73+0.7)--(8-1.4, 1.73+0.7)--(8-1.4, 1.73-0.7)--(8-0.1, 1.73-0.7)--(8-0.1, 1.73+0.7);
\filldraw[fill=gray!40, opacity=0.5, draw=black] ({8+cos (135)*2.73}, {1.73 +sin (135)*2.73})--({8+cos (135)}, {1.73 +sin (135)})--({8+cos (135)}, {-1.73 -sin (135)+1.73*2})-- ({8+cos (135)*2.73}, {-1.73 -sin (135)*2.73+1.73*2}) arc (225: 270: 1.73+1)--(8,1.73*2+1) arc (90: 135: 1.73+1);
\end{tikzpicture}
\caption{(1) The gray region inside $\fC^{n-1}$ (introduced in Subsection \ref{subsec: tilde N, I}) on the left represents the projection of $\fF_0$ (to be more precise, one needs to smooth the corners in the picture, but this is not essential as explained in \cite{GPS1}). The projections of $Z_{\fF_0}$ are all zero. The blue dot in the center of $\fC^{n-1}$, denoted by $c_\emptyset$, represents the projection of $\Lambda_R$ to the simplex $\fC^{n-1}$. 
(2) The gray region in the right half-disc is the Liouville subsector $\cP$. The blue curve in the right half-disc shows a cylindrical modification of the projection of $\Lambda_R$ to $\bC_{\Re z\leq 0}$. 
}
\label{figure: proj L_xi, R}
\end{figure}
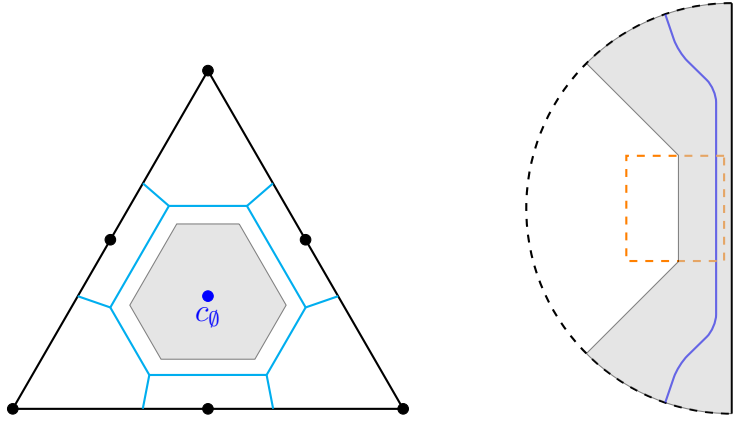

Let $C_1$ be a cylindrical modification of the projection of $\Lambda_R$ in $\bC_{\Re z<0}$, which is initially $\{\Re z=-1/R\}$. Let $q=\Re z$ and $p=\Im z$. We assume that $C_1$ is contained in $\{\Re z\leq -1/R\}$ inside the region $\{|\Im z|\leq R\}$, and it is conic outside the region $\{|\Im z|\leq R\}$. Without loss of generality, we may assume that $C_1$ has compactly supported primitive $f_{C_1}$, and we can choose a compactly supported extension of $f_{C_1}$ on $\bC_{\Re z<0}$.

We make the following additional assumptions on $C_1$ and $f_{C_1}: \bC_{\Re z<0}\rightarrow \bR$:
\begin{assumption}\label{assumption: f_C1}
The curve $C_1$ is defined as the graph of a function $\varphi: \bR_p\rightarrow \bR_{q}$, where $p=\Im z$ and $q=\Re z$, which is symmetric about $p=0$ and satisfies
\begin{align*}
&\varphi(p)=\begin{cases}&-\frac{1}{R},\ -R\leq p\leq R\\
&-\frac{1}{R^2}p, \ 3R\leq p<\infty\\
\end{cases},\\
&\varphi'(p)\in (-\frac{2}{R^2},0], \text{ for } R<p<3R,\\
&\varphi(p)-p\varphi'(p)\geq -\frac{1}{R}. 
\end{align*}
Let $f_{C_1}: \bC_{\Re z<0}\rightarrow\bR$ be a compactly supported extension of $\int_0^p\frac{1}{2}(s(-\varphi'(s))+\varphi(s))ds$ on $C_1$ with support contained in $\{p^2+q^2\leq 12R^2, q\leq -\frac{\epsilon}{R}\}$ for some $\epsilon>0$. We assume that $R$ and $A$ are sufficiently large so that $\cP\supset C_1\cup \{p^2+q^2\leq 24 R^2\}\supset C_1\cup \supp(f_{C_1})$. 
\end{assumption}

Now via a similar construction as in \cite[6.2]{GPS2}, we can deform the product Lagrangian $F_{\bR, c_\emptyset}\times\Lambda_{T_{\cpt, 1}}^0\times C_1$ into a cylindrical Lagrangian. 
Since the factor $\Lambda_{T_{\cpt, 1}}^0$ is compact and conic, we only need to do a cylindrical modification of the other two factors $F_{\bR, c_\emptyset}\times C_1$ inside $T^*\Omega_\emptyset\times \bC_{\Re z<0}$. 

First, intersecting the Lagrangian $F_{\bR, c_\emptyset}\times C_1$ with a fixed contact hypersurface $\cH_K\times \bC_{\Re z<0}\subset T^*\Omega_\emptyset\times \bC_{\Re z<0}, K\gg R$ given by
\begin{align*}
&\cH_K=\{\|(\Re p_{\beta^\vee})\|=(\sum\limits_{\beta\in\Pi}(\Re p_{\beta^\vee})^2)^{\frac{1}{2}}=K\}\subset T^*\Omega_\emptyset\ (\text{recall }\sum\limits_{\beta\in\Pi}\Re p_{\beta^\vee}=0\text{ in }T^*\Omega_\emptyset), \\
&(\text{similarly, set }\cH_I=\{\|(\Re p_{\beta^\vee})\|\in I\}\text{ for any connected interval }I\subset [0,\infty))
\end{align*} 
we get a submanifold of (real) dimension $n-1$, over which 
\begin{align}\label{eq: lambda_R cap Sigma_K}
\alpha_{T^*\Omega_\emptyset}+\alpha_{\bC_{\Re z}<0}|_{(F_{\bR, c_\emptyset}\cap \cH_K)\times C_1}=df_{C_1}. 
\end{align}
In the following, we use $\alpha_{\cH_K}$ to denote for $\alpha_{T^*\Omega_\emptyset}|_{\cH_K}$, and use $\alpha_{\bC}$ to denote for $\alpha_{\bC_{\Re z}<0}$. 

Consider the 1-parameter family of contact 1-forms on $\cH_K\times \bC_{\Re z<0}$, 
\begin{align*}
\alpha_t=\alpha_{\cH_K}+\alpha_{\bC}-tdf_{C_1}, 0\leq t\leq 1. 
\end{align*}
Let $V_{\alpha}$ be the direct sum of the Reeb vector field on $(\cH_K, \alpha_{\cH_K})$ and the zero vector field on $\bC_{\Re z<0}$, and let $\varphi_{-f_{C_1}V_{\alpha}}^t$ be the flow of $-f_{C_1}V_{\alpha}$, then we have 
\begin{align}\label{eq: varphi_fV}
(\varphi_{-f_{C_1}V_{\alpha}}^t)^* \alpha_{1-t}=\alpha_1=\alpha_{\cH_K}+\alpha_{\bC}-df_{C_1}.
\end{align}
By (\ref{eq: lambda_R cap Sigma_K}), the intersection
\begin{align*}
\Gamma_{R,K}:=(F_{\bR, c_\emptyset}\cap \cH_K)\times C_1
\end{align*}
is a Legendrian submanifold in $\cH_K\times \bC_{\Re z<0}$ with respect to $\alpha_1$. By (\ref{eq: varphi_fV}), for any $0\leq t\leq 1$, $\varphi_{-f_{C_1}V_{\alpha}}^t(\Gamma_{R,K})$ is a Legendrian submanifold with respect to $\alpha_{1-t}$.

Second, since $f_{C_1}$ is compactly supported, by choosing $K$ sufficiently large, the flow $\varphi_{-f_{C_1}V_\alpha}^t(\Gamma_{R,K})$ is defined for all $0\leq t\leq 1$ inside $T^*\Omega_\emptyset\times \cP$.  
Now take the union of flow lines of $\varphi_{-f_{C_1}V_\alpha}^t(\Gamma_{R,K})$ under the Liouville vector field 
\begin{align*}
Z_{1-t}=Z_{T^*\Omega_\emptyset}+Z_{\bC_{\Re z<0}}+(1-t)X_{f_{C_1}}
\end{align*}
 of $\alpha_{1-t}$ (on the symplectization), i.e. 
 \begin{align}\label{eq: L_1-t, cyl}
 L_{1-t}^{cyl, \alpha_{1-t}}:=\bigcup\limits_{s\geq 0}\varphi_{Z_{1-t}}^s(\varphi_{-f_{C_1}V_\alpha}^t(\Gamma_{R,K})).
 \end{align}

Lastly, let 
\begin{align*}
&\phi_{t}: \cH_{[K,\infty)}\times \bC_{\Re z<0}\longrightarrow  \cH_{[K,\infty)}\times \bC_{\Re z<0}
\end{align*}
be the diffeomorphism defined by 
\begin{align*}
&\phi_{t}|_{\cH_K\times \bC_{\Re z<0}}=\varphi_{-f_{C_1}V_\alpha}^t\\
&\phi_{t}\circ\varphi^s_{Z_1}=\varphi^s_{Z_{1-t}}\circ \phi_{t}, \forall s\geq 0.
\end{align*}
Since by definition $\phi_{t}^*\alpha_{1-t}=\alpha_1$, $\phi_t$ is the Hamiltonian flow of a time-dependent family of Hamiltonian functions 
\begin{align*}
H_t=\iota_{X_t}\alpha_{1-t}+f_{C_1}, 0\leq t\leq 1.
\end{align*} 
The Hamiltonian vector field is $X_t=-f_{C_1}V_\alpha+Y_t$, where $V_\alpha$ is Hamiltonian vector field of $\frac{\|(\Re p_{\beta^\vee})\|}{K}$, and $Y_t$ is the component tangent to the factor $\bC_{\Re z<0}$ (depending on the level $\|(\Re p_{\beta^\vee})\|$). Then 
\begin{align*}
H_t=(1-\frac{\|(\Re p_{\beta^\vee})\|}{K})f_{C_1}+\iota_{Y_t}(\alpha_{\bC}-(1-t)df_{C_1}). 
\end{align*}
In particular, for any $K'>K$, on $\cH_{[K, K']}\times \bC_{\Re z<0}$, 
we have 
\begin{align*}
\supp H_t,\ \supp Y_t\subset \cH_{[K,K']}\times(\bigcup\limits_{0\leq s\leq\log (K'/K)}\varphi^s_{Z_{\bC_{\Re z<0}}+(1-t)X_{f_{C_1}}}(\supp f_{C_1}))
\end{align*}
and $|Y_t|$ is bounded from above. 

To simplify the notations, we will denote $Z_{\bC_{\Re z<0}}$ (resp. $X_{f_{C_1}}$) simply as $Z_{\bC}$ (resp. $X$), when there is no cause of confusion. Note that on any level $K\cdot e^s$, i.e. $\cH_{K\cdot e^s}\times \bC_{\Re z<0}$, 
\begin{align}\label{eq: Y_t def}
Y_t=\frac{d}{dt}\varphi_{Z_{\bC}+(1-t)X}^s\circ \varphi^{-s}_{Z_\bC+X}. 
\end{align}
Since $f_{C_1}$ has compact support, we directly see that on $\cH_{[K,K']}\times \bC_{\Re z<0}$,
\begin{align}\label{eq: bound Y_t}
|Y_t|, |H_t|\leq Q_{R, f_{C_1}}(K'/K)
\end{align}
for some constant $Q_{R, f_{C_1}}>0$ (depending only on $R$ and $f_{C_1}$).

Choose a smooth cut-off function 
\begin{align*}
&b_K: [K,\infty)\rightarrow [0,1],\\
&b_K|_{[K+2,\infty)}=1,\ b_K|_{[K, K+(1/R)]}=0,\ 0\leq b_K'(x)\leq 1\text{ for all }x.  
\end{align*}
Consider the Hamiltonian function 
\begin{align*}
\widetilde{H}_t:=b_K(\|(\Re p_{\beta^\vee})\|)H_t,
\end{align*}
where as before $\|(\Re p_{\beta^\vee})\|$ is considered as a function on $T^*\Omega_\emptyset$. 
We can extend $\widetilde{H}$ to be homogeneous near infinity and to have support contained in $\cH'_{[K,\infty)}\overset{\triangle}{\times} \cP\subset T^*\Omega'_\emptyset\overset{\triangle}{\times} \cP$, for a slight larger $\Omega'_\emptyset\supset \overline{\Omega}_\emptyset$, where $\cH'_{[K,\infty)}$ is defined similarly as $\cH_{[K,\infty)}$. 

We have  
\begin{align*}
X_{\widetilde{H}_t}=&b_K(\|(\Re p_{\beta^\vee})\|)\cdot X_t+b'_K(\|(\Re p_{\beta^\vee})\|)H_t\cdot V_\alpha\\
=& (b'_K(\|(\Re p_{\beta^\vee})\|)H_t-b_K(\|(\Re p_{\beta^\vee})\|)f_{C_1})V_\alpha+b_K(\|(\Re p_{\beta^\vee})\|)\cdot Y_t.
\end{align*}
Now set 
\begin{align}\label{eq: def L_1-t}
L_{1-t}:=\varphi_{X_{\widetilde{H}_t}}^t(F_{\bR, c_\emptyset}\times C_1)\times \Lambda_{T_{\cpt,1}}^0. 
\end{align}
Using Assumption \ref{assumption: f_C1}, it is clear from the estimate (\ref{eq: bound Y_t}) and the description of the ``cylindrical" part (\ref{eq: L_1-t, cyl}) that by choosing $K\gg R\gg K_0\gg1$ (where $K_0$ is from (\ref{eq: cH_K_0})), we have $L_{1-t}\subset \fF_0\overset{\triangle}{\times} \cP$.

The Lagrangian $L_0$, i.e. for $t=1$, gives a cylindrical modification of $\Lambda_R$ that is used in  Proposition \ref{prop: L_0, part 2} and \ref{prop: L_0, W}. Note that  
\begin{align*}
L_0\cap (\cH_{[0,K]}\times \Lambda_{T_{\cpt, 1}}^0\times \bC_{\Re z<0})= (F_{\bR, c_\emptyset}\cap \cH_{[0,K]})\times \Lambda_{T_{\cpt, 1}}^0\times C_1.
\end{align*}
For any $\zeta\in \ft_c$, let $L_0^\zeta=\zeta+L_0$. By choosing $K\gg K_0\gg R\gg |\zeta|$, we have $L_0^\zeta\subset \fF_0\overset{\triangle}{\times} \cP$. 
Let
\begin{align*}
L_0^{\zeta; 1}=(L_0^\zeta\cap (\cH_{[0,1]}\times T^*T_{\cpt, 1}\times \{\Re z\geq -\frac{3}{R}\})). 
\end{align*}
Fix sufficiently small $0<\delta<\delta'$. For $R\gg 1$, we can choose a constant $C_0\geq 1$ so that 
\begin{align}\label{eq: K_zeta, delta}
\proj_\ft(L_0^{\zeta;1})\subset \cK^{\delta,C_0}_\zeta:=\{\|\proj_{\ft_c} t-\zeta\|\leq \delta\}\cap \{\sum\limits_{\beta\in \Pi}(\Re p_{\beta^\vee})^2\leq C_0\}\subset \ft^{\reg}.
\end{align} 
In this case, we have 
\begin{itemize}
\item[(1)] the composition
\begin{align*}
L_0^{\zeta; 1}\hookrightarrow J_G\overset{\chi}{\longrightarrow}\fc
\end{align*}
is $C^1$-close to the composition
\begin{align*}
L_0^{\zeta; 1}\hookrightarrow \cB_{w_0}\cong T^*T\rightarrow \ft \overset{\chi_\ft}{\longrightarrow}\fc. 
\end{align*}
In particular, the images of both maps lie in a compact region in $\fc^\reg$. 

\item[(2)] There is a canonical $W$-equivariant identification (with respect to the standard Borel $B$ determined by the principal $\fsl_2$-triple $(e, f, \sfh_0)$)
\begin{align}\label{eq: trivial W zeta}
L_0^{\zeta; 1}\underset{\fc}{\times}\ft\overset{\sim}{\longrightarrow} L_0^{\zeta;1}\times W
\end{align}
given by the trivialization of the left-hand-side principal $W$-bundle 
\begin{align}\label{eq: trivial W, section}
\{(g,\xi,B_1): \overline{\xi}\in \fb_1/[\fb_1, \fb_1]\overset{\text{canonical}}{\cong}\fb/[\fb,\fb]\cong \ft\text{ satisfies }\overline{\xi}\in \cK^{\delta', C_0'}_\zeta\}\subset L_0^{\zeta; 1}\underset{\fc}{\times}\ft.
\end{align}
for some slightly larger $\delta'>\delta$ and $C_0'>C_0$. 
\end{itemize}

For any $\xi\in \ft$, let $\xi_\bR+\xi_c$ be the decomposition of $\xi$ with respect to $\ft\cong \ft_\bR\oplus \ft_c$. 

\begin{lemma}\label{lemma: Omega}
For $K\gg R\gg 1$, consider the projection to the second factor in the fiber product 
\begin{align*}
\varpi_\zeta: L_0^{\zeta}\underset{\fc}{\times}\ft\longrightarrow \ft.
\end{align*}
\item[(a)] If $\zeta\in \ft_c^{\reg}$, then the map $\varpi_\zeta|_{L_0^{\zeta;1}\underset{\fc}{\times}\ft}$ is arbitrarily $C^1$-close to the composition
\begin{align*}
L_0^{\zeta;1}\times W\hookrightarrow \cB_{w_0}\times W\cong T^*T\times W&\longrightarrow \ft\\
(h,t;w)&\mapsto w(t),
\end{align*}
under the canonical identification (\ref{eq: trivial W zeta}), as $R\rightarrow\infty$. 

\item[(b)] For general $\zeta\in \ft_c$, there exist $\delta_0, \widetilde{\delta}_0>0$, which are independent of (sufficiently large) $R$ and $K$, such that for $x\in (L_0^\zeta\backslash L_0^{\zeta;1})\underset{\fc}{\times}\ft$, 
\begin{align}
\label{eq: lemma: Omega (b)}&\|(\varpi_\zeta(x))_\bR\|^2\geq  \delta_0^2 \max\{1, \sum\limits_{\beta\in \Pi} (\Re p_{\beta^\vee}(x))^2\},\\
\label{eq: lemma: Omega (b),1}&\|(\varpi_\zeta(x))_\bR\|^2\geq  \widetilde{\delta}_0^2 \max\{1, \|(\varpi_\zeta(x))\|^2\}. 
\end{align}

\item[(c)] For $\zeta=0$, fix any standard ball $\bD_\bR$ centered at $0$ in $\ft_\bR$ of radius $r_0>0$. Given any $\delta>0$, for all $K\gg R\gg r_0$, we have 
\begin{align}\label{eq: lemma: Omega (c)}
\varpi_0(L_0\underset{\fc}{\times}\ft)\subset (\bD_\bR\times D_{c,\delta})\cup \bR_{\geq 1}\cdot (\partial\bD_\bR\times D_{c,\delta}),
\end{align}
where $D_{c,\delta}$ is the standard ball in $\ft_c$ centered at $0$ of radius $\delta$. 
\end{lemma}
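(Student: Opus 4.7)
The plan rests on two structural observations. First, the cylindrical modification in Subsection \ref{subsec: construct L_zeta} preserves $\Im p_{\beta^\vee}=0$: the Hamiltonian $\widetilde{H}_t$ defining the modification depends only on $T^*\Omega_\emptyset\times \bC_{\Re z\leq 0}$-coordinates and is independent of the $T^*T_{\cpt,1}$-factor, so its flow preserves the zero section $\Lambda^0_{T_{\cpt,1}}$ in that factor. Since the coroots span $\ft$ and $\Im p_{\beta^\vee}=-\Im\widetilde{p}_{\beta^\vee}$, this forces $\proj_\ft(L_0)\subset \ft_\bR$, hence $\proj_\ft(L_0^\zeta)\subset \zeta+\ft_\bR$. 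Second, $\chi(\overline{w}_0^{-1}h,f+t+\Ad_{(\overline{w}_0^{-1}h)^{-1}}f)$ converges $C^\infty$ to $\chi_\fg(f+t)=\chi_\ft(t)$ on compact $t$-ranges as $|\gamma_{-\Pi}(h)|\to 0$, since $\Ad_{(\overline{w}_0^{-1}h)^{-1}}f\to 0$ in $C^\infty$ with respect to $h$; the identity $\chi_\fg(f+t)=\chi_\ft(t)$ itself follows from computing the characteristic polynomial of the lower-triangular matrix $f+t$ in a standard faithful representation.

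For part (a), points $x\in L_0^{\zeta;1}$ have $\proj_T(x)\in \rho_R\cdot \cV$ for some fixed compact $\cV\subset T$, where $\rho_R\in T_\bR^+$ satisfies $|\lambda_{\beta^\vee}(\rho_R)|^{1/\lambda_{\beta^\vee}(\sfh_0)}=R^2/n$, so $|\gamma_{-\Pi}(\proj_T(x))|=O(R^{-2})$ uniformly. By~(\ref{eq: K_zeta, delta}) and $\zeta\in \ft_c^\reg$, $\chi(L_0^{\zeta;1})$ sits in a fixed compact subset of $\fc^\reg$, so Lemma~\ref{lemma: hat chi_epsilon, gamma}(ii) splits $L_0^{\zeta;1}\underset{\fc}{\times}\ft$ into $|W|$ smooth sections, each $C^0$-close to $x\mapsto w(\proj_\ft(x))$. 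The canonical trivialization~(\ref{eq: trivial W zeta}) assigns each sheet its $w\in W$ via the Borel $B_1=wB$, matching the closeness labeling. The $C^1$-upgrade follows from the implicit function theorem applied to the $C^\infty$ convergence $\chi\to \chi_\ft\circ\proj_\ft$ above, since $\chi_\ft$ is a local diffeomorphism on each branch over $\fc^\reg$.

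For part (b), any $x\in L_0^\zeta\setminus L_0^{\zeta;1}$ lies in the cylindrical end, where $\sum_\beta(\Re p_{\beta^\vee}(x))^2$ is bounded below (either by $C_0$ from the $T^*\Omega_\emptyset$ direction, or via the relation $I=2\widetilde{\norm}^{-1}\sum \Re p_{\beta^\vee}$ translating the $\bC$-cylindrical growth into growing $\Re p$). Since $\Re p_{\beta^\vee}=\lambda_{\beta^\vee}(\sfh_0)\langle (\proj_\ft(x))_\bR,\beta^\vee\rangle$, we deduce $\|(\proj_\ft(x))_\bR\|^2\gtrsim \sum(\Re p_{\beta^\vee}(x))^2$ uniformly. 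Homogenizing the $C^1$-closeness from (a) via the $\bC^\times$-equivariance of $\chi$ yields $\|\varpi_\zeta(x)-w(\proj_\ft(x))\|\leq \varepsilon\cdot \max(1,\|\proj_\ft(x)\|)$ for some $w\in W$, with $\varepsilon\to 0$ as $R\to\infty$; since $W$ acts $\bR$-linearly preserving $\ft_\bR$, this gives $\|(\varpi_\zeta(x))_\bR\|\geq (1-o(1))\|(\proj_\ft(x))_\bR\|$, producing both~(\ref{eq: lemma: Omega (b)}) and~(\ref{eq: lemma: Omega (b),1}) with uniform constants $\delta_0,\widetilde{\delta}_0>0$ independent of $R,K$. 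For part (c), the structural fact $\proj_\ft(L_0)\subset \ft_\bR$ gives $(\proj_\ft(x))_c=0$, so on $L_0^{0;1}$ the $C^1$-closeness of (a) yields $\|(\varpi_0(x))_c\|<\delta$ for any prescribed $\delta>0$ by taking $R$ large; on the cylindrical part the $\bC^\times$-rescaled version of the same estimate produces $\|(\varpi_0(x))_c\|\leq (\delta/r_0)\|(\varpi_0(x))_\bR\|$, placing $\varpi_0(x)\in \bR_{\geq 1}\cdot (\partial \bD_\bR\times D_{c,\delta})$.

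The hard part is the $C^1$-upgrade of Lemma~\ref{lemma: hat chi_epsilon, gamma}(ii) together with its rescaled extension to the cylindrical ends: one must differentiate the $\bC^\times$-equivariant relationship between $\chi$ and $\chi_\ft$ and control the decay of $\Ad_{(\overline{w}_0^{-1}h)^{-1}}f$ uniformly along the Liouville direction via the implicit function theorem, so that the error in $\varpi_\zeta\approx w\circ\proj_\ft$ scales linearly with $\|\proj_\ft(x)\|$ rather than acquiring uncontrolled growth out in the Weinstein cone. The remaining work is then essentially bookkeeping with coordinates $(q_{\lambda_{\beta^\vee}},\theta_{\lambda_{\beta^\vee}},\Re p_{\beta^\vee},\Im p_{\beta^\vee})$.
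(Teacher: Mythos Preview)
Your approach is correct and closely parallels the paper's: both rest on the facts that $\proj_\ft(L_0)\subset\ft_\bR$ (which, as you observe, is immediate from the product form $L_{1-t}=\varphi^t_{X_{\widetilde H_t}}(F_{\bR,c_\emptyset}\times C_1)\times\Lambda^0_{T_{\cpt,1}}$), and that on any base region with $|\gamma_{-\Pi}(h)|\ll 1$ one has $\chi\approx\chi_\ft\circ\proj_\ft$. Part (a) in the paper is likewise deferred to the comments (items (1) and (2)) recorded immediately before the lemma, so your treatment there is the same.

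Where the two arguments differ is in (b) and (c). You aim for a global linear estimate $\|\varpi_\zeta(x)-w(\proj_\ft(x))\|\leq\varepsilon\max(1,\|\proj_\ft(x)\|)$ on the entire cylindrical end, obtained by ``homogenizing'' the closeness from (a) via $\bC^\times$-equivariance; you correctly identify this as the hard part. The paper sidesteps this entirely. Instead it observes that every $x\in L_0^\zeta\setminus L_0^{\zeta;1}$ can be flowed by $\varphi_{-Z}^{t_x}$ into a single explicit compact slice (the set (\ref{eq: L_1-t, 1, K+2, 1}), whose $\ft_\bR$-projection lands in the annular region $\cT_\bR$ of (\ref{eq: cT_bR}), bounded away from $0$). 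On that one compact slice the closeness $\chi\approx\chi_\ft\circ\proj_\ft$ gives $\|(\varpi_\zeta)_\bR\|\geq\delta_1$ directly; one then scales back out using only the weight-$(-1)$ homogeneity of $\varpi_\zeta$ under $\varphi_{-Z}$, together with $t_x\geq\log\|(\Re p_{\beta^\vee})\|-C$. No implicit-function argument along the whole cone is needed, and the compact slice in question is \emph{not} $L_0^{\zeta;1}$, so your phrase ``homogenizing from (a)'' is slightly misleading --- the input is really just the raw closeness estimate at large $|b_\lambda|$, not (a) itself. Two small corrections: the relation is $I=2\widetilde{\norm}\sum_\beta\Re p_{\beta^\vee}$ (not $\widetilde{\norm}^{-1}$), and your ``characteristic polynomial of a lower-triangular matrix'' justification for $\chi_\fg(f+t)=\chi_\ft(t)$ is only literal in type $A$; in general one argues that $f+t$ is $N^-$-conjugate to $t$ for $t\in\ft^{\reg}$, and then extends by continuity.
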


\begin{proof}
(a) is straightforward from the above comments. 

(b) We write (the closure of) the complement of $L_0^{\zeta;1}$ in $L_0^\zeta$ as the union of three parts:
\begin{align}
\label{eq: proof L part 1}&L_0^{\zeta; [1, K+2]; 1}:= L_0^\zeta\cap (\cH_{[1,K+2]}\times T^*T_{\cpt, 1}\times \{\Re z\geq -\frac{3}{R}\})\\
\label{eq: proof L part 2}&L_0^{\zeta;[0, K+2]; \text{cone}}:= L_0^\zeta\cap (\cH_{[0,K+2]}\times T^*T_{\cpt, 1}\times \{\Re z\leq -\frac{3}{R}\})\\
\label{eq: proof L part 3}&L_0^{\zeta; [K+2,\infty)}:=L_0^\zeta\cap (\cH_{[K+2,\infty)}\times T^*T_{\cpt, 1}\times \bC_{\Re z<0}\}). 
\end{align}
Let 
\begin{align}\label{eq: cT_bR}
\cT_\bR:=\bigcup\limits_{(\nu_\beta)_\beta\in \Omega_\emptyset}\partial \{t\in \ft_\bR: \sum\limits_{\beta\in \Pi}(p_{\beta^\vee}-\sum\limits_{\beta\in \Pi}\nu_\beta\cdot p_{\beta^\vee})^2\leq 1, |\sum\limits_{\beta\in \Pi}p_{\beta^\vee}|\leq 9\},
\end{align}
where $\Omega_\emptyset\subset \{\sum\limits_{\beta\in \Pi}|b_{\lambda_{\beta^\vee}}|^{1/\lambda_{\beta^\vee(\sfh_0)}}=1\}\subset \bR^n_{|b_{\lambda_{\beta^\vee}}|^{1/\lambda_{\beta^\vee(\sfh_0)}}}$, and $(\nu_\beta)_\beta$ are viewed as weights contained in $\Omega_\emptyset$. Since each $\nu_\beta$ has a strictly positive lower bound, there exists $\delta_1>0$, such that $\cT_\bR\subset \{\sum\limits_{\beta\in \Pi}(p_{\beta^\vee})^2\geq 2\delta_1^2\}$.  

Let $K\gg R$. For any point $x$ in any of the three parts (\ref{eq: proof L part 1})-(\ref{eq: proof L part 3}), there exists $t_x\geq 0$ such that $\varphi_{-Z}^{t_x}(x)$ is contained in the product 
\begin{align}\label{eq: L_1-t, 1, K+2, 1}
(\Omega_\emptyset\times T_{\cpt, 1}\times \bR_{q\in [-\frac{3}{R}, -\epsilon]}) \times\{t\in \ft: \proj_{\ft_\bR} t\in \cT_\bR, |\proj_{\ft_c} t|\leq |\zeta|\},
\end{align}
for some uniform $0<\epsilon<\frac{3}{R}$ independent of $x$, where $q=\Re z=-1/\widetilde{\norm}$ as in Subsection \ref{subsec: tilde N, I}. 
Since $\varphi_{-Z}^{t}, t\geq 0$ scales $p_{\beta^\vee}$ with weight $-1$, we have $t_x\geq  \log (\sum\limits_{\beta\in \Pi}(\Re p_{\beta^\vee}(x))^2)^{\frac{1}{2}}-C$ for some uniform constant $C\geq 0$.

It is clear that for $x$ in (\ref{eq: L_1-t, 1, K+2, 1}), $\|(\varpi_\zeta(x))_\bR\|^2\geq 1.5\delta_1^2$. Since $\varphi_{-Z}^{t}, t\geq 0$ scales $\varpi_\zeta$ with weight $-1$, we get (\ref{eq: lemma: Omega (b)}) with $\delta_0=\delta_1e^{-C}$as desired. (\ref{eq: lemma: Omega (b),1}) can be obtained similarly.  

(c) We follow essentially the same argument as for (b). In the current case, (\ref{eq: L_1-t, 1, K+2, 1}) is the same as  
\begin{align}\label{eq: L_0, 1, K+2, 1}
(\Omega_\emptyset\times T_{\cpt, 1}\times \bR_{q\in [-\frac{3}{R}, 0)}) \times \cT_\bR.
\end{align}
Then for any $x$ in (\ref{eq: L_0, 1, K+2, 1}), we have $\varpi_0(x)$ contained in the right-hand-side of (\ref{eq: lemma: Omega (c)}), then so is  $\varpi_0((L_0\backslash L_{0}^{0;1})\underset{\fc}{\times}\ft)$. 
Clearly $\varpi_0(L_{0}^{0;1}\underset{\fc}{\times}\ft)$ is contained in there too. So the proof is complete. 
\end{proof}

Let $\cJ$ be any (regular) compatible cylindrical almost complex structure on $J_G$.

\begin{prop}\label{prop: L_t, zeta, no disc}
Let $\zeta\in \ft_c^{\reg}$. For any homology class $\ell\in H_1(T,\bZ)$ with $(-i\zeta, \ell)>0$, the moduli space of $\cJ$-holomorphic discs $f: (\cD, \partial \cD)\rightarrow (J_G, L_0^\zeta)$ satifying $[f(\partial \cD)]=\ell$ is compact, and it is cobordant to $\emptyset$.
\end{prop}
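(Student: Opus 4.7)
The plan is to combine an exact topological energy formula, $C^0$-bounds coming from the integrable system $\chi \colon J_G \to \fc$, and a cobordism through the family of translates $L_s := L_0^{s\zeta}$, $s \in [0,1]$.

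First, I would observe that $L_0$ is exact in $J_G$: by the construction in Subsection \ref{subsubsec: conic} it is the image of the manifestly exact product Lagrangian $F_{\bR, c_\emptyset} \times \Lambda^0_{T_{\cpt, 1}} \times C_1$ under the compactly supported Hamiltonian isotopy $\varphi^t_{X_{\widetilde{H}_t}}$, and Hamiltonian isotopies preserve exactness. Hence the translate $L_0^\zeta = \zeta + L_0$ inside $\cB_{w_0} \cong T^*T$ has Liouville class $[\vartheta|_{L_0^\zeta}] = \zeta$ under the canonical identification $H^1(L_0^\zeta; \bC) \cong H^1(T; \bC) \cong \ft^*_\bC$, and Stokes' theorem combined with $\cJ$-holomorphicity yields, up to sign,
\[
E(f) \;=\; \int_\cD f^*\omega \;=\; 2\pi \langle -i\zeta, \ell\rangle,
\]
which is strictly positive by hypothesis. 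This gives an a priori energy bound depending only on $\zeta$ and $\ell$.

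Second, to bound $f(\cD)$ in a compact subset of $J_G$, I plan to lift $\chi \circ f \colon \cD \to \fc$ through the branched cover $\ft \to \fc$. The $W$-equivariant trivialization (\ref{eq: trivial W zeta}) supplies a holomorphic lift on the part of the boundary mapping into $L_0^{\zeta;1}$, with values in $\bigsqcup_{w\in W} w(\cK_\zeta^{\delta,C_0})$. On the non-compact part $L_0^\zeta \setminus L_0^{\zeta;1}$, Lemma \ref{lemma: Omega}(b) provides the key inequality $\|(\varpi_\zeta(x))_\bR\|^2 \ge \widetilde{\delta}_0^2 \|\varpi_\zeta(x)\|^2$; passing to a ramified cover of $\cD$ over the preimages of the $W$-walls then assembles a genuine holomorphic lift to $\ft$ whose boundary lies in a bounded region. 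Subharmonicity of $\|\Re(\cdot)\|^2$ along holomorphic maps together with the maximum principle bounds the lift, and pulling back through $\chi$, which is proper on $\chi^{-1}(\fK) \cap \cB_{w_0}^\dagg$ for compact $\fK \subset \fc^{\reg}$ (Proposition \ref{prop: proper b map}), yields the desired $C^0$ bound. With this bound in hand, Gromov compactness applies: holomorphic spheres have vanishing area since $c_1(TJ_G) = 0$, and disc bubbles of smaller positive energy are ruled out by the energy-class dictionary together with standard transversality.

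Third, for the null-cobordism I would take the parametric moduli of $\cJ_s$-holomorphic discs with boundary on $L_s = L_0^{s\zeta}$, for a generic path $\cJ_s$ and $s \in [0,1]$. At $s = 0$, $L_0$ is exact and $\ell \neq 0$ (since $(-i\zeta, \ell) > 0$), so $\cM_\ell(L_0) = \emptyset$. For $s \in (0,1]$ the energy $2\pi s \langle -i\zeta, \ell \rangle$ stays uniformly positive and bounded, and the $C^0$ estimate from Step~2 extends uniformly in $s$ since $\cK_{s\zeta}^{\delta, C_0}$ varies continuously and remains bounded on $[0,1]$. The resulting parametric moduli is therefore a smooth cobordism between $\cM_\ell(L_0^\zeta)$ and $\emptyset$.

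The main obstacle is the $C^0$ estimate. The Lagrangian $L_0^\zeta$ has non-compact ends in both cotangent-fiber and $\ft_\bR$-directions, the ambient $J_G$ has a rich sector structure with multiple Bruhat strata $\cB_{w_0 w_S}$, and a priori a disc could escape across the ``finite'' boundary of $\cB_{w_0}^\dagg$ or along a different stratum. Making the lift to $\ft$ globally well-defined (and not only over $\fc^{\reg}$), and systematically using the analysis of Subsections \ref{subsec: analysis cB_w0}–\ref{subsec: walls} to rule out escape towards $\cB_w$ for $w \neq w_0$, constitutes the bulk of the technical work.
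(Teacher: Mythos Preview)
Your approach is far more elaborate than necessary, and the $C^0$ estimate you flag as ``the bulk of the technical work'' is precisely where your argument has a genuine gap. The lift of $\chi\circ f$ through the branched cover $\ft\to\fc$ is not well-defined without further work (the disc may hit $\fc^{\sing}$, and you only describe the lift on pieces of the boundary); the subharmonicity of $\|\Re(\cdot)\|^2$ along a holomorphic map to $\ft$ depends on the almost complex structure and is not automatic; and nothing in your outline prevents the disc from leaving $\cB_{w_0}^\dagg$ through the finite boundary, which is exactly the scenario you worry about at the end. None of Subsections~\ref{subsec: analysis cB_w0}--\ref{subsec: walls} are set up to give maximum-principle type bounds on holomorphic discs.

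The paper bypasses all of this with a single structural observation you have overlooked: $L_0^\zeta$ lies in the Liouville subsector $\cB_{w_0}^\dagg\subset \overline{J}_G$, and one may choose $\cJ$ so that holomorphic curves cannot cross the sector boundary (this is \cite[Lemma~2.41]{GPS1}, the standard sector-trapping argument using a holomorphic defining function near $\partial\cB_{w_0}^\dagg$). Hence every disc with boundary on $L_0^\zeta$ is entirely contained in $\cB_{w_0}^\dagg$. Compactness then follows from bounded geometry (\cite[Lemma~2.42]{GPS1}), and since the inclusion $L_0^\zeta\hookrightarrow \cB_{w_0}^\dagg$ induces an isomorphism on $H_1$ (both are homotopy equivalent to $T$), the boundary class $\ell\neq 0$ cannot bound in $\cB_{w_0}^\dagg$. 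So the moduli space is in fact \emph{empty}, not merely null-cobordant, and your entire parametric family in Step~3 is unnecessary.
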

\begin{proof}
First, using the same argument as in \cite[Lemma 2.42]{GPS1}, $(\cB_{w_0}^\dagg, \omega, \cJ, L_0^\zeta)$ has bounded geometry, which yields the compactness of moduli space. 
Second, by choosing $\cJ$ so that the assumption in Lemma 2.41 \emph{loc. cit.} holds for $X=\cB_{w_0}^\dagg$, we have $f(\cD)\subset \cB_{w_0}^\dagg$. Since $H_1(\cB_{w_0}^\dagg,\bZ)\cong H_1(T,\bZ)$, and $[f(\partial \cD)]=\ell\neq 0$, we conclude that the space of such $\cJ$-holomorphic discs is empty. 
\end{proof}

\subsubsection{Hamiltonian deformation of $L_0^\zeta, \zeta\in \ft_c^\reg$}\label{subsubsec: def Lambda_R}

Applying Proposition \ref{lemma: empty, Ham isotopy} for $\cK=\cK_{\zeta,C_0}^\delta$ from (\ref{eq: K_zeta, delta}), $T_{\cpt, 1}\subset \cV^\dagg$, $(c_\beta(s))_\beta=\gamma_{-\Pi}(s^{-\sfh_0})$, we get a compactly supported Hamiltonian isotopy $\varphi_s$ on $\cW_{\cV, \cK}$. 
For $L_0^\zeta=\zeta+L_0$ with $K\gg R\gg 1$ in the definition, let 
\begin{align}\label{eq: cL_zeta, def}
\cL_\zeta:=\frj_{R^{\sfh_0}}\circ\varphi_{\frac{1}{R}}\circ\frj_{R^{-\sfh_0}}(L_0^\zeta). 
\end{align}
It follows from Proposition \ref{prop: L_t, zeta, no disc} that $\cL_\zeta$ is tautologically unobstructed. 

Recall that for any Lagrangian $L\subset J_G$, we use $\widehat{L}\subset T^*T$ to denote for its transformation under the canonical Lagrangian correspondence (\ref{eq: Lag corresp}). 
Since by construction, $\varphi_s$ is the restriction of a $T$-equivariant symplectomorphism (\ref{eq: tilde varphi_s}), combining with Lemma \ref{lemma: Omega} (a), (b), we directly get the following:
\begin{lemma}\label{lemma: L_xi, R, clean}
For any $\zeta\in \ft_c^\reg$, there exists $\delta_0>0$, such that the transformed Lagrangian $\widehat{\cL}_{\zeta}\subset T^*T$ satisfies that 
\begin{align*}
\widehat{\cL}_\zeta\cap (T\times \{\|\xi_\bR\|\leq \delta_0\})=\coprod\limits_{w\in W}(T_{\cpt}\times\{w(\zeta)\})\times w(\Gamma)\subset T^*T_{\cpt}\times T^*\bR_{>0}^n 
\end{align*}
where $\Gamma\subset T^*\bR_{>0}^n$ is a Lagrangian graph over $\{\|\xi_\bR\|\leq \delta_0\}\subset \ft^*_\bR\cong \ft_\bR$.
\end{lemma}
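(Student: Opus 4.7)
The plan is to split $\cL_\zeta$ cleanly into a central piece $\cL_\zeta^1$ and an outer piece, show that only the central piece contributes to the region $\{\|\xi_\bR\|\leq\delta_0\}$ in $T^*T$, and then use the $T$-equivariance of $\varphi_{1/R}$ to identify the central piece.

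First I would record the clean decomposition
\[
\cL_\zeta \;=\; \cL_\zeta^1 \,\sqcup\, (L_0^\zeta \setminus L_0^{\zeta;1}),
\]
where $\cL_\zeta^1 := \frj_{R^{\sfh_0}}\circ \varphi_{1/R}\circ \frj_{R^{-\sfh_0}}(L_0^{\zeta;1})$. This follows from the compact support of $\varphi_{1/R}$ inside $\cW_{\cV,\cK_\zeta^{\delta,C_0}}$ combined with the containment $\proj_\ft(L_0^{\zeta;1})\subset \cK_\zeta^{\delta,C_0}$ recorded in (\ref{eq: K_zeta, delta}) and the choice $K\gg R\gg1$. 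Applying Lemma \ref{lemma: Omega}(b) to the outer piece yields a uniform $\delta_0>0$ with $\|(\varpi_\zeta(x))_\bR\|\geq\delta_0$ for all $x\in (L_0^\zeta\setminus L_0^{\zeta;1})\times_\fc\ft$. Under the Lagrangian correspondence (\ref{eq: Lag corresp}), the image in $T^*T$ of a point $(x,\xi')\in \cL_\zeta\times_\fc\ft$ has $\ft^*$-coordinate equal to $\xi'=\varpi_\zeta(x)$ via $\pi_\chi$ in (\ref{eq: pi_chi, B_1}), so
\[
\widehat{L_0^\zeta\setminus L_0^{\zeta;1}}\,\cap\,(T\times\{\|\xi_\bR\|\leq\delta_0\}) \;=\; \emptyset,
\]
and it suffices to describe $\widehat{\cL_\zeta^1}\cap (T\times\{\|\xi_\bR\|\leq\delta_0\})$.

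Next, on $\cW_{\cV,\cK_\zeta^{\delta,C_0}}$ the composition $\frj_{R^{\sfh_0}}\circ \varphi_{1/R}\circ \frj_{R^{-\sfh_0}}$ agrees with the restriction of the $T$-equivariant symplectomorphism $\widetilde{\varphi}_{(c_\beta(1/R))_\beta,\,R^{\sfh_0}}$ from (\ref{eq: tilde varphi_c}), which identifies $T\times \cK_\zeta^{\delta,C_0}$ with the open subset $\chi^{-1}(\jmath(\cK_\zeta^{\delta,C_0}))\subset J_G$ symplectically and $T$-equivariantly (the standard $T$-action on the source going to the integrable-system $T$-action on the target). By the product form of $L_0$ inherited from (\ref{eq: Lambda_R, split}) and the cylindrical construction of Subsection \ref{subsubsec: conic}, relative to the splitting $T^*T\cong T^*T_\cpt\times T^*T_\bR$ the subset $\frj_{R^{-\sfh_0}}(L_0^{\zeta;1})$ takes the form $(T_\cpt\times\{\zeta\})\times\widetilde{\Gamma}$ for a Lagrangian graph $\widetilde{\Gamma}\subset T^*T_\bR$ over $\{\|\xi_\bR\|\leq\delta_0\}$; in particular it is $T_\cpt$-invariant with imaginary cotangent coordinate constantly $\zeta$. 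By $T$-equivariance, $\cL_\zeta^1$ is then a $T_\cpt$-invariant Lagrangian inside $\chi^{-1}(\jmath(\cK_\zeta^{\delta,C_0}))$, whose intersection with each fiber $\chi^{-1}(\chi_\ft(\xi))$, $\xi\in \cK_\zeta^{\delta,C_0}$, is a single $T_\cpt$-orbit.

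Finally I would push $\cL_\zeta^1$ through the Lagrangian correspondence. Over $\fc^\reg$ this correspondence is an unramified $|W|$-fold cover that is $T$-equivariant and compatible with the group-scheme structures (items (1)--(4) of Subsection \ref{subsec: def of J_G}): a generic torus fiber $\chi^{-1}(\chi_\ft(\xi))$ is sent to $\coprod_{w\in W} T\times \{w(\xi)\}$, and hence a $T_\cpt$-orbit inside that fiber is sent to $\coprod_{w\in W} T_\cpt\times \{w(\xi)\}$. Varying $\xi\in \cK_\zeta^{\delta,C_0}$ and using that the $T_\cpt$-action is independent of the $T_\bR$-direction in the splitting of $T^*T$, the $w$-sheet of $\widehat{\cL_\zeta^1}$ becomes exactly $(T_\cpt\times \{w(\zeta)\})\times w(\Gamma)$, where $\Gamma\subset T^*T_\bR\cong T^*\bR_{>0}^n$ is the Lagrangian graph coming from $\widetilde{\Gamma}$. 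Taking the disjoint union over $W$ yields the lemma. The main technical point is the clean decomposition in the first step, i.e.\ verifying that the support of the conjugated Hamiltonian isotopy meets $L_0^\zeta$ only inside $L_0^{\zeta;1}$, which is where (\ref{eq: K_zeta, delta}) and the largeness of $K$ and $R$ are used; once this and the $T$-equivariance of $\widetilde{\varphi}_{s,\rho_0}$ are in hand, the identification of each $w$-sheet is a direct consequence of the standard structure of the Lagrangian correspondence on $\chi^{-1}(\fc^\reg)$.
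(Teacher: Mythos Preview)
Your proposal is correct and follows essentially the same route as the paper, which proves the lemma in one sentence by invoking the $T$-equivariance of $\varphi_s$ (via (\ref{eq: tilde varphi_s})) together with Lemma \ref{lemma: Omega}(a),(b). Your splitting into the outer piece (handled by Lemma \ref{lemma: Omega}(b)) and the inner piece (handled by the $T$-equivariant symplectomorphism plus the $W$-equivariance and fiberwise structure of the correspondence (\ref{eq: Lag corresp})) is exactly the intended unpacking; the paper's citation of Lemma \ref{lemma: Omega}(a) is subsumed in your direct analysis of $\cL_\zeta^1$ inside $\chi^{-1}(\chi_\ft(\cK_\zeta^{\delta,C_0}))$.
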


\section{Proof of the main propositions}\label{subsec: proof prop}

Let $L_0$ and $\cL_\zeta,\zeta\in \ft_c^\reg$ be the Lagrangians defined in Section \ref{subsec: construct L_zeta}. We fix the grading on $L_0$ and $\cL_{\zeta}$ induced from the constant grading $\frac{1}{2}\dim_\bC T=\frac{1}{2}n$ on $\Lambda_{R}$ (with respect to the Sasaki almost complex structure and the canonical trivialization of $\kappa^{\otimes 2}$ as in \cite{NaZa}). Fix the trivial Pin-structure on the base $T$, i.e. the one induced from an open embedding $T\hookrightarrow \bC^n$. Then using the homotopy equivalence $L_0\rightarrow T$ (resp. $\cL_\zeta\rightarrow T$), the projection to the base of $T^*T\cong \cB_{w_0}$, $\cL_{\zeta}$ is equipped with the trivial relative Pin-structure.

\subsection{Proof of Proposition \ref{prop: L_xi, S_e, part 1}}\label{subsec: proof non-exact}

Proposition \ref{prop: L_xi, S_e, part 1} (i) is well known and (ii) is a direct consequence of the following lemma. 

\begin{lemma}\label{lemma: step 1,2,3}
By appropriate cofinal sequence of positive (resp. negative) wrappings of $\Sigma_I\rightarrow \Sigma_I^{+, (j)}$ (resp. $\Sigma_I\rightarrow \Sigma_I^{-, (j)}$), we can make $\cL_\zeta$ intersects $\Sigma_I^{+,(j)}$ (resp. $\Sigma_I^{-,(j)}$) transversely at exactly one point for all $j\gg 1$ with grading $0$ (resp. grading $n$).
\end{lemma}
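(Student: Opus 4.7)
The plan is to reduce the intersection problem to $T^{*}T$ via the Lagrangian correspondence (\ref{eq: Lag corresp}) and then exploit the explicit model of $\widehat{\cL}_{\zeta}$ from Lemma \ref{lemma: L_xi, R, clean}. First, I would take $\Sigma_I^{+,(j)} := \varphi_{H_{R_j}}^{1}(\Sigma_I)$ and $\Sigma_I^{-,(j)} := \varphi_{H_{R_j}}^{-1}(\Sigma_I)$ for an increasing sequence $R_j \to \infty$; these are cofinal in the respective wrapping categories by the argument of Subsection \ref{subsec: Floer cochains}. The intertwining relation (\ref{eq: equivariant flow H_R}) identifies the transformation of $\Sigma_I^{\pm,(j)}$ under the correspondence with $\varphi_{\widetilde{H}_{R_j}}^{\pm 1}(T_I^{*}T)$ and of $\cL_\zeta$ with $\widehat{\cL}_\zeta$; and, as in the proof of Proposition \ref{prop: A_G, vector}, transverse intersections in $J_G$ are in $|W|$-to-$1$ correspondence with transverse intersections in $T^{*}T$, the fibers of this bijection being the natural $W$-action on $J_G\times_{\fc}\ft^{*}$. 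Hence it suffices to produce exactly $|W|$ transverse intersections of $\varphi_{\widetilde{H}_{R_j}}^{\pm 1}(T_I^{*}T)$ with $\widehat{\cL}_\zeta$ forming a single $W$-orbit, for all $j\gg 1$.

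Inside the good region $\{\|\xi_\bR\|\le \delta_0\}$, Lemma \ref{lemma: L_xi, R, clean} splits $\widehat{\cL}_\zeta$ as $\coprod_{w\in W}(T_{\cpt}\times\{w(\zeta)\})\times w(\Gamma)$, while $\varphi_{\widetilde{H}_{R_j}}^{\pm 1}(T_I^{*}T)$ is the graph over $\ft$ of $\xi\mapsto \exp\bigl(\pm\nabla \widetilde{H}_{R_j}(\xi)\bigr)$. Intersection with the $w$-component forces the compact-direction condition $\xi_c=w(\zeta)$ and reduces the noncompact-direction matching to a fixed-point equation on $T_\bR$ of the form
\begin{align*}
\exp\bigl(\pm\nabla_{\ft_\bR}\widetilde{H}_{R_j}(w(\zeta)+\xi_\bR)\bigr)=h_{w(\Gamma)}(\xi_\bR),
\end{align*}
where $h_{w(\Gamma)}$ parametrizes the graph $w(\Gamma)$. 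By the construction of $\widetilde{H}_{R}$ in (\ref{eq: tilde H_R def}), once $R_j$ is large enough that $w(\zeta)+\{\|\xi_\bR\|\le\delta_0\}$ lies in the quadratic region of $\widetilde H_{R_j}$, the map $\pm\nabla\widetilde{H}_{R_j}$ is an $R_j$-independent linear isomorphism there, so the equation is a small perturbation of the identity and the implicit function theorem yields a unique transverse solution. This produces one intersection in each of the $|W|$ components, forming a single $W$-orbit by $W$-equivariance.

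Next, I would rule out intersections outside the good region using Lemma \ref{lemma: Omega}(b): for every $x\in(L_0^\zeta\setminus L_0^{\zeta;1})\times_\fc\ft$, the cotangent-fiber value $\xi=\varpi_\zeta(x)$ satisfies $\|\xi_\bR\|\ge\widetilde\delta_0\max\{1,\|\xi\|\}$. Because the Hamiltonian isotopy $\varphi_{1/R}$ defining $\cL_\zeta$ is compactly supported and is the restriction of a $T$-equivariant symplectomorphism preserving the fibration to $\cK$ (Proposition \ref{lemma: empty, Ham isotopy}, Proposition \ref{eq: tilde mu, embed}), the same estimate transfers to $\widehat{\cL}_\zeta$. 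For $R_j\gg 1$, the cylindrical portion of $\widehat{\cL}_\zeta$ escapes to infinity in the $T_\bR$-direction along the Liouville flow while $\exp(\pm\nabla\widetilde{H}_{R_j}(\xi))$ remains bounded in the linear regime of $\widetilde H_{R_j}$, ruling out matching there; on the complementary bounded region the uniform lower bound $\|\xi_\bR\|\ge\widetilde\delta_0$ separates $\widehat{\cL}_\zeta$ from the graph of $\varphi_{\widetilde{H}_{R_j}}^{\pm 1}(T_I^{*}T)$ by a standard compactness argument. Transversality and grading at the unique $J_G$-intersection are then read off from the local model: the graded Lagrangian planes are those of $T_I^{*}T$ and a graph close to the zero section of $T^{*}T$, intersecting transversely with Maslov index $0$ for positive wrapping and $n$ for negative wrapping, matching (\ref{eq: prop skyscraper 1})–(\ref{eq: prop skyscraper 2}).

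The main technical obstacle is the non-intersection statement outside the good region: the cylindrical ends of $\cL_\zeta$ from Subsection \ref{subsec: construct L_zeta} force the base $T$-coordinate of $\widehat{\cL}_\zeta$ to grow with $\|\xi\|$, whereas the wrapped flow keeps the base bounded in the linear regime of $\widetilde H_{R_j}$; reconciling these two behaviors requires combining Lemma \ref{lemma: Omega}(b) with the quantitative $R_j$-independent control of $\nabla \widetilde H_{R_j}$ in the linear region, and tracking uniformly how the compactly supported isotopy $\varphi_{1/R}$ propagates these estimates from $L_0^\zeta$ to $\cL_\zeta$.
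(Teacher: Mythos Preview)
Your overall strategy—transform via the correspondence and compare with the explicit model of $\widehat{\cL}_\zeta$ from Lemma \ref{lemma: L_xi, R, clean}—is the same as the paper's, and your ``inside the good region'' analysis is in the right spirit. The genuine gap is in ruling out intersections outside that region, and it stems from your choice of wrapping sequence.

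You take $\Sigma_I^{\pm,(j)}=\varphi_{H_{R_j}}^{\pm 1}(\Sigma_I)$ with $R_j\to\infty$ and the \emph{unmodified} $H_{R_j}$. On any bounded portion of the bad region (say $\|\xi\|\le C$ with $\|\xi_\bR\|\ge\widetilde\delta_0$), once $R_j>C$ the time-$\pm 1$ flow is simply the quadratic flow $\xi\mapsto\exp(\pm\nabla\widetilde F(\xi)/2)$, which is \emph{independent of $R_j$}. So there is no $j\to\infty$ mechanism that would push the wrapped fiber off $\widehat{\cL}_\zeta$ there; either there are spurious intersections or there are not, but your ``standard compactness argument'' does not decide this. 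Separately, your claim that $\exp(\pm\nabla\widetilde H_{R_j}(\xi))$ ``remains bounded in the linear regime'' is false: in that regime $\|\nabla\widetilde H_{R_j}\|\sim R_j/2$, so the $T$-displacement grows with $j$. And the assertion that the cylindrical portion of $\widehat{\cL}_\zeta$ ``escapes to infinity in the $T_\bR$-direction'' contradicts the fact (Step~1 of the paper's proof) that $\widehat{\cL}_\zeta$ has \emph{compact} projection to $T$.

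The paper fixes this by changing the setup: it uses a single linear Hamiltonian $H_1$, explicitly \emph{modified} so that (a) near the $W$-orbit of $\zeta$ it has the product form $\widetilde H_1=\tfrac12\|\xi_c\|+\tfrac12\|\xi_\bR\|^2$, and (b) on the cone $\cQ_{\eta_0}=\{\|\xi_\bR\|\ge\eta_0\max(1,\|\xi\|)\}$ one has the uniform lower bound $\|D_{\xi_\bR}\widetilde H_1\|\ge\eta_0/2$. One then wraps for \emph{long time} $s\to\pm\infty$. Property (b) forces $\varphi_{\widetilde H_1}^s(\cQ_{\eta_0})$ to leave any compact subset of $T$ as $|s|\to\infty$, and since $\widehat{\cL}_\zeta$ has compact $T$-projection, all intersections for $|s|\gg1$ are pushed into the neighborhoods $w(\cU_{\zeta,\eta_0})$. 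There the product form (a) makes the count a direct product computation giving one transverse point per $W$-component. Thus the missing idea is precisely this modification of the Hamiltonian together with the long-time escape argument; Lemma \ref{lemma: Omega}(b) alone, combined with time-$1$ flows of $H_{R_j}$, does not suffice.
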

\begin{proof}

Using the Lagrangian correspondence (\ref{eq: Lag corresp}), it suffices to show that $\widehat{\cL}_{\zeta}$ intersects $\widehat{\Sigma}_I^{+, (j)}$ (resp. $\widehat{\Sigma}_I^{+- (j)}$) transversely at exactly $|W|$ many points (that constitute a $W$-orbit), where $\widehat{\Sigma}_I$ is just the cotangent fiber at $I\in T$. We will define a positive linear Hamiltonian $H_1: J_G\rightarrow\bR_{\geq 0}$ (which will be a modification of (\ref{eq: tilde H_R def})), the image of $\Sigma_I$ under whose positive/negative Hamiltonian flow at time $\pm s_j, s_j\rightarrow\infty$ will give $\Sigma_I^{+, (j)}$ and $\Sigma_I^{-, (j)}$, respectively.

\emph{Step 1.} Some key features about $\widehat{\cL}_\zeta$

First, it is clear from the construction of $\cL_{\zeta}$ that $\widehat{\cL}_\zeta$ is asymptotically conic (note that $\widehat{\cL}_\zeta$ could be singular). By the proof of Lemma \ref{lemma: Omega} (b), specifically the fact that every point in $L_0^\zeta\backslash L_0^{\zeta;1}$ can be flowed into the compact region \ref{eq: L_1-t, 1, K+2, 1} under $\varphi_{-Z}^t$, we see the projection of $\widehat{\cL}_\zeta$ to $T$ is compact. Second, we recall the property of $\widehat{\cL}_\zeta$ from Lemma \ref{lemma: L_xi, R, clean}.

\emph{Step 2}. Definition of $H_1$

Without loss of generality, we may assume $\|\zeta\|>3$. We start with the Hamiltonian function (more precisely, the pullback function to $J_G$) $H_1:\fc\rightarrow \bR_{\geq 0}$ from (\ref{eq: H_R def}), whose pullback to $\ft^*$ is $\widetilde{H}_1: \ft^*\rightarrow\bR_{\geq 0}$ (\ref{eq: tilde H_R def}). 

Let 
\begin{align}\label{eq: cotangent T, splitting}
&T^*T\cong T^*T_\cpt\times T^*\ft_\bR\\
\nonumber&(h,\xi)\mapsto (\vec{\theta}, \xi_c), (\log_\bR h, \xi_\bR)
\end{align}
be the canonical splitting of $T^*T$. 
For any $\eta_0>0$, let 
\begin{align}\label{eq: cQ_eta_0}
\cQ_{\eta_0}=\{\xi=\xi_c+\xi_\bR: \|\xi_\bR\|\geq \eta_0\cdot \max\{1,\|\xi\|\}\}\subset T^*T. 
\end{align}
By Lemma \ref{lemma: Omega}, for $R, K\gg 1$ and sufficiently small $\eta_0>0$, there exists a small neighborhood $\cU_{\zeta}$ of $\zeta$ inside $\ft^*_c$, such that
\begin{align}\label{eq: proj t L_zeta}
\proj_{\ft^*}(\widehat{\cL}_\zeta)\subset \coprod\limits_{w_1\in W}w_1(\cU_{\zeta}+\{\xi_\bR\in \ft_\bR^*: \|\xi_\bR\|\leq \eta_0\})\cup \cQ_{\eta_0}.
\end{align}
Since 
\begin{align}\label{eq: cU_zeta, eta_0}
\cU_{\zeta,\eta_0}:=\cU_\zeta+\{\xi_\bR\in \ft^*_\bR: \|\xi_\bR\|\leq \eta_0\}
\end{align}
is pre-compact and its closure is away from $\ft^\sing$, we can modify the Hamiltonian $\widetilde{H}_1$ (in a $W$-invariant way) so that 
\begin{align}\label{eq: tilde H_1 split 1}
\widetilde{H}_1|_{\cU_{\zeta,\eta_0}}=\frac{1}{2}\|\xi_c\|+\frac{1}{2}\|\xi_\bR\|^2,
\end{align}
and by choosing the sequence $(\epsilon_j^{(i)})_{1\leq j\leq n}$ in the induction steps in Subsection \ref{subsec: Hamiltonians} to be much smaller than $\eta_0$, we can make sure that 
\begin{align}\label{eq: D_xi_R tilde H}
\|D_{\xi_\bR}\widetilde{H}_1|_{\cQ_{\eta_0}}\|\geq \frac{1}{2}\eta_0.
\end{align}
Note that (\ref{eq: D_xi_R tilde H}) implies that 
\begin{align}\label{eq: compact escape}
\text{for any compact region } \cK\subset T,\ \varphi_{\widetilde{H}_1}^{s}(\cQ_{\eta_0})\cap (\cK\times \ft^*)=\emptyset, \text{ for }|s|\gg 1. 
\end{align}  
Let 
\begin{align*}
\widetilde{H}_{1,c}:=y_1(\|\xi_c\|),\ \widetilde{H}_{1,\bR}:= \frac{1}{2}\|\xi_\bR\|^2,
\end{align*}
be functions on $\ft^*$, then (\ref{eq: tilde H_1 split 1}) becomes 
\begin{align*}
\widetilde{H}_1|_{\cU_{\zeta,\eta_0}}=(\widetilde{H}_{1,c}+\widetilde{H}_{1,\bR})|_{\cU_{\zeta,\eta_0}}. 
\end{align*}

{\it Step 3.} The intersection $\widehat{\cL}_\zeta\cap \varphi_{\widetilde{H}_1}^{s}(\widehat{\Sigma}_I)$ for $|s|\gg 1$.

 First, by (\ref{eq: compact escape}) and (\ref{eq: proj t L_zeta}), we must have 
 \begin{align}\label{eq: proj L_zeta cap Sigma}
 \proj_{\ft^*}(\widehat{\cL}_\zeta\cap \varphi_{\widetilde{H}_1}^{s}(\widehat{\Sigma}_I))\subset  \coprod\limits_{w_1\in W}w_1(\cU_{\zeta}+\{\xi_\bR\in \ft_\bR^*: \|\xi_\bR\|\leq \eta_0\}), |s|\gg 1.
 \end{align}
 Applying Lemma \ref{lemma: L_xi, R, clean} with $\eta_0\leq \delta_0$, we have 
\begin{align}\label{eq: L_zeta, H, H_1}
\widehat{\cL}_\zeta\cap \varphi_{\widetilde{H}_1}^{s}(\widehat{\Sigma}_I)=\widehat{\cL}_\zeta\cap \varphi_{\widetilde{H}_{1, c}+\widetilde{H}_{1, \bR}}^{s}(\widehat{\Sigma}_I)\subset \{\|\xi_\bR\|\leq \eta_0\}, |s|\gg 1.
\end{align}
Since the wrapping is under a product Hamiltonian function, it is clear that the intersection (\ref{eq: L_zeta, H, H_1}) is transverse and consists of exactly one point (cf. Figure \ref{figure: wrapping} for the negative wrapping).

Transferring back the geometry to $J_G$, it is straightforward to identify the grading for the (only) intersection point $\cL_\zeta\cap \varphi_{H_1}^{-s}(\Sigma_I), s\gg 1$ (resp. $\varphi_{H_1}^{s}(\Sigma_I)\cap \cL_{\zeta}, s\gg 1$) as $\dim_\bC T=\dim_\bC T^\vee=n$ (resp. $0$). So for any sequence $0\leq s_j\uparrow \infty$, the sequence $\Sigma_I^{-, (j)}:=\varphi_{H_1}^{-s_j}(\Sigma_I)$ (resp. $\Sigma_I^{+, (j)}:=\varphi_{H_1}^{s_j}(\Sigma_I)$) gives a desired cofinal sequence of negative (resp. positive) wrappings of $\Sigma_I$.

\begin{figure}[!tbp]
\centering
\begin{minipage}[b]{0.4 \textwidth}
\begin{tikzpicture}
\draw[dashed, gray] (0,0) arc (0:180:1.75 and 0.6);
\draw[gray, thick](0,0) arc (0:-180:1.75 and 0.6); 
\draw[gray, thick] (0, 3)--(0,-2.5);
\draw[gray, thick] (-3.5,3)--(-3.5, -2.5);
\draw[dashed, thick, red] (0,2) arc (0:180:1.75 and 0.6);
\draw[thick, red] (0,2) arc (0:-180:1.75 and 0.6); 
\node[right](zeta) at (0.2, 2){$T_{\cpt}\times \{\zeta\}$};
\draw[blue, thick, rounded corners=2pt] ({1.75*cos (3.5*3.14 r)-1.75}, {0.6*sin (3.5*3.14 r)+0.1*3.5*3.14})--({1.75*cos (3.55*3.14 r)-1.75}, {0.6*sin (3.55*3.14 r)+0.1*3.55*3.14})--({1.75*cos (3.55*3.14 r)-1.75+0.1}, {0.6*sin (3.55*3.14 r)+0.1*3.55*3.14+0.1})--({1.75*cos (3.55*3.14 r)-1.75+0.1}, {0.6*sin (3.55*3.14 r)+0.1*3.55*3.14+0.1+2.5)});
\draw[blue, thick, domain=3*pi: 3.5*pi, samples=50] plot ({1.75*cos (\x r)-1.75},{0.6*sin(\x r)+0.1*\x});
\draw[blue, thick, dashed, domain=2*pi: 3*pi, samples=50] plot ({1.75*cos (\x r)-1.75},{0.6*sin(\x r)+0.1*\x});
\draw[blue, thick, domain=pi:2*pi, samples=50] plot ({1.75*cos (\x r)-1.75},{0.6*sin(\x r)+0.1*\x});
\draw[blue, thick, dashed, domain=-0:pi, samples=50] plot ({1.75*cos (\x r)-1.75},{0.6*sin(\x r)+0.1*\x});
\draw[blue, thick, domain=-pi:0, samples=50] plot ({1.75*cos (\x r)-1.75},{0.6*sin(\x r)+0.1*\x});
\draw[blue, thick, dashed, domain=-2*pi:-pi, samples=50] plot ({1.75*cos (\x r)-1.75},{0.6*sin(\x r)+0.1*\x});
\draw[blue, thick, domain=-3*pi:-2*pi, samples=50] plot ({1.75*cos (\x r)-1.75},{0.6*sin(\x r)+0.1*\x});
\draw[blue, thick, dashed, domain=-3.5*pi:-3*pi, samples=50] plot ({1.75*cos (\x r)-1.75},{0.6*sin(\x r)+0.1*\x});
\draw[blue, thick, dashed, rounded corners=2pt] ({1.75*cos (-3.5*3.14 r)-1.75}, {0.6*sin (-3.5*3.14 r)-0.1*3.5*3.14})--({1.75*cos (-3.55*3.14 r)-1.75}, {0.6*sin (-3.55*3.14 r)-0.1*3.55*3.14})--({1.75*cos (-3.55*3.14 r)-1.75+0.1}, {0.6*sin (-3.55*3.14 r)-0.1*3.55*3.14-0.1})--({1.75*cos (-3.55*3.14 r)-1.75+0.1}, {0.6*sin (-3.55*3.14 r)-0.1*3.55*3.14-0.1-2)});
\node[right](cotangent) at (1, 0){$T^*T_\cpt$};
\end{tikzpicture}
\end{minipage}
\hfill
\begin{minipage}[b]{0.4\textwidth}
\begin{tikzpicture}
\draw[gray] (-3.5,0)--(3.5,0);
\draw[gray] (0,-2)--(0,2);
\draw[thick, red] (2.5,-0.5) arc (180: 150: 2 and 2);
\draw[thick, blue,  rounded corners=3pt] (-3.2, -2)--(-3.2,-0.5)--(-3,-0.3)--(3, 0.3)--(3.2, 0.5)--(3.2, 2);
\filldraw[green] 
(2.65, 0.25) circle (2pt) node[above] {$P$};
\draw (0,-2.5) node {$T^*\ft_\bR$};
\end{tikzpicture}
\end{minipage}
\caption{The intersection of the transformed Lagrangians $\widehat{\cL}_\zeta$ (red) and $\varphi_{\widetilde{H}_{1,c}+\widetilde{H}_{1, \bR}}^{-s}(\widehat{\Sigma}_I)$ (blue) in $T^*T$, which has the same intersection as $\widehat{\cL}_\zeta\cap \varphi_{\widetilde{H}_1}^{-s}(\widehat{\Sigma}_I), s\gg 1$. }\label{figure: wrapping}
\end{figure}
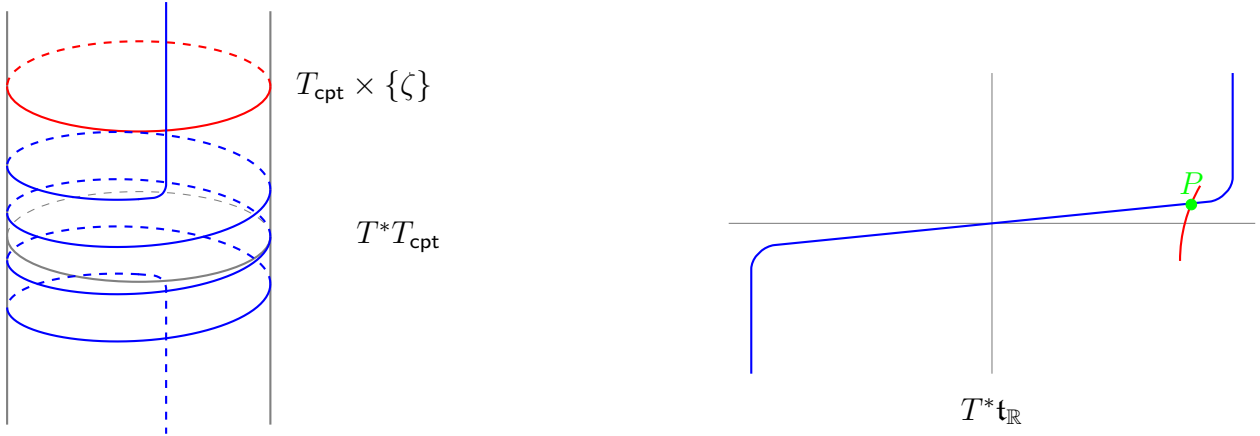
\end{proof}

\begin{proof}[Proof of Proposition \ref{prop: L_xi, S_e, part 1} (iii)]

We use the same $\widetilde{H}_1$ as in the proof of Lemma \ref{lemma: step 1,2,3} \emph{Step 2}. 
We look at $\widehat{\cL}_\zeta$ and $\widehat{\cL}_{w(\zeta)}$ in $T^*T$, and relate the intersections and discs for $\widehat{\cL}_\zeta$ and $\varphi_{\widetilde{H}_1}^{-s}(\widehat{\cL}_{w(\zeta)})$ in $T^*T$ to those of $\cL_{\zeta}$ and the negative wrapping of $\cL_{w(\zeta)}$ in $J_G$.

Following a similar argument as in the proof of Lemma \ref{lemma: step 1,2,3} \emph{Step 3}, we have 
\begin{align*}
\widehat{\cL}_\zeta\cap \varphi_{\widetilde{H}_1}^{-s}(\widehat{\cL}_{w(\zeta)})\subset \{\|\xi_\bR\|\leq \eta_0\}, s\gg 1.
\end{align*}
The intersection is clean along the $W$-orbit of $T_{\cpt}\times \{\zeta\}\times \{Q\}$ for some $Q\in T^*\ft_\bR$ (cf. Figure \ref{figure: wrapping, w_1}).

Transferring the geometry back to $J_G$,  
we get $\cL_\zeta\cap \varphi_{H_1}^{-s}(\cL_{w(\zeta)})$ intersects cleanly along a single $T_\cpt$-orbit in $\chi^{-1}([\zeta])\cong C_G(\zeta)\cong T$, where the identifications are using $\xi=\zeta, B_1=B$ in (\ref{eq: J_G, fc, ft}).
The restriction of the rank 1 local system $\check{\rho}_1$ on $\cL_{\zeta}$ (resp. $w(\check{\rho}_2)$ on $\cL_{w(\zeta)}$) to the $T_\cpt$-orbit is $\check{\rho}_1$ (resp. $\check{\rho}_2$) under the above identification. Now (iii) in the proposition follows.

\begin{figure}[!tbp]
\centering
\begin{minipage}[b]{0.4 \textwidth}
\begin{tikzpicture}
\draw[dashed, gray] (0,0) arc (0:180:1.75 and 0.6);
\draw[gray, thick](0,0) arc (0:-180:1.75 and 0.6); 
\draw[gray, thick] (0, 2.5)--(0,-2.5);
\draw[gray, thick] (-3.5,2.5)--(-3.5, -2.5);
\draw[dashed, thick, red] (0,1.5) arc (0:180:1.75 and 0.6);
\draw[thick, red] (0,1.5) arc (0:-180:1.75 and 0.6); 
\node[right](zeta) at (0.2, 1.5){$T_{\cpt}\times \{\zeta\}$};
\draw[cyan, thick, domain=pi: 2*pi, samples=100] plot  ({1.65*cos (\x r)-0.1*sin (6*\x r)-1.75},{0.6*sin(\x r)-0.1*sin (4*\x r)+1.5}); 
\draw[cyan, thick, dashed, domain=0: pi, samples=100] plot  ({1.65*cos (\x r)-0.1*sin (6*\x r)-1.75},{0.6*sin(\x r)-0.1*sin (4*\x r)+1.5}); 
\node[right](cotangent) at (1, 0){$T^*T_\cpt$};
\end{tikzpicture}
\end{minipage}
\hfill
\begin{minipage}[b]{0.4\textwidth}
\begin{tikzpicture}
\draw[gray] (-3.5,0)--(3.5,0);
\draw[gray] (0,-2)--(0,2);
\draw[thick, red] (2.5,-0.5) arc (180: 150: 2 and 2);
\draw[thick, cyan,  rounded corners=3pt] (-3.5,-0.2)--(3, 0.3)--(3.2, 0.5);
\filldraw[green] 
(2.65, 0.25) circle (2pt) node[align=left, above] {$Q$};
\draw (0,-2.5) node {$T^*\ft_\bR$};
\end{tikzpicture}
\end{minipage}
\caption{One portion of the intersections of the transformed Lagrangians $\widehat{\cL}_\zeta$ (red) and Hamiltonian perturbed $\varphi^{-s}_{\widetilde{H}_1}(\widehat{\cL}_{w(\zeta)})$ (cyan) in $T^*T$}\label{figure: wrapping, w_1}
\end{figure}
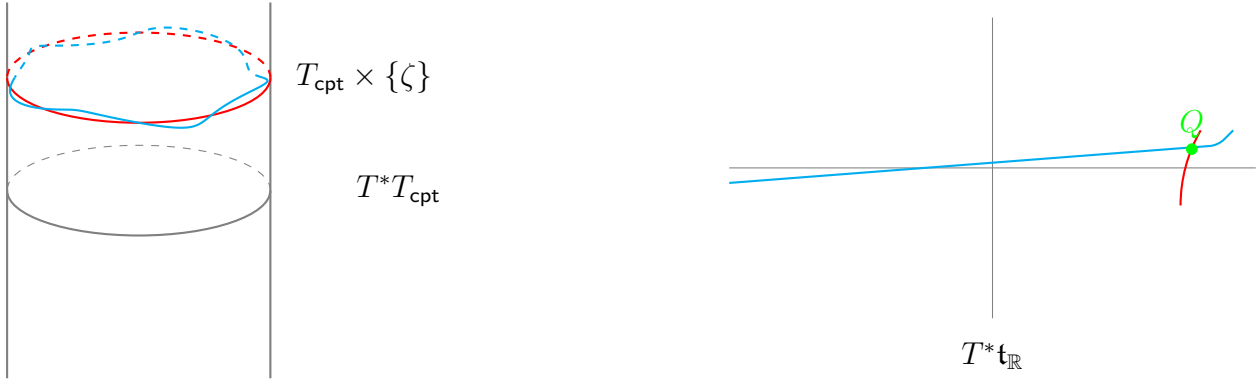
\end{proof}

\subsection{Proof of Proposition \ref{prop: L_0, part 2} and Proposition \ref{prop: L_0, W}}\label{subsec: proof 5.6, 5.7}

Before giving the actual proofs, we give an overview of the main ideas and fix some notations. Let $L_0$ be constructed as in Subsection \ref{subsubsec: conic}. We will use the notations and results from Subsections \ref{subsec: analysis cB_w0} and \ref{subsec: walls}. 

Fix some standard open balls $\bD\subsetneq \bD'$ in $\ft$ centered at $0$, whose closures are contained in the open region $\{|\sum\limits_{\beta\in \Pi} \Re p_{\beta^\vee}|<1\}$. For sufficiently large $K\gg R\gg 1$ in the construction of $L_0$, the projection
\begin{align}\label{eq: p_D}
L_0\cap (T\times \bD')=\Lambda_R\cap (T\times \bD')\overset{p_{\bD'}}{\longrightarrow} \bD'\subset \ft\overset{\chi_\ft}{\longrightarrow} \fc
\end{align}
is very close to the restriction of $\chi$, and outside the region $T\times \bD'$, $\proj_{\ft_\bR}((L_0\cap (T\times (\ft-\bD)))\underset{\fc}{\times}\ft))$ is outside $\bD'\cap \ft_\bR$. Since the image of $p_{\bD'}$ in (\ref{eq: p_D}) is contained in $\ft_\bR$, we get $\chi(L_0\cap (T\times \bD'))$ is contained in a thin neighborhood $Nb(\fc_\bR)$ of the real locus $\fc_\bR:=\chi_\ft(\ft_\bR)$ in $\fc$. Denote the preimage of $Nb(\fc_\bR)$ in $\ft$ by $Nb(\ft_\bR)$. Without loss of generality, we may assume that $Nb(\ft_\bR)=\ft_\bR\times D_{c, \delta}$, where $D_{c,\delta}\subset \ft_c$ is a small $W$-invariant ball centered at $0$ of radius $0<\delta\ll 1$. We set $\bD'_{\bR}=\bD'\cap \ft_\bR$ and \emph{reset} $\bD'=\bD'_{\bR}\times D_{c,\delta}$ (and similarly for $\bD_{\bR}$ and $\bD$ respectively). 

Second, let $\bD^\circ_{\bR}$ be the complement of a $W$-invariant tubular neighborhood of $\ft^\sing_\bR$ in $\bD'\cap \ft_\bR$, and let $\bD^\circ=\bD^\circ_{\bR}\times D_{c, \delta}\subset Nb(\ft_\bR)$. By Proposition \ref{lemma: empty, Ham isotopy} and Corollary \ref{cor: dist star}, we have a good understanding of $L_0\cap (T\times \bD^\circ)$ inside the integrable system picture $J_G\rightarrow \fc$. 
Namely, if we do the identification 
\begin{align}\label{eq: bD, circ, T_cpt}
\chi^{-1}(\bD^\circ/W)\cong (T_{\cpt}\times D_{c,\delta})\times (\bR^n_{>0}\times (\bD^\circ_\bR\cap \ft_\bR^+))\cong T\times (\bD^\circ\cap (\ft_\bR^+\times D_{c,\delta}))
\end{align}
using $\bD^\circ/W\cong \bD^\circ\cap (\ft_\bR^+\times D_{c,\delta})$, then  
after a small Hamiltonian isotopy, there exists a pre-compact open region $\Omega\subset \bR_{>0}^n$, a Lagrangian $\Gamma_w\subset \Omega\times (\bD^\circ_{\bR}\cap \ft_\bR^+)$ that is a graph over $\bD^\circ_{\bR}\cap \ft_\bR^+$ for each $w\in W$, and some $\epsilon>0$ very small, 
such that $L_0\cap (T\times \bD^\circ)$ is identified with
\begin{align}\label{eq: L_w'}
\coprod\limits_{w\in W}(T_{\cpt}\times\{0\})\times w^{-1}(\epsilon^{-\sfh_0})\cdot \Gamma_w. 
\end{align}
So over this region, the intersections of the wrapping of $\Sigma_I$ and $L_0\cap (T\times \bD^\circ)$ can be well understood. For the portion of $L_0$ outside $T\times \bD$, we can do similar things as in the proof of Lemma \ref{lemma: step 1,2,3}, so that the wrapping of $\Sigma_I$ after sufficiently long time will have no intersection with $L_0$ over there. 

The subtle part is about $L_0\cap (T\times (\bD'-\bD^\circ))$, for which we do not have a concrete description inside the integrable system $J_G\rightarrow\fc$. Note that it is not helpful to transform the Lagrangians to $T^*T$ using the correspondence (\ref{eq: Lag corresp}), exactly by the remarks in the end of Subsection \ref{subsec: def of J_G}. However, by appropriately defining the wrapping Hamiltonian near the ``walls" in $\fc_{\bR}^\sing$, and using results from Section \ref{subsec: walls} (in particular Proposition \ref{prop: g_S, natural, w}), we can show that if the $R$ in (\ref{eq: p_D}) is sufficiently large, then the wrapping of $\Sigma_I$ will never intersect $L_0$ inside $T\times (\bD-\bD^\circ)$.

\begin{proof}[Proof of Proposition \ref{prop: L_0, part 2}]
As explained above, we are going to define an appropriate positive wrapping Hamiltonian $H$, and choose $L_0$ with $R$ sufficiently large so that the intersections $\varphi_H^{s}(\Sigma_I)\cap L_0$ are contained in $L_0\cap (T\times \bD^\circ)$ as above, for all $|s|\gg 1$. 

\emph{Step 1. Definition of a positive linear Hamiltonian $H$ on $\fc$.}

The space $\ft_\bR$ is stratified by open cones $w(\mathring{\fz}_{S,\bR}^+)$, indexed by $(S, w)$ with $S\subset \Pi$ and $w\in W^S_{\min}$, which can be viewed as a fan. Let $\bfP\subset \ft_\bR$ be the $W$-invariant dual convex polytope defined by 
\begin{align*}
\{t\in \ft_\bR^*\cong \ft_\bR: \lng w(\lambda_{\beta^\vee}), t\rng\leq 1, \beta\in \Pi, w\in W/W_{\Pi-\{\beta\}}\}.
\end{align*}
Note that on the dominant cone $\ft_\bR^+$, the polytope is cut out by $ \lng \lambda_{\beta^\vee}, t\rng\leq 1, \beta\in \Pi$. We do a $W$-invariant smoothing of $\partial\bfP$, denoted by $\partial\bfP_{sm}$, in a similar way as we did in Subsection \ref{subsubsec: H, sm}, such that (1) for any $(S,w), S\subsetneq \Pi$, there is an open neighborhood $\cU_{(S,w)}$ of $\partial\bfP_{sm}\cap w(\mathring{\fz}_{S,\bR}^+)$ in $\ft_\bR$ for which 
\begin{align*}
\partial\bfP_{sm}\cap \cU_{(S,w)}\subset (\partial\bfP_{sm}\cap w(\mathring{\fz}_{S,\bR}^+))+\ft_{S,\bR}; 
\end{align*}
in other words, $\partial\bfP_{sm}\cap \cU_{(S,w)}$ is contained in the union of normal slices of $w(\mathring{\fz}_{S,\bR}^+)$ along the intersection $\partial\bfP_{sm}\cap w(\mathring{\fz}_{S,\bR}^+)$; 
(2) in an open neighborhood of $\partial \bfP_{sm}\cap \bR_{>0}\cdot \sfh_0$, $\partial \bfP_{sm}$ is defined by $\|\xi\|=c$ for some constant $c>0$; (3) the domain $\bfP_{sm}$ enclosed by $\partial\bfP_{sm}$ is convex. 
Since the smoothing process (by induction) is very similar to that in Subsection \ref{subsubsec: H, sm}, we omit the details. Up to radial scaling we may assume that $\bfP_{sm}$ is contained in $\bD_\bR$ (cf. Figure \ref{figure: bD_R, ft_R,+}). Let $\widetilde{\cU}_{S,w}=\bR_{>0}\cdot (\partial \bfP_{sm}\cap \cU_{S,w})$ for $S\subsetneq \Pi$ and let $\widetilde{\cU}_{\Pi,1}=[0,\frac{1}{4})\cdot \partial \bfP_{sm}$. Let $\bD^\circ_\bR$ be a $W$-invariant open neighborhood (not too large so that it avoids a tubular neighborhood of $\ft_\bR^\sing$) of the complement of the union of $\widetilde{\cU}_{S,w}$ over all $(S,w)$ with $\emptyset\neq S\subset \Pi$ 
in $\bD'_\bR$, and let $\bD^\circ=\bD^\circ_\bR\times D_{c,\delta}$. 

First, we define a $W$-invariant function $\widetilde{H}$ on (part of) $\ft$ as follows. First, choose a smooth function $\sfb: \bR_{\geq 0}\rightarrow \bR_{\geq 0}$, such that 
\begin{align*}
\sfb(r)=0, r\in [0, \frac{1}{4}];\ \sfb(r)=r, r\geq\frac{3}{4};\ \sfb''(r)>0, r\in (\frac{1}{4},\frac{3}{4}).
\end{align*}
Second, define $\widetilde{H}|_{\bfP_{sm}}(r\cdot \xi)=\sfb(r)$, for $\xi\in \partial \bfP_{sm}, r\in[0,1]$, and extend it to $\bfP_{sm}\times D_{c,\delta}$ by pulling back under the obvious projection to $\bfP_{sm}$. Then extend $\widetilde{H}|_{\bfP_{sm}\times D_{c,\delta}}$ homogeneously to $(\bfP_{sm}\times D_{c,\delta})\cup \bR_{\geq 1}\cdot (\partial \bfP_{sm}\times D_{c,\delta})$. 

Now it is clear that $\widetilde{H}$ descends to a smooth function on the quotient of its defining domain in $\fc$. Then extend this to a nonnegative $H$ on $\fc$ that is homogeneous (and strictly positive) outside a compact region. We will also use $H$ to denote its pullback to $J_G$.

\emph{Step 2. Some key facts}.

If we choose $L_0$ with $K\gg R$ both sufficiently large, then we have the followings: 
\begin{itemize}
\item[(i)] Let $\bD^\circ_1\subsetneq \bD^\circ_2$ be slight enlargements of $\bD^\circ$. By Proposition \ref{lemma: empty, Ham isotopy}, after a small compactly supported Hamiltonian isotopy inside $T\times\bD^\circ_2$, we can make 
\begin{align*}
&L_0\cap (T\times \bD^\circ)\subset L_0\cap \chi^{-1}(\chi_\ft(\bD^\circ_1))\overset{(\ref{eq: bD, circ, T_cpt})}{\cong} (\ref{eq: L_w'}),\\
&\chi(L_0\cap (T\times (\bD'-\bD^\circ)))\subset \chi_\ft(\bigcup\limits_{\emptyset\neq S\subset \Pi}\widetilde{\cU}_{(S,w)}). 
\end{align*}

\item[(ii)] $\chi(L_0)\subset ((\bfP_{sm}\times D_{c,\delta})\cup \bR_{\geq 1}\cdot (\partial \bfP_{sm}\times D_{c,\delta}))/W\subset \fc$ (cf. Lemma \ref{lemma: Omega} (c)). In particular, to calculate $\varphi_H^s(\Sigma_I)\cap L_0$ for any $s\in \bR$, we only use the portion of $H$ descended from $\widetilde{H}$. 

\item[(iii)] By the definition of $H$, for any $(S, w)$
\begin{align*}
\varphi_H^s(\Sigma_I)\cap \chi^{-1}(\chi_\ft(\widetilde{\cU}_{S, w}\times D_{c,\delta}\cap \bD')), s\in \bR
\end{align*}
is contained in the $\cZ(L_S)_0$-orbit of the portion of $\Sigma_I$ under the isomorphism (\ref{eq: chi_S, mathscr Z}). Therefore, by Proposition \ref{prop: g_S, natural, w}, 
\begin{align*}
\varphi_H^s(\Sigma_I)\cap (L_0\cap T\times (\bD'-\bD^\circ))=\emptyset, s\in \bR. 
\end{align*}

\item[(iv)] Using a similar argument as in the proof of Lemma \ref{lemma: step 1,2,3}, we have 
\begin{align*}
\varphi_H^s(\Sigma_I)\cap (L_0\cap T\times (\ft-\bD))\subset \varphi_H^s(\Sigma_I)\cap L_0\cap \chi^{-1}(\chi_\ft(\bR_{\geq 1}\cdot (\partial \bfP_{sm}\times D_{c,\delta}))=\emptyset, |s|\gg 1.
\end{align*}
This is due to the fact that the transformed Lagrangian $\widehat{L}_0\subset T^*T$ projects to a compact domain in $T$, while the projection of $\varphi_{\widetilde{H}}^s(\widehat{\Sigma}_I)\cap (T\times \bR_{\geq 1}\cdot (\partial \bfP_{sm}\times D_{c,\delta}))$ to $T$ is disjoint from the compact region for $|s|\gg 1$. 
\end{itemize}
 
\emph{Step 3. Calculation of wrapped Floer complexes}

By \emph{Step 2}, using the identification (\ref{eq: bD, circ, T_cpt}), the intersection(s) $\varphi_H^{s}(\Sigma_I)\cap L_0$ for $|s|\gg 1$ can be calculated in a standard way inside (\ref{eq: bD, circ, T_cpt}) with $\bD^\circ$ replaced by $\bD^\circ_1$,  as 
\begin{align}\label{eq: intersection, Gamma_w}
\varphi_H^{s}(\{I\}\times (\bD_{1}^\circ\cap (\ft_{\bR}^+\times D_{c,\delta})))\cap \coprod\limits_{w\in W}(T_{\cpt}\times\{0\})\times w^{-1}(\epsilon^{-\sfh_0})\cdot \Gamma_w,
\end{align}
where $\{I\}\times (\bD_{1}^\circ\cap (\ft_{\bR}^+\times D_{c,\delta}))$ is just the portion of the cotangent fiber at $I$ contained in (\ref{eq: bD, circ, T_cpt}). It is clear from Figure \ref{figure: bD_R, ft_R,+} that for $s\gg 1$ (resp. $s\ll -1$), 
\begin{align*}
\varphi_{\widetilde{H}|_{\bD_{\bR}^\circ\cap \ft_\bR^+}}^s(\{I\}\times (\bD_{1, \bR}^\circ\cap \ft_\bR^+))\text{ intersects } \coprod\limits_{w\in W}w^{-1}(\epsilon^{-\sfh_0})\cdot \Gamma_w
\end{align*}
transversely at exactly one point in $w_0(\epsilon^{-\sfh_0})\cdot \Gamma_{w_0}$ (resp. $\epsilon^{-\sfh_0}\cdot \Gamma_1$), for the former covers the ``strip" $\bigcup\limits_{0<\epsilon<\epsilon_0}w_0(\epsilon^{-\sfh_0})\cdot \overline{\Omega}\subset \bR^n_{>0}$ (resp. $\bigcup\limits_{0<\epsilon< \epsilon_0}\epsilon^{-\sfh_0}\cdot \overline{\Omega}\subset \bR^n_{>0}$), for some fixed $0<\epsilon_0\ll 1$, in a one-to-one manner, and approaches to the zero-section over any compact region in the ``strip" as $s\rightarrow \infty$ (resp. $s\rightarrow -\infty$). 
Now the isomorphisms (\ref{eq: skyscraper 1 no q}) and (\ref{eq: skyscraper 2 no q}) in the proposition follow directly. 

\begin{figure}[h]
\begin{tikzpicture}
\draw (0,2.5)--(0,-2.5);
\draw ({2.5*cos(30)},{2.5*sin(30)})--({-2.5*cos(30)},{-2.5*sin(30)});
\draw ({2.5*cos(150)},{2.5*sin(150)})--({-2.5*cos(150)},{-2.5*sin(150)});
\draw[thick, rounded corners=10pt] ({2.3*(1+cos (60))/2},{2.3*sin(60)/2})--({2.3*cos(60)}, {2.3*sin(60)})--({2.3*cos(120)}, {2.3*sin(120)})--({2.3*cos(180)}, {2.3*sin(180)})--({2.3*cos(240)}, {2.3*sin(240)})--({2.3*cos(300)}, {2.3*sin(300)})--(2.3,0)--({1.99*cos(30)}, {1.99*sin(30)});
\fill[cyan, draw, opacity=0.5] ({2.3*(0.6*1+0.4*cos (60))+0.3*cos(30)},{2.3*0.4*sin(60)+0.3*sin(30)})--
({2.3*(0.6*1+0.4*cos (60))-1.3*cos(30)},{2.3*0.4*sin(60)-1.3*sin(30)})--
({2.3*(0.6*1+0.4*cos (60))-1.3*cos(30)},{-(2.3*0.4*sin(60)-1.3*sin(30))})--
({2.3*(0.6*1+0.4*cos (60))+0.5*cos(30)},{-2.3*0.4*sin(60)-0.5*sin(30)}) arc (-24:24: 2.5) -- ({2.3*(0.6*1+0.4*cos (60))+0.5*cos(30)},{2.3*0.4*sin(60)+0.5*sin(30)});
\draw[thick, dashed, rounded corners=6pt, orange, scale=0.4] ({2.3*(1+cos (60))/2},{2.3*sin(60)/2})--({2.3*cos(60)}, {2.3*sin(60)})--({2.3*cos(120)}, {2.3*sin(120)})--({2.3*cos(180)}, {2.3*sin(180)})--({2.3*cos(240)}, {2.3*sin(240)})--({2.3*cos(300)}, {2.3*sin(300)})--(2.3,0)--({1.99*cos(30)}, {1.99*sin(30)});
\draw[cyan] (2.4,0) node[right] {$\bD_{1,\bR}^\circ\cap \ft_\bR^+$};
\end{tikzpicture}
\caption{The $W$-invariant region enclosed by the outer thick black curve represents $\bfP_{sm}$. The cyan region represents $\bD^\circ_{1,\bR}\cap \ft_{\bR}^+$. The middle orange dashed curve encloses $\widetilde{\cU}_{\Pi, 1}$.}\label{figure: bD_R, ft_R,+}
\end{figure}
\end{proof}

\begin{proof}[Proof of Proposition \ref{prop: L_0, W}]
The proof goes similarly as the previous one for Proposition \ref{prop: L_0, part 2}, and we will use the same notations from there. To show (\ref{eq: prop L_0, W}), we pick $L_0^{(1)}$ and $L_0^{(2)}$ with respective $K^{(j)}\gg R^{(j)}\gg 1$ satisfying $R^{(2)}\gg R^{(1)}$. Then by the same reasoning in \emph{Step 2} of the previous proof, we have for $s\gg 1$, 
\begin{align*}
&\varphi_{H}^s(L_0^{(1)})\cap L_0^{(2)}\\
=&\coprod\limits_{w\in W}(T_{\cpt}\times\{0\})\times \varphi_{\widetilde{H}|_{\bD_{\bR}^\circ\cap \ft_\bR^+}}^s(w^{-1}(\epsilon_1^{-\sfh_0})\cdot \Gamma_w^{(1)})\cap \coprod\limits_{w\in W}(T_{\cpt}\times\{0\})\times w^{-1}(\epsilon_2^{-\sfh_0})\cdot \Gamma_w^{(2)}\\
=&\coprod\limits_{w\in W}(T_{\cpt}\times\{0\})\times \varphi_{\widetilde{H}|_{\bD_{\bR}^\circ\cap \ft_\bR^+}}^s(w^{-1}(\epsilon_1^{-\sfh_0})\cdot \Gamma_w^{(1)})\cap ((T_{\cpt}\times\{0\})\times w_0(\epsilon_2^{-\sfh_0})\cdot \Gamma_{w_0}^{(2)})
\end{align*}
for some $0<\epsilon_2\ll \epsilon_1\ll 1$. The intersection is clean and has $|W|$ many connected components $C_w$, each isomorphic to $T_{\cpt}$. For $(\varphi_H^s(L_0^{(1)}), \check{\rho})$ and $(L_0^{(2)}, w_1(\check{\rho}))$, $s\gg 1$, there is an indexing $p: W\rightarrow \{1,\cdots, |W|\}$ and a spectral sequence converging to their Floer cohomology (cf.  \cite{Seidel2}, \cite{Pozniak}, \cite{Schmaschke}) whose $E_1$-page is given by 
\begin{align}\label{eq: E_1, p, q}
E_1^{p(w),q}=H^{p(w)+q+i'(C_{p(w)})-n}(C_{p(w)};  w^{-1}(\check{\rho}^{-1})\otimes w_0w_1(\check{\rho})), 
\end{align}
for some coherent index $i'(C_{p(w)})\in \bZ$. If $\rho\in (T^\vee)^\reg$, then (\ref{eq: E_1, p, q}) is zero unless $w=w_1^{-1}w_0$, in which case, (\ref{eq: E_1, p, q}) is the cohomology $H^*(T,\bC)$ up to some grading shift. So the $E_1$-page converges to $\bigoplus_q E_1^{p(w_1^{-1}w_0), q}[-p(w_1^{-1}w_0)-q]=H^*(T,\bC)[d]$, for some $d\in \bZ$. On the other hand, it is clear that by local Hamiltonian perturbation of $L_0^{(2)}$ near $C_w, w\in W$, we can achieve transverse intersections with gradings ranging between $0$ and $n$, therefore $d$ must be $0$ and we obtain (\ref{eq: prop L_0, W}) as desired.    

Lastly, by Proposition \ref{prop: L_0, part 2}, both $(L_0,\check{\rho})$ and $(L_0, w_1(\check{\rho}))$ correspond to simple left $\cA_G$-modules in the abelian category of (finitely generated) $\cA_G$-modules. Hence 
\begin{align*}
H^0\Hom_{\cW(J_G)}((L_0,\check{\rho}), (L_0, w_1(\check{\rho})))\cong \bC
\end{align*}
implies that they are isomorphic. 
\end{proof}

\begin{remark}
It will follow from Proposition \ref{prop: A_G, W-inv} that for non-regular $\check{\rho}$, we also have 
\begin{align*}
\Hom_{\cW(J_G)}((L_0,\check{\rho}), (L_0, w_1(\check{\rho})))\cong H^*(T,\bC).
\end{align*} 
The above proof gives the $E_1$-page of the spectral sequence (\ref{eq: E_1, p, q}) to compute the Floer cohomology. However, there are multiple columns having nonzero entries $H^{p(w)+q}(T,\bC), 0\leq p(w)+q\leq n$. Hence the differentials $d_r^{p,q}, r\geq 1$ are not all zero, which means there are non-trivial counts of holomorphic discs entering into the calculation. 
\end{remark}

\subsection{Proof of Proposition \ref{prop: A_G commutative} and Proposition \ref{prop: A_G, W-inv}}

Before giving the actual proof, we prove a couple of lemmas in algebra. 
Let $U\subset \bA^n=\Spec\bC[z_1,\cdots, z_n]$ be any (nonempty) affine Zariski open subvariety. 

\begin{lemma}\label{lemma: countable, 0}
Let $M^\bullet$ be any $\bC[U]$-module whose cohomology modules are countably generated over $\bC[U]$. Assume  
\begin{itemize}
\item[] $M^\bullet\otimes_{\bC[U]} (\bC[U]/\fM)\cong 0$ (here as always the tensor product and all functors have been derived unless otherwise specified) for every maximal ideal $\fM\subset \bC[U]$.
\end{itemize}
Then $M^\bullet\cong0$. 
\end{lemma}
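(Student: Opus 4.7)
The plan is a two-step argument: first reduce to a single countably generated $\bC[U]$-module via truncation and the hyperTor spectral sequence, then establish the resulting abelian statement by induction on Krull dimension. For the reduction, since $\bC[U]$ is regular of finite Krull dimension $n = \dim U$, the functor $(-)\otimes^L_{\bC[U]} \bC[U]/\fM$ has cohomological amplitude bounded by $n$. Letting $b$ be the largest degree with $H^b(M^\bullet) \neq 0$ and applying the hyperTor spectral sequence
\[
E_2^{p,q} \;=\; \mathrm{Tor}^{\bC[U]}_{-p}\bigl(H^q(M^\bullet),\, \bC[U]/\fM\bigr) \;\Longrightarrow\; H^{p+q}\bigl(M^\bullet \otimes^L_{\bC[U]} \bC[U]/\fM\bigr)
\]
to the truncation triangle $\tau^{<b}M^\bullet \to M^\bullet \to H^b(M^\bullet)[-b] \to \tau^{<b}M^\bullet[1]$, the vanishing hypothesis extracts $H^b(M^\bullet)\otimes^L_{\bC[U]} \bC[U]/\fM \simeq 0$ for every maximal $\fM$. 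Induction on amplitude then reduces the lemma to the assertion: a countably generated $\bC[U]$-module $M$ with $\mathrm{Tor}^{\bC[U]}_i(M, \bC[U]/\fM) = 0$ for all $i \geq 0$ and all maximal $\fM$ must vanish.

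For the abelian assertion I would induct on $n = \dim U$ --- more generally, on the Krull dimension of a finite-type $\bC$-algebra, with a preliminary devissage along minimal primes and nilpotents to reduce to the integral case. The base $n=0$ is elementary. For the inductive step, pick any nonzero $f \in \bC[U]$: since $\bC[U]$ is a domain, $f$ is a non-zero-divisor, so $M\otimes^L_{\bC[U]} \bC[U]/(f) \simeq [M \xrightarrow{f} M]$. For every maximal $\bar\fM \subset \bC[U]/(f)$, lifting to $\fM \subset \bC[U]$ with $f \in \fM$, associativity of derived tensor gives
\[
\bigl(M \otimes^L_{\bC[U]} \bC[U]/(f)\bigr) \otimes^L_{\bC[U]/(f)} \bC[U]/\fM \;\simeq\; M\otimes^L_{\bC[U]} \bC[U]/\fM \;\simeq\; 0.
\]
Since $\bC[U]/(f)$ has Krull dimension $\leq n-1$ by Krull's Hauptidealsatz, the inductive hypothesis applied to it forces $M \otimes^L_{\bC[U]} \bC[U]/(f) \simeq 0$; in particular, multiplication by $f$ is both injective and surjective on $M$. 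As $f$ was an arbitrary nonzero element of $\bC[U]$, the module $M$ naturally carries the structure of a $K$-vector space, where $K = \bC(U)$ is the fraction field.

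Finally, the conclusion $M = 0$ follows from a cardinality argument that is the only place where countable generation enters decisively. If $M \neq 0$ then $M$ contains a copy of $K$ as a $\bC[U]$-submodule, so it suffices to show that $K$ is not countably generated over $\bC[U]$ when $\dim U > 0$: any countable system of generators of $K$ would confine the denominators to a countable multiplicative subset $S \subset \bC[U]$, permitting inversion only of elements whose irreducible factors lie in a countable collection, whereas $\bC[U]$ carries uncountably many pairwise non-associate irreducibles (corresponding to the uncountably many codimension-one closed subvarieties of $U$). This contradicts countable generation of $M$, forcing $M = 0$. The main obstacle lies in the dimension induction, specifically in handling the (generally non-integral) quotients $\bC[U]/(f)$ via the preliminary devissage just mentioned; the final cardinality step is conceptually transparent but indispensable, in that it is exactly what rules out the familiar counterexample $K$ to the analogous statement without the countability hypothesis.
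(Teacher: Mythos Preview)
Your overall strategy---dimension induction until the module becomes a vector space over the fraction field, then the cardinality argument---is correct and matches the paper's core idea. The final cardinality step is identical to the paper's. However, the way you split the argument into Step~1 (reduce to a single module) and Step~2 (prove the abelian assertion by induction) creates a genuine gap.

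In Step~2 you induct on the \emph{abelian} assertion, but then apply the inductive hypothesis to the two-term complex $M\otimes^L_{\bC[U]}\bC[U]/(f)\simeq[M\xrightarrow{f}M]$ over $\bC[U]/(f)$. This is a type mismatch: the abelian hypothesis concerns a single module with all $\mathrm{Tor}_j$ vanishing, and you cannot recover that for $H^0=M/fM$ and $H^{-1}=\ker(f)$ separately. The two-row spectral sequence only gives $\mathrm{Tor}_0^{B}(M/fM,B/\fM)=\mathrm{Tor}_1^B(M/fM,B/\fM)=0$ and $\mathrm{Tor}_0^B(\ker f,B/\fM)=0$, with the remaining Tor groups linked by a nonzero $d_2$. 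This weaker vanishing does \emph{not} force the module to be zero: the local cohomology module $H^n_{\fM_0}(\bC[x_1,\dots,x_n])$ is countably generated, has $\mathrm{Tor}_j(-,\bC[U]/\fM)=0$ for all $j<n$ and all $\fM$, yet is nonzero. The same example shows that Step~1 does not extract full derived vanishing of $H^b(M^\bullet)$ from the spectral sequence---you only get $\mathrm{Tor}_0$ and $\mathrm{Tor}_1$.

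The fix is to run the induction on the \emph{derived} statement throughout, for bounded-above complexes over integral finite-type $\bC$-algebras, using exactly the d\'evissage you mention: for $B=A/(f)$, kill nilpotents (the top cohomology vanishes by nilpotent Nakayama), then pass to the minimal primes $\fp_i$ (top cohomology is killed since $\prod\fp_i\subseteq\bigcap\fp_i=0$), reducing to $M^\bullet\otimes^L_A A/\fp_i$ over the integral domain $A/\fp_i$ of smaller dimension. This works, and renders Step~1 unnecessary. The paper organizes the same idea differently: it inducts on $\dim\operatorname{supp}M^\bullet$ and uses Noether normalization to push forward along a finite \'etale map to an open in $\bA^{<k}$, thereby always staying over regular rings and sidestepping the d\'evissage. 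Your d\'evissage route is a legitimate alternative once the induction is stated for complexes rather than modules.
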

\begin{proof}
We prove by induction on the dimension of the support of $M$ (inside the proof, we denote $M^\bullet$ simply by $M$). First, if the support of $M$ has dimension $0$ (i.e. it is at most a countable union of closed points), then $M$ is a direct sum of skyscraper sheaves, which cannot satisfy the condition unless it is $0$. 

Suppose we have proved the case for $\dim \supp(M)<k$. Now assume $\dim \supp(M)=k$. Let $\fp\subset \bC[U]$ be a prime ideal of dimension $<k$, and let $Z$ be the corresponding closed subscheme. By Noether normalization theorem, there is an open subset $W\subset \bA^{\dim Z}$ and an open subset $Z^\dagg\subset Z$ (both nonempty), together with a proper $\acute{\text{e}}$tale map $Z^\dagg\rightarrow W$. The pushforward of $M|_{Z^\dagg}$ along $Z^\dagg\rightarrow W$ satisfies that it has zero fiber at every closed point, hence by induction, it is zero. This shows that $M$ has zero fiber at every point of dimension $<k$. 

Now for any $\fp\subset \bC[U]$ of dimension $k$, consider  
\begin{align*}
M\otimes_{\bC[U]} (\bC[U]/\fp)\longrightarrow M\otimes_{\bC[U]}\mathbf{K}_\fp,
\end{align*}
where $\mathbf{K}_\fp$ is the fraction field of $\bC[U]/\fp$. This is an isomorphism because it induces isomorphisms on all fibers. In particular, $M\otimes_{\bC[U]} (\bC[U]/\fp)$ is a free $\mathbf{K}_\fp$-module. Thus it cannot be countably generated over $\bC[U]/\fp$ unless it is zero. This finishes the inductive step. 
\end{proof}

\begin{lemma}\label{lemma: M, countable, lb}
Let $M^\bullet$ be any $\bC[U]$-module concentrated in degree $\leq 0$  satisfying:
\begin{itemize}
\item[(i)] each $H^i(M^\bullet)$ is countably generated over $\bC[U]$;
\item[(ii)] for any maximal ideal $\fM\subset \bC[U]$, $M^\bullet\otimes_{\bC[U]} (\bC[U]/\fM)\cong \bC[U]/\fM$;
\item[(iii)] there exists a finite collection $\{e_1,\cdots, e_K\}\subset H^0(M^\bullet)$ such that they generate $M^\bullet\otimes_{\bC[U]} (\bC[U]/\fM)$ for every $\fM$;
\end{itemize} 
Then $M^\bullet$ is concentrated in degree $0$, and it is a locally free rank $1$ module of $\bC[U]$. 
\end{lemma}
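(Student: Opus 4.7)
The plan is to build a rank one projective $\bC[U]$-module $\cL$ together with a morphism $\varphi\colon \cL[0]\to M^\bullet$ in the derived category of $\bC[U]$-modules, arrange that the cone of $\varphi$ has vanishing derived fiber at every maximal ideal, and then conclude by Lemma \ref{lemma: countable, 0} that the cone is zero, so that $M^\bullet\simeq \cL[0]$ is a line bundle concentrated in degree zero. The starting point is the hypercohomology spectral sequence
\[
E_2^{p,q}=\mathrm{Tor}^{\bC[U]}_{-p}\!\bigl(H^q(M^\bullet),\,\bC[U]/\fM\bigr)\;\Longrightarrow\; H^{p+q}\!\bigl(M^\bullet\otimes^L_{\bC[U]}\bC[U]/\fM\bigr).
\]
Since $M^\bullet$ sits in non-positive degrees and $\mathrm{Tor}$ is supported in non-positive $p$, the only term contributing to total degree zero is $E_2^{0,0}=H^0(M^\bullet)\otimes_{\bC[U]}\bC[U]/\fM$, and no differential touches it; combined with (ii) this yields canonical isomorphisms $H^0(M^\bullet)\otimes_{\bC[U]}\bC[U]/\fM \xrightarrow{\sim} \bC[U]/\fM$ for every maximal ideal $\fM$.

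For the construction of $\cL$, the map $\bC[U]^K\to H^0(M^\bullet)$ sending the standard basis to $(e_1,\dots,e_K)$ induces at each fiber a surjection $k(\fM)^K\twoheadrightarrow H^0(M^\bullet)\otimes k(\fM)\cong k(\fM)$. The varying kernels are cut out by the algebraic conditions $\sum c_i\, e_i(\fM)=0$ and so assemble into a rank $K-1$ locally free subsheaf $\mathcal{K}\subset \cO_{\Spec\bC[U]}^K$; set $\cL:=\cO_{\Spec\bC[U]}^K/\mathcal{K}$, a line bundle on $\Spec\bC[U]$. By construction the map $\bC[U]^K\to H^0(M^\bullet)$ factors through a morphism $\cL\to H^0(M^\bullet)$ of $\bC[U]$-modules that is an isomorphism on every fiber.

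Finally, since $\cL$ is projective, $\mathrm{Ext}^1_{\bC[U]}(\cL,\tau_{\le -1}M^\bullet)\cong\mathrm{Hom}_{\bC[U]}(\cL,H^1(\tau_{\le -1}M^\bullet))=0$, so the morphism $\cL\to H^0(M^\bullet)=\tau_{\ge 0}M^\bullet$ lifts to $\varphi\colon\cL[0]\to M^\bullet$ in the derived category. The cone $N^\bullet$ satisfies $H^0(N^\bullet)=H^0(M^\bullet)/\cL$ and $H^i(N^\bullet)=H^i(M^\bullet)$ for $i<0$, all countably generated by hypothesis (i) combined with the finite generation of $\cL$. Tensoring the defining triangle with $\bC[U]/\fM$ shows $N^\bullet\otimes^L_{\bC[U]}\bC[U]/\fM\cong 0$, since both the fiber isomorphism $\cL\otimes \bC[U]/\fM\xrightarrow{\sim}H^0(M^\bullet)\otimes \bC[U]/\fM$ from the previous paragraph and the edge isomorphism from the first paragraph are already isomorphisms. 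Lemma \ref{lemma: countable, 0} then forces $N^\bullet=0$, so $\varphi$ is a quasi-isomorphism. The most delicate point is verifying that the fiber-wise kernels $\mathcal{K}_\fM$ really do glue to a locally free subsheaf of $\cO^K$ of rank $K-1$ rather than a coherent subsheaf of larger generic rank, which uses the algebraic dependence of $(e_1(\fM),\dots,e_K(\fM))$ on $\fM$ and the constancy of the corank; the remainder is standard derived-category bookkeeping, with Lemma \ref{lemma: countable, 0} doing the essential work of ruling out pathological countably generated tails.
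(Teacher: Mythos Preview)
Your strategy---build a line bundle $\cL$ with a map $\cL[0]\to M^\bullet$ that is a fiberwise isomorphism, then kill the cone with Lemma~\ref{lemma: countable, 0}---is exactly what works in the base case $K=1$ of the paper's induction. The difficulty is that for $K\geq 2$ your construction of $\cL$ is not justified, and this is not a detail but the entire content of the lemma.

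Concretely, you assert that the fiberwise kernels $\mathcal{K}_\fM=\ker(k(\fM)^K\to H^0(M^\bullet)\otimes k(\fM))$ glue to a rank $K-1$ sub-bundle of $\cO^K$, equivalently that the set-theoretic assignment $\fM\mapsto [e_1(\fM):\cdots:e_K(\fM)]\in\bP^{K-1}$ is a morphism of varieties. You then say this ``uses the algebraic dependence of $(e_1(\fM),\dots,e_K(\fM))$ on $\fM$,'' but that dependence is precisely what must be proved: $H^0(M^\bullet)$ is only countably generated, not coherent, so there is no a priori reason the fibers vary algebraically. Indeed, the conclusion that $H^0(M^\bullet)$ is a line bundle cannot be read off from $H^0(M^\bullet)\otimes k(\fM)\cong k(\fM)$ alone (take $H^0=\bC[t]\oplus(\bC[t,t^{-1}]/\bC[t])$ to see this, though of course this particular module does not satisfy the derived hypothesis (ii)). There is a second, related gap: even granting a sub-bundle $\mathcal{K}$ with the correct fibers, you need $\mathcal{K}\subset\ker(\bC[U]^K\to H^0(M^\bullet))$ to get the factorization $\cL\to H^0(M^\bullet)$; this would require $\bigcap_\fM \fM\cdot H^0(M^\bullet)=0$, which again is not automatic for countably generated modules.

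The paper's proof addresses exactly this obstruction via a double induction on $n=\dim U$ and $K$. The countability hypothesis is used essentially: one writes each generator $q_\ell$ as a combination of the $e_j$ after clearing finitely many linear factors in $z_n$, producing a countable bad set of values of $z_n$; picking $b$ outside this set and applying the lower-dimensional case on $\{z_n=b\}$ manufactures, near each point, a Zariski open on which one $e_s$ becomes redundant, reducing $K$. Your spectral-sequence/edge-map argument for $H^0(M^\bullet)\otimes k(\fM)\cong k(\fM)$ and the lifting $\cL[0]\to M^\bullet$ via projectivity are both fine and would streamline the endgame, but only once $\cL$ has actually been produced---and producing it is where the work lies.
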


\begin{proof}
In the proof, we usually denote $M^\bullet$ simply by $M$, and we assume $M^i=0, i>0$. 
We prove by induction on the dimension $n$ and the number $K$ in (iii). If $n=0$, there is nothing to prove. If $K=1$, then $e_1$ (or better any of its representatives in $M^0$) gives a non-vanishing global section, i.e. non-vanishing at every closed point, and clearly $e_1\cdot \bC[U]\cong \bC[U]$. So doing $\otimes (\bC[U]/\fM)$ to the exact triangle 
\begin{align*}
 e_1\cdot \bC[U]\overset{\iota_1}{\rightarrow} M\rightarrow \Cone(\iota_1)\overset{+1}{\rightarrow},
\end{align*}
we get the exact triangle 
\begin{align*}
\bC[U]/\fM\rightarrow M\underset{\bC[U]}{\otimes}(\bC[U]/\fM)\rightarrow\Cone(\iota_1)\underset{\bC[U]}{\otimes} (\bC[U]/\fM)\overset{+1}{\rightarrow},
\end{align*}
whose first arrow is nonzero for all $\fM$. By the assumption (ii), we have 
\begin{align*}
\Cone(\iota_1)\underset{\bC[U]}{\otimes} (\bC[U]/\fM)\cong 0, \text{ for all }\fM. 
\end{align*}
Then by Lemma \ref{lemma: countable, 0}, we have $\Cone(\iota_1)\cong 0$, i.e. $M\cong \bC[U]$. We also note that all assumptions automatically hold for any open affine $\widetilde{U}\subset U$ and pullback of $M$ there.

Suppose we have proved the statement for all $U'$ of dimension strictly less than $n\geq 1$ together with all $K'$, and we have proved for $U'$ of dimension $n$ with $1\leq K'<K$. Let $\widetilde{M}^\bullet$ defined as follows. First, $\widetilde{M}^i=M^i, i<0$, and 
\begin{align*}
\widetilde{M}^0=\{v\in M^0: \text{there exist }a_1,a_2,\cdots, a_s\in \bC,\text{ such that } \overline{v}\cdot \prod\limits_{j=1}^s(z_n-a_j)\in \sum\limits_{j=1}^K e_j\cdot \bC[U]\},
\end{align*}
where $\overline{v}$ means the image of $v$ in $H^0(M)=M^0/\text{Im}\ d^{-1}$. Then we have a natural map $\widetilde{M}\rightarrow M$, which induces $H^i(\widetilde{M})\cong H^i(M), i<0,$ and $H^0(\widetilde{M})\hookrightarrow H^0(M)$. Moreover, for any $a\in \bC$, using the Koszul resolution of $\bC[U]/(z_n-a)$, we see that 
\begin{align}
\label{eq: H, i, below 0}&H^i(\widetilde{M}\otimes \bC[U]/(z_n-a))\cong H^i(M\otimes \bC[U]/(z_n-a)), i<0\\
\label{eq: H, 0, M}&H^0(\widetilde{M}\otimes \bC[U]/(z_n-a))\hookrightarrow H^0(M\otimes \bC[U]/(z_n-a)).
\end{align}
Since $M\otimes \bC[U]/(z_n-a)$ satisfies (i)-(iii) with dimension $n-1$, from induction we get (\ref{eq: H, i, below 0}) is zero for all $i<0$ and (\ref{eq: H, 0, M}) is an isomorphism of line bundles. In particular, we have $\widetilde{M}= M$, by Lemma \ref{lemma: countable, 0}.

Let $\{q_1,q_2,\cdots\}$ be a countable set of generators $H^0(M)=H^0(\widetilde{M})$ such that for each $q_\ell$ there exists $\tau_\ell\in \bZ_{\geq 0}$, $a_i^{(\ell)}\in \bC, i=1,\cdots,\tau_\ell$ and $f_j^{(\ell)}(z)\in \bC[U], j=1,\cdots K$ with 
\begin{align*}
q_\ell\cdot \prod\limits_{i=1}^{\tau_\ell}(z_n-a_{i}^{(\ell)})=\sum\limits_{j=1}^K e_j\cdot f_j^{(\ell)}(z). 
\end{align*}
For any $b\in \bC-\{a_i^{(\ell)}:1\leq i\leq \tau_\ell, \ell\in \bZ_{\geq 1} \}$ with $U\cap \{z_n=b\}\neq\emptyset$, and any closed point $P\in U\cap \{z_n=b\}$, by induction, there is a Zariski open affine subset $V:=\{g_1(z_1,\cdots, z_{n-1})\neq 0, \cdots g_1(z_1,\cdots, z_{n-1})\neq 0\}\subset U\cap \{z_n=b\}$ containing $P$ such that for some $1\leq j_P\leq K$, $e_{j_P}$ generates $M|_{V}$. Let $\widetilde{U}=(V\times \bA^1_{z_n})\cap U$ which is again affine open in $\bA^n$. For any $s\neq j_P$, we have $e_s|_{V}=e_{j_P}|_V\cdot h_1(z_1, \cdots, z_{n-1})$, for some $h_1\in \bC[V]$. Viewing $h_1$ as the pullback function on $\widetilde{U}$, we have $e_s-e_{j_P}\cdot h_1\in H^0(M|_{\widetilde{U}})\cdot (z_n-b)$, and by the generation assumption we have that there exists a finite indexing set $C$ and $r_c(z)\in \bC[\widetilde{U}], c\in C$ such that 
\begin{align*}
&e_s-e_{j_P}\cdot h_1=\big(\sum\limits_{c\in C} q_c\cdot r_{c}(z)\big) (z_n-b)\\
\Rightarrow&(e_s-e_{j_P}\cdot h_1)\cdot \prod\limits_{c\in C}\prod\limits_{i=1}^{\tau_c}(z_n-a_{i}^{(c)})= \big(\sum\limits_{j=1}^K e_j\cdot \widetilde{r}_j(z)\big)(z_n-b), \text{ where }\widetilde{r}_j(z)\in \bC[\widetilde{U}], \\
\Rightarrow& e_s\cdot\big(\prod\limits_{c\in C}\prod\limits_{i=1}^{\tau_c}(z_n-a_{i}^{(c)})-\widetilde{r}_s(z)(z_n-b)\big)\in \sum\limits_{j\neq s}e_j\cdot \bC[\widetilde{U}]. 
\end{align*}
Let 
\begin{align*}
F_s(z)=\prod\limits_{c\in C}\prod\limits_{i=1}^{\tau_c}(z_n-a_{i}^{(c)})-\widetilde{r}_s(z)(z_n-b)\big.
\end{align*}
and let $\widetilde{U}_{P}=\widetilde{U}\cap \{F_s(z)\neq 0\}$. Clearly, $\widetilde{U}_{P}\cap \{z_n=b\}=\widetilde{U}\cap \{z_n=b\}=V$. Now $M|_{\widetilde{U}_{P}}$ satisfies (i)-(iii) with $K$ replaced by $K-1$. Hence by induction, $M|_{\widetilde{U}_{P}}$ is in degree $0$ and is locally free of rank $1$. Since $P\in U\cap \{z_n=b\}$ is arbitrary, we conclude with the following:
\begin{itemize}
\item[]\emph{There is a Zariski open subset $U_n\subset U$ such that $M|_{U_n}$ is locally free of rank $1$, and $U_n\supset \bigcup\limits_{b\not\in \{a_i^{(\ell)}: 1\leq i\leq n, \ell\in\bZ_{\geq 1}\}}\{z_n=b\}\cap U$.}
\end{itemize}
This implies that the complement $U-U_n$ projects to $\bA^1_{z_n}$ in a \emph{finite} subset. Repeating this with $z_n$ replaced by $z_j$ for each $j<n$, we get: 
\begin{itemize}
\item[]\emph{There is a finite collection of closed points $\{P_1,\cdots, P_m\}\subset U$ (with corresponding maximal ideal $\fM_{P_j}$) such that the restriction of $M$ to $U^\dagg:=U-\{P_1,\cdots, P_m\}$ is locally free of rank 1.}
\end{itemize}

Lastly, we show that $M$ is locally free of rank $1$ on $U$.  First assume $n=1$. For each $1\leq s\leq m$ we choose any $e_{j_s}$ that generates $M\otimes\bC[U]/\fM_{P_s}$. There are two cases:
\begin{itemize}
\item[Case 1:] $e_{j_s}|_{U^\dagg}\neq 0$.  Then since the zeros of $e_{j_s}|_{U^\dagg}\neq 0$ are finite, $e_{j_s}$ gives a non-vanishing section on a Zariski open subset containing $P_s$;

\item[Case 2:] $e_{j_s}|_{U^\dagg}=0$. Then there exists $e_{j_s'}$ such that $e_{j_s'}|_{U^\dagg}\neq 0$. Consider the linear combination $R\cdot e_{j_s}+e_{j_s'}$, for some $R\in \bR$. For $R$ sufficiently large, this gives a non-vanishing section on a Zariski open subset containing $P_s$.
\end{itemize}
So we have proved the case for $n=1$. 

For $n\geq 2$. We can extend the line bundle $M|_{U^\dagg}$ to a (unique) line bundle $M'$ on $U$ (since $U^\dagg$ and $U$ have the same divisor class group), and we have a natural map $\varphi: M\rightarrow H^0(M)\rightarrow M'$ by sending $q\in H^0(M)$ to the unique extension of $q|_{U^\dagg}$ in $M'$. Then $\Cone(\varphi)$ is a direct sum of (not necessarily simple or concentrated in degree $0$) skyscraper sheaves supported on the closed points $P_1,\cdots, P_m$. By (ii), we see that $\Cone(\varphi)$ must be coherent and it is a sum of simple skyscraper sheaves concentrated in degree $-1$ and $0$. On the other hand, since $H^j(\bC[U]/\fM, M)\cong 0, j=0,1$, we get $\varphi$ is an isomorphism and so $M$ is locally free of rank $1$. 
\end{proof}

\begin{proof}[Proof of Proposition \ref{prop: A_G commutative}]

Let $\cM$ be the $\cA_G-\bC[T^\vee]$-bimodule corresponding to the co-restriction functor 
 in (\ref{eq: res, co-res}).
 
(i) For a generic cotangent fiber\footnote{Strictly speaking, we need to take a cylindricalization of $F_h$ as done in Subsection \ref{subsec: construct L_zeta} for $L_0$. Using a similar construction there, the resulting cylindrical $F_h$ satisfies the same properties.} $F_h\subset \cB_{w_0}\cong T^*T$ (equipped with constant grading $n$), $F_h\cap \chi^{-1}([0])$ transversely in $|W|$ many points and they are in the same degree (note that $F_h$ and $\chi^{-1}([0])$ are both holomorphic Lagrangians), 
 where $\chi^{-1}([0])$ is the critical handle whose co-core $\Sigma_I$ generates $\cW(J_G)$. This is due to (1) the map $\chi|_{F_h}: F_h\rightarrow\fc$ is proper by Proposition \ref{prop: proper b map}, and (2) the intersection of $F_h$ and $\chi^{-1}([\xi])$ for $[\xi]\in \fc^{\reg}$ (in a compact region) and $|\gamma_{-\Pi}(h)|\ll 1$ is transverse at $|W|$ many points (cf. Lemma \ref{lemma: hat chi_epsilon, gamma}). 
 Since $\End(\Sigma_I)$ is concentrated in degree $0$, by the wrapping exact sequence from \cite{GPS2}, we have $co\text{-}res(\bC[T^\vee])\cong 
 \cA_G^{\oplus |W|}$. 
 
We now show that $\cM\cong \bC[T^\vee]$ as a $\bC[T^\vee]$-module. Fix an identification $\cM\cong \cA_G^{\oplus|W|}$ as left $\cA_G$-modules from above. 
Let $e_j=[0,\cdots, 0, 1,0,\cdots,0], 1\leq j\leq |W|$ be the element in $\cA_G^{\oplus |W|}$ that has $1\in\cA_G$ in the $j$-th component and $0$ otherwise. Then $\cM$ as a right $\bC[T^\vee]$-module (in degree $0$) satisfies (i)-(iii) in Lemma \ref{lemma: M, countable, lb}, which follows from Proposition \ref{prop: L_0, part 2} and which has the above $e_j,j=1,\cdots, |W|$ serve as those in (iii), and so $\cM$ is locally free of rank $1$ over $\bC[T^\vee]$. But this means $\cM\cong \bC[T^\vee]$.

(ii)
By (i), we have  
 \begin{align*}
 &res(\cF)\cong \Hom_{\cA_G\text{-}\Mod}(\cM, \cF)\cong \Hom_{\cA_G\text{-}\Mod}(\cA_G^{\oplus |W|}, \cF)\\
 &\cong  (\cA_G^{\oplus |W|})^\vee\underset{\cA_G}{\otimes} \cF,
 \end{align*}
where $\cM^\vee\cong (\cA_G^{\oplus |W|})^\vee:=\Hom_{\cA_G-\text{Mod}}(\cA_G^{\oplus |W|}, \cA_G)$ with the right $\cA_G$-module structure from that on the target. Using the same method as (i), and Proposition \ref{prop: L_0, part 2}, (\ref{eq: skyscraper 1 no q}), we deduce that $\cM^\vee$ as a left $\bC[T^\vee]$-module is free of rank 1.

(iii)
  
By (ii) we have an isomorphism of $\bC[T^\vee]-\cA_G$-bimodules 
\begin{align*}
\cM^\vee\cong (\cA_G^{\oplus |W|})^\vee\cong \bC[T^\vee]
\end{align*}
that represent the restriction functor. This implies that the natural algebra map $\cA_G\rightarrow \bC[T^\vee]\cong \End_{\bC[T^\vee]}(\bC[T^\vee])$ is injective, which forces $\cA_G$ to be commutative. Alternatively, we can use (i) to deduce the injective algebra map $\cA_G\rightarrow \bC[T^\vee]\cong \End_{\bC[T^\vee]}(\cM)\cong\bC[T^\vee]$.

(iv) 
We view the $\bC[T^\vee]$-module structure on $\cM$ in terms as an embedding into matrix algebras over $\cA_G$: 
\begin{align}\label{eq: C[T], matrix}
\bC[T^\vee]\hookrightarrow\End_{\cA_G}(\cA_G^{\oplus|W|}).
\end{align}
Let $\{x^{\pm\lambda^\vee_{\alpha}}, \alpha\in \Pi\}$ be the standard algebra generators of $\bC[T^\vee]$, and let $c^{\alpha,\pm}_{ij},1\leq i,j\leq |W|$ be the entries of the matrix image of $x^{\pm\lambda^\vee_{\alpha}}$ from (\ref{eq: C[T], matrix}). Let $v=(v_1,\cdots, v_{|W|})$ be a generator of $\cA_G^{\oplus|W|}$ as a rank $1$ $\bC[T^\vee]$-module. Then it is clear that $\cA_G$ as an algebra is generated by $c^{\alpha,\pm}_{ij},1\leq i,j\leq |W|, \alpha\in\Pi$, and $v_1,\cdots, v_{|W|}$. 
\end{proof}

\begin{proof}[Proof of Proposition \ref{prop: A_G, W-inv}]

(i) It follows directly from Proposition \ref{prop: A_G commutative} that the co-restriction functor (resp. the restriction functor) is isomorphic to $\mathsf{f}_*$ (resp.  $\mathsf{f}^!$) on coherent sheaves. Here we also use the general result about partially wrapped Fukaya categories that $\cA_G$ is smooth\footnote{Alternatively, one can use $\Perf(\cA_G)$ instead of coherent sheaves on $\Spec\cA_G$ in the statement of the proposition, which will not affect the proof of Theorem \ref{thm: sec G adjoint}.}. 

(ii) follows from Proposition  \ref{prop: L_0, W}. More explicitly, since $(L_0, \check{\rho})\in \cW(\cB_{w_0})\simeq \Coh(T^\vee)$ represents the (simple) skyscraper sheaf at $\check{\rho}\in T^\vee$, Proposition \ref{prop: L_0, W} implies that for any $\check{\rho}\in (T^\vee)^{\reg}$, the $W$-orbit of the corresponding skyscraper sheaves are sent to the same skyscraper sheaf on $\Spec \cA_G$ via $\mathsf{f}_*$. Since $T^\vee$ is a smooth affine variety, the map $\mathsf{f}$ is $W$-invariant. This finishes the proof.  
\end{proof}

\end{document}